\renewcommand{\headrulewidth}{0pt}
\newcommand{\arr}{a}
\newcommand{\arrv}{\pmb{a}}
\newcommand{\srv}{s}
\newcommand{\srvv}{\pmb{s}}
\newcommand{\dep}{d}
\newcommand{\depv}{\pmb{d}}
\newcommand{\Qs}{\mathcal{U}}
\def\bq#1{\pmb{#1}}
\newcommand{\ml}{\mathcal{X}}
\newcommand{\Vmap}{\mathcal{V}}
\newcommand{\fl}[1]{\lfloor{#1}\rfloor} 
\newcommand{\ce}[1]{\lceil{#1}\rceil}
\newcommand{\sqtwopi}{\sqrt{2\pi}}
\newcommand{\fraclambda}{\frac{\dir}{\sqrt 2}}
\newcommand{\zfraclambda}{\frac{z -\dir}{\sqrt 2}}
\newcommand{\negzfraclambda}{-\frac{z + \dir}{\sqrt 2}}
\newcommand{\Var}{\mathbb V ar}
\def\XiSH{\Xi_{G}} 
\def\XiSHsig{\Xi_{G^{\sigma}}} 
\newcommand{\Rf}{\mathfrak s}
\def\SHpp{\Gamma} 
\def\SHHpp{\Lambda} 
\newcommand{\Erl}{\mathcal E^{\rho,\lambda}}
\newcommand{\N}{\mathbb{N}}
\newcommand{\R}{\mathbb{R}}
\newcommand{\Q}{\mathbb{Q}}
\newcommand{\Z}{\mathbb{Z}}
\newcommand{\F}{\mathcal{F}}
\newcommand{\G}{\mathcal{G}}
\newcommand{\X}{\mathcal{X}}
\newcommand{\Y}{\mathcal{Y}}
\newcommand{\C}{\mathcal{C}}
\newcommand{\D}{\mathcal{D}}
\newcommand{\E}{\mathbb{E}}
\newcommand{\Hh}{\mathcal{H}}
\newcommand{\ve}{\varepsilon}
\newcommand{\Ll}{\mathcal{L}}
\newcommand{\Pp}{\mathbb P}
\newcommand{\f}{\frac}
\newcommand{\Ss}{\mathcal S}
\newcommand{\T}{\mathcal T}
\newcommand{\Ff}{\mathcal F}
\newcommand{\deq}{\overset{d}{=}}
\newcommand{\mbf}{\mathbf}
\newcommand{\wt}{\widetilde}
\newcommand{\Rup}{\R_{{\scaleobj{0.7}{\uparrow}}}^4}
\newcommand{\wh}{\widehat}
\newcommand{\li}{\;{\le}_{\rm inc}\;}
\newcommand{\gi}{\;{\ge}_{\rm inc}\;}
\newcommand{\Nor}{\mathcal N}
\newcommand{\Exp}{\operatorname{Exp}}
\newcommand{\W}{W}
\newcommand{\NU}{\operatorname{NU}}
\newcommand{\DLBusedc}{\Xi}
\newcommand{\dir}{\xi}
\newcommand{\Split}{\mathfrak S}
\newcommand{\h}{\mathfrak h}
\newcommand{\graph}{\mathcal G}
\def\shdif{J}  
\def\ind{\mathbf{1}}
\def\ddd{\displaystyle} 
\newcommand{\UC}{\operatorname{UC}}
\newcommand{\CRpin}{C_{\text{pin}}(\R)}
\newcommand{\Ee}{\mathbb E}
\newcommand{\tf}{\tfrac}
\def\ind{\mathbf{1}}
\DeclareMathOperator*{\argmax}{arg\,max} 
\newcommand{\sig}{{\scaleobj{0.8}{\boxempty}}} 
\newcommand{\sigg}{{\scaleobj{0.9}{\boxempty}}} 
\newcommand{\cN}{\mathcal{N}}
\newcommand{\lzb}{\llbracket}   
\newcommand{\rzb}{\rrbracket}   
\DeclareMathOperator\Cls{Cls} 
\newcommand{\Qd}{Q}
\newcommand{\Rd}{R_d}
\newcommand{\Dd}{D_d}
\newcommand{\Ud}{U_d}
\def\bck#1{{\overleftarrow{#1}}}
\newcommand{\be}{\begin{equation}}
\newcommand{\ee}{\end{equation}}
\def\tsp{\hspace{0.5pt}}  
\def\tspa{\hspace{0.7pt}}
\def\tspb{\hspace{0.9pt}}
\def\tspc{\hspace{1.1pt}}
\newcommand\abullet{{\raisebox{1.5pt}{\scaleobj{0.5}{\bullet}}}}  
\newcommand\aabullet{{\tspc\raisebox{1.5pt}{\scaleobj{0.5}{\bullet}}\tspc}}  
\theoremstyle{plain}
\newtheorem{theorem}{Theorem}[section]
\newtheorem{proposition}[theorem]{Proposition}
\newtheorem{corollary}[theorem]{Corollary}
\newtheorem{lemma}[theorem]{Lemma}
\theoremstyle{remark}
\theoremstyle{definition}
\newtheorem{definition}[theorem]{Definition}
\newtheorem{remark}[theorem]{Remark}
\title{The stationary horizon as the central multi-type invariant measure in the KPZ universality class}
\author{Evan L. Sorensen}
\date{Day Month Year}
\begin{document}
\pagenumbering{roman} 

\begin{titlepage}
    \begin{center}
        \vspace*{1cm}

        \LARGE{\textbf{The stationary horizon as the central multi-type invariant measure in the KPZ universality class}}
        
        \vspace{0.5cm}

        by\\
    
        \textbf{Evan L. Sorensen}
        
        \vfill
        \Large
        A dissertation submitted in partial fulfillment\\
        of the requirements for the degree of\\
        Doctor of Philosophy\\
        (Mathematics)\\

        \vspace{1.8cm}

        \Large
        at the\\University of Wisconsin-Madison\\
        2023\\
        \vspace{1.0cm}
        \begin{flushleft}
        \large
        Date of Final Oral Exam:  06/09/2023\\
        The dissertation is approved by the following members of the Final Oral Committee: \\
        \setlength{\parindent}{10ex}
        Timo Sepp\"al\"ainen, Professor, Mathematics\\
        Benedek Valk\'o,  Professor, Mathematics\\
        Hao Shen, Assistant Professor, Mathematics\\ 
        Hung Vinh Tran, Associate Professor, Mathematics\\
        \end{flushleft}
        
    \end{center}
    
\end{titlepage}

\thispagestyle{empty}
\vspace*{\fill}
Copyright \textcopyright\ 2023 (Evan Sorensen) All rights reserved.

\pagebreak

\doublespacing

\setcounter{page}{1}

\fancyhf{} 
\fancyhead[RO,R]{\thepage} 
\renewcommand{\headrulewidth}{0pt}

\begin{center}
    \Large
    \textbf{The stationary horizon as the central multi-type invariant measure in the KPZ universality class}

    \vspace{0.4cm}
    \textbf{Evan L. Sorensen}
    
    \vspace{0.9cm}
    \textbf{Abstract}
\end{center}
The Kardar-Parisi-Zhang (KPZ) universality class describes a large class of 2-dimensional models of random growth, which exhibit universal scaling exponents and limiting statistics.  The last ten years has seen remarkable progress in this area, with the formal construction of two interrelated limiting objects, now termed the KPZ fixed point and the directed landscape (DL). This dissertation focuses on a third central object, termed the stationary horizon (SH). The SH was first introduced (and named) by Busani as the scaling limit of the Busemann process in exponential last-passage percolation. Shortly after, in the author's joint work with Sepp\"al\"ainen, it was independently constructed in the context of Brownian last-passage percolation. In this dissertation, we give an alternate construction of the SH, directly from the description of its finite-dimensional distributions and without reference to Busemann functions. From this description, we give several exact distributional formulas for the SH. Next, we show the significance of the SH as a key object in the KPZ universality class by showing that the SH is the unique coupled invariant distribution for the DL. A major consequence of this result is that the SH describes the Busemann process for the DL. From this connection, we give a detailed description of the collection of semi-infinite geodesics in the DL, from all initial points and in all directions. As a further evidence of the universality of the SH, we show that it appears as the scaling limit of the multi-species invariant measures for the totally asymmetric simple exclusion process (TASEP). This dissertation is adapted from two joint works with Sepp\"al\"ainen and two joint works with Busani and Sepp\"al\"ainen. 

\chapter*{Dedication}

To my incredible wife Blair and our two daughters, Abigail and Caroline. 


\chapter*{Acknowledgements}

Just over four years ago, as a first-year PhD student, I walked into Professor Timo Sepp\"al\"ainen's office and asked if I could work with him. Working with Timo has been an incredible fit that has made a significant lasting impact on my professional life. He has given me the latitude to work at the pace that was right for me, but has also pushed and challenged me to become a much better mathematician. Throughout my experience in graduate school, he has given priceless professional advice that has helped me to navigate all aspects of the academic world.  I am forever grateful to have him as my PhD advisor. 

There have been so many others who have blessed my life both before and during graduate school. I have been so fortunate to study at UW-Madison. In addition to my interactions with Timo, I have had the great fortune of taking courses from and interacting with Dave Anderson, Sebastien Roch, Hao Shen, Hung Tran, Jean-Luc Thiffeault, and Benedek Valk\'o. For the last three years, I have had the great fortune of interacting and collaborating with Erik Bates during his postdoc at UW-Madison. He has freely given of his time to answer my questions about the postdoc search process and to read my research statements. I am also grateful to Xiao Shen for his collaboration and advice. When I started working with Timo, Xiao kindly welcomed me into the group and willingly  helped me as I was learning about the field. I wish to also mention the kindness and friendship both Erik and Xiao showed me, along with Arjun Krishnan and Riddhi Basu, for sitting with me during my day in the hospital in India when we were there for a workshop. I wish to thank Billy Jackson for his valuable help in mentoring me as a teacher and helping me to prepare for the postdoc job market. I thank Wil Cocke, Jane Davis, Max Hill, Moises Herradon, Aidan Howells, Robert Laudone, Nathan Nicholson, Sun-Woo Park,   and the many other wonderful graduate students I have interacted with during my time at UW-Madison for their mentoring, their collaborations in our classes, and for helping to make my time at UW-Madison a memorable experience. I also give thanks to the outstanding members of the Madison 1st ward of the Church of Jesus Christ of Latter-Day Saints. Throughout my time in graduate school, they have served me and my family, have welcomed us with open arms, and have helped us to grow in faith.

At the end of 2021, Timo and I stared collaborating with Ofer Busani. The things we have accomplished together have made a huge difference in my career trajectory. Throughout our collaboration, Ofer has willingly shared his experiences and advice with me, for which I am incredibly grateful. I am also grateful for my wonderful experiences with my other collaborators, including Ken Alexander, David Clancy, Michael Damron,  Tam\'as Forg\'acs,  Arjun Krishnan, Sean Groathouse, Jack Hanson, David Harper, Andrzej Piotrowski, Firas Rassoul-Agha, Daniel Slonim, Shrivats Sudhir, and Jason White. I wish to additionally thank many other welcoming members of the probability community, with whom I have had the pleasure of sharing enlightening conversations and e-mail exchanges. These include Tom Alberts, M\'arton Bal\'azs Yuri Bakhtin, Ivan Corwin, Sayan Das, Duncan Dauvergne, Douglas Dow, Hindy Drillick, Elnur Emrah, Nicos Georgiou, Adam Jaffe, Chris Janjigian, Kostya Khanin, Alisa Knizel, Yier Lin, Neil O'Connell, Shalin Parekh, Jonathon Peterson, Leandro Pimentel, Ron Peled, Jeremy Quastel, Daniel Remenik, Phil Sosoe,  B\'alint Vir\'ag, and  Xuan Wu. 

Of course, my experience in mathematics did not start in graduate school. Throughout my schooling, I have been blessed to learn from incredible K-12 teachers who loved me and believe in me, including Mark Anderson, Keith Erickson, Wolfgang Griesinger, Windlan Hall, Brenda Johnson, Mark Kingsbury, Michael Maas, Kari McSherry, Lisa Pingrey, and Vince Thomas. During my undergraduate years at Brigham Young University, I cannot overstate the influence that David Cardon has had on me. I was fortunate to take several classes from him and had the great pleasure of working on mentored research with him during my time at BYU. There, I was also blessed to learn from several other outstanding instructors, including Blake Barker, Bradley Barney, Wayne Barrett, Denise Halverson, Paul Jenkins, Ken Kuttler, Chris Grant, Nathan Priddis, and Jared Ward.

 I am grateful for my interactions with Jeff Weintraub and Scott Warrick during two internships at Cirrus Logic, Inc. Jeff has been one of my greatest supporters, even after my internship experience. He has shown me such incredible kindness and gave me a wonderful opportunity to learn mathematics outside the classroom. Much of my early experience with probability came in numerous thought-provoking discussions with Jeff.

My family has been there for me to support me in every state of my education. My parents, Bryant and Marie Sorensen, helped me to develop a love for learning, both spiritually and academically. I am also so grateful for my siblings Kristina, Alex, Isaac, and Grace, as well as their spouses Michael, Jessica, and Rhianna for the love and support they have shown me. I am grateful for my nieces Rosalyn and Hannah and the one that is on the way. I also wish to thank my in-laws, Linda and Matt Harris, for their constant love and support, as well as all of my grandparents, aunts, uncles, and cousins.

My greatest thanks of all goes to my beautiful wife Blair and our two children, Abigail and Caroline. Blair believed in me through the most difficult times of graduate school. She has given everything to raise our two children and to encourage and support me as we navigated these past five years together, including a global pandemic. The work and sacrifice that Blair has given is much more worthy of recognition than anything I have accomplished. Our daughters Abigail and Caroline have showed me endless love and kindness as I have gone through my graduate school experience. I am so proud to be their father and am excited to see the wonderful things they will accomplish in their lives.

 With permission of the other authors, this dissertation contains material taken and adapted from from joint works with Sepp\"al\"ainen~\cite{Seppalainen-Sorensen-21a,Seppalainen-Sorensen-21b} and Busani and Sepp\"al\"ainen~\cite{Busa-Sepp-Sore-22arXiv,Busa-Sepp-Sore-22b}. During my time working on these projects and on this dissertation, I have been partially supported by Timo Sepp\"al\"ainen, through NSF grants DMS-1602846, DMS-1854619, and DMS-2152362.

\tableofcontents

\listoffigures


\chapter{A gentle introduction for a general audience} \label{chap:introgentle}
\pagenumbering{arabic} 
Imagine you are in a class studying probability, and each of the $30$ people in your class roll a fair $6$-sided die. We can't predict which number your die will land on, but we can predict what the aggregate statistics of the class will be. Since there are $30$ students and $6$ equally likely possibilities, we expect that each number would be rolled by $5$ people. There is an underlying theoretical average for this experiment. The possibilities are $1,2,3,4,5,6$, and each has an equal chance of being rolled. The average of these numbers is $3.5$, which we call the \textit{expectation}.  This terminology may seem strange; you can't roll a $3.5$, of course. However, if you take all the rolls from the $30$ students in your class and look at the average of the numbers rolled, that number will be close to $3.5$. You will likely not get exactly $3.5$ (since this is a random experiment), but this number of $3.5$ gives us a good non-random approximation.

Figure \ref{fig:dice1} is an example of what a histogram of the data might look like. In this simulated example, $6$ people rolled a $1$, $8$ people rolled a $2$, $4$ people rolled a $3$, $3$ people rolled a $4$, $5$ people rolled a $5$, and $4$ people rolled a $6$. The average of all the rolls is approximately $3.17$; this is fairly close to the expectation of $3.5$. By adding a large number of students to the class, our sample average is guaranteed to get much closer to $3.5$. This phenomenon is what we call the \textit{Law of Large Numbers}.  
\begin{figure}[t]
\centering
\includegraphics[height = 3in]{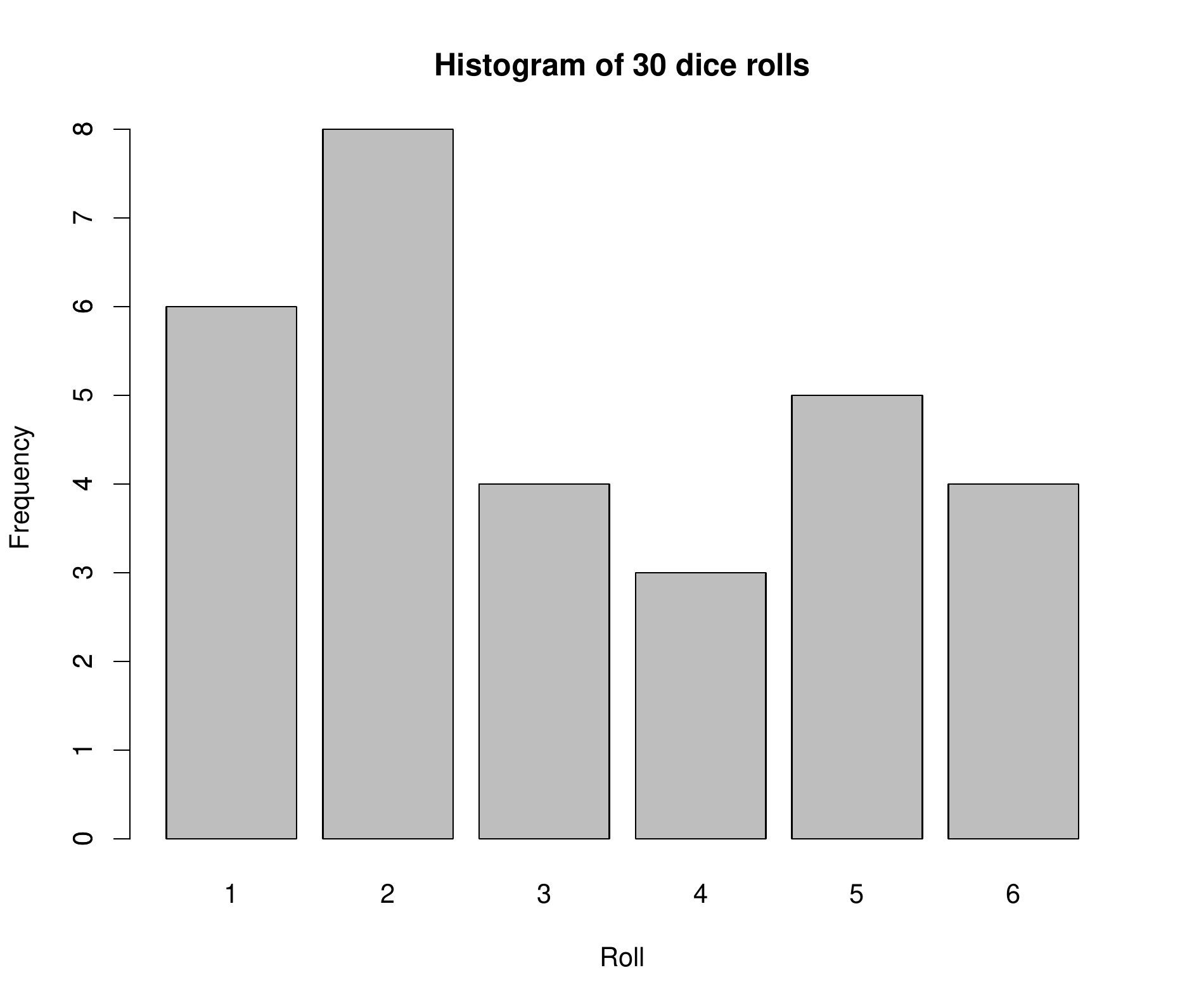}
\caption{Histogram of $30$ dice rolls}
\label{fig:dice1}
\end{figure}

Let's make things a little more interesting. Imagine that you and your classmates have a lot of time on your hands, and you can repeat your dice rolling experiment $1,000$ times. Each time, you record the average of your $30$ rolls. Remember that each of these averages will be close to $3.5$, but now each sample average is an independent random number. Of course, something like this would take too long, but we can simulate this quickly on a computer. Figure \ref{fig:dice} is a histogram of the $1,000$ simulated averages from $30$ dice rolls. We see a distribution that is centered around $3.5$ and forms a bell curve. We call this the normal, or \textit{Gaussian}, distribution. The fact that this bell curve appears is a manifestation of what we call the \textit{Central Limit Theorem}, and this allows us to quantitatively estimate how much the sample average will deviate from the expectation.

\begin{figure}[h]
\centering
\includegraphics[height = 3in]{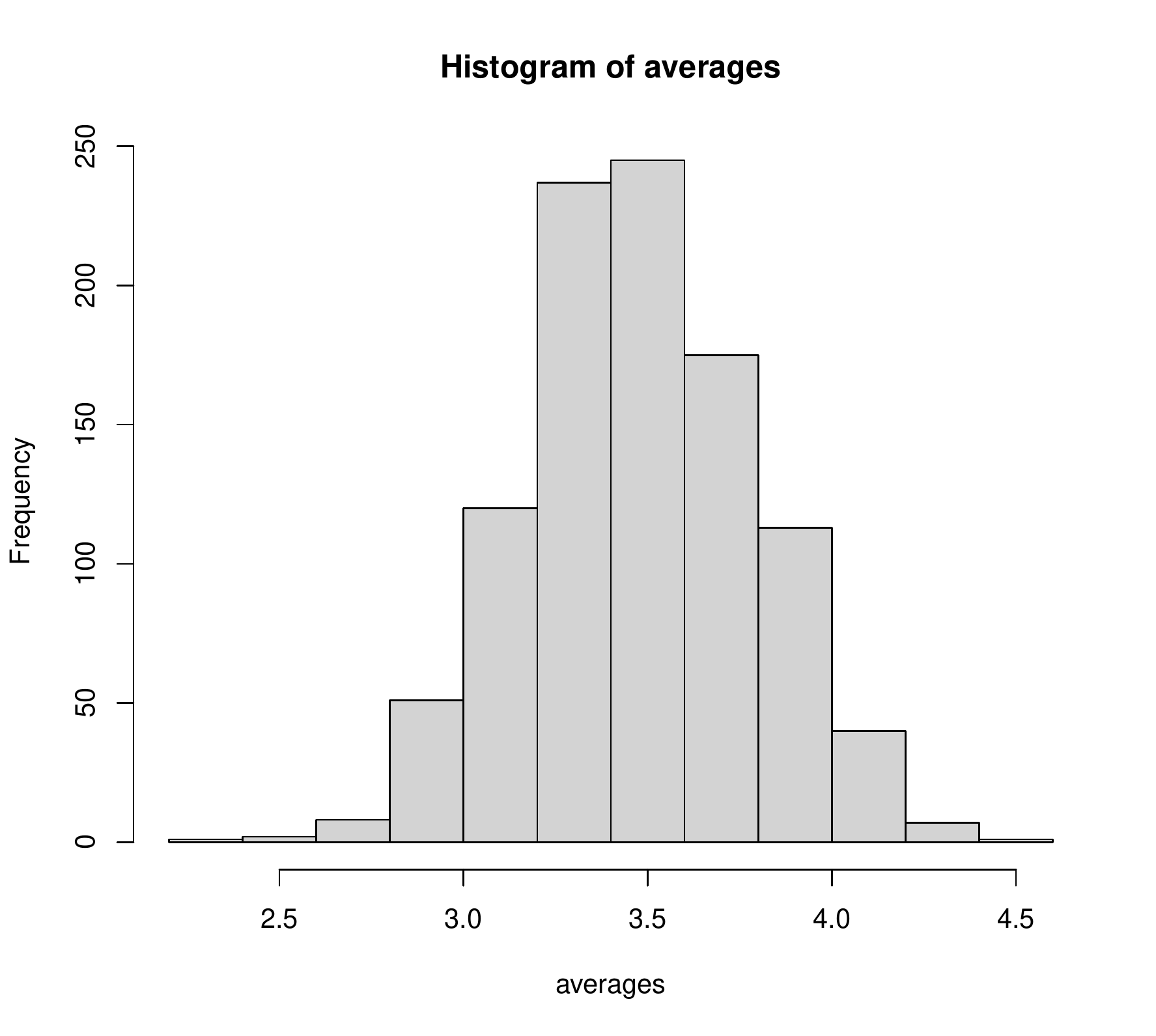}
\caption{Simulation of dice roll averages}
\label{fig:dice}
\end{figure}

Both the law of large numbers and the central limit theorem are examples of universal laws for random variables. While we can't predict the value of a single random quantity, on average, these will distribute themselves in ways that we can predict with high accuracy. There is nothing special about rolling dice in the examples we saw. For \textit{any} experiment that has a finite variance (a technical assumption we won't discuss in detail here), the averages for large samples will distribute themselves as a bell curve. For example, this will also work if you flip coins instead of rolling dice, or if you roll dice with a more than $6$ heads. The law of large numbers and central limit theorem also work if your dice or coins are unfair--that is, not all possibilities are equally likely. And this phenomenon is more than just an interesting fact about dice games. Random quantities from tree heights in a forest or electrical test data on a semiconductor chips are often modelled with the normal distribution. The statistical framework for modern science stands firmly on the mathematical groundwork provided by the central limit theorem.

We can think of the central limit theorem as a one-dimensional law of random behavior. In our dice experiment, we are measuring one number each time. But there are many instances when we are interested in modelling random behavior that captures more than just single quantities that are independent of their surroundings. Consider the following picture of the Mississippi River and its tributaries in Figure \ref{fig:Mississippi}.
\begin{figure}[h]
    \centering
    \includegraphics[height = 3in]{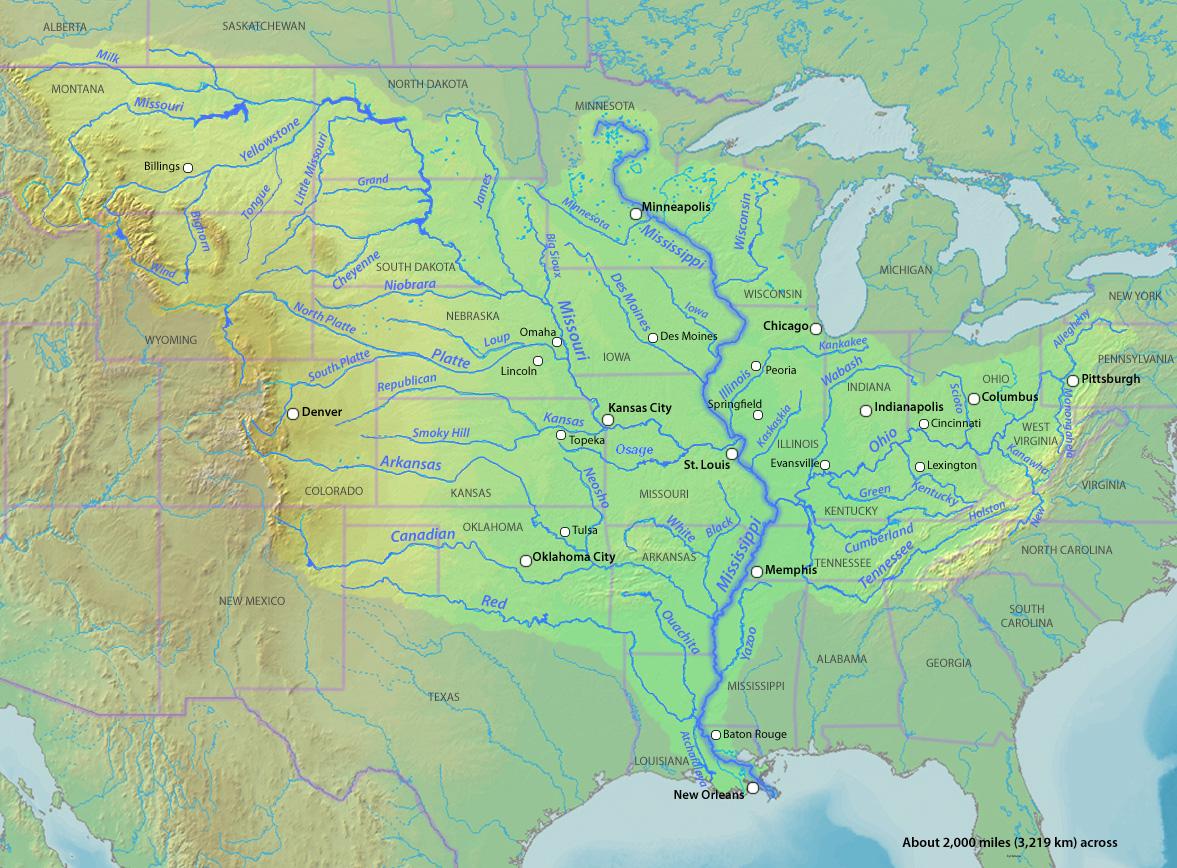}
    \caption{\tiny \copyright Shannon, 2010. Found at Wikimedia Commons \url{https://commons.wikimedia.org/wiki/File:Mississippirivermapnew.jpg.} Licensed under CC BY-SA 4.0 https://creativecommons.org/licenses/by-sa/4.0/legalcode/ Creative Commons Corporation (“Creative Commons”) is not a law firm and does not provide legal services or legal advice. Distribution of Creative Commons public licenses does not create a lawyer-client or other relationship. Creative Commons makes its licenses and related information available on an “as-is” basis. Creative Commons gives no warranties regarding its licenses, any material licensed under their terms and conditions, or any related information. Creative Commons disclaims all liability for damages resulting from their use to the fullest extent possible.}
    \label{fig:Mississippi}
\end{figure}
It may seem strange to say that the path of the river is random; there is in fact only one Mississippi River. However, its formation was the result of the surrounding landscape, with a huge number of small factors contributing to its development. This formation is an example of spatial random growth. A key observation is that the location of the river depends heavily on the surrounding areas. In this dice experiment, the result of your roll has no effect on your neighbor's roll. There is certainly much more correlation in this example. If you were to measure the statistics of spatial locations of the river, you would observe something different from a bell curve. Let's consider another picture I took of a system of cracks in a parking lot (Figure \ref{fig:cracks}).
\begin{figure}[h]
    \centering
    \includegraphics[height = 3in]{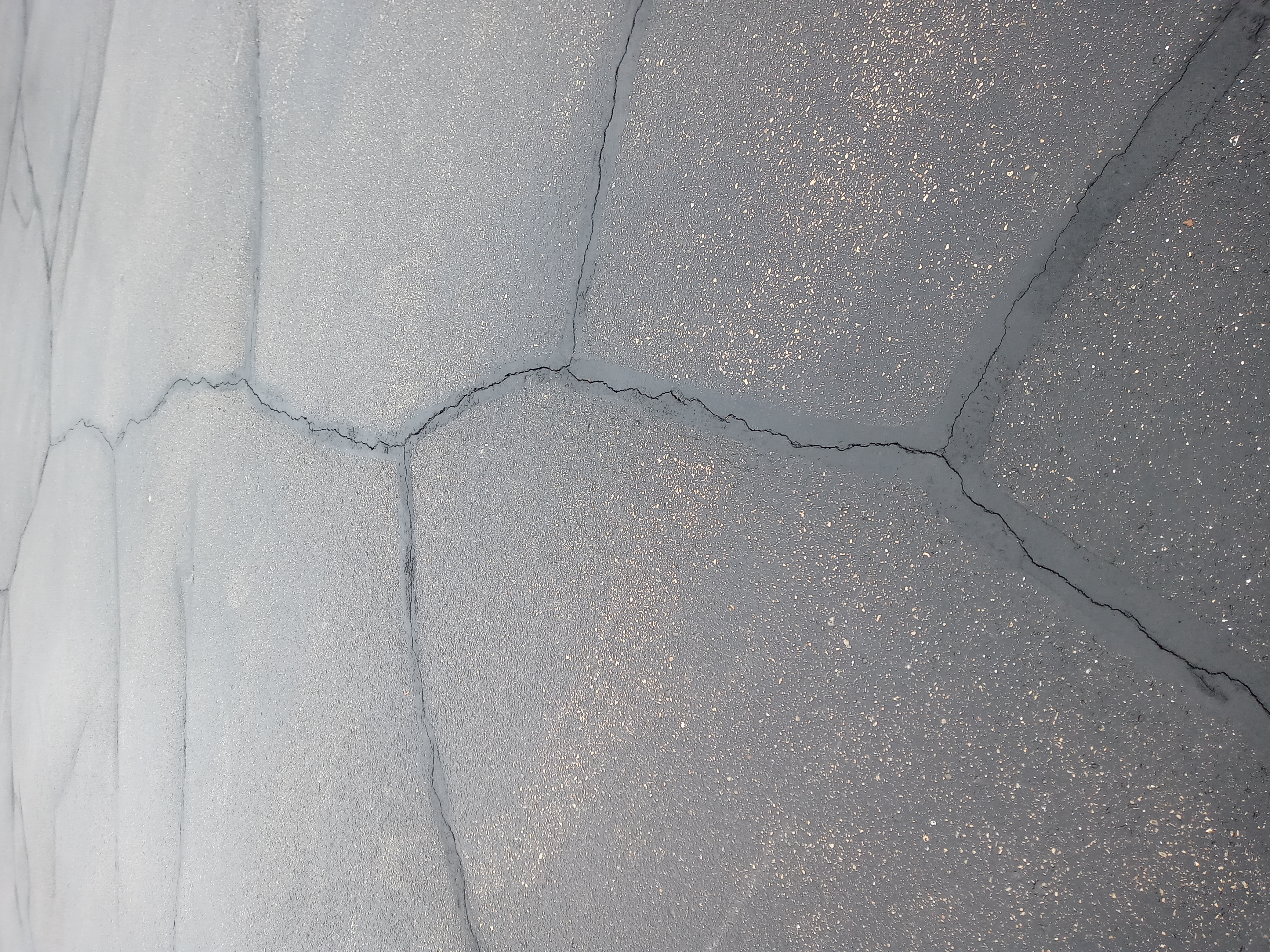}
    \caption{\small A system of cracks in a parking lot}
    \label{fig:cracks}
\end{figure}
In many ways, this picture resembles the picture of the Mississippi River. In particular, there are several paths merging onto a main path, which somewhat resembles the path of a river. While the time taken to form the crack is much different than the time taken to form the Mississippi River we see today, there are similarities in the dynamics of its formation. The random impurities in the asphalt govern where the crack will form, and the location of the crack at a given point depends heavily on the local structure of the pavement.

To motivate this further, imagine that you are travelling from San Diego, California to Boston, Massachusetts. Meanwhile, your friend is travelling from Los Angeles, California to New York, New York. If the paths you each took were straight lines, your paths would intersect at exactly one point. However, this is certainly not the case. Each of you will want to take the route that takes you to your destination in the shortest time possible; and that involves travelling on highways that were built around the random terrain. Figure \ref{fig:US_map} compares the fastest routes along the journey between the two pairs of cities, according to Google maps. If you and your friend both follow the recommended route, both of you will drive to Holbrook, Arizona, then travel the same route to Columbus, Ohio, and lastly, split off to travel to your respective destinations. 
\begin{figure}[h]
    \centering
    \includegraphics[height = 3in]{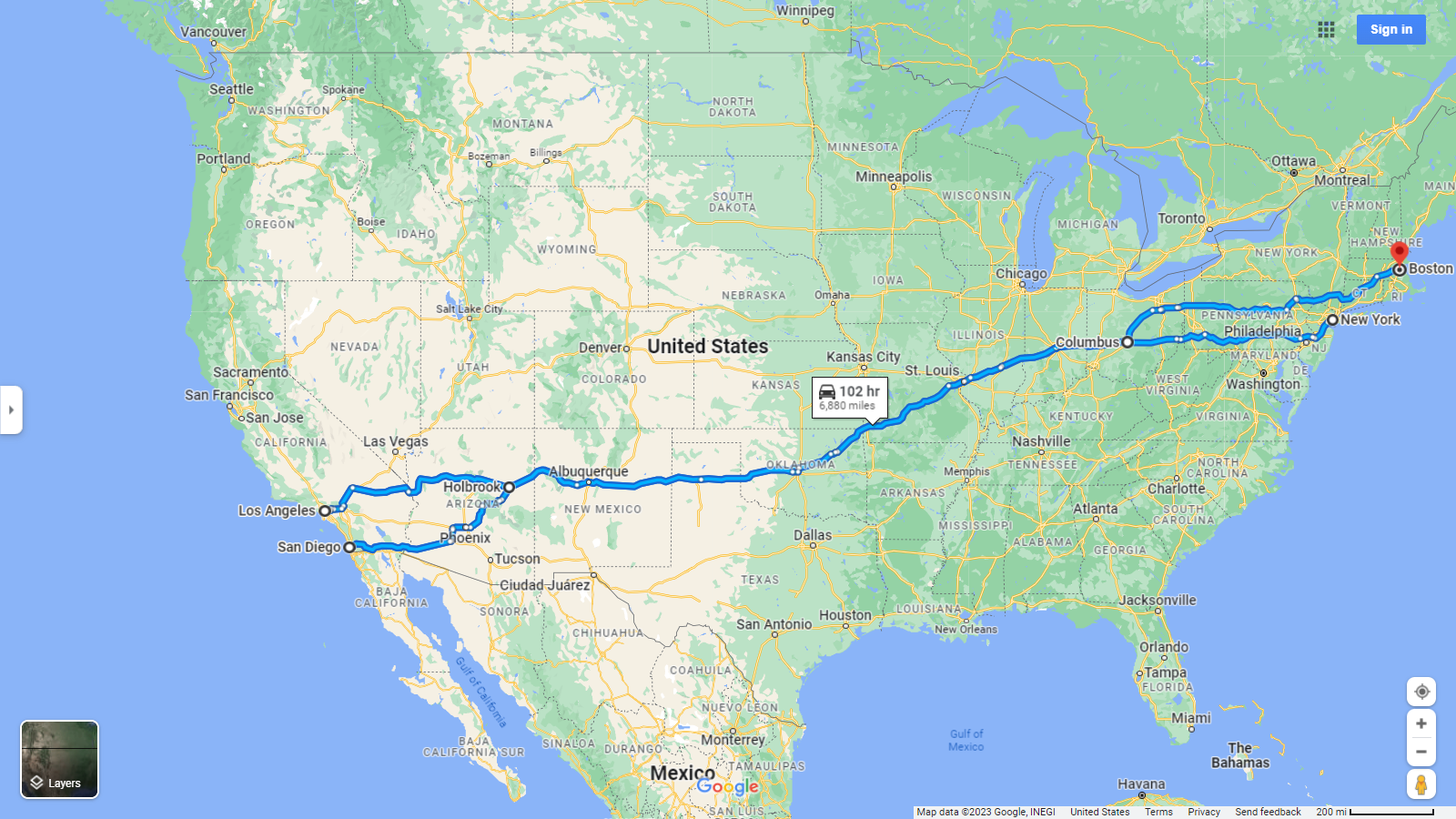}
    \caption{\tiny Picture created via google.com/maps, May 11, 2023. Map of fastest route from New York  to Columbus, Ohio to Los Angeles, California to Holbrook, Arizona to San Diego, California to Boston,  Massachusetts. 
    }
    \label{fig:US_map}
\end{figure}

Just as the Gaussian distribution describes many real-world phenomena via the central limit theorem, it is natural to ask if there is a mathematical object that can describe the similar behavior of these three pictures, as well as many others one might observe scientifically. The overwhelming answer, based on physical and numerical evidence, is, ``Yes!" (See Chapter \ref{chap:intro} for more discussion of the contexts in which this universal behavior has appeared). However, developing a mathematical proof of such universal behavior is currently out of reach, except for a few models that are equipped with formulas that allow for accessible computations.

Mathematically describing these phenomena is not so simple, either. There is an astronomical number of tiny factors that govern the geometry of these paths.  One natural approach is to do a discrete approximation. Let's imagine you come back to your probability class, but this time only $16$ people showed up, and your desks are arranged in a $4 \times 4$ square. Each of you rolls a die and records a number. Let's say that the numbers rolled appear as in Figure \ref{fig:dice_grid}. 

\begin{figure}
    \centering
    \includegraphics[height = 3in]{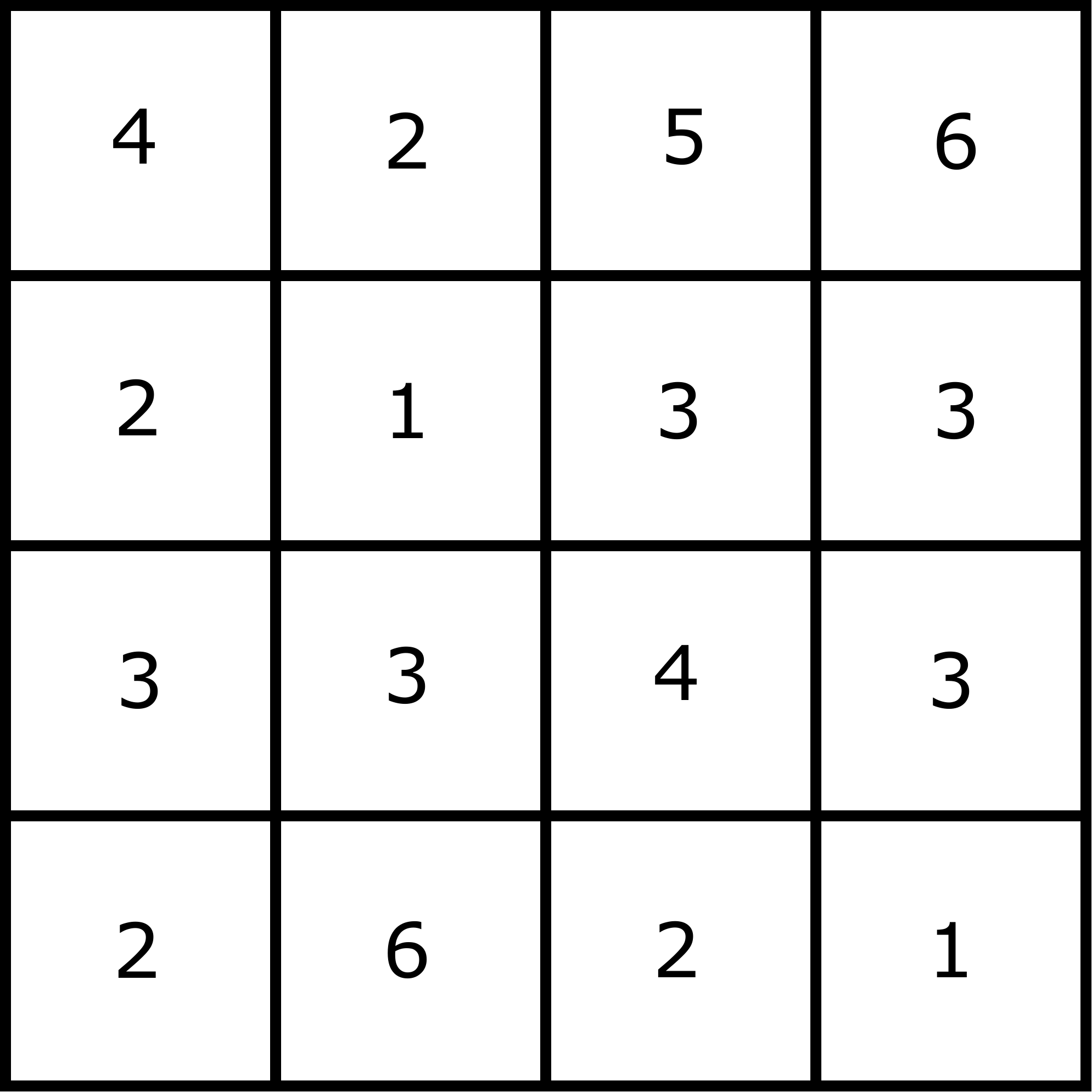}
    \caption{\small Results of dice rolls}
    \label{fig:dice_grid}
\end{figure}

Now, with these results, we will play a game. You want to travel between the bottom left corner to the upper right corner. We impose a rule that you may only move one step at a time, and that step must either be an upward step or a rightward step. Every time you visit a square, you collect the number in the box and add it to your score. You want to find the path that maximizes your score. There are 20 possible paths you could take, and Figure \ref{fig:paths} shows four possibilities, with the optimal one in the lower right corner. Just as in the previous experiment, the result of your dice roll does not affect the result of your neighbor's dice roll. However, unlike the previous experiment, the place where you sit does has an effect on the choice of optimal path. This introduces the spatial correlation similar to what we have observed in the real-world examples discussed.

\begin{figure}
    \centering
    \includegraphics[height= 2in]{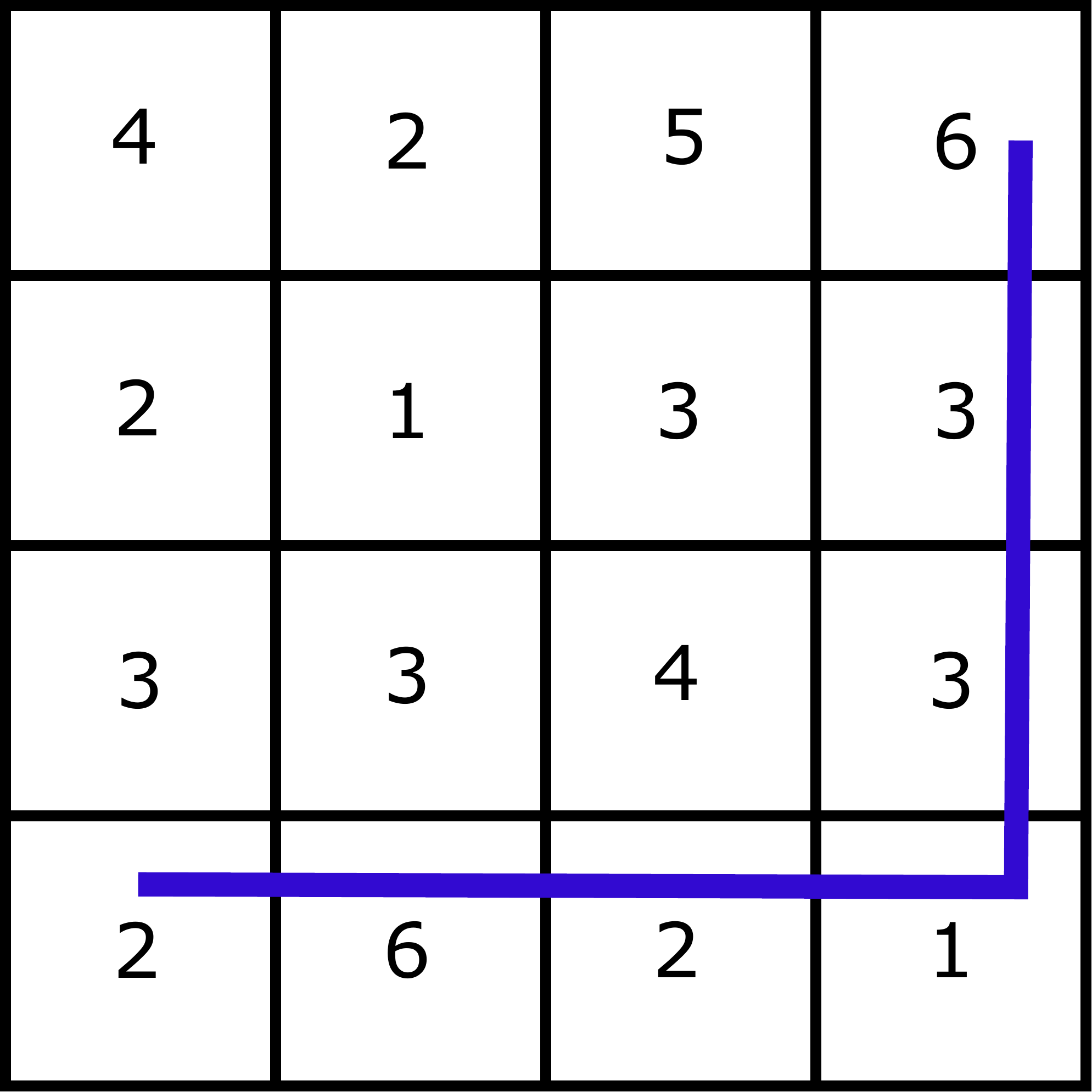}
    \includegraphics[height= 2in]{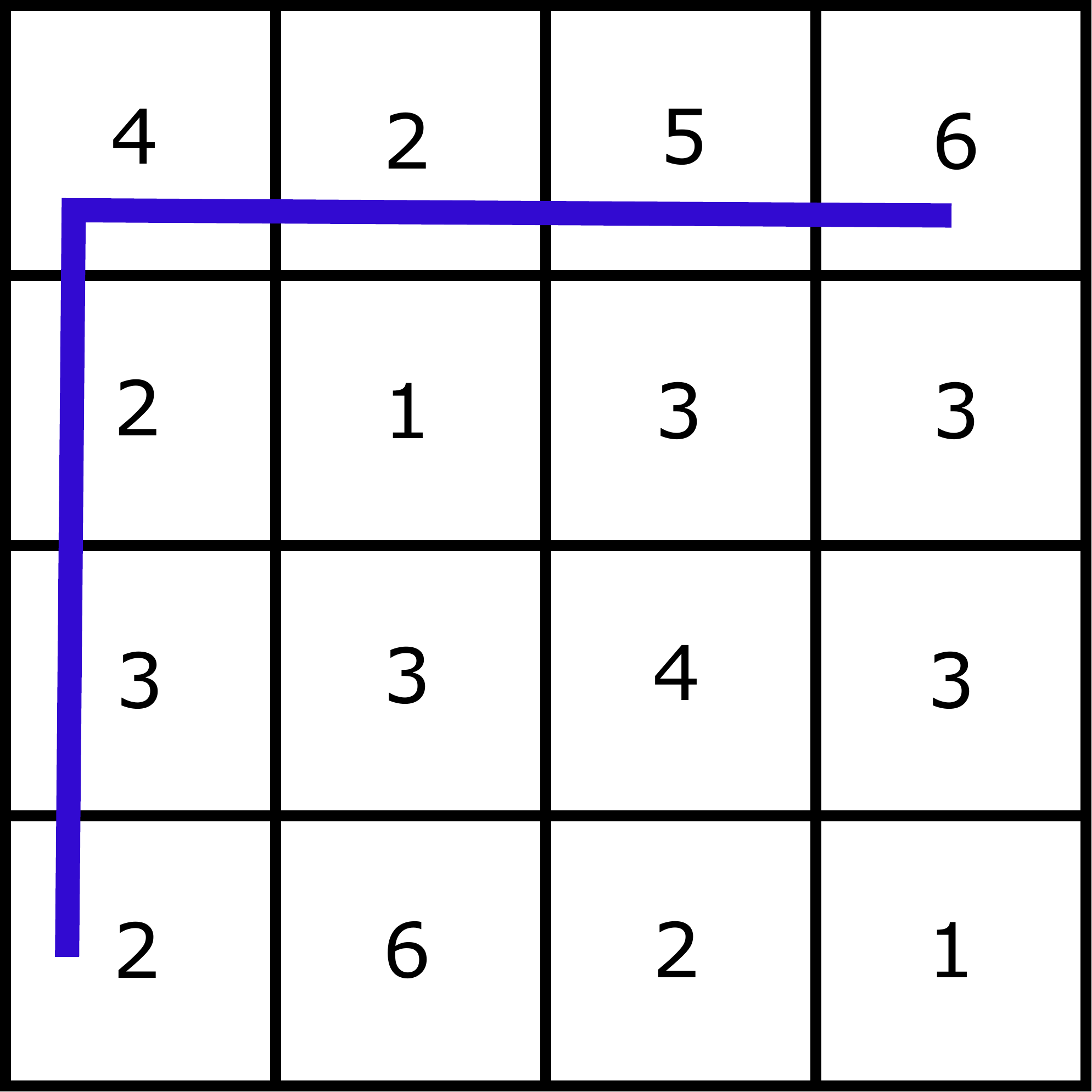}
    \includegraphics[height= 2in]{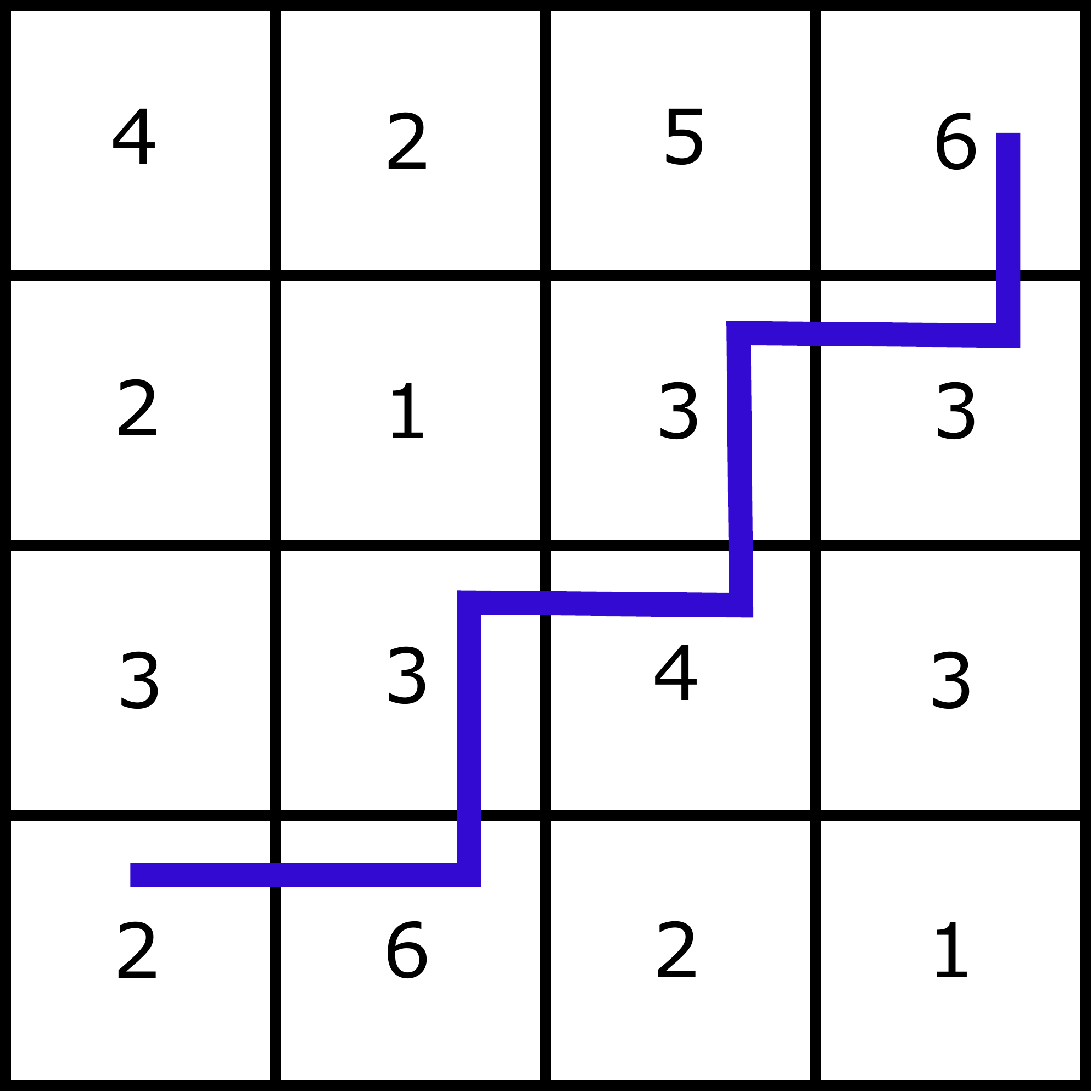}
    \includegraphics[height= 2in]{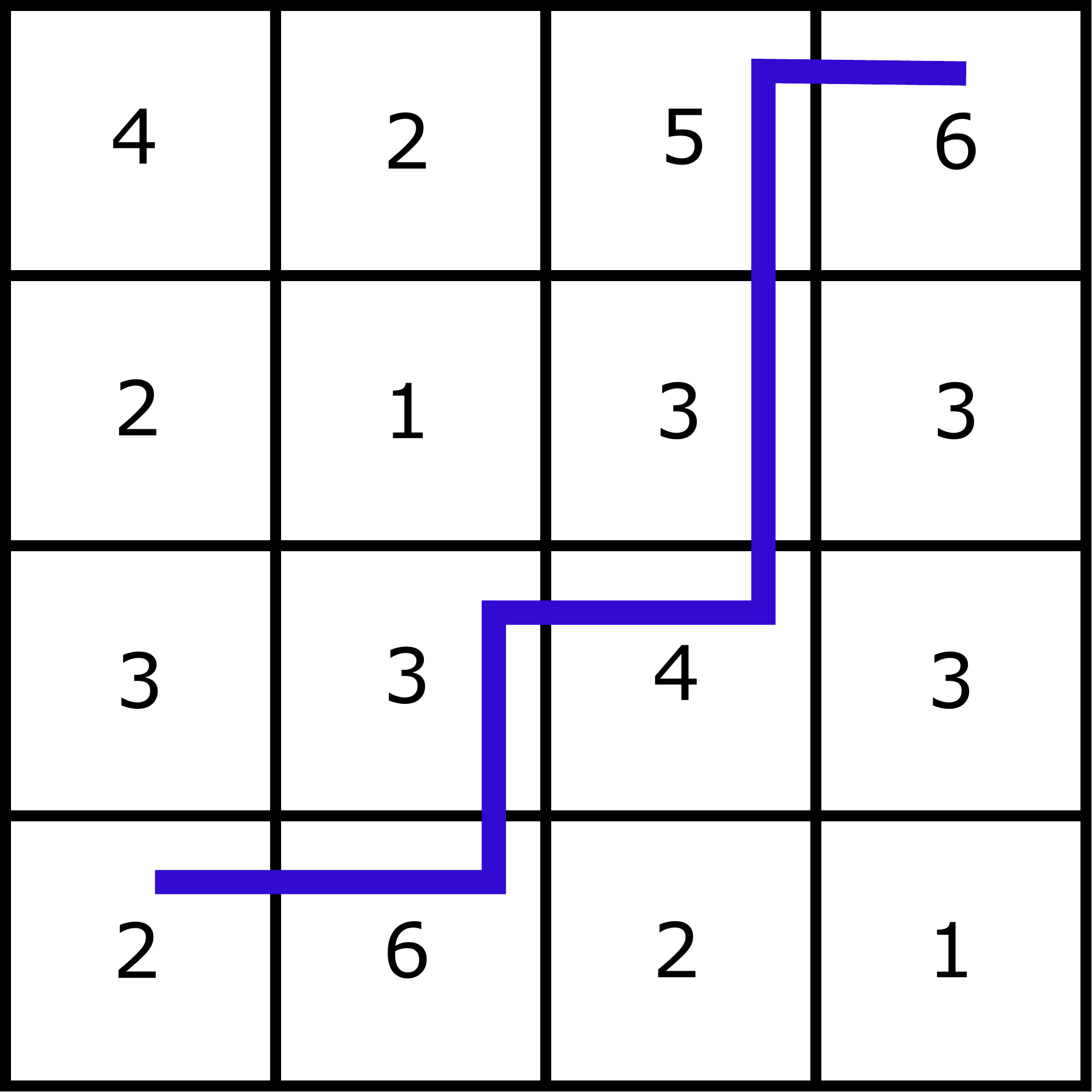}
    \caption{\small Examples of admissible paths. The lower right path has the maximum possible total sum of $29$}
    \label{fig:paths}
\end{figure}

\newpage
\begin{figure}
    \centering
    \includegraphics[height = 3in]{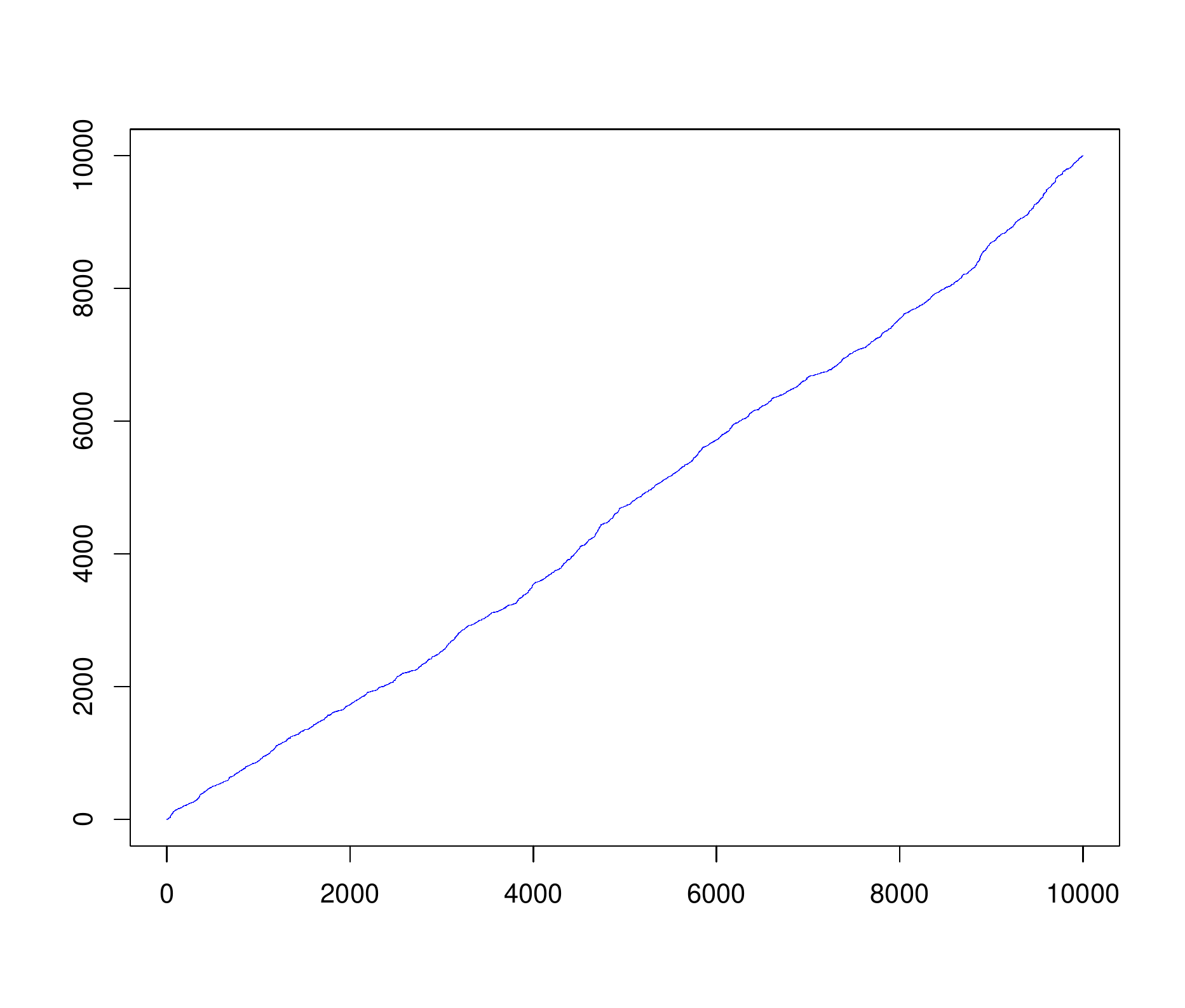}
    \caption{\small Example of a maximal path in a grid of size $1,000 \times 1,000$}
    \label{fig:LPP_path}
\end{figure}

We could look at this picture for a much larger box via a computer simulation. Figure \ref{fig:LPP_path} depicts a simulation of this process for a box of size $1,000 \times 1,000$. We may also be interested in travelling between other pairs of points. In Figure \ref{fig:LPP_two_path}, we overlay the maximal up-right path between a second pair of points close to the corners. What we see is something closely resembling the picture of the road trip we discussed earlier. We call the optimal paths in these pictures \textit{geodesics}. The phenomenon of geodesics merging, is called \textit{coalescence}.  

\begin{figure}
    \centering
    \includegraphics[height = 3in]{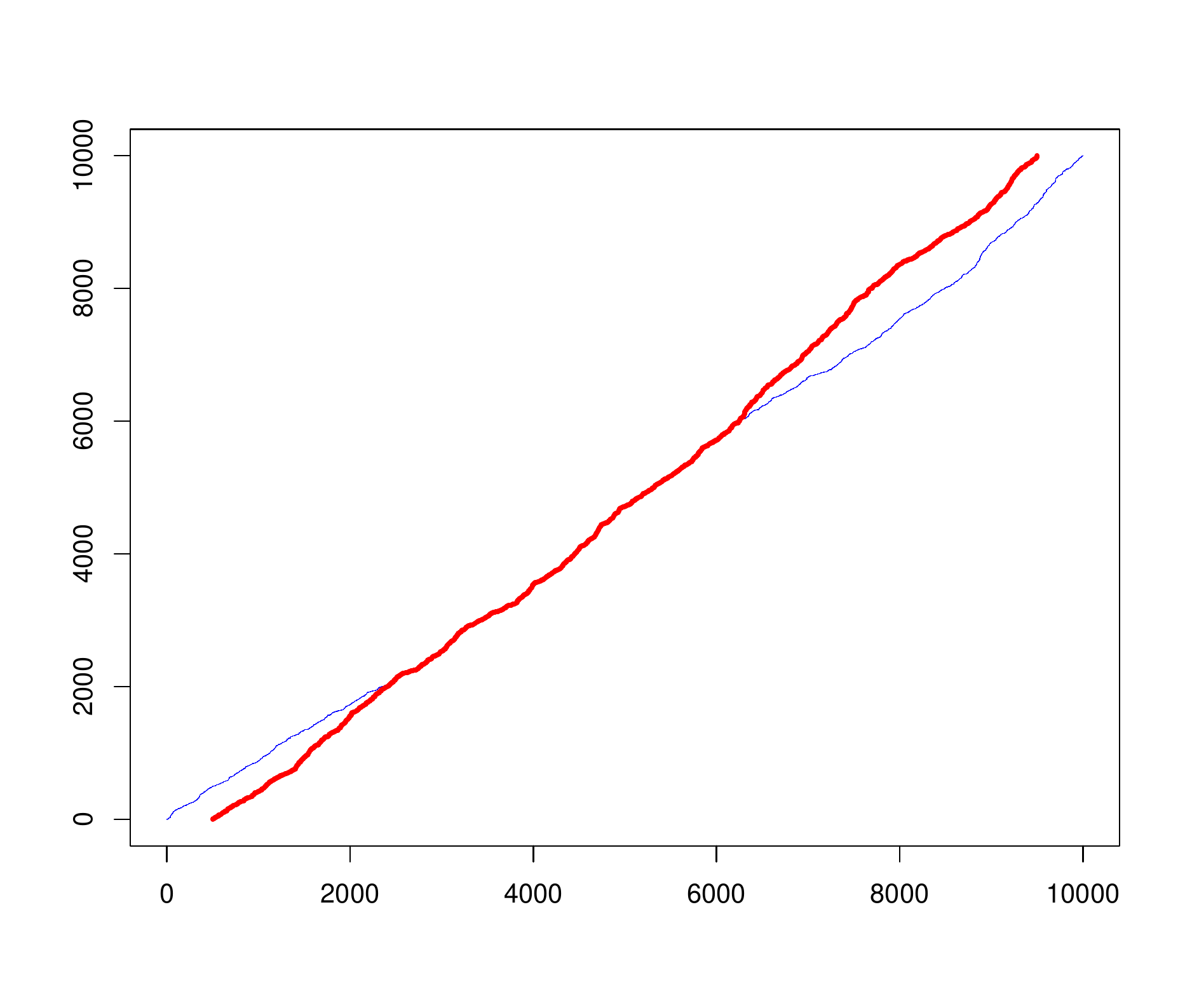}
    \caption{\small Example of maximal paths between two pairs of points. One path is blue/thin, and the other is red/thick }
    \label{fig:LPP_two_path}
\end{figure}

When the picture is zoomed out, the geodesics we see in Figure \ref{fig:LPP_two_path} are not much different than straight lines. If we look at the deviation of these paths from the straight line between points and then rotate and scale the picture, we get Figure \ref{fig:DL}. This object is called the \textit{directed landscape} (DL), and was first studied in \cite{Directed_Landscape}. While the picture seems to indicate that the process is stable under this centering and rescaling, it has not been mathematically proven that this is the case. However, if instead of rolling dice for each square, we sample from a different statistical distribution (either the geometric or exponential distribution) distribution, then it was proven recently by Dauvergne and Vir\'ag \cite{Dauvergne-Virag-21} that the DL does in fact appear after recentering and rescaling. There are a handful of variants of this model for which this has also been proven, but a proof of such universality  outside these special models remains out of reach. There are several other models that are conjectured to have the same behavior. For example, we could remove the rule that the admissible paths must only move upward and rightward. However, without the directional constraint, there are no known models for which we can make exact computations.  From numerical evidence, it seems that the scaling behavior should be the same, but no proof exists in the literature. Just as the Gaussian distribution is proven to be a universal object via the central limit theorem, the DL is believed to be a central object for these spatial growth models. Proving the universality of the DL is one of the major current areas of research in probability. 

There is another perspective for studying this universal behavior through a second object, called the \textit{KPZ fixed point}, which was first introduced in \cite{KPZfixed}. We can construct the KPZ fixed point from the directed landscape, so the two objects are closely interrelated (see Section \ref{sec:KPZ_fixed} for a mathematical introduction to the KPZ fixed point). The initials KPZ stand for Kardar-Parisi-Zhang, who introduced a partial differential equation in \cite{Kardar-Parisi-Zhang-86}, which we now call the KPZ equation. Mathematical models which exhibit the universal limiting statistics of the directed landscape and the KPZ fixed point are said to lie in the \textit{KPZ universality class}.

\begin{figure}
    \centering
    \includegraphics[height = 3in]{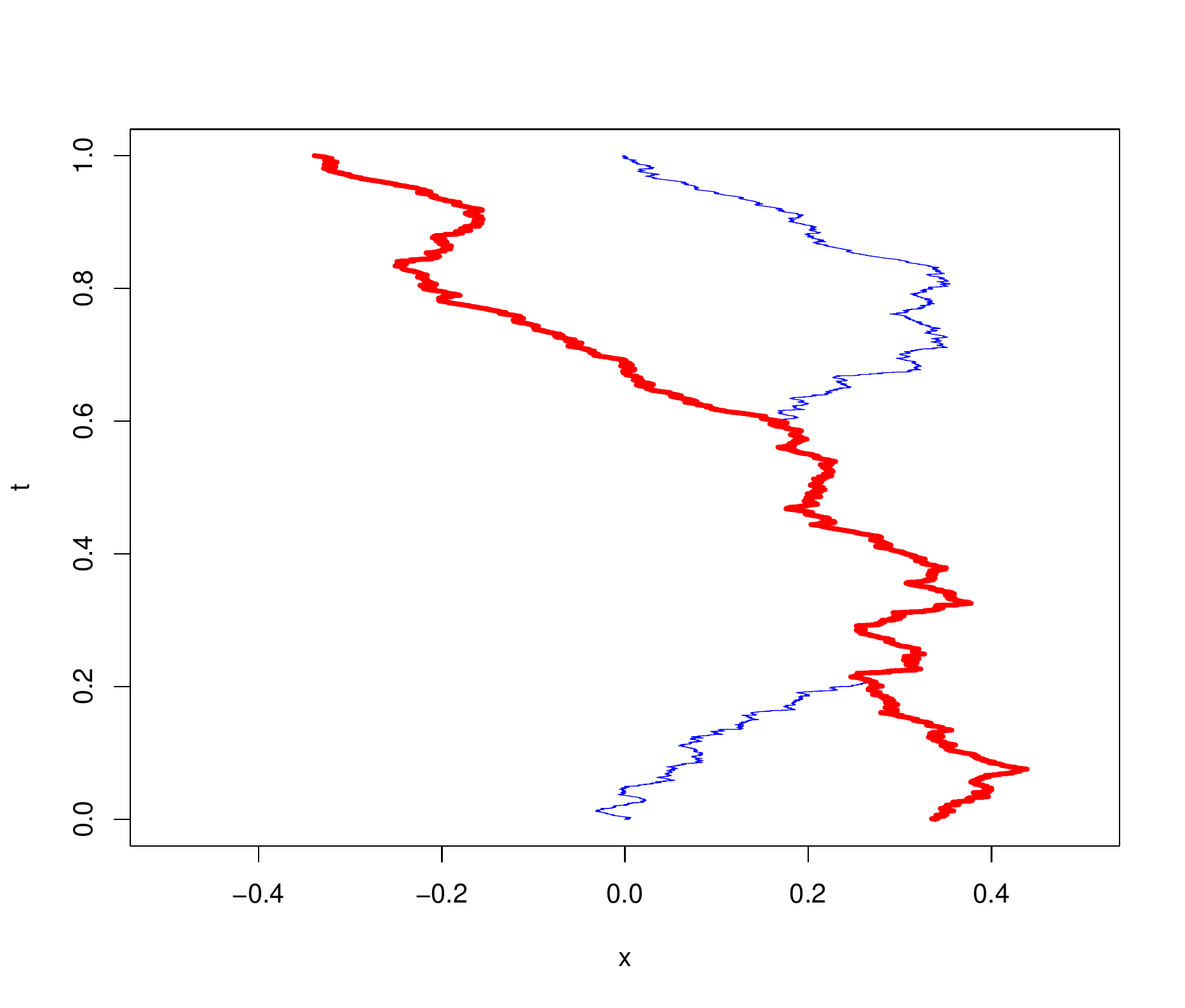}
    \caption{\small The recentered, rotated and scaled version of Figure \ref{fig:LPP_two_path} }
    \label{fig:DL}
\end{figure}

This dissertation is concerned with the long-term behavior of the models in the KPZ class. The main objective is to study a third central object, called the stationary horizon, which provides us with another perspective for understanding KPZ universality. The stationary horizon is mathematically constructed in Chapter \ref{chap:SHcons}. Chapter \ref{chap:invar} shows that the SH describes the long-term evolution of these growth models from a large class of initial conditions. Chapter \ref{chap:Buse} uses the SH to describe detailed geometrical features of long-term growth. One major contribution of this dissertation is a detailed study of the coalescence of geodesics in the DL. Previously, results regarding coalescence were obtained in \cite{Rahman-Virag-21}  for geodesics when restricting attention to a single direction in space. The results of this dissertation give a global picture for coalescence and splitting when observing all directions at once.  Lastly, Chapter \ref{chap:TASEP} shows how the SH appears in the separate context of particle systems, giving further evidence for the centrality of the SH in the KPZ universality class.

\chapter{Introduction} \label{chap:intro}
\section{KPZ universality}
Since the 1986 work of Kardar, Parisi, and Zhang \cite{Kardar-Parisi-Zhang-86}, it has been conjectured that there are universal asymptotic growth rates of the fluctuations and correlations of a large class of spatial stochastic growth models, which also exhibit universal limiting statistics. These models comprise what is now called the Kardar-Parisi-Zhang (KPZ) universality class.  Evidence of the characteristic fluctuations and statistics of these models has been uncovered, both experimentally and numerically, in several contexts, including the growth of liquid crystals \cite{Takeuchi-10,Takeuchi2011GrowingIU,Fukai-Takeuchi-17,Takeuchi-Sano-12,Iwatsuka-Fukai-Takeuchi-20,Fukai-Takeuchi-20}, colonies of living cells \cite{Mitsugu-98,Wakita-Itoh-Matsuyama-97,Huergo-10}, 
the slow burning of paper \cite{Miettinen-05,Myllys2001KineticRI,Maunuksela-97}, geological processes \cite{Halpin-Zhang-95,Yunker-13}, the interaction of molecules on smooth surfaces \cite{Barabasi-96,Degawa-97,WANG-03,WANG-05}, and quantum entanglement \cite{Nahum-17}. 

 The first mathematically rigorous proofs of these asymptotics were presented at the turn of the century by Baik, Deift, and Johansson \cite{Baik-Deift-Johansson-99,Johansson-2000} for some  specially constructed models with accessible formulas originating from representation theory. These special models are said to be \textit{exactly solvable}. The works \cite{Baik-Deift-Johansson-99,Johansson-2000} showed that the one-point distributions of these models converge to the Tracy-Widom distribution \cite{Tracy-Widom-94} from random matrix theory. Corwin, Quastel, and Remenik conjectured in 2011 \cite{Corwin-Quastel-Remenik-15} that models in the KPZ universality class should also converge to richer universal objects whose marginal distributions are described by the Tracy-Widom distribution. Recently, two such interrelated universal objects, known as the \textit{KPZ fixed point} and the \textit{directed landscape (DL)}, were rigorously constructed \cite{Directed_Landscape,KPZfixed} and shown to be the scaling limit of many exactly solvable models \cite{Dauvergne-Virag-21,KPZ_equation_convergence,Wu-23,heat_and_landscape,KPZ_equation_convergence}. The KPZ fixed point and DL are expected to be universal objects for random growth models, analogously to how the Gaussian distribution is a universal object for sums of independent random variables from the central limit theorem (See the survey article \cite{Corwin-survey} for more discussion). Proving convergence to the DL outside the exactly solvable cases remains as a major open problem. 

 This dissertation focuses on a third central object in the KPZ universality class, called the \textit{stationary horizon} (SH). The (SH) 
 is a cadlag process indexed by the real line  whose state space is continuous functions, each a Brownian motion with drift. The SH  was first introduced by Busani \cite{Busani-2021} as the diffusive scaling limit of the Busemann process of exponential last-passage percolation (LPP) from \cite{Fan-Seppalainen-20}, and was conjectured to be the universal scaling limit of the Busemann process of models in the KPZ universality class. Shortly afterward, the paper \cite{Seppalainen-Sorensen-21b} of the author's joint work with Sepp\"al\"ainen was posted. To derive results about semi-infinite geodesics in Brownian last-passage percolation (BLPP), we constructed the Busemann process in BLPP and made several explicit distributional calculations. Remarkably, after discussions with Busani, we discovered that the Busemann process of BLPP has the same distribution as the SH, restricted to nonnegative drifts. Furthermore, due to a rescaling property of the stationary horizon, when the direction is perturbed on order $N^{-1/3}$ from the diagonal, this process also converges to the SH, fully indexed by $\R$. These results were added to the second version of \cite{Seppalainen-Sorensen-21b}. 

 This dissertation details why the SH is a third central object in the KPZ class. We can understand its significance via three main points: (i) The SH is a coupled (or multi-type) invariant measure and an attractor of the KPZ fixed point. It therefore describes the long-term evolution of the KPZ fixed point, started from different initial conditions. (ii) The SH describes the Busemann process of the DL and thus provides a detailed geometric description of the collection of semi-infinite geodesics in the DL. (iii) The SH appears as the scaling limit of multi-type stationary measures for many exactly solvable models known to lie in the KPZ universality class, including exponential LPP, Brownian LPP, and TASEP.

\section{Semi-infinite geodesics and Busemann functions}
The presence of the SH in the KPZ universality class is tied to the study of semi-infinite geodesics and Busemann functions. The study  of semi-infinite geodesics was initiated by Licea and Newman in first-passage percolation in the 1990s \cite{licea1996,Newman} with  the first results on existence, uniqueness and coalescence.   Since the work of Hoffman \cite{Hoffman-2005,hoffman2008}, Busemann functions have been a key tool for studying semi-infinite geodesics (see, for example \cite{Damron_Hanson2012,Hanson-2018,Georgiou-Rassoul-Seppalainen-17a,Timo_Coalescence,Seppalainen-Sorensen-21a,Seppalainen-Sorensen-21b,Rahman-Virag-21,Ganguly-Zhang-2022a}, and Chapter 5 of \cite{50years}).

The study of semi-infinite geodesics began in directed last-passage percolation with the application of the Licea-Newman techniques  to the exactly solvable exponential model  by Ferrari and Pimentel \cite{ferr-pime-05}. 
Georgiou, Rassoul-Agha, and Sepp\"al\"ainen \cite{Georgiou-Rassoul-Seppalainen-17a,Georgiou-Rassoul-Seppalainen-17b} showed the existence of semi-infinite geodesics in directed  last-passage percolation with general weights under mild moment conditions. Using this, Janjigian, Rassoul-Agha, and Sepp\"al\"ainen \cite{Janjigian-Rassoul-Seppalainen-19} showed that geometric properties of the semi-infinite geodesics can be found by studying analytic properties of the Busemann process. In the special case of exponential weights,   the distribution of the Busemann process from \cite{Fan-Seppalainen-20} 
was used to show that all geodesics in a given direction coalesce if and only if that direction is not a discontinuity of the Busemann process.

In the author's collaboration with Sepp\"al\"ainen \cite{Seppalainen-Sorensen-21b}, we extended this work to the semi-discrete setting, by deriving the distribution of the Busemann process and analogous results for semi-infinite geodesics in BLPP. Again, all semi-infinite geodesics in a given direction coalesce if and only if that direction is not a discontinuity of the Busemann process. In each direction of discontinuity there are two coalescing families of semi-infinite geodesics  and from each initial point  \textit{at least} two semi-infinite geodesics. Compared to LPP on the discrete lattice, the semi-discrete setting of BLPP gives rise to additional non-uniqueness. In particular, \cite{Seppalainen-Sorensen-21b} developed a new coalescence proof to handle the non-discrete setting. This new technique is utilized in this dissertation to show coalescence of geodesics in the directed landscape.

While there is much research surrounding semi-infinite geodesics, it is natural to ask about the existence of bi-infinite geodesics. In first-passage percolation, Licea and Newman \cite{licea1996}  showed that there are no bi-infinite geodesics in fixed northeast and southwest directions. Howard and Newman \cite{howard2001} later proved similar results for Euclidean last-passage percolation. Around this time, Wehr and Woo \cite{wehr_woo_1998} proved that, under a first moment assumption on the edge weights, there are no bi-infinite geodesics for first-passage percolation that lie entirely in the upper-half plane. 

In 2016, Damron and Hanson \cite{Damron_Hanson2016} strengthened the result of Licea and Newman by proving that, if the weights have continuous distribution and the boundary of the limit shape is differentiable, for each fixed direction, there are no bi-infinite geodesics with one end having that direction. In 2018, Sepp\"al\"ainen \cite{Timo_Coalescence} showed nonexistence of bi-infinite geodesics in exponential LPP with fixed northeast and southwest directions. This technique was imported to BLPP in the author's joint work with Sepp\"al\"ainen \cite{Seppalainen-Sorensen-21a} for BLPP. Still, this leaves open this possibility of bi-infinite geodesics in exceptional directions. The full complete nonexistence was resolved in exponential LPP with two independent works: first by Basu, Hoffman, and Sly \cite{SlyNonexistenceOB} and shortly thereafter by Bal{\'a}zs, Busani, and Sepp\"al\"ainen \cite{Balzs2019NonexistenceOB}. The latter proof rested on understanding  the joint distribution of the Busemann functions from \cite{Fan-Seppalainen-20}, and this technique has since been used to show the nonexistence of bi-infinite polymer Gibbs measures in the log-gamma polymer \cite{Busani-Seppalainen-2020} and the non-existence of bi-infinite geodesics in geometric last-passage percolation \cite{Groathouse-Janjigian-Rassoul-21}. We discuss relevant literature to the study of infinite geodesics in the DL in the introduction to Chapter \ref{chap:Buse}.

The results for semi-infinite geodesics and Busemann functions have interpretations in the study of viscosity solutions for Hamilton-Jacobi equations. The evolution of Busemann functions can be described via variational formulas (see for example, Theorem \ref{thm:DL_Buse_summ}\ref{itm:Buse_KPZ_description} in Chapter \ref{chap:Buse}), and the maximizers are the locations of semi-infinite geodesics (Theorem \ref{thm:DL_SIG_cons_intro}). In this regard, Busemann functions can be viewed as global solutions to an appropriate Hamilton-Jacobi equation, and the semi-infinite geodesics are the backwards maximizers used to solve the equation. Theorem \ref{thm:DL_Buse_summ}\ref{itm:Buse_KPZ_description} is an example of the dynamic programming principle. The variational formula \eqref{eqn:KPZ_DL_rep} is an example of an optimal control formula, and $\Ll$ plays the role of the fundamental solution. Busemann functions and semi-infinite geodesics have been studied for the Burgers' equation with random forcing, both in compact and noncompact settings \cite{Sinai-1991,Sinai-1993,GIKP-2005,Iturriaga-Khanin-2003,Bakhtin-2007,Kifer-1997,Dirr-Souganidis-2005,Kifer-1997,Bakhtin-2013,Bakhtin-Cator-Konstantin-2014,Bakhtin-16chapter,Bakhtin-16,Bakhtin-Khanin-18,Bakhtin-Li-18,Bakhtin-Li-19,Hoang-Khanin-2003,Drivats-2022}. Specifically in the works of Bakhtin and coauthors \cite{Bakhtin-16,Bakhtin-Cator-Konstantin-2014,Bakhtin-16chapter,Bakhtin-2013,Bakhtin-Li-18,Bakhtin-Li-19}, one sees analogous results for Busemann functions and semi-infinite geodesics, restricted to a fixed direction in space. The uniqueness of the global solution in a fixed direction is called the \textit{one force--one solution} principle. One novelty of the study in the present work (along with the analogous results for exponential LPP in \cite{Janjigian-Rassoul-2020a} and Brownian LPP in \cite{Seppalainen-Sorensen-21b}) is that, for the directed landscape, there exists random exceptional directions, for which the one force--one solution principle fails. Analogous questions were recently explored for the KPZ equation in \cite{Janj-Rass-Sepp-22}. There, it was left open whether such exceptional directions exist, but it was shown that, either these directions exist with probability one and are dense in $\R$, or the set of such directions is almost surely empty. 

\section{Exclusion processes and multi-type stationary measures}
	Since the work of Spitzer in the 1970s \cite{spitzer1991interaction}, exclusion processes have been extensively studied (see, for example, the book  \cite{liggett1985interacting}). These processes can be mapped into growing interfaces, which, under appropriate assumptions, are believed to lie in the KPZ class \cite{Corwin-survey}.  The most famous example is that of the totally asymmetric simple exclusion process (TASEP), consisting of particles and holes on $\Z$, where particles attempt to move to the right when signaled by Poisson clocks. TASEP was the first model for which convergence of the evolving height function to the KPZ fixed point was shown \cite{KPZfixed}.
	Stationary measures of one-dimensional  exclusion processes are well-known \cite[Chapter VIII]{liggett1985interacting} under very  general assumptions on $p$. In this case, the i.i.d.\ Bernoulli product measures  $\nu^\rho$ on $\{0,1\}^\Z$ with particle density  $\rho\in[0,1]$ are the translation-invariant, extremal  stationary measures.

	The family $\{\nu^\rho\}_{\rho\in[0,1]}$ has been  instrumental for example in the study of hydrodynamic limits of exclusion processes \cite{kipnis1998scaling}. In \cite{benassi1987hydrodynamical,andjel1987hydrodynamic},  it was shown that when started from $\nu^{\lambda,\rho}$ (the product measure on $\Z$ with intensity $\lambda$ to the left of the origin and intensity $\rho$ to the right), the TASEP particle profile will converge to either a rarefaction fan or a moving shock depending on the values of $\rho$ and $\lambda$. When $\rho>\lambda$, i.e.\ the shock hydrodynamics,  \cite{ferrari1991microscopic}  showed the existence of a microscopic stationary  profile  as seen from the shock. These studies utilized couplings $\pi^{\lambda,\rho}$ of the measures $\nu^\rho$ and $\nu^\lambda$ that are themselves  stationary under the joint TASEP dynamics of two processes that evolve in   \textit{basic coupling}.  Basic  coupling means that two or more exclusion processes, each from their own initial state,  are run together with common Poisson clocks.

	The stationary measure  $\pi^{\lambda,\rho}$ is sometimes called the two-type stationary measure. This is because one can realize the basic coupling by introducing two types of particles on $\Z$. The location of the first class particles is described by $\nu^\lambda$. Next, when ignoring classes, the distribution of first and second class particles together  is   $\nu^\rho$. In this process, first class particles treat second class particles as holes, so the first class particles take priority. 
The two-type stationary measures
	$\pi^{\lambda,\rho}$ generalize  to 
		multi-type stationary measures $\pi^{\rho_1,\dotsc,\rho_n}$. These measures  and their Ferrari-Martin construction by queuing mappings  \cite{Ferrari-Martin-2007}   are key inputs in Chapter \ref{chap:TASEP}. The study of the joint distribution of Busemann functions for exponential LPP by Fan and Sepp\"al\"ainen \cite{Fan-Seppalainen-20}, can be traced back to the Ferrari-Martin construction. It has been known since the work of Rost \cite{Rost-1981} that exponential LPP can be mapped to TASEP, allowing one to infer limiting statistics about one model via the other. However, this connection does \textit{not} extend to the multi-type exclusion dynamics, and so the scaling limit of the Busemann process to the SH shown by Busani \cite{Busani-2021} does not imply the analogous result for TASEP. However, using the queuing representation of the multi-type stationary distribution, we arrive at the same scaling limit, showing further evidence of the universality of the SH. This was originally done in \cite{Busa-Sepp-Sore-22b} and is recorded in this dissertation in Chapter \ref{chap:TASEP}.

 \section{Summary of results, inputs, and organization of the dissertation}

 The remainder of this dissertation consists of four chapters. This dissertation has been written to be mostly self-contained, except for a few necessary inputs outlined in this section. Knowledge of the fundamentals of measure-theoretic probability (for example, found in Durrett \cite{Durrett}) will be assumed. The results of each chapter depend on those in the previous chapters, except for Chapter \ref{chap:TASEP}, which may be read after Chapter \ref{chap:SHcons}, independently of Chapters \ref{chap:invar} and \ref{chap:Buse}.

 Chapter \ref{chap:SHcons} gives a new construction of the SH that does not rely on existence of Busemann functions. Instead, we define the finite-dimensional distributions and show they are consistent, then use this to construct a process on the Skorokhod space $D(\R,C(\R))$ of functions $\R \to C(\R)$ that are right-continuous with left limits. After the construction, we give detailed probabilistic information about the distribution of this process. Of particular significance is the fact that the stationary horizon, restricted to compact intervals, is a jump process (Theorem \ref{thm:SH_jump_process}). While the particular approach of construction presented here is new, most of the necessary development of this chapter is adapted from the papers \cite{Seppalainen-Sorensen-21a,Seppalainen-Sorensen-21b}, written jointly with Sepp\"al\"ainen. There are two notable exceptions. One is the proof of the reflection invariance of the SH in Theorem \ref{thm:SH_dist_invar}\ref{itm:SH_reflinv}. This dissertation provides a new proof from construction of the SH, a bijection proved in Lemma \ref{DRbij}, and a triangular array construction analogous to its discrete counterpart from \cite{Fan-Seppalainen-20}. Previously, the reflection invariance was proved in \cite{Busa-Sepp-Sore-22arXiv} using reflection invariance of the DL from \cite{Dauvergne-Virag-21} and the fact that the SH is the unique coupled invariant measure for the DL. The other exception is Section \ref{sec:SHPalm}, which has been adapted from the author's joint work with Busani and Sepp\"al\"ainen \cite{Busa-Sepp-Sore-22arXiv}.  Chapter \ref{chap:SHcons} relies on queuing theory from \cite{glynn1991,harrison1990,harrison1992,Harrison1985,brownian_queues}, culminating in a single theorem we use, recorded here as Theorem \ref{thm:OCY_orig}. We also use some distributional calculations from \cite{BM_handbook} and the theory of Palm conditioning from \cite{Kallenberg-book}. 

 Chapter \ref{chap:invar} shows that the SH is a coupled invariant measure and an attractor for the KPZ fixed point. This was previously shown in the author's joint work with Busani and Sepp\"al\"ainen \cite{Busa-Sepp-Sore-22arXiv} using the following inputs: (i)  
The invariance of the Busemann process of  the exponential corner growth model  under the LPP dynamics \cite{Fan-Seppalainen-20}, (ii) Convergence of this Busemann process to SH  \cite{Busani-2021},  (iii) Exit point bounds for stationary exponential LPP  \cite{bala-busa-sepp-20, Balazs-Cator-Seppalainen-2006, Emrah-Janjigian-Seppalainen-20,Seppalainen-Shen-2020, Sepp_lecture_notes}, and (iv)   Convergence of exponential LPP to  DL \cite{Dauvergne-Virag-21}.

To make this dissertation more self-contained, Chapter \ref{chap:invar} gives an alternate proof of the invariance of the SH. We replace (i) with the invariance of the SH under BLPP (proved here as Lemma \ref{lem:BLPP_invar} and adapted from the results in the author's joint work with Sepp\"al\"ainen \cite{Seppalainen-Sorensen-21b}). Item (ii) is replaced by the scaling relations of the SH proved in Theorem \ref{thm:SH_dist_invar}\ref{itm:SHscale}, removing the need for a limit transition to the SH. Item (iii) is replaced with exit point bounds for BLPP, which are proved in this dissertation as Theorem~\ref{thm:BLPP_exit_pts}. The proof uses the technique of \cite{Emrah-Janjigian-Seppalainen-20}, but, to the best of this author's knowledge, this particular result for zero temperature BLPP has not appeared anywhere else in the literature. Item (iv) is replaced with the convergence of BLPP to the DL from \cite{Directed_Landscape}. The attractiveness of the SH is proved the same in this dissertation as it appears in \cite{Busa-Sepp-Sore-22arXiv}.

In Chapter \ref{chap:Buse}, we give a detailed study of the collection of infinite geodesics across all directions and initial points. This chapter is adapted from the author's joint work with Busani and Sepp\"al\"ainen \cite{Busa-Sepp-Sore-22arXiv}. To construct the global Busemann process, we start from the results in \cite{Rahman-Virag-21},   summarized in Section \ref{sec:RV_summ}. After  the first version of our paper \cite{Busa-Sepp-Sore-22arXiv}, Ganguly and Zhang \cite{Ganguly-Zhang-2022a} gave an independent construction of the Busemann function in a fixed direction, and we note that the results of this dissertation do not rely on this newer work. After characterizing the distribution of the Busemann process, we use the regularity of SH from Chapter \ref{chap:SHcons} to prove results about the regularity of the Busemann process and semi-infinite geodesics. A result from \cite{Dauvergne-22} implies  Lemma \ref{lm:horiz_shift_mix} and the mixing in Theorem \ref{thm:Buse_dist_intro}\ref{itm:stationarity}. In this chapter, we also describe the size of the  exceptional sets of points with non-unique geodesics (Theorems \ref{thm:Split_pts} and  \ref{thm:DLNU}\ref{itm:DL_NU_count}). For this, we use results about point-to-point geodesics from \cite{Bates-Ganguly-Hammond-22} and \cite{Dauvergne-Sarkar-Virag-2020}.

The techniques of Chapters \ref{chap:invar} and \ref{chap:Buse} are probabilistic, rather than integrable,  but some results we use come from  integrable inputs. We use results about the directed landscape proved in \cite{Directed_Landscape,Dauvergne-Virag-21,Dauvergne-Nica-Virag-2021,Busani-2021,Dauvergne-Zhang-2021}, which utilize the continuous  RSK correspondence 
\cite{OConnell-2003,rep_non_colliding}. The   results on  point-to-point geodesics in \cite{Bates-Ganguly-Hammond-22,Dauvergne-Sarkar-Virag-2020} rely on \cite{Hammond2}, who studied the   number of disjoint geodesics in BLPP using integrable inputs. The necessary inputs for the DL are recorded in Appendix \ref{appB}. Appendix \ref{appA} contains some miscellaneous facts that we use throughout the dissertation.

Chapter \ref{chap:TASEP} shows that the SH appears as the scaling limit of the TASEP speed process studied in \cite{Amir_Angel_Valko11}. This chapter is taken from the author's joint work with Busani and Sepp\"al\"ainen \cite{Busa-Sepp-Sore-22b}. To show convergence to the SH, there are two key steps: (i) convergence of the finite-dimensional distributions and (ii) tightness on the path space $D(\R,C(\R))$ of the SH. The key input for both of these steps is the Ferrari-Martin queuing representation of the multi class stationary measures for TASEP \cite{Ferrari-Martin-2007}. The technique for the tightness proof was first developed by Busani in the context of exponential last-passage percolation \cite{Busani-2021}. It requires several technical details about the space $D(\R,C(\R))$, which we omit in this dissertation and refer the reader to \cite{Busa-Sepp-Sore-22b}.

\section{Notation}
\label{sec:notat} 
The notation of this dissertation has been made consistent throughout the chapters, occasionally differing slightly from that used in the papers \cite{Seppalainen-Sorensen-21a,Seppalainen-Sorensen-21b,Busa-Sepp-Sore-22arXiv,Busa-Sepp-Sore-22b}. We summarize key notation used here. 

\begin{enumerate}
[label={\rm(\roman*)}, ref={\rm\roman*}]   \itemsep=2pt 
    \item  $\Z$, $\Q$ and $\R$ are restricted by subscripts, as in for example $\Z_{> 0}=\{1,2,3,\dotsc\}$.
    \item Equality in distribution is denoted by $\tspb\deq\tspb$ and convergence in distribution is denoted by $\tspb\Longrightarrow$.
    \item   $X \sim \Exp(\rho)$ means that $X$ is exponentially distributed with rate $\rho > 0$, or equivalently, with mean $\rho^{-1}$.  In other words,
    $\Pp(X>t)=e^{-\rho t}$ for $t>0$.
    \item $X \sim \Nor(\mu,\sigma^2)$ means that $X$ has the Gaussian distribution with mean $\mu$ and variance $\sigma^2$. 
    \item The increments of a function $f:\R \to \R$ are denoted by $f(x,y) = f(y) - f(x)$.
    \item Increment ordering of $f,g:\R \to \R$:   $f \li g$ means that  $f(x,y) \le g(x,y)$ for all $x < y$.
    \item For a function $f:\R \to \R$, define the reflected function $\Rf f:\R \to \R$ as $\Rf f(x) = f(-x)$.
    \item For $s \in \R$,   $\Hh_s=\{(x,s): x \in \R\}$ is  the set of space-time points at time level $s$.
    \item Two-sided standard Brownian motion is a random continuous  process $\{B(x): x \in \R\}$ such that $B(0) = 0$ almost surely and   $\{B(x):x \ge 0\}$ and $\{\Rf B(x):x \ge 0\}$ are two independent standard Brownian motions on $[0,\infty)$.
    \item\label{def:2BMcmu} If $B$ is a two-sided standard Brownian motion, then 
    for $\sigma > 0$ and $\lambda \in \R$, \\ 
    $\{\sigma B(x) + \lambda x: x \in \R\}$ is a two-sided Brownian motion with diffusivity $c>0$ and drift $\mu\in\R$. 
    \item The parameter domain of the directed landscape is  $\Rup = \{(x,s;y,t) \in \R^4: s < t\}$.
    \item Let $\nu_f$ denote the Lebesgue-Stieltjes measure of a non-decreasing function $f$ on $\R$.
    \item The Hausdorff dimension of a set $A$ is denoted by $\dim_H(A)$. 
    \item For $x \in \R$, $\lfloor x \rfloor$ denotes the largest integer less than or equal to $x$, $\lceil x \rceil$ denotes the smallest integer greater than or equal to $x$, $[x]$ denotes the integer closest to $x$ with $|[x]| \le |x|$.
\end{enumerate}

\chapter{Construction and properties of the stationary horizon} \label{chap:SHcons}

\section{Introduction} \label{sec:SH_bigintro}
This chapter is devoted to constructing the stationary horizon (SH) and studying its distributional properties. The SH is a stochastic process $G^\sigma = \{G^\sigma_\dir\}_{\dir \in \R}$, where $\sigma$ is a scaling parameter, and each $G_\dir^\sigma$ is a Brownian motion with drift. $G^\sigma$ lives in the Skorokhod space $D(\R,C(\R))$ of functions $\R \to C(\R)$ that are right-continuous with left limits (see Section \ref{sec:SH_intro}), and we let $G_{\dir -}:\R \to \R$ denote the left limit for this process at $\dir$. To define the SH, we first define queuing transformations in Section \ref{sec:queue_intro}.  In Section \ref{sec:SH_intro}, we state Proposition \ref{prop:SH_cons}, which defines the SH. Shortly after comes Theorem \ref{thm:SH_dist_invar}, which states several distributional invariances of the SH. Section \ref{sec:dist_calc_thm} uses the description of the finite-dimensional distributions of the SH to make several distributional calculations. These calculations are applied in Section \ref{sec:SH_jump} to show that the SH, restricted to a compact set of space, is a jump process (Theorem \ref{thm:SH_jump_process}). This leads to qualitative results on the set of discontinuities of the SH in the parameter $\dir$, as well as several path properties of the process, recorded in Theorem \ref{thm:SH_sticky_thm}. Section \ref{sec:SHPalm} is devoted to studying the process $x \mapsto J_{\dir}(x) := G_{\dir}(x) - G_{\dir -}(x)$ when $\dir$ is a direction of discontinuity. Each direction $\dir$ is a direction of discontinuity with probability $0$, so defining the distribution of this process requires tools from the theory of Palm conditioning \cite{Kallenberg-book}. In the appropriate sense, $J_\dir$ is equal in distribution to a Brownian local time (see Theorems \ref{thm:BusePalm} and \ref{thm:indep_loc} for a more precise statement).   

\section{The Brownian queue} \label{sec:queue_intro} 
In the classical M/M/1 queue, customers arrive at a service station at times determined by a Poisson process. Services are also available at times determined by an independent Poisson process, whose rate is strictly larger than that of the arrivals. The queue follows a first in-first out (FIFO) principle so that customers are served in the order in which they arrived at the queue. The remarkable classical result, now known as Burke's Theorem \cite{Burke1956} is that the times at which customers depart the queue is also a Poisson process with rate equal to that of the arrivals. 

In this setup, the number of customers who arrive at the queue in a finite interval must be an integer, naturally representing a number of people. However, when the rate of service is close to the rate of arrivals, one can obtain a scaling limit of the queue where the number of arrivals and services in an interval are now real-valued \cite{glynn1991,Harrison1985,harrison1990,harrison1992} (See also \cite[Section 6.9]{resnick}). We call this the Brownian queue. Here, we introduce the Brownian queue in the formulation of O'Connell and Yor \cite{brownian_queues}. For our purposes, we consider a queue that has no starting time; it has been running since time $x = -\infty$. Let $A$ and $S$ be two independent, two-sided standard Brownian motions, and let $\lambda > 0$. For $x < y$, $A(x,y)$ represents the arrivals to the queue in the time interval $(x,y]$, and $\lambda(y - x) - S(x,y)$ is the amount of service available   in  $(x,y]$.  For $y \in \R$, set
\be \label{qddef}
\begin{aligned}
q(y) &= \sup_{-\infty < x \le y}\{A(x,y) + S(x,y) - \lambda(y - x)\} \\
d(y) &= A(y) + q(0) - q(y), \\
e(y) &= S(y) + q(0) - q(y).
\end{aligned}
\ee
In queuing terms, $q(y)$ is the length of the queue at time $y$, and for $x < y$, $d(x,y)$ is the number of departures from the queue in the interval $(x,y]$. The following is due to \cite{brownian_queues}. Without the statements for the process $e$, the theorem is a special case of a more general result  previously   shown in \cite{harrison1990}.
\begin{theorem}\cite[Theorem 4]{brownian_queues} \label{thm:OCY_orig}
The processes $d$ and $e$ are independent, two-sided standard Brownian motions. Furthermore, for each $y \in \R$, 
$\{d(x,y),e(x,y): - \infty < x \leq y\}$ is independent of $\{q(x): x \ge y\}$.
\end{theorem}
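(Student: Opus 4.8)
The plan is to recognize Theorem~\ref{thm:OCY_orig} as the stationary Brownian analogue of Burke's output theorem for the M/M/1 queue, and to prove it by the same device: time reversal together with the reversibility of the queue-length process. The first step is to identify that process. Set $Z(y):=A(y)+S(y)-\lambda y$; since $A,S$ are independent standard Brownian motions, $Z$ is a two-sided Brownian motion with variance parameter $2$ and drift $-\lambda<0$, and \eqref{qddef} reads $q(y)=\sup_{x\le y}Z(x,y)=Z(y)-\inf_{x\le y}Z(x)$. Thus $q$ is the Skorokhod reflection of $Z$ at $0$, i.e.\ a reflected Brownian motion with negative drift; because $\lambda>0$ the running infimum is a.s.\ finite, $q$ is well defined and stationary, its marginal is $\Exp(\lambda)$, and, being a one-dimensional diffusion in its stationary regime, it is time-reversible: $\{q(-y)\}_{y\in\R}\deq\{q(y)\}_{y\in\R}$ as processes. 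I would also record the decomposition $q=Z+L$ with $L(y)=-\inf_{x\le y}Z(x)$ nondecreasing, continuous, and growing only on $\{q=0\}$; balancing increments in stationarity gives $\E[L(x,y)]=\lambda(y-x)$.

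The second step is the assertion that time reversal interchanges arrivals with departures. Let $\wh A(y):=-A(-y)$, $\wh S(y):=-S(-y)$, and let $\wh q,\wh d,\wh e$ be the corresponding time-reversals of $q,d,e$ about $0$. The crux is the pathwise identity that, for every realization, $(\wh q;\wh d,\wh e)$ again solves the system \eqref{qddef}, now with $(\wh d,\wh e)$ in the role of $(A,S)$: in particular $\wh q(y)=\sup_{x\le y}\{\wh d(x,y)+\wh e(x,y)-\lambda(y-x)\}$, and the increments of $\wh A,\wh S$ are recovered from $(\wh q,\wh d,\wh e)$ exactly as those of $d,e$ are recovered from $(q,A,S)$. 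This is the continuum form of the M/M/1 fact that time reversal interchanges up-jumps (arrivals) with down-jumps (departures); at the level of the reflection map it follows from the decomposition $q=Z+L$ and the duality between $\sup_{x\le y}$ and a running infimum under $y\mapsto -y$. Granting it, the scalar reversibility $\wh q\deq q$ upgrades to the marked reversibility $(\wh d,\wh e,\wh q)\deq(A,S,q)$ of processes; since $A,S$ are independent standard Brownian motions, so are $\wh d,\wh e$, and hence $d,e$ are independent two-sided standard Brownian motions. As a cross-check and an alternative handle on the independence, note from \eqref{qddef} that $d(x,y)=A(x,y)-q(x,y)$ and $e(x,y)=S(x,y)-q(x,y)$, so $d-e=A-S$ is a variance-$2$ Brownian motion independent of $A+S$, hence of $q$ and of $d+e=(A+S)(x,y)-2q(x,y)$; once $d+e$ is known to be a variance-$2$ Brownian motion, the independence and standard scaling of $d$ and $e$ follow from $d=\tfrac12[(d+e)+(d-e)]$, $e=\tfrac12[(d+e)-(d-e)]$ by a covariance computation.

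The third step is the Burke-type independence of the past outputs from the future queue. Reversing time about the point $y$ instead of $0$, the family $\{d(x,y),e(x,y):x\le y\}$ becomes the increments of $(\wh d,\wh e)$ on $[-y,\infty)$ and $\{q(x):x\ge y\}$ becomes $\{\wh q(x):x\le -y\}$. By the marked reversibility this reduces the asserted independence to the statement that the increments of $(A,S)$ on $[-y,\infty)$ are independent of $\{q(x):x\le -y\}$, which holds because $\{q(x):x\le -y\}$ is a functional of the increments of $(A,S)$ on $(-\infty,-y]$, and Brownian increments over disjoint intervals (together with $A\perp S$) are independent. Equivalently, one may peel off the $A-S$ part of $\{d(x,y),e(x,y):x\le y\}$, which is independent of everything measurable with respect to $A+S$ (in particular of $\{q(x):x\ge y\}$), reducing matters to the Burke property $\{(d+e)(x,y):x\le y\}\perp\{q(x):x\ge y\}$ for the single reflected process $q$, and then reassembling the two pieces with a short conditioning argument.

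I expect the genuine content, and the hardest step, to be the pathwise time-reversal identity of the second step: checking that the reversed outputs $(\wh d,\wh e)$ re-drive $\wh q$ as a bona fide Brownian queue, so that reversibility of the scalar process $q$ bootstraps to reversibility of the full marked system. In the M/M/1 world this is the transparent ``arrivals $\leftrightarrow$ departures'' bijection; in continuous time it must be executed through the Skorokhod reflection map, with some care because the regulator $L$ is supported on the Lebesgue-null zero set of $q$. An alternative that sidesteps constructing this identity by hand is to deduce the theorem as a heavy-traffic scaling limit of the classical M/M/1 Burke theorem: under critical load the queue-length, departure, and unused-service processes converge jointly (by continuity of the reflection map) to $(q,d,e)$, and both the independence of the limiting Brownian motions and the conditional independence pass through the weak limit; the price there is setting up the critical scaling and the joint Donsker-type convergence.
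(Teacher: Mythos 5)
The paper does not prove this statement: Theorem~\ref{thm:OCY_orig} is imported verbatim from \cite{brownian_queues} (with the $e$-free part attributed to \cite{harrison1990}), so there is no in-paper proof to compare against. Your proposal is essentially the proof in that source — Burke's theorem via time reversal and the reversibility of the stationary reflected Brownian motion $q=Z-\inf_{x\le y}Z(x)$ with $Z=A+S-\lambda\,\aabullet$ — and the outline is sound.

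One logical point deserves reordering. The claim that ``scalar reversibility $\wh q\deq q$ upgrades to marked reversibility $(\wh d,\wh e,\wh q)\deq(A,S,q)$'' does not follow on its own: $q$ is a functional of $A+S$ only, so no amount of information about the law of $q$ can recover the joint law of the pair $(A,S)$, and reversibility of $q$ cannot by itself deliver two independent outputs. What you relegate to a ``cross-check'' is in fact the necessary main step: $d-e=A-S$ is a driftless variance-$2$ Brownian motion independent of $\sigma(A+S)$, hence of $q$ and of $d+e=(A+S)-2q(0,\aabullet)$; the reversibility/Skorokhod-uniqueness argument then needs to be run only for the scalar aggregated queue to show that $d+e$ is a driftless variance-$2$ Brownian motion whose past increments are independent of $\{q(x):x\ge y\}$ (here one uses that $d+e$ is a measurable functional of $q$ alone, by uniqueness of the Skorokhod decomposition $q=Z+L$). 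Recombining via $d=\tfrac12[(d+e)+(d-e)]$, $e=\tfrac12[(d+e)-(d-e)]$ and a covariance computation then yields both the independence of $d$ and $e$ and the stated independence from the future of $q$. With that rearrangement the argument is complete modulo standard facts (reversibility of stationary one-dimensional diffusions, measurability of the regulator $L$ as a functional of $q$).
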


We view the operations in Theorem \ref{thm:OCY_orig} in terms of mappings $C(\R) \times C(\R) \to C(\R)$, where we subsume the drift term into one of the functions. For continuous functions $Z,B:\R \rightarrow \R$ satisfying $\limsup_{x \rightarrow -\infty} [Z(x) - B(x)] = -\infty$, define  
\begin{align} 
Q(Z,B)(y) &= \sup_{-\infty < x \le y} \{B(x,y)-Z(x,y)\},  \label{Qdef}  \\
D(Z,B)(y) &= Z(y) + Q(Z,B)(y) - Q(Z,B)(0),\label{Ddef} \\
R(Z,B)(y) &= B(y) + Q(Z,B)(0) - Q(Z,B)(y).\label{Rdef}
\end{align}
\begin{theorem} \label{thm:OCY_DR} Let $\sigma 
 > 0$, and let $B$ be a two-sided Brownian motion with diffusion $\sigma$ and drift $\lambda_1$, independent of the two-sided Brownian motion $B$ with diffusion $\sigma$ and drift $\lambda_2 > \lambda_1$. Then, 
 $(B,Z) \deq (R(Z,B),D(Z,B))$, that is, $R(Z,B)$ and $D(Z,B)$ are independent Brownian motions, each with diffusion $\sigma$ and with drifts $\lambda_1$ and $\lambda_2$, respectively.  
 Furthermore, for all $y \in \R$, $\{(D(Z,B)(x,y),R(Y,C)(x,y)): - \infty < x \leq y\}$ is independent of $\{Q(Z,B)(x): x \geq y\}$. 
\end{theorem}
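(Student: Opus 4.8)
The plan is to deduce Theorem~\ref{thm:OCY_DR} directly from Theorem~\ref{thm:OCY_orig} after two elementary reductions: a linear rescaling that removes the diffusivity $\sigma$, and a relabeling of the driving Brownian motions that puts the maps $Q,D,R$ into the exact form of $q,d,e$ from~\eqref{qddef}.

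First I would reduce to $\sigma=1$. Set $B^\circ=\sigma^{-1}B$ and $Z^\circ=\sigma^{-1}Z$; these are independent standard two-sided Brownian motions with drifts $\lambda_1/\sigma<\lambda_2/\sigma$. The maps in \eqref{Qdef}, \eqref{Ddef}, \eqref{Rdef} are positively homogeneous of degree one in the pair $(Z,B)$, so $Q(Z,B)=\sigma\, Q(Z^\circ,B^\circ)$, $D(Z,B)=\sigma\, D(Z^\circ,B^\circ)$, and $R(Z,B)=\sigma\, R(Z^\circ,B^\circ)$. Since multiplication by the positive constant $\sigma$ is a bimeasurable bijection of $C(\R)$, applying it to both sides of the $\sigma=1$ conclusion recovers the general case, both for the joint distributional identity and for the conditional independence statement. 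At this stage I would also record that the maps are well defined on these inputs: $Z-B$ is a Brownian motion with strictly positive drift $\lambda_2-\lambda_1$, so $Z(x)-B(x)\to-\infty$ as $x\to-\infty$ and hence $\limsup_{x\to-\infty}[Z(x)-B(x)]=-\infty$; in particular the supremum defining $Q(Z,B)(y)$ is a.s.\ finite and attained.

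Next, for $\sigma=1$, write $B=A+\lambda_1\,\mathrm{id}$ and $Z=S+\lambda_2\,\mathrm{id}$ with $A,S$ independent standard two-sided Brownian motions, and put $\lambda:=\lambda_2-\lambda_1>0$. The key point is that $(A,-S)$ is again a pair of independent standard two-sided Brownian motions, so taking arrivals $A':=A$, service increments governed by $S':=-S$, and rate $\lambda$, the recipe \eqref{qddef} produces processes $q,d,e$ to which Theorem~\ref{thm:OCY_orig} applies. A direct computation with increments then gives $Q(Z,B)(y)=q(y)$ (the two drifts combine to $-\lambda(y-x)$ and $A(x,y)-S(x,y)=A'(x,y)+S'(x,y)$), together with $R(Z,B)(y)=d(y)+\lambda_1 y$ and $D(Z,B)(y)=-e(y)+\lambda_2 y$. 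Since $d$ and $e$ are independent standard Brownian motions, $R(Z,B)$ and $D(Z,B)$ are independent Brownian motions with diffusivity $1$ and drifts $\lambda_1$ and $\lambda_2$; as $B$ and $Z$ are also independent with these same marginals, the two pairs have the same joint law, i.e.\ $(B,Z)\deq(R(Z,B),D(Z,B))$. For the last assertion, for fixed $y$ the increments $R(Z,B)(x,y)=d(x,y)+\lambda_1(y-x)$ and $D(Z,B)(x,y)=-e(x,y)+\lambda_2(y-x)$ are deterministic functions of $\{(d(x,y),e(x,y)):x\le y\}$, while $Q(Z,B)(x)=q(x)$ for all $x$; hence the independence of $\{(D(Z,B)(x,y),R(Z,B)(x,y)):x\le y\}$ from $\{Q(Z,B)(x):x\ge y\}$ is inherited verbatim from the second sentence of Theorem~\ref{thm:OCY_orig}.

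There is no real obstacle here beyond careful bookkeeping; the one spot deserving attention is the sign flip $S'=-S$. It is what matches the difference $B-Z$ — a negative-drift process, whose supremum over $x\le y$ is therefore finite — with the \emph{sum} $A'+S'$ that appears in the queue-length functional $q$, and correspondingly it is the reason $D(Z,B)$ emerges as $-e$ (with its drift $\lambda_2 y$ restored) rather than as $e$ itself.
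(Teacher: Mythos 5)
Your proposal is correct and follows essentially the same route as the paper: both identify $(Q,D,R)$ applied to $(Z,B)$ with the O'Connell--Yor processes $(q,d,e)$ of \eqref{qddef} via an affine change of variables and then invoke Theorem \ref{thm:OCY_orig}. The only differences are cosmetic — you handle $\sigma$ by a preliminary homogeneity rescaling and place the sign flip on the service process (so $d,e$ correspond to $R,D$ rather than, as in the paper's parametrization $A=(\lambda_2\,\mathrm{id}-Z)/\sigma$, $S=(B-\lambda_1\,\mathrm{id})/\sigma$, to $D,R$) — and since Theorem \ref{thm:OCY_orig} is symmetric in $d$ and $e$, this changes nothing.
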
 
\begin{proof}
Define $A,S:\R \to \R$ by $A(x) = (\lambda_2 x - Z(x))/\sigma$ and $S(x) = (B(x) - \lambda_1 x)/\sigma$. Then, $S$ and $A$ are i.i.d. two-sided standard Brownian motions. Let $q,d,e$ be defined as in \eqref{qddef} with $\lambda = \lambda_2 - \lambda_1$.  Then, 
\begin{align*}
\sigma q(y) &= \sup_{-\infty < x \le y}\{A(x,y) + S(x,y) - (\lambda_2 - \lambda_1)(y - x)\} \\
&= \sup_{-\infty < x \le y}\{B(x,y) - Z(x,y)\} = Q(Z,B)(y). 
\end{align*}
Next observe that 
\begin{align*}
D(Z,B)(y) &= Z(y) + Q(Z,B)(y) - Q(Z,B)(0)  \\
&\qquad =  \lambda_2 y - \sigma A(y) + \sigma q(y) - \sigma q(0) = \lambda_2 y - \sigma d(y),
  \\[2pt]
\text{and}\quad R(Z,B)(y) &= B(y) + Q(Z,B)(0) - Q(Z,B)(y) \\
&\qquad =\lambda_1 y +\sigma S(y) + \sigma q(0) - \sigma q(t) = \lambda_1 y + \sigma e(y).
\end{align*}
The result now follows from Theorem \ref{thm:OCY_orig}.
\end{proof}
We now iterate the mapping $D$ as follows:
\be \label{Diter}
\begin{aligned} 
    D^{(1)}(Z) &= Z,\quad  D^{(2)}(Z^2,Z^1) = D(Z^2,Z^1), \quad\text{and} \\
    D^{(n)}(Z^n,Z^{n - 1},\ldots,Z^1) &= D(D^{(n - 1)}(Z^n,\ldots,Z^{2}),Z^1)\quad \text{ for } n \geq 2. 
\end{aligned}
\ee
Given a Borel subset $A \subseteq \R$, we define two state spaces of $n$-tuples of functions.  
\be\label{Yndef}\begin{aligned} 
\ \;  &\Y_n^A := \Bigg\{\mathbf Z = (Z^1,\ldots, Z^n) \in \CRpin^n:  \\
&\qquad\qquad\qquad\text{ for } 1 \le i \le n, \lim_{x\rightarrow -\infty} \frac{Z^i(x)}{x} \text{ exists and lies in }A,  \\
&\qquad\qquad\qquad\qquad\text{and for } 2 \leq i \leq n , \lim_{x \rightarrow -\infty} \frac{Z^i(x)}{x} > \lim_{x \rightarrow -\infty} \frac{Z^{i - 1}(x)}{x}    \Bigg\},
\end{aligned}\ee
and
\begin{align} \label{Xndef}
\X_n^A := \Bigg\{\eta = (\eta^1,\ldots,\eta^n) \in \Y_n^A:  \eta^i \gi \eta^{i - 1}  \text{ for } 2 \leq i \leq n 
\Bigg\}.
\end{align}
 These spaces are Borel-measurable subsets (see below) of the space $C(\R,\R^n)$. These spaces are treated as subspaces of $C(\R,\R^n)$ where the  $\sigma$-algebra is the Borel $\sigma$-algebra under the topology of uniform convergence on compact sets, or equivalently, the $\sigma$-algebra generated by the coordinate projections (see, for example, \cite[Section 7]{billing}).

\begin{proof}[Measurability of the state spaces $\X_n^A$ and $\Y_n^A$]
 For $\Y_n^A$, it is sufficient to show that the variables 
\[
\liminf_{x \rightarrow -\infty}\f{Z^i(x)}{x}\qquad\text{and}\qquad\limsup_{x \rightarrow -\infty}\f{Z^i(x)}{x}
\]
are both measurable. We prove the first is measurable, and the second follows analogously. For $a \in \R$, 
\[
\Bigg\{\liminf_{x \rightarrow -\infty} \f{Z^i(x)}{x} > a    \Bigg\} = \bigcup_{k \in \N}\bigcup_{N \in \N} \bigcap_{x \le -N} \Big\{\f{Z^i(x)}{x} \ge a + \f{1}{k}\Big\}.
\]
By continuity of $Z^i$, the intersection over $x \le -N$ can be changed to an intersection over $x \in (-\infty,-N] \cap \Q$. Hence, the set on the left is measurable. By continuity, we may write $\X_n^A$ as 
\[
\X_n^A = \bigcap_{i = 2}^n \bigcap_{x < y, x,y \in \Q}\Big\{\eta \in \Y_N^A: \eta^i(x,y) \ge \eta^{i-1}(x,y)\Big\},
\]
completing the proof.
\end{proof}

Next, define a transformation  $\D^{(n)}$ on $n$-tuples of functions as follows. Let $A\subseteq \R$. For $(Z^1,\ldots,Z^n)\in \Y_n^A$, the image $\eta = (\eta^1,\ldots, \eta^n)=\D^{(n)}(Z^1,\ldots,Z^n)) \in \X_n^A$ is defined by
\begin{equation} \label{scrD}
\eta^i = D^{(i)}(Y^i,\ldots,Y^1) \quad \text{ for } 1 \le i \le n.
\end{equation}
Lemma \ref{image of script D lemma}\ref{itm:image} below proves that $\D^{(n)}: \Y_n^A \to \X_n^A$.

For $\sigma > 0$ and an increasing vector of real drifts $\bar \dir = (\dir_1 < \dir_2 < \cdots < \dir_n)$, define the measure $\nu_\sigma^{\bar \dir}$ on $\Y_n^\R$ as follows: $(Z^1,\ldots,Z^n) \sim \nu_\sigma^{\bar \dir}$ if $Z^1,\ldots,Z^n$ are mutually independent and $Z^i$ is a Brownian motion with diffusivity $\sigma$ and drift $\sigma^2 \dir_i$. Define the measure $\mu_\sigma^{\bar \dir}$ on $\X_n^\R$ as
\be \label{musigma}
\mu_\sigma^{\bar \dir} = \nu_\sigma^{\bar \dir} \circ (\D^{(n)})^{-1}.
\ee

\noindent That is, for $(Z^1,\ldots,Z^n) \sim \nu_\sigma^{\bar \dir}$, we have $\D^{(n)}(Z^1,\ldots,Z^n) \sim \mu_\sigma^{\bar \dir}$. 

\section{Construction and invariances of the stationary horizon} \label{sec:SH_intro}
In this section, we introduce the stationary horizon (SH) and state several of its basic properties. The subsequent sections contain more detailed probabilistic information.  \label{sec:SH_cons_intro}
The Skorokhod space $D(\R,C(\R))$ consists of functions $\R \to C(\R)$ that are right-continuous with left limits. $C(\R)$ is endowed with the topology of uniform convergence on compact sets. We denote a generic element of this space as $G =   \{G_\dir\}_{\dir \in \R}$, where each $G_\dir$ is a continuous function. Membership in $D(\R,C(\R))$ means that for each $\dir \in \R$, $\lim_{\alpha \searrow \dir} G_\alpha = G_\dir$, where convergence holds uniformly on compact sets. Additionally, the limits $\lim_{\alpha \nearrow \dir} G_\alpha$ exist, uniformly on compact sets, and we denote this limiting continuous function as $G_{\dir -}$.  We endow $D(\R,C(\R))$ with the  $\sigma$-algebra $\Ff'$ generated by the projections $\pi^{\dir_1,\ldots,\dir_n}:D(\R,C(\R))\to C(\R,\R^n)$, which are defined by  $\pi^{\dir_1,\ldots,\dir_n}(G) = (G_{\dir_1},\ldots,G_{\dir_n})$. Since the Borel $\sigma$-algebra on $C(\R)$ coincides with the $\sigma$-algebra generated by projections $p^{x_1,\ldots,x_n}: C(\R) \to \R^n$ defined by $p^{x_1,\ldots,x_n} f = (f(x_1),\ldots,f(x_n))$, the $\sigma$-algebra $\F'$ is the smallest $\sigma$-algebra on $D(\R,C(\R))$ so that for each $\dir,x \in \R, G_\dir(x)$ is measurable.

\begin{proposition} \label{prop:SH_cons}
     On the Skorokhod space $(D(\R,C(\R)),\Ff')$, there exists a family of  measures $\{\Pp^\sigma\}_{\sigma > 0}$, satisfying the following properties. If we don't reference the measure $\Pp^\sigma$, we  let $G^\sigma = \{G_\dir^\sigma\}_{\dir \in \R}$ denote the associated random element of $D(\R,C(\R))$. 
     \begin{enumerate} [label={\rm(\roman*)}, ref={\rm(\roman*)}]   \itemsep=3pt
     \item \label{itm:SHBM} For $\dir \in \R$, $G^\sigma_\dir$ is a two-sided Brownian motion with variance $\sigma$ and drift $\sigma^2\dir$. In particular, $\Pp^\sigma$- almost surely, for every $\dir \in \R$, $G_\dir(0) = 0$. 
     \item \label{itm:SH_dist} For $\bar \dir = (\dir_1 < \cdots < \dir_n)$, $\Pp^\sigma \circ (\pi^{\bar \dir })^{-1} = \mu_\sigma^{\bar \dir}$. That is,
     \[
(G^\sigma_{\dir_1},\ldots,G^\sigma_{\dir_n}) \sim \mu_\sigma^{\bar \dir}.
     \]
     More concretely, this random vector in $C(\R,\R^n)$ has the distribution of
     \[
     (Y^1,D^{(2)}(Y^2,Y^1),\ldots, D^{(n)}(Y^n,\ldots,Y^1)),
     \]
     where $Y^1,\ldots,Y^n$ are independent Brownian motions, each with diffusivity $\sigma$ and with drifts $\sigma^2 \dir_1,\ldots,\sigma^2 \dir_n$. The measure $\Pp^\sigma$ is the unique distribution on $D(\R,C(\R))$ with these finite-dimensional distributions. 
     \item \label{itm:SH_mont} $\Pp^\sigma$-almost surely, for all $\dir_1 < \dir_2$, $G_{\dir_1-} \li G_{\dir_1} \li G_{\dir_2 -} \li G_{\dir_2}$.
     \item \label{itm:SH_cont} For each $\dir \in \R$, $\Pp^\sigma(G_{\dir}(x) = G_{\dir -}(x)\;\; \forall x \in \R) = 1$. 
     \item \label{itm:SH_drifts} $\Pp^\sigma$-almost surely, simultaneously for every $\dir \in \R$, 
     \[
     \lim_{|x| \to \infty} \f{G_\dir(x)}{x} = \lim_{|x| \to \infty} \f{G_{\dir-}(x)}{x} = \sigma^2 \dir.
     \]
    \end{enumerate}
\end{proposition}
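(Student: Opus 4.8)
The plan is to construct $\Pp^\sigma$ in two stages.
\textbf{Stage 1 (finite-dimensional skeleton).}
First I would check that the family $\{\mu_\sigma^{\bar\dir}\}$ is consistent under deletion of a drift.
Deleting the largest drift $\dir_n$ is immediate, since $\eta^i=D^{(i)}(Z^i,\dots,Z^1)$ involves only $Z^1,\dots,Z^i$, so the first $n-1$ components of $\D^{(n)}(Z^1,\dots,Z^n)$ are exactly $\D^{(n-1)}(Z^1,\dots,Z^{n-1})$.
Deleting a non-maximal drift $\dir_k$ is reduced to fewer functions using the reversibility of Theorem \ref{thm:OCY_DR} together with a tandem-queue reordering identity of the form $D(D(A,B),C)=D(D(A,R(B,C)),D(B,C))$: repeated commutation moves isolate $Z^k$, and then Theorem \ref{thm:OCY_DR} removes it, replacing $Z^k$ by a departure process that is a Brownian motion of the same drift, independent of everything downstream.
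The prototype is $n=3$, $k=2$: writing $W=D(Z^3,Z^2)$, Theorem \ref{thm:OCY_DR} makes $W$ a Brownian motion with drift $\sigma^2\dir_3$ independent of $Z^1$, and since $\eta^3=D(D(Z^3,Z^2),Z^1)=D(W,Z^1)$ while $\eta^1=Z^1$, the pair $(\eta^1,\eta^3)$ is precisely $\D^{(2)}(Z^1,W)\sim\mu_\sigma^{\dir_1,\dir_3}$; an induction on $n$ handles the general case.
With consistency in hand, Kolmogorov's extension theorem (valid for an arbitrary index set and a Polish target, here $C(\R)$) produces a random element $\wh G=(\wh G_\dir)_{\dir\in\R}$ on the product $\sigma$-algebra with $(\wh G_{\dir_1},\dots,\wh G_{\dir_n})\sim\mu_\sigma^{\bar\dir}$; in particular each $\wh G_\dir$ is a Brownian motion with the prescribed diffusivity and drift, with $\wh G_\dir(0)=0$ a.s.

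\textbf{Stage 2 (c\`adl\`ag modification).}
Each pairwise marginal $\mu_\sigma^{\dir<\dir'}$ is supported on $\X_2^\R$ (Lemma \ref{image of script D lemma}), so on an event of full measure one has $\wh G_\alpha\li\wh G_\beta$ for all rationals $\alpha<\beta$, each $\wh G_\alpha$ ($\alpha\in\Q$) is continuous with $\wh G_\alpha(0)=0$, and each obeys $\lim_{|x|\to\infty}\wh G_\alpha(x)/x=\sigma^2\alpha$.
Fix $\dir$. Since $x\mapsto\wh G_\alpha(0,x)$ is nondecreasing in $\alpha$, the pointwise limit $\wt G_\dir(x):=\lim_{\Q\ni\alpha\searrow\dir}\wh G_\alpha(x)$ exists; sandwiching $\wh G_{\beta_1}(x,y)\le\wh G_\alpha(x,y)\le\wh G_{\beta_2}(x,y)$ for fixed rationals $\beta_1<\dir<\beta_2$ bounds the modulus of continuity of $\wh G_\alpha$ on any compact by those of $\wh G_{\beta_1}$ and $\wh G_{\beta_2}$, uniformly in $\alpha\in(\beta_1,\beta_2)\cap\Q$, so equicontinuity plus pointwise monotone convergence upgrades the convergence to uniform-on-compacts and $\wt G_\dir\in C(\R)$.
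The same sandwich shows $\dir\mapsto\wt G_\dir$ is right-continuous in $C(\R)$ and admits left limits $\wt G_{\dir-}:=\lim_{\Q\ni\alpha\nearrow\dir}\wh G_\alpha\in C(\R)$, uniformly on compacts.
Thus $\wt G$ is a measurable $D(\R,C(\R))$-valued map; let $\Pp^\sigma$ be its law on $(D(\R,C(\R)),\Ff')$.

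Finally I would verify \ref{itm:SHBM}--\ref{itm:SH_drifts}.
Fix $\dir$. For $x>0$, $\wh G_\dir\li\wh G_\alpha$ gives $\wh G_\dir(x)\le\wh G_\alpha(x)$ for $\alpha>\dir$, and dominated convergence yields $\E[\wt G_\dir(x)]=\lim_{\alpha\searrow\dir}\sigma^2\alpha x=\sigma^2\dir x=\E[\wh G_\dir(x)]$; since $\wt G_\dir(x)-\wh G_\dir(x)\ge0$ has mean zero, $\wt G_\dir=\wh G_\dir$ a.s.\ (first for fixed $x$, then all $x$ by continuity).
Hence $\wt G$ and $\wh G$ have the same finite-dimensional distributions, giving \ref{itm:SH_dist} (uniqueness is immediate since $\Ff'$ is generated by the coordinate projections) and \ref{itm:SHBM} (the one-dimensional marginal $\mu_\sigma^{\dir}=\nu_\sigma^{\dir}$ is a Brownian motion with the stated diffusivity and drift, and $\wt G_\dir(0)=\lim_\alpha\wh G_\alpha(0)=0$ for \emph{every} $\dir$ on the good event).
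Claim \ref{itm:SH_mont} is the limiting form of the rational inequalities $\wh G_\alpha\li\wh G_\beta$, using that $\li$ passes to locally uniform limits.
For \ref{itm:SH_cont}: $\wt G_{\dir-}\li\wt G_\dir$ by \ref{itm:SH_mont}, and the identical mean comparison (now $\E[\wt G_{\dir-}(x)]=\lim_{\alpha\nearrow\dir}\sigma^2\alpha x=\sigma^2\dir x$) forces $\wt G_{\dir-}=\wt G_\dir$ a.s.\ for each fixed $\dir$.
For \ref{itm:SH_drifts}: on the good event, squeeze $\wt G_\dir(x)$ (and $\wt G_{\dir-}(x)$) between $\wh G_\alpha(x)$ and $\wh G_\beta(x)$ for rationals $\alpha<\dir<\beta$, divide by $x$, send $|x|\to\infty$, then let $\alpha\uparrow\dir$ and $\beta\downarrow\dir$; this holds simultaneously for all $\dir$.

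The main obstacle is Stage 1: the consistency identity is where the integrable queuing structure is genuinely used, and arranging the bookkeeping so that an arbitrary deleted drift can be commuted into position and removed via Theorem \ref{thm:OCY_DR} is the delicate point (this is the part of the development adapted from \cite{Seppalainen-Sorensen-21a,Seppalainen-Sorensen-21b}).
Stage 2 and the verification of \ref{itm:SHBM}--\ref{itm:SH_drifts} are comparatively soft, but they lean essentially on the increment order $\li$ being both intrinsic to the finite-dimensional law (via $\X_n^\R$) and stable under limits---which is exactly why $D(\R,C(\R))$, rather than the space $C(\R,C(\R))$, is the correct state space here.
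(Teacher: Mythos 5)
Your proposal is correct and follows essentially the same route as the paper: consistency of the $\mu_\sigma^{\bar\dir}$ via the queuing reordering identity (the paper's Lemma \ref{analogue of Lemma 4.4 in Joint Buse Paper} and the intertwining relation \eqref{intertwining}, packaged as Lemma \ref{weak continuity and consistency}\ref{consistency}), Kolmogorov extension, monotone limits along rational drifts upgraded to uniform-on-compacts convergence by the increment-order sandwich, and the ``ordered processes with equal one-dimensional marginals coincide'' argument for items \ref{itm:SH_cont} and the identification of the modification. The only cosmetic difference is that you extend over the full index set $\R$ and then reconcile the c\`adl\`ag modification with the original marginals via the mean-zero argument, whereas the paper extends only over $\Q$ and defines the process at irrational drifts directly by limits; both are sound.
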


We call the process $G^\sigma$ the \textit{stationary horizon} with diffusivity $\sigma$ (or SH for short). The most common choices for $\sigma$ are $1$ (where the SH is the unique coupled invariant measure for Brownian last-passage percolation) and $\sqrt 2$ (where the SH is the unique coupled invariant measure for the KPZ fixed point). These invariances are both discussed in further detail in the following chapter. The parameterization of the SH first constructed in \cite{Busani-2021} is $\{G^{2}_{\dir/4}\}_{\dir \in \R}$. The choice of parameterization bears no real significance, as Item \ref{itm:SHscale} of the following theorem shows we can obtain one process as a simple scaling of another. 

\begin{theorem} \label{thm:SH_dist_invar}
The SH $G^\sigma$ satisfies the following. 
\begin{enumerate} [label={\rm(\roman*)}, ref={\rm(\roman*)}]   \itemsep=3pt 
\item \label{itm:shinv} Translation invariance: for each $x\in\R$, $\{G^\sigma_\dir(x,x + \aabullet)\}_{\dir \in \R} \deq G^\sigma$.

\item\label{itm:SHscale} Scaling invariance: let $b,c,\sigma > 0$ and $\nu \in \R$. Then,
\[
\{b G^\sigma_\dir(c^2 \aabullet) - (bc\sigma)^2 \nu \aabullet \}_{\dir \in \R} \deq \{G^{bc\sigma}_{\dir/b - \nu} 
 \}_{\dir \in \R}.
\]
In particular, $\{G_\dir^\sigma(c^2 \abullet)\}_{\dir \in \R} \deq G^{c\sigma}$, and $\{c^{-1}G_\dir^\sigma(c^2 \abullet)\}_{\dir \in \R} \deq \{G^\sigma_{c\dir}\}_{\dir \in \R}$
\item \label{itm:SH_inc_stat} Stationarity of increments: For $\dir_1 < \dir_2 < \cdots < \dir_n$ and $\dir^\star\in \R$,
\[
(G^\sigma_{\dir_2} - G^\sigma_{\dir_1},\ldots,G^\sigma_{\dir_n} - G^\sigma_{\dir_{n - 1}}) \deq (G^\sigma_{\dir_2 + \dir^\star} - G^\sigma_{\dir_1 + \dir^\star},\ldots,G^\sigma_{\dir_n + \dir^\star} - G^\sigma_{\dir_{n - 1} + \dir^\star}).
\]
\item \label{itm:SH_reflinv} Reflection invariance: $\{G^\sigma_{(-\dir)-}(-\aabullet)\}_{\dir \in \R} \deq G^\sigma$.
\end{enumerate}
\end{theorem}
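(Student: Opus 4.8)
\emph{Common reduction.} By Proposition~\ref{prop:SH_cons}\ref{itm:SH_dist}, $\Pp^\sigma$ is the unique law on $D(\R,C(\R))$ with finite-dimensional distributions $\mu_\sigma^{\bar\dir}$, and $\mu_\sigma^{\bar\dir}$ is by definition the pushforward under $\D^{(n)}$ of the independent-Brownian-motion law $\nu_\sigma^{\bar\dir}$. So for each of \ref{itm:shinv}--\ref{itm:SH_reflinv} it suffices to check that the displayed transformation of $G^\sigma$ is still a random element of $D(\R,C(\R))$ and to identify its finite-dimensional distributions, and the latter reduces to an equivariance of the iterated queuing map $D^{(n)}$ of \eqref{Diter} paired with the corresponding invariance of $\nu_\sigma^{\bar\dir}$.

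\emph{Items \ref{itm:shinv}, \ref{itm:SHscale}, \ref{itm:SH_inc_stat}.} I would record three equivariances of the basic map $D$ of \eqref{Ddef}, all immediate because $Q(Z,B)(y)=\sup_{w\le y}\{B(w,y)-Z(w,y)\}$ depends on $Z,B$ only through increments: recentering, $D\bigl(Z(x,x{+}\aabullet),B(x,x{+}\aabullet)\bigr)=D(Z,B)(x,x{+}\aabullet)$; diffusive scaling, $D\bigl(bZ(c^2\aabullet),bB(c^2\aabullet)\bigr)=bD(Z,B)(c^2\aabullet)$ for $b,c>0$; and the fact that adding a common linear drift to both arguments adds that drift to the output. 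Each passes to $D^{(n)}$ by induction along \eqref{Diter} and preserves membership in $\Y_n^\R$. Then \ref{itm:shinv} follows since recentering fixes $\nu_\sigma^{\bar\dir}$; \ref{itm:SHscale} follows by applying diffusive scaling and then the linear-drift shift by $-(bc\sigma)^2\nu$, which sends the inputs of diffusivity $\sigma$ and drifts $\sigma^2\dir_i$ to independent Brownian motions of diffusivity $bc\sigma$ and drifts $(bc\sigma)^2(\dir_i/b-\nu)$, still increasing since $b>0$, whose $\D^{(n)}$-image is $\mu_{bc\sigma}^{(\dir_1/b-\nu,\dots,\dir_n/b-\nu)}$ (the two special cases being $(b,\nu)=(1,0)$ and $(b,\nu)=(c^{-1},0)$); and \ref{itm:SH_inc_stat} follows by coupling the two drift vectors through adding the common linear drift $\sigma^2\dir^\star$ to all inputs, which by the third equivariance shifts each coordinate $D^{(k)}(Z^k,\dots,Z^1)$ by that linear function, a shift that cancels in every consecutive difference. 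In each case the transformation of $D(\R,C(\R))$ is a coordinatewise continuous map composed with a relabelling of the parameter, so membership in $D(\R,C(\R))$ is automatic.

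\emph{Item \ref{itm:SH_reflinv}.} This is the substantive case. I would first remove the left limits: by Proposition~\ref{prop:SH_cons}\ref{itm:SH_cont}, for each fixed $\dir$ one has $G^\sigma_{(-\dir)-}=G^\sigma_{-\dir}$ a.s., so for fixed $\dir_1<\dots<\dir_n$ it holds a.s.\ that $(\Rf G^\sigma_{(-\dir_1)-},\dots,\Rf G^\sigma_{(-\dir_n)-})=(\Rf G^\sigma_{-\dir_1},\dots,\Rf G^\sigma_{-\dir_n})$, while the cadlag structure in $\dir$ makes $\dir\mapsto\Rf G^\sigma_{(-\dir)-}$ a random element of $D(\R,C(\R))$. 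Thus by the uniqueness in Proposition~\ref{prop:SH_cons}\ref{itm:SH_dist} it suffices to prove, for $\dir_1<\dots<\dir_n$,
\[
(\Rf G^\sigma_{-\dir_1},\dots,\Rf G^\sigma_{-\dir_n})\ \deq\ (G^\sigma_{\dir_1},\dots,G^\sigma_{\dir_n}).
\]
Reversing the order of coordinates and using $(G^\sigma_{-\dir_n},\dots,G^\sigma_{-\dir_1})\sim\mu_\sigma^{(-\dir_n,\dots,-\dir_1)}$, this becomes a reflection symmetry of the map $\D^{(n)}$: applying $\Rf$ coordinatewise to $\D^{(n)}$ of independent Brownian motions with increasing drifts $\sigma^2\dir_i$, then reversing coordinates, produces something with the law of $\D^{(n)}$ of independent Brownian motions with increasing drifts $-\sigma^2\dir_{n+1-i}$. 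I would prove this from two ingredients: (1) Lemma~\ref{DRbij}, which states that the Brownian queuing map $(Z,B)\mapsto(D(Z,B),R(Z,B))$ of \eqref{Ddef}--\eqref{Rdef} is a bijection between the relevant spaces of function pairs and that conjugating it by the reflect-and-swap operation $(f,g)\mapsto(\Rf g,\Rf f)$ gives its inverse, together with the distributional content of Theorem~\ref{thm:OCY_DR}, in particular its nested-independence clause; and (2) a representation of $\D^{(n)}$ as the output of a triangular array built from the basic map $(Z,B)\mapsto(D(Z,B),R(Z,B))$, the Brownian analogue of the multiline/triangular-array construction used for exponential LPP in \cite{Fan-Seppalainen-20}, arranged so that reading the array from the opposite corner coincides with reflecting every entry and reversing the order of the lines. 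Feeding the reflection identity of Lemma~\ref{DRbij} into every node of the array, and using Theorem~\ref{thm:OCY_DR} to verify that each node preserves the relevant independent-Brownian structure and to transport the joint law through the array, yields the displayed identity; the case $n=2$ already exhibits the mechanism, the single queuing step being handled directly by Lemma~\ref{DRbij} and Theorem~\ref{thm:OCY_DR}.

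\emph{Main obstacle.} Parts \ref{itm:shinv}--\ref{itm:SH_inc_stat} are routine once the reduction to finite-dimensional distributions is in place. The work is concentrated in \ref{itm:SH_reflinv}: stating and proving the bijection-with-reflection identity of Lemma~\ref{DRbij} on the correct state spaces --- with the drift orderings and $x\to-\infty$ asymptotics that keep each supremum in $Q$ finite and the maps well defined on $\Y_n^\R$, $\X_n^\R$ --- and, above all, organizing the triangular array so that the reflect-and-reverse symmetry is transparent and the drifts and the independence propagate correctly through all of its nodes. Keeping track of which Brownian motion, carrying which drift, sits at each node, and checking that reflection carries an increasing-drift configuration to a correctly ordered decreasing-drift one, is the delicate bookkeeping.
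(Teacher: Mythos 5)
Your proposal is correct and follows essentially the same route as the paper: the paper reduces the theorem to the invariances of the finite-dimensional measures $\mu_\sigma^{\bar\dir}$ (Lemma~\ref{weak continuity and consistency}), proving shift and scaling invariance from the explicit increment representation of $D^{(n)}$ in Lemma~\ref{identity for multiple queueing mappings} (your observation that $Q$ depends only on increments), deriving increment-stationarity as the $b=c=1$, $\nu=-\dir^\star$ case of the scaling relation (equivalent to your common-linear-drift coupling), and proving reflection invariance exactly via the triangular array \eqref{eqn:triang_array}, the bijection of Lemma~\ref{DRbij}, and the distributional propagation of Lemma~\ref{lem:triang_dist}/Theorem~\ref{thm:OCY_DR}. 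Your explicit handling of the left limits in \ref{itm:SH_reflinv} via Proposition~\ref{prop:SH_cons}\ref{itm:SH_cont} is a point the paper leaves implicit, but nothing in your outline diverges from its argument.
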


The rest of this section is devoted to proving Proposition \ref{prop:SH_cons} and Theorem \ref{thm:SH_dist_invar}. The following subsections develop the theory of the queuing mappings and the measures $\mu_\sigma^{\bar \dir}$, which describe the finite-dimensional distributions of the process. We show these measures are consistent (Theorem \ref{weak continuity and consistency}\ref{consistency}), then build the measures on $D(\R,C(\R))$ through limiting procedures. The proofs of Proposition \ref{prop:SH_cons} and Theorem \ref{thm:SH_dist_invar} come in Section \ref{sec:SH_cons_proof}. 
\subsection{Deterministic properties of the queuing mappings} \label{sec:queue_proofs} 

We now prove some accessible properties of the queuing mappings. We note here that all lemmas in this subsection are deterministic statements about continuous functions.
\begin{lemma}\label{DRcont}
    Assume $Z,B \in C(\R)$ satisfy $\lim_{x \to -\infty}[Z(x) - B(x)] = -\infty$. Then, for $x < y$,
    \begin{align*}
    D(Z,B)(x,y) &= B(x,y) +\Bigl(\sup_{x \le u \le y}\{Z(x,u) - B(x,u)\} - \sup_{-\infty < u \le x}\{Z(x,u) - B(x,u)\}\Bigr)^+,
    \end{align*}
    and 
    \begin{align*}
    R(Z,B)(x,y) &= Z(x,y) -\Bigl(\sup_{x \le u \le y}\{Z(x,u) - B(x,u)\} - \sup_{-\infty < u \le x}\{Z^1(x,u) - B^1(x,u)\}\Bigr)^+,
    \end{align*}
    Furthermore, $D(Z,B)$ and $R(Z,B)$ are continuous.  
\end{lemma}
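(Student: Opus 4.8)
The plan is to start from the defining formula \eqref{Qdef} and rewrite $Q(Z,B)$ in a way that separates the supremum into a part over $(-\infty,x]$ and a part over $[x,y]$. First I would observe that for $y \ge x$ the increment $Q(Z,B)(x,y) = Q(Z,B)(y) - Q(Z,B)(x)$ can be handled by splitting the range of the supremum defining $Q(Z,B)(y)$ at the point $x$: writing $B(u,y) - Z(u,y) = (B(u,x) - Z(u,x)) + (B(x,y) - Z(x,y))$ for $u \le x$ and $B(u,y) - Z(u,y) = B(x,y) - Z(x,y) - (Z(x,u) - B(x,u))$ for $x \le u \le y$, one gets
\[
Q(Z,B)(y) = B(x,y) - Z(x,y) + \max\Bigl\{Q(Z,B)(x),\ \sup_{x \le u \le y}\{Z(x,u) - B(x,u)\}^{\,}\Bigr\}
\]
where I have also used $Q(Z,B)(x) = \sup_{-\infty < u \le x}\{B(u,x) - Z(u,x)\} = \sup_{-\infty < u \le x}\{Z(x,u) - B(x,u)\}$ after relabeling. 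Subtracting $Q(Z,B)(x)$ and using $\max\{a,b\} - a = (b - a)^+$ then yields exactly the claimed formula for $D(Z,B)(x,y) = Z(x,y) + Q(Z,B)(x,y)$, since the $-Z(x,y)$ cancels the $Z(x,y)$ in $Z(y) - Z(x)$. The formula for $R(Z,B)(x,y) = B(x,y) - Q(Z,B)(x,y)$ follows from the same identity, now keeping $B(x,y)$ and picking up a minus sign on the $(\cdot)^+$ term; I would just note that the stray superscripts $Z^1, B^1$ in the displayed statement are typos for $Z, B$.

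For the continuity claim, the key point is that $Q(Z,B)$ is continuous. The finiteness of the supremum is guaranteed by the hypothesis $\lim_{x \to -\infty}[Z(x) - B(x)] = -\infty$, which forces the supremum in \eqref{Qdef} to be attained on a bounded set (for $y$ in any compact interval the relevant range of $x$ is bounded below). Then I would argue continuity of $y \mapsto Q(Z,B)(y)$ directly: on a compact interval $[-M, M]$, one can replace the supremum over $(-\infty, y]$ by a supremum over $[-N, y]$ for $N$ large enough depending only on $M$ (using the $-\infty$ decay hypothesis to discard the tail), and $y \mapsto \sup_{-N \le x \le y}\{B(x,y) - Z(x,y)\}$ is continuous because it is a supremum of a jointly continuous function over a range that varies continuously in $y$ (a standard uniform-continuity-on-compacts argument). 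Since $Z$ and $B$ are continuous and $D, R$ are built from $Z$, $B$, and $Q(Z,B)$ by addition and subtraction, continuity of $D(Z,B)$ and $R(Z,B)$ follows immediately.

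The only mildly delicate step is the reduction of the infinite supremum to a supremum over a compact set uniformly for $y$ in a compact interval; everything else is bookkeeping with the increment notation $f(x,y) = f(y) - f(x)$. I expect the main obstacle to be purely organizational — keeping the split-at-$x$ algebra straight and making sure the $(\cdot)^+$ appears with the correct sign in each of the two formulas — rather than anything requiring a new idea.
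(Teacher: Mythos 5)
Your derivation of the identity is essentially the paper's: both split the supremum defining $Q(Z,B)(y)$ at the point $x$, pass to increments, and use $\max\{a,b\}-a=(b-a)^+$, so that part matches. The only divergence is the continuity step: where you truncate the supremum to a compact range $[-N,y]$ uniformly for $y$ in a compact interval and invoke continuity of a supremum of a jointly continuous function, the paper instead squeezes the increment directly, observing that
\[
0 \le \Bigl(\sup_{x \le u \le y}\{Z(x,u) - B(x,u)\} - \sup_{-\infty < u \le x}\{Z(x,u) - B(x,u)\}\Bigr)^+ \le \sup_{x \le u \le y}\{Z(x,u) - B(x,u)\},
\]
since the subtracted supremum is $\ge 0$ (take $u=x$), and the right-hand side tends to $0$ as $y \downarrow x$ by continuity of $Z$ and $B$; this gives continuity of $D(Z,B)$ and $R(Z,B)$ without any tail estimate. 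Both routes are valid; the paper's is a bit shorter and sidesteps the "mildly delicate" uniform truncation you flag, while yours has the minor side benefit of establishing continuity of $Q(Z,B)$ itself.
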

\begin{proof}
    From the definition of $D$ \eqref{Ddef}, for $x < y$,
    \begin{align}
    D(Z,B)(x,y) &= Z(x,y) + \sup_{-\infty < u \le y}\{B(u,y) - Z(u,y)\} - \sup_{-\infty < u \le x}\{B(u,x) - Z(u,x)\} \nonumber\\
    &= B(x,y) + \sup_{-\infty < u \le y}\{Z(u) - B(u)\} - \sup_{-\infty < u \le x}\{Z(u) - B(u)\} \label{DZB2} \\
    &= B(x,y) + \sup_{-\infty < u \le y}\{Z(x,u) - B(x,u)\} - \sup_{-\infty < u \le x}\{Z(x,u) - B(x,u)\} \nonumber \\
    &= B(x,y) + \Bigl(\sup_{x \le u \le y}\{Z(x,u) - B(x,u)\} - \sup_{-\infty < u \le x}\{Z(x,u) - B(x,u)\} \Bigr)^+. \nonumber
    \end{align}
    A similar calculation shows the identity for $R(Z,B)$. To show continuity, we observe that 
    \begin{align*}
     0 &\le \Bigl(\sup_{x \le u \le y}\{Z(x,u) - B(x,u)\} - \sup_{-\infty < u \le x}\{Z(x,u) - B(x,u)\} \Bigr)^+ \\
     &\le  \Bigl(\sup_{x \le u \le y}\{Z(x,u) - B(x,u)\}  \Bigr)^+ = \sup_{x \le u \le y}\{Z(x,u) - B(x,u)\}. 
    \end{align*} 
    where the second inequality follows because 
    \[
    \sup_{-\infty < u \le x}\{Z(x,u) - B(x,u)\} \ge Z(x,x) - B(x,x) =  0,
    \]
    and the last equality follows by similar reasoning. The continuity of $D(Z,B)$ and $R(Z,B)$ now follows from the continuity of $Z$ and $B$.
\end{proof}

\begin{lemma} \label{identity for multiple queueing mappings}
For $(Z^1,\ldots,Z^n) \in C(\R,\R^n)$ with $Z^i(0) = 0$ for $1 \le i \le n$, define
\[
A^{\mathbf Z}_{n}(y) = \sup_{-\infty < x_{n - 1} \leq x_{n - 2} \le \cdots \leq x_{1} \leq y} \Big\{\sum_{i = 1}^{n - 1} Z^{i + 1}(x_{i}) - Z^{i}(x_{i})   \Big\}.
\]
Then, for $n \geq 2$, if $A_n^{\mathbf Z}(0)$ is finite, 
\begin{align*}
D^{(n)}(Z^n,Z^{n - 1},\ldots, Z^1)(y) 
= &Z^1(y) + A^{\mathbf Z}_{n}(y) - A^{\mathbf Z}_{n}(0).
\end{align*}
Furthermore, $R(Z^2,Z^1)(y) = Z^2(y) + A_2^{Z^1,Z^2}(0) - A_n^{Z^1,Z^2}(y)$.
\end{lemma}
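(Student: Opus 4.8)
The goal is to prove Lemma~\ref{identity for multiple queueing mappings}: an explicit identity expressing the iterated departure map $D^{(n)}$ in terms of a single supremum $A^{\mathbf Z}_n$ over nested variables, together with the companion identity for $R(Z^2,Z^1)$.

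\textbf{Overall strategy.} The plan is to proceed by induction on $n$, using the case $n=2$ as the base and the recursive definition \eqref{Diter} to pass from $n-1$ to $n$. The key tool throughout is the second line \eqref{DZB2} of the computation in Lemma~\ref{DRcont}, which rewrites $D(Z,B)(y)$, for functions vanishing at $0$, as
\[
D(Z,B)(y) = B(y) + \sup_{-\infty < u \le y}\{Z(u) - B(u)\} - \sup_{-\infty < u \le 0}\{Z(u) - B(u)\}.
\]
For the base case $n=2$, this is exactly the claimed formula with $A^{\mathbf Z}_2(y) = \sup_{-\infty < x_1 \le y}\{Z^2(x_1) - Z^1(x_1)\}$, since $D^{(2)}(Z^2,Z^1) = D(Z^2,Z^1)$ and here the roles are $Z \leftrightarrow Z^2$, $B \leftrightarrow Z^1$. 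The companion identity for $R(Z^2,Z^1)$ follows the same way from the $R$-analogue of \eqref{DZB2} (derived identically from \eqref{Rdef}), giving $R(Z^2,Z^1)(y) = Z^2(y) - \sup_{-\infty<u\le y}\{Z^2(u)-Z^1(u)\} + \sup_{-\infty<u\le 0}\{Z^2(u)-Z^1(u)\} = Z^2(y) + A_2^{Z^1,Z^2}(0) - A_2^{Z^1,Z^2}(y)$.

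\textbf{Inductive step.} Assume the formula holds for $n-1$, i.e.\ writing $W = D^{(n-1)}(Z^n,\ldots,Z^2)$ we have $W(y) = Z^2(y) + A^{\mathbf Z'}_{n-1}(y) - A^{\mathbf Z'}_{n-1}(0)$ where $\mathbf Z' = (Z^2,\ldots,Z^n)$ and $A^{\mathbf Z'}_{n-1}(y) = \sup_{-\infty<x_{n-1}\le\cdots\le x_2\le y}\sum_{i=2}^{n-1}\{Z^{i+1}(x_i)-Z^i(x_i)\}$. By \eqref{Diter}, $D^{(n)}(Z^n,\ldots,Z^1) = D(W,Z^1)$, and applying the base-case form of \eqref{DZB2} with $Z\leftrightarrow W$, $B\leftrightarrow Z^1$ (noting $W(0)=Z^2(0)=0$, and that finiteness of $A^{\mathbf Z}_n(0)$ guarantees the relevant suprema are finite so the formula applies):
\[
D(W,Z^1)(y) = Z^1(y) + \sup_{-\infty<u\le y}\{W(u)-Z^1(u)\} - \sup_{-\infty<u\le 0}\{W(u)-Z^1(u)\}.
\]
Now substitute the inductive expression for $W(u)$. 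The constant $-A^{\mathbf Z'}_{n-1}(0)$ cancels between the two suprema, so
\[
\sup_{-\infty<u\le y}\{W(u)-Z^1(u)\} - A^{\mathbf Z'}_{n-1}(0) = \sup_{-\infty<u\le y}\Bigl\{Z^2(u)-Z^1(u) + \sup_{-\infty<x_{n-1}\le\cdots\le x_2\le u}\sum_{i=2}^{n-1}\{Z^{i+1}(x_i)-Z^i(x_i)\}\Bigr\},
\]
and pulling the inner supremum out (with $u$ playing the role of $x_1$, and the constraint $x_2\le u$ exactly nesting with the new variable) this equals $A^{\mathbf Z}_n(y)$. The same manipulation at $y=0$ gives the $-A^{\mathbf Z}_n(0)$ term, completing the induction.

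\textbf{Anticipated obstacle.} The routine algebra of relabeling variables and cancelling the constants is straightforward; the main point requiring care is the \emph{bookkeeping of finiteness and the domain of applicability}. One must check that the hypothesis "$A^{\mathbf Z}_n(0)$ finite" propagates down so that at every stage of the induction the relevant one-sided suprema $\sup_{-\infty<u\le 0}\{W(u)-Z^1(u)\}$ are finite (equivalently $\limsup_{x\to-\infty}[W(x)-Z^1(x)]=-\infty$), which is the standing hypothesis needed to invoke \eqref{DZB2} and Lemma~\ref{DRcont}. Since $A^{\mathbf Z}_n(0) \ge A^{\mathbf Z'}_{n-1}(0) + \sup_{-\infty<u\le 0}\{Z^2(u)-Z^1(u)\}$ and each summand is bounded below (by taking all $x_i=0$), finiteness of the full sup forces finiteness of each piece, so the induction is legitimate; spelling this monotonicity out cleanly is the one place to be attentive. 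The interchange of the nested suprema (combining $\sup_u$ with $\sup_{x_{n-1}\le\cdots\le x_2\le u}$ into a single $\sup$ over the enlarged nested region) is a standard fact about iterated suprema and needs only a one-line justification.
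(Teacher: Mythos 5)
Your proposal is correct and follows essentially the same route as the paper's proof: induction on $n$ using the recursive definition \eqref{Diter}, the base case and the $R$-identity from \eqref{DZB2}, substitution of the inductive hypothesis into the outer supremum with cancellation of the constant term, and merging of the nested suprema. Your extra attention to propagating the finiteness hypothesis is a sensible addition but does not change the argument.
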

\begin{proof}
The $n = 2$ case follows from \eqref{DZB2} (observe that the assumption $Z^i(0) = 0$ is crucial).
The statement for the map $R$ follows analogously. 
Now, assume that the statement is true for some $n \ge 2$.  By definition of $D^{(n +1)}$ \eqref{Diter} and the $n = 2$ case,
\begin{align*}
 &\quad \;D^{(n + 1)}(Z^{n + 1},\ldots,Z^1)(y) = D(D^{(n)}(Z^{n + 1},\ldots,Z^2),Z^1)(y) \\
&= Z^1(y) + \sup_{-\infty < x  \le y}\{D^{(n)}(Z^{n + 1},\ldots,Z^2)(x) - Z^1(x)\} \\
&\qquad\qquad\qquad\qquad\qquad - \sup_{-\infty < x  \le 0}\{D^{(n)}(Z^{n + 1},\ldots,Z^2)(x) - Z^1(x)\} \\
&= Z^1(y) + \sup_{-\infty < x \le y}\Biggl\{Z^2(x) + \sup_{-\infty < x_n \le \cdots \le x_{2} \le  x}\Bigl\{\sum_{i = 2}^{n} Z^{i + 1}(x_{i}) - Z^{i}(x_{i})   \Bigr\} - Z^1(x)  \Biggr\} \\
&\qquad\qquad - \sup_{-\infty < x \le 0}\Biggl\{Z^2(x) + \sup_{-\infty < x_n \le \cdots \le x_{2} \le  x}\Bigl\{ \sum_{i = 2}^{n} Z^{i + 1}(x_{i}) - Z^{i}(x_{i})   \Bigr\} - Z^1(x)  \Biggr\}  \\
&= Z^1(y) + A_n^{\mbf Z}(y) - A_n^{\mbf Z}(0),
\end{align*}
where in the third line, the term $\sup_{-\infty < x_n \le \cdots \le x_{2} \le  0}\Bigl\{\sum_{i = 2}^{n} Z^{i + 1}(x_{i}) - Z^{i}(x_{i})   \Bigr\}$ is canceled out from both terms by subtraction. 
\end{proof}

\begin{lemma} \label{uniform convergence of queueing mapping}
Let $a < b$, and let $Z_k,B_k \in \CRpin$ be sequences such that $Z_k \rightarrow Z$ and $B_k \rightarrow B$ uniformly on compact sets. Assume further that 
\begin{equation} \label{eqn:lim_cond}
\limsup_{\substack{x \rightarrow -\infty \\ k \rightarrow \infty} } \biggl|\f{1}{x} Z_k(x) - b \,\biggr| = 0 = \limsup_{\substack{x \rightarrow -\infty \\ k \rightarrow \infty} } \biggl|\f{1}{x} B_k(x) - a \,\biggr|.
\end{equation}
Then, $Z' := D(Z,B),B' := R(Z,B), Z_k' := D(Z_k,B_k)$, and $ B_k' := R(Z_k,B_k)$ are well-defined for sufficiently large $k$. Furthermore,  
\[
\lim_{k \rightarrow \infty} Z_k' = Z' \qquad\text{and}\qquad \lim_{k \rightarrow \infty} B_k' = B',
\]
in the sense of uniform convergence on compact subsets of $\R$, and
\[
\limsup_{\substack{x \rightarrow -\infty \\ k \rightarrow \infty} } \biggl|\f{1}{x} Z_k'(x) - b \biggr| = 0 = \limsup_{\substack{x \rightarrow -\infty \\ k \rightarrow \infty} } \biggl|\f{1}{x} B_k'(x) - a \biggr|.
\]
\end{lemma}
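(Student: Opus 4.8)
The plan is to leverage the explicit formula for $D$ and $R$ from Lemma \ref{DRcont}, which expresses these maps purely in terms of suprema of increments of $Z$ and $B$, so that the convergence question reduces to a question about uniform convergence of such suprema. First I would record that, under the hypothesis \eqref{eqn:lim_cond} with $b > a$, the quantities $Z_k(x) - B_k(x)$ tend to $-\infty$ as $x \to -\infty$, uniformly in $k$ large, since $\frac{1}{x}(Z_k(x) - B_k(x)) \to b - a > 0$ uniformly as $x \to -\infty$, $k \to \infty$; consequently $\limsup_{x \to -\infty}[Z_k(x) - B_k(x)] = -\infty$ for all sufficiently large $k$ (and for $Z, B$ themselves), so $Z_k', B_k', Z', B'$ are well-defined by the standing assumption of Lemma \ref{DRcont}. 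Quantitatively, there exist $k_0$, $M_0$, and $\delta > 0$ such that $Z_k(x) - B_k(x) \le \delta x$ for all $x \le -M_0$ and all $k \ge k_0$ (and likewise for $Z, B$).

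Next I would fix a compact set, say $[-N, N]$, and a point $x \in [-N,N]$, and analyze the bracketed expression
\[
E_k(x,y) := \sup_{x \le u \le y}\{Z_k(x,u) - B_k(x,u)\} - \sup_{-\infty < u \le x}\{Z_k(x,u) - B_k(x,u)\}
\]
appearing in Lemma \ref{DRcont}. The first supremum is over a compact $u$-range and is a continuous functional of $(Z_k, B_k)$ in the uniform-on-compacts topology, so it converges uniformly on $[-N,N]^2$ to the corresponding quantity for $(Z,B)$ by a standard $\varepsilon$-argument. For the second, infinite, supremum the key point is that the tail $u \le -M$ contributes $Z_k(x,u) - B_k(x,u) = (Z_k(x) - B_k(x)) - (Z_k(u) - B_k(u)) \le (Z_k(x) - B_k(x)) - \delta u$, which, since $-\delta u \to +\infty$ is linear while $x$ is bounded, is actually irrelevant only in the wrong direction — so I must instead note that $\sup_{u \le x}\{Z_k(x,u) - B_k(x,u)\}$ is attained (up to $\varepsilon$) on a compact range $u \in [-M, x]$ with $M$ depending only on $N, \delta, M_0$ and the uniform bound on $\sup_{[-N,N]}|Z_k|, |B_k|$ for $k \ge k_0$ (all of which are uniformly bounded because $Z_k \to Z$, $B_k \to B$ uniformly on compacts). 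On that fixed compact range the supremum is again a continuous functional, giving uniform convergence. Combining, $E_k \to E$ uniformly on $[-N,N]^2$, and then from the Lemma \ref{DRcont} formulas $Z_k'(x,y) = B_k(x,y) + (E_k(x,y))^+$ and the analogous one for $B_k'$, together with $Z_k'(0) = Z'(0)$-type normalizations handled via $Z_k'(y) = Z_k'(0,y)$... — more carefully, since $D(Z,B)(y) = Z(y) + Q(Z,B)(y) - Q(Z,B)(0)$ and we only directly control increments, I would first prove uniform convergence of the increments $Z_k'(x,y) \to Z'(x,y)$ on $[-N,N]^2$, then use $Z_k'(0) = Z_k(0) + Q(Z_k,B_k)(0) - Q(Z_k,B_k)(0) = Z_k(0) \to Z(0)$ — actually $Z_k(0)$ need not be $0$, but $Z_k'(y) = Z_k'(0) + Z_k'(0,y)$ and $Z_k'(0) \to Z'(0)$ follows from convergence of the single value $Q(Z_k,B_k)(0)$, which is itself the $M$-compactified supremum argument at $x = 0$. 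This yields $Z_k' \to Z'$ and $B_k' \to B'$ uniformly on compacts.

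Finally, for the drift statement I would use the representation $Z_k'(x,y) = B_k(x,y) + (E_k(x,y))^+$ with $y$ fixed (say $y = 0$) and $x \to -\infty$: divide by $x$ and show $\frac{1}{x}(E_k(x,0))^+ \to 0$ uniformly as $x \to -\infty$, $k \to \infty$. The bound $0 \le (E_k(x,0))^+ \le \sup_{x \le u \le 0}\{Z_k(x,u) - B_k(x,u)\}$ from the proof of Lemma \ref{DRcont}, combined with $Z_k(x,u) - B_k(x,u) = (Z_k(x) - B_k(x)) - (Z_k(u) - B_k(u))$ and the hypothesis \eqref{eqn:lim_cond}, shows this supremum is $o(x)$ uniformly in $k$: indeed $\frac{1}{x} Z_k(x) \to b$ and $\frac{1}{x} B_k(x) \to a$ uniformly, and the subtracted term $Z_k(u) - B_k(u)$ ranges, for $u \in [x, 0]$, between a uniformly-controlled value and something that makes the whole increment at most comparable to $(b-a)|x| \cdot$(something bounded) — here I must be slightly careful: write $\sup_{x \le u \le 0}\{Z_k(x) - Z_k(u) - B_k(x) + B_k(u)\} \le (Z_k(x) - B_k(x)) + \sup_{x \le u \le 0}\{B_k(u) - Z_k(u)\} \le (Z_k(x) - B_k(x)) + \sup_{x \le u \le 0}(-\delta u) = (Z_k(x) - B_k(x)) + \delta|x|$, and dividing by $x < 0$ gives $\ge \frac{1}{x}(Z_k(x) - B_k(x)) - \delta \to (b-a) - \delta$; this lower bound combined with the trivial upper bound $\frac{1}{x}(E_k(x,0))^+ \le 0$ is not enough, so instead I bound $(E_k(x,0))^+ \le \max(0, \sup_{x \le u \le 0}\{Z_k(x,u) - B_k(x,u)\})$ and estimate $\frac1x$ of the right side from both sides using \eqref{eqn:lim_cond} to pin it between two quantities converging to the same limit, concluding $\frac1x Z_k'(x) \to b$; the argument for $B_k'$ via $R$ is symmetric. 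The main obstacle is exactly this last uniform-tail control — making rigorous that the infinite supremum in $Q$ is, uniformly in $k$ and uniformly for $x$ in a compact set, effectively a supremum over a fixed compact $u$-interval, and that the $k\to\infty$ and $x\to -\infty$ limits in \eqref{eqn:lim_cond} interact correctly; once that uniformity is in hand, everything else is continuity of finite-range suprema plus the elementary bounds already isolated in the proof of Lemma \ref{DRcont}.
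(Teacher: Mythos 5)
Your proposal follows essentially the same route as the paper's proof: well-definedness from the two-sided linear bounds that \eqref{eqn:lim_cond} gives on $Z_k(x)-B_k(x)$, reduction of everything to the supremum functional, the observation that the infinite supremum is (uniformly in $k$ and in the compact spatial variable) effectively a supremum over a fixed compact interval, and the drift from the same linear bounds. Two remarks on execution. First, the paper avoids your normalization bookkeeping by using the identity $Z_k'(y)=B_k(y)+Q_k(y)-Q_k(0)$ with $Q_k(y):=\sup_{-\infty<u\le y}\{Z_k(u)-B_k(u)\}$ (Lemma \ref{identity for multiple queueing mappings}) rather than the increment formula of Lemma \ref{DRcont}; this also makes the drift statement immediate, since for $y$ and $k$ large the two-sided bounds $(b-a+2\ve)y< Z_k(y)-B_k(y)\le Q_k(y)\le (b-a-2\ve)y$ give $\f{1}{y}Q_k(y)\to b-a$ and hence $\f{1}{y}Z_k'(y)\to a+(b-a)=b$. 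Second, your opening claim in the drift step, that $\f1x\bigl(E_k(x,0)\bigr)^+\to 0$, is false: since $Z_k'(x)=B_k(x)-\bigl(E_k(x,0)\bigr)^+$ under your conventions, that quantity must converge to $a-b\neq 0$. You do notice the problem and your fallback (pin the supremum between two quantities with the same limit) is the correct idea --- it is exactly the two-sided bound on $Q_k$ above in disguise --- but as written it is left at the level of a plan, so make sure to carry out that two-sided estimate explicitly.
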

\begin{proof}
We first show that for all $k$ sufficiently large,
\be \label{itlimcond}
\limsup_{x \rightarrow -\infty} [Z_k(x) - B_k(x)] = \limsup_{x \rightarrow -\infty} [Z(x) - B(x)] = -\infty. 
\ee
Let $2\ve < b - a$. The assumption \eqref{eqn:lim_cond} implies that there exists $R> 0$ so that whenever $k > R$ and $x < -R$,  $\f{1}{x}Z_k(x) > b -\ve$ and $\f{1}{x}B_k(x) < a + \ve$. Then, \be \label{eqn:ZBklim}
Z_k(x) - B_k(x) < (b - a -2\ve)x \quad \text{for all }-x,k > R,
\ee
and this goes to $-\infty$ as $x \to -\infty$. Now, assume, to the contrary,  that  \\ $\limsup_{x \to -\infty}[Z(x) - B(x)] > -\infty$. Then, there exists a sequence $x_n \to -\infty$ with $Z(x_n) - B(x_n) \ge A$ for some constant $A$. Choose $x_n < -R$ so that $(b - a - 2\ve)x_n < A - 1$. Then, for $k \ge R$, $Z_k(x_n) - B_k(x_n) < (b - a - 2\ve)x_n < A - 1$. By the convergence $Z_k\to Z$ and $B_k \to B$, $Z(x_n) - B(x_n) \le A - 1$, a contradiction.

With \eqref{itlimcond}, we have established that $Z',B',Z_k'$, and $B_k'$ are well-defined for sufficiently large $k$. 
By Lemma \ref{identity for multiple queueing mappings}, 
\begin{align*}
 Z_k'(y)
&= B_k(y) + \sup_{-\infty < x \le y}\{Z_k(x) - B_k(x)\} - \sup_{-\infty < x \le 0}\{Z_k(x) - B_k(x)\}, \text{ and } \\
B_k'(y) &= Z_k(y) + \sup_{-\infty < x \le 0}\{Z_k(x) - B_k(x)\}- \sup_{-\infty < x \le y}\{Z_k(x) - B_k(x)\}.
\end{align*}
It therefore suffices to show that $\sup_{-\infty < x \le y}\{Z_k(x) - B_k(x)\}$ converges uniformly, on compact subsets of $\R$, to $\sup_{-\infty < x \le y} \{Z(x) - B(x)\}$, and that 
\be \label{eqn:limit_pres}
\limsup_{\substack{x \rightarrow -\infty \\ k \rightarrow \infty} } \biggl|\f{1}{x}\sup_{-\infty \le x \le y}\{Z_k(x) - B_k(x)\}  - (b - a) \biggr| = 0.
\ee

We first prove pointwise convergence. Let $x_y$ be a maximizer of $Z(x) - B(x)$ over \\ $x \in (-\infty,y]$. Then, 
\begin{align*}
\liminf_{k \rightarrow \infty}\sup_{-\infty < x \le y}\{Z_k(x) - B_k(x)\} &\ge 
\liminf_{k \rightarrow \infty} [Z_k(x_y) - B_k(x_y)] \\&= Z(x_y)- B(x_y) = \sup_{-\infty < x \le y}\{Z(x) - B(x)\}.
\end{align*}
For the converse, for $k$ sufficiently large so \eqref{itlimcond} holds, let $x_y^k$ be a sequence of maximizers of $Z_k(x) - B_k(x)$ over $x \in (-\infty,y]$. Then,
\[
\limsup_{k \rightarrow \infty}\sup_{-\infty < x \le y}\{Z_k(x) - B_k(x)\} \ge \limsup_{k \to \infty} Z_k(x_y^k) - B_k(x_y^k).
\]
By uniform-on-compact convergence $Z_k \to Z$ and $B_k \to B$, if
\be \label{eqn:lb}
\limsup_{k \rightarrow \infty}\sup_{-\infty < x \le y}\{Z_k(x) - B_k(x)\} > \sup_{-\infty < x \le y}\{Z(x) - B(x)\},
\ee
then by the uniform convergence of $B_k$ to $B$ and $Z_k$ to $Z$, it must be that $x_y^{k_j} \rightarrow -\infty$ along some subsequence $k_j$. Then, by \eqref{eqn:ZBklim},
\[
\limsup_{j \rightarrow \infty} \sup_{-\infty < x \le y}\{Z_{k_j}(x) - B_{k_j}(x)\} = \limsup_{j \rightarrow \infty} [Z_{k_j}(x_y^{k_j}) - B_{k_j}(x_y^{k_j})] = -\infty,
\]
a contradiction to \eqref{eqn:lb}.
Therefore, $\{x_y^k\}_k$ is a bounded sequence, and for each $y \in \R$, there exists some $R_y \in \R$ such that, for all $k$ sufficiently large, 
\begin{align*}
\sup_{t-\infty < x \le y} \{Z_k(x) - B_k(x)\} &= \sup_{R_y \le x \le y}\{Z_k(x) - B_k(x)\}, \text{ and} \\
\sup_{t-\infty < x \le y} \{Z(x) - B(x)\} &= \sup_{R_y \le x \le y}\{Z(x) - B(x)\}.
\end{align*}
The quantity $\sup_{R_t \le x \le y}\{Z_k(x) - B_k(x)\}$ converges to $\sup_{-\infty < x \le y} \{Z(x) - B(x)\}$ by the assumed uniform convergence on compact subsets $Z_k \rightarrow Z$ and $B_k \rightarrow B$. The uniform convergence on compact subsets of the function $y \mapsto \sup_{-\infty < x \le y} \{Z_k(x) - B_k(x)\}$ is as follows: for $N\in \R$,
\begin{align*}
\limsup_{k \rightarrow \infty}&\sup_{y \in [-N,N]} \; \bigl\lvert \,\sup_{-\infty < x \le y} \{Z_k(x) - B_k(x)\} - \sup_{-\infty < x \le y} \{Z(x) - B(x)\}\bigr\rvert \\
= \limsup_{k \rightarrow \infty}&\sup_{y \in [-N,N]} |\sup_{R_{-N} \le x \le y} \{Z_k(x) - B_k(x)\} - \sup_{R_{-N} \le x \le y} \{Z(x) - B(x)\}| = 0.
\end{align*}
To prove \eqref{eqn:limit_pres}. Let $0 < 2\ve < b - a$. By \eqref{eqn:ZBklim}, for $-x,k \ge R$,
\[
\sup_{-\infty < x \le y}\{Z_k(x) - B_k(x)\} \le \sup_{-\infty < x \le y}\{(b -\ve)x - (a + \ve)x\} = (b - a - 2\ve)y.
\]
For the reverse inequality, we observe that
\[
\sup_{-\infty < x \le y}\{Z_k(x) - B_k(x)\} \ge B_k(x) - Z_k(x),
\]
and apply Assumption \eqref{eqn:lim_cond}.
\end{proof}

\begin{lemma} \label{D and R preserve limits}
Let $(B,Z) \in \Y_2^\R$ satisfy the following limits:
\be \label{BZlim}
\lim_{x \rightarrow -\infty}\frac{B(x)}{x} = a < b = \lim_{x \rightarrow -\infty}\frac{Z(x)}{x}.
\ee
Then, the mappings $D(Z,B)$ and $R(Z,B)$ are well-defined, and 
\[
\lim_{x \rightarrow -\infty}\frac{R(Z,B)(x)}{x} = a \qquad \text{and}\qquad \lim_{x \rightarrow -\infty}\frac{D(Z,B)(x)}{x} = b.
\]

\noindent On the other hand, if $Z(x) - B(x) \to -\infty$ as $x \to -\infty$ and if the following limits exist (regardless of whether the limits \eqref{BZlim} exist) 
\[
\lim_{x \to +\infty} \f{Z(x)}{x} = c < \lim_{x \to +\infty} \f{B(x)}{x} = d,
\]
then also
\[
\lim_{x \to +\infty} \f{D(Z,B)(x)}{x} = c, \qquad \text{and}\qquad \lim_{x \to +\infty} \f{D(Z,B)(x)}{x} = d.
\]
\end{lemma}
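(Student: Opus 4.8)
The plan is to reduce the entire lemma to one elementary identity writing $D$ and $R$ in terms of the running supremum
\[
M(y) := \sup_{-\infty < x \le y}\{Z(x) - B(x)\}.
\]
From the definitions \eqref{Qdef}--\eqref{Rdef}, using $Z(0)=B(0)=0$ (both functions lie in $\CRpin$), a one-line computation gives $Q(Z,B)(y) = B(y) - Z(y) + M(y)$, and hence, whenever $M$ is finite everywhere,
\[
D(Z,B)(y) = B(y) + M(y) - M(0), \qquad R(Z,B)(y) = Z(y) + M(0) - M(y).
\]
By continuity of $Z - B$, finiteness of $M$ is equivalent to $\limsup_{x\to-\infty}[Z(x)-B(x)] = -\infty$, which is precisely the standing requirement for $D$ and $R$ to be well-defined. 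Thus both halves of the lemma reduce to identifying the asymptotic slope of $M$ at the relevant end.

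For the first statement I would begin with well-definedness: given $\ve \in (0,(b-a)/2)$, \eqref{BZlim} provides $R>0$ with $Z(x)/x > b-\ve$ and $B(x)/x < a+\ve$ for all $x<-R$, so multiplying by $x<0$ gives $Z(x)-B(x) < (b-a-2\ve)x \to -\infty$. The same estimates, together with $Z(y)-B(y) > (b-a+2\ve)y$, sandwich $(b-a+2\ve)y < M(y) \le (b-a-2\ve)y$ for $y<-R$ (the upper bound since $x \mapsto (b-a-2\ve)x$ is increasing), so dividing by $y<0$ and letting $\ve\downarrow 0$ yields $M(y)/y \to b-a$ as $y\to-\infty$. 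Feeding this into the two identities gives $D(Z,B)(y)/y = B(y)/y + (M(y)-M(0))/y \to a+(b-a)=b$ and $R(Z,B)(y)/y = Z(y)/y + (M(0)-M(y))/y \to b-(b-a)=a$. (Alternatively, this part is the specialization of Lemma~\ref{uniform convergence of queueing mapping} to constant sequences.)

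For the second statement the decisive point is that $M$ is now \emph{bounded}: the continuous function $Z-B$ tends to $-\infty$ as $x\to-\infty$ by hypothesis and, because $(Z(x)-B(x))/x \to c-d<0$, also tends to $-\infty$ as $x\to+\infty$; hence it attains a finite maximum on $\R$, and $M(y)$ increases to that value as $y\to+\infty$. Therefore $M(y)/y \to 0$, and the two identities give $D(Z,B)(y)/y \to d$ and $R(Z,B)(y)/y \to c$ as $y\to+\infty$; the intended statement is $\lim_{x\to+\infty} R(Z,B)(x)/x = c$ and $\lim_{x\to+\infty} D(Z,B)(x)/x = d$. (A useful consistency check throughout is $D(Z,B)+R(Z,B) = Z+B$, which is immediate from the two identities.)

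I expect no genuine obstacle: once the identities $D(Z,B)(y)=B(y)+M(y)-M(0)$ and $R(Z,B)(y)=Z(y)+M(0)-M(y)$ are in place, the rest is routine. The only places needing mild care are keeping the inequalities correctly oriented when dividing by the negative quantity $y$ in the $y\to-\infty$ sandwich, and ensuring the $+\infty$ argument invokes \emph{both} tails of $Z-B$ — the $-\infty$ tail from the hypothesis, the $+\infty$ tail from $c<d$.
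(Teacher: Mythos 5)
Your identities $D(Z,B)(y)=B(y)+M(y)-M(0)$ and $R(Z,B)(y)=Z(y)+M(0)-M(y)$, with $M(y)=\sup_{x\le y}\{Z(x)-B(x)\}$, are correct (they are \eqref{DZB2} restated), and your treatment of the $x\to-\infty$ half — well-definedness plus the sandwich giving $M(y)/y\to b-a$ — is sound and is essentially the paper's argument, which simply delegates it to Lemma \ref{uniform convergence of queueing mapping} with constant sequences, as you note.

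The $x\to+\infty$ half is where you and the paper part ways, and the divergence matters. You take the printed hypothesis $\lim_{x\to+\infty}Z(x)/x=c<d=\lim_{x\to+\infty}B(x)/x$ at face value, so that $Z-B\to-\infty$ at both ends, $M$ is bounded, $M(y)/y\to0$, and hence $D$ picks up $B$'s slope $d$ while $R$ picks up $Z$'s slope $c$. That is a correct deduction from that hypothesis — but it is not the regime the paper works in. The paper's own proof bounds $(d-c-\ve)x\le Z(x)-B(x)\le(d-c+\ve)x$ for large $x>0$ and concludes $\lim_{y\to\infty}\tfrac1y\sup_{x\le y}\{Z(x)-B(x)\}=d-c>0$; this only makes sense if the intended hypothesis is $\lim_{x\to+\infty}B(x)/x=c<d=\lim_{x\to+\infty}Z(x)/x$ (the statement has the letters attached to the wrong functions, on top of the $D$-versus-$R$ typo in the conclusion that you already spotted). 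In that regime $D$ inherits $Z$'s slope and $R$ inherits $B$'s slope at $+\infty$, exactly as at $-\infty$, and that is the version every downstream use requires: Lemma \ref{lem:triang_wd} feeds $D$ and $R$ inputs whose $+\infty$ slopes are ordered the same way as their $-\infty$ slopes, and Lemma \ref{DRbij} needs $(R(Z,B),D(Z,B))$ to land back in $\Y_2^{a,b,c,d}$, i.e.\ $R$ must carry $B$'s slope $c$ and $D$ must carry $Z$'s slope $d$ at $+\infty$. Your ``$M$ is bounded'' mechanism never occurs in those applications; the correct mechanism there is your $-\infty$ sandwich re-run at $+\infty$, restricting the supremum to $[R_\ve,y]$ because $Z-B\to+\infty$ there while $Z-B\to-\infty$ at $-\infty$. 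So both halves of your write-up are internally sound, but the second half proves a true statement that is not the one the lemma is put to; it should be redone under the corrected hypothesis.
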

\begin{proof}
The preservation of limits as $x \to -\infty$ is a direct corollary of Lemma \ref{uniform convergence of queueing mapping}, setting $B_k = B$ and $Z_k = Z$ for all $k$.

For limits as $x \to +\infty$, similarly as in the proof of Lemma \ref{uniform convergence of queueing mapping}, it suffices to show that 
\[
\lim_{y \to \infty} \f{1}{y}\sup_{-\infty < x \le y}\{Z(x) - B(x)\} = d - c.
\]

Let $0 < \ve < d - c$, and let $R_\ve > 0$ be sufficiently large so that for all $x > R_\ve$, $(d - c - \ve)x \le Z(x) - B(x) \le (d - c + \ve)x$. It follows that $Z(x) - B(x) \to +\infty$ as $x \to +\infty$, so for all sufficiently large $y$, 
\[
\sup_{-\infty < x \le y}\{Z(x) - B(x)\} = \sup_{R_\ve \le x \le y}\{Z(x) - B(x)\}.
\]
Then, 
\begin{align*}
&\quad \; b - c - \ve =  \liminf_{y \to \infty} \f{1}{y} \sup_{R_\ve \le x \le y}\{(b - c -\ve)x\}  \le\liminf_{y \to \infty} \f{1}{y}\sup_{-\infty \le x \le y}\{Z(x) - B(x)\} \\
&\le \limsup_{y \to \infty} \f{1}{y}\sup_{-\infty \le x \le y}\{Z(x) - B(x)\} \le \limsup_{y \to \infty} \f{1}{y} \sup_{R_\ve \le x \le y}\{(b - c + \ve)x\} = b - c +\ve.
\end{align*}
Letting $\ve \searrow 0$ completes the proof. 
\end{proof}
We prove a related lemma.

\begin{lemma} \label{liminflem}
Let $B,Z \in \CRpin$ satisfy 
\[
\liminf_{x \to -\infty} \f{Z(x)}{x} = b  > a = \lim_{x \to -\infty} \f{B(x)}{x},
\]
where the limit on the right is assumed to exist. Then,
\[
\liminf_{x \to -\infty} \f{D(Z,B)(x)}{x} \ge b.
\]
\end{lemma}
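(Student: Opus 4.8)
The plan is to reduce the claim to the explicit formula for $D(Z,B)$ and then control the linear growth at $-\infty$ of its two nontrivial pieces separately; the one subtlety throughout is that dividing an inequality by $y<0$ reverses it.

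First I would record the basic estimate that simultaneously gives well-definedness of $D(Z,B)$. Fix $\ve>0$ with $2\ve<b-a$. Since $\liminf_{x\to-\infty}Z(x)/x=b$ and $\lim_{x\to-\infty}B(x)/x=a$, there is $R>0$ with $Z(x)/x>b-\ve$ and $B(x)/x<a+\ve$ for all $x<-R$; multiplying through by $x<0$ (which reverses the inequalities) yields $Z(x)-B(x)<(b-a-2\ve)x$ for $x<-R$, and the right side tends to $-\infty$ since $b-a-2\ve>0$. In particular $\limsup_{x\to-\infty}[Z(x)-B(x)]=-\infty$, so $D(Z,B)$ is well-defined.

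Next I would rewrite $D$ using \eqref{Qdef}--\eqref{Ddef}: expanding the increments in $Q(Z,B)(y)=\sup_{-\infty<x\le y}\{B(x,y)-Z(x,y)\}$ gives $Q(Z,B)(y)=B(y)-Z(y)+M(y)$, where $M(y):=\sup_{-\infty<x\le y}\{Z(x)-B(x)\}$, and hence $D(Z,B)(y)=B(y)+M(y)-Q(Z,B)(0)$ (essentially \eqref{DZB2} in Lemma \ref{DRcont}). The constant $Q(Z,B)(0)$ is finite since the supremum defining $M(0)$ is attained ($Z-B$ is continuous and diverges to $-\infty$), so it disappears after division by $y$, and the problem reduces to showing $\liminf_{y\to-\infty}[B(y)+M(y)]/y\ge b$. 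For $y<-R$ the estimate from the previous paragraph bounds $Z(x)-B(x)$ over $x\le y$ by the increasing linear function $(b-a-2\ve)x$, whose supremum over $(-\infty,y]$ is attained at $x=y$; therefore $M(y)\le(b-a-2\ve)y$. Dividing the identity for $D(Z,B)(y)$ by $y<0$ then gives
\[
\f{D(Z,B)(y)}{y}\ \ge\ \f{B(y)}{y}+(b-a-2\ve)-\f{Q(Z,B)(0)}{y}\qquad\text{for }y<-R,
\]
and as $y\to-\infty$ the right side converges to $a+(b-a-2\ve)=b-2\ve$; sending $\ve\downarrow0$ finishes the proof.

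The computation is short, and the only place demanding attention — the "main obstacle," to the extent there is one — is consistently tracking the reversal of inequalities caused by dividing by the negative quantity $y$; once the identity $D(Z,B)(y)=B(y)+M(y)-Q(Z,B)(0)$ is in hand, everything reduces to the elementary fact that the supremum of an increasing linear function on $(-\infty,y]$ sits at the endpoint.
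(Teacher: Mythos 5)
Your proof is correct and follows essentially the same route as the paper's: fix $\ve$ with $2\ve<b-a$, bound $\sup_{x\le y}\{Z(x)-B(x)\}$ above by $(b-a-2\ve)y$ for $y$ sufficiently negative, and conclude from the identity $D(Z,B)(y)=B(y)+\sup_{x\le y}\{Z(x)-B(x)\}-Q(Z,B)(0)$ after dividing by $y<0$ and letting $\ve\downarrow0$. The only difference is that you also spell out the well-definedness of $D(Z,B)$, which the paper leaves implicit.
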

\begin{proof}
Let $0 < 2\ve < b -a$. Let $R_\ve > 0$ be sufficiently large so that, for all $x < -R_\ve$, $Z(x) < (b - \ve)x$  and $B(x) > (a + \ve)x$ Then, for all $y < -R_\ve$, 
\[
\sup_{-\infty < x \le y}\{Z(x) - B(x)\} \le \sup_{-\infty < x \le y}\{(b - a - 2\ve)x\} = (b - a - 2\ve)y.
\]
Hence, 
\[
\liminf_{y \to -\infty} \f{1}{y} \sup_{-\infty < x \le y}\{Z(x) - B(x)\} \ge (b - a - 2\ve)
\]
Since 
\[
D(Z,B)(y) = B(y) + \sup_{-\infty < x \le y}\{Z(x) - B(x)\} - \sup_{-\infty < x \le 0}\{Z(x) - B(x)\},
\]
and $\ve > 0$ is arbitrary, this completes the proof. 
\end{proof}

\begin{lemma} \label{DRbij}
Let 
\be \label{Y2abcd}
\begin{aligned}
\Y_2^{a,b,c,d}&=\Biggl\{(B,Z) \in \Y_2^\R: 
\lim_{x\to -\infty} \f{B(x)}{x} = a,\;\lim_{x \to -\infty}\f{Z(x)}{x} = b, \\ 
&\qquad\qquad\qquad\qquad\qquad\qquad \lim_{x \to +\infty} \f{B(x)}{x} = c, \;\text{and} \lim_{x \to +\infty}\f{Z(x)}{x} = d.
\Biggr\}
\end{aligned}
\ee
Then, the mapping $(B,Z) \mapsto (R(Z,B),D(Z,B))$ is a bijection $\Y_2^{a,b,c,d} \to \Y_2^{a,b,c,d}$.

For a function $f:\R \to \R$, define the transformed function $\Rf f:\R \to \R$ as $\Rf f(x) = f(-x)$, as in Section \ref{sec:notat}. Then, the inverse of  the mapping $(B,Z) \mapsto (R(Z,B),D(Z,B))$ is the mapping $(C,Y) \mapsto (\hat R(Y,C),\hat D(Y,C))$, where $\hat R$ and $\hat D$ are defined by 
\be
\hat R(Y,C) = \Rf[D(\Rf C, \Rf Y)],\qquad \text{and}\qquad \hat D(Y,C) = \Rf[R(\Rf C, \Rf Y)].
\ee

\noindent In other words, if $(B,Z) \in \Y_2^{a,b,c,d}$, then 
\be \label{Did}
D( \Rf R(Z,B),\Rf D(Z,B)) = \Rf B,\qquad\text{and}\qquad R(\Rf R(Z,B),\Rf D(Z,B)) = \Rf Z.
\ee
\end{lemma}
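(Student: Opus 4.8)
The plan is to verify the bijection claim in two stages: first, that the map $\Phi:(B,Z)\mapsto(R(Z,B),D(Z,B))$ sends $\Y_2^{a,b,c,d}$ into itself; second, that the claimed formula for $\Phi^{-1}$ is correct, which simultaneously establishes injectivity and surjectivity. For the first stage, I would invoke Theorem~\ref{thm:OCY_DR} (or rather its deterministic underpinning) together with Lemma~\ref{D and R preserve limits}: the hypothesis $a<b$ guarantees $Z(x)-B(x)\to-\infty$ as $x\to-\infty$, so $D(Z,B)$ and $R(Z,B)$ are well-defined, and Lemma~\ref{D and R preserve limits} gives exactly that the left-tail slopes of $R(Z,B),D(Z,B)$ are again $a,b$ and the right-tail slopes are again $c,d$ (using $c<d$ for the right-tail statement). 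One also needs $R(Z,B)(0)=D(Z,B)(0)=0$ and continuity, both immediate from \eqref{Ddef}--\eqref{Rdef} and Lemma~\ref{DRcont}, and the increment-ordering membership in $\Y_2^\R$ is likewise built into the definitions. So $\Phi:\Y_2^{a,b,c,d}\to\Y_2^{a,b,c,d}$.

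The heart of the matter is the second stage: proving the pair of identities \eqref{Did}, i.e.\ that applying the ``reflected'' queuing maps to $(\Rf R(Z,B),\Rf D(Z,B))$ recovers $(\Rf Z,\Rf B)$. I would write $C=R(Z,B)$, $Y=D(Z,B)$ and $Q=Q(Z,B)$, so that by \eqref{Ddef}--\eqref{Rdef} we have $Y(y)=Z(y)+Q(y)-Q(0)$ and $C(y)=B(y)+Q(0)-Q(y)$; note $C(y)-Y(y)=B(y)-Z(y)-2(Q(y)-Q(0))$ and, crucially, $Q(y)=\sup_{u\le y}\{B(u,y)-Z(u,y)\}$ is a running-sup type object. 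The key algebraic fact to extract is a time-reversal identity for $Q$: one should show that $Q(\Rf C,\Rf Y)$, the queue length built from the reflected output functions, equals $\Rf Q$ up to the appropriate additive normalization; equivalently that the sup defining $Q(\Rf C,\Rf Y)(y)$ over $u\le y$ reorganizes into the sup defining $Q$ over $u\ge -y$. This is the classical reversibility of the Brownian (or M/M/1) queue in disguise — the departure process run backwards in time, with departures and arrivals swapped, reconstructs the original arrival process. I would either prove it directly by manipulating the suprema (using that $\limsup_{x\to-\infty}[\Rf C(x)-\Rf Y(x)]=-\infty$, which follows from the $c<d$ hypothesis and Lemma~\ref{D and R preserve limits}, so the reflected maps are well-defined), or — more cleanly — derive it from Lemma~\ref{identity for multiple queueing mappings} with $n=2$, which already packages the $D$ and $R$ maps in terms of the single functional $A_2$.

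Once the time-reversal identity for $Q$ is in hand, identities \eqref{Did} follow by bookkeeping: substituting into the definitions of $\hat D$ and $\hat R$, the $Q$-terms telescope and one is left with $\Rf B$ and $\Rf Z$ respectively. This shows $\hat\Phi\circ\Phi=\mathrm{id}$ on $\Y_2^{a,b,c,d}$. To get that $\hat\Phi$ is a two-sided inverse, I would observe the construction is symmetric under $(a,b,c,d)\leftrightarrow$ itself with the roles of $(B,Z)$ and reflection interchanged: $\hat\Phi$ maps $\Y_2^{a,b,c,d}$ into itself by the same Lemma~\ref{D and R preserve limits} argument applied to the reflected functions, and running the same telescoping computation shows $\Phi\circ\hat\Phi=\mathrm{id}$ as well; hence $\Phi$ is a bijection with inverse $\hat\Phi$. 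The main obstacle is the time-reversal identity for $Q$ — getting the suprema to line up requires care with the endpoints of the optimization intervals and with the normalizing constants $Q(0)$, and one must be sure the ``$\limsup=-\infty$'' well-definedness condition holds for \emph{all four} queuing operations appearing (the original $Q(Z,B)$ and the reflected $Q(\Rf C,\Rf Y)$), which is exactly why the two-sided slope conditions defining $\Y_2^{a,b,c,d}$ (both $a<b$ and $c<d$) are imposed.
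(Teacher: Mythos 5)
Your proposal is correct and follows essentially the same route as the paper: well-definedness and preservation of $\Y_2^{a,b,c,d}$ via Lemmas \ref{identity for multiple queueing mappings} and \ref{D and R preserve limits}, then a direct expansion of the suprema to verify \eqref{Did}, with the remaining identities and the two-sidedness of the inverse handled by symmetric arguments. The ``time-reversal identity for $Q$'' you single out as the crux is exactly how the paper proceeds: after the telescoping, everything reduces to $\sup_{-\infty<x\le -y}\{Z(x)-B(x)\}=\inf_{-y\le x<\infty}\{2\sup_{-\infty<u\le x}[Z(u)-B(u)]-(Z(x)-B(x))\}$, which is Lemma \ref{pitman Representation Lemma} applied to $f=Z-B$ (and indeed one checks $Q(\Rf C,\Rf Y)=\Rf Q$ exactly, with no extra normalization).
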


\begin{proof}
To check that the mapping $(B,Z) \mapsto (R(Z,B),D(Z,B))$ preserves $\Y_n^{a,b,c,d}$, we first use Lemma \ref{identity for multiple queueing mappings} to verify that $D(Z,B)(0) = R(Z,B)(0) = 0$. The preservation of the limits follows from Lemma \ref{D and R preserve limits}. Furthermore, since
\[
\lim_{x \to -\infty} \f{\Rf R(Z,B)(x)}{x} = -c > -d = \lim_{x \to -\infty} \f{\Rf D(Z,B)(x)}{x},
\]
we have $\lim_{x \to -\infty}[\Rf R(Z,B)(x) - \Rf D(Z,B)(x)] = -\infty$ so that $D( \Rf R(Z,B),\Rf D(Z,B))$ and $R( \Rf R(Z,B),\Rf D(Z,B))$ are well-defined. 

We show the first equality of \eqref{Did}, which is equivalent to $\hat R(D(Z,B),R(Z,B)) = B$. The proof that $\hat D(D(Z,B),R(Z,B)) = Z$, as well as the proofs that 
\[
D(\hat D(Y,C),\hat R(Y,C)) = Y,\qquad\text{and}\qquad R(\hat D(Y,C),\hat R(Y,C)) = C
\]
follow a similar argument. Using the $n = 2$ case of Lemma \ref{identity for multiple queueing mappings},
\begin{align*}
&\quad \; D(\Rf R(Z,B),\Rf D(Z,B))(y)  \\
&= \Rf D(Z,B)(y) + \sup_{-\infty < x \le y}\{\Rf R(Z,B)(x) - \Rf D(Z,B)(x)\}  \\
&\qquad\qquad\qquad\qquad - \sup_{-\infty < x \le 0}\{\Rf R(Z,B)(x) - \Rf D(Z,B)(x)\} \\
&\qquad= B(-y) + \sup_{-\infty < x \le -y}\{Z(x) - B(x)\} - \sup_{-\infty  < x \le 0}\{Z(x) - B(x)\} \\
&\qquad\qquad\qquad + \sup_{-\infty < x \le y}\{Z(-x) - B(-x) - 2\sup_{-\infty < u \le - x}[Z(u) - B(u)]\} \\
&\qquad\qquad\qquad\qquad\qquad - \sup_{-\infty < x \le 0}\{Z(-x) - B(-x) - 2\sup_{-\infty < u \le - x}[Z(u) - B(u)]\} \\
&= B(-y) + \sup_{-\infty < x \le -y}\{Z(x) - B(x)\} - \sup_{-\infty  < x \le 0}\{Z(x) - B(x)\} \\
&\qquad\qquad\qquad+ \sup_{-y \le x < \infty}\{Z(x) - B(x) - 2\sup_{-\infty < u \le x}[Z(u) - B(u)]\} \\
&\qquad\qquad\qquad\qquad\qquad - \sup_{0 \le x < \infty}\{Z(x) - B(x) - 2\sup_{-\infty < u \le x}[Z(u) - B(u)]\} \\
&= B(-y) + \sup_{-\infty < x \le -y}\{Z(x) - B(x)\} - \sup_{-\infty  < x \le 0}\{Z(x) - B(x)\} \\
&\qquad\qquad\qquad+ \sup_{-y \le x < \infty}\{Z(x) - B(x) - 2\sup_{-\infty < u \le x}[Z(u) - B(u)]\} \\
&\qquad\qquad\qquad\qquad\qquad - \sup_{0 \le x < \infty}\{Z(x) - B(x) - 2\sup_{-\infty < u \le x}[Z(u) - B(u)]\} \\
&= B(-y) + \sup_{-\infty < x \le -y}\{Z(x) - B(x)\} - \sup_{-\infty  < x \le 0}\{Z(x) - B(x)\} \\
&\qquad\qquad\qquad - \inf_{-y \le x < \infty}\{2\sup_{-\infty < u \le x}[Z(u) - B(u)] - (Z(x) - B(x))\} \\
&\qquad\qquad\qquad\qquad\qquad - \inf_{0 \le x < \infty}\{2\sup_{-\infty < u \le x}[Z(u) - B(u)] - (Z(x) - B(x))\}. 
\end{align*}
To show that this last line equals $B(-y) = \Rf B(y)$, it suffices to show that for all $y \in \R$,
\[
\sup_{-\infty < x \le -y}\{Z(x) - B(x)\} = \inf_{-y \le x < \infty}\{2\sup_{-\infty < u \le x}[Z(u) - B(u)] - (Z(x) - B(x))\}. 
\]
This is exactly the statement of Lemma \ref{pitman Representation Lemma} with $f = Z -B$. 
\end{proof}

\begin{lemma} \label{queue_order}
Assume that $Z,Z',B \in \CRpin$ satisfy $Z \li Z'$ and
\[
\limsup_{x \rightarrow -\infty} Z(x) - B(x) = -\infty.
\]
Then $B \li D(Z,B) \li D(Z',B)$. 
\end{lemma}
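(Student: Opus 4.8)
The plan is to reduce everything to the closed-form expression for the increments of $D(Z,B)$ provided by Lemma \ref{DRcont} and then compare the two sides interval by interval. First I would check that $D(Z',B)$ is even well-defined: since $Z,Z'$ are pinned at $0$, the hypothesis $Z\li Z'$ gives, for every $x<0$, $-Z(x)=Z(x,0)\le Z'(x,0)=-Z'(x)$, so $Z'(x)\le Z(x)$ and hence $\limsup_{x\to-\infty}[Z'(x)-B(x)]\le\limsup_{x\to-\infty}[Z(x)-B(x)]=-\infty$. Thus $Q(Z',B)$ and $D(Z',B)$ make sense, and Lemma \ref{DRcont} applies to both pairs $(Z,B)$ and $(Z',B)$.

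The lower bound $B\li D(Z,B)$ is then immediate: by Lemma \ref{DRcont}, for $x<y$,
\[
D(Z,B)(x,y)-B(x,y)=\Bigl(\sup_{x\le u\le y}\{Z(x,u)-B(x,u)\}-\sup_{-\infty<u\le x}\{Z(x,u)-B(x,u)\}\Bigr)^{+}\ge 0 .
\]
For the upper bound $D(Z,B)\li D(Z',B)$, I would again write both increments via Lemma \ref{DRcont}, subtract $B(x,y)$ from each, and use that $t\mapsto t^{+}$ is nondecreasing, so it suffices to compare the two bracketed differences. The key observation is that $Z\li Z'$ pushes both suprema in the favorable direction, but for opposite reasons: for $u\in[x,y]$ one has $Z(x,u)\le Z'(x,u)$ directly, hence $\sup_{x\le u\le y}\{Z(x,u)-B(x,u)\}\le\sup_{x\le u\le y}\{Z'(x,u)-B(x,u)\}$; whereas for $u\le x$, applying $Z\li Z'$ to the increment over $(u,x)$ gives $Z(u,x)\le Z'(u,x)$, i.e.\ $Z(x,u)\ge Z'(x,u)$, hence $\sup_{-\infty<u\le x}\{Z(x,u)-B(x,u)\}\ge\sup_{-\infty<u\le x}\{Z'(x,u)-B(x,u)\}$, and this supremum enters with a minus sign. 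Combining the two, the bracketed difference for $(Z,B)$ is $\le$ that for $(Z',B)$; taking positive parts and adding back $B(x,y)$ yields $D(Z,B)(x,y)\le D(Z',B)(x,y)$ for all $x<y$, which is exactly $D(Z,B)\li D(Z',B)$.

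The only genuinely delicate point is the sign bookkeeping in the last step: one must keep straight that $Z\li Z'$ compares increments over $(x,u)$ with $x<u$, so for $u<x$ the relevant comparison is over $(u,x)$ and the inequality between $Z(x,u)$ and $Z'(x,u)$ flips — but because that supremum is subtracted, the net effect on the bracketed difference is still monotone. Everything else is a direct substitution into the formula of Lemma \ref{DRcont}, so I do not expect any real obstruction.
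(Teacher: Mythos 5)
Your proof is correct and takes essentially the same approach as the paper, whose entire argument is to invoke the increment formula of Lemma \ref{DRcont} and assert that both inequalities follow. You have simply spelled out the sign bookkeeping for the two suprema (and the well-definedness of $D(Z',B)$) that the paper leaves implicit.
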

\begin{proof} 
By Lemma \ref{DRcont},
\[
D(Z,B)(x,y) = B(x,y) +\Bigl(\sup_{x \le u \le y}\{Z(x,u) - B(x,u)\} - \sup_{-\infty < u \le x}\{Z(x,u) - B(x,u)\}\Bigr)^+,
\]
from which both inequalities follow. 
\end{proof}

\begin{lemma} \label{image of script D lemma}
Let $A \subseteq \R$ be a Borel set. Then, the mapping $\D^{(n)}$ \eqref{scrD} satisfies the following properties: 
\begin{enumerate} [label=\rm(\roman{*}), ref=\rm(\roman{*})]  \itemsep=3pt 
    \item \label{itm:Ynlim} If $(Z^1,\ldots,Z^n) \in \Y_n^A$ satisfies
    \[
    \lim_{x \rightarrow -\infty} \f{Z^i(x)}{x} = a_i\qquad\text{for }  1 \le i \le n,
    \]
    then the image $(\eta^1,\ldots,\eta^n) = \D^{(n)}(Z^1,\ldots,Z^n)$ also satisfies
    \[
    \lim_{x \rightarrow -\infty} \f{\eta^i(x)}{x} = a_i\qquad\text{for }  1 \le i \le n.
    \]
    \item \label{itm:image}  $\D^{(n)}$ maps $\Y_n^A$ into $\X_n^A$.
\end{enumerate}
\end{lemma}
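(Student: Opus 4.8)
The plan is to prove part \ref{itm:Ynlim} first, by induction on $n$, and then to deduce part \ref{itm:image} from it together with the monotonicity lemma, Lemma \ref{queue_order}. For \ref{itm:Ynlim} the base case $n=1$ is trivial since $\eta^1=Z^1$. For the inductive step, for $2\le i\le n$ write
\[
\eta^i = D^{(i)}(Z^i,\dots,Z^1) = D\bigl(D^{(i-1)}(Z^i,\dots,Z^2),\,Z^1\bigr),
\]
and observe that $D^{(i-1)}(Z^i,\dots,Z^2)$ is the top coordinate of $\D^{(i-1)}$ applied to the tuple $(Z^2,\dots,Z^i)$, which after reindexing lies in $\Y_{i-1}^A$. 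By the induction hypothesis this function has left-slope $a_i$ at $-\infty$, and it is continuous and pinned at $0$ by Lemmas \ref{identity for multiple queueing mappings} and \ref{DRcont}; since $a_1<a_i$ the difference $D^{(i-1)}(Z^i,\dots,Z^2)(x)-Z^1(x)\to-\infty$, so the outer application of $D$ is well-defined, and Lemma \ref{D and R preserve limits} shows it preserves the left-slope of its first argument, giving $\lim_{x\to-\infty}\eta^i(x)/x=a_i$. Running this unfolding inductively also shows that every intermediate function occurring in the formation of $\eta^i$ is continuous, pinned at $0$, and has a well-defined left-slope, so $\eta^i\in\CRpin$ and $\D^{(n)}$ is genuinely defined on all of $\Y_n^A$.

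Part \ref{itm:image} then splits into two halves. Membership of $(\eta^1,\dots,\eta^n)$ in $\Y_n^A$ is immediate from the above: each $\eta^i\in\CRpin$ has left-slope $a_i\in A$, and the strict ordering $a_1<\dots<a_n$ is inherited from the input tuple. It remains to verify the increment ordering $\eta^i\gi\eta^{i-1}$ for $2\le i\le n$. The key structural observation is that, unfolding the left-nested recursion \eqref{Diter}, both $\eta^{i-1}$ and $\eta^i$ are images of the \emph{same} composition
\[
\Phi \;=\; D(\,\cdot\,,Z^1)\circ D(\,\cdot\,,Z^2)\circ\cdots\circ D(\,\cdot\,,Z^{i-2}),
\]
namely $\eta^{i-1}=\Phi(Z^{i-1})$ and $\eta^i=\Phi\bigl(D(Z^i,Z^{i-1})\bigr)$.

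To conclude, apply Lemma \ref{queue_order} with $Z=Z'=Z^i$ and $B=Z^{i-1}$ to get $Z^{i-1}\li D(Z^i,Z^{i-1})$, and then propagate this relation through $\Phi$. At the stage applying $D(\,\cdot\,,Z^k)$ for $1\le k\le i-2$, the smaller of the two current functions descends from $Z^{i-1}$ and, by \ref{itm:Ynlim}, has left-slope $a_{i-1}>a_k$, so $\limsup_{x\to-\infty}[\,\cdot\,(x)-Z^k(x)]=-\infty$; Lemma \ref{queue_order} then gives that $W\mapsto D(W,Z^k)$ is $\li$-monotone at this stage. Iterating over $k=i-2,\dots,1$ yields $\eta^{i-1}=\Phi(Z^{i-1})\li\Phi\bigl(D(Z^i,Z^{i-1})\bigr)=\eta^i$, i.e.\ $\eta^i\gi\eta^{i-1}$, completing the proof that $\D^{(n)}(Z^1,\dots,Z^n)\in\X_n^A$.

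I expect the only genuinely fiddly point to be this last step: getting the identities $\eta^{i-1}=\Phi(Z^{i-1})$ and $\eta^i=\Phi(D(Z^i,Z^{i-1}))$ exactly right (these rest entirely on the left-nested form of $D^{(i)}$ in \eqref{Diter}), and checking that the $\limsup=-\infty$ hypothesis of Lemma \ref{queue_order} survives every stage of $\Phi$ — which is precisely where part \ref{itm:Ynlim} is needed. The induction for \ref{itm:Ynlim} itself is routine once one observes that $D^{(i-1)}(Z^i,\dots,Z^2)$ is a coordinate of $\D^{(i-1)}$ on a shorter tuple, so that no separate analysis of the multi-maximum formula from Lemma \ref{identity for multiple queueing mappings} is required.
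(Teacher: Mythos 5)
Your proof is correct and follows essentially the same route as the paper: part (i) by iterating Lemma \ref{D and R preserve limits} through the nested definition \eqref{Diter}, and part (ii) by pushing the base inequality from Lemma \ref{queue_order} through the left-nested composition. Your explicit $\Phi$-propagation is just the paper's induction on $i$ (applied to the index-shifted tuple $Z^2,\ldots,Z^{i+1}$) unrolled, and your check that the $\limsup$ hypothesis of Lemma \ref{queue_order} survives each stage via the slopes from part (i) makes explicit a point the paper leaves implicit.
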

\begin{proof}
Item \ref{itm:Ynlim} follows by definition of $\D^{(n)}$ \eqref{scrD} and repeated application of Lemma \ref{D and R preserve limits}. To show Item \ref{itm:image}, we must show that for $(Z^1,\ldots,Z^n) \in \Y_n^A$, 
$(\eta^1,\ldots,\eta^n) = \D^{(n)}(Z^1,\ldots,Z^n),
$
each $\eta^i$ is continuous, satisfies $\eta^i(0) = 0$, and $\eta^i \li \eta^{i + 1}$. Continuity follows from Lemma \ref{DRcont}. Since $Z^1(0) = 0$, Lemma \ref{identity for multiple queueing mappings} proves $\eta^i(0)  = 0$ for $1 \le i \le n$. 
By  Lemma \ref{queue_order},  
$\eta^2 = D(Z^2,Z^1) \gi Z^1 = \eta^1$.
Assume inductively that
\[
\eta^{i} = D^{(i)}(Z^{i},\ldots,Z^1) \gi D^{(i - 1)}(Z^{i - 1},\ldots,Z^1) = \eta^{i - 1}.
\]
Then, after applying this assumption with $Z^2,\ldots,Z^{i + 1}$ in place of $Z^1,\ldots,Z^i$ and using Lemma \ref{queue_order}, we get that 
\begin{align*}
\eta^{i + 1} = &D^{(i + 1)}(Z^{i + 1},\ldots,Z^1) = D(D^{(i)}(Z^{i + 1},\ldots,Z^2),Z^1) \\
\gi &D(D^{(i - 1)}(Z^{i},\ldots,Z^2),Z^1) = D^{(i)}(Z^i,\ldots,Z^1) = \eta^i, 
\end{align*}
and the proof is complete.
\end{proof}

\begin{lemma} \label{analogue of Lemma 4.4 in Joint Buse Paper}
Let $(B^1,Z^1,Z^2) \in \Y_3^\R$, and set $B^2 = R(Z^1,B^1)$.
Then,
\[
D(D(Z^2,B^2),D(Z^1,B^1)) = D(D(Z^2,Z^1),B^1).
\]
\end{lemma}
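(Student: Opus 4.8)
The plan is to reduce the claimed identity to a commutativity property of the queuing map $D$, which should follow from the variational formula in Lemma~\ref{identity for multiple queueing mappings} after carefully tracking how the "service'' function $B^2 = R(Z^1,B^1)$ encodes the suprema built from $B^1$ and $Z^1$. First I would expand the left-hand side using Lemma~\ref{identity for multiple queueing mappings}: both $D(D(Z^2,B^2),D(Z^1,B^1))$ and $D(D(Z^2,Z^1),B^1)$ are of the form $(\text{first function}) + A(y) - A(0)$ where $A$ is a nested supremum, so it suffices to show the two triple-supremum expressions agree up to the additive normalization at $0$. On the left we have the chain $B^1 \to D(Z^1,B^1)$ as the bottom layer and $B^2 \to D(Z^2,B^2)$ as the next, while on the right we have the chain $B^1 \to D(Z^1,B^1)$ obtained from $B^1, Z^1$ directly, and then $D(Z^2,Z^1) $ feeding into it.

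The key step is to use that $B^2 = R(Z^1,B^1)$ is the ``departure-free'' companion of $D(Z^1,B^1)$ in the Brownian queue, so that by Lemma~\ref{DRcont} (or directly from \eqref{Ddef},\eqref{Rdef}) one has the pointwise identity $D(Z^1,B^1)(y) - B^2(y) = Z^1(y) - B^1(y) + \big(\text{terms involving } Q(Z^1,B^1)\big)$, and more usefully $B^2(x,y) = Z^1(x,y) - \big(Q(Z^1,B^1)(y) - Q(Z^1,B^1)(x)\big)$. Substituting this into the definition of $D(D(Z^2,B^2),D(Z^1,B^1))$, the $Q(Z^1,B^1)$ increments telescope against the increments of $D(Z^1,B^1)$, collapsing the inner supremum over the pair $(B^2, D(Z^1,B^1))$ into a supremum over $(Z^1, B^1)$. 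One then recognizes the resulting nested supremum as exactly $A^{\mathbf Z}_3$ with $\mathbf Z = (B^1, Z^1, Z^2)$ (up to reindexing), which is also what Lemma~\ref{identity for multiple queueing mappings} produces for $D(D(Z^2,Z^1),B^1)$. The hypothesis $(B^1,Z^1,Z^2)\in\Y_3^\R$ guarantees the drift ordering needed for all the suprema to be finite and for the queuing maps to be well-defined (via Lemma~\ref{D and R preserve limits}, $B^2 = R(Z^1,B^1)$ inherits the drift of $B^1$, so $D(Z^2,B^2)$ makes sense).

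The main obstacle I anticipate is the bookkeeping in the telescoping step: one must verify that after substituting the expression for $B^2$, the ``$- \sup_{-\infty < u \le x}$'' correction terms at the inner level combine correctly with the outer supremum so that no spurious contributions survive, and that the normalization by the value at $0$ is consistent on both sides (this is where the condition $Z^i(0)=0$, implicit in $\Y_3^\R$ through the definition of $\CRpin$, is used). An alternative route, if the direct substitution gets unwieldy, is to invoke the distributional statement: take $(B^1,Z^1,Z^2)$ to be independent Brownian motions with strictly increasing drifts, apply Theorem~\ref{thm:OCY_DR} twice to see that both sides have the same law as $D^{(3)}$ of an appropriate triple, and then argue that since both sides are given by fixed continuous (indeed, explicit supremum) functionals of $(B^1,Z^1,Z^2)$ that agree almost surely on a set of full measure for every choice of drifts, they must agree identically on $\Y_3^\R$ by a density/continuity argument (using Lemma~\ref{uniform convergence of queueing mapping}). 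I would present the direct deterministic computation as the primary proof, since it avoids the density argument and makes the telescoping transparent.
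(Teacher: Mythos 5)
Your setup matches the paper's: both expand the two sides via the supremum formulas (Lemma \ref{identity for multiple queueing mappings} for the right-hand side, the definitions \eqref{Ddef}--\eqref{Rdef} with $B^2=R(Z^1,B^1)$ substituted for the left-hand side) and reduce the claim to an identity between nested suprema. The gap is in what you call the "telescoping step." After the substitution, the $Q(Z^1,B^1)$ terms do \emph{not} cancel; what survives is a genuine identity, namely that for all $y$,
\begin{align*}
&\sup_{x \le y}\{Z^1(x)-B^1(x)\} \;+\; \sup_{x \le y}\Bigl[Z^1(x)-B^1(x) - 2\sup_{w \le x}\{Z^1(w)-B^1(w)\} \\
&\qquad\qquad + \sup_{v \le w \le x}\{Z^2(w)-Z^1(w)+Z^1(v)-B^1(v)\}\Bigr] \;=\; \sup_{w \le x \le y}\{Z^1(x)-B^1(x)+Z^2(w)-Z^1(w)\},
\end{align*}
and the factor $-2\sup_{w\le x}\{Z^1(w)-B^1(w)\}$ is an obstruction, not a bookkeeping artifact. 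The paper proves this by two separate inequalities, each split into cases according to where the relevant maximizer sits relative to the running maximum of $Z^1-B^1$; one case even requires constructing an auxiliary point $z^\star$ via the Intermediate Value Theorem at which $Z^1-B^1$ re-attains its past supremum. None of this is a cancellation, so your proposal as written is missing the core of the argument.

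Your fallback route is also not viable as stated. Theorem \ref{thm:OCY_DR} yields equality \emph{in distribution} of the two sides when the inputs are independent Brownian motions, but equality in law of two functionals of the same input does not give almost sure equality, which is what you would need before any density or continuity argument could upgrade it to a deterministic identity on $\Y_3^\R$. So the distributional detour cannot substitute for the pointwise supremum identity; you must prove that identity directly.
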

\begin{proof}
We first note that by definition of $D^{(3)}$ \eqref{Diter} and Lemma \ref{identity for multiple queueing mappings},
\begin{align}
    D(D(Z^2,Z^1),B^1)(y) &= D^{(3)}(Z^2,Z^1,B^1)(y) \nonumber \\
    &= B^1(y) + \sup_{-\infty < w \le x \le y}\{Z^1(x) - B^1(x) + Z^2(w) - Z^1(w)\} \nonumber \\
    &\qquad\qquad- \sup_{-\infty < w \le x \le 0}\{Z^1(x) - B^1(x) + Z^2(w) - Z^1(w)\}. \label{expression for rhs in 4.4 analogue}
\end{align}
On the other hand, by definitions of the mappings $D$ and $R$ \eqref{Ddef}--\eqref{Rdef},
\begin{align}
   &\quad \; D(D(Z^2,B^2),D(Z^1,B^1))(y) \nonumber \\
   &= D(Z^1,B^1)(y) + \sup_{-\infty < x \le y}\{D(Z^2,B^2)(x) - D(Z^1,B^1)(x) \} \nonumber \\
   &\qquad\qquad\qquad\qquad- \sup_{-\infty < x \le 0 }\{D(Z^2,B^2)(x) - D(Z^1,B^1)(x)  \}  \nonumber \\
   &= B^1(y) + \sup_{-\infty < x \le y}\{Z^1(x) - B^1(x)\} - \sup_{-\infty < x \le 0}\{Z^1(x) - B^1(x)\} \nonumber \\
    &+ \sup_{-\infty < x \le y}\Big[B^2(x) + \sup_{-\infty < w \le x}\{Z^2(w) - B^2(w)\} - B^1(x) - \sup_{-\infty < w \le x}\{Z^1(w) - B^1(w)\}   \Big] \nonumber \\
    &- \sup_{-\infty < x \le 0}\Big[B^2(x) + \sup_{-\infty < w \le x}\{Z^2(w) - B^2(w)\} - B^1(x) - \sup_{-\infty < w \le x}\{Z^1(w) - B^1(w)\}   \Big]  \nonumber \\
    &= B^1(y) + \sup_{-\infty < x \le y}\{Z^1(x) - B^1(x)\} - \sup_{-\infty < x \le 0}\{Z^1(x) - B^1(x)\} \nonumber  \\
    &\qquad+ \sup_{-\infty < x \le y}\Big[Z^1(x) - B^1(x) - 2\sup_{-\infty < w \le x}\{Z^1(w) - B^1(w)\}\} \nonumber \\
    &\qquad \qquad\qquad\qquad\qquad+ \sup_{-\infty < v \le w \le x}\{Z^2(w) - Z^1(w) + Z^1(v) - B^1(v) \}   \Big] \nonumber \\
    &\qquad- \sup_{-\infty < x \le 0}\Big[Z^1(x) - B^1(x) - 2\sup_{-\infty < w \le x}\{Z^1(w) - B^1(w)\}\} \nonumber \\
    &\qquad \qquad\qquad\qquad\qquad+ \sup_{-\infty < v \le w \le x}\{Z^2(w) - Z^1(w) + Z^1(v) - B^1(v) \}   \Big] \label{expression for lhs in 4.4 analogue}.
\end{align}
Comparing \eqref{expression for rhs in 4.4 analogue} with \eqref{expression for lhs in 4.4 analogue}, it is sufficient to show that, for arbitrary $y \in \R$,
\begin{align}
    &\sup_{-\infty < x \le y}\{Z^1(x) - B^1(x)\} \nonumber \\
    &\qquad + \sup_{-\infty < x \le y}\Big[Z^1(x) - B^1(x)  - 2\sup_{-\infty < w \le x}\{Z^1(w) - B^1(w)\} \label{lhs}  \\
    &\qquad\qquad\qquad + \sup_{-\infty < v \le w \le x}\{Z^2(w) - Z^1(w) + Z^1(v) - B^1(v) \}   \Big] \nonumber\\
      = &\sup_{-\infty < w \le x \le y}\{Z^1(x) - B^1(x) + Z^2(w) - Z^1(w)\}. \label{rhs}
\end{align}

We will first prove that $\eqref{lhs} \leq\eqref{rhs}$. We note that
\begin{align} 
\eqref{lhs} &\leq \sup_{-\infty < x \le y}\{Z^1(x) - B^1(x)\} \\
&\qquad + \sup_{-\infty < x \le y}\Big[Z^1(x) - B^1(x) - 2\sup_{-\infty < w \le x}\{Z^1(w) - B^1(w)\} \\
 &\qquad\qquad + \sup_{-\infty < w \le x}\{Z^2(w) - Z^1(w)\} + \sup_{-\infty < w \le x}\{ Z^1(w) - B^1(w) \}   \Big]  \nonumber\\
=&\sup_{-\infty < x \le y}\{Z^1(x) - B^1(x)\}  \label{greater than lhs} \\
+ &\sup_{-\infty < x \le y}\Big[Z^1(x) - B^1(x)
 - \sup_{-\infty < w \le x}\{Z^1(w) - B^1(w)\}  + \sup_{-\infty < w \le x}\{Z^2(w) - Z^1(w)\}   \Big] \nonumber   
\end{align}
Now, we let $x^\star \le y$ be a point such that 
\begin{multline*}
Z^1(x^\star) - B^1(x^\star)
 - \sup_{-\infty < w \le x^\star}\{Z^1(w) - B^1(w)\}  + \sup_{-\infty < w \le x^\star}\{Z^2(w) - Z^1(w)\}  \\
=\sup_{-\infty < x \le y}\Big[Z^1(x) - B^1(x)
 - \sup_{-\infty < w \le x}\{Z^1(w) - B^1(w)\}  + \sup_{-\infty < w \le x}\{Z^2(w) - Z^1(w)\}   \Big]. 
\end{multline*}
We consider two cases.
\begin{flalign*}
&\textbf{Case 1:}  \sup_{-\infty < w \le x^\star}\{Z^1(w) - B^1(w)\} = \sup_{-\infty < w \le  y}\{Z^1(w) - B^1(w)\} &&
\end{flalign*}
Then,
\begin{align*}
    \eqref{lhs} \le \eqref{greater than lhs} &= \sup_{-\infty < x \le y}\{Z^1(x) - B^1(x)\} 
+ Z^1(x^\star) - B^1(x^\star) \\
 &\qquad - \sup_{-\infty < w \le x^\star}\{Z^1(w) - B^1(w)\}  + \sup_{-\infty < w \le x^\star}\{Z^2(w) - Z^1(w)\}    \\
    &= Z^1(x^\star) - B^1(x^\star) + \sup_{-\infty < w \le x^\star}\{Z^2(w) - Z^1(w)\}  \\
    &\le \sup_{-\infty < w \le x \le y}\{Z^1(x) - B^1(x) + Z^2(w) - Z^1(w)\} = \eqref{rhs}.
\end{align*}
\begin{flalign*}
\textbf{Case 2:}  \sup_{-\infty < w \le x^\star}\{Z^1(w) - B^1(w)\} < \sup_{-\infty < w \le  y}\{Z^1(w) - B^1(w)\}. &&
\end{flalign*}
Then, we have that 
\[
\sup_{-\infty < w \le  y}\{Z^1(w) - B^1(w)\} = \sup_{x^\star \le w \le y}\{Z^1(w) - B^1(w)\},
\]
so, noting that $Z^1(x^\star) - B^1(x^\star) \le \underset{-\infty < w \le x^\star}{\sup}\{Z^1(w) - B^1(w)\}$,
\begin{align*}
    \eqref{lhs} \le \eqref{greater than lhs} &= \sup_{-\infty < x \le y}\{Z^1(x) - B^1(x)\} 
+ Z^1(x^\star) - B^1(x^\star)  \\
&\qquad - \sup_{-\infty < w \le x^\star}\{Z^1(w) - B^1(w)\}  + \sup_{-\infty < w \le x^\star}\{Z^2(w) - Z^1(w)\}    \\
    &\le \sup_{x^\star \le  x \le y}\{Z^1(x) - B^1(x)\} + \sup_{-\infty < w \le x^\star}\{Z^2(w) - Z^1(w)\}\\
    &= \sup_{-\infty < w \le x^\star \le x \le y }\{Z^1(x) - B^1(x) + Z^2(w) - Z^1(w)  \} \\
    &\le \sup_{-\infty < w \le x \le y }\{Z^1(x) - B^1(x) + Z^2(w) - Z^1(w)  \} = \eqref{rhs}.
\end{align*}

Now, we prove that $\eqref{rhs} \le \eqref{lhs}$. Let $-\infty < w^\star \le x^\star \le y$ be such that 
\[
Z^1(x^\star) - B^1(x^\star) + Z^2(w^\star) - Z^1(w^\star)= \sup_{-\infty < w \le x \le y }\{Z^1(x) - B^1(x) + Z^2(w) - Z^1(w)  \}.
\]
We consider two new cases. 
\begin{flalign*}
\textbf{Case 1:} \sup_{-\infty < v \le x^\star}\{Z^1(v) - B^1(v)\} = \sup_{-\infty < v \le w^\star}\{Z^1(v)  -B^1(v)\}. &&
\end{flalign*}
Then,
\begin{align*}
    \eqref{rhs}
    = &Z^1(x^\star) - B^1(x^\star) + Z^2(w^\star) - Z^1(w^\star) \\
    &\qquad \qquad\qquad+ \sup_{-\infty < v \le w^\star}\{Z^1(v)  -B^1(v)\} -\sup_{-\infty < v \le x^\star}\{Z^1(v) - B^1(v)\}  \\
    \le &\sup_{-\infty < x \le y}\Big[Z^1(x) - B^1(x) - \sup_{-\infty < v \le x}\{Z^1(v) - B^1(v)\}\\
    &\qquad \qquad\qquad+ \sup_{-\infty < v \le w \le x}\{Z^1(v) - B^1(v) + Z^2(w) - B^2(w)\}  \Big] \\
    \le &\sup_{-\infty < x \le y}\Big[Z^1(x) - B^1(x)+ \sup_{-\infty < v \le y}\{Z^1(v) - B^1(v)\}    \\
    - &2\sup_{-\infty < v \le x}\{Z^1(v) - B^1(v)\} + \sup_{-\infty < v \le w \le x}\{Z^2(w) - Z^1(w) + Z^1(v) - B^1(v) \}   \Big]
    \\ =&\sup_{-\infty < v \le y}\{Z^1(v) - B^1(v)\} \nonumber \\
    &\qquad + \sup_{-\infty < x \le y}\Big[Z^1(x) - B^1(x)  - 2\sup_{-\infty < v \le x}\{Z^1(v) - B^1(v)\} \label{lhs}  \\
    &\qquad\qquad\qquad + \sup_{-\infty < v \le w \le x}\{Z^2(w) - Z^1(w) + Z^1(v) - B^1(v) \}   \Big] =\eqref{lhs}.
\end{align*}

\begin{flalign*}
\textbf{Case 2:} \sup_{-\infty < v \le x^\star}\{Z^1(v) - B^1(v)\} > \sup_{-\infty < v \le w^\star}\{Z^1(v)  -B^1(v)\}. &&
\end{flalign*}
Then, we have that
\[
\sup_{-\infty < v \le x^\star}\{Z^1(v) - B^1(v)\} = \sup_{w^\star \le v \le x^\star}\{Z^1(v) - B^1(v)\} > Z^1(w^\star) - B^1(w^\star).
\]
Consequently, there is a point $v^\star \in (w^\star,x^\star]$ such that 
\[
Z^1(v^\star) - B^1(v^\star) = \sup_{-\infty < v \le x^\star}\{Z^1(v) - B^1(v)\}. 
\]
Next, we define $z^\star \in [w^\star,v^\star)$ as
\[
z^\star = \inf \Big\{z \in [w^\star,v^\star): Z^1(z) - B^1(z) = \sup_{-\infty < v \le w^\star}\{Z^1(v) - B^1(v) \} \Big\}.
\]
To see that this set is nonempty, recall that
\begin{align*}
Z^1(v^\star) - B^1(v^\star) &= \sup_{-\infty < v \le x^\star}\{Z^1(v) - B^1(v)\} \\
&> \sup_{-\infty < x \le w^\star}\{Z^1(v)  -B^1(v)\} \ge Z^1(w^\star) - B^1(w^\star),
\end{align*}
and use the Intermediate Value Theorem. By continuity, we observe that 
\begin{equation} \label{sup_eq}
Z^1(z^\star) - B^1(z^\star) = \sup_{-\infty < v \le z^\star}\{Z^1(v) - B^1(v)\} = \sup_{-\infty < v \le w^\star}\{Z^1(v) - B^1(v)  \}.
\end{equation}

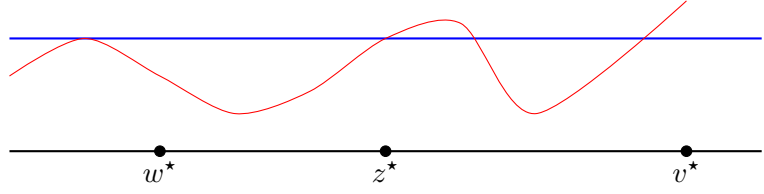
\begin{figure}[t]
\centering
    \begin{tikzpicture}
    \draw[black,thick] (10,0)--(20,0);
    \draw[blue,thick] (10,1.5)--(20,1.5);
    \filldraw[black] (12,0) circle (2pt) node[anchor = north] {$w^\star$};
    \filldraw[black] (15,0) circle (2pt) node[anchor = north] {$z^\star$};
    \filldraw[black] (19,0) circle (2pt) node[anchor = north] {$v^\star$};
    \draw [red] plot [smooth] coordinates {(10,1)(11,1.5)(12,1)(13,0.5)(14,0.8)(15,1.5)(16,1.7)(17,0.5)(19,2)};
    \end{tikzpicture}
    \caption{\small Example graph of the function $Z^1(z) - B^1(z)$. The upper horizontal (blue) line represents the value of $\underset{-\infty < v \le w^\star}{\sup}\{Z^1(v) - B^1(v)\}$. The value of $z^\star$ is the first location greater than or equal to $w^\star$ where the function $Z^1(z) - B^1(z)$ takes the value on the blue line.}
    \label{fig:existence of w^*}
\end{figure}

Refer to figure \ref{fig:existence of w^*} for clarity. Now, by \eqref{sup_eq}, we have 
\begin{align*}
    \eqref{rhs} = &Z^1(x^\star) - B^1(x^\star) + Z^2(w^\star) - Z^1(w^\star) \\
    = &Z^1(x^\star) - B^1(x^\star) + Z^1(z^\star) - B^1(z^\star) - 2\sup_{-\infty < v \le z^\star}\{Z^1(v) - B^1(v)\} \\
    &\qquad+Z^2(w^\star) - Z^1(w^\star) + \sup_{-\infty <v \le w^\star}\{Z^1(v) - B^1(v)\} \\
    &\le \sup_{-\infty < x \le y}\{Z^1(x) - B^1(x)\}\\
    + &\sup_{-\infty < z \le y}\Big\{Z^1(z) - B^1(z) - 2\sup_{-\infty < v \le z}\{Z^1(v) - B^1(v)\} \\
    &\qquad\qquad\qquad + \sup_{-\infty < v \le w \le z}\{Z^2(w) - Z^1(w) + Z^1(v) - B^1(v)\} \Big\} = \eqref{lhs}. 
\end{align*}
This concludes all cases of the proof.
\end{proof}

\newpage
\begin{lemma} \label{lem:Alternate Rep of Iterated Queues}
Let $n \ge 2$, and assume $(B^1,Z^1,Z^{2},\ldots,Z^n) \in \Y_{n + 1}^\R$.   For $2 \le j \le n$ define
$
B^j = R(Z^{j - 1},B^{j - 1}).
$  
Then, for $1 \le k \le n - 1$,
\begin{align*}
    &D^{(n + 1)}(Z^n,Z^{n - 1},\ldots,Z^1,B^1) \\
    &\qquad\qquad = D^{(k + 1)}(D^{(n - k + 1)}(Z^n,\ldots,Z^{k + 1},B^{k + 1}),D(Z^k,B^k),\ldots,D(Z^1,B^1)).
\end{align*}
\end{lemma}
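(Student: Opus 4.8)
The plan is to induct on $k$, with the base case $k=1$ carrying essentially all the analytic content via Lemma~\ref{analogue of Lemma 4.4 in Joint Buse Paper}. The only auxiliary fact needed is the elementary ``regrouping'' identity
\[
D^{(m)}(X^m,X^{m-1},\dots,X^1)=D^{(m-1)}\bigl(D(X^m,X^{m-1}),X^{m-2},\dots,X^1\bigr)\qquad(m\ge 2),
\]
which is immediate from the recursion \eqref{Diter} by a one-line induction on $m$. Before doing the algebra I would record, once and for all, that every queuing map appearing below is well defined and that every tuple fed into Lemma~\ref{analogue of Lemma 4.4 in Joint Buse Paper} lies in $\Y_3^\R$: by Lemma~\ref{D and R preserve limits} each $B^j=R(Z^{j-1},B^{j-1})$ has the same asymptotic slope at $-\infty$ as $B^1$, hence strictly smaller than that of $Z^1$, while by Lemmas~\ref{D and R preserve limits} and~\ref{image of script D lemma} an iterate $D^{(j)}(Z^n,\dots,Z^{n-j+1})$ has the asymptotic slope of $Z^n$; combined with $(B^1,Z^1,\dots,Z^n)\in\Y_{n+1}^\R$ this keeps every slope ordering strict wherever one is needed.

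\emph{Base case $k=1$.} Set $\wt Z:=D^{(n-1)}(Z^n,\dots,Z^2)$. Using the recursion \eqref{Diter} on each side, the claimed identity for $k=1$ becomes
\[
D\bigl(D(\wt Z,Z^1),B^1\bigr)=D\bigl(D(\wt Z,B^2),D(Z^1,B^1)\bigr),\qquad B^2=R(Z^1,B^1),
\]
which is precisely Lemma~\ref{analogue of Lemma 4.4 in Joint Buse Paper} applied to the triple $(B^1,Z^1,\wt Z)\in\Y_3^\R$.

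\emph{Inductive step.} Suppose the identity holds for some $k$ with $1\le k\le n-2$; the passage to $k+1$ only modifies the top argument $W:=D^{(n-k+1)}(Z^n,\dots,Z^{k+1},B^{k+1})$ of the $k$-form. Apply the (already proven) $k=1$ case to the shorter admissible tandem $(B^{k+1},Z^{k+1},Z^{k+2},\dots,Z^n)\in\Y_{n-k+1}^\R$, whose associated second boundary function is exactly $R(Z^{k+1},B^{k+1})=B^{k+2}$; this yields
\[
W=D\bigl(D^{(n-k)}(Z^n,\dots,Z^{k+2},B^{k+2}),\,D(Z^{k+1},B^{k+1})\bigr).
\]
Substituting this into the $k$-form and applying the regrouping identity with $X^{k+2}=D^{(n-k)}(Z^n,\dots,Z^{k+2},B^{k+2})$, $X^{k+1}=D(Z^{k+1},B^{k+1})$ and $X^j=D(Z^j,B^j)$ for $1\le j\le k$ turns $D^{(k+1)}(W,D(Z^k,B^k),\dots,D(Z^1,B^1))$ into $D^{(k+2)}(X^{k+2},X^{k+1},\dots,X^1)$, which is the $(k+1)$-form. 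This closes the induction and gives the claim for all $1\le k\le n-1$.

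There is no deep obstacle: the essential identity is Lemma~\ref{analogue of Lemma 4.4 in Joint Buse Paper}, and everything else is index bookkeeping plus the trivial regrouping identity. The one point demanding care is checking, when the base case is re-used on the truncated tandem $(B^{k+1},Z^{k+1},\dots,Z^n)$, that its hypotheses still hold --- that the relevant asymptotic slopes remain strictly ordered --- which is exactly what the preservation statements in Lemmas~\ref{D and R preserve limits} and~\ref{image of script D lemma} supply.
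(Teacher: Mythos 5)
Your proof is correct and rests on the same essential ingredient as the paper's, namely Lemma~\ref{analogue of Lemma 4.4 in Joint Buse Paper} applied with $Z^2$ replaced by an iterated departure map, together with routine regrouping via the recursion \eqref{Diter}. The only organizational difference is that you induct on $k$ with $n$ fixed (re-using the $k=1$ case on the truncated tandem $(B^{k+1},Z^{k+1},\dots,Z^n)$), whereas the paper inducts on $n$ and reduces general $k$ to the $k=1$ case by peeling off the bottom argument $D(Z^1,B^1)$; both routes are valid and of comparable length.
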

\begin{proof}
With Lemma \ref{analogue of Lemma 4.4 in Joint Buse Paper} in place, we can now follow the argument of Theorem 4.5 in \cite{Fan-Seppalainen-20}.
By Lemma \ref{multiline process well-defined}, all the given operations are well-defined. Lemma \ref{analogue of Lemma 4.4 in Joint Buse Paper} gives us the statement for $n = 2$. Assume, by induction, that the statement is true for some $n - 1 \ge 2$. We will show the statement is also true for $n$. We first prove the case $k = 1$. 
\begin{align*}
    &\;\;\;D^{(2)}(D^{(n)}(Z^n,\ldots,Z^2,B^2),D(Z^1,B^1)) \\
    &= D(D(D^{(n -1)}(Z^n,\ldots,Z^2),B^2),D(Z^1,B^1)) \\
    &= D(D(D^{(n - 1)}(Z^n,\ldots,Z^2),Z^1),B^1) \\
    &= D(D^{(n)}(Z^n,\ldots,Z^1),B^1) \\
    &= D^{(n + 1)}(Z^n,\ldots,Z^1,B^1).
\end{align*}
The second equality above was a consequence of Lemma \ref{analogue of Lemma 4.4 in Joint Buse Paper}.
Now, let $2 \le k \le n - 1$. Then, applying the definition of $D^{(k + 1)}$ followed by the induction assumption,
\begin{align*}
    &D^{(k + 1)}(D^{(n - k + 1)}(Z^n,\ldots,Z^{k + 1},B^{k + 1}),D(Z^k,B^k),\ldots,D(Z^1,B^1)) \\
    = &D(D^{(k)}(D^{(n - k + 1)}(Z^n,\ldots,Z^{k + 1},B^{k + 1}),D(Z^k,B^k),\ldots,D(Z^2,B^2)),D(Z^1,B^1)) \\
    = &D(D^{(n)}(Z^n,\ldots,Z^2,B^2),D(Z^1,B^1)) = D^{(2)}(D^{(n)}(Z^n,\ldots,Z^2,B^2),D(Z^1,B^1)).
\end{align*}
Hence, we have reduced this to the $k = 1$ case. 
\end{proof}
\noindent We note that the case $k= n - 1$ of Theorem \ref{lem:Alternate Rep of Iterated Queues} gives us
\begin{equation} \label{intertwining}
    D^{(n + 1)}(Z^n,\ldots,Z^1,B^1) = D^{(n)}(D(Z^n,B^n),\ldots,D(Z^1,B^1)).
\end{equation}

\subsection{Multiclass Markov chains}
\label{section:multiline-Markov process}
We define here two discrete-time Markov chains on the state spaces $\Y_n^{A}$ and $\Y_n^A$ \eqref{Yndef}--\eqref{Xndef}. 

The multiline process is a discrete-time Markov chain on the state space $\Y_n^{(\sigma^2 a,\infty)}$ of \eqref{Yndef} for some choice of $\sigma > 0$ and $a \in \R$. Analogous processes are defined in discrete settings in \cite{Ferrari-Martin-2007} and \cite{Fan-Seppalainen-20}. 
The transition from the time $m-1$ state $\mathbf Z_{m - 1} =\mathbf Z =  (Z^1,Z^2,\ldots, Z^n) \in \Y_n^{(\sigma^2 a,\infty)}$ to the time $m$ state 
\begin{equation*} 
\mathbf Z_{m} = \overline{\mathbf Z} = (\overline Z^1,\overline Z^2,\ldots, \overline Z^n) \in \Y_n^{(\sigma^2 a,\infty)}
\end{equation*}
is defined as follows. The driving force is an auxiliary   function $B \in \CRpin$ that satisfies
\[
\lim_{x \rightarrow -\infty} \f{B(x)}{x} = \sigma^2 a \in \R. 
\]
for some fixed choice of $a$.
First, set $B^1 = B$,  and $\overline Z^1 = D(Z^1,B^1)$. 
    Then, iteratively for $i = 2,3,\ldots,n$:
    \be \label{multiline process}
    B^i = R(Z^{i - 1},B^{i - 1}), \qquad\text{and}\qquad
    \overline Z^i = D(Z^i,B^i).
    \ee

\begin{lemma} \label{multiline process well-defined}
The multiline process \eqref{multiline process} is well-defined on the state space $\Y_n^{(\sigma^2 a,\infty)}$.
\end{lemma}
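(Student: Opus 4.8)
The plan is to argue by induction on the level $i\in\{1,\dots,n\}$, carrying along precise information about the asymptotic slope at $-\infty$ of each auxiliary function $B^i$. Write $b_0=\sigma^2 a=\lim_{x\to-\infty}B(x)/x$ and $b_i=\lim_{x\to-\infty}Z^i(x)/x$ for $1\le i\le n$; by the definition \eqref{Yndef} of $\Y_n^{(\sigma^2 a,\infty)}$ together with the hypothesis on the driving function $B$, these slopes satisfy $b_0<b_1<b_2<\dots<b_n$. The point of the induction is to show that each transition \eqref{multiline process} is a legitimate composition of queuing maps and that its output again lies in $\Y_n^{(\sigma^2 a,\infty)}$.

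The inductive hypothesis at level $i$ is that $B^i\in\CRpin$ and $\lim_{x\to-\infty}B^i(x)/x=b_0$; the base case $i=1$ is immediate since $B^1=B$. For the inductive step, since $b_i>b_0$ we have $Z^i(x)-B^i(x)=x\bigl(Z^i(x)/x-B^i(x)/x\bigr)\to-\infty$ as $x\to-\infty$, so the hypothesis $\limsup_{x\to-\infty}[Z^i(x)-B^i(x)]=-\infty$ required to define $D(Z^i,B^i)$ and $R(Z^i,B^i)$ holds; Lemma \ref{DRcont} then gives that $\overline Z^i=D(Z^i,B^i)$ and $B^{i+1}=R(Z^i,B^i)$ are well-defined and continuous. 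Evaluating \eqref{Ddef} and \eqref{Rdef} at $y=0$ gives $\overline Z^i(0)=Z^i(0)=0$ and $B^{i+1}(0)=B^i(0)=0$, so $\overline Z^i,B^{i+1}\in\CRpin$. Finally, $(B^i,Z^i)\in\Y_2^\R$ with $b_0<b_i$, so Lemma \ref{D and R preserve limits} yields $\lim_{x\to-\infty}\overline Z^i(x)/x=b_i$ and $\lim_{x\to-\infty}B^{i+1}(x)/x=b_0$; the latter is precisely the inductive hypothesis at level $i+1$, closing the induction.

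Running the induction through $i=n$, the output $\overline{\mathbf Z}=(\overline Z^1,\dots,\overline Z^n)$ consists of functions in $\CRpin$ whose asymptotic slopes at $-\infty$ are $b_1<b_2<\dots<b_n$, each strictly larger than $\sigma^2 a$; hence $\overline{\mathbf Z}\in\Y_n^{(\sigma^2 a,\infty)}$ and the transition \eqref{multiline process} is well-defined, as claimed. I do not anticipate a genuine obstacle here, since the argument is essentially bookkeeping of asymptotic slopes; the one structural fact that must not be overlooked is that $R$ returns a function carrying the \emph{smaller} of the two input slopes, namely $b_0$. This is exactly what prevents the gap $b_i-b_0>0$ from closing as $i$ increases, and thus guarantees at every level the hypothesis under which the queuing maps $D$ and $R$ are defined.
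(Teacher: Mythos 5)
Your proof is correct and follows essentially the same route as the paper's: an induction showing that each auxiliary function $B^i$ retains asymptotic slope $\sigma^2 a$ at $-\infty$ (via Lemma \ref{D and R preserve limits}), so that $Z^i(x)-B^i(x)\to-\infty$ and the maps $D$ and $R$ are defined at every level. You additionally verify explicitly that the output lands back in $\Y_n^{(\sigma^2 a,\infty)}$, which the paper leaves implicit; that is a harmless and slightly more complete presentation of the same argument.
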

\begin{proof}
This follows from Lemma \ref{D and R preserve limits}: Inductively, each $B^i$ satisfies
\[
\lim_{x \rightarrow -\infty}\frac{B^i(x)}{x} = \sigma^2 a 
\]
so since $\mathbf Z \in \Y_n^{(\sigma^2 a,\infty)}$, we have that, for $1 \le i \le n$,
\[
\limsup_{x \rightarrow -\infty}Z^i(x) - B^i(x) = -\infty. \qedhere
\]
\end{proof}

\begin{theorem} \label{multiline invariant distribution}
For each choice of $a \in \R$ and $\bar \dir = (\dir_1 < \cdots <\dir_n) \in \R^n_{> a}$, the measure $\nu_\sigma^{\bar \dir}$ on $\Y_n^{(\sigma^2 a,\infty)}$ is invariant for the multiline process \eqref{multiline process} if the driving function $B$ at each step of the evolution is taken to be an independent two-sided Brownian motion with diffusivity $\sigma$ and drift $\sigma^2 a$.
\end{theorem}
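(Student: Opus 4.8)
The plan is to peel off the layers of the iteration \eqref{multiline process} one at a time and apply the distributional identity of Theorem \ref{thm:OCY_DR} to each layer, while carrying along the full joint-independence structure. First note that a two-sided Brownian motion with diffusivity $\sigma$ and drift $\sigma^2\dir_i$ has $\lim_{x\to-\infty} Z^i(x)/x = \sigma^2\dir_i$ almost surely; since $\dir_1 < \cdots < \dir_n$ and each $\dir_i > a$, the measure $\nu_\sigma^{\bar\dir}$ is indeed supported on $\Y_n^{(\sigma^2 a,\infty)}$, and Lemma \ref{multiline process well-defined} guarantees that every map $D(\cdot,\cdot)$ and $R(\cdot,\cdot)$ encountered below is almost surely well-defined. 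This is essentially the semi-discrete analogue of the argument of Theorem 4.5 in \cite{Fan-Seppalainen-20}.

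Fix the setup: $(Z^1,\dots,Z^n)\sim\nu_\sigma^{\bar\dir}$ and the driving function $B=B^1$ is an independent two-sided Brownian motion with diffusivity $\sigma$ and drift $\sigma^2 a$, so that $B^1,Z^1,\dots,Z^n$ are mutually independent two-sided Brownian motions of diffusivity $\sigma$ with drifts $\sigma^2 a,\sigma^2\dir_1,\dots,\sigma^2\dir_n$. I would then prove by induction on $i\in\{0,1,\dots,n\}$ the statement $(C_i)$: the functions $\overline Z^1,\dots,\overline Z^i,B^{i+1},Z^{i+1},\dots,Z^n$ are mutually independent two-sided Brownian motions of diffusivity $\sigma$, with $\overline Z^k$ of drift $\sigma^2\dir_k$ for $1\le k\le i$, with $B^{i+1}$ of drift $\sigma^2 a$, and with $Z^k$ of drift $\sigma^2\dir_k$ for $i+1\le k\le n$. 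The base case $(C_0)$ is the setup above (reading $B^1$ for $B^{0+1}$), and $(C_n)$ says precisely that $(\overline Z^1,\dots,\overline Z^n)\sim\nu_\sigma^{\bar\dir}$, which is the asserted invariance.

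For the inductive step, assume $(C_{i-1})$. Since $\dir_i > a$, the pair $(B^i,Z^i)$ consists of independent two-sided Brownian motions of common diffusivity $\sigma$ with drifts $\sigma^2 a < \sigma^2\dir_i$, so Theorem \ref{thm:OCY_DR} applies and gives $(B^{i+1},\overline Z^i)=\big(R(Z^i,B^i),D(Z^i,B^i)\big)\deq(B^i,Z^i)$; in particular $B^{i+1}$ and $\overline Z^i$ are independent two-sided Brownian motions of diffusivity $\sigma$ with drifts $\sigma^2 a$ and $\sigma^2\dir_i$. Because $(B^{i+1},\overline Z^i)$ is a measurable function of $(B^i,Z^i)$ alone, and by $(C_{i-1})$ the pair $(B^i,Z^i)$ is independent of $\big(\overline Z^1,\dots,\overline Z^{i-1},Z^{i+1},\dots,Z^n\big)$, the joint law of $\big(\overline Z^1,\dots,\overline Z^{i-1},B^{i+1},\overline Z^i,Z^{i+1},\dots,Z^n\big)$ factors as the law of $\big(\overline Z^1,\dots,\overline Z^{i-1},Z^{i+1},\dots,Z^n\big)$ — a product of its marginals by $(C_{i-1})$ — times the law of $(B^{i+1},\overline Z^i)$ — a product of its marginals by the independence just obtained. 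Hence the whole collection is mutually independent with the claimed marginals, which is $(C_i)$, completing the induction.

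I do not expect a genuine obstacle: the proof is a bookkeeping argument resting entirely on Theorem \ref{thm:OCY_DR}. The one point requiring care is the independence bookkeeping in the inductive step — namely that $(B^{i+1},\overline Z^i)$ depends only on the current-layer pair $(B^i,Z^i)$, hence is independent both of the already-produced $\overline Z^1,\dots,\overline Z^{i-1}$ and of the not-yet-processed $Z^{i+1},\dots,Z^n$, combined with the internal independence $B^{i+1}\perp\overline Z^i$ furnished by Theorem \ref{thm:OCY_DR}. The standing requirement that all queuing maps are well-defined throughout is exactly Lemma \ref{multiline process well-defined}.
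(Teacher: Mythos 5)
Your proposal is correct and follows essentially the same route as the paper's proof: an induction peeling off one layer at a time, applying Theorem \ref{thm:OCY_DR} to the pair $(B^i,Z^i)$ and using that $(B^{i+1},\overline Z^i)$ is a function of that pair alone to propagate mutual independence. The bookkeeping in your hypothesis $(C_i)$ matches the paper's inductive statement exactly, so there is nothing to add.
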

\begin{proof}
Assume that $\mathbf Z = (Z^1,\ldots, Z^n) \in \Y_n^{(\sigma^2 a,\infty)}$ has distribution $\nu_\sigma^{\bar \dir}$. We will show that $\overline{\mathbf Z}$ also has distribution $\nu_\sigma^{\bar \dir}$. The assumption on $\mbf Z$ means that $Z^1,\ldots,Z^n$ are independent two-sided Brownian motions with diffusivity $\sigma$ and drift $\sigma^2\dir_i$. By Theorem \ref{thm:OCY_DR}, 
$
\overline Z^1 = D(Z^1,B^1)
$
is a two-sided Brownian motion with diffusivity $\sigma$ and  drift $\sigma^2\dir_1$, independent of
$
B^2 = R(Z^1,B^1),
$
which is a two-sided Brownian motion with diffusivity $\sigma$ and drift $\sigma^2 a$. Hence, the random paths $\overline Z^1, B^2,Z^2,\ldots, Z^n$ are mutually independent. We iterate this process as follows: Assume, for some $2 \leq k \leq n - 1$, that the random paths $\overline Z^1,\ldots,\overline Z^{k - 1},B^k,Z^k,\ldots, Z^n$ are mutually independent, where for $1 \leq i \leq k - 1$, $\overline Z^i$ is a Brownian motion with diffusivity $\sigma$ and drift $\sigma^2\dir_i$. Then, by another application of Theorem \ref{thm:OCY_DR},
$
\overline Z^k = D(Z^k,B^k)
$
is a two-sided Brownian motion with diffusivity $\sigma$ and drift $\sigma^2\dir_k$, independent of 
$
B^{k + 1} = R(Z^k,B^k),
$
which is a two-sided Brownian motion with diffusivity $\sigma$ and zero drift.
Since $(\overline Z^k,B^{k + 1})$ is a function of $(B^k,Z^k)$, we have that $\overline Z^1,\ldots,\overline Z^k,B^{k + 1},Z^{k + 1},\ldots,Z^n$ are mutually independent, completing the proof. 
\end{proof}

\noindent For a fixed diffusivity $\sigma > 0$ and a constant $a > 0$, we now define a Markov chain \[
\eta := \{\eta_m\}_{m \in \Z_{\ge 0}} = \{(\eta_m^1,\ldots,\eta_m^n)\}_{m \in \Z_{\ge 0}}
\]
with state space $\X_n^{(\sigma^2 a,\infty)}$. Henceforth,  $\mbf F =\{F_m\}_{m \ge 1}$ denotes an i.i.d.\ sequence of two-sided Brownian motions with diffusivity $\sigma$ and drift $\sigma^2 a$, independent of the initial configuration $\eta_0\in\X_n^{(\sigma^2 a,\infty)}$. At each discrete time step $m \ge 1$, set $ F_m$ to be the driving Brownian motion. Given the time $m-1$ state $\eta_{m-1}$,   define the time $m$   state of the chain as  
\begin{equation} \label{Busemann Markov chain}
\eta_{m} = \bigl(D(\eta^1_{m - 1},F_m),D(\eta^2_{m - 1},F_m),\ldots,D(\eta^n_{m - 1},F_m)\bigr).
\end{equation}
Lemmas \ref{queue_order} and \ref{image of script D lemma} imply that if $\eta_{m -1} \in \X_n^{(\sigma^2 a,\infty)}$, then $\eta_{m} \in \X_n^{(\sigma^2 a,\infty)}$ as well. 
\begin{theorem} \label{existence of an invariant measure for Busemann MC}
For $a,\sigma > 0$ and $\bar \dir = (\dir_1 < \cdots < \dir_n) \in \R^n_{> a}$, the measure $\mu_\sigma^{\bar \dir} $ of \eqref{musigma} is invariant for the Markov chain \eqref{Busemann Markov chain}.
\end{theorem}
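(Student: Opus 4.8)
The plan is to couple the Busemann Markov chain \eqref{Busemann Markov chain} with one step of the multiline process \eqref{multiline process} through the map $\D^{(n)}$, and to transfer the invariance of $\nu_\sigma^{\bar \dir}$ from Theorem \ref{multiline invariant distribution} across this intertwining. Concretely, suppose $\mathbf Z = (Z^1,\ldots,Z^n) \sim \nu_\sigma^{\bar \dir}$ on $\Y_n^{(\sigma^2 a,\infty)}$, so that $\eta_0 := \D^{(n)}(\mathbf Z)$ has law $\mu_\sigma^{\bar \dir}$ and lies in $\X_n^{(\sigma^2 a,\infty)}$ by Lemma \ref{image of script D lemma}\ref{itm:image}. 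Let $F$ be a two-sided Brownian motion with diffusivity $\sigma$ and drift $\sigma^2 a$, independent of $\mathbf Z$, and use it simultaneously as the time-$1$ driving function $F_1$ of the Busemann chain and as the driving function $B^1 = B$ of one multiline step applied to $\mathbf Z$. Write $\overline{\mathbf Z} = (\overline Z^1,\ldots,\overline Z^n)$ for the multiline output: $\overline Z^1 = D(Z^1,B^1)$, and $B^j = R(Z^{j-1},B^{j-1})$, $\overline Z^j = D(Z^j,B^j)$ for $2 \le j \le n$. By Theorem \ref{multiline invariant distribution}, $\overline{\mathbf Z} \sim \nu_\sigma^{\bar \dir}$, hence $\D^{(n)}(\overline{\mathbf Z}) \sim \mu_\sigma^{\bar \dir}$.

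The identity to establish is $\eta_1 = \D^{(n)}(\overline{\mathbf Z})$, where $\eta_1$ is the one-step image of $\eta_0$ under \eqref{Busemann Markov chain} driven by $F$. Reading off the $i$th coordinate, $\eta_1^i = D(\eta_0^i,F) = D(D^{(i)}(Z^i,\ldots,Z^1),F)$, whereas $\D^{(n)}(\overline{\mathbf Z})^i = D^{(i)}(\overline Z^i,\ldots,\overline Z^1)$. Since $D^{(i+1)}(Z^i,\ldots,Z^1,B^1) = D(D^{(i)}(Z^i,\ldots,Z^1),B^1)$ by \eqref{Diter}, the claimed equality is exactly the intertwining relation \eqref{intertwining} (the $k=n-1$ case of Lemma \ref{lem:Alternate Rep of Iterated Queues}) applied with $n$ replaced by $i$ to the tuple $(B^1,Z^1,\ldots,Z^i)$. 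I would check that this tuple lies in $\Y_{i+1}^\R$ almost surely: the required strict ordering of limiting slopes at $-\infty$ is $\sigma^2 a < \sigma^2 \dir_1 < \cdots < \sigma^2 \dir_i$, which holds because $\bar \dir \in \R^n_{>a}$; moreover the auxiliary functions $B^2,\ldots,B^i$ generated inside this application coincide with those of the full multiline step, since each $B^j$ depends only on $(B^1,Z^1,\ldots,Z^{j-1})$. Combining these facts coordinatewise gives $\eta_1 = \D^{(n)}(\overline{\mathbf Z}) \sim \mu_\sigma^{\bar \dir}$, which is the assertion; invariance for one step suffices since the chain's driving functions are i.i.d.

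The substantive content has already been front-loaded into the deterministic Lemma \ref{lem:Alternate Rep of Iterated Queues} and the probabilistic Theorem \ref{multiline invariant distribution}, so what remains is organizational: aligning the indexing conventions of $\D^{(n)}$, $D^{(i)}$, and the two Markov chains, and verifying well-definedness of every queuing map invoked (each $B^j$ has limiting slope $\sigma^2 a$ at $-\infty$ by Lemma \ref{D and R preserve limits}, so every difference $Z^i - B^j$ has positive limiting slope and tends to $-\infty$, as in Lemma \ref{multiline process well-defined}). I expect the single point needing genuine care is confirming that \eqref{intertwining} may be applied to each initial segment $(B^1,Z^1,\ldots,Z^i)$, $i \le n$, rather than only to the full tuple, and that the resulting coordinatewise identities are all realized by the \emph{same} driving function $F$, which is what makes the coupling $\eta_1 = \D^{(n)}(\overline{\mathbf Z})$ exact.
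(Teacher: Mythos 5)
Your proposal is correct and follows essentially the same route as the paper: both establish the pathwise intertwining $\T^B(\D^{(n)}(\mathbf Z)) = \D^{(n)}(\Ss^B(\mathbf Z))$ coordinatewise via \eqref{Diter} and \eqref{intertwining}, and then invoke Theorem \ref{multiline invariant distribution} to push the invariance of $\nu_\sigma^{\bar\dir}$ through $\D^{(n)}$. Your added checks (that each initial segment $(B^1,Z^1,\ldots,Z^i)$ lies in $\Y_{i+1}^\R$, and that the auxiliary services $B^j$ are the same across the coordinatewise applications because each depends only on $(B^1,Z^1,\ldots,Z^{j-1})$) are points the paper leaves implicit, and they are correctly resolved.
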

\newpage
\begin{proof}
This follows by an intertwining argument developed by Ferrari and Martin for particle systems in \cite{Ferrari-Martin-2007} and carried out for exponential last-passage percolation in \cite{Fan-Seppalainen-20}. Let $\mathbf Z \sim \nu_\sigma^{\bar \dir}$. Assume that $\eta$ has distribution $\mu_\sigma^{\bar \dir}$: the distribution of $\D^{(n)}(\mathbf Z)$. Without loss of generality, we assume that $\eta = \D^{(n)}(\mathbf Z)$. Then, for Brownian motion $B$, let $\Ss^B$ denote the mapping of a single evolution step of $\mathbf Z$ according to the multiline process \eqref{multiline process} and $\T^B$ denote the mapping of a single evolution step of $\eta$ according to the Markov chain \eqref{Busemann Markov chain}. Then, using the definition of $D^{(k)}$ and Equation \eqref{intertwining},
\begin{align*}
\T_k^B(\eta) = &D(\eta^k,B) = D(D^{(k)}(Z^k,\ldots,Z^1),B^1) = D^{(k + 1)}(Z^k,\ldots,Z^1,B^1) \\
= &D^{(k)}(D(Z^k,B^k),D(Z^{k - 1},B^{k - 1}),\ldots,D(Z^1,B^1))  \\
= &D^{(k)}(\Ss_k^B(\mathbf Z),\Ss_{k - 1}^B(\mathbf Z),\ldots,\Ss_1^B(\mathbf Z)) = \D_k^{(n)}(\Ss^B(\mathbf Z)).
\end{align*}
Hence, $\T^B(\eta) = \D^{(n)}(\Ss^B(\mathbf Z))$. Since $\eta = \D^{(n)}(\mathbf Z)$, we have that
\[
\T^B(\D^{(n)}(\mathbf Z)) = \D^{(n)}(\Ss^B(\mathbf Z)).
\]
By Theorem \ref{multiline invariant distribution}, $\Ss^B(\mathbf Z) \overset{d}{=} \mathbf Z \sim \nu_\sigma^{\bar \dir}$. Therefore, $\T^B(\eta) \overset{d}{=} \D^{(n)}(\mathbf Z) \sim \mu_\sigma^{\bar \dir}$.
\end{proof}

\subsection{A triangular array representation of the map $\D^{(n)}$}
Here, we give a triangular array construction of the mapping $\D^{(n)}$ of \eqref{scrD}. This construction has previously appeared for a discrete prelimiting analogue of the map in \cite{Fan-Seppalainen-20} and \cite{Busani-2021}. For $(Z^1,\ldots,Z^n) \in \Y_n^\R$, define the following triangular array $\{\eta^{i,j},\zeta^{i,j}:1 \le j \le i \le n\}$ inductively as follows: set $\eta^{1,1} = \zeta^{1,1} = Z^1$, and assuming that $\eta^{i - 1,j}$ and $\zeta^{i - 1,j}$ have been defined for $j \in \{1,\ldots,i - 1\}$,
\be \label{eqn:triang_array}
\begin{aligned}
\eta^{i,1} &= Z^i \\
\eta^{i,j} &= D(\eta^{i,j -1},\zeta^{i - 1,j - 1}),\qquad j  \in \{2,\ldots,i\} \\
\zeta^{i,j - 1} &= R(\eta^{i,j -1},\zeta^{i - 1,j - 1}), \qquad j  \in \{2,\ldots,i\}  \\
\zeta^{i,i} &= \eta^{i,i}.
\end{aligned}
\ee
To show this construction is well-defined, we need to show that for $i \in \{1,\ldots,n\}$, and $j \in \{2,\ldots,i\}$,
\[
\lim_{x \to -\infty} [\eta^{i,j - 1}(x) - \zeta^{i - 1,j - 1}(x)] = -\infty.
\]
The following lemma ensures this is the case. 
\begin{lemma} \label{lem:triang_wd}
    Let $(Z^1,\ldots,Z^n) \in \Y_n^\R$, and defined the triangular array as in \eqref{eqn:triang_array}. Let $\dir_1 < \cdots < \dir_n$ denote the asymptotic drifts:
    $
    \dir_i = \lim_{x \to -\infty} \f{Z^i(x)}{x}.
    $
    Then, for each $1 \le j \le i \le n$,
    \[
    \lim_{x \to -\infty} \f{\eta^{i,j}(x)}{x} = \dir_i,\qquad\text{and}\lim_{x \to -\infty} \f{\zeta^{i,j}(x)}{x} = \dir_j.    
    \]
    If in addition, the following limits exist:
    $
        \lim_{x \to +\infty} \f{Z^i(x)}{x} = \wt \dir_i
    $
    with $\wt \dir^1 < \cdots <\wt \dir^n$, then for each $1 \le j \le i \le n$,
    \[
    \lim_{x \to +\infty} \f{\eta^{i,j}(x)}{x} = \wt \dir_i,\qquad\text{and}\lim_{x \to +\infty} \f{\zeta^{i,j}(x)}{x} = \wt \dir_j.    
    \]
\end{lemma}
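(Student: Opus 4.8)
The plan is to establish both displayed statements by a nested induction, with Lemma~\ref{D and R preserve limits} as essentially the only input: that lemma says that if the continuous path $Z$ has strictly larger asymptotic slope than $B$ at a given endpoint, then $D(Z,B)$ inherits the slope of $Z$ and $R(Z,B)$ the slope of $B$ at that endpoint (and the maps are well defined there). The outer induction is on the row index $i$ and the inner one on the column index $j$, and the same induction simultaneously yields well-definedness of the array \eqref{eqn:triang_array}, since the hypothesis of Lemma~\ref{D and R preserve limits} at $-\infty$ is exactly the condition $\eta^{i,j-1}(x)-\zeta^{i-1,j-1}(x)\to-\infty$ needed to apply $D$ and $R$.

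First I would treat the limits at $-\infty$. The base row $i=1$ is immediate: $\eta^{1,1}=\zeta^{1,1}=Z^1$ has slope $\dir_1$ by hypothesis. For the inductive step fix $i\ge 2$, assume the conclusion for all rows $<i$, and run an inner induction on $j\in\{2,\dots,i\}$, with base case $\eta^{i,1}=Z^i$ of slope $\dir_i$. Assuming $\eta^{i,j-1}$ has $-\infty$ slope $\dir_i$, apply Lemma~\ref{D and R preserve limits} to the pair $(B,Z)=(\zeta^{i-1,j-1},\eta^{i,j-1})$: both functions lie in $\CRpin$ (continuity from Lemma~\ref{DRcont}, vanishing at the origin from Lemma~\ref{identity for multiple queueing mappings}), $\zeta^{i-1,j-1}$ has $-\infty$ slope $\dir_{j-1}$ by the outer hypothesis (applicable since $j-1\le i-1$), and $\dir_{j-1}<\dir_i$ because $j-1<i$ and the $\dir_k$ are strictly increasing, so that $(\zeta^{i-1,j-1},\eta^{i,j-1})\in\Y_2^\R$. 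The lemma then gives that $\eta^{i,j}=D(\eta^{i,j-1},\zeta^{i-1,j-1})$ and $\zeta^{i,j-1}=R(\eta^{i,j-1},\zeta^{i-1,j-1})$ are well defined with $-\infty$ slopes $\dir_i$ and $\dir_{j-1}$, closing the inner induction; the remaining entry $\zeta^{i,i}=\eta^{i,i}$ has slope $\dir_i$, which is $\dir_j$ at $j=i$. This also establishes well-definedness of the whole array.

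The argument at $+\infty$ is the identical bookkeeping using the second half of Lemma~\ref{D and R preserve limits}. Its hypothesis $\eta^{i,j-1}(x)-\zeta^{i-1,j-1}(x)\to-\infty$ as $x\to-\infty$ has just been verified; the relevant $+\infty$ slopes are $\wt\dir_i$ for $\eta^{i,j-1}$ (inner induction, base $\eta^{i,1}=Z^i$) and $\wt\dir_{j-1}$ for $\zeta^{i-1,j-1}$ (outer hypothesis), and $\wt\dir_{j-1}<\wt\dir_i$ because $j-1<i$ and the $\wt\dir_k$ are strictly increasing. Hence $\eta^{i,j}$ inherits $+\infty$ slope $\wt\dir_i$ and $\zeta^{i,j-1}$ inherits $\wt\dir_{j-1}$, and again $\zeta^{i,i}=\eta^{i,i}$ has slope $\wt\dir_i$.

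I do not expect a genuine obstacle: the content is a careful double induction. The one point needing attention is to line up the direction of the slope inequality with the endpoint under consideration — at both $-\infty$ and $+\infty$ the queued path $Z=\eta^{i,j-1}$ must carry the strictly larger slope, and in both cases this is forced by $i>j-1$ together with the (separate) monotonicity of $(\dir_k)$ and $(\wt\dir_k)$ — together with keeping track that the continuity and normalization needed to place all intermediate functions in $\CRpin$ are furnished by Lemmas~\ref{DRcont} and~\ref{identity for multiple queueing mappings}.
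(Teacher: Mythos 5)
Your proof is correct and is essentially the paper's argument: the paper disposes of this lemma by citing Lemma \ref{image of script D lemma} and induction, which unwinds to exactly the double induction you carry out via Lemma \ref{D and R preserve limits} (with the slope inequality $\dir_{j-1}<\dir_i$, resp.\ $\wt\dir_{j-1}<\wt\dir_i$, supplied by $j-1<i$ and the monotonicity of the drifts). Your observation that the queued path $\eta^{i,j-1}$ must carry the strictly larger slope at both endpoints is the right reading of Lemma \ref{D and R preserve limits}, whose $+\infty$ statement in the paper contains a typo in the labeling of $c$ and $d$.
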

\begin{proof}
    This follows by Lemma \ref{image of script D lemma} and induction.  
\end{proof}
We prove the following facts about this triangular array. Given the preceding lemmas about the queuing mappings (which require different proofs than in the discrete case), the proofs of Lemmas \ref{triang_iii} and \ref{lem:triang_dist} follow the same procedure as in \cite{Fan-Seppalainen-20}. We give full proofs here for completeness and to give clarity to which of the prior results are used. Lemma \ref{lem:backwards_map} does not have an analogue in \cite{Fan-Seppalainen-20}, although I expect one to hold in the discrete case. The basic building block of the proof is the bijection in Lemma \ref{DRbij}. The analogue of the mapping $(B,Z) \mapsto (R(Z,B),D(Z,B))$ in the discrete case is also a bijection on the appropriate space, as shown to me in unpublished notes of Timo Sepp\"al\"ainen. However, the proof of Lemma \ref{DRbij} requires different techniques than in the discrete case. 

\begin{lemma} \label{triang_iii}
    Let $(Z^1,\ldots,Z^n) \in \Y_n^\R$. Let $(\wt \eta^1,\ldots,\wt \eta^n) = \D^{(n)}(Z^1,\ldots,Z^n)$ as defined in \eqref{scrD}. Then, for $i = 1,\ldots,n$, $\wt \eta^i = \eta^{i,i}$, where $\eta^{i,i}$ is defined in \eqref{eqn:triang_array}. 
\end{lemma}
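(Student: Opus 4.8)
The plan is to prove this by induction on $n$, showing that the last diagonal entry $\eta^{n,n}$ of the triangular array coincides with $\widetilde\eta^n = D^{(n)}(Z^n,\ldots,Z^1)$, while simultaneously tracking what the remaining diagonal and boundary entries compute. The base case $n=1$ is immediate since $\eta^{1,1}=Z^1=\widetilde\eta^1$. For the inductive step, I would first observe that the first $n-1$ rows of the triangular array for $(Z^1,\ldots,Z^n)$ are literally the triangular array for $(Z^1,\ldots,Z^{n-1})$, so by the induction hypothesis $\eta^{i,i}=\widetilde\eta^i=D^{(i)}(Z^i,\ldots,Z^1)$ for $1\le i\le n-1$, and more importantly I would identify the entries $\zeta^{n-1,j}$ along the bottom of the $(n-1)$st row.

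The key is to recognize the bottom row of rows $1,\dots,n-1$ as an iterated queue: unwinding \eqref{eqn:triang_array}, the pair $(\eta^{i,j},\zeta^{i,j-1})=(D(\eta^{i,j-1},\zeta^{i-1,j-1}),R(\eta^{i,j-1},\zeta^{i-1,j-1}))$ is exactly one application of the $D/R$ map, and the passage down the $n$th row, $\eta^{n,1}=Z^n$, $\eta^{n,j}=D(\eta^{n,j-1},\zeta^{n-1,j-1})$ for $j=2,\ldots,n$, is precisely the first line of the multiline process \eqref{multiline process} applied with the "$Z^n$" slot and the driving functions $\zeta^{n-1,1},\ldots,\zeta^{n-1,n-1}$ playing the roles of $B^1,\ldots$. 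In fact, tracing the recursion shows that $\zeta^{n-1,j}$ is $B^{j+1}=R(\widetilde\eta^{\,?},\,\cdot\,)$ built from the previous rows, so that row $n$ reads
\[
\eta^{n,j}=D^{(j)}\bigl(Z^n, \zeta^{n-1,1},\zeta^{n-1,2},\ldots\bigr)\text{-type iteration,}
\]
and I want to show $\eta^{n,n}=D^{(n)}(Z^n,Z^{n-1},\ldots,Z^1)$. The cleanest route is to invoke Lemma~\ref{lem:Alternate Rep of Iterated Queues} (in particular its consequence \eqref{intertwining}): with $B^1:=\zeta^{1,1}=Z^1$ and $B^j:=\zeta^{j-1,j-1}$... actually the right matching is to set the inputs $(B^1,Z^1,\ldots,Z^{n-1})$ of that lemma to be a shifted copy, so that $D(Z^k,B^k)=\eta^{k,k}$ along the diagonal and the $B^j$'s generated there are exactly the $\zeta^{n-1,\cdot}$'s feeding row $n$. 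Then Lemma~\ref{lem:Alternate Rep of Iterated Queues} with $k=n-1$ gives $D^{(n)}(D(Z^{n-1},B^{n-1}),\ldots,D(Z^1,B^1)) = D^{(n)}(\eta^{n-1,n-1},\ldots,\eta^{1,1})$ equals the same iterated queue that row $n$ of the array evaluates, which is $D^{(n)}(Z^n,\ldots,Z^1)=\widetilde\eta^n$.

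Concretely the steps are: (1) state the induction hypothesis for the full array of size $n-1$, including the identification of the boundary entries $\zeta^{n-1,j}$ with the auxiliary functions $B^{j+1}$ from the multiline process; (2) check well-definedness of all operations in row $n$ via Lemma~\ref{lem:triang_wd}; (3) write row $n$ explicitly as an iterated $D$-map driven by $(\zeta^{n-1,1},\ldots,\zeta^{n-1,n-1})$; (4) apply Lemma~\ref{lem:Alternate Rep of Iterated Queues}/\eqref{intertwining} to rewrite $D^{(n)}(Z^n,\ldots,Z^1)$ as the same iterated map, concluding $\eta^{n,n}=\widetilde\eta^n$; and (5) close the induction by noting the array's first $n-1$ rows handled the lower-index diagonal entries. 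The main obstacle I anticipate is step (1)/(3): getting the bookkeeping exactly right so that the $\zeta^{i,j}$ produced inside the array match — as functions, not just in distribution — the $B^j=R(\cdot,\cdot)$ sequence that appears on the left-hand side of Lemma~\ref{lem:Alternate Rep of Iterated Queues}. This is purely a deterministic identity-chasing matter, but it requires carefully aligning indices between the two-index array notation of \eqref{eqn:triang_array} and the single-index iteration of \eqref{multiline process}; once that dictionary is fixed, the conclusion is an immediate application of \eqref{intertwining}.
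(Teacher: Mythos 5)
You have located the right tool (Lemma \ref{lem:Alternate Rep of Iterated Queues} and its consequence \eqref{intertwining}), but the inductive step as you describe it does not close, and the gap is precisely in the "dictionary" you defer to the end. When you apply \eqref{intertwining} to the inputs $(B^1,Z^1,\ldots,Z^{n-1})=(Z^1,Z^2,\ldots,Z^n)$, the auxiliary functions it generates are $B^j=R(Z^j,B^{j-1})=\zeta^{j,1}$, i.e.\ the \emph{first column} of the array, and $D(Z^{j+1},B^j)=\eta^{j+1,2}$. So a single application of \eqref{intertwining} yields
\[
D^{(n)}(Z^n,\ldots,Z^1)=D^{(n-1)}(\eta^{n,2},\eta^{n-1,2},\ldots,\eta^{2,2}),
\]
which peels off one column; it does \emph{not} produce the bottom-row entries $\zeta^{n-1,1},\ldots,\zeta^{n-1,n-1}$ that drive row $n$, and it does not rewrite $D^{(n)}(Z^n,\ldots,Z^1)$ as the tandem queue $\eta^{n,n}=D(\cdots D(D(Z^n,\zeta^{n-1,1}),\zeta^{n-1,2})\cdots,\zeta^{n-1,n-1})$ in one shot. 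Your induction hypothesis, which concerns the first $n-1$ rows of the \emph{original} array (inputs $Z^1,\ldots,Z^{n-1}$), is also not the hypothesis you need at this point: what the display above calls for is the lemma applied to the \emph{column-shifted} sub-array $\{\eta^{i,j},\zeta^{i,j}:2\le j\le i\le n\}$, which is the triangular array built from the different input tuple $(\eta^{2,2},\eta^{3,2},\ldots,\eta^{n,2})$ (one checks this tuple lies in $\Y_{n-1}^\R$ via Lemma \ref{lem:triang_wd}). Either you iterate the column-peeling identity $n-1$ times, or you apply the induction hypothesis to that shifted array; as written, neither happens, and the claim that the conclusion is "an immediate application of \eqref{intertwining}" is false. (For $n=3$ your one-step version happens to coincide with Lemma \ref{analogue of Lemma 4.4 in Joint Buse Paper}, which may be why it looks immediate.)

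For comparison, the paper's proof dispenses with the outer induction on $n$ entirely: for each fixed $i$ it performs an inner induction on the column index $m$, showing via \eqref{intertwining} that
\[
D^{(i-m+1)}(\eta^{i,m},\ldots,\eta^{m,m})=D^{(i-m+2)}(\eta^{i,m-1},\ldots,\eta^{m-1,m-1}),
\]
and chaining these equalities from $m=1$ (where the entries are just $Z^i,\ldots,Z^1$) down to $m=i-1$ gives $\wt\eta^i=\eta^{i,i}$. Your strategy can be repaired into a genuine induction on $n$ along the lines sketched above, but the inductive step would then consist of one application of \eqref{intertwining} \emph{plus} an application of the hypothesis to the shifted sub-array — not the single matching of row $n$ with the multiline process that you propose.
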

\newpage
\begin{proof}
We first recall that $(\eta^{1,1},\eta^{2,1},\ldots,\eta^{n,1}) = (Z^1,\ldots,Z^n)$ by definition. Thus, the $i = 1$ case is immediate. For $i = 2$, 
\[
\eta^{2,2} = D(\eta^{2,1},\zeta^{1,1}) = D(\eta^{2,1},\eta^{1,1}) = D(Z^2,Z^1) = \wt \eta^2.
\]
Now, assume that $i \ge 3$. For $m =1,2,3,\ldots,i - 1$, it follows from the iterative definition that
\be \label{eqn:etatoD}
\begin{aligned}
    &(\eta^{i,m},\eta^{i - 1,m},\ldots,\eta^{m + 1,m},\eta^{m,m})  \\
    &=\bigl(D(\eta^{i,m - 1},\zeta^{i - 1,m - 1}),D(\eta^{i - 1,m - 1},\zeta^{i - 2,m - 1}),\ldots,D(\eta^{m,m - 1},\zeta^{m - 1,m - 1})\bigr). 
\end{aligned}
\ee
By definition, we also have $\zeta^{m - 1,m - 1} = \eta^{m - 1,m - 1}$ and $\zeta^{j,m - 1} = R(\eta^{j,m - 1},\zeta^{j - 1,m - 1})$ for $j = m,\ldots,i - 1$. Substituting this into \eqref{eqn:etatoD}, when $m = 1,\ldots,i - 2$,
\begin{align}
&\quad \; D^{(i - m + 1)}(\eta^{i,m},\eta^{i - 1,m},\ldots,\eta^{m,m})\nonumber  \\
&=D^{(i - m + 1)}(D(\eta^{i,m - 1},\zeta^{i - 1,m - 1}),D(\eta^{i - 1,m - 1},\zeta^{i - 2,m - 1}),\ldots,D(\eta^{m,m - 1},\zeta^{m - 1,m - 1})\bigr) \nonumber \\
&\overset{\eqref{intertwining}}{=} D^{(i - m + 2)}(\eta^{i,m - 1},\eta^{i - 1,m - 1},\ldots,\eta^{m - 1,m - 1}).\label{Detalevel}
\end{align}
Then, using the definition of $D^{(i)}$ \eqref{Diter} and applying \eqref{Detalevel} inductively for $m = 1,\ldots,i - 1$,
\[
\begin{aligned}
\wt \eta^i &= D^{(i)}(I^1,\ldots,I^1) 
= D^{(i)}(\eta^{i,1},\eta^{i - 1,1}\ldots,\eta^{i,1}) 
= D^{(i - 1)}(\eta^{i,2},\eta^{i - 1,2},\ldots,\eta^{2,2}) \\
&= \cdots = D^{(2)}(\eta^{i,i - 1},\eta^{i - 1,i - 1}) = D(\eta^{i,i - 1},\zeta^{i - 1,i - 1}) = \eta^{i,i},
\end{aligned}
\]
where in the last two steps, we simply used the definitions \eqref{eqn:triang_array}. 
\end{proof}

\begin{lemma} \label{lem:backwards_map}
Recall the mapping $\Rf f(x) = f(-x)$. Let $(Z^1,\ldots,Z^n) \in \Y_n^\R$, and assume also that $(\Rf Z^n, \Rf Z^{n -1},\ldots,\Rf Z^1) \in \Y_n^\R$,. This means that for each $1 \le i \le n$, $\lim_{x \to +\infty} x^{-1} Z^i(x)$ exists, and
\[
\lim_{x \to +\infty}\f{Z^i(x)}{x} > \lim_{x \to +\infty} \f{Z^{i - 1}(x)}{x}.
\]
Consider the triangular array $\{\eta^{i,j},\zeta^{i,j}: 1\le j \le i \le n\}$ defined in \eqref{eqn:triang_array}.  For $i \in \{1,\ldots,n\}$ and $j \in \{1,\ldots,i\}$, define
\be \label{Yijdef}
Y^{i,j} = \Rf \zeta^{i,i - j + 1}.
\ee
Then, for $i \in \{1,\ldots,n\}$ and $j \in \{1,\ldots,i\}$,
\be \label{Drev}
D^{(j)}(Y^{i,j},\ldots,Y^{i,1}) = \Rf D^{(i - j + 1)}(Z^{i - j + 1},\ldots,Z^1).
\ee
\end{lemma}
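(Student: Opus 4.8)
The plan is to prove \eqref{Drev} by induction on $i\in\{1,\dots,n\}$, establishing it for all $j\in\{1,\dots,i\}$ simultaneously. The base case $i=1$ is immediate, since then only $j=1$ occurs and $D^{(1)}(Y^{1,1})=Y^{1,1}=\Rf\zeta^{1,1}=\Rf Z^1=\Rf D^{(1)}(Z^1)$. In the inductive step I fix $i\ge 2$, assume \eqref{Drev} for $i-1$, and first dispose of $j=1$: by Lemma~\ref{triang_iii}, $D^{(1)}(Y^{i,1})=Y^{i,1}=\Rf\zeta^{i,i}=\Rf\eta^{i,i}=\Rf D^{(i)}(Z^i,\dots,Z^1)$, which is exactly the right side of \eqref{Drev} at $j=1$.

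The content is in the range $j\in\{2,\dots,i\}$, and the engine is Lemma~\ref{DRbij}. For each $m\in\{2,\dots,i\}$, step $m$ of the construction \eqref{eqn:triang_array} applies the queuing maps to the pair $(B,Z)=(\zeta^{i-1,m-1},\eta^{i,m-1})$, giving $D(Z,B)=\eta^{i,m}$ and $R(Z,B)=\zeta^{i,m-1}$. By Lemma~\ref{lem:triang_wd} this pair lies in $\Y_2^{a,b,c,d}$ with $a=\dir_{m-1}<b=\dir_i$ and $c=\wt\dir_{m-1}<d=\wt\dir_i$ — the strictness holding because $m-1<i$, and the ordering of the asymptotic slopes at $+\infty$ being supplied by the hypothesis $(\Rf Z^n,\dots,\Rf Z^1)\in\Y_n^\R$. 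Hence the identity \eqref{Did} of Lemma~\ref{DRbij} gives, for $m=2,\dots,i$, the reversed relations
\begin{equation*}
D(\Rf\zeta^{i,m-1},\Rf\eta^{i,m})=\Rf\zeta^{i-1,m-1},\qquad R(\Rf\zeta^{i,m-1},\Rf\eta^{i,m})=\Rf\eta^{i,m-1}.
\end{equation*}

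Next I invoke the intertwining identity \eqref{intertwining} (taking $n=j-1$ there) with $B^1:=Y^{i,1}$ and $Z^\ell:=Y^{i,\ell+1}$ for $\ell=1,\dots,j-1$; Lemma~\ref{lem:triang_wd} shows $(Y^{i,1},\dots,Y^{i,j})\in\Y_j^\R$ because $Y^{i,\ell}=\Rf\zeta^{i,i-\ell+1}$ has asymptotic slope $-\wt\dir_{i-\ell+1}$ at $-\infty$ and these slopes strictly increase in $\ell$. The auxiliary functions $B^\ell=R(Z^{\ell-1},B^{\ell-1})$ appearing in \eqref{intertwining} then satisfy $B^\ell=\Rf\eta^{i,i-\ell+1}$: this holds at $\ell=1$ because $\zeta^{i,i}=\eta^{i,i}$, and inductively $B^{\ell+1}=R(Y^{i,\ell+1},B^\ell)=R(\Rf\zeta^{i,i-\ell},\Rf\eta^{i,i-\ell+1})=\Rf\eta^{i,i-\ell}$ by the reversed $R$-relation. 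Plugging into the reversed $D$-relation gives $D(Z^\ell,B^\ell)=D(\Rf\zeta^{i,i-\ell},\Rf\eta^{i,i-\ell+1})=\Rf\zeta^{i-1,i-\ell}=Y^{i-1,\ell}$. Therefore \eqref{intertwining} reads
\begin{equation*}
D^{(j)}(Y^{i,j},\dots,Y^{i,1})=D^{(j-1)}(Y^{i-1,j-1},\dots,Y^{i-1,1}),
\end{equation*}
and the right-hand side equals $\Rf D^{(i-j+1)}(Z^{i-j+1},\dots,Z^1)$ by the inductive hypothesis at $(i-1,j-1)$; this is precisely \eqref{Drev}, completing the induction.

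The main obstacle is entirely organizational: correctly tracking the three index families on the triangular array, and checking at each elementary step that the required strict, finite ordering of asymptotic slopes at both $\pm\infty$ holds, so that Lemma~\ref{DRbij} and the intertwining identity genuinely apply. Conceptually everything beyond routine bookkeeping is in Lemma~\ref{DRbij}: it says time reversal sends each elementary queuing step to another such step acting on the reversed data, so that the forward cascade \eqref{eqn:triang_array} reverses into a cascade governed by \eqref{intertwining}.
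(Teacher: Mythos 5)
Your proof is correct and follows essentially the same route as the paper's: an outer induction on $i$, with Lemma \ref{DRbij} used to reverse each elementary step $(\zeta^{i-1,m-1},\eta^{i,m-1})\mapsto(\zeta^{i,m-1},\eta^{i,m})$ of the triangular array, and Lemma \ref{lem:Alternate Rep of Iterated Queues} supplying the key reduction $D^{(j)}(Y^{i,j},\ldots,Y^{i,1}) = D^{(j-1)}(Y^{i-1,j-1},\ldots,Y^{i-1,1})$ before the inductive hypothesis is applied. The only difference is organizational: the paper uses the $k=1$ case \eqref{Dflipr} of that lemma together with a second, inner induction to peel off one queue at a time, whereas you invoke the $k=n-1$ case \eqref{intertwining} once, which absorbs that inner induction and is somewhat cleaner (with the harmless caveat that \eqref{intertwining} is stated for $n\ge 2$, so your $j=2$ case rests directly on the reversed $D$-relation you derived from Lemma \ref{DRbij} rather than on the intertwining identity).
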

\begin{remark}
Lemma \ref{lem:backwards_map} shows that we can reverse the order of the queuing mappings. In the $i = n$ case, $Y^{n,1}$ is the reflected version of the last component of $\D^{(n)}(Z^1,\ldots,Z^n)$ by Lemma \ref{triang_iii}. This now becomes the first component of the input into the mapping $\D^{(n)}$. This is used later, along with the distributional invariance in Lemma \ref{lem:triang_dist} to show reflection invariance of the measures $\mu_\sigma^{\bar \dir}$ (Lemma \ref{weak continuity and consistency}\ref{reflect})
\end{remark}
\begin{proof}The preservation of limits in Lemma \ref{lem:triang_wd} ensures that all these operations are well-defined. In this proof, we several times will use the $k = 1$ case of Lemma \ref{lem:Alternate Rep of Iterated Queues}, which for $r \ge 3$, we write as
\be \label{Dflipr}
D^{(r)}(Z^r,\ldots,Z^1) = D(D^{(r - 1)}(Z^r,\ldots,Z^3,R(Z^2,Z^1)),D(Z^2,Z^1)).
\ee
We prove this by induction. For all $i \in \{1,\ldots,n\}$, the $j = 1$ case follows from  we  the identity 
\[
D^{(1)}(Y^{i,1}) = Y^{i,1} = \Rf \zeta^{i,i} = \Rf D^{(i)}(Z^i,\ldots,Z^1),
\]
where the last line holds by Lemma \ref{triang_iii}, and the others are just the definitions. In particular, the $i = j = 1$ case of \eqref{Drev} holds. We turn to the $j = 2$ case. Recall the definitions \eqref{eqn:triang_array} and \eqref{Yijdef}:
\[
\begin{aligned}
Y^{i,1} &= \Rf \zeta^{i,i} = \Rf D(\eta^{i,i - 1},\zeta^{i - 1,i - 1}),\\
Y^{i,2} &= \Rf \zeta^{i,i - 1} = \Rf R(\eta^{i,i - 1},\zeta^{i - 1,i - 1}).
\end{aligned}
\]
Lemma \ref{DRbij} and the definition $\eta^{i - 1,i -1} = \zeta^{i - 1,i - 1}$ implies now that 
\be \label{DRind_step}
D(Y^{i,2},Y^{i,1}) = \Rf \zeta^{i - 1,i - 1} = \Rf \eta^{i - 1,i - 1},\qquad\text{and}\qquad R(Y^{i,2},Y^{i,1}) = \Rf \eta^{i,i - 1}. 
\ee
Since Lemma \ref{triang_iii} states that $\eta^{i - 1,i -1} = D^{(i - 1)}(Z^{i - 1},\ldots,Z^1)$, the first equality of \eqref{DRind_step} shows the $j = 2$ case of \eqref{Drev}.

Now, assume that for some $i \ge 2$ that \eqref{Drev} holds for $i - 1$ and all $j \in \{1,\ldots,i - 1\}$. We showed already that the  $j = 1,2$ case holds for general $i$, so let $j \ge 3$. We show that the $i,j$ case holds. By \eqref{Dflipr} and \eqref{DRind_step},
\be \label{Djk1}
\begin{aligned}
D^{(j)}(Y^{i,j},\ldots,Y^{i,1}) &= D(D^{(j - 1)}(Y^{i,j},\ldots,Y^{i,3},R(Y^{i,2},Y^{i,1})),D(Y^{i,2},Y^{i,1})) \\
&= D(D^{(j - 1)}(Y^{i,j},\ldots,Y^{i,3},\Rf \eta^{i,i - 1}),\Rf \zeta^{i - 1,i - 1}).
\end{aligned}
\ee
Now, we prove, in a second layer of induction that, for $m = 1,\ldots,j - 2$,
\be \label{Dkind}
\begin{aligned}
&\quad \; D^{(j)}(Y^{i,j},\ldots,Y^{i,1}) \\
&= D(D(\cdots (D(D^{(j - m)}(Y^{i,j},\ldots,Y^{i,m + 2},\Rf \eta^{i,i - m}),\Rf \zeta^{i - 1,i - m}),\Rf \zeta^{i - 1,i - m  +1}),\cdots),\Rf \zeta^{i - 1,i - 1}).
\end{aligned}
\ee
Equation \eqref{Djk1} proves the $m = 1$ case of \eqref{Dkind}. Now, assume that \eqref{Dkind} holds for some $m \in 1,\ldots,j - 3$. Then, using \eqref{Dflipr}, 
\be \label{Dmstep1}
\begin{aligned}
&\quad \; D^{(j - m)}(Y^{i,j},\ldots,Y^{i,m + 2},\Rf \eta^{i,i - m}) \\
&= D(D^{(j - (m + 1))}(Y^{i,j},\ldots,Y^{i,m + 3},R(Y^{i,m + 2},\Rf \eta^{i,i - m})  ),D(Y^{i,m + 2},\Rf \eta^{i,i - m})).
\end{aligned}
\ee
Again, recall the definitions \eqref{eqn:triang_array} and \eqref{Yijdef}:
\[
\begin{aligned}
    \Rf \eta^{i,i - m} &= \Rf D(\eta^{i,i - m - 1},\zeta^{i - 1,i - m - 1}),\\
    Y^{i,m + 2} &= \Rf \zeta^{i,i - m - 1} = \Rf R(\eta^{i,i - m - 1},\zeta^{i - 1,i - m - 1}).
\end{aligned}
\]
Then, Lemma \ref{DRbij} gives us
\be \label{DRzeta2}
D(Y^{i,m + 2},\Rf \eta^{i,i - m}) = \Rf \zeta^{i - 1,i - m - 1},\qquad\text{and}\qquad R(Y^{i,m + 2},\Rf \eta^{i,i - m}) = \Rf \eta^{i,i -m - 1}.
\ee
Substituting \eqref{DRzeta2} into \eqref{Dmstep1} and then \eqref{Dmstep1} into \eqref{Dkind} completes the second layer of induction. We now return to the original layer of induction. We consider \eqref{Dkind} in the extremal case $m = j - 2$. Recalling $D^{(2)} = D$ \eqref{Diter}, equation \eqref{DRzeta2} tells us that 
\[
D^{(2)}(Y^{i,j},\Rf  \eta^{i,i - (j - 2)}) = \Rf \zeta^{i - 1,i - (j - 1)}
\]
Substituting this into the $m = j - 2$ case of \eqref{Djk1}, and using the iterative definition of $D^{(j)}$ \eqref{Diter}, we obtain
\begin{align*}
    &\quad \;D^{(j)}(Y^{i,j},\ldots,Y^{i,1}) \\
        &= D(D(\cdots (D(\Rf \zeta^{i - 1,i - (j - 1)},\Rf \zeta^{i - 1,i - (j - 2)}),\zeta^{i - 1,i - (j - 3)}),\cdots),\Rf \zeta^{i - 1,i - 1}) \\
            &= D^{(j - 1)}( \Rf \zeta^{i - 1,i - (j - 1)},\Rf \zeta^{i - 1,i - (j - 2)},\ldots,\Rf \zeta^{i - 1,i - 1}) \\
            &= D^{(j - 1)}(Y^{i - 1,j - 1},\ldots,Y^{i - 1,1}).
\end{align*}
By the induction assumption that \eqref{Drev} holds for $i -1$ and $j - 1$, this equals
\[
\Rf D^{(i - 1 - (j - 1) + 1)}(Z^{i - 1 - (j - 1) + 1},\ldots,Z^1) = \Rf D^{(i - j + 1)}(Z^{i - j + 1},\ldots,Z^1),
\]
and this gives exactly \eqref{Drev}, completing the induction. 
\end{proof}

\begin{lemma} \label{lem:triang_dist}
For $\sigma > 0$ and $\dir_1 < \cdots < \dir_n$, assume that $(Z^1,\ldots,Z^n)$ has distribution $\nu_\sigma^{(\dir_1,\ldots,\dir_n)}$. Let the triangular array $\{\eta^{i,j},\zeta^{i,j}:1 \le j \le i \le n\}$ be defined as in \eqref{eqn:triang_array}. Then, for each $1 \le i,j \le n$, the random vector $(\eta^{j,j},\eta^{j+1,j},\ldots,\eta^{n,j})$ has distribution $\nu_\sigma^{(\dir_j,\ldots,\dir_n)}$, and the random vector $(\zeta^{i,1},\ldots,\zeta^{i,i})$ has distribution $\nu_\sigma^{(\dir_1,\ldots,\dir_i)}$
\end{lemma}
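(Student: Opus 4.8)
The plan is to prove the two distributional claims by two separate inductions, each of which at the inductive step reduces to a single application of an invariance already proved above: Theorem \ref{multiline invariant distribution} for the $\eta$-columns and Theorem \ref{thm:OCY_DR} for the $\zeta$-rows. Since $(Z^1,\ldots,Z^n)\sim\nu_\sigma^{(\dir_1,\ldots,\dir_n)}$ lies in $\Y_n^\R$ almost surely, Lemma \ref{lem:triang_wd} guarantees that every queuing operation appearing below is well-defined, and the two inductions are entirely independent of each other (the first uses only $\eta$-columns, the second only $\zeta$-rows).

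For the claim about the $\eta$-columns I would induct on $j$. The base case $j=1$ is immediate, since $(\eta^{1,1},\ldots,\eta^{n,1})=(Z^1,\ldots,Z^n)\sim\nu_\sigma^{(\dir_1,\ldots,\dir_n)}$. For the step, observe that passing from column $j-1$ to column $j$ of the array is precisely one step of the multiline process \eqref{multiline process} driven by $B:=\eta^{j-1,j-1}=\zeta^{j-1,j-1}$ and applied to the $m$-tuple $(\eta^{j,j-1},\ldots,\eta^{n,j-1})$, where $m=n-j+1$: a short induction on the internal index $k$ using the recursions \eqref{eqn:triang_array} identifies $B^k$ with $\zeta^{j+k-2,j-1}$ and $\overline Z^k$ with $\eta^{j+k-1,j}$ for $1\le k\le m$. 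By the induction hypothesis, column $j-1$ is distributed as $\nu_\sigma^{(\dir_{j-1},\dir_j,\ldots,\dir_n)}$, so $\eta^{j-1,j-1}$ is a two-sided Brownian motion of diffusivity $\sigma$ and drift $\sigma^2\dir_{j-1}$, independent of the tuple $(\eta^{j,j-1},\ldots,\eta^{n,j-1})\sim\nu_\sigma^{(\dir_j,\ldots,\dir_n)}$; since $\dir_{j-1}<\dir_j<\cdots<\dir_n$, Theorem \ref{multiline invariant distribution} (taken with $a=\dir_{j-1}$) applies and gives $(\eta^{j,j},\ldots,\eta^{n,j})=(\overline Z^1,\ldots,\overline Z^m)\sim\nu_\sigma^{(\dir_j,\ldots,\dir_n)}$.

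For the claim about the $\zeta$-rows I would induct on $i$, using that row $i$ of the $\zeta$-array is produced from row $i-1$ and the fresh path $Z^i$ by the cascade $\eta^{i,1}=Z^i$, then $\zeta^{i,\ell}=R(\eta^{i,\ell},\zeta^{i-1,\ell})$ and $\eta^{i,\ell+1}=D(\eta^{i,\ell},\zeta^{i-1,\ell})$ for $\ell=1,\ldots,i-1$, followed by $\zeta^{i,i}=\eta^{i,i}$. The case $i=1$ is trivial. The induction hypothesis at level $i-1$ is $(\zeta^{i-1,1},\ldots,\zeta^{i-1,i-1})\sim\nu_\sigma^{(\dir_1,\ldots,\dir_{i-1})}$; since this row is a measurable function of $(Z^1,\ldots,Z^{i-1})$, it is automatically independent of $Z^i$. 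I would then run a nested induction on $\ell$ whose hypothesis is that $\zeta^{i,1},\ldots,\zeta^{i,\ell-1}$ are independent Brownian motions with drifts $\sigma^2\dir_1,\ldots,\sigma^2\dir_{\ell-1}$; that the paths $\eta^{i,\ell},\zeta^{i-1,\ell},\zeta^{i-1,\ell+1},\ldots,\zeta^{i-1,i-1}$ are independent Brownian motions with drifts $\sigma^2\dir_i,\sigma^2\dir_\ell,\sigma^2\dir_{\ell+1},\ldots,\sigma^2\dir_{i-1}$; and that the former block is independent of the latter. This propagates by applying Theorem \ref{thm:OCY_DR} to the pair $(\zeta^{i-1,\ell},\eta^{i,\ell})$, whose drifts obey $\dir_\ell<\dir_i$: the theorem makes $\zeta^{i,\ell}=R(\eta^{i,\ell},\zeta^{i-1,\ell})$ and $\eta^{i,\ell+1}=D(\eta^{i,\ell},\zeta^{i-1,\ell})$ independent Brownian motions of drifts $\sigma^2\dir_\ell$ and $\sigma^2\dir_i$, and, being a function of $(\zeta^{i-1,\ell},\eta^{i,\ell})$, this pair is independent of the remaining $\zeta^{i-1,\ell+1},\ldots,\zeta^{i-1,i-1}$ and of $\zeta^{i,1},\ldots,\zeta^{i,\ell-1}$. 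Carrying this to $\ell=i-1$ and then using $\zeta^{i,i}=\eta^{i,i}$ yields $(\zeta^{i,1},\ldots,\zeta^{i,i})\sim\nu_\sigma^{(\dir_1,\ldots,\dir_i)}$, closing the outer induction.

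The step needing the most care is the independence bookkeeping in the $\zeta$-cascade: one must keep track of which input paths each intermediate object depends on, both to justify the hypothesis $\eta^{i,\ell}\perp\zeta^{i-1,\ell}$ required by Theorem \ref{thm:OCY_DR} at every step and to ensure that the outputs $\zeta^{i,\ell}$ split off at earlier steps remain independent of everything generated afterward. The drift inequalities needed by Theorem \ref{thm:OCY_DR} are never in doubt, because $\dir_i$ is the largest drift entering row $i$, so in every queuing operation of the cascade it is the $Z$-argument that carries the strictly larger drift.
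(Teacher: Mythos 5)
Your proof is correct and follows essentially the same route as the paper's: an induction on $j$ for the $\eta$-columns that identifies the column-to-column step as one iteration of the multiline process driven by $\zeta^{j-1,j-1}$ (so Theorem \ref{multiline invariant distribution} applies), and an induction on $i$ for the $\zeta$-rows with a nested induction whose hypothesis is exactly the paper's intermediate claim \eqref{zeind2}, propagated by Theorem \ref{thm:OCY_DR}. The independence bookkeeping you flag as the delicate point is handled the same way in the paper, by noting that each output pair is a function of the corresponding input pair.
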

\begin{proof}
By definition, $(\eta^{1,1},\ldots,\eta^{n,1}) = (Z^1,\ldots,Z^n)$, which has distribution$\sim \nu_\sigma^{(\dir_1,\ldots,\dir_n)}$ by assumption. Recall that this means that the $Z^1,\ldots,Z^n$ are mutually independent two-sided Brownian motions, each with diffusivity $\sigma$, and with drifts $\sigma^2 \dir_1,\ldots,\sigma^2 \dir_n$. We assume, by way of induction, that, for some $j \in \{2,\ldots,n - 1\}$,
\[
(\eta^{j - 1,j - 1},\eta^{j,j - 1},\ldots,\eta^{n,j - 1}) \sim \nu_\sigma^{(\dir_{j - 1},\ldots,\dir_n)}.
\]
By definition \eqref{eqn:triang_array}, for $m \in \{0,\ldots,n - j\}$,
\[
\eta^{j + m,j} = D(\eta^{j + m,j - 1},\zeta^{j + m - 1,j - 1}),\qquad\text{and}\qquad \zeta^{j + m,j - 1} = R(\eta^{j + m,j - 1},\zeta^{j + m - 1,j - 1}).
\]
From this, we see that we obtain the random vector $(\eta^{j,j},\ldots,\eta^{n,j})$ from $(\eta^{j,j - 1},\ldots,\eta^{n,j - 1})$ via a single iteration of the multiline process \eqref{multiline process}, where $\zeta^{j - 1,j - 1} = \eta^{j - 1,j - 1}$ is the driving Brownian motion. Theorem \ref{multiline invariant distribution} now implies that $(\eta^{j,j},\ldots,\eta^{n,j}) \sim \nu_\sigma^{(\dir_j,\ldots,\dir_n)}$.

Next, we prove $(\zeta^{i,1},\ldots,\zeta^{i,i})\sim\nu_\sigma^{(\dir_1,\ldots,\dir_i)}$. To start the induction, note that $\zeta^{1,1} = Z^1 \sim \nu_\sigma^{(\dir_1)}$. Now, assume that, for some $i \in \{2,\ldots,n - 1\}$, 
\[
(\zeta^{i - 1,1},\zeta^{i - 1,2},\ldots,\zeta^{i - 1,i - 1}) \sim \nu_\sigma^{(\dir_1,\ldots,\dir_i)}.
\] 
We now show via a second layer of induction that, for $m \in \{1,\ldots,i\}$, 
\be \label{zeind2}
(\zeta^{i,1},\ldots,\zeta^{i,m - 1},\zeta^{i - 1,m},\ldots,\zeta^{i - 1,i - 1},\eta^{i,m}) \sim \nu_\sigma^{(\dir_1,\ldots,\dir_{m - 1},\dir_m,\ldots,\dir_{i - 1},\dir_i)}.
\ee
In the $m = i$ case of \eqref{zeind2}, since $\eta^{i,i} = \zeta^{i,i}$, this will complete the first layer of induction. 

By construction of the triangular array \eqref{eqn:triang_array}, $(\zeta^{i - 1,1},\zeta^{i - 1,2},\ldots,\zeta^{i - 1,i - 1})$ is a function of $(Z^1,\ldots,Z^{i - 1})$ and is therefore independent of $Z^i = \eta^{i,1}\sim \nu_\sigma^{(\dir_i)}$. This gives the $m = 1$ case of \eqref{zeind2}.  Now, assume that \eqref{zeind2} holds for some $m \in \{1,\ldots,i - 1\}$. By definition, 
\[
\eta^{i,m + 1} = D(\eta^{i,m},\zeta^{i - 1,m}),\qquad\text{and}\qquad \zeta^{i,m} = R(\eta^{i,m},\zeta^{i - 1,m}).
\]

By Theorem \ref{thm:OCY_DR},  $\zeta^{i,m}$ and $\eta^{i,m + 1}$ are independent two-sided Brownian motions with diffusivity $\sigma$ and drifts $\sigma^2 \dir_m$ and $\sigma^2\dir_i$, respectively. Substituting the mapping of $(\zeta^{i - 1,m},\eta^{i,m}) \mapsto (\zeta^{i,m},\eta^{i,m + 1})$ into \eqref{zeind2} completes the inductive step. 
\end{proof}

\subsection{Distributional invariances of the queuing measures}

\begin{lemma} \label{weak continuity and consistency}
Let $\sigma > 0$ and $\bar \dir = (\dir_1 < \cdots < \dir_n)$. Let $(\eta^1,\ldots,\eta^n) \sim \mu_\sigma^{\bar \dir}$. The following hold.
\begin{enumerate} [label=\rm(\roman{*}), ref=\rm(\roman{*})]  \itemsep=3pt 
\item \label{Shift invariance} \rm{(}Shift invariance{\rm)} For $x \in \R$,
\[
\{(\eta^1(x,x + y),\ldots,\eta^n(x,x + y)): y \in \R\} \sim  \mu_\sigma^{\bar \dir}.
\]
\item \label{scaling relations}{\rm(}Scaling relations{\rm)} Let  and $b,c > 0$, and $\nu \in \R$. Then,
\[
\{(b \eta^1(c^2 y) - (bc\sigma)^2 \nu y,\ldots,b \eta^n(c^2 y) - (bc\sigma)^2 \nu y):y \in \R\}\sim \mu_{b c \sigma}^{(\dir_1/b - \nu,\ldots,\dir_n/b - \nu)}. 
\]
    \item \label{consistency}{\rm(}Consistency{\rm)} Any subsequence $(\eta^{j_1},\ldots,\eta^{j_k})$ has distribution $\mu_\sigma^{(\dir_{j_1},\ldots,\dir_{j_k})}$. 
    \item \label{reflect} {\rm(}Reflection invariance{\rm)} $(\Rf \eta^n,\ldots, \Rf \eta^1)\sim  \mu_{\sigma}^{(-\dir_n,\ldots,-\dir_1)}$. 
\item \label{weak continuity} {\rm(}Weak continuity{\rm)}
    Let $\bar \dir^k = (\dir_1^k,\dir_2^k,\ldots,\dir_n^k) \in \R^n$ be a sequence converging to $\bar \dir$ and let $\sigma^k$ be a sequence converging to $\sigma > 0$.  Then, $\mu_{\sigma_k}^{\bar \dir^k} \rightarrow \mu_\sigma^{\bar \dir}$ weakly, as probability measures on $\X_n^\R$. 
\end{enumerate}
\end{lemma}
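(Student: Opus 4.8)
The five assertions share a common mechanism: the iterated queuing map $\D^{(n)}$ intertwines with each symmetry in question, so that the asserted property of $\mu_\sigma^{\bar\dir}=\nu_\sigma^{\bar\dir}\circ(\D^{(n)})^{-1}$ (see \eqref{musigma}) is inherited from the corresponding, and elementary, property of the product measure $\nu_\sigma^{\bar\dir}$. In each case I will check the intertwining first at the level of the single maps $D$ and $R$ of \eqref{Ddef}--\eqref{Rdef}, using only that $Q(Z,B)$ in \eqref{Qdef} is a supremum of increments of $B-Z$, and then iterate it through the definition \eqref{Diter}; that the intermediate compositions remain well defined is guaranteed by Lemma~\ref{D and R preserve limits} and Lemma~\ref{image of script D lemma}.

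Parts \ref{Shift invariance} and \ref{scaling relations} are the routine cases. For \ref{Shift invariance}, a direct computation from \eqref{Qdef}--\eqref{Rdef} gives, for the recentering map $\tau_x\colon f\mapsto f(x+\aabullet)-f(x)$, the identity $Q(\tau_xZ,\tau_xB)(y)=Q(Z,B)(x+y)$ and hence $\tau_xD(Z,B)=D(\tau_xZ,\tau_xB)$ and $\tau_xR(Z,B)=R(\tau_xZ,\tau_xB)$; iterating yields $\tau_x\D^{(n)}(\mathbf Z)=\D^{(n)}(\tau_x\mathbf Z)$, and since a two-sided Brownian motion with drift has stationary increments, $\tau_x$ preserves $\nu_\sigma^{\bar\dir}$, giving \ref{Shift invariance}. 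For \ref{scaling relations}, factor the transformation as the diffusive dilation $f\mapsto bf(c^2\aabullet)$ followed by subtraction of the common linear function $y\mapsto(bc\sigma)^2\nu y$. Since $Q$ depends on $Z,B$ only through the increments of $B-Z$, the common linear subtraction leaves $Q$ unchanged and passes through $D$ and $R$ as the same linear shift, while the dilation rescales $Q$ homogeneously; iterating through $\D^{(n)}$ and matching diffusivities and drifts via the elementary Brownian scaling $\nu_\sigma^{\bar\dir}\mapsto\nu_{bc\sigma}^{(\dir_1/b-\nu,\dots,\dir_n/b-\nu)}$ gives \ref{scaling relations}. Both transformations preserve the state space $\X_n^\R$, since they preserve asymptotic slopes up to the order-preserving map $\dir\mapsto\dir/b-\nu$ and respect the increment order $\li$.

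Part \ref{consistency} is the heart. By composing single deletions it suffices to show, for each $1\le i\le n$, that deleting the coordinate $\eta^i$ from $(\eta^1,\dots,\eta^n)\sim\mu_\sigma^{(\dir_1,\dots,\dir_n)}$ produces $\mu_\sigma^{(\dir_1,\dots,\dir_{i-1},\dir_{i+1},\dots,\dir_n)}$. Realize $(\eta^1,\dots,\eta^n)=\D^{(n)}(Z^1,\dots,Z^n)$ with $Z^1,\dots,Z^n$ independent Brownian motions of diffusivity $\sigma$ and drifts $\sigma^2\dir_1,\dots,\sigma^2\dir_n$. For $\ell\le i-1$ the component $\eta^\ell=D^{(\ell)}(Z^\ell,\dots,Z^1)$ does not involve $Z^i$. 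Let $(W^i,\dots,W^{n-1})$ be the output of one step of the multiline process \eqref{multiline process} applied to the state $(Z^{i+1},\dots,Z^n)$ with driving function $Z^i$ (legitimate since the state drifts exceed $\sigma^2\dir_i$). A single application of the intertwining relation \eqref{intertwining}, re-expressing the inner iterate $D^{(\ell-i+2)}(Z^{\ell+1},\dots,Z^i)$ that appears in $\eta^{\ell+1}=D^{(\ell+1)}(Z^{\ell+1},\dots,Z^i,Z^{i-1},\dots,Z^1)$ as $D^{(\ell-i+1)}(W^\ell,\dots,W^i)$, gives $\eta^{\ell+1}=D^{(\ell)}(W^\ell,\dots,W^i,Z^{i-1},\dots,Z^1)$ for $i\le\ell\le n-1$. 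Hence the vector obtained by deleting $\eta^i$ equals $\D^{(n-1)}(Z^1,\dots,Z^{i-1},W^i,\dots,W^{n-1})$ (with the obvious conventions at $i=1$ and $i=n$). By Theorem~\ref{multiline invariant distribution} (applied with $a=\dir_i$) the block $(W^i,\dots,W^{n-1})$ has law $\nu_\sigma^{(\dir_{i+1},\dots,\dir_n)}$, and being a function of $(Z^i,\dots,Z^n)$ it is independent of $(Z^1,\dots,Z^{i-1})$; thus the concatenation has law $\nu_\sigma^{(\dir_1,\dots,\dir_{i-1},\dir_{i+1},\dots,\dir_n)}$, which by definition \eqref{musigma} proves the claim. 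I expect the bookkeeping for this merging step --- in particular identifying the correct auxiliary functions $W^i,\dots,W^{n-1}$ --- to be the main point of the argument, although the queuing identities it rests on are all already in place.

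For \ref{reflect}, apply Lemma~\ref{lem:backwards_map} with $i=n$: writing $Y^{n,j}=\Rf\zeta^{n,n-j+1}$ for the triangular array of \eqref{eqn:triang_array} built from $(Z^1,\dots,Z^n)$, that lemma gives $D^{(j)}(Y^{n,j},\dots,Y^{n,1})=\Rf D^{(n-j+1)}(Z^{n-j+1},\dots,Z^1)=\Rf\eta^{n-j+1}$ for $1\le j\le n$, so $(\Rf\eta^n,\dots,\Rf\eta^1)=\D^{(n)}(Y^{n,1},\dots,Y^{n,n})$. By Lemma~\ref{lem:triang_dist} the row $(\zeta^{n,1},\dots,\zeta^{n,n})$ has law $\nu_\sigma^{(\dir_1,\dots,\dir_n)}$; reflecting each coordinate (which negates the drift) and reversing the order, $(Y^{n,1},\dots,Y^{n,n})=(\Rf\zeta^{n,n},\dots,\Rf\zeta^{n,1})$ has law $\nu_\sigma^{(-\dir_n,\dots,-\dir_1)}$, whence $(\Rf\eta^n,\dots,\Rf\eta^1)\sim\mu_\sigma^{(-\dir_n,\dots,-\dir_1)}$. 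Finally, for \ref{weak continuity}, realize $(Z^1_k,\dots,Z^n_k)\sim\nu_{\sigma_k}^{\bar\dir^k}$ and $(Z^1,\dots,Z^n)\sim\nu_\sigma^{\bar\dir}$ on one probability space using common standard two-sided Brownian motions, so that $Z^i_k\to Z^i$ almost surely, uniformly on compacts. The hypothesis \eqref{eqn:lim_cond} of Lemma~\ref{uniform convergence of queueing mapping} holds because the drifts converge and are eventually strictly separated, and iterating that lemma through the definition of $\D^{(n)}$ gives $\D^{(n)}(\mathbf Z_k)\to\D^{(n)}(\mathbf Z)$ almost surely, uniformly on compacts. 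As the topology of $\X_n^\R\subseteq C(\R,\R^n)$ is exactly uniform convergence on compacts and the limit lies in $\X_n^\R$ by Lemma~\ref{image of script D lemma}, this yields $\mu_{\sigma_k}^{\bar\dir^k}\to\mu_\sigma^{\bar\dir}$ weakly, proving \ref{weak continuity}.
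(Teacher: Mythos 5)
Your proposal is correct and follows essentially the same route as the paper's proof: shift and scaling invariance via the commutation of the queuing maps with recentering/dilation (the paper does this in one shot using the nested-supremum formula of Lemma \ref{identity for multiple queueing mappings}, you do it map-by-map, which is equivalent), consistency via the intertwining identity \eqref{intertwining} combined with Theorem \ref{multiline invariant distribution}, reflection via Lemmas \ref{lem:backwards_map} and \ref{lem:triang_dist}, and weak continuity via an almost-sure coupling and iteration of Lemma \ref{uniform convergence of queueing mapping}. Your auxiliary block $(W^i,\ldots,W^{n-1})$ in the consistency step is exactly the paper's $(\hat Z^{i+1},\ldots,\hat Z^n)$ under a shifted index.
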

\begin{proof}[Proof of Lemma \ref{weak continuity and consistency}]
For all items of this proof, we set 
\[\eta = (\eta^1,\ldots,\eta^n) = \D^{(n)}(Z^1,\ldots,Z^n),\] where $Z^i$ are independent Brownian motions with diffusivity $\sigma$ and drifts $\sigma^2 \dir_i$. Then, \\$\eta \sim \mu_\sigma^{\bar \dir}$ by definition \eqref{musigma}. Recall the $j$th component of $\D^{(n)}(Z^1,\ldots,Z^n)$ \\is $D^{(j)}(Z^j,\ldots,Z^1)$ \eqref{scrD}. 

We also make heavy use of Lemma \ref{identity for multiple queueing mappings}, which states
\be \label{Djrep}
\begin{aligned}
D^{(j)}(Z^j,\ldots,Z^1)(y) &= Z^1(y) + \sup_{-\infty < x_{j - 1} \leq x_{n - 2} \le \cdots \leq x_{1} \leq y} \Big\{\sum_{i = 1}^{j - 1} Z^{i + 1}(x_{i}) - Z^{i}(x_{i})   \Big\}  \\
&\qquad\qquad - \sup_{-\infty < x_{j - 1} \leq x_{j - 2} \le \cdots \leq x_{1} \leq 0} \Big\{\sum_{i = 1}^{j - 1} Z^{i + 1}(x_{i}) - Z^{i}(x_{i})   \Big\}
\end{aligned}
\ee

\noindent \textbf{Item \ref{Shift invariance}:}
From \eqref{Djrep}, observe that 
\begin{align*}
&\quad \;D^{(j)}(Z^j,\ldots,Z^1)(x,x + y)\\ &= Z^1(x,x + y) + \sup_{-\infty < x_{j - 1} \leq x_{j - 2} \le \cdots \leq x_{1} \leq x + y} \Big\{\sum_{i = 1}^{j - 1} Z^{i + 1}(x_{i}) - Z^{i}(x_{i})   \Big\} \\
&\qquad\qquad - \sup_{-\infty < x_{j - 1} \leq x_{j - 2} \le \cdots \leq x_{1} \leq x} \Big\{\sum_{i = 1}^{j - 1} Z^{i + 1}(x_{i}) - Z^{i}(x_{i})   \Big\} \\
&=Z^1(x,x + y) +  \sup_{-\infty < x_{j - 1} \leq x_{j - 2} \le \cdots \leq x_{1} \leq y} \Big\{\sum_{i = 1}^{j - 1} Z^{i + 1}(x_{i} + x) - Z^{i}(x_{i} + x)   \Big\} \\
&\qquad\qquad - \sup_{-\infty < x_{j - 1} \leq x_{j - 2} \le \cdots \leq x_{1} \leq 0} \Big\{\sum_{i = 1}^{j - 1} Z^{i + 1}(x_{i} + x) - Z^{i}(x_{i} +x)   \Big\} \\
&= Z^1(x,x + y) +  \sup_{-\infty < x_{j - 1} \leq x_{j - 2} \le \cdots \leq x_{1} \leq y} \Big\{\sum_{i = 1}^{j - 1} Z^{i + 1}(x,x_{i} + x) - Z^{i}(x,x_{i} + x)   \Big\} \\
&\qquad\qquad - \sup_{-\infty < x_{j - 1} \leq x_{j - 2} \le \cdots \leq x_{1} \leq 0} \Big\{\sum_{i = 1}^{j - 1} Z^{i + 1}(x,x_{i} + x) - Z^{i}(x,x_{i} +x)   \Big\} \\
&= D^{(j)}(Z^j(x,x + \aabullet),\ldots,Z^1(x,x + \aabullet))(y)
\end{align*}
where the penultimate step follows by adding and subtracting $\sum_{i = 1}^{j - 1} [Z^i(x) - Z^{i +1}(x)]$. The desired result follows because the law of $(Z^1,\ldots,Z^n)$ is preserved under shifts.

\noindent \textbf{Item \ref{scaling relations}}
From \eqref{Djrep}, 
\begin{align*}
    &\quad \; b D^{(j)}(Z^j,\ldots,Z^1)(c^2 y) - (bc\sigma)^2 \nu y \\
    &= b Z^1(c^2 y) - (bc\sigma)^2 \nu y  + \sup_{-\infty < x_{j - 1} \leq x_{j - 2} \le \cdots \leq x_{1} \leq c^2 y} \Big\{\sum_{i = 1}^{j - 1} bZ^{i + 1}(x_{i}) - b Z^{i}(x_{i})   \Big\}  \\
&\qquad\qquad - \sup_{-\infty < x_{j - 1} \leq x_{j - 2} \le \cdots \leq x_{1} \leq 0} \Big\{\sum_{i = 1}^{j - 1} bZ^{i + 1}(x_{i}) - bZ^{i}(x_{i})   \Big\}  \\
&=  b Z^1(c^2 y) - (bc\sigma)^2 \nu y +  \sup_{-\infty < x_{j - 1} \leq x_{j - 2} \le \cdots \leq x_{1} \leq y} \Big\{\sum_{i = 1}^{j - 1} bZ^{i + 1}(c^2 x_{i}) - b Z^{i}(c^2 x_{i})   \Big\} \\
&\qquad\qquad - \sup_{-\infty < x_{j - 1} \leq x_{j - 2} \le \cdots \leq x_{1} \leq 0} \Big\{\sum_{i = 1}^{j - 1} bZ^{i + 1}(c^2 x_{i}) - b Z^{i}(c^2 x_{i})   \Big\} \\
&= b Z^1(c^2 y) - (bc\sigma)^2 \nu y \\
&+  \sup_{-\infty < x_{j - 1} \leq x_{j - 2} \le \cdots \leq x_{1} \leq y} \Big\{\sum_{i = 1}^{j - 1} bZ^{i + 1}(c^2 x_{i}) - (bc\sigma)^2 \nu x_i - (b Z^{i}(c^2 x_{i}) - (bc\sigma)^2\nu  x_i)   \Big\} \\
&-\sup_{-\infty < x_{j - 1} \leq x_{j - 2} \le \cdots \leq x_{1} \leq 0} \Big\{\sum_{i = 1}^{j - 1} bZ^{i + 1}(c^2 x_{i}) - (bc\sigma)^2 \nu x_i - (b Z^{i}(c^2 x_{i}) - (bc\sigma)^2\nu x_i)   \Big\}\\
&= D^{(j)}(\wt Z^j,\ldots,\wt Z^1),
\end{align*}
where $\wt Z^i(x) = bZ^i(c^2 x) - (bc\sigma)^2\nu x$. Because the $\wt Z^i$ are independent two-sided Brownian motions with diffusivity $bc\sigma$ and drift $bc^2 \sigma^2 \dir_i - (bc\sigma)^2 \nu = (bc\sigma)^2(\dir_i/b - \nu)$, we have
\[
(\wt Z^1,\ldots,\wt Z^j) \sim \mu_{b c\sigma}^{\dir_1/b - \nu,\ldots, \dir_n/b - \nu}.
\]

\noindent \textbf{Item \ref{consistency}:} It suffices to show that, for $1 \le i \le n$,
\[(\eta^1,\ldots,\eta^{i - 1},\eta^{i + 1},\ldots, \eta^n) \sim \mu_\sigma^{\dir_1,\ldots,\dir_{i - 1},\dir_{i + 1},\ldots,\dir_n}.\]  

\noindent For $i = n$, the statement is immediate from the definition of the map $\D^{(n)}$. Next, we show the case $i = 1$. For $2 \le j \le n$, we use \eqref{intertwining} to write
\[
D^{(j)}(Z^j,\ldots,Z^1) = D^{(j - 1)}(D(Z^j,\wt Z^{j - 1}),\ldots, D(Z^3,\wt Z^{2}),D(Z^2,\wt Z^{1})),
\]
where $\wt Z^{1} = Z^1$, and for $i > 1$, $\wt Z^{i} = R(Z^{i},\wt Z^{i - 1})$. Then, $(\eta^2,\ldots,\eta^n) = \D^{(n - 1)}(\hat Z^2,\ldots,\hat Z^n)$, where $\hat Z^i = D(Z^i,\wt Z^{i - 1})$ for $2 \le i \le n$. By Theorem \ref{multiline invariant distribution}, $\hat Z^2,\ldots,\hat Z^n$ are independent Brownian motions, each with diffusivity $\sigma$ and with drifts $\sigma^2 \dir_2,\ldots,\sigma^2 \dir_n$, so
this completes the proof of the $i = 1$ case. By definition of $D^{(j)}$ \eqref{Diter}, for $i < j \le n$,
\begin{equation} \label{queue_iter}
D^{(j)}(Z^j,\ldots,Z^1) = D(D(\cdots D(D^{(j - i + 1)}(Z^j,\ldots,Z^{i}),Z^{i-1}),\ldots,Z^2),Z^1).
\end{equation}
 Similarly as in the $i = 1$ case, we apply \eqref{intertwining} to get that
\begin{align} \label{queue_iter_2}
&\;\;\;D^{(j - i + 1)}(Z^j,\ldots,Z^i)\nonumber \\
&= D^{(j - i)}(D(Z^j,\wt Z^{j - 1}),\ldots,D(Z^{i + 1},\wt Z^{i})) = D^{(j - i)}(\hat Z^j,\ldots,\hat Z^{i + 1}),
\end{align}
where, $\wt Z^{i} = Z^i$, and for $j > i$, $\wt Z^{j} =R(Z^j,\wt Z^{j- 1})$ and $\hat Z^j = D(Z^j,\wt Z^{j - 1})$.
Then, by \eqref{queue_iter} and \eqref{queue_iter_2},  for $i < j \le n$,
\[D^{(j)}(Z^j,\ldots,Z^1) = D^{(j - 1)}(\hat Z^j,\ldots,\hat Z^{i + 1}, Z^{i - 1},\ldots,Z^1),
\]
 and so 
\be \label{eqn:marg}
(\eta^1,\ldots,\eta^{i - 1},\eta^{i + 1},\ldots,\eta^n) = \D^{(n - 1)}(Z^1,\ldots,Z^{i - 1},\hat Z^{i + 1},\ldots,\hat Z^n).
\ee 
By Theorem \ref{multiline invariant distribution}, $\hat Z^{i + 1},\ldots,\hat Z^{n}$ are independent Brownian motions with diffusivity $\sigma$ and drifts $\sigma^2\dir_{i + 1},\ldots,\sigma^2\dir_n$. Since these are functions of $Z^{i},\ldots,Z^n$, the functions $Z^1,\ldots,Z^{i - 1},\hat Z^{i + 1},\ldots,\hat Z^j$ are independent as well, and by \eqref{eqn:marg}, \[
(\eta^1,\ldots,\eta^{i - 1},\eta^{i + 1},\ldots, \eta^n) \sim \mu_\sigma^{(\dir_1,\ldots,\dir_{i - 1},\dir_{i + 1},\ldots,\dir_n)}. 
\]

 \noindent \textbf{Item \ref{reflect}:} For $(Z^1,\ldots,Z^n)$ given, let the triangular array $\{\eta^{i,j},\zeta^{i,j}: 1 \le j \le i \le n\}$ be defined by \eqref{eqn:triang_array}. For $1 \le j \le n$, set $Y^j = \Rf \zeta^{n - j + 1}$. Then, by Lemma \ref{lem:triang_dist},
\[
(Y^1,\ldots,Y^n) \sim \nu_\sigma^{(-\dir_n,\ldots,-\dir_1)}.
\]
Note that the limits $\lim_{x \to \pm \infty} x^{-1} Z^i(x)$ exist and equal $\sigma^2 \dir_i$ almost surely. Then, we may apply the $i = n$ case of Lemma \ref{lem:backwards_map}, which states, for $1 \le j \le n$,
\[
D^{(j)}(Z^j,\ldots,Z^1) = \Rf \eta^{(n - j + 1)}.
\]
Hence, 
\[
(\Rf \eta^n,\ldots,\Rf \eta^1)  = \D^{(n)}(Y^1,\ldots,Y^n) \sim \mu_{\sigma}^{(-\dir_n,\ldots,-\dir_1)}.
\]

 \noindent \textbf{Item \ref{weak continuity}}
We show the existence of $\eta_k \sim \mu_{\sigma_k}^{\bar \dir^k}$ such that, almost surely, for $1 \le i \le n$,  $\eta_k^i \rightarrow \eta^i$, uniformly on compact sets. For the sequence $\mbf Z$ defined at the beginning of the proof, define $Z_k^i(x) = \f{\sigma_k}{\sigma} Z^i(x) + (\sigma_k^2 \dir_i^k  - \sigma \sigma_k\dir_i) x$. Then, $(Z_k^1,Z_k^2,\ldots,Z_k^n) \sim \nu_{\sigma_k}^{\dir^k}$. Set $\eta = \D^{(n)}(Z) \sim \mu_\sigma^{\bar \dir}$ and $\eta_k = \D^{(n)}(Z_k)$. By construction, $Z_k \rightarrow Z^i$, uniformly on compact sets. thus, the convergence $\eta_k^1 = Z_k^1 \rightarrow Z^1 = \eta^1$ is immediate.  Further, since $Z^i$ satisfies the almost sure asymptotic 
\[
\lim_{x \to -\infty} \f{Z^i(x)}{x} = \sigma^2 \dir_i,
\]
for $1 \le i \le n$, $Z_k^i$ satisfies
\[
\limsup_{\substack{x \rightarrow -\infty \\ k \rightarrow \infty} } \Bigl|\f{1}{x} Z_k^i(x) - \sigma^2 \dir_i  \Bigr| = 0.
\]

By Lemma \ref{uniform convergence of queueing mapping}, $\eta_k^2 = D(Z_k^2,Z_k^1)$ converges to $\eta^2 = D(Z^2,Z^1)$ uniformly on compact sets, and
\[
 \limsup_{\substack{z \rightarrow -\infty \\ k \rightarrow \infty} } \Bigl|\f{1}{x} \eta_k^2(x) - \sigma^2\dir_2  \Bigr| = 0.
 \]
Now, assume by induction that for $i \ge 2$, $\eta_k^i = D^{(i)}(Z_k^i,\ldots,Z_k^1)$ converges uniformly on compact sets to $\eta^i$ and that 
\[
\limsup_{\substack{x \rightarrow -\infty \\ k \rightarrow \infty} } \Bigl|\f{1}{x} \eta_k^i(x) - \sigma^2 \dir_i  \Bigr| = 0.
\]
Then, by shifting indices and setting $\wt \eta_k^i = D^{(i)}(Z_k^{i + 1},\ldots,Z_k^2)$ and $\wt \eta^i = D^{(i)}(Z^{i + 1},\ldots,Z^2)$, it also holds by induction that $\wt \eta_k^{i}$ converges uniformly on compact sets to $\wt \eta^{i}$, and
\[
\limsup_{\substack{x \rightarrow -\infty \\ k \rightarrow \infty} } \Bigl|\f{1}{x} \wt \eta_k^i(x) - \sigma^2\dir_i \Bigr| = 0.
\]
By definition of $D^{(i + 1)}$ \eqref{Diter} and the $i = 2$ case,
\[
\eta_k^{i + 1} = D^{(i + 1)}(Z_k^{i + 1},\ldots,Z_k^1) = D(\wt \eta_k^i,Z_k^1) \to D(\wt \eta^i,Z^1) = D^{(i + 1)}(Z^{i + 1},\ldots,Z^1)= \eta^{i + 1},
\]
where the convergence is almost sure, uniformly on compact sets. The $i = 2$ case also guarantees
\[
\limsup_{\substack{x \rightarrow -\infty \\ k \rightarrow \infty} } \Bigl|\f{1}{x} \eta_k^{i + 1}(x) - \sigma^2 \dir_{i+1}  \Bigr| = 0,
\]
thereby completing the inductive step.  
\end{proof}

\subsection{Proof of Proposition \ref{prop:SH_cons} and Theorem \ref{thm:SH_dist_invar}} \label{sec:SH_cons_proof}
 
\begin{proof}{Proof of Proposition \ref{prop:SH_cons}}
We remind the reader that this proposition shows the existence of the process $G^\sigma$, which we call the stationary horizon (SH), with a fixed diffusivity parameter $\sigma > 0$. 

Lemma \ref{weak continuity and consistency}\ref{consistency} establishes that the measures $\mu^{\bar \dir}_\sigma$ are consistent. Hence, for $\bar \dir = (\dir_1 < \cdots < \dir_n)$, if $(\eta^1,\ldots,\eta^n) \sim \mu_\sigma^{\bar \dir}$, each $\eta^i$ has distribution $\mu_\sigma^{\dir_i}$. By definition \eqref{musigma} (see also \eqref{Diter}), this is the distribution of $D^{(1)}(Z) = Z$ which is a two-sided Brownian motion with diffusion coefficient $\sigma$ and drift $\sigma^2\dir_i$. In particular, $\eta^i(0) = 0$, almost surely. Kolmogorov's extension theorem implies the existence of a unique measure $\mu^\Q_\sigma$ on $C(\R)^\Q = \prod_{\Q}C(\R)$ under which, for $\{\wt G_\alpha\}_{\alpha \in \Q} \in C(\R)^\Q$ 
and any increasing finite vector $\bar \alpha = (\alpha_1,\dotsc,\alpha_n) \in \Q^n$, $(\wt G_\alpha,\ldots,\wt G_{\alpha_n}) \sim \mu^{\bar \alpha}_\sigma$. In particular, under $\mu_\sigma^\Q$, each $\wt G^\alpha$ is a Brownian motion with diffusivity $\sigma$ and drift $\sigma^2\alpha$. Furthermore, the measures $\mu_\sigma^{\bar \dir}$ are supported on the set $\X_n^\R$ \eqref{Xndef}, so 
\be \label{mubetamont}
\mu^\Q_\sigma(\wt G_{\alpha_1} \li \wt G_{\alpha_2} \;\; \forall \alpha_1 < \alpha_2 \in \Q) = 1.
\ee
Hence, under $\mu^\Q_\sigma$, on a single event of probability one, for each $\dir\in \R$ and $x \in \R$, the limits
\be \label{eqn:SHdef}
G^\sigma_\dir(x) := \lim_{\Q \ni \alpha \searrow \dir}\wt G^\sigma_\alpha(x) \qquad\text{and}\qquad  G^\sigma_{\dir -}  := \lim_{\Q \ni \alpha \nearrow \dir} \wt G_\alpha^\sigma(x)
\ee
exist. Furthermore, 
\be \label{eqwtGmont}
\mu^\Q_\sigma(G_{\dir} \li \wt G_\alpha \;\; \forall \dir \in \R, \alpha \in \Q \text{ with }\alpha > \dir ) = 1.
\ee
Then, on the event of \eqref{eqwtGmont}, Lemma \ref{lem:ext_mont} implies that, for $A < a < b < B$,  
\[
0 \le \wt G_\alpha(a,b) -  G_\dir(a,b) \le  \wt G_\alpha(A,B) - G_\dir(A,B), 
\]
so the convergence is uniform on compact sets. A symmetric argument holds for limits from the left. Monotonicity implies that $\mu^\Q_\sigma( G_{\dir -} \li \wt G_\dir \li G_\dir \;\;\forall \dir \in \Q) = 1$. Furthermore, by uniform convergence, for each $\dir \in \R$, $G_{\dir-}$ and $G_\dir$ are both Brownian motions with drift $\dir$. Hence,
\[
\mu^\Q_\sigma(G_{\dir -}(x) = \wt G_{\dir}(x) = G_\dir(x) \;\;\forall x \in \Q, \dir \in \Q) = 1,
\]
but by continuity, we also have
\[
\mu^\Q_\sigma(G_{\dir -}(x) = \wt G_{\dir}(x) = G_\dir(x) \;\;\forall x \in \R, \dir \in \Q) = 1,
\]

We have now defined a stochastic process $\{ G_\dir\}_{\dir \in \R}$ whose projection to the rationals agrees with the process $\{\wt G_\dir\}_{\dir \in \R}$ originally constructed under the measure $\mu_\sigma^\Q$. Then, $\mu_\sigma^\Q$- almost surely, for $\dir < \dir_2 < \dir_3$, and rational values $\alpha_1 < \dir_1 < \alpha_2 < \dir_2 < \dir_3 < \alpha_4$. 
\be \label{eqn:SHbig_mont}
G_{\alpha_1} \li  G_{\dir_1} \li  G_{\alpha_2} \li  G_{\dir_2} \li  G_{\alpha_3} \li  G_{\dir_3} \li  G_{\alpha_4}.
\ee
This implies that, simultaneously for every $\dir \in \R$, the following limits exist, in the sense of uniform convergence on compact sets, and they agree with the limits along rational directions.
\[
G^\sigma_{\dir} = \lim_{\alpha \searrow \dir} G^\sigma_\alpha \qquad\text{and}\qquad G^\sigma_{\dir -} = \lim_{ \alpha \nearrow \dir} G^\sigma_\alpha.
\]

Hence, the random process $\{G_\dir\}_{\dir \in \R}$ almost surely lies in the space $D(\R,C(\R))$. Let $\Pp^\sigma$ be the pushforward of the measure $\mu_\sigma^\Q$ under the map to $D(\R,C(\R))$ defined by \eqref{eqn:SHdef}. Without reference to the measure, we use $\{G^\sigma_\dir\}_{\dir \in \R}$ to denote the process. 

We now verify that this process satisfies the items of the theorem. Item \ref{itm:SHBM} follows because of the corresponding property for rational parameters and the uniform convergence. Item \ref{itm:SH_dist} follows because the finite-dimensional measures for rational directions was defined to be $\mu_\sigma^{\bar \dir}$. The convergence in \eqref{eqn:SHdef} and weak convergence (Lemma \ref{weak continuity and consistency}\ref{weak continuity}) extend this property to all $n$-tuples of real directions. The uniqueness then follows because the $\sigma$-algebra on $D(\R,C(\R))$ is generated by the projection maps.  The monotonicity of Item \ref{itm:SH_mont} follows from \eqref{eqn:SHbig_mont}. For Item \ref{itm:SH_cont}, for any $\dir \in \R$, both $G_{\dir-}$ and $G_{\dir +}$ are Brownian motions with diffusivity $\sigma$ and drift $\sigma^2 \dir$, and they are ordered by Item \ref{itm:SH_mont}. Thus, with $\Pp^\sigma$-probability one, $G_{\dir-}(x) = G_{\dir}(x)$ for all $x \in \Q$. But both functions are continuous, so the equality extends to all $x \in \R$. 

We finish by proving Item \ref{itm:SH_drifts}. Since each $G_\dir$ is a Brownian motion with drift $\sigma^2 \dir$ under $\Pp^\sigma$,
\[
\Pp^\sigma\Bigl(\lim_{|x| \to \infty}\f{G_\dir(x)}{x} = \sigma^2 \dir \;\;\forall \dir \in \Q\Bigr) = 1.
\]
Then, by the monotonicity of Item \ref{itm:SH_mont}, and since $G_\dir(0,x) = G_\dir(x)$, for $x > 0$ and $\ve > 0$, 
\[
\begin{aligned}
\sigma^2(\dir - \ve) = \lim_{x \to +\infty} \f{G_{\dir - \ve}(x)}{x} &\le \liminf_{x \to +\infty} \f{G_{\dir-}(x)}{x}\\
&\le \limsup_{x \to +\infty} \f{G_{\dir +}(x)}{x} \le \lim_{x \to +\infty} \f{G_{\dir + \ve}(x)}{x} = \sigma^2(\dir + \ve).
\end{aligned}
\]
Sending $\ve \searrow 0$ completes the proof. The limits as $x \to -\infty$ are proved similarly.  
\end{proof}

\begin{proof}[Proof of Theorem \ref{thm:SH_dist_invar}]
Here, we verify the distributional invariances of the SH. Items \ref{itm:shinv},\ref{itm:SHscale}, and \ref{itm:SH_reflinv}  follow because $(G^\sigma_{\dir_1},\ldots,G^\sigma_{\dir_n}) \sim \mu_\sigma^{\bar \dir}$ (Proposition \ref{prop:SH_cons}\ref{itm:SH_dist}) and from the corresponding invariances of the measures $\mu_\sigma^{\bar \dir}$ (Lemma \ref{weak continuity and consistency}\ref{Shift invariance},\ref{scaling relations}, and \ref{reflect}). The increment-stationarity of Item \ref{itm:SH_inc_stat} follows now from Item \ref{itm:SHscale}: Setting $c = b = 1$,
we obtain 
  \[
(G^\sigma_{\dir_1},\ldots,G^\sigma_{\dir_n}) \deq (G^\sigma_{\dir_1 + \dir^\star},\ldots,G^\sigma_{\dir_n+ \dir^\star}),
  \]
  and therefore,
\[
(G^\sigma_{\dir_2} - G^\sigma_{\dir_1},\ldots,G^\sigma_{\dir_n} - G^\sigma_{\dir_{n - 1}}) \deq (G^\sigma_{\dir_2 + \dir^\star} - G^\sigma_{\dir_1 + \dir^\star},\ldots,G^\sigma_{\dir_n + \dir^\star} - G^\sigma_{\dir_{n - 1} + \dir^\star}). \qedhere
\]
\end{proof}

\section{Distributional calculations for the SH} \label{sec:dist_calc_thm}
We now state some explicit distributional calculations for the SH to be proved in Section \ref{sec:dist_calc}.  
\begin{theorem} \label{thm:SH_inc_dist}
Let $a,\dir_0,\sigma_0 \in \R$, and $z \ge 0$ and $\sigma,d,\dir > 0$. Then, the following probabilities are all equal to the quantity $F(z;\dir,\sigma,d)$, which is defined as
\[
 \Phi\Bigl(\f{z - \dir \sigma^2 d}{\sqrt{2\sigma^2 d}}\Bigr) + e^{z \dir}\left( (1 + \dir z + \dir^2 \sigma^2 d)\Phi\Bigl(-\f{z + \dir \sigma^2 d}{\sqrt{2\sigma^2 d}} \Bigr) - \dir \sqrt{\f{\sigma^2 d}{\pi}}  \exp\Bigl(-\frac{(z + \dir \sigma^2 d)^2}{4 \sigma^2 d}\Bigr)    \right).
\]
\begin{enumerate} [label=\rm(\roman{*}), ref=\rm(\roman{*})]  \itemsep=3pt
\item \label{Pp1} $\Pp^\sigma(\sup_{x,y \in [a,a + d]} |G_{\dir_0 + \dir}(x,y) - G_{\dir_0}(x,y)| \le z)$ 
\item \label{Pp2} $\Pp^\sigma(G_{\dir_0 + \dir}(a,a + d) - G_{\dir_0}(a,a + d) \le z)$
\item \label{Pp3} $\Pp^\sigma(G_{\dir_0}(-d) - G_{\dir_0 + \dir}(-d)\le z)$
\item \label{Pp4} $\Pp^\sigma(G_{\dir_0 + \dir}(d) - G_{\dir_0}(d)\le z)$ 
\item \label{Pp5} $\Pp^1\Bigl(G_{(\dir_0 + \dir) \sqrt{\sigma^2 d}}(1) -G_{\dir_0 \sqrt{\sigma^2 d}}(1) \le \f{z}{ \sqrt{\sigma^2 d}} \Bigr)$. 
\end{enumerate}
Furthermore, 
\be \label{F0}
F(0;\dir,\sigma,d) > 0,\;\; \forall \dir,d,\sigma > 0, \qquad \text{and}\qquad \lim_{\dir \to \infty} F(0;\dir.\sigma,d) = 0.
\ee
Hence, for $d > 0$, the distribution of $G_{\dir_0 + \dir}(d) - G_{\dir_0}(d)$ can be written as a mixture of probability measures
\[
p \delta_0 + (1 - p)\pi 
\]
where $p = F(0,\dir,\sigma,d)$, $\delta_0$ is the point mass at $z = 0$, and $\pi$ is a continuous probability measure on $(0,\infty)$ with density
\[
(1 - p)^{-1} \Big[\dfrac{\partial}{\partial z}F(z;\dir,\sigma,d)\Big]\ind(z > 0).
\]
\end{theorem}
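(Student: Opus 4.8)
The plan is to prove that the common value is $F(z;\dir,\sigma,d)$ by reducing all five probabilities to one one-dimensional law, computing that law explicitly, and then reading off the mixture. First I would check the five quantities agree. Items \ref{Pp1} and \ref{Pp2} describe the same event: since $G_{\dir_0}\li G_{\dir_0+\dir}$ by Proposition \ref{prop:SH_cons}\ref{itm:SH_mont}, the function $\Delta(x):=G_{\dir_0+\dir}(x)-G_{\dir_0}(x)$ is nondecreasing, so $\sup_{x,y\in[a,a+d]}|G_{\dir_0+\dir}(x,y)-G_{\dir_0}(x,y)|=\sup_{x,y\in[a,a+d]}|\Delta(y)-\Delta(x)|=\Delta(a+d)-\Delta(a)=G_{\dir_0+\dir}(a,a+d)-G_{\dir_0}(a,a+d)$. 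Translation invariance (Theorem \ref{thm:SH_dist_invar}\ref{itm:shinv}) together with $G_\dir(0)=0$ identifies the laws of $G_{\dir_0+\dir}(a,a+d)-G_{\dir_0}(a,a+d)$, of $G_{\dir_0+\dir}(d)-G_{\dir_0}(d)$ (case $a=0$), and of $G_{\dir_0}(-d)-G_{\dir_0+\dir}(-d)$ (case $a=-d$, using $G_\dir(-d)=-G_\dir(-d,0)$), so \ref{Pp2}, \ref{Pp3} and \ref{Pp4} coincide. Specializing the scaling relation (Theorem \ref{thm:SH_dist_invar}\ref{itm:SHscale}) to $\nu=0$, $c=\sqrt d$, $b=(\sigma\sqrt d)^{-1}$ and evaluating each coordinate function at the argument $d$ (resp.\ $1$) gives $\big((\sigma\sqrt d)^{-1}G^\sigma_{\dir_0}(d),(\sigma\sqrt d)^{-1}G^\sigma_{\dir_0+\dir}(d)\big)\deq\big(G^1_{\dir_0\sqrt{\sigma^2 d}}(1),G^1_{(\dir_0+\dir)\sqrt{\sigma^2 d}}(1)\big)$, i.e.\ \ref{Pp4} $=$ \ref{Pp5}; and stationarity of increments in the direction variable (Theorem \ref{thm:SH_dist_invar}\ref{itm:SH_inc_stat}) reduces the whole statement to computing $\Pp^1\!\big(G^1_\lambda(1)-G^1_0(1)\le w\big)$ for $\lambda:=\dir\sqrt{\sigma^2 d}>0$ and $w:=z/\sqrt{\sigma^2 d}\ge0$.

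Next I would pass to a queueing functional. By Proposition \ref{prop:SH_cons}\ref{itm:SH_dist}, $(G^1_0,G^1_\lambda)\deq(Y^1,D(Y^2,Y^1))$ with $Y^1,Y^2$ independent two-sided Brownian motions of diffusivity $1$ and drifts $0$ and $\lambda$; and by Lemma \ref{identity for multiple queueing mappings} (valid since $Y^i(0)=0$ and the relevant supremum is a.s.\ finite, because $\lambda>0$) one has $D(Y^2,Y^1)(1)-Y^1(1)=\sup_{u\le1}W(u)-\sup_{u\le0}W(u)$, where $W:=Y^2-Y^1$ is a two-sided Brownian motion of diffusivity $\sqrt2$ and drift $\lambda$. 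Writing $M_t:=\sup_{u\le t}W(u)$, the target quantity equals $M_1-M_0=\big(\sup_{0\le u\le1}W(u)-M_0\big)^+$.

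Then comes the computation. The two halves $\{W(u):u\le0\}$ and $\{W(u):u\ge0\}$ of a two-sided Brownian motion are independent, so $M_0$ is independent of $V:=\sup_{0\le u\le1}W(u)$. By time-reversal and the classical formula for the global maximum of a negatively drifted Brownian motion, $M_0\sim\Exp(\lambda)$; and the law of $V$ is the standard running-maximum law of a drifted Brownian motion obtained by the reflection principle (\cite{BM_handbook}): $\Pp(V<m)=\Phi\big(\tfrac{m-\lambda}{\sqrt2}\big)-e^{\lambda m}\Phi\big(-\tfrac{m+\lambda}{\sqrt2}\big)$ for $m\ge0$. Conditioning on $M_0$, $\Pp\big((V-M_0)^+\le w\big)=\Pp(V\le M_0+w)=\int_0^\infty\lambda e^{-\lambda t}\,\Pp(V\le t+w)\,dt$; evaluating this pair of Gaussian--exponential convolution integrals and substituting back $\lambda=\dir\sqrt{\sigma^2 d}$, $w=z/\sqrt{\sigma^2 d}$ yields exactly $F(z;\dir,\sigma,d)$. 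This integral evaluation, together with the bookkeeping needed to collapse the output into the compact closed form of $F$, is the only genuinely computational step and the main obstacle; everything else is a direct application of the symmetries in Theorem \ref{thm:SH_dist_invar}, of Proposition \ref{prop:SH_cons}\ref{itm:SH_dist}, and of the elementary queueing identity in Lemma \ref{identity for multiple queueing mappings}.

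Finally I would extract the mixture. At $w=0$ the computation identifies $p:=F(0;\dir,\sigma,d)=\Pp(V\le M_0)$, which is strictly positive (for instance $\ge\Pp(V\le1)\Pp(M_0\ge1)>0$) and tends to $0$ as $\dir\to\infty$ by routine Mills-ratio asymptotics of the closed form (equivalently, $\Pp(V\le M_0)\le\E[\Phi(M_0-\lambda/2)]\to0$ as $\lambda\to\infty$). Since $z\mapsto F(z;\dir,\sigma,d)$ is the distribution function of the nonnegative variable $G_{\dir_0+\dir}(d)-G_{\dir_0}(d)\deq(V-M_0)^+$, it has an atom of mass $p$ at $0$, while on $(0,\infty)$ it is $C^1$ (being assembled from $\Phi$, exponentials and polynomials) with nonnegative derivative, so the law restricted to $(0,\infty)$ is absolutely continuous with density $(1-p)^{-1}\,[\partial_zF(z;\dir,\sigma,d)]\,\ind(z>0)$; this is exactly the asserted decomposition $p\delta_0+(1-p)\pi$.
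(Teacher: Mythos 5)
Your proposal is correct and follows essentially the same route as the paper: reduce all five probabilities to one via monotonicity, translation and scaling invariance, represent the increment as $(\sup_{0\le u\le 1}W(u)-\sup_{u\le 0}W(u))^+$ through the queueing map, and convolve the reflection-principle law of the running maximum with the $\Exp(\lambda)$ law of the past supremum. The only (harmless) deviation is in proving $F(0;\dir,\sigma,d)>0$: you argue directly via $\Pp(V\le M_0)\ge\Pp(V\le 1)\Pp(M_0\ge 1)>0$, whereas the paper shows $\dir\mapsto F(0;\dir,1,1)$ is strictly decreasing with limit $0$; both work.
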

\begin{remark}
We also compute the Laplace transform/moment generating function of this random variable in \eqref{Laplace}.
\end{remark}
\newpage

\begin{corollary} \label{cor:SH_dist_cor}
The following hold.
\begin{enumerate} [label=\rm(\roman{*}), ref=\rm(\roman{*})]  \itemsep=3pt 
\item \label{itm:conv_to_Gaus} For $\dir_0,x,y \in \R$, as $|\dir| \to \infty$, $G^\sigma_{\dir_0 + \dir}(x,x + y) - G^\sigma_{\dir_0}(x,x + y) - \sigma^2 \dir y$ converges in distribution to a Gaussian random variable with mean $0$ and variance $2\sigma^2 |y|$.
\item \label{itm:splitting_time} Let $\dir_0,x \in \R$. For $\dir > 0$, define
\be \label{Split_pts_def}
\begin{aligned}
S_{x}^-(\dir_0,\dir_0 + \dir) &= \inf\{ y > 0: G_{\dir_0 + \dir}(x,x - y) < G_{\dir_0}(x,x - y) \}, \\
S_x^{+}(\dir_0,\dir_0 + \dir) &= \inf\{y > 0: G_{\dir_0 + \dir}(x,x + y) > G_{\dir_0}(x,x + y) \}.
\end{aligned}
\ee
In words, $S_x^-$ and $S_x^+$ are the splitting times of the trajectories $y \mapsto G_{\dir_0}(x,x + y)$ and $y \mapsto G_{\dir_0 + \dir}(x,x + y)$ to the left and the right of the origin, respectively.
For $y > 0$, define
\begin{align*}
T_{\dir_0}^-(x,x+y) &= \inf\{\dir > 0: G_{\dir_0 -\dir}(x,x+y) < G_{\dir_0}(x,x + y)\}, \\
T_{\dir_0}^+(x,x+y) &= \inf\{\dir > 0: G_{\dir_0 + \dir}(x,x+y) > G_{\dir_0}(x,x+y)\},
\end{align*}
so that $T_{\dir_0}^-(x,x+y)$ and $T_{\dir_0}^+(x,x+y)$ are the first points of decrease and increase, respectively, of the function $\dir \mapsto G_{\dir_0 + \dir}(x,x+y)$ to the left and right of $\dir = 0$. Then, for $y > 0$ and $\dir > 0$,
\begin{align}
&\quad \;\Pp^\sigma(S_x^-(\dir_0,\dir_0 + \dir) \ge y) = \Pp^\sigma(S_x^{+}(\dir_0,\dir_0 + \dir) \ge y) \nonumber \\
&= \Pp^\sigma(T_{\dir_0}^-(x,x + y) \ge \dir) = \Pp^\sigma(T_{\dir_0}^+(x,x + y) \ge \dir) \nonumber  \\
&= \Pp^\sigma(G_{\dir_0 + \dir}(x,x+y) = G_{\dir_0}(x,x+y)) \nonumber \\
&= (2 + \dir^2 \sigma^2 y)\Phi\Bigl(-\dir\sqrt{\f{\sigma^2 y}{2}}\Bigr) - \dir \sqrt{\f{\sigma^2 y}{\pi}} e^{-\frac{ \dir^2 \sigma^2 y }{4}}  \label{split_prob}
\end{align}
\item \label{itm:not_indep} 
For $\sigma > 0$, $x \in \R$, $y > 0$, and $\dir_1 < \dir_2$, $G^\sigma_{\dir_2}(x,x+y) - G^\sigma_{\dir_1}(x,x+y)$ is not independent of $G^\sigma_{\dir_1}(x,x+y)$. The process $\dir \mapsto G^\sigma_\dir(x,x+y)$ is not Markov and does not have independent increments. For $\dir_1 < \dir_2$, the joint process $(G_{\dir_1},G_{\dir_2})$ is not Markov.  
\end{enumerate}
\end{corollary}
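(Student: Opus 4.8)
The strategy is to first establish the equality of all the probabilities in \ref{Pp1}--\ref{Pp5} by exploiting the invariances already proved, and only then to carry out one explicit computation. The cleanest starting point is the supremum quantity in \ref{Pp1}. By Proposition \ref{prop:SH_cons}\ref{itm:SH_dist}, the pair $(G_{\dir_0},G_{\dir_0+\dir})$ has the distribution of $(Z^1,D(Z^2,Z^1))$ for independent Brownian motions $Z^1,Z^2$ with diffusivity $\sigma$ and drifts $\sigma^2\dir_0,\sigma^2(\dir_0+\dir)$; here $\dir$ is the only drift difference that matters. Using Lemma \ref{DRcont}, $G_{\dir_0+\dir}(a,a+d)-G_{\dir_0}(a,a+d)$ is exactly the ``queue length increment'' expression
\[
\Bigl(\sup_{a\le u\le a+d}\{Z^1(a,u)-Z^2(a,u)\}-\sup_{-\infty<u\le a}\{Z^1(a,u)-Z^2(a,u)\}\Bigr)^+,
\]
which is monotone in the interval and hence its supremum over subintervals of $[a,a+d]$ equals its value on $[a,a+d]$ itself; this identifies \ref{Pp1} with \ref{Pp2}. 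Translation invariance (Theorem \ref{thm:SH_dist_invar}\ref{itm:shinv}) lets me take $a=0$. The equality \ref{Pp2}$=$\ref{Pp4} is the statement that, for an increment over $[0,d]$, the two-sided supremum structure collapses to a one-sided increment; this should follow from the stationarity of increments together with the explicit O'Connell--Yor representation, or alternatively by a direct computation from \eqref{DZB2}. The equality \ref{Pp3}$=$\ref{Pp4} is reflection invariance (Theorem \ref{thm:SH_dist_invar}\ref{itm:SH_reflinv}), which swaps directions and space orientation. Finally \ref{Pp5} is the scaling relation Theorem \ref{thm:SH_dist_invar}\ref{itm:SHscale}: setting $b=c^{-1}$ with $c^2=\sigma^2 d$ turns $G^\sigma_\dir(d)$ into $(\sigma^2 d)^{-1/2}\,G^1_{\dir\sqrt{\sigma^2 d}}(1)$ up to a drift reparametrization, matching the rescaled probability in \ref{Pp5}.

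With all five reduced to a single object, I compute $\Pp(G^1_{\dir_0+\dir}(1)-G^1_{\dir_0}(1)\le z)$, equivalently the law of $D(Z^2,Z^1)(1)-Z^1(1)$ where $Z^1,Z^2$ are independent standard BMs with drifts differing by $\dir$. By \eqref{DZB2} this equals
\[
\sup_{-\infty<u\le 1}\{Z^1(u)-Z^2(u)\}-\sup_{-\infty<u\le 0}\{Z^1(u)-Z^2(u)\},
\]
and $W:=Z^1-Z^2$ is a BM with diffusivity $\sqrt 2$ and drift $-\dir$ (negative, so the suprema are finite a.s.). Write $M_t=\sup_{u\le t}W(u)$. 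The target is the law of $M_1-M_0$. Condition on $R:=M_0-W(0)\ge 0$, which for a negative-drift BM at a fixed time is exponentially distributed — here $W$ has drift $-\dir$ and diffusivity $\sqrt2$, so $\sup_{u\le 0}(W(u)-W(0))\sim\Exp(\dir)$ by the standard reflection/renewal computation for the all-time maximum of a BM with drift. On $\{M_1-M_0>0\}$ the increment equals $\bigl(\sup_{0\le u\le 1}(W(u)-W(0))-R\bigr)^+$, and conditionally on the value of $W$ on $(-\infty,0]$ the process $u\mapsto W(u)-W(0)$ on $[0,1]$ is an independent BM with drift $-\dir$, diffusivity $\sqrt2$, started at $0$. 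So I need the joint law of $\bigl(\sup_{0\le u\le 1}\widetilde W(u),\,\widetilde W(1)\bigr)$ for such a $\widetilde W$, which is the classical reflection-principle bivariate density; integrating against the $\Exp(\dir)$ law of $R$ and collecting the indicator $\{\sup-R\le z\}$ produces the stated $F(z;\dir,1,1)$, with the $\Phi$ terms coming from the Gaussian tails and the $\exp(-(z+\dir)^2/4)$ term from the density of the maximum. This is the computation referenced as coming from \cite{BM_handbook}; I expect it to be the main technical obstacle — bookkeeping the normalizations $\sqrt2$ versus $2$ in the variance, and correctly handling the atom at $z=0$ coming from the event $\sup_{0\le u\le1}\widetilde W(u)\le R$.

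For the remaining assertions: the positivity $F(0;\dir,\sigma,d)>0$ is immediate since $\Phi$ and the correction are strictly positive for the relevant arguments, and $F(0;\dir,\sigma,d)\to 0$ as $\dir\to\infty$ follows from $\Phi(-\dir\sqrt{\sigma^2 d/2})\to 0$ with the polynomial prefactor $2+\dir^2\sigma^2 d$ being dominated by the Gaussian decay, plus the elementary bound on the $\exp$ term. The mixture decomposition of the law of $G_{\dir_0+\dir}(d)-G_{\dir_0}(d)$ is then a formal consequence: the variable is nonnegative (Proposition \ref{prop:SH_cons}\ref{itm:SH_mont}), puts mass $p=F(0;\dir,\sigma,d)$ at $0$, and on $(0,\infty)$ its CDF is the smooth function $F(\cdot;\dir,\sigma,d)$, whose $z$-derivative is the claimed density after renormalizing by $(1-p)^{-1}$. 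For Corollary \ref{cor:SH_dist_cor}: part \ref{itm:conv_to_Gaus} follows by plugging $z=\sigma^2\dir y+w\sqrt{2\sigma^2|y|}$ into $F$ and letting $|\dir|\to\infty$ (the $e^{z\dir}$-weighted terms vanish and the leading $\Phi$ term tends to $\Phi(w)$); part \ref{itm:splitting_time} is the observation that the five events listed are literally the same event expressed via the monotone increment representation above, so all probabilities equal $F(0;\dir,\sigma,y)$, which is the displayed formula \eqref{split_prob}; and part \ref{itm:not_indep} follows by computing $\Cov\bigl(G_{\dir_1}(x,x+y),\,G_{\dir_2}(x,x+y)-G_{\dir_1}(x,x+y)\bigr)$ from the explicit finite-dimensional law, or more simply by noting that the increment has a positive atom at $0$ whose occurrence is correlated with the sign and size of $G_{\dir_1}(x,x+y)$ (larger $G_{\dir_1}$ increment makes the queue less likely to empty), contradicting independence; the failure of the Markov property then follows because an increment-independent Markov process with these one-dimensional marginals would force independence of the increments, which we have just ruled out.
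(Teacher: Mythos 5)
Your treatment of Items \ref{itm:conv_to_Gaus} and \ref{itm:splitting_time} matches the paper's: plug the recentering into $F$ for the Gaussian limit, and reduce all five probabilities in \ref{itm:splitting_time} to $\Pp^\sigma(G_{\dir_0+\dir}(x,x+y)=G_{\dir_0}(x,x+y))=F(0;\dir,\sigma,y)$. One caveat on \ref{itm:splitting_time}: the events are not ``literally the same event'' --- for instance $\{S_x^-\ge y\}$ and $\{S_x^+\ge y\}$ concern opposite sides of $x$, and $\{T^-_{\dir_0}\ge\dir\}$ involves the pair $(G_{\dir_0-\dir},G_{\dir_0})$ rather than $(G_{\dir_0},G_{\dir_0+\dir})$; they are only equal in probability, via translation invariance and stationarity of increments, after each has been identified (using monotonicity, continuity in $x$, and right-continuity in $\dir$) with the corresponding equality event. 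This is routine but should be said. Likewise, the first assertion of \ref{itm:not_indep} via the variance identity $\Var(G_{\dir_2})=\Var(G_{\dir_1})+\Var(\text{increment})$ under independence is exactly the paper's argument and is fine.

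The genuine gap is in the remaining assertions of Item \ref{itm:not_indep}. You write that ``the failure of the Markov property then follows because an increment-[stationary] Markov process \dots would force independence of the increments, which we have just ruled out.'' This implication is false: a stationary Ornstein--Uhlenbeck process is Markov with stationary but dependent increments, so non-independence of increments cannot by itself rule out the Markov property. Moreover, what you ruled out is independence of the increment from the \emph{value} $G_{\dir_1}(x,x+y)$, which is a different statement from independence of consecutive $\dir$-increments; the latter needs its own argument. The correct route --- and the one the paper takes --- is to use the formula you just proved in Item \ref{itm:splitting_time}: since $\dir\mapsto G_\dir(x,x+y)$ is nondecreasing, independence of increments would force
\[
\Pp^\sigma(G_{\dir}(x,x+y)=G_0(x,x+y))=\Pp^\sigma(G_{\eta}(x,x+y)=G_0(x,x+y))\,\Pp^\sigma(G_{\dir}(x,x+y)=G_\eta(x,x+y)),
\]
i.e.\ memorylessness of $T_0^+(x,x+y)$, which contradicts \eqref{split_prob} not being exponential in $\dir$; the same non-memorylessness rules out the Markov property for $\dir\mapsto G_\dir(x,x+y)$, since a time-homogeneous Markov process that is constant up to its first jump has an exponential holding time. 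Finally, you do not address the last claim at all: that the joint spatial process $(G_{\dir_1},G_{\dir_2})$ is not Markov. This again follows from \eqref{split_prob}, because the separation time $S_x^+(\dir_1,\dir_2)$ of the two trajectories is not exponential, whereas a jointly Markovian coupling would require a memoryless separation time.
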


\newpage
\begin{remark}
Corollary 2 of \cite{Rogers-Pitman-81} describes another coupling of two  Brownian motions with drift
such that they agree for a finite amount of time.  Their result   is related to our work because it is used to show the stability of the Brownian queue (see, for example, page 289 in \cite{brownian_queues}).
In a similar vein, Section 5 of \cite{MEXIT} explores maximum exit couplings--couplings of two Brownian motions such that the law of the separation point is stochastically maximal.  The couplings of \cite{Rogers-Pitman-81,MEXIT} are different from the SH, as can be seen from a calculation of the separation time. In \cite{Rogers-Pitman-81}, the separation time is exponential, while in \cite{MEXIT}, the separation time has the distribution of the absolute value of a Gaussian.

The fact that  $(G_{\dir_1},G_{\dir_2})$ follows because the splitting time of the trajectories is not exponential. Alternatively, this fact follows from Theorem 28 of \cite{MEXIT}. Specifically, two elliptic diffusions with continuous coefficients cannot be coupled in a jointly Markovian manner with nonzero separation time. I thank Adam Jaffe for pointing me to the paper \cite{MEXIT}. 
\end{remark}

\subsection{Proofs} \label{sec:dist_calc}
\begin{proof}[Proof of theorem \ref{thm:SH_inc_dist}]
We first prove that each of the probabilities \ref{Pp1}--\ref{Pp5} is the same. We know from Proposition \ref{prop:SH_cons}\ref{itm:SH_mont} that for $\dir > 0$, $G_{\dir_0} \li G_{\dir_0 + \dir}$. Lemma \ref{lem:ext_mont} implies that for $a \le x \le y \le a + d$, $\Pp^\sigma$-almost surely, 
\[
0 \le G_{\dir_0 + \dir}(x,y) - G_{\dir_0}(x,y) \le G_{\dir_0 + \dir}(a,a + d) - G_{\dir_0}(a,d).
\]
Hence, $\ref{Pp1}=\ref{Pp2}$. The probabilities \ref{Pp3} and \ref{Pp4} are both equal to \ref{Pp2} by translation invariance (Theorem \ref{thm:SH_dist_invar}\ref{itm:shinv}).
To see that $\ref{Pp4} = \ref{Pp5}$, we apply the scaling invariance of Theorem \ref{thm:SH_dist_invar}\ref{itm:SHscale} with $b = \sqrt{\sigma^2 d}$ and $c = d^{-1/2}$ to obtain
\[
\{\sqrt{\sigma^2 d} G_{\dir \sqrt{\sigma^2 d}}^1(1)\}_{\dir \in \R} = \{\sqrt{\sigma^2 d} G_{\dir \sqrt{\sigma^2 d}}^1(d/d)\}_{\dir \in \R} \deq \{G_\dir^\sigma(d)\}_{\dir \in \R},
\]
and thus, \ref{Pp4} = \ref{Pp5}.
Now, to compute this probability, we use \ref{Pp5} with $d = \sigma = 1$. Then, the general case follows from replacing $z$ with $z/\sqrt{\sigma^2 d}$ and $\dir$ with $\dir \sqrt{\sigma^2 d}$.

\noindent We know from the construction in Proposition \ref{prop:SH_cons}\ref{itm:SH_dist} that 
\[
(G_{\dir_0}^1,G_{\dir_0 + \dir}^1) \deq (B,D(Z,B)),
\]
where $B$ and $Z$
are independent two-sided Brownian motions with diffusivity $1$ and drifts $\dir_0,\dir_0 + \dir$. Then, by Lemma \ref{DRcont}, 
\begin{align*}
G_{\dir_0 + \dir}^1(1) - G_{\dir_0}^1(1) &\deq \Bigl(\sup_{0 \le u \le 1}\{Z(u) - B(u)\} - \sup_{-\infty < u \le 0}\{Z(u) - B(u)\} \Bigr)^+.\\
&\deq \Bigl(\sup_{0 \le u \le 1}\{\sqrt 2 B(u) + \dir u\} - \sup_{-\infty < u \le 0}\{\sqrt 2 B(u) + \dir u\} \Bigr)^+.
\end{align*}
Notice that the two suprema on the right-hand side are independent, as the first depends on $B$ for positive $u$ and the second depends on $B$ for negative $u$. For $x \ge 0$, Lemma \ref{lem:BM_drift_sup} states
\begin{align*}
&\quad\;\Pp\bigl(\sup_{0 \le u \le 1}\{\sqrt 2 B(u) + \dir u\} \le x\bigr)
=  \Phi\Bigl(\frac{x - \dir}{\sqrt{2}}\Bigr) - e^{\dir x}\Phi\Bigl(\frac{-x - \dir}{\sqrt{2}}\Bigr).
\end{align*}
On the other hand, by Lemma \ref{lemma:sup of BM with drift}, 
\[
\sup_{-\infty < u \le 0}\{\sqrt 2 B(u) + \dir u\} \sim \operatorname{Exp}(\dir).
\]
The conclusion of the theorem follows by a simple, but tedious convolution. 
\begin{align*}
&\Pp^1(G_{\dir_0 + \dir}(1) - G_{\dir_0}(1) \le z) 
= \int_{-\infty}^0 \left(\Phi\bigl(\frac{z-y - \dir }{\sqrt{2}}\bigr) - e^{\dir (z - y)}\Phi\bigl(\frac{-z + y - \dir }{\sqrt{2}}\bigr)\right)\dir e^{\dir y}\, dy  \\[1em]
&= \int_{-\infty}^0\int_{-\infty}^{\frac{z-y - \dir}{\sqrt 2}} \frac{\dir}{\sqrt{2\pi}}e^{-x^2/2}e^{\dir y}\, dx\, dy - e^{ \dir z}\int_{-\infty}^0\int_{-\infty}^{\frac{-z + y - \dir}{\sqrt 2}} \frac{\dir}{\sqrt{2\pi}} e^{-x^2/2}\, dx\, dy.
\end{align*}
We now use Fubini's Theorem to switch the order of integration. This results in
\begin{align}
&= \int_{-\infty}^{\zfraclambda}\int_{-\infty}^0 \frac{\dir}{\sqrt{2\pi}} e^{-x^2/2}e^{\dir y}\, dy\, dx  + \int_{\zfraclambda}^\infty \int_{-\infty}^{z - \dir - \sqrt 2 x} \frac{\dir}{\sqrt{2\pi}} e^{-x^2/2}e^{\dir y}\, dy\, dx  \nonumber \\
&\qquad\qquad\qquad\qquad\qquad\qquad\qquad\qquad\qquad-e^{ \dir z}\int_{-\infty}^{\negzfraclambda}\int_{ \sqrt 2 x +z + \dir }^0 \frac{\dir}{\sqrt{2\pi}} e^{-x^2/2}\, dy\, dx \nonumber \\
&= \int_{-\infty}^{\zfraclambda} \frac{1}{\sqrt{2\pi}} e^{-x^2/2}\, dx + \int_{\zfraclambda}^\infty \frac{1}{\sqrt{2\pi}} e^{-x^2/2}e^{z\dir -\sqrt 2 \dir x - \dir^2}\, dx \nonumber \\
&\qquad\qquad\qquad\qquad\qquad\qquad\qquad\qquad\qquad +e^{ \dir z}\int_{-\infty}^{\frac{-z - \dir}{\sqrt 2}} (\sqrt 2 x + z + \dir)\frac{\dir}{\sqtwopi} e^{-x^2/2}\, dx \nonumber  \\
&= \Phi\bigl(\zfraclambda\bigr) + e^{\dir z}\int_{\zfraclambda}^\infty \frac{1}{\sqtwopi} e^{-\frac{(x + \sqrt 2 \dir)^2}{2}}\, dx  \nonumber \\
&\qquad\qquad\qquad\qquad\qquad\qquad\qquad + e^{\dir z}\left(\int_{-\infty}^{\negzfraclambda} \frac{\dir }{\sqrt{\pi}} xe^{-x^2/2}\, dx +   (\dir z + \dir^2) \Phi\bigl(\negzfraclambda\bigr)  \right) \nonumber  \\
&= \Phi\bigl(\zfraclambda\bigr) + e^{\dir z}\int_{\frac{z + \dir}{\sqrt 2}}^\infty \frac{1}{\sqtwopi} e^{-u^2/2}\, du
+ e^{\dir z}\left(-\frac{\dir}{\sqrt{\pi}}e^{-\frac{(z + \dir)^2}{4}} +   (\dir z + \dir^2) \Phi\bigl(\negzfraclambda\bigr)  \right) \nonumber  \\
&= \Phi\bigl(\zfraclambda\bigr) + e^{\dir z}\left( (1 + \dir z + \dir^2)\Phi\bigl(\negzfraclambda\bigr) - \frac{\dir}{\sqrt \pi}e^{-\frac{(z + \dir)^2}{4}}    \right). \label{CDF11}
\end{align}

To prove \eqref{F0}, it again suffices to show the $d = \sigma = 1$ case. 
Observe that 
\be \label{eqn:0inc}
F(0;\dir,1,1)= (2 + \dir^2 ) \Phi\bigl(-\dir/\sqrt 2\bigr) - \f{\dir}{\sqrt \pi} e^{-\f{\dir^2 }{4}}.
\ee
from which it readily follows that $\lim_{\dir \to \infty} F(0;\dir,1,1) = 0$.
Further,  from \eqref{eqn:0inc}, we can compute
    \be \label{eqn:Dlambda}
    \dfrac{\partial }{\partial \dir} F(0;\dir,1,1) = 2 \dir \Phi(-\dir/\sqrt 2) - \f{2}{\sqrt \pi}e^{-\f{\dir^2 }{4}}.
    \ee
    By \cite[Theorem 1.2.6]{Durrett}, for all $y > 0$, $\int_{-\infty}^{-y} e^{-x^2/2}\,dx < y^{-1} e^{-y^2/2}$. (The theorem is stated with a weak inequality, but the proof shows that the equality is strict.)  Applying this to \eqref{eqn:Dlambda}, we see that $\lambda \mapsto F(0;\dir,1,1)$ is strictly decreasing. 
    Hence, $F(0;\dir,1,1) > 0$ for all $\dir > 0$, and also $F(0;\dir,\sigma,d) > 0$ for all $\dir,\sigma,d > 0$ by the equality~$\ref{Pp5}=\ref{Pp4}$.  
    \end{proof}

    \noindent We now compute the Laplace transform/moment generating function of $G_{\dir_0 + \dir}^\sigma(d) - G_{\dir_0}^\sigma(d)$. For $\alpha \neq \dir$, 
    
\be \label{Laplace}
\begin{aligned}
&\Ee^\sigma\Big[\exp\Big(-\alpha(G_{\dir_0 + \dir}(a,a + d) - G_{\dir_0}(a,a + d))\Big)\Big] \\
&=e^{\alpha^2\sigma^2 d  - \alpha \dir \sigma^2 d }\Phi\Big((\dir - 2\alpha)\sqrt{\f{\sigma^2 d}{2}}\Big) \Big(1 - \f{\alpha^2}{(\dir - \alpha)^2}\Big) \\
&\qquad\qquad\qquad+\Phi\Big(-\dir \sqrt{\f{\sigma^2 d}{2}}\Big)\Big(1 + \f{\alpha \dir}{(\dir - \alpha)^2} - \f{\alpha(1 + \sigma^2\dir^2 d)}{\dir - \alpha}\Big) \\
&\qquad\qquad\qquad\qquad\qquad\qquad+\f{\dir \alpha}{(\dir - \alpha)}\sqrt{\f{\sigma^2 d}{\pi}} e^{-\dir^2\sigma^2 d/4}.
\end{aligned}
\ee
Again, we start with $\sigma = d = 1$. By \eqref{CDF11}, we obtain
\be \label{eqn:1-BuseCDF}
\Pp^1(G_{\dir_0 + \dir}(1) - G_{\dir_0}(1) > z) = \Phi\Big(-\zfraclambda\Big) - e^{\dir z}(1 + \dir z + \dir^2)\Phi\Big(\negzfraclambda\Big) + \f{\dir e^{\dir z}}{\sqrt \pi} e^{-\f{(z + \dir)^2}{4}}.
\ee
we compute the Laplace transform via Lemma \ref{Identity for Laplace Transform}: that is 
\be \label{Laplace_lemma_rep}
\Ee^1\Bigl[-\alpha(G_{\dir_0 + \dir}(a,b) - G_{\dir_0}(a,b))\Bigr]  = 1 - \alpha \int_0^\infty e^{-\alpha z}\Pp^1(G_{\dir_0 + \dir}(a,b) - G_{\dir_0}(a,b) > z)\, dz. 
\ee 
handling each of the three terms on in the sum on the right-hand side of \eqref{eqn:1-BuseCDF}.  Using Fubini's theorem, we have
\begin{align}
    &\int_0^\infty -\alpha e^{-\alpha z} \Phi\Big(-\f{z - \dir}{\sqrt 2}\Big)\,dz = \int_0^\infty \int_{-\infty}^{-\zfraclambda} -\alpha e^{-\alpha z} \f{1}{\sqtwopi} e^{-x^2/2}\, dxdz \nonumber \\
    = &\int_{-\infty}^{\fraclambda}\int_0^{\dir - \sqrt 2 x}-\alpha e^{-\alpha z} \f{1}{\sqtwopi} e^{-x^2/2}\,dz\,dx = \int_{-\infty}^{\fraclambda} (e^{-\alpha(\dir - \sqrt 2 x)} - 1)\f{1}{\sqtwopi} e^{-x^2/2}\, dx \nonumber  \\
    = &\int_{-\infty}^{\fraclambda} \f{e^{-\alpha(\dir - \sqrt 2 x)}e^{-x^2/2}}{\sqtwopi} \, dx - \Phi\Big(\fraclambda\Big) 
    = e^{\alpha^2 - \alpha \dir}\Phi\Big(\f{\dir - 2\alpha}{\sqrt 2}\Big) - \Phi\Big(\fraclambda\Big).  \label{Laplace part 1}
\end{align}
Next, we compute the second term, using integration by parts in the second step
\begin{align}
    &\int_0^\infty \alpha e^{(\dir - \alpha)z}(1 + \dir z + \dir^2)\Phi\Big(\negzfraclambda\Big)\,dz \nonumber  \\
    =&\int_{-\infty}^{-\fraclambda} \int_0^{-\sqrt 2 x - \dir} \alpha e^{(\dir - \alpha)z}\f{(1  + \dir z + \dir^2)e^{-x^2/2}}{\sqtwopi}\, dzdx \nonumber  \\
    = &\int_{-\infty}^{-\fraclambda} \Bigg((1 + \dir(-\sqrt 2 x - \dir) + \dir^2)\f{\alpha}{\dir - \alpha} e^{(\alpha - \dir)(\sqrt 2 x + \dir)} \nonumber  \\
    &\qquad\qquad - \f{\alpha(1 + \dir^2)}{\dir - \alpha} -\int_0^{-\sqrt 2 x - \dir} \f{\alpha \dir}{\dir - \alpha}e^{(\dir - \alpha)z}\, dz\Bigg) \f{e^{-x^2/2}}{\sqtwopi}\,dx \nonumber \\
    = &\int_{-\infty}^{-\fraclambda} \Bigg((1 -\sqrt 2 x \dir)\f{\alpha}{\dir - \alpha} e^{(\alpha - \dir)(\sqrt 2 x + \dir)} \nonumber \\
    &\qquad\qquad - \f{\alpha(1 + \dir^2)}{\dir - \alpha}-\f{\alpha \dir}{(\dir - \alpha)^2}e^{(\alpha - \dir)(\sqrt 2 x + \dir)} + \f{\alpha \dir}{(\dir - \alpha)^2}\Bigg)\f{e^{-x^2/2}}{\sqtwopi}\,dx \nonumber \\
    = &\f{\alpha(\alpha \dir^2 + \alpha - \dir^3)}{(\dir - \alpha)^2}\Phi\Big(-\fraclambda\Big) \nonumber \\
    &\qquad\qquad+ \int_{-\infty}^{-\fraclambda}\f{\alpha(-\sqrt 2 x \dir^2 - \alpha + \sqrt 2 x \alpha \dir)e^{(\alpha - \dir)(\sqrt 2 x + \dir)}}{(\dir - \alpha)^2}\f{e^{-x^2/2}}{\sqtwopi}\,dx \nonumber \\
    = &\f{\alpha(\alpha \dir^2 + \alpha - \dir^3)}{(\dir - \alpha)^2}\Phi\Big(-\fraclambda\Big)  \nonumber \\
    &\qquad\qquad+ \int_{-\infty}^{-\fraclambda}\f{\alpha(-\sqrt 2 x \dir^2 - \alpha + \sqrt 2 x \alpha \dir)e^{-(x - \sqrt 2(\alpha - \dir))^2/2 + \alpha(\alpha - \dir)}}{(\dir - \alpha)^2\sqtwopi}\,dx \nonumber\\
    = &\f{\alpha(\alpha \dir^2 + \alpha - \dir^3)}{(\dir - \alpha)^2}\Phi\Big(-\fraclambda\Big) \nonumber\\
    &\qquad\qquad+ e^{\alpha^2 - \alpha \dir}\Bigg( \f{ \dir \alpha}{\sqrt \pi(\dir - \alpha)}e^{-\dir^2/4 + \alpha \dir - \alpha^2} + \Big(2\dir \alpha - \f{\alpha^2}{(\dir - \alpha)^2}\Big) \Phi\Big(\f{\dir - 2\alpha}{\sqrt 2}\Big) \Bigg) \nonumber\\
    = &\f{\alpha(\alpha \dir^2 + \alpha - \dir^3)}{(\dir - \alpha)^2}\Phi\Big(-\fraclambda\Big) \nonumber \\
    &\qquad\qquad+ \f{\dir \alpha}{\sqrt \pi(\dir - \alpha)}e^{-\dir^2/4} + e^{\alpha^2 - \alpha \dir}\Big(2\dir \alpha - \f{\alpha^2}{(\dir - \alpha)^2}\Big)\Phi\Big(\f{\dir - 2\alpha}{\sqrt 2}\Big). \label{Laplace part 2}
\end{align}
Lastly, we handle the third term:
\begin{align}
    \int_0^\infty -\alpha e^{(\dir - \alpha)z}\f{\dir}{\sqrt \pi }e^{-(z + \dir)^2/4}\, dz = -2\alpha \dir e^{\alpha^2 - \alpha \dir} \Phi\Big(\f{\dir - 2\alpha}{\sqrt 2}\Big). \label{Laplace part 3}
\end{align}
Adding \eqref{Laplace part 1},\eqref{Laplace part 2}, and \eqref{Laplace part 3} to the number $1$, we get 
\begin{align*}
    \Ee[\exp(-\alpha Z)] &= e^{\alpha^2 - \alpha \dir}\Phi\Big(\f{\dir - 2\alpha}{\sqrt 2}\Big)\Big(1 - \f{\alpha^2}{(\dir - \alpha)^2}\Big) \nonumber\\
    &\qquad\qquad+ \Phi\Big(-\fraclambda\Big)\Big(1 + \f{\alpha \dir}{(\dir - \alpha)^2} - \f{\alpha(1 + \dir^2)}{\dir - \alpha}\Big) + \f{\dir \alpha}{\sqrt \pi (\dir - \alpha)}e^{-\dir^2/4},
\end{align*}
and we see this matches \eqref{Laplace} with $d = \sigma = 1$. The general case follows by \ref{Pp4} = \ref{Pp5} after replacing $\alpha$ with $\alpha \sqrt{\sigma^2 d}$ and $\dir$ with $\dir \sqrt{\sigma^2 d}$.

\begin{proof}[Proof of Corollary \ref{cor:SH_dist_cor}
]
 \noindent \textbf{Item \ref{itm:conv_to_Gaus} (Convergence to Gaussian):} For $\dir,y > 0$, Theorem \ref{thm:SH_inc_dist}, tells us that
\begin{multline*}
    \Pp^\sigma(G_{\dir + \dir_0}(x,x + y) - G_{\dir_0}(x,x + y) - \sigma^2 \dir d \le z) \\ 
    =\Phi\bigl(\f{z}{\sqrt{2\sigma^2 y}}\bigr) + e^{(z + \sigma^2 \dir y)\dir}\Bigl((1 + (z + \sigma^2 \dir y) \dir + \dir^2 \sigma^2 y) \Phi\bigl(-\f{z + 2\sigma^2 \dir y}{\sqrt{2\sigma^2 y}}\bigr) \\
     - \dir \sqrt{\f{\sigma^2 y}{\pi}}\exp\bigl(-\f{(z + 2\sigma^2 \dir y)^2}{4 \sigma^2 y}\bigr) \Bigr)
\end{multline*}
In the limit as $\dir \to +\infty$, the only nonvanishing term is 
\[
\Phi\bigl(\f{z}{\sqrt{2\sigma^2 y}}\bigr) = \int_{-\infty}^{z/\sqrt{2\sigma^2 y}} \f{1}{\sqrt{2\pi}} e^{-x^2/2}\,dx = \int_{-\infty}^z \f{1}{\sqrt{4\pi \sigma^2 y}} e^{-x^2/(4 \sigma^2 y)}\,dx,
\]
which we now recognize as the distribution function of a Gaussian random variable with $0$ mean and variance $2\sigma^2 y$. For $y < 0$, the result follows by symmetry of the Gaussian.

\noindent For the limit as $\dir \to -\infty$, the stationarity of increments in Theorem \ref{thm:SH_dist_invar}\ref{itm:SH_inc_stat} implies
\begin{align*}
    \quad \; G^\sigma_{\dir + \dir_0}(x,x +y) - G^\sigma_{\dir_0}(x,x + y) - \sigma^2 \dir y 
    &= - (G^\sigma_{\dir_0}(x,x + y) - G^{\sigma}_{\dir_0 + \dir}(x,x + y) - \sigma^2|\dir|y  )\\
    &\deq -(G_{-\dir}^\sigma(x,x + y) - G_{0}^\sigma(x,x + y) - \sigma^2 |\dir|y).
\end{align*}
The result now follows by the symmetry of the Gaussian distribution.

 \noindent \textbf{Item \ref{itm:splitting_time} (Splitting times):} We first recall the definitions: for $\dir > 0$,
\begin{align*}
S_{x}^-(\dir_0,\dir_0 + \dir) &= \inf\{ y > 0: G_{\dir_0 + \dir}(x,x - y) < G_{\dir_0}(x,x - y) \}, 
\\
S_x^{+}(\dir_0,\dir_0 + \dir) &= \inf\{y > 0: G_{\dir_0 + \dir}(x,x + y) > G_{\dir_0}(x,x + y) \},
\end{align*}
and for $y > 0$,
\begin{align*}
T_{\dir_0}^-(x,x+y) &= \inf\{\dir > 0: G_{\dir_0 -\dir}(x,x+y) < G_{\dir_0}(x,x + y)\}, \\
T_{\dir_0}^+(x,x+y) &= \inf\{\dir > 0: G_{\dir_0 + \dir}(x,x+y) > G_{\dir}(x,x+y)\}.
\end{align*}

Let $y,\dir > 0$. We work on the $\Pp$-almost sure event (depending on a fixed $\dir_0 \in \R,\dir > 0)$ on which $G_{(\dir_0 + \dir)-}(x) = G_{\dir_0 + \dir}(x)$  for all $x \in \R$, and for arbitrary $\dir_1 < \dir_2$, $G_{\dir_1} \li G_{\dir_2}$ (Proposition \ref{prop:SH_cons}\ref{itm:SH_mont}--\ref{itm:SH_cont}). On this event, we first show that
\begin{enumerate} [label=\rm(\alph{*}), ref=\rm(\alph{*})]  \itemsep=3pt
\item \label{eqa} $S_x^-(\dir_0,\dir_0 + \dir) \ge y \iff G_{\dir_0 + \dir}(x,x - y) = G_{\dir_0}(x,x - y)$
\item \label{eqb}$T_{\dir_0}^-(x,x + y) \ge \dir \iff G_{\dir_0 -\dir}(x,x+y) = G_{\dir_0}(x,x + y)$
\item \label{eqc} $S_x^+(\dir_0,\dir_0 + \dir) \ge y\iff T_{\dir_0}^+(x,x+y) \ge \dir \iff G_{\dir_0 + \dir}(x,x+y) = G_{\dir}(x,x+y)$.
\end{enumerate}
We prove \ref{eqa} and \ref{eqb}, then \ref{eqc} follows similarly. For \ref{eqa}, if $S_x^-(\dir_0,\dir_0 + \dir) \ge y$, then since $G_{\dir_0 + \dir}(x,x - z) \le G_{\dir_0}(x,x-z)$ holds for general $z > 0$, we must have equality for $z < y$. Continuity implies the equality $G_{\dir_0 + \dir}(x,x - y) = G_{\dir_0}(x,x-y)$. On the other hand, if $G_{\dir_0 + \dir}(x,x - y) = G_{\dir_0}(x,x-y)$, then for all $z \in (0,y]$, by Lemma \ref{lem:ext_mont}, 
\[
0 \ge  G_{\dir_0 + \dir}(x,x - z) - G_{\dir_0}(x,x-z)  \ge G_{\dir_0 + \dir}(x,x - y) - G_{\dir_0}(x,x-y)   = 0,
\]
so $S_x^-(\dir_0,\dir_0 + \dir) \ge y$.

 \noindent Now, we turn to \ref{eqb}. If $T_{\dir_0}^-(x,x+y) \ge \dir > 0$, then $G_{\dir_0 - \eta}(x,x + y) = G_{\dir_0}(x,x+y)$ for all $\eta \in (0,\dir)$. But the right-continuity of $G$ implies that $G_{\dir_0 - \dir}(x,x+y) = G_{\dir_0}(x,x+y)$ as well. On the other hand, if $G_{\dir_0 - \dir}(x,x + y) = G_{\dir_0}(x,x+y)$, then for $\eta \in (0,\dir]$,  $G_{\dir_0 - \eta}(x,x+y) \ge G_{\dir_0 - \dir}(x,x+y)$, so
\[
0 \ge  G_{\dir_0 - \eta}(x,x+y) - G_{\dir_0}(x,x+y) \ge  G_{\dir_0 - \dir}(x,x + y) - G_{\dir_0}(x,x+y) = 0.
\]
Hence, $T_{\dir_0}^-(x,x+y) \ge \dir$. By stationarity of $G$ in the $x$ and $\dir$ parameters (Theorem \ref{thm:SH_dist_invar}\ref{itm:shinv},\ref{itm:SH_inc_stat}),
\begin{align*}
    &\quad \; \Pp^\sigma(G_{\dir_0 + \dir}(x,x - y) = G_{\dir_0}(x,x - y)) = \Pp^\sigma(G_{\dir_0 + \dir}(x - y,x) = G_{\dir_0}(x-y,x) \\
    &= \Pp^\sigma(G_{\dir_0 + \dir}(x,x + y) = G_{\dir_0}(x,x+y) = \Pp^\sigma(G_{\dir_0}(x,x + y) = G_{\dir_0 - \dir}(x,x+y),
\end{align*}
where in the first equality we simply used the fact that $G_\dir(x,y) = -G_\dir(y,x)$. Hence, each of the events in Items \ref{eqa}--\ref{eqb} has the same probability, which is computed by setting $d = y$ and $z = 0$ in the formula of Theorem \ref{thm:SH_inc_dist}. We repeat this formula here:
\[
(2 + \dir^2 \sigma^2 y)\Phi\Bigl(-\dir\sqrt{\f{\sigma^2 y}{2}}\Bigr) - \dir \sqrt{\f{\sigma^2 y}{\pi}} e^{-\frac{ \dir^2 \sigma^2 y }{4}}.
\]

 \noindent \textbf{Item \ref{itm:not_indep} (Non-independence and non-Markovian structure):} 

Assume, by way of contradiction, that for $y > 0$, $G^\sigma_{\dir_2}(x,y) - G^\sigma_{\dir_1}(x,y)$ is independent of $G^\sigma_{\dir_1}(x,y)$. Then, since each $x \mapsto G^\sigma_\dir$ is a Brownian motion with diffusivity $\sigma$,
\begin{align*}
\sigma^2 y &=  \Var^\sigma(G_{\dir_2}(x,x + y)) = \Var^\sigma(G_{\dir_2}(x,x+y) - G_{\dir_1}(x,x+y)) + \Var^\sigma(G_{\dir_1}(x,x+y)) \\
&= \Var^\sigma(G_{\dir_2}(x,x+y) - G_{\dir_1}(x,x+y)) + \sigma^2 y.
\end{align*}
Hence, $\Var^\sigma(G_{\dir_2}(x,x+y) - G_{\dir_1}(x,x+y)) = 0$. But this is not true, as Theorem \ref{thm:SH_inc_dist} shows that $G_{\dir_2}^\sigma(x,x+y) - G_{\dir_1}^\sigma(x,x+y)$ does not have a degenerate distribution. 

We turn to showing the non-independence of increments. Assume, by way of contradiction, $\dir \mapsto G^\sigma_\dir(x,x + y)$ has independent increments. Then for $0 < \eta < \dir$,
\begin{align*}
&\quad \;\Pp^\sigma(G_{\dir}(x,x+y) - G_0(x,x+y) = 0) \\
&= \Pp^\sigma(G_{\eta}(x,x+y) - G_0(x,x+y) = 0) \Pp^\sigma(G_{\dir}(x,x+y) - G_\eta(x,x+y) = 0),
\end{align*}
by monotonicity. Equivalently,
\begin{multline*}
\Pp^\sigma(G_{\dir}(x,x+y) = G_0(x,x+y) |G_{\eta}(x,x+y) = G_0(x,x+y)) \\
= \Pp^\sigma( G_{\dir}(x,x+y) = G_\eta(x,x+y)).
\end{multline*}
As we showed in the proof of Item \ref{itm:splitting_time}, this is equivalent to
\begin{multline*}
\Pp^\sigma(T_0^+(x,x+y) \ge \dir | T_0^+(x,x+y) \ge \eta) \\= \Pp^\sigma(T_\eta^+(x,x+y) \ge \dir - \eta) = \Pp^\sigma(T_0^+(x,x+y) \ge \dir - \eta). 
\end{multline*}
However, our computation in Item \ref{itm:splitting_time} shows that $T_0^+(x,x+y)$ does not have a memoryless distribution, a contradiction. The process $\dir \mapsto G^\sigma_\dir(x,x+y)$ therefore is also not Markov because the time to its first point of increase is not memoryless. 

Lastly, the joint process $(G_{\dir_1},G_{\dir_2})$ is not Markov because the splitting time of $G_{\dir_1}(x)$ and $G_{\dir_2}(x)$ for $x \ge 0$ is not memoryless, as seen in Item \ref{itm:splitting_time}.
\end{proof}

\section{The SH as a jump process and its path properties} \label{sec:SH_jump}
The trajectories of $\dir \mapsto G^\sigma(x,y)$ are step functions. This fact is the fundamental ingredient for the analysis of the infinite geodesics in the directed landscape studied in Chapter \ref{chap:Buse}. We state this theorem now. 
\begin{theorem} \label{thm:SH_jump_process}
Fix $\sigma > 0$ and let $x \in \R, y > 0$. Then, $\Pp^\sigma$-almost surely, the paths of $\dir \mapsto G_{\dir}(x,x + y)$ are nondecreasing step functions, whose jumps are a discrete subset of $\R$. The expected number of jumps in an interval $[\dir_0,\dir_0 + \dir] \subseteq \R$ is
\[
\Ee^\sigma[\# \{\eta \in [\dir_0,\dir_0 + \dir]: G_{\eta- }(x,x + y) < G_{\eta +}(x,x+y)\}] = 2 \dir\sqrt{\f{\sigma^2 y}{\pi}}. 
\]
Furthermore, $\lim_{\dir \to \pm \infty} G_\dir(x,x+y) = \pm \infty$ almost surely; consequently, the set of jumps over all $\dir \in \R$ is infinite and unbounded for both positive and negative $\dir$. 
\end{theorem}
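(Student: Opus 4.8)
\textbf{Proof plan for Theorem \ref{thm:SH_jump_process}.}

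The plan is to exploit the explicit increment distribution from Theorem \ref{thm:SH_inc_dist} together with the monotonicity in Proposition \ref{prop:SH_cons}\ref{itm:SH_mont}. Fix $x\in\R$ and $y>0$, and write $f(\dir)=G_\dir(x,x+y)$. By Proposition \ref{prop:SH_cons}\ref{itm:SH_mont}, $f$ is nondecreasing in $\dir$, and $\Pp^\sigma$-a.s.\ the left and right limits $f(\dir-)$, $f(\dir+)=f(\dir)$ exist everywhere; so $f$ has at most countably many jumps and between jumps we only need to rule out continuous growth. For the expected number of jumps, I would fix an interval $[\dir_0,\dir_0+\dir]$ and use the formula
\[
\#\{\eta\in[\dir_0,\dir_0+\dir]:\ f(\eta-)<f(\eta)\}\ \ge\ \ind\{f(\dir_0+\dir)-f(\dir_0)>0\}\cdot(\text{count}),
\]
but more precisely I would partition $[\dir_0,\dir_0+\dir]$ into $n$ subintervals of length $\dir/n$ and sum the indicators $\ind\{G_{\dir_0+k\dir/n}(x,x+y)>G_{\dir_0+(k-1)\dir/n}(x,x+y)\}$. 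By Corollary \ref{cor:SH_dist_cor}\ref{itm:splitting_time} (equivalently Theorem \ref{thm:SH_inc_dist} at $z=0$), the probability that a single increment of parameter-width $\delta$ is strictly positive is $1-F(0;\delta,\sigma,y)$, and expanding $F(0;\delta,\sigma,y)$ for small $\delta$ gives $1-F(0;\delta,\sigma,y)=2\delta\sqrt{\sigma^2 y/\pi}+o(\delta)$ (this follows from $\Phi(0)=1/2$ and a Taylor expansion of the formula in Theorem \ref{thm:SH_inc_dist}; the linear term comes from $-\dir\sqrt{\sigma^2 y/\pi}$ at $\dir=\delta$ together with the derivative of $\Phi$). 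Summing over $k$ and letting $n\to\infty$, monotone convergence (the count of distinct jumps is the increasing limit of the subdivided count) plus the continuity/step structure yields
\[
\Ee^\sigma[\#\text{ jumps in }[\dir_0,\dir_0+\dir]]=\lim_{n\to\infty} n\cdot\bigl(2(\dir/n)\sqrt{\sigma^2 y/\pi}+o(1/n)\bigr)=2\dir\sqrt{\tfrac{\sigma^2 y}{\pi}}.
\]

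With finite expectation on every bounded interval, the set of jumps in any compact parameter-interval is $\Pp^\sigma$-a.s.\ finite, hence discrete; combined with nondecreasingness, the only remaining possibility to exclude is that $f$ increases continuously on some interval with no jumps. I would rule this out by showing the jump heights already account for all the increase: for $\dir_1<\dir_2$, one expects $\Ee^\sigma[f(\dir_2)-f(\dir_1)]=\Ee^\sigma\bigl[\sum_{\eta\in[\dir_1,\dir_2]}(f(\eta)-f(\eta-))\bigr]$. Concretely, $\Ee^\sigma[f(\dir_2)-f(\dir_1)]$ is computed from Theorem \ref{thm:SH_inc_dist} (it is $\int_0^\infty(1-F(z;\dir_2-\dir_1,\sigma,y))\,dz$, or read off from the mixture description there), while the sum of expected jump heights is computed by the same subdivision argument: $\Ee^\sigma[\sum_k (f(\dir_0+k\delta)-f(\dir_0+(k-1)\delta))\ind\{>0\}]$ telescopes to $\Ee^\sigma[f(\dir_2)-f(\dir_1)]$ exactly since by monotonicity each term equals the unconditional increment. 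Passing $n\to\infty$, the contribution from subintervals containing $\ge 2$ jumps is $O(1/n)$ in probability, so in the limit the total increase is carried entirely by the jumps. Thus $f$ is $\Pp^\sigma$-a.s.\ a pure-jump nondecreasing step function.

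For the final assertion, that $\lim_{\dir\to\pm\infty}G_\dir(x,x+y)=\pm\infty$ a.s.: by translation invariance (Theorem \ref{thm:SH_dist_invar}\ref{itm:shinv}) it suffices to treat $x=0$, and by increment-stationarity and the reflection $\dir\mapsto-\dir$ it suffices to handle $\dir\to+\infty$. For fixed $\dir$, $G_\dir$ is a Brownian motion with drift $\sigma^2\dir$, so $\Ee^\sigma[G_\dir(y)]=\sigma^2\dir y\to\infty$; but I want an a.s.\ statement. The clean route is Corollary \ref{cor:SH_dist_cor}\ref{itm:conv_to_Gaus}: $G_\dir(0,y)-\sigma^2\dir y$ converges in distribution (to a fixed Gaussian) as $\dir\to\infty$, which forces $G_\dir(0,y)\to\infty$ in probability; then monotonicity in $\dir$ (Proposition \ref{prop:SH_cons}\ref{itm:SH_mont}) upgrades convergence in probability along a sequence $\dir\to\infty$ to the a.s.\ monotone limit $G_\dir(0,y)\to\sup_\dir G_\dir(0,y)$, and this supremum cannot be finite since that would contradict $G_{\dir_k}(0,y)\to\infty$ in probability. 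The same argument with $\dir\to-\infty$ (using the symmetric tail in Corollary \ref{cor:SH_dist_cor}\ref{itm:conv_to_Gaus}) gives $G_\dir(0,y)\to-\infty$. Since $G_\dir(x,x+y)$ is a monotone step function with $\pm\infty$ limits, it must have infinitely many jumps accumulating only at $\pm\infty$, which is the last claim.

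\textbf{Main obstacle.} The delicate point is the passage from the discrete-subdivision computation to the statement about the actual (continuum) jump set: one must argue that subintervals carrying two or more jumps contribute negligibly (so the subdivided indicator-count converges to the true jump count and the subdivided increment-sum does not ``lose mass''), and that continuous increase is genuinely absent rather than merely having zero expected total. I expect this bookkeeping — controlling the $o(1/n)$ error uniformly and justifying the interchange of limit and expectation via monotone convergence — to be the part requiring the most care; the distributional expansion of $F(0;\delta,\sigma,y)$ and the $\dir\to\pm\infty$ limiting behavior are comparatively routine given Theorem \ref{thm:SH_inc_dist} and Corollary \ref{cor:SH_dist_cor}.
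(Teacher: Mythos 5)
Your overall strategy is the same as the paper's: subdivide the parameter interval, use increment-stationarity together with the explicit formula $1-F(0;\delta,\sigma,y)=2\delta\sqrt{\sigma^2y/\pi}+o(\delta)$ to compute the expected number of subintervals with a strictly positive increment, and pass to the limit. The expansion of $F$ and the expected-count computation are correct, and your treatment of $\lim_{\dir\to\pm\infty}G_\dir(x,x+y)=\pm\infty$ (distributional divergence upgraded by monotonicity) matches the paper's, which uses Chebyshev on the Gaussian marginal plus monotonicity.

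The soft spot is exactly the part you flag as the main obstacle, and the paper resolves it with a device your sketch misses. The paper (via its general Theorem \ref{thm:jump process condition}) counts not the jump points but the \emph{points of increase} $K$ of the nondecreasing path, and shows $K\le J_\infty$, where $J_\infty$ is the monotone limit of the counts $J_n$ over \emph{nested dyadic} subdivisions. The exact identity $\Ee[Y(1)-Y(0)]=c_n\,\Ee[J_n]$ with $c_n=\Ee[Y(2^{-n})-Y(0)\mid Y(2^{-n})>Y(0)]$ then gives $\Ee[J_\infty]<\infty$ (this is where the positivity of $\liminf_n c_n$, i.e.\ your condition that expected jump sizes do not degenerate, enters). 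Once $K<\infty$ a.s., the deterministic fact that a nondecreasing function with finitely many points of increase is locally constant off those points finishes the step-function claim; no continuous component can survive, because an interval of continuous increase would contain a continuum of points of increase. Your proposed alternative, matching $\Ee[f(\dir_2)-f(\dir_1)]$ against the expected sum of jump heights, does not close as written: the telescoped sum $\sum_k\Delta_k$ equals $f(\dir_2)-f(\dir_1)$ identically whether or not a continuous part is present, so it cannot detect one, and isolating the jump contribution would require a separate Palm-type computation of $\Ee[\text{height}\mid\text{jump}]$ that is more work than the paper's argument. Two further bookkeeping points: use nested dyadic partitions so that $J_n$ is monotone in $n$ and monotone convergence applies (uniform $n$-fold partitions are not nested); and to get the exact equality $\Ee[\#\text{jumps}]=\lim_n\Ee[J_n]$ rather than an inequality you also need $J_\infty\le K$, which the paper obtains by first showing that, a.s., no jump occurs at any fixed partition point (each $\Ee[Y(t+)-Y(t-)]=0$ by increment-stationarity and finiteness of the mean).
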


We now detail the path properties of the SH. Given $G \in D(\R,C(\R))$ and $x,y \in \R$, we define the random sets
\be \label{XiSHdef}
\begin{aligned}
\XiSH(x,y) &= \{\dir \in \R: G_{\dir -}(x,y)\neq G_{\dir +}(x,y)\},\;\text{and} \\
 \XiSH &= \bigcup_{x,y \in \R} \XiSH(x,y) = \bigcup_{x \in \R} \{\dir \in \R: G_{\dir -}(x) \neq G_{\dir +}(x)\}.
 \end{aligned}
\ee
Observe that $\XiSH(x,y) = \XiSH(y,x)$. We define $\XiSH(x) = \XiSH(0,x)$. When we wish to refer to the set $\XiSH$ for a realization of the SH $G^\sigma$ without referring to the measure $\Pp^\sigma$, we write $\XiSHsig$.

\newpage
\begin{theorem} \label{thm:SH_sticky_thm}
Let $\sigma > 0$. The following hold $\Pp^\sigma$-almost surely. 
\begin{enumerate}[label=\rm(\roman{*}), ref=\rm(\roman{*})]  \itemsep=3pt 
\item \label{itm:SH_dist_grows} For $\dir_1 < \dir_2$ and $x \in \R$, $y \mapsto G_{\dir_2}(x,x + y) - G_{\dir_1}(x,x + y)$ is a nondecreasing function. 
\item \label{itm:SH_set_contain} For $a \le x < y \le b$, $\XiSH(x,y) \subseteq \XiSH(a,b)$. 
 \item \label{itm:SH_stick} For every $\dir \in \R$ and every compact set $K \subseteq \R$, there exists an $\ve = \ve(K,\dir) > 0$ so that when $x,y \in K$, and $\dir - \ve < \alpha < \dir < \beta < \dir + \ve$,
$G_{\alpha}(x,y) = G_{\dir-}(x,y)$ and $G_{\dir}(x,y) = G_{\beta}(x,y)$.
\item \label{itm:SH_all_jump} For every $x < y$, $\dir \mapsto G_\dir(x,y)$ is a step function converging to $\pm \infty$ as $\dir \to \pm \infty$. The set $\XiSH(x,y)$ is countably infinite and contains only finitely many points in each compact interval. 
\item \label{itm:SH_split_Xi} Let $S_x^-(\dir_1,\dir_2)$ and $S_x^+(\dir_1,\dir_2)$ be defined as in \eqref{Split_pts_def}. For \textit{every} pair $\dir_1 < \dir_2$ and $x \in \R$, there exists $\alpha,\beta \in [\dir_1,\dir_2] \cap \XiSH$ so that
\[
\begin{aligned}
    S_x^-(\dir_1,\dir_2) &= \inf\{y > 0: G_{\alpha}(x,x-y) < G_{\alpha_-}(x,x-y) \},\qquad\text{and} \\
    S_x^+(\dir_1,\dir_2) &= \inf\{y > 0: G_{\beta}(x,x+y) > G_{\beta_-}(x,x+y) \}.
\end{aligned}
\]
That is, when the trajectories $G_{\dir_1},G_{\dir_2}$ split (to the left or right of the origin), there is an exceptional $\dir \in \XiSH$ so that $G_{\dir-}$ and $G_{\dir}$ also split at that same point.
\item \label{itm:SH_Xi_dense}  The set $\XiSH$ is dense in $\R$. 
\end{enumerate}
\end{theorem}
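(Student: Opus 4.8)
# Proof Proposal for Theorem \ref{thm:SH_sticky_thm}

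The plan is to prove the six items in the order listed, with Theorem \ref{thm:SH_jump_process} and the monotone–extension lemma (Lemma \ref{lem:ext_mont}) as the workhorses, and with Item \ref{itm:SH_stick} as the single substantive step from which the rest follow by bookkeeping.

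\textbf{Items \ref{itm:SH_dist_grows} and \ref{itm:SH_set_contain}.} Item \ref{itm:SH_dist_grows} is immediate from Proposition \ref{prop:SH_cons}\ref{itm:SH_mont}: for $y_1<y_2$ and $\dir_1<\dir_2$,
\[
\bigl(G_{\dir_2}(x,x+y_2)-G_{\dir_1}(x,x+y_2)\bigr)-\bigl(G_{\dir_2}(x,x+y_1)-G_{\dir_1}(x,x+y_1)\bigr)=G_{\dir_2}(x+y_1,x+y_2)-G_{\dir_1}(x+y_1,x+y_2)\ge 0 ,
\]
since $G_{\dir_1}\li G_{\dir_2}$. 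For Item \ref{itm:SH_set_contain}, if $\dir\notin\XiSH(a,b)$ then $G_{\dir-}(a,b)=G_{\dir}(a,b)$ (using right-continuity); since $G_{\dir-}\li G_\dir$, Lemma \ref{lem:ext_mont} yields $0\le G_\dir(x,y)-G_{\dir-}(x,y)\le G_\dir(a,b)-G_{\dir-}(a,b)=0$ for $a\le x<y\le b$, so $\dir\notin\XiSH(x,y)$; this is the contrapositive.

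\textbf{Items \ref{itm:SH_stick} and \ref{itm:SH_all_jump}.} It suffices to prove \ref{itm:SH_stick} for $K=[-n,n]$, $n\in\N$, since intersecting the finitely... countably many resulting a.s.\ events handles every compact $K$. Fix $n$. By Theorem \ref{thm:SH_jump_process}, $\dir\mapsto G_\dir(-n,n)$ is $\Pp^\sigma$-a.s.\ a nondecreasing step function with discrete jump set $\XiSH(-n,n)$. Given $\dir\in\R$, pick $\ve=\ve(n,\dir)>0$ with $(\dir-\ve,\dir+\ve)\cap\XiSH(-n,n)\subseteq\{\dir\}$; then $\alpha\mapsto G_\alpha(-n,n)$ is constant on $(\dir-\ve,\dir)$, and by the left limit in $D(\R,C(\R))$ that constant equals $G_{\dir-}(-n,n)$, while $G_\beta(-n,n)=G_\dir(-n,n)$ for $\beta\in[\dir,\dir+\ve)$ by right-continuity. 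Applying Lemma \ref{lem:ext_mont} (to $G_\alpha\li G_{\dir-}$ for $\alpha<\dir$, and to $G_\dir\li G_\beta$ for $\beta\ge\dir$) transfers these equalities from the increment $(-n,n)$ to every sub-increment $(x,y)$ with $x,y\in[-n,n]$, which is \ref{itm:SH_stick}. Then \ref{itm:SH_all_jump} follows: applying \ref{itm:SH_stick} with $K=\{x,y\}$ shows the jumps of $\dir\mapsto G_\dir(x,y)$ are isolated, so it is a step function and $\XiSH(x,y)$ is locally finite; choosing rationals $x<a<b<y$ and using \ref{itm:SH_set_contain} together with the infinitude of $\XiSH(a,b)$ from Theorem \ref{thm:SH_jump_process} shows $\XiSH(x,y)$ is infinite, hence countably infinite; and $\lim_{\dir\to\pm\infty}G_\dir(x,x+y)=\pm\infty$ follows from Corollary \ref{cor:SH_dist_cor}\ref{itm:conv_to_Gaus} (giving convergence in probability) combined with monotonicity in $\dir$.

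\textbf{Items \ref{itm:SH_split_Xi} and \ref{itm:SH_Xi_dense}.} Work on the a.s.\ event of \ref{itm:SH_all_jump} and fix $\dir_1<\dir_2$, $x$. Because $\dir\mapsto G_\dir(x,x+y)$ is a step function with $\XiSH(x,x+y)\cap[\dir_1,\dir_2]$ finite, for each $y>0$
\[
G_{\dir_2}(x,x+y)-G_{\dir_1}(x,x+y)=\sum_{\gamma\in\XiSH(x,x+y)\cap(\dir_1,\dir_2]}\bigl(G_\gamma(x,x+y)-G_{\gamma-}(x,x+y)\bigr),
\]
a finite sum of nonnegative terms. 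For $\gamma\in\XiSH\cap(\dir_1,\dir_2]$ set $s_\gamma=\inf\{y>0:G_\gamma(x,x+y)>G_{\gamma-}(x,x+y)\}$; by monotonicity and continuity, $\{y>0:G_\gamma(x,x+y)=G_{\gamma-}(x,x+y)\}=(0,s_\gamma]$. The displayed identity shows $G_{\dir_2}(x,x+y)>G_{\dir_1}(x,x+y)\iff y>s_\gamma$ for some $\gamma$, so $S_x^+(\dir_1,\dir_2)=\inf_\gamma s_\gamma$; choosing $y_0$ slightly above this infimum, only the finitely many $\gamma\in\XiSH(x,x+y_0)\cap(\dir_1,\dir_2]$ can have $s_\gamma<y_0$, so the infimum is attained at some $\beta$, and the argument for $S_x^-$ and $\alpha$ is symmetric (the case $S_x^{\pm}=\infty$, which occurs only off an a.s.\ event, is covered since then every such $\gamma$ has $s_\gamma=\infty$, and at least one $\gamma$ exists because $G_{\dir_1}\ne G_{\dir_2}$ by Proposition \ref{prop:SH_cons}\ref{itm:SH_drifts}). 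Finally, for \ref{itm:SH_Xi_dense}: if $\XiSH\cap(\dir_1,\dir_2)=\emptyset$ then by \ref{itm:SH_all_jump} each $\dir\mapsto G_\dir(x,y)$ is constant on $(\dir_1,\dir_2)$, forcing $G_{\dir'}\equiv G_{\dir''}$ for distinct $\dir',\dir''\in(\dir_1,\dir_2)$, contradicting $G_{\dir'}(z)/z\to\sigma^2\dir'\ne\sigma^2\dir''\leftarrow G_{\dir''}(z)/z$ from Proposition \ref{prop:SH_cons}\ref{itm:SH_drifts}.

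\textbf{Main obstacle.} The crux is \ref{itm:SH_stick}. The delicate points are that Theorem \ref{thm:SH_jump_process} only supplies the step-function structure for a \emph{fixed} reference interval, so one must transfer to all sub-increments via Lemma \ref{lem:ext_mont} and to all compact $K$ by a countable exhaustion, all on one $\Pp^\sigma$-a.s.\ event. A secondary nuisance throughout \ref{itm:SH_all_jump}--\ref{itm:SH_Xi_dense} is the ``for every $\dir_1<\dir_2$'' / ``for every $x<y$'' quantification, which demands the structural facts simultaneously for all parameter values rather than almost every one; this is resolved by reducing to rational parameters and invoking the containment \ref{itm:SH_set_contain} together with continuity.
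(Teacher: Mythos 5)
Your proof is correct, and for Items \ref{itm:SH_dist_grows}--\ref{itm:SH_all_jump} it follows essentially the same route as the paper: monotonicity of increments, Lemma \ref{lem:ext_mont} to transfer (non)jumps between nested increments, and a countable exhaustion by reference intervals (you use $[-n,n]$ where the paper uses rational pairs $a<b$; this is immaterial). For Items \ref{itm:SH_split_Xi} and \ref{itm:SH_Xi_dense} you take a genuinely different, though equivalent, path. The paper identifies the exceptional direction $\beta$ by observing that $[\dir_1,\dir_2]\cap\XiSH(x,x+S)=\varnothing$ while $[\dir_1,\dir_2]\cap\XiSH(x,x+S+\ve)\ne\varnothing$ for every $\ve>0$, and then extracts $\beta$ from the decreasing family of finite sets $\XiSH(x,x+S+\ve)\cap[\dir_1,\dir_2]$; it then deduces density of $\XiSH$ directly from Item \ref{itm:SH_split_Xi}. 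You instead telescope the difference $G_{\dir_2}-G_{\dir_1}$ over the jumps in $(\dir_1,\dir_2]$, set $s_\gamma$ for each jump direction, identify $S_x^+=\inf_\gamma s_\gamma$, and show the infimum is attained by local finiteness; density is then obtained by a separate contradiction with the distinct a.s.\ drifts from Proposition \ref{prop:SH_cons}\ref{itm:SH_drifts}. Your version makes the quantitative relation $S_x^\pm=\inf_\gamma s_\gamma$ explicit, which is slightly more informative; the paper's version avoids the (harmless) bookkeeping of a sum whose index set depends on $y$. One small point worth making explicit in your write-up: the monotonicity of $y\mapsto G_\gamma(x,x+y)-G_{\gamma-}(x,x+y)$ needed for $\{y>0:\,=0\}=(0,s_\gamma]$ is Item \ref{itm:SH_dist_grows} applied to the pair $(G_{\gamma-},G_\gamma)$, which requires the ordering $G_{\gamma-}\li G_\gamma$ from Proposition \ref{prop:SH_cons}\ref{itm:SH_mont} rather than the two-index version you proved.
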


Figure \ref{fig:SH} shows a simulation of the stationary horizon for $\sigma = 1$ and \\ $\dir \in \{0,\pm 1,\pm 2,\pm 3, \pm 5, \pm 10\}$. The items of Theorem \ref{thm:SH_sticky_thm} can be seen from the graph. Item \ref{itm:SH_set_contain} states that the discontinuities increase as we move away from the origin. This is manifest in the picture by the splitting of trajectories. Item \ref{itm:SH_split_Xi} states that when the two trajectories split for positive $x$, there exists a value $\dir \in \XiSH$ so that $G_{\dir +}$ follows the upper trajectory, and $G_{\dir -}$ follows the lower trajectory. This results in the density of Item \ref{itm:SH_Xi_dense}. The reverse happens for negative $x$. Once the trajectories split, the distance between the two never decreases, as stated in Item \ref{itm:SH_dist_grows}. Items \ref{itm:SH_stick} and \ref{itm:SH_all_jump} reflect the fact that two trajectories $G_{\dir_1}$ and $G_{\dir_2}$ stick together in a neighborhood of the origin before splitting. 
\begin{figure}
\centering
\includegraphics[height = 3in]{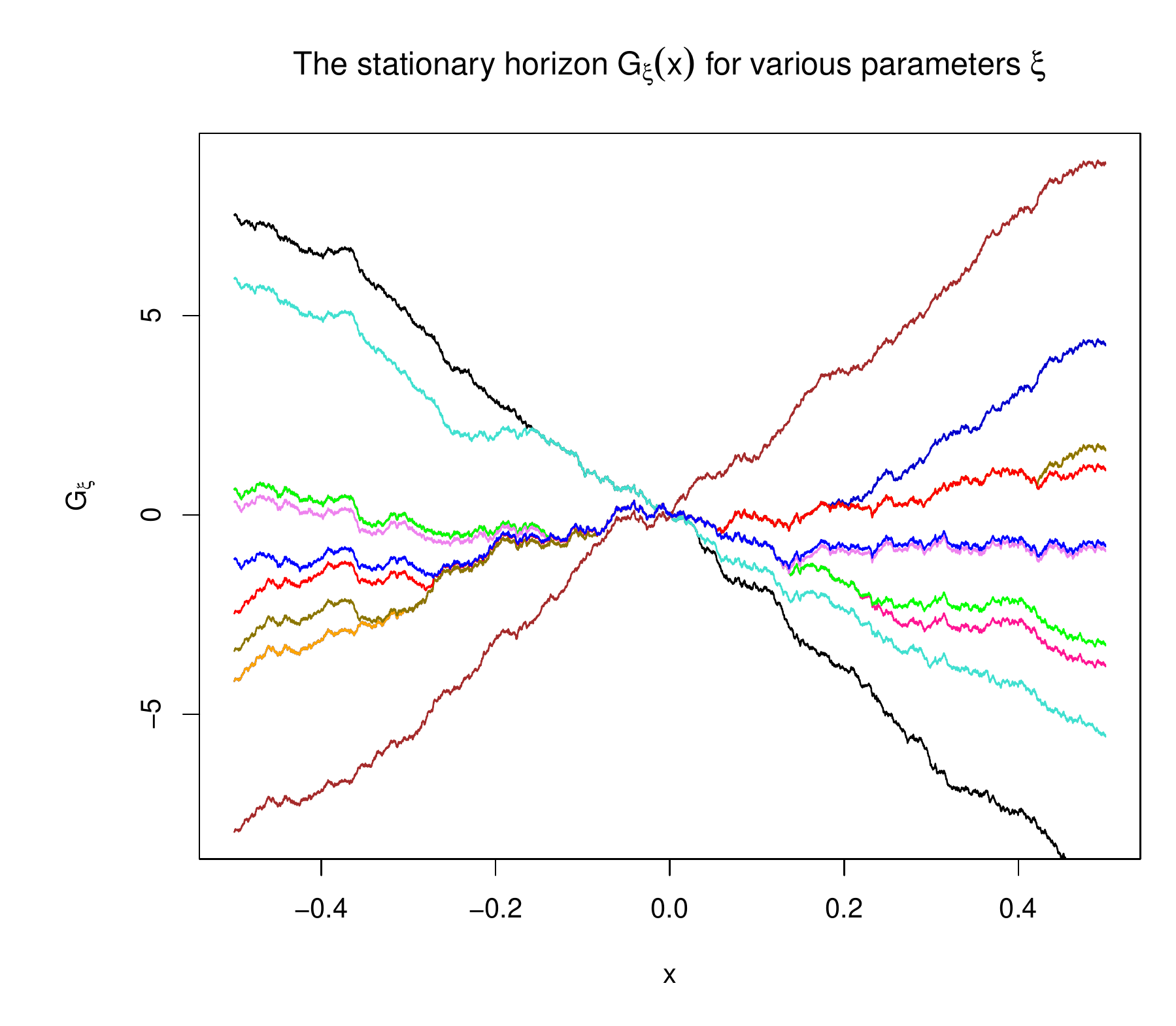}
\caption{A simulation of the stationary horizon}
\label{fig:SH}
\end{figure}

The discreteness of the sets $\XiSH(x,y)$ in Theorem \ref{thm:SH_sticky_thm}\ref{itm:SH_all_jump} implies that, under $\Pp^\sigma$, the set $\XiSH(x,y)$ is a well-defined point processes.  The set $\XiSH$ itself is dense, and it is not easy, a priori, to interpret as a random object. However, By Theorem \ref{thm:SH_sticky_thm}\ref{itm:SH_set_contain}, $\XiSH$ is the increasing union of the sets $\XiSH(x,y)$  as $x \to -\infty$ and $y \to +\infty$ (In fact, it suffices to fix either $x$ or $y$ and send the other to $\pm \infty$ by Corollary \ref{cor:dcLR} in Section \ref{sec:SHPalm}). We state some distributional invariances of these sets in the following corollary to Theorem \ref{thm:SH_dist_invar}. 
\begin{corollary}
For $\sigma > 0$, let $G^\sigma$ be the SH. For $c,b > 0$ and $\nu \in \R$, and $(x,y) \in \R$, the sets $\XiSH$ satisfy the following distributional invariances:
\[
\XiSHsig(x,y) \deq -\XiSHsig(-x,-y) 
\deq b\Xi_{G^{bc\sigma}}(c^{-2} x,c^{-2} y) + b\nu.  
\]
\end{corollary}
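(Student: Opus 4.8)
The plan is to deduce each of the two claimed distributional identities directly from the already-established distributional invariances of the SH in Theorem \ref{thm:SH_dist_invar}, by noting that the map $G \mapsto \XiSH(x,y)$ is a deterministic (measurable) functional of the process $G$, so that any equality in distribution for $G$ transfers to the induced sets. Concretely, for a fixed pair $x,y \in \R$, the set $\XiSH(x,y) = \{\dir: G_{\dir-}(x,y) \neq G_{\dir+}(x,y)\}$ is a measurable function of the trajectory $\dir \mapsto G_\dir(x,y)$, hence of $G \in D(\R,C(\R))$.

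First I would handle the reflection identity $\XiSHsig(x,y) \deq -\XiSHsig(-x,-y)$. By Theorem \ref{thm:SH_dist_invar}\ref{itm:SH_reflinv}, $\{G^\sigma_{(-\dir)-}(-\aabullet)\}_{\dir \in \R} \deq G^\sigma$. Define the reflected process $\wt G_\dir(\aabullet) = G^\sigma_{(-\dir)-}(-\aabullet)$; one checks that the discontinuity set of $\dir \mapsto \wt G_\dir(x,y)$ (which for $\wt G$ equals $G^\sigma_{(-\dir)-}(-x,-y)$, a left-continuous reparametrization of $\dir \mapsto G^\sigma_\dir(-x,-y)$) is, as a set, exactly $-\XiSHsig(-x,-y)$: reparametrizing $\dir \mapsto -\dir$ negates the location of each jump, and replacing the right-continuous version by the left-continuous version does not change which points are discontinuities. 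Here one uses Theorem \ref{thm:SH_sticky_thm}\ref{itm:SH_all_jump} to know the trajectory is a genuine step function so the discontinuity set is unambiguous. Since $\wt G \deq G^\sigma$, applying the functional $G \mapsto \XiSH(x,y)$ to both sides gives $\XiSHsig(x,y) \deq -\XiSHsig(-x,-y)$.

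Next I would handle the scaling identity $\XiSHsig(x,y) \deq b\,\Xi_{G^{bc\sigma}}(c^{-2}x, c^{-2}y) + b\nu$. By Theorem \ref{thm:SH_dist_invar}\ref{itm:SHscale} with the stated constants, $\{b G^\sigma_\dir(c^2 \aabullet) - (bc\sigma)^2\nu\,\aabullet\}_{\dir \in \R} \deq \{G^{bc\sigma}_{\dir/b - \nu}\}_{\dir \in \R}$. For fixed $x,y$, look at the increment at arguments $c^{-2}x$ and $c^{-2}y$: the left side reads $b G^\sigma_\dir(c^2\cdot c^{-2}x,\, c^2\cdot c^{-2}y) - (bc\sigma)^2\nu(c^{-2}y - c^{-2}x) = b\,G^\sigma_\dir(x,y) - (\text{term independent of }\dir)$, so as a function of $\dir$ its discontinuity set is $\XiSHsig(x,y)$ (affine-in-value, $\dir$-independent corrections do not move jumps, and multiplying the value by $b>0$ does not either). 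The right side is $\dir \mapsto G^{bc\sigma}_{\dir/b-\nu}(c^{-2}x, c^{-2}y)$, whose discontinuity set in $\dir$ is obtained from $\Xi_{G^{bc\sigma}}(c^{-2}x,c^{-2}y)$ by inverting the reparametrization $\dir \mapsto \dir/b - \nu$, i.e. by the affine map $u \mapsto b(u+\nu) = bu + b\nu$. Equality in distribution of the two processes then yields $\XiSHsig(x,y) \deq b\,\Xi_{G^{bc\sigma}}(c^{-2}x,c^{-2}y) + b\nu$.

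The only genuinely delicate point — and the step I would be most careful about — is the bookkeeping of one-sided limits: Theorem \ref{thm:SH_dist_invar}\ref{itm:SH_reflinv} is stated in terms of the left-limit process $G_{(-\dir)-}$, so I must argue that passing between the left-continuous and right-continuous modifications of a step function leaves the (unsigned) set of discontinuity points invariant, and that this set is the object denoted $\XiSH(x,y)$ in \eqref{XiSHdef}. This is immediate from Theorem \ref{thm:SH_sticky_thm}\ref{itm:SH_all_jump} (the trajectory is a step function with locally finitely many jumps, so at each jump point both one-sided limits exist and differ, regardless of which modification one uses), together with Proposition \ref{prop:SH_cons}\ref{itm:SH_cont} to handle the measure-zero ambiguity at individual parameters. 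Everything else is a routine transfer of a distributional identity through a measurable map, plus elementary observations that adding a $\dir$-independent affine function of the spatial variable, or multiplying by a positive constant, does not alter the location of jumps in $\dir$.
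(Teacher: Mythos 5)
Your proposal is correct and follows essentially the same route as the paper: the first identity is read off from the reflection invariance of Theorem \ref{thm:SH_dist_invar}\ref{itm:SH_reflinv} and the second from the scaling relation of Theorem \ref{thm:SH_dist_invar}\ref{itm:SHscale}, in both cases by tracking how the affine reparametrization in $\dir$ moves the jump set of the step function $\dir \mapsto G_\dir(x,y)$. Your extra care about the left- versus right-continuous modifications (justified via Theorem \ref{thm:SH_sticky_thm}\ref{itm:SH_all_jump}) is a point the paper leaves implicit, but it is the same argument.
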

\begin{proof}
The first equality follows by the reflection invariance of Theorem \ref{thm:SH_dist_invar}\ref{itm:SH_reflinv}, and the second follows by the scaling relations in Theorem \ref{thm:SH_dist_invar}\ref{itm:SHscale}. More specifically, let $G^\sigma$ be the SH, and let $\wt G^{bc\sigma}$ be defined by $\wt G^\sigma_{\dir/b - \nu}(x) = bG_{\dir}^\sigma(c^2 x) - (bc\sigma)^2 \nu x$. Then, $\wt G^{bc\sigma}$ is distributed as $G^{bc\sigma}$, and $\dir \in \Xi_{G^\sigma}(c^2 x,c^2 y)$ if and only if $\dir \in b \Xi_{\wt G^{bc\sigma}}(x,y) + b\nu$.  
\end{proof}

\subsection{Proofs} \label{sec:step_path}
The following more general theorem is the key to showing step function behavior of the SH. It gives a general condition for an increment-stationary process to be a jump process, and is of independent interest. 
\begin{theorem} \label{thm:jump process condition}
On a probability space $(\Omega,\F,\Pp)$, let $Y = \{Y(t): t \ge 0\}$ be a nondecreasing, increment-stationary process such that the following three conditions hold:
\begin{enumerate} [label=\rm(\roman{*}), ref=\rm(\roman{*})]  \itemsep=3pt    
    \item \label{finitemean} For $a < b$, $ \Ee[Y(b)-Y(a)] < \infty$.
    \item  \label{p0>0} $\Pp[Y(t) = Y(0)] \in (0,1)$ for all sufficiently small $t > 0$.
    \item \label{liminf assumption} $ \liminf_{t \searrow 0} \Ee[Y(t) - Y(0)|Y(t) > Y(0)] > 0$.
\end{enumerate} 
Then, $\Pp$-almost surely the paths of  $t \mapsto Y(t)$ are step functions with finitely many jumps in each bounded interval. For each $t \ge 0$, there is a jump at $t$ with probability $0$. For  $a < b \in I$, the expected number of jump points in the interval $[a,b]$ equals
\[
\f{\Ee[Y(b) - Y(a)]}{\lim_{n \rightarrow \infty}\Ee[Y(2^{-n}) - Y(0)|Y(2^{-n}) > Y(0)]  }.
\]
\end{theorem}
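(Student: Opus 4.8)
\textbf{Proof proposal for Theorem \ref{thm:jump process condition}.}

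The plan is to show that the jumps of $Y$ form a stationary, ergodic point process on $[0,\infty)$ with finite intensity, and to identify the intensity via an optimal-stopping / dyadic-refinement argument. First I would set up the skeleton. Fix a scale $2^{-n}$ and partition $[0,t]$ into the $\lfloor 2^n t\rfloor$ dyadic subintervals $[k2^{-n},(k+1)2^{-n}]$. Let $N_n(t)$ be the number of these subintervals on which $Y$ strictly increases, i.e. $Y((k+1)2^{-n}) > Y(k2^{-n})$. By increment-stationarity each such indicator has the same probability $q_n := \Pp[Y(2^{-n}) > Y(0)] = 1 - p_n$, where $p_n := \Pp[Y(2^{-n}) = Y(0)] \in (0,1)$ by hypothesis \ref{p0>0} (for $n$ large). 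Hence $\Ee[N_n(t)] = \lfloor 2^n t\rfloor\, q_n$. The key quantitative input is to control $q_n$: writing $Y(2^{-n}) - Y(0) = (Y(2^{-n}) - Y(0))\ind\{Y(2^{-n})>Y(0)\}$ and taking expectations,
\[
\Ee[Y(2^{-n}) - Y(0)] = q_n \cdot \Ee[\,Y(2^{-n}) - Y(0)\mid Y(2^{-n}) > Y(0)\,].
\]
By increment-stationarity and telescoping, $\Ee[Y(2^{-n}) - Y(0)] = 2^{-n}\,\Ee[Y(1)-Y(0)]$, which is finite by \ref{finitemean}. Denote $c_n := \Ee[Y(2^{-n}) - Y(0)\mid Y(2^{-n})>Y(0)]$. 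Assumption \ref{liminf assumption} gives $\liminf_n c_n > 0$, and since $Y$ is nondecreasing and $c_n \le \Ee[Y(2^{-n})-Y(0)]/\Pp[\cdot] \to 0$ arguments show $c_n$ is bounded, so along a subsequence (in fact one wants the full limit) $c_n \to c_\infty \in (0,\infty)$; I will argue the limit exists by a monotonicity/refinement comparison. Then $q_n = 2^{-n}\Ee[Y(1)-Y(0)]/c_n$, so $\Ee[N_n(t)] \to t\,\Ee[Y(1)-Y(0)]/c_\infty$, a finite limit.

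Next I would upgrade ``finite expected count at scale $2^{-n}$'' to ``finitely many genuine jumps, a.s.'' The sequence $N_n(t)$ is nondecreasing in $n$ (a subinterval on which $Y$ increases contains a finer subinterval on which it increases, since $Y$ is monotone), so $N_n(t) \uparrow N_\infty(t)$, the number of points of increase of $Y$ in $[0,t]$ counted with the convention that distinct dyadic limit cells separate them; by monotone convergence $\Ee[N_\infty(t)] = \lim_n \Ee[N_n(t)] = t\,\Ee[Y(1)-Y(0)]/c_\infty < \infty$. In particular $N_\infty(t) < \infty$ a.s., so $Y$ has only finitely many points of increase in each bounded interval; combined with monotonicity and right-continuity (inherited from the hypotheses, or one works with the increment function directly), this forces $Y$ to be a pure step function on bounded intervals. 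That there is a jump at a fixed $t$ with probability $0$ follows because the expected number of jumps in $[t,t+h]$ is $h\,\Ee[Y(1)-Y(0)]/c_\infty \to 0$ as $h \searrow 0$, while $\{Y \text{ jumps at } t\} \subseteq \{N_\infty(t+h)-N_\infty(t-) \ge 1\}$ for all $h$; by increment-stationarity $\Pp[Y \text{ jumps at }t]$ does not depend on $t$ and is bounded by every $h\,\Ee[Y(1)-Y(0)]/c_\infty$, hence is $0$. Finally, since the jump process has finite intensity and (being built from an increment-stationary $Y$) is itself stationary, the expected number of jumps in $[a,b]$ is $(b-a)$ times the intensity, and the intensity is $\Ee[Y(1)-Y(0)]/c_\infty = \Ee[Y(b)-Y(a)]/((b-a)c_\infty)$; rearranging gives exactly the claimed formula $\Ee[Y(b)-Y(a)]/\lim_n \Ee[Y(2^{-n})-Y(0)\mid Y(2^{-n})>Y(0)]$.

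The main obstacle I anticipate is justifying that the conditional-expectation sequence $c_n = \Ee[Y(2^{-n})-Y(0)\mid Y(2^{-n})>Y(0)]$ actually \emph{converges} (not merely has positive liminf and finite limsup), since the clean final formula requires an honest limit. The natural route is a refinement inequality: relate $c_{n+1}$ to $c_n$ by splitting $[0,2^{-n}]$ into its two halves and using monotonicity of $Y$ together with increment-stationarity to express $q_n = 2q_{n+1} - \Pp[\text{both halves increase}]$ and $2^{-n}\Ee[Y(1)-Y(0)] = 2\cdot 2^{-(n+1)}\Ee[Y(1)-Y(0)]$, which pins down how $q_n$ (hence $c_n = 2^{-n}\Ee[Y(1)-Y(0)]/q_n$) evolves; showing $q_n/q_{n+1} \to 2$ is equivalent to $\Pp[\text{both halves of } [0,2^{-n}] \text{ increase}]/q_n \to 0$, i.e. that increase events at scale $2^{-n}$ become ``singleton-like'' as $n\to\infty$, which is where assumption \ref{liminf assumption} does the real work: if $c_n$ stayed bounded below, a subinterval containing two separated increases would have conditional increment at least (roughly) $2\liminf c_n$ with probability not negligible relative to $q_n$, contradicting $\Ee[Y(2^{-n})-Y(0)\mid \text{increase}] = c_n$ being comparable to a single $c_{n+1}$. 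Making this comparison fully rigorous — essentially proving $\Ee[N_\infty(t)] = \lim N_n(t)$ coincides with the ``true'' jump count and that double-counting at scale $2^{-n}$ is negligible — is the technical heart; everything else (finiteness, no fixed-time jumps, step-function structure, the affine-in-$(b-a)$ formula) follows routinely from monotone convergence and increment-stationarity once that limit is secured.
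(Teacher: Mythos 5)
Your skeleton is the same as the paper's: count the dyadic subintervals of $[0,t]$ on which $Y$ increases, use increment-stationarity to get $\Ee[N_n(t)]=\lfloor 2^nt\rfloor q_n$ and the identity $q_nc_n = 2^{-n}\Ee[Y(1)-Y(0)]$, then pass to the limit by monotone convergence. But you have misidentified where the work is. The convergence of $c_n$, which you flag as the main obstacle and attack with a refinement inequality for $q_n/q_{n+1}$, is actually free: since $N_n(t)$ is nondecreasing in $n$, so is $\Ee[N_n(t)] = \lfloor 2^n t\rfloor q_n$, hence $c_n = 2^{-n}\Ee[Y(1)-Y(0)]/q_n$ is (essentially) nonincreasing; it is bounded below away from $0$ by hypothesis (iii), so it converges, and monotone convergence identifies $\Ee[N_\infty(t)]$ with $t\,\Ee[Y(1)-Y(0)]/\lim_n c_n$. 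No analysis of double increases within a cell is needed for this, and your proposed route (showing $q_n/q_{n+1}\to 2$, i.e.\ $c_{n+1}/c_n\to 1$) presupposes the convergence you are trying to establish. Relatedly, your parenthetical reason that $c_n$ is bounded above is garbled: both $\Ee[Y(2^{-n})-Y(0)]$ and $q_n$ tend to $0$, so their ratio is not obviously bounded; the bound comes from $c_n \le \Ee[Y(1)-Y(0)]/q_{n_0}$ for any fixed $n_0$ with $q_{n_0}>0$.

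The genuine gap is the step you defer to the end: identifying $N_\infty(t)$ with the actual number $K$ of points of increase of $Y$ in $[0,t]$, which is what the theorem's formula refers to. This needs two separate arguments. For $K\le N_\infty$ you must place $k$ distinct points of increase into $k$ pairwise disjoint dyadic cells at a fine enough scale and observe that each forces an increase over at least one adjacent cell. For $N_\infty\le K$ you need (a) a bisection argument showing that $Y(b)>Y(a)$ forces a point of increase in $[a,b]$, and (b) to rule out a single jump being counted at a cell boundary or escaping to an endpoint; the paper handles (b) by first proving that a jump occurs at any fixed time with probability $0$ (via $\Ee[Y(t+\ve)-Y(t-\ve)]=2\ve\,\Ee[Y(1)-Y(0)]\to 0$) and then restricting to the full-probability event that no jump sits at a dyadic rational. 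Note also that you cannot invoke right-continuity of $Y$ to conclude the step-function structure --- it is not among the hypotheses; instead, finiteness of $K$ together with monotonicity forces $Y$ to be locally constant off a finite set. Once $K=N_\infty$ a.s.\ is in hand, your concluding use of increment-stationarity to get the affine-in-$(b-a)$ formula is fine.
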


\begin{remark}
Heuristically, we can think of Condition \ref{liminf assumption} in the following way: on average, the size of the jumps are bounded away from $0$, and therefore the jumps cannot accumulate because an increment of the process itself has finite expectation by Condition \ref{finitemean}.   
\end{remark}

\begin{proof}[Proof of Theorem \ref{thm:jump process condition}]
 We first note that for $b > a$,  
 \be \label{eqn:EY01}
 (b - a)\Ee[Y(1) - Y(0)] = \Ee[Y(b) - Y(a)].
 \ee
 Indeed, by increment-stationarity, it suffices to show that $\Ee[Y(t) - Y(0)] = t\Ee[Y(1) - Y(0)]$ for all $t > 0$. Since $Y$ is nondecreasing, $t \mapsto \Ee[Y(t) - Y(0)]$ is nondecreasing, so it further suffices to show that $\Ee[Y(t) - Y(0)] = t\Ee[Y(1) - Y(0)]$ just for rational $t > 0$. For any positive integer $k$,
\[
\Ee[Y(k) - Y(0)] = \sum_{i = 1}^k \Ee[Y(i) - Y(i -1)] = k\Ee[Y(1) - Y(0)] = k\Ee[Y(1) - Y(0)].
\]
Then for positive integers $r$ and $k$,
\[
r\Ee[Y(1) - Y(0)] = \Ee[Y(r) - Y(0)] = \sum_{i = 1}^k\Ee[Y(ri/k) - Y(r(i -1 )/k)] = k\Ee[Y(r/k) - Y(0)],
\]
and $\Ee[Y(r/k)-Y(0)] = \tf{r}{k} \Ee[Y(1) - Y(0)]$.

We now show that there are finitely many jumps in the interval $[0,1]$, and the general result holds by increment-stationarity and \eqref{eqn:EY01}. Consider discrete versions of the process $Y$ as follows. For $n \in \Z_{>0}$, let $D_n = \{\f{j}{2^n}: j \in \Z, 0 \le j \le 2^n\}$, and consider the process $Y_n := \{Y(t): t \in D_n\}$. Let $J_n$ be the number of jumps of $Y_n$, i.e.,
\[
J_n = \sum_{j =1}^{2^n} \ind \Bigl(Y\bigl(\tf{j}{2^n}\bigr) > Y\bigl(\tf{j -1}{2^n}\bigr)\Bigr)
\]
Then, $J_n$ is nondecreasing in $n$, so it has a limit, denoted as the random variable $J_\infty$. Let $K \in \{0,1,2,\ldots\} \cup \{\infty\}$ be the number of points of increase of $Y$ on the interval $[0,1]$. Specifically, a point $t \in (0,1)$ is a point of increase, if  $Y(t + \ve)  > Y(t -\ve)$ for all $\ve > 0$. We say $0$ is a point of increase if $Y(t) > Y(0)$ for all $t > 0$, and we likewise say that $1$ is a point of increase if $Y(t) < Y(1)$ for all $t < 1$.

There are three steps we need to complete the proof. The first is to show $K \le J_\infty$. Next, we show that $J_\infty$ is finite almost surely by computing its mean. Lastly, we show that $J_\infty \le K$.

We now show that $K \le J_\infty$. If $K < \infty$, let $k = K$, and otherwise, let $k$ be an arbitrary positive integer. It suffices to show that $J_\infty \ge k$.  By definition of $k$, we may choose $k$ points of increase $t_1<\dotsm<t_k$. First, we handle the case where $t_i \in (0,1)$ for all $i$. Then, for all sufficiently large $n$, there exist $n$-dependent positive integers $0 < j_1 < \cdots < j_k < 2^n$ so that for each $i$, $j_{i + 1} > j_i + 2$,  and 
\be \label{eqn:ti in interval}
\f{j_i -  1}{2^n} < t_i < \f{j_i + 1}{2^n}.
\ee
 Since $t_i$ is a point of increase and $Y$ is nondecreasing, $Y(\f{j_i + 1}{2^n}) > Y(\f{j_i - 1}{2^n})$. Therefore, $Y(\f{j_i + 1}{2^n}) > Y(\f{j_i}{2^n})$ or $Y(\f{j_i}{2^n}) > Y(\f{j_i - 1}{2^n})$. By assumption that $j_{i + 1} > j_i + 2$, the intervals $[\f{j_i - 1}{2^n},\f{j_i + 1}{2^n}]$ are mutually disjoint, so $J_n \ge k$ and therefore $J_\infty \ge k$. The case where $t_1 = 0$ or $t_k = 1$ is handled similarly.

\noindent Next, we compute the finite mean of $J_\infty$.
Let 
\[
c_n = \Ee[Y(2^{-n}) - Y(0)|Y(2^{-n}) > Y(0)]
\]
Then, using increment-stationarity,
\begin{align}
 \Ee[Y(1) - Y(0)] &= \sum_{j = 1}^{2^n} \Ee\Big[Y\big(\tf{j}{2^n}\big) - Y\big(\tf{j - 1}{2^n}\big)\Big]   \nonumber \\
    &= \sum_{j = 1}^{2^n} \Ee\Big[Y\big(\tf{j}{2^n}\big) - Y\big(\tf{j - 1}{2^n}\big)\Big|Y\big(\tf{j}{2^n}\big) > Y\big(\tf{j - 1}{2^n}\big) \Big]\Pp\Big(Y\big(\tf{j}{2^n}\big) > Y\big(\tf{j - 1}{2^n}\big)\Big) \nonumber \\
    &= c_n \sum_{j = 1}^{2^n}  \Pp\Big(Y\big(\tf{j}{2^n}\big) > Y\big(\tf{j - 1}{2^n}\big)\Big) \nonumber
    = c_n \Ee[J_n].
\end{align}
By assumptions \ref{finitemean} and \ref{liminf assumption} and the monotone convergence theorem,
\be \label{eqn:Jinf_mean}
\Ee[J_\infty] = \lim_{n \rightarrow \infty} \Ee[J_n] = \lim_{n \rightarrow \infty} \f{\Ee[Y(1) - Y(0)]}{c_n} < \infty. 
\ee
Therefore, $\Pp(J_\infty < \infty) = 1$. Since $K \le J_\infty$, with probability one, $Y$ has only finitely many points of increase on $[0,1]$. Therefore, with probability one, $Y:[0,1]\to \R$ is locally constant except at the finitely many jump points. Hence, for each $t \in (0,1)$, the left and right limits of $Y$ at $t, Y(t \pm)$ exist. The limits $Y(0+)$ and $Y(1-)$ exist as well. Since $Y$ is increasing, for each $t \in(0,1)$ and $\ve > 0$, we can apply \eqref{eqn:EY01} and Assumption \ref{finitemean} to get
\[
\Ee[Y(t+) - Y(t-)] \le \Ee[Y(t + \ve) - Y(t - \ve)] = 2\ve \Ee[Y(1) - Y(0)] < \infty.
\]
Sending $\ve \searrow 0$, the left-hand side is $0$ and therefore, a jump occurs at time $t$ with probability $0$. Similar arguments apply to $t = 0$ and $t = 1$. Therefore, there exists an event of probability one, $\Omega_{\Q_2}$ on which $Y$ has no jumps at points of the form $\f{j}{2^n}$ for positive integers $j$ and $n$.

To compute the mean number of jumps, we show that $J_\infty = K$ on the event $\Omega_{\Q_2}$. We already showed that $K \le J_\infty$, so it remains to show $J_\infty \le K$.

 We start by showing that if $Y(b) > Y(a)$ for some $a < b$, there must be some point of increase in the interval $[a,b]$. We prove this as follows: let $c$ be the midpoint of $a$ and $b$. Then, since $Y$ is nondecreasing, either $Y(b) > Y(c)$ or $Y(c) > Y(a)$. If, without loss of generality, $Y(b) > Y(c)$, then we can bisect the interval again with midpoint $d$ and get that $Y(b) > Y(d)$ or $Y(d) > Y(c)$, where $d$ is the midpoint of $a$ and $b$. Inductively, this constructs a sequence of nested intervals $[a_n,b_n] \subseteq [a_{n - 1},b_{n - 1}]\subseteq [a,b]$, where $[a_n,b_n]$ is either the left or right half of the previous interval. Then, $a_n$ is nondecreasing and $b_n$ is nonincreasing and $b_n - a_n \rightarrow 0$. Then, set $t = \lim_{n \rightarrow \infty} a_n = \lim_{n \rightarrow \infty} b_n$, and we have that $t \in [a_n,b_n]$ for all $n$. If $t \in (0,1)$, then for all $\ve > 0$,  we may choose $n$ large enough so that, because $Y$ is nondecreasing,
\[
Y(t + \ve) - Y(t - \ve) \ge Y(b_n) - Y(a_n) > 0.
\]
Hence, $t$ is a point of increase. The case where $t = 0$ or $1$ is handled similarly.

Now, we show that on $\Omega_{\Q_2}$, $J_n \le K$ for all $n$. By definition, $J_n$ is the number of integers $0 < j \le 2^n$ such that $Y(j2^{-n}) > Y((j -1)2^{-n})$. For each such $j$, we just showed that there must be a point of increase in $[(j - 1)2^{-n},j2^{-n}]$, and on the event $\Omega_{\Q_2}$, that point of increase must lie in the interior of the interval. Thus, $J_n \le K$, and $J_\infty \le K$, so $J_\infty = K$ on $\Omega_{\Q_2}$. Equation \eqref{eqn:Jinf_mean} computes the mean number of jump points. 
\end{proof}

\begin{proof}[Proof of Theorem \ref{thm:SH_jump_process}]
We verify the conditions of Theorem \ref{thm:jump process condition} for the process 
\[
\{G_{\dir_0 + \dir}^\sigma(x,x+y):\dir \ge 0\},
\]
where $\dir_0$ is arbitrary.  This  process is nondecreasing by Proposition \ref{prop:SH_cons}\ref{itm:SH_mont}, and has stationary increments by Theorem \ref{thm:SH_dist_invar}\ref{itm:SH_inc_stat}. 

Condition \ref{finitemean} of Theorem \ref{thm:jump process condition} follows because each $G^\sigma_\dir$ is a Brownian motion with diffusivity $\sigma$ and drift $\sigma^2 \dir$. In particular, for $\dir,y > 0$,
\be \label{GExp}
\Ee^\sigma[G_{\dir_0 + \dir}(x,x+y) - G_{\dir_0}(x,x+y)] = \sigma^2 \dir y < \infty.   
\ee
Theorem \ref{thm:SH_inc_dist} states that $\Pp^\sigma(G_{\dir_0 + \dir}(x,x+y) = G_{\dir_0}(x,x+y))  \in (0,1)$ for all $\dir,y > 0$. Hence, Condition \ref{p0>0} is satisfied. 

Lastly, we verify Condition \ref{liminf assumption}, which, combined with \eqref{GExp} allows us to compute the mean number of jumps in the interval $[\dir_0,\dir_0 + \dir]$. Observe that, because \[\Pp^\sigma[G_{\dir_0 + \dir}(x,x+y) -G_{\dir_0}(x,x+y) \ge 0] = 1,\] 
and using \eqref{split_prob},
\begin{align*}
&\quad \; \Ee^\sigma[G_{\dir_0 + \dir}(x,x+y) -G_{\dir_0}(x,x+y)|G_{\dir_0 + \dir}(x,x+y) >G_{\dir_0}(x,x+y)  ] \\
&= \f{\Ee^\sigma[G_{\dir_0 + \dir}(x,x+y) -G_{\dir_0}(x,x+y)]}{\Pp^\sigma[G_{\dir_0 + \dir}(x,x+y) > G_{\dir_0}(x,x+y)]} \\
&= \f{\sigma^2 \dir y}{1 - (2 + \dir^2 \sigma^2 y) \Phi\bigl(-\dir \sqrt{\f{\sigma^2 y}{2}}\bigr) + \dir \sqrt{\f{\sigma^2 y}{\pi}}\exp(-\dir^2 \sigma^2 y /4) }.
\end{align*}
An application of L'H\^opital's rule gives
\begin{align*}
&\quad \; \lim_{\dir \searrow 0} \Ee^\sigma[G_{\dir_0 + \dir}(x,x+y) -G_{\dir_0}(x,x+y)|G_{\dir_0 + \dir}(x,x+y) >G_{\dir_0}(x,x+y)  ]  \\
&=\lim_{\dir \searrow 0} \f{\sigma^2 y}{-2\dir \sigma^2 y \Phi\bigl(-\dir \sqrt{\f{\sigma^2 y}{2}}\bigr)  +2\sqrt{\f{\sigma^2 y}{\pi}} } = \f{\sqrt{\pi \sigma^2 y}}{2} > 0
\end{align*}
Along with \eqref{GExp}, Theorem \ref{thm:jump process condition} gives us
\[
\Ee^\sigma[\# \{\eta \in [\dir_0,\dir]: G_{\eta- }(x,x + y) < G_{\eta +}(x,x+y)\}] = 2 \dir\sqrt{\f{\sigma^2 y}{\pi}}. 
\]

The fact that there are infinitely many jumps over all $\dir \in \R$ is as follows: We recall that $\dir \mapsto G^\sigma_{\dir}(x,x+y)$ is nondecreasing. Thus, it has almost sure limits as $\dir \to \pm \infty$. Since $G^\sigma_{\dir}(x,x+y)$ is Gaussian with mean $\sigma^2 \dir y$ and variance $\sigma^2 y$, for all $z \in \R$, Markov's inequality implies, for any $z \in \R$
\begin{align*}
\lim_{\dir \to + \infty} \Pp^\sigma(G_\dir(x,x+y) \le z) &\le \lim_{\dir \to + \infty} \Pp^\sigma(|G_\dir(x,x+y) -\sigma^2 \dir y| > \sigma^2 \dir y - z)  \\
&\le \lim_{\dir \to +\infty} \f{\sigma^2 y}{(\sigma^2 \dir y - z)^2} = 0.
\end{align*}
so $\lim_{\dir \to \pm \infty} G^\sigma_\dir(x,x+y) = \pm\infty$, almost surely. Similarly, $\lim_{\dir \to -\infty} G^\sigma_\dir(x,x+y) = -\infty$. Since the set of jumps is discrete, there must be infinitely many of them on the real line, and they are unbounded for both positive and negative directions. 
\end{proof}

\begin{proof}[Proof of Theorem \ref{thm:SH_sticky_thm}]
 \noindent \textbf{Item \ref{itm:SH_dist_grows}:} The monotonicity of $y \mapsto f(y) := G_{\dir_2}(x,x+y) - G_{\dir_1}(x,x+y)$ follows by $G_{\dir_1} \li G_{\dir_2}$ (Proposition \ref{prop:SH_cons}\ref{itm:SH_mont}): for $w < y$,
\[
f(w,y) = G_{\dir_2}(x + w,x+  y) - G_{\dir_1}(x + w,x + y) \ge 0.
\]

 \noindent \textbf{Item \ref{itm:SH_set_contain}}
Because $G_{\dir-} \li G_{\dir +}$, Lemma \ref{lem:ext_mont} implies that for $a \le x < y \le b$,
\[
0 \le G_{\dir +}(x,y) - G_{\dir -}(x,y) \le G_{\dir +}(a,b) - G_{\dir -}(a,b),
\]
giving the inclusion $\XiSH(x,y) \subseteq \XiSH(a,b)$.

 \noindent \textbf{Items \ref{itm:SH_stick}--\ref{itm:SH_all_jump}:} For a fixed $x \neq y$, Theorem \ref{thm:SH_jump_process} states that, $\Pp^\sigma$ almost surely, $\XiSH(a,b)$ is a discrete and infinite set, and $\dir \mapsto G_\dir(a,b)$ is a right-continuous step function. Thus, $\Pp^\sigma$-almost surely, for any $\dir \in \R$, there exists $\ve > 0$ such that, if $\dir - \ve < \alpha < \dir < \beta < \dir + \ve$, 
\be \label{Gab=}
G_{\alpha}(a,b) = G_{\dir -}(a,b),\qquad\text{and}\qquad G_{\dir}(a,b) = G_{\beta}(a,b). 
\ee
We consider the $\Pp^\sigma$ almost sure event such a $\ve > 0$ exists for all $\dir \in \R$ and each rational pair $a\neq b$.  To show Item \ref{itm:SH_stick}, it suffices to take the compact set to be $[a,b]$ for rational $a < b$. Then, for every $\dir \in \R$, there exists $\ve = \ve(\dir, a,b)$ so that for $\dir - \ve < \alpha < \dir < \beta < \dir + \ve$ \eqref{Gab=} holds. Then, by $G_{\dir} \li G_{\beta}$ and Lemma \ref{lem:ext_mont}, for $a \le x \le y \le b$, and all such $\alpha,\beta$, 
\be \label{Gbdir-}
0 \le G_{\beta}(x,y) - G_{\dir}(x,y) \le G_{\beta}(a,b) - G_\dir(a,b) = 0.
\ee
A symmetric argument shows that $G_{\alpha}(x,y) = G_{\dir-}(x,y)$. 

The result of Item \ref{itm:SH_all_jump} that $\XiSH(x,y)$ is countably infinite and discrete for all $x < y$ is as follows. Without loss of generality, assume $x < y$. Choose rational values $a,b,c,d$ with $a < x < c < d  < y < b$. Then Item \ref{itm:SH_set_contain} implies
\[
\XiSH(c,d) \subseteq \XiSH(x,y) \subseteq \XiSH(a,b),
\]
and recall that $\XiSH(b,c)$ and $\Xi(a,d)$ are both infinite and discrete. Furthermore, \eqref{Gbdir-} implies that $\dir\mapsto G_\dir(x,y)$ can only increase when $\dir \mapsto G_{\dir}(a,b)$ increases, so $\dir \mapsto G_\dir(x,y)$ is also a step function. The limits as $\dir \to \pm \infty$ follow by a similar monotonicity argument.  

 \noindent \textbf{Item \ref{itm:SH_split_Xi}:} We handle the case for $S_x^+(\dir_1,\dir_2)$, and the other follows analogously. Recall the definition \eqref{Split_pts_def}
\[
S_x^+(\dir_1,\dir_2) = \inf\{y > 0: G_{\dir_2}(x,x+y) > G_{\dir_1}(x,x+y) \}.
\]
For shorthand, let $S = S_x^+(\dir_1,\dir_2)$. By definition of the nondecreasing property in Item \ref{itm:SH_dist_grows}, the function $\dir \mapsto G_\dir(x,x + S)$ is constant in the interval $[\dir_1,\dir_2]$, but for every $\ve > 0$, the function $\dir \mapsto G_{\dir}(x,x + S + \ve)$ is not constant in the interval $[\dir_1,\dir_2]$. Since $\dir \mapsto G_{\dir}(x,x + S + \ve)$ is a step function by Item \ref{itm:SH_all_jump}, it follows that $[\dir_1,\dir_2] \cap \XiSH(x,x + S) = \varnothing$, but $[\dir_1,\dir_2] \cap \XiSH(x,x + S + \ve) \ne \varnothing$ for all $\ve > 0$.  Since each $\XiSH(x,x + S + \ve)$ contains only finitely many values in $[\dir_1,\dir_2]$ (Item \ref{itm:SH_all_jump}), and since $\XiSH(x,x + S + \ve)$ decreases as $\ve$ decreases (Item \ref{itm:SH_set_contain}), there must be some $\dir^\star \in [\dir_1,\dir_2]$ so that, for every $\ve > 0$ $\dir^\star \in [\dir_1,\dir_2] \cap \XiSH(x,x + S + \ve)$. Then, $G_{\dir^\star -}(x,x + S) = G_{\dir^\star +}(x,x + S)$, but $G_{\dir^\star -}(x,x + S + \ve) < G_{\dir^\star +}(x,x + S + \ve)$ for all $\ve > 0$. Hence, 
\[
S = \inf\{y > 0: G_{\dir^\star +}(x,x + y) > G_{\dir^\star -}(x,x + y)\}.
\]

 \noindent \textbf{Item \ref{itm:SH_Xi_dense}:} The density of $\XiSH$ follows directly from Item \ref{itm:SH_split_Xi}.
\end{proof}

\section{Random measures and Palm kernels} \label{sec:SHPalm}
\begin{figure}[t]
\centering
\includegraphics[height = 3in]{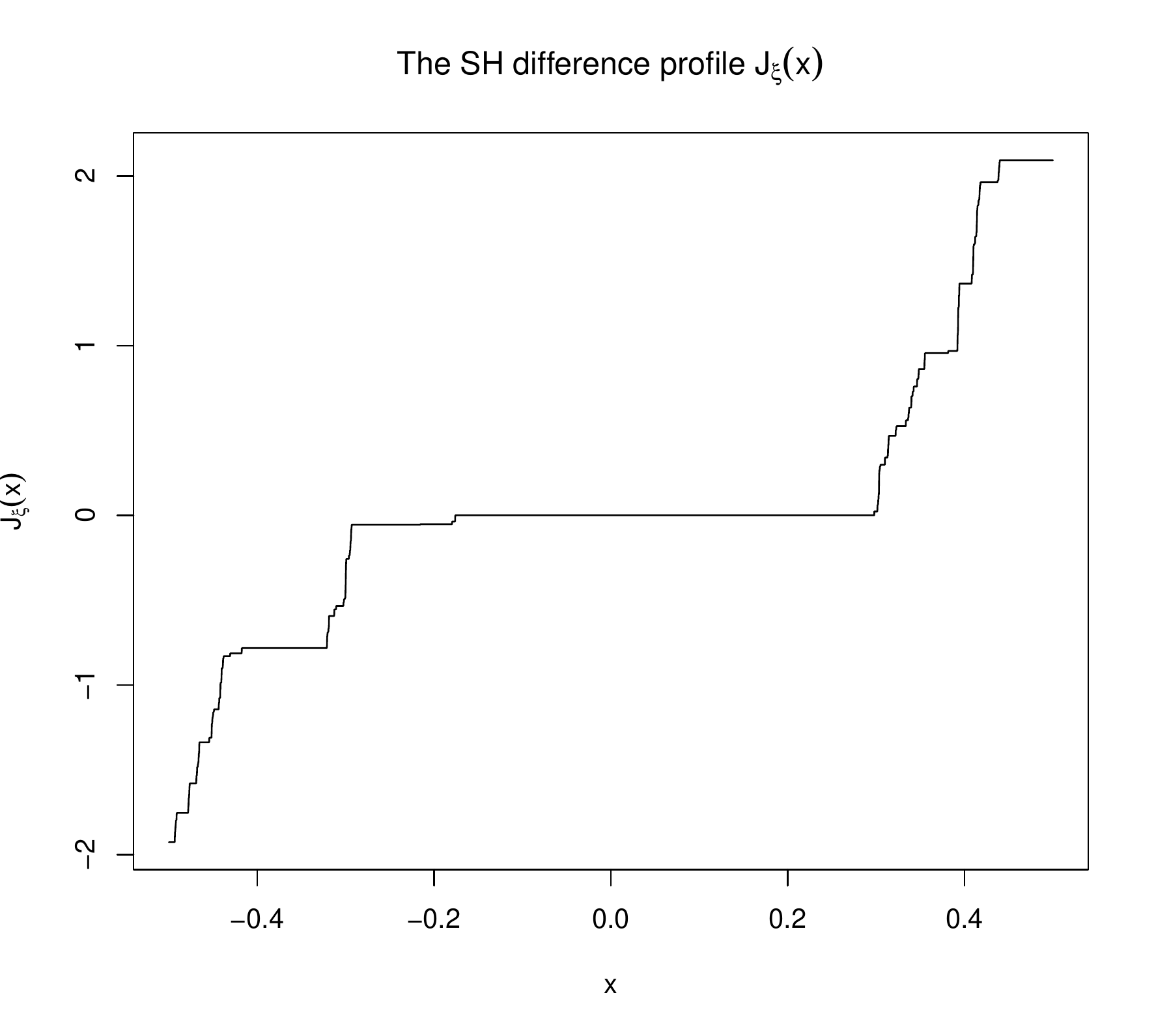}
\caption{\small The SH difference profile $J_\dir(x)$. The function vanishes   in a nondegenerate  random neighborhood
of $x = 0$ and evolves as two independent Brownian local times to the left and right.} 
\label{fig:loc_time}
\end{figure}

For $G \in D(\R,C(\R))$, define the process of jumps 
\begin{equation} \label{Jdir}
	\shdif := \{\shdif_\dir\}_{\dir\tsp\in\tsp\R} = \{G_\dir-G_{\dir-}\}_{\dir\tsp\in\tsp\R}
\end{equation}
By Theorem \ref{thm:SH_sticky_thm}, for each $\sigma > 0$, $\Pp^\sigma$-almost surely, either
$\shdif_\dir$ vanishes identically (when $\dir \notin \XiSH$ or $\shdif_\dir$ is a   nondecreasing continuous function that vanishes in a nondegenerate (random) neighborhood of the origin. We use $J^\sigma = \{J^\sigma_\dir\}_{\dir\in \R}$ to denote the process when $G = G^\sigma$, without reference to the measure $\Pp^\sigma$.   By Theorem \ref{thm:SH_dist_invar}\ref{itm:shinv},\ref{itm:SH_inc_stat},
\be\label{H65}
\{\shdif^\sigma_{\dir+\eta}(y+x)-\shdif^\sigma_{\dir+\eta}(y): x \in \R\}_{\dir \in \R}\;\deq\;\{\shdif^\sigma_{\dir}(x):x \in \R\}_{\dir \in \R}  \quad \forall\tsp y, \eta\in\R.  
\ee

The goal of this section is to prove the following. 
\begin{theorem} \label{thm:BusePalm}
Let $\sigma > 0$, and let $G^\sigma$ be the SH. For $\dir \in \R$  and $\shdif^\sigma_\dir$ defined above, let
\[
\tau_\dir^\sigma = \inf\{x > 0: \shdif^\sigma_{\dir}(x) > 0 \}\qquad\text{and}\qquad \bck{\tau^\sigma_\dir} = \inf\{x > 0: -\shdif^\sigma_{\dir}(-x) > 0\}
\]
denote the points to the right and left of the origin beyond which $G_{\dir +}$ and $G_{\dir -}$ separate, if ever. Then,  conditionally on $\dir \in \Xi_{G^\sigma}$ in the appropriate Palm sense, the restarted functions 
\[
x\mapsto \shdif^\sigma_{\dir}(x + \tau_\dir^\sigma) -\shdif^\sigma_{\dir}(\tau_\dir^\sigma)
\quad\text{ and } \quad 
x \mapsto -\shdif_{\dir}^\sigma(-x - \bck{\tau_\dir^\sigma}) + \shdif^\sigma_{\dir}(-\bck{\tau_\dir^\sigma}),\quad x \in\R_{\ge0},  
\]
 are equal in distribution to two independent running maximums of Brownian motion with diffusivity $\sigma^2$ and zero drift. In particular, they are equal in distribution to two independent appropriately normalized versions of Brownian local time. See Figure \ref{fig:loc_time}.
\end{theorem}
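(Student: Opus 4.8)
The plan is to reduce everything to the one-jump, two-sided picture provided by the finite-dimensional description of the SH and the queuing identities of Section \ref{sec:queue_intro}. First I would fix $\sigma>0$ and, by the scaling invariance in Theorem \ref{thm:SH_dist_invar}\ref{itm:SHscale}, reduce to $\sigma=1$, carrying the diffusivity back at the end. The key structural input is that for a fixed pair $\dir_1<\dir_2$ the pair $(G_{\dir_1},G_{\dir_2})$ is distributed as $(B,D(Z,B))$ with $B,Z$ independent Brownian motions of drifts $\dir_1,\dir_2$ (Proposition \ref{prop:SH_cons}\ref{itm:SH_dist}), and Lemma \ref{DRcont} writes the increment $G_{\dir_2}(x,y)-G_{\dir_1}(x,y) = D(Z,B)(x,y)-B(x,y)$ explicitly in terms of $\sup\{Z-B\}$ running maxima. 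The function $y\mapsto G_{\dir_2}(0,y)-G_{\dir_1}(0,y)$ on $[0,\infty)$ is then, by this formula, identically $0$ until the running maximum of $Z-B$ over $[0,y]$ (a Brownian motion with drift $\dir_2-\dir_1$ started from $0$) first exceeds the independent constant $M^- := \sup_{u\le 0}\{Z(u)-B(u)\} \sim \Exp(\dir_2-\dir_1)$, and thereafter equals $\big(\text{that running max}\big)-M^-$. Restarted at its first point of increase $\tau$, it is therefore exactly the running maximum of a Brownian motion with drift $\dir_2-\dir_1$, which as $\dir_2\downarrow\dir_1$ degenerates (in the appropriate Palm limiting sense) to the running maximum of a driftless Brownian motion; the left side is handled by the reflection invariance Theorem \ref{thm:SH_dist_invar}\ref{itm:SH_reflinv}, which turns $-\shdif_\dir(-\cdot)$ into a copy of the right-hand object, and the independence of the two sides is already visible in the decomposition because $M^-$ and the forward running maximum depend on disjoint halves of $B$ and $Z$.

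The actual work is to make ``conditionally on $\dir\in\XiSH$ in the appropriate Palm sense'' precise and to pass from the two-parameter-pair statement to the single-direction jump $\shdif_\dir$. Here I would use the setup of Kallenberg's Palm theory \cite{Kallenberg-book}: by Theorem \ref{thm:SH_sticky_thm}\ref{itm:SH_all_jump} and Theorem \ref{thm:SH_jump_process}, for each fixed $x>0$ the set $\XiSH(0,x)$ is a locally finite point process on $\R$ with a nonatomic, translation-invariant (in $\dir$, by Theorem \ref{thm:SH_dist_invar}\ref{itm:SH_inc_stat}) intensity measure of density $2\sqrt{x/\pi}$ per unit $\dir$, so the Palm kernel $\Pp^{\sigma,\,0}(\,\cdot\mid 0\in\XiSH)$ is well defined. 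I would define the Palm-conditioned law of the whole jump function $\shdif_\dir$ by a Campbell/exchange-formula computation: for a test functional $\Psi$,
\[
\Ee\Big[\sum_{\dir\in\XiSH(0,x)\cap[0,1]}\Psi\big(\shdif_\dir\big)\Big]
= \int_0^1 \Ee^{\sigma,\,\dir}\big[\Psi(\shdif_\dir)\,\big|\,\dir\in\XiSH\big]\,\Lambda(d\dir),
\]
and the plan is to evaluate the left-hand side using the pair representation $(G_\dir, G_{\dir+\epsilon})$ and sending $\epsilon\downarrow 0$: for $\epsilon$ small the sum over $\XiSH(0,x)\cap[0,1]$ is, with high probability, a sum over the jumps of $\dir\mapsto G_\dir(0,x)-G_{\dir_0}(0,x)$ captured between consecutive gridpoints, each contributing (after restarting at $\tau$) a running maximum of a drift-$\epsilon$ Brownian motion. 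Dominated convergence then identifies the Palm integrand as the $\epsilon\to 0$ limit, i.e. the driftless running maximum. The left/right independence under the Palm kernel is inherited from the disjoint-dependence decomposition above, which survives the limit.

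The main obstacle I expect is the Palm bookkeeping: justifying that, under $\Pp^{\sigma,\dir}(\cdot\mid \dir\in\XiSH)$, the object $\shdif_\dir$ really is the $\epsilon\downarrow 0$ limit of the size-$\epsilon$-pair increment restarted at its first increase point, uniformly enough to pass the limit inside the Campbell formula — in particular controlling that no jump mass of $\dir\mapsto G_\dir(0,x)$ escapes to $\pm\infty$ and that $\tau_\dir, \bck{\tau_\dir}$ are a.s.\ finite and positive on the Palm event (which follows from Theorem \ref{thm:SH_sticky_thm}\ref{itm:SH_split_Xi} and the fact that jumps vanish in a neighborhood of $0$). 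The remaining ingredients — the explicit $\Exp(\dir_2-\dir_1)$ law of $M^-$ from Lemma \ref{lemma:sup of BM with drift}, and the fact that a running maximum of driftless Brownian motion with diffusivity $\sigma^2$ is a version of Brownian local time — are either quoted or standard (Lévy's theorem), so once the Palm limit is set up rigorously the identification of the distribution is immediate. Finally, undoing the $\sigma=1$ reduction via Theorem \ref{thm:SH_dist_invar}\ref{itm:SHscale} rescales space by $c^2$ and the function values by $c$, which is exactly the scaling that turns a driftless running maximum into one of diffusivity $\sigma^2$, completing the proof.
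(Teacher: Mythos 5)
Your proposal is correct and follows essentially the same route as the paper: the pair representation $(B,D(Z,B))$ together with Lemma \ref{DRcont} to write the jump as a running maximum started at an independent exponential level, the strong Markov property to identify the restarted profile as the running maximum of a small-drift Brownian motion, a mesh-refinement limit inside a Campbell formula to identify the Palm kernel as the driftless running maximum, and the disjointness of the half-line $\sigma$-algebras of $Z-B$ for the left/right independence. The only substantive difference is the Palm bookkeeping you yourself flag: the paper conditions on the pair $(\tau_\dir,\dir)$ rather than on $\dir\in\XiSH(0,x)$ for a fixed $x$ (since $\XiSH$ is dense, the $\dir$-marginal intensity is not locally finite), and your fixed-$x$ formulation is the $\tau\in(0,x]$ marginal of that joint kernel, so it works but would require checking consistency as $x$ varies, which the paper's joint kernel handles automatically.
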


Defining the appropriate sense of Palm conditioning requires some care. The set $\Xi_{G^\sigma}$ is almost surely dense in $\R$, so we instead condition on pairs $(\dir,\tau_\dir^\sigma)$, where $\dir \in \Xi_{G^\sigma}$. A more precise version of Theorem \ref{thm:BusePalm}  is proved as Theorem \ref{thm:indep_loc}.

Theorem \ref{thm:BusePalm} has deep geometric significance for the study of semi-infinite geodesics in the directed landscape in Chapter \ref{chap:Buse}. For $\dir \in \XiSH$, the function $x \mapsto J_\dir(x)$ is a nondecreasing function and therefore defines a random Lebesgue-Stieltjes measure. The support of this measure, up to the removal of a countable set, is interpreted in Theorem \ref{thm:random_supp} as the set of points with two disjoint semi-infinite geodesics in direction $\dir$.

\subsection{The difference profile for positive $x$}
We first study the functions $\shdif_\dir(x)$ for $x\ge0$, under the measure $\Pp^\sigma$.  Approximate $\shdif$ by a process $\shdif^{N}$ defined on dyadic rational $\dir$. For $N\in \Z_{>0}$ let 
\begin{equation}\label{Hd} 
	\shdif^{N}_{\dir_i}=G_{\dir_i}-G_{\dir_{i-1}} \qquad\text{for } \  \dir_i=\dir^N_i=i2^{-N} \ \text{ and } \  i\in \Z.
\end{equation}For $i \in \Z$, let
\begin{equation}\label{taui}
	\tau_{\dir_i}^{N}= \inf\{x > 0: \shdif^{N}_{\dir_i}(x) > 0\} = S_0^+(\dir_{i - 1},\dir_i).
\end{equation}
Since the $G_{\dir_i}$ have different drifts for different values of $i$, $\tau_{\dir_i}^{N}<\infty$ almost surely. For $f\in C(\R)$ and $\tau \in \R$,   let 
$[f]^{\tau}\in C(\R_{\ge0})$ denote the restarted function
\begin{equation} \label{shiftnot}
[f]^{\tau}(x) =f(\tau+x) -f(\tau) \  \text{ for }  x\in [0,\infty) .
\end{equation}

Denote by $\mathcal{D}_{\alpha}^\sigma$
the distribution on $C(\R_{\ge0})$  of  the running maximum of a Brownian motion with drift $\alpha\in\R$ and diffusivity $\sqrt{2\sigma^2}$. That is, if $X$ denotes standard Brownian motion, then 
\[   \mathcal{D}_{\alpha}^\sigma(A)  =  \Pp\bigl\{  \bigl[ \sup_{0\leq u\leq s} \sqrt{2\sigma^2} X(u)+\alpha u\bigr]_{s\in[0,\infty)} \in A  \bigr\}  
\]
for Borel sets $A\subset C(\R_{\ge0})$.  When the drift vanishes  ($\alpha=0$)  we  abbreviate  $\mathcal{D}^\sigma=\mathcal{D}_0^\sigma$.

\begin{lemma} \label{lem:WBSM}
On a probability space $(\Omega, \F,P)$, 
Let $Y =\{Y(x): x \ge 0\}$ be a Brownian motion with drift $\alpha$ and diffusivity $\sqrt{2\sigma^2}$. Let $W$ be an almost surely negative random variable independent of $B_\alpha^\sigma$. Let 
\[\theta = \inf\{x > 0:  W+B_{\alpha}^\sigma(x) \ge 0 \}.
\]
Then, for all $x > 0$,
\be \label{condD}
P\Big(\Big[\sup_{0\leq s\leq \theta+u} W+Y(s)\Big]^+_{u\in[0,\infty)}\in \aabullet \,\Big|\,\theta=x\Big) = \D_\alpha^\sigma(\aabullet).
\ee
In particular, 
\be \label{uncondD}
P\Big(\Big[\sup_{0\leq s\leq \theta+u} W+Y(s)\Big]^+_{u\in[0,\infty)}\in \aabullet\Big) = \D_\alpha^\sigma(\aabullet)
\ee
\end{lemma}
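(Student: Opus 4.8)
The plan is to prove Lemma \ref{lem:WBSM} by reducing it to the strong Markov property of Brownian motion together with the classical fact that the running maximum of a Brownian motion started afresh at a stopping time, with the maximum reset to zero, is again distributed as a running maximum of the same Brownian motion. First I would observe that since $W$ is almost surely negative and independent of $Y$, the stopping time $\theta = \inf\{x > 0: W + Y(x) \ge 0\}$ is an almost surely finite stopping time with respect to the filtration generated by $Y$ (after conditioning on the value of $W$), and that at time $\theta$ we have $W + Y(\theta) = 0$ by continuity. Thus on $\{\theta = x\}$ (in the appropriate regular-conditional-probability sense, since $\theta$ has a density), the process $u \mapsto W + Y(x + u)$ starts from $0$, and conditionally it is again a Brownian motion with drift $\alpha$ and diffusivity $\sqrt{2\sigma^2}$, independent of $\mathcal F_\theta$, by the strong Markov property.

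The key step is then to analyze $\sup_{0 \le s \le \theta + u}\{W + Y(s)\}$ and take positive parts. For $s \le \theta$ we have $W + Y(s) \le 0$ by definition of $\theta$ (this is where almost-sure negativity of $W$ and continuity are used: before the first hitting time of $0$, the process stays $\le 0$). Hence
\[
\Bigl[\sup_{0 \le s \le \theta + u}\{W + Y(s)\}\Bigr]^+ = \Bigl[\sup_{\theta \le s \le \theta + u}\{W + Y(s)\}\Bigr]^+ = \sup_{0 \le v \le u}\{W + Y(\theta + v)\},
\]
where the last supremum is automatically nonnegative because its value at $v = 0$ is $W + Y(\theta) = 0$, so the positive part is redundant. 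Writing $\wt Y(v) = W + Y(\theta + v)$, conditionally on $\{\theta = x\}$ the process $\wt Y$ is a Brownian motion with drift $\alpha$ and diffusivity $\sqrt{2\sigma^2}$ started at $0$, so $\{\sup_{0 \le v \le u} \wt Y(v)\}_{u \ge 0}$ has law $\mathcal D_\alpha^\sigma$ by the definition of $\mathcal D_\alpha^\sigma$ given just before the lemma. This proves \eqref{condD}, and \eqref{uncondD} follows by integrating \eqref{condD} against the law of $\theta$, since the right-hand side $\mathcal D_\alpha^\sigma(\aabullet)$ does not depend on $x$.

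The main obstacle, and the only place requiring genuine care, is making the conditioning on $\{\theta = x\}$ rigorous, since this is a null event. I would handle it by invoking the strong Markov property in its standard form: the pair $(\theta, \{Y(\theta + v) - Y(\theta)\}_{v \ge 0})$ is such that the second coordinate is independent of $\mathcal F_\theta$ (hence of $\theta$ and of $W$) and distributed as a Brownian motion with drift $\alpha$ and diffusivity $\sqrt{2\sigma^2}$. Combined with $Y(\theta) = -W$, this gives that $\{W + Y(\theta + v)\}_{v \ge 0} = \{Y(\theta + v) - Y(\theta)\}_{v \ge 0}$ is independent of $\theta$ with the stated law, which is exactly the content of \eqref{condD} interpreted via a regular conditional distribution. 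A small additional point is to note $\theta > 0$ almost surely (so the ``$\inf$ over $x > 0$'' and behavior near $0$ cause no issue), which holds since $W < 0$ strictly and $Y$ is continuous with $Y(0) = 0$.
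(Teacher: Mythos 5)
Your proposal is correct and follows essentially the same route as the paper's proof: both reduce the claim to the strong Markov property at $\theta$, using that $Y(\theta)=-W$ and that the supremum over $[0,\theta]$ contributes nothing after taking positive parts, and then integrate over the law of $\theta$ for the unconditional statement. No substantive differences.
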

\begin{proof}

Let $A\in\mathcal B(C(\R_{\ge0}))$ and  $\theta>0$.  Below, notice that  $Y(\theta)=-W$. Then, noting that $\theta$ is a stopping time with respect to the filtration $\mathcal{F}_y=\sigma\big(W,\{Y(x)\}_{x\in[0,y]}\big)$, we use the strong Markov property to restart at time $\theta$. 
	\begin{equation}\label{Oh2}
	\begin{aligned}
	&\quad\; P\Big(\Big[\sup_{0\leq s\leq \theta+u} W+Y(s)\Big]^+_{u\in[0,\infty)}\in A \,\Big|\,\theta=x\Big)\\
	&=P\Big( \Bigl[\,\sup_{\theta\leq s\leq \theta+u} W+Y(s)\Bigr]_{u\in[0,\infty)} \in A \,\Big|\,\theta=x\Big)\\
	&=P\Big( \Bigl[\,\sup_{0\leq s\leq u} Y(\theta+s)- Y(\theta)\Bigr]_{u\in[0,\infty)} \in A \,\Big|\,\theta=x\Big)\\
	&=P\Big( \Bigl[\,\sup_{0 \leq s\leq u} Y(s)\Bigr]_{u\in[0,\infty)} \in A\Big)=\mathcal{D}_{\alpha}^\sigma(A).
	\end{aligned}
	\end{equation}
	The first equality came from 
	\[  \sup_{s\in[0,\theta]}\{W+Y(s)\}  =  0 \le \sup_{s\in[\theta\!, \, \theta+u]}\{W+Y(s)\}\quad \text{for all } u\geq0.
	\] 
	The claim of \eqref{condD} has now been verified. Equation \eqref{uncondD} follows because  
	\begin{equation*}
	\begin{aligned}	&\quad \;P\Big(\Big[\sup_{0\leq s\leq \theta+u} W+Y(s)\Big]^+_{u\in[0,\infty)}\in A\Big) \\	&= \int P\Big(\Big[\sup_{0\leq s\leq \theta+u} W+Y(s)\Big]^+_{u\in[0,\infty)}\in A \,\Big|\,\theta=x\Big)\, d\theta(x).  	\end{aligned}    \qedhere 	
 \end{equation*}
\end{proof}

\begin{corollary} \label{cor:discrete_restart}
	Let $\sigma > 0$ $\alpha_N=\sigma^2 2^{-N} $. Then for all $i\in \Z$ and  $x>0$,  
	\begin{equation}\label{eq4}
\Pp^\sigma\big(\big[\shdif^{N}_{\dir_i}\big]^{\tau^{N}_{\dir_i}}\in \aabullet\,\,\big|\,\tau^{N}_{\dir_i}=x\big)= \mathcal{D}^\sigma_{\alpha_N}(\aabullet) .
	\end{equation}
\end{corollary}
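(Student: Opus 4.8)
The plan is to reduce Corollary \ref{cor:discrete_restart} to a direct application of Lemma \ref{lem:WBSM}. The key observation is that the difference profile $\shdif^{N}_{\dir_i} = G_{\dir_i} - G_{\dir_{i-1}}$ can be realized, via Proposition \ref{prop:SH_cons}\ref{itm:SH_dist} and Lemma \ref{DRcont}, in the form ``negative random starting value plus weighted Brownian motion'' on the positive half-line. Concretely, by the finite-dimensional description in Proposition \ref{prop:SH_cons}\ref{itm:SH_dist}, $(G_{\dir_{i-1}}, G_{\dir_i}) \deq (B, D(Z,B))$ where $B, Z$ are independent two-sided Brownian motions with diffusivity $\sigma$ and drifts $\sigma^2 \dir_{i-1}$ and $\sigma^2 \dir_i$. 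Applying Lemma \ref{DRcont} with the pair $(Z,B)$, for $x > 0$ we get
\[
\shdif^{N}_{\dir_i}(x) = D(Z,B)(x) - B(x) = \Bigl(\sup_{0 \le u \le x}\{Z(0,u) - B(0,u)\} - \sup_{-\infty < u \le 0}\{Z(0,u) - B(0,u)\}\Bigr)^+.
\]
Writing $W = -\sup_{-\infty < u \le 0}\{Z(0,u) - B(0,u)\}$ and $Y(x) = Z(0,x) - B(0,x) = Z(x) - B(x)$ for $x \ge 0$, we have $\shdif^{N}_{\dir_i}(x) = (\,\sup_{0 \le u \le x} W + Y(u)\,)^+$, exactly the object appearing in Lemma \ref{lem:WBSM}. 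Here $Y$ is a Brownian motion on $\R_{\ge 0}$ with diffusivity $\sqrt{2}\sigma = \sqrt{2\sigma^2}$ and drift $\sigma^2(\dir_i - \dir_{i-1}) = \sigma^2 2^{-N} = \alpha_N$, and by the independence of $B|_{(-\infty,0]}$ from $(B|_{[0,\infty)}, Z|_{[0,\infty)})$ (using that $B,Z$ are two-sided BMs), $W$ is independent of $Y$. By Lemma \ref{lemma:sup of BM with drift} (cf.\ the computation in the proof of Theorem \ref{thm:SH_inc_dist}), $W$ is $-\Exp(\sigma^2 2^{-N}/\sigma^2) = -\Exp(2^{-N})$-distributed, hence almost surely negative, as Lemma \ref{lem:WBSM} requires.

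First I would make this identification precise, checking that under $\Pp^\sigma$ the joint law of $(\tau^{N}_{\dir_i}, \shdif^{N}_{\dir_i})$ agrees with the joint law of $(\theta, (\sup_{0 \le s \le \theta + \abullet} W + Y(s))^+)$ in the notation of Lemma \ref{lem:WBSM}. The only point requiring attention is that the stopping time $\tau^{N}_{\dir_i} = \inf\{x > 0 : \shdif^{N}_{\dir_i}(x) > 0\}$ matches $\theta = \inf\{x > 0: W + Y(x) \ge 0\}$; these coincide almost surely because $\shdif^{N}_{\dir_i}(x) > 0$ exactly when the running supremum of $W + Y$ has become positive, and $W+Y$ is continuous with $W+Y(0) = W < 0$, so the first strictly-positive time of the supremum equals the first hitting time of $0$ by $W+Y$. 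Then \eqref{condD} of Lemma \ref{lem:WBSM} applied with $\alpha = \alpha_N$ directly yields
\[
\Pp^\sigma\bigl([\shdif^{N}_{\dir_i}]^{\tau^{N}_{\dir_i}} \in \aabullet \,\big|\, \tau^{N}_{\dir_i} = x\bigr) = \D^\sigma_{\alpha_N}(\aabullet),
\]
which is \eqref{eq4}.

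I do not expect a serious obstacle here; this corollary is essentially a dictionary translation between the queuing representation of the SH increments and the abstract setup of Lemma \ref{lem:WBSM}. The one place to be slightly careful is the joint independence structure: one must verify that when we restrict $B$ and $Z$ to $[0,\infty)$ the ``past supremum'' $W$ (which depends only on $B, Z$ restricted to $(-\infty, 0]$) is genuinely independent of the future path $Y(x) = Z(x) - B(x)$, $x \ge 0$, and of the running-supremum increments after time $\theta$. This follows from the definition of a two-sided Brownian motion in the notation section (the restrictions to $[0,\infty)$ and $(-\infty,0]$ are independent) together with the Markov property of $Y$ at the stopping time $\theta$, which is exactly how Lemma \ref{lem:WBSM} is set up. A secondary bookkeeping point is matching conventions: Lemma \ref{lem:WBSM} has diffusivity $\sqrt{2\sigma^2}$ for $Y$ while the SH has diffusivity $\sigma$ for each $G_\dir$; since $Y = Z - B$ is a difference of two independent diffusivity-$\sigma$ Brownian motions, its diffusivity is $\sqrt{2}\sigma = \sqrt{2\sigma^2}$, so the conventions align and the drift $\alpha_N = \sigma^2 2^{-N}$ comes out correctly.
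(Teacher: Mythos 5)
Your proposal is correct and follows essentially the same route as the paper's proof: both reduce the claim to Lemma \ref{lem:WBSM} by using Proposition \ref{prop:SH_cons}\ref{itm:SH_dist} and Lemma \ref{DRcont} to write $\shdif^{N}_{\dir_i}$ as $\bigl[\sup_{0\le u\le x} W+Y(u)\bigr]^+$ with $W=-\sup_{-\infty<u\le 0}\{Z(u)-B(u)\}$ negative and independent of the drift-$\alpha_N$, diffusivity-$\sqrt{2\sigma^2}$ Brownian motion $Y$ on $\R_{\ge 0}$, and by matching $\tau^N_{\dir_i}$ with the hitting time $\theta$. The extra details you supply (the exponential law of $W$, the explicit identification of the stopping times) are correct and only make explicit what the paper leaves implicit.
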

\begin{proof}
	From the definition of the finite-dimensional marginals of the stationary horizon (Proposition \ref{prop:SH_cons}\ref{itm:SH_dist}), we have 
 \be \label{Gsigdef}
 (G^\sigma_{\dir_{i - 1}}, G^{\sigma}_{\dir_i}) \deq (Z^1,D(Z^2,Z^1)),
 \ee
 where $Z^1,Z^2$ are independent two-sided Brownian motions, each with diffusivity $\sigma$ and drifts $\sigma^2 \dir_{i - 1},\sigma^2 \dir_i$, respectively. Then, $Y := Z^2 - Z^1$ is a Brownian motion with diffusivity $\sqrt{2\sigma^2}$ and drift $\sigma^2(\dir_i - \dir_{i - 1}) = \sigma^2 2^{-N} $. Set $\wt \shdif^N(y) = D(Z^2,Z^1)(y) - Z^1(y)$. By Lemma \ref{DRcont}, 
 \be \label{Jnrep}
 \begin{aligned}
\wt \shdif^N(y) = \Bigl(\sup_{0 \le x \le y}\{Y(x)\} - \sup_{-\infty < x \le 0}\{Y(x)\} \Bigr)^+ 
 = \Bigl[\sup_{0 \le x \le y} W +  Y(x)\Bigr],
 \end{aligned}
 \ee
 where $W = - \sup_{-\infty < x \le 0}\{Y(x)\}$ is an almost surely negative random variable, independent of $\{Y(x):x \ge 0\}$.
Define
	\begin{equation}\label{thetaN}
	\theta^N = \inf\{x > 0: \wt J^{N}(x) > 0 \}= \inf\{x > 0:  Y(x) \ge 0 \}.
	\end{equation}
	Hence, now $(\shdif^{N}_{\dir_i},\tau^{N}_{\dir_i})\deq (\wt{J}^{N},\theta^N)$, and the corollary now follows from Lemma \ref{lem:WBSM}.
\end{proof}

\noindent For $\dir\in \R$ let  
\begin{equation}\label{eq9}
\tau_\dir=\inf\{x\ge 0:\shdif_\dir(x) > 0\}. 
\end{equation}
The connection with the discrete counterpart in \eqref{taui} is 
\be\label{tau45}  
\tau^N_{\dir_i}= \min\{ \tau_\dir:  \dir\in(\dir_{i-1}, \dir_i]\}.  
\ee
On the space  $\R_{\ge0}\times \R$ define the random  point measure and its  mean measure  
\begin{equation}\label{SHpp}  
	\SHpp=\sum_{(\tau_\dir,\dir):\tau_{\dir}<\infty} \delta_{(\tau_\dir,\dir)}
	\quad\text{ and } \quad 
	\lambda_{\SHpp}^\sigma(\aabullet):=
	\E^\sigma[ \tspb{\SHpp}(\aabullet)\tspb]. 
\end{equation}
The point process $\SHpp$ records the jump  directions $\dir$  and the points $\tau_\dir$ where $G_\dir$ and  $G_{\dir-}$ separate on $\R_{\ge0}$.   Theorem \ref{thm:SH_sticky_thm} ensures that $\SHpp$ and $\lambda_{\SHpp}$ are locally finite, $\Pp^\sigma$-almost surely. 
It will cause no confusion to use the same  symbol $\Gamma$ to denote the random set  \[ 
	\SHpp=\{(\tau_\dir,\dir): \dir\in\R, \tau_{\dir}<\infty\} . 
\] 
Then also $\lambda^\sigma_{\SHpp}(\aabullet)=
	\E^\sigma(\tspb|\SHpp\cap \aabullet|\tspb)$ where $| \aabullet |$ denotes cardinality.  
The counterparts for the approximating process are 
\begin{equation} \label{SHppdis}
\SHpp^{(N)}=\{(\tau^{N}_{\dir_i},\dir_i):i\in \Z, \tau^{N}_{\dir_i}<\infty\}
\quad\text{and}\quad 
\lambda_{\SHpp}^{(N)}(\aabullet):=\mathbb{E}^\sigma(|\SHpp^{(N)}\cap  \aabullet|),
\end{equation}
where, at the risk of slightly abusing notation, we have dropped the dependence of $\sigma$ in the notation for $\lambda_{\SHpp}^{(N)}$.

The dyadic partition in \eqref{Hd} imposes a certain monotonicity as $N$ increases: $\tau_\xi$ values can be added but not removed. The $\xi$-coordinates that are not dyadic rationals  move as the partition refines.  So we have 
\be\label{tau67} 
\{  \tau^{N}_{\dir_i} : (\tau^{N}_{\dir_i},\dir_i) \in \SHpp^{(N)}\} 
\subset 
\{  \tau^{N+1}_{\dir_i} : (\tau^{N+1}_{\dir_i},\dir_i) \in \SHpp^{(N+1)}\}
\subset 
\{  \tau_{\dir} : (\tau_{\dir},\dir) \in \SHpp\}.  
\ee


\begin{lemma}\label{lm:ac}
	For $\sigma > 0$, the measure $\lambda_{\SHpp}^\sigma$ and Lebesgue measure $m$ are mutually absolutely continuous on $\R_{>0}\times\R$.  
	The Radon-Nikodym derivative is given by 
	\be\label{RN128} 
	\f{d\lambda_{\SHpp}^\sigma}{dm}(\tau,\dir) = \f{\sigma}{\sqrt{\pi \tau}} \qquad \text{for } \ (\tau,\dir)\in\R_{>0}\times\R .
	\ee
\end{lemma}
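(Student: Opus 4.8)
The plan is to compute the mean measure $\lambda_{\SHpp}^\sigma$ by first handling the dyadic approximations $\lambda_{\SHpp}^{(N)}$ from \eqref{SHppdis} and then passing to the limit using the monotonicity \eqref{tau67}. For a fixed interval $[\dir_0,\dir_0+\dir]$ and a Borel set $I\subseteq\R_{>0}$, Theorem \ref{thm:SH_jump_process} already tells us that the expected number of $\dir$-jumps of $\dir\mapsto G_\dir(x,x+y)$ in $[\dir_0,\dir_0+\dir]$ equals $2\dir\sqrt{\sigma^2 y/\pi}$; the point of the present lemma is to resolve where on the $\tau$-axis these jumps land. First I would fix $N$ and note that by Corollary \ref{cor:discrete_restart} and translation/scaling invariance, for each $i$ the pair $(\tau^N_{\dir_i}, \text{post-}\tau\text{ profile})$ has an explicit law: $\tau^N_{\dir_i} = S_0^+(\dir_{i-1},\dir_i)$, whose tail $\Pp^\sigma(S_0^+(\dir_{i-1},\dir_i)\ge y)$ is given by \eqref{split_prob} with the drift gap $\dir=2^{-N}$. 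Differentiating that tail in $y$ gives the density of $\tau^N_{\dir_i}$ on $(0,\infty)$, and summing over the $2^N\dir$ indices $i$ with $\dir_i\in(\dir_0,\dir_0+\dir]$ yields $\lambda_{\SHpp}^{(N)}(I\times[\dir_0,\dir_0+\dir])$. The key asymptotic is that as $N\to\infty$ (drift gap $\to 0$), $2^N$ times the density of $S_0^+$ at a fixed $\tau>0$ converges to $\dfrac{\sigma}{\sqrt{\pi\tau}}$; this is the heart of the computation and follows from a Taylor expansion of \eqref{split_prob} in the drift gap.

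More concretely, write $p_N(y) := \Pp^\sigma(S_0^+(\dir_{i-1},\dir_i)\ge y)$ with gap $\theta_N = 2^{-N}$, so from \eqref{split_prob}
\[
p_N(y) = (2+\theta_N^2\sigma^2 y)\Phi\!\Bigl(-\theta_N\sqrt{\tfrac{\sigma^2 y}{2}}\Bigr) - \theta_N\sqrt{\tfrac{\sigma^2 y}{\pi}}\, e^{-\theta_N^2\sigma^2 y/4}.
\]
Note $p_N(0)=1$ and $p_N(\infty)=0$, so $-p_N'(y)\,dy$ is the law of $S_0^+$ on $(0,\infty)$, with an atom at $0$ of mass $1-\lim_{y\to 0^+}p_N(y)$ — but since $p_N(0^+)=1$ there is no atom, consistent with $S_0^+>0$ a.s. Differentiating and expanding to leading order in $\theta_N$ gives $-p_N'(y) = \dfrac{\theta_N\sigma}{2\sqrt{\pi y}} + O(\theta_N^2)$ uniformly on compact subsets of $(0,\infty)$ (one checks the $\Phi$-terms contribute $\theta_N^2$ pieces while the Gaussian-density term contributes the stated leading term). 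Then
\[
\lambda_{\SHpp}^{(N)}(I\times[\dir_0,\dir_0+\dir]) = \#\{i: \dir_i\in(\dir_0,\dir_0+\dir]\}\cdot \int_I (-p_N'(y))\,dy \longrightarrow \dir\cdot\int_I \frac{\sigma}{\sqrt{\pi y}}\,dy,
\]
since the count is $2^N\dir + O(1)$ and it multiplies $\theta_N = 2^{-N}$. This identifies the limiting mean measure on product sets $I\times[\dir_0,\dir_0+\dir]$; by the $\pi$-$\lambda$ theorem this pins down $\lambda_{\SHpp}^\sigma$ on all of $\R_{>0}\times\R$ once the limit transition is justified.

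The limit transition itself is the step I expect to be the main obstacle. One must show $\lambda_{\SHpp}^{(N)}\to\lambda_{\SHpp}^\sigma$ in the appropriate sense — e.g. vague convergence, or monotone convergence of $\SHpp^{(N)}(I\times[\dir_0,\dir_0+\dir])$ to $\SHpp(I\times[\dir_0,\dir_0+\dir])$ followed by monotone convergence of expectations. The inclusions \eqref{tau67} give that the set of $\tau$-values only grows, but the bookkeeping is subtle because a jump point $\dir\in\Xi_{G^\sigma}$ is first "seen" at the scale $N$ for which $\dir$ and $\dir-2^{-N}$ straddle it, and at coarser scales the point $\tau_{\dir_i}^N = \min\{\tau_\eta:\eta\in(\dir_{i-1},\dir_i]\}$ may correspond to a different (the largest) jump in that dyadic block; one needs that, as $N\to\infty$, every genuine $(\tau_\dir,\dir)\in\SHpp$ with $\dir\in[\dir_0,\dir_0+\dir]$ and $\tau_\dir\in I$ is eventually captured and no spurious points survive. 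Theorem \ref{thm:SH_sticky_thm}\ref{itm:SH_all_jump} (local finiteness of $\Xi_{G^\sigma}(x,y)$) and \ref{itm:SH_set_contain} give the local finiteness and nesting needed to make this rigorous, and the finiteness of the limiting mean measure on bounded sets (from the explicit formula, $\int_I \sigma/\sqrt{\pi y}\,dy<\infty$ when $\inf I>0$) rules out escape of mass. Once monotone convergence $\SHpp^{(N)}(\aabullet)\uparrow\SHpp(\aabullet)$ is established on product sets bounded away from $\tau=0$, the monotone convergence theorem upgrades it to $\lambda_{\SHpp}^{(N)}\to\lambda_{\SHpp}^\sigma$, and \eqref{RN128} follows. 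Mutual absolute continuity with Lebesgue measure $m$ is then immediate since the Radon–Nikodym derivative $\sigma/\sqrt{\pi\tau}$ is strictly positive and finite on $\R_{>0}\times\R$.
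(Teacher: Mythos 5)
Your route is workable but genuinely different from, and considerably heavier than, the paper's argument. The paper never touches the dyadic approximation here: it observes that $(\tau_\eta,\eta)\in\SHpp\cap\big((\tau,\tau+\delta]\times(\dir,\dir+\ve]\big)$ exactly when $\eta$ is a jump direction of $\dir\mapsto G_\dir(\tau+\delta)$ but not of $\dir\mapsto G_\dir(\tau)$, which by the nesting $\XiSH(0,\tau)\subseteq\XiSH(0,\tau+\delta)$ (Theorem \ref{thm:SH_sticky_thm}\ref{itm:SH_set_contain}) makes $\lambda^\sigma_{\SHpp}$ of the rectangle equal to the \emph{difference} of the two expected jump counts from Theorem \ref{thm:SH_jump_process}, namely $\f{2\sigma\ve}{\sqrt\pi}(\sqrt{\tau+\delta}-\sqrt\tau)=\int_\dir^{\dir+\ve}\int_\tau^{\tau+\delta}\f{\sigma}{\sqrt{\pi x}}\,dx\,d\alpha$. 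That is the whole proof; the $\tau$-localization you set out to extract from the discretization is already encoded in the $y$-dependence of the jump-count formula. Your approach, by contrast, requires the law of $\tau^N_{\dir_i}$, a Taylor expansion in the gap $2^{-N}$, and the limit transition $\lambda^{(N)}_{\SHpp}\to\lambda^\sigma_{\SHpp}$, which you correctly identify as the delicate step. That transition is not circular and can be completed exactly as the paper does later in the proof of Lemma \ref{lm:SH5}: almost-sure convergence $|\SHpp^{(N)}\cap R|\to|\SHpp\cap R|$ via the events $\mathcal U^k_N$, dominated by $|\SHpp\cap((a,b)\times(c-1,d))|$ using \eqref{tau67}, followed by dominated convergence of expectations. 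So your plan can be made rigorous, but it reproves by hand what Theorem \ref{thm:SH_jump_process} plus monotonicity already give for free.

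One concrete error: your leading-order density is off by a factor of $2$. Expanding \eqref{split_prob} with gap $\theta_N$ gives $p_N(y)=1-\f{2\theta_N\sigma\sqrt y}{\sqrt\pi}+O(\theta_N^2)$, hence $-p_N'(y)=\f{\theta_N\sigma}{\sqrt{\pi y}}+O(\theta_N^2)$, not $\f{\theta_N\sigma}{2\sqrt{\pi y}}$. With your stated expansion, the product with the index count $2^N\dir$ would converge to $\f{\dir}{2}\int_I\f{\sigma}{\sqrt{\pi y}}\,dy$, half of the limit you then assert (and half of the correct answer). The corrected expansion restores consistency both with your claimed limit and with the total-mass check $\int_0^y\f{\sigma}{\sqrt{\pi x}}\,dx=2\sqrt{\sigma^2y/\pi}$ against Theorem \ref{thm:SH_jump_process}.
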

\begin{proof}
Let $\dir \in \R,y > 0$, and $\ve > 0$. Theorem \ref{thm:SH_jump_process} states that 
\[
\Ee^\sigma[\# \{\eta \in (\dir,\dir + \ve]: G_{\eta- }(y) < G_{\eta +}(y)\}] = 2 \ve \sqrt{\f{\sigma^2 y}{\pi}}.
\]

Further, observe that, for $\dir \in \R,\tau > 0$, and $\delta,\ve > 0$. 
\begin{multline*}
 \; \lambda^\sigma_{\SHpp}\big((\tau,\tau+\delta]\times (\dir,\dir+\ve]\big) \\
= \Ee^\sigma[\# \{\eta \in (\dir,\dir + \ve]: G_{\eta- }(\tau + \delta) < G_{\eta +}(\tau + \delta)\}] \\- \Ee^\sigma[\# \{\eta \in (\dir,\dir + \ve]: G_{\eta- }(\tau) < G_{\eta +}(\tau)\}]
\end{multline*}
because $(\tau_\eta,\eta) \in \SHpp \cap(\tau,\tau+\delta]\times (\dir,\dir+\ve]$ if and only if $\eta \in \cap (\dir,\dir + \ve]$, $\tau_\eta < \infty$, and the splitting of $G_{\eta-}$ and $G_{\eta}$ occurred in $(0,\tau + \delta]$, but not in $[0,\tau]$. 
Hence, 
\[ 
\lambda^\sigma_{\SHpp}\big((\tau,\tau+\delta]\times (\dir,\dir+\ve]\big) = \f{2 \sigma \ve}{\sqrt \pi}(\sqrt{\tau + \delta} - \sqrt{\tau}) = \int_{\dir}^{\dir + \ve} \int_{\tau}^{\tau + \delta} \f{\sigma}{\sqrt{\pi x}}\,dx \,d\alpha.  \qedhere
 \] 
\end{proof}


By \eqref{RN128},  $\lambda_{\SHpp}$ does not have a finite marginal on the $\dir$-component, as expected since the set $\XiSH$ is dense, $\Pp^\sigma$ almost surely. Below we do Palm conditioning on the pair $(\tau_{\dir},\dir)\in\R_{>0}\times\XiSH$ and not on the jump directions $\dir\in\XiSH$ alone.

\begin{lemma}\label{lm:SH5}  Let  $A \subseteq C(\R_{\ge0})$ be a Borel set. Then for any open rectangle $R=(a,b)\times(c,d)\subseteq \R_{\ge0}\times\R$, 
	\begin{equation}\label{Omr2}
	\mathbb{E^\sigma}\Bigl[ \; \sum_{(\tau,\,\dir)\,\in\,  \SHpp}\ind_{A}([\shdif_\dir]^\tau)\tspb\ind_{R}(\tau,\dir)\Bigr] =\lambda^\sigma_{\SHpp}(R)\tspb\mathcal{D}^\sigma(A). 
	\end{equation}
\end{lemma}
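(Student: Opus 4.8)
\textbf{Proof plan for Lemma \ref{lm:SH5}.}
The plan is to first establish the identity for the discrete approximating point processes $\SHpp^{(N)}$ and then pass to the limit $N\to\infty$ using the monotone exhaustion \eqref{tau67}. For fixed $N$, the point process $\SHpp^{(N)}$ consists of the points $(\tau^N_{\dir_i},\dir_i)$ with $\dir_i = i2^{-N}$, and Corollary \ref{cor:discrete_restart} tells us that, conditionally on $\tau^N_{\dir_i} = x$, the restarted function $[\shdif^N_{\dir_i}]^{\tau^N_{\dir_i}}$ has law $\mathcal D^\sigma_{\alpha_N}$ with $\alpha_N = \sigma^2 2^{-N}$. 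The key structural input is that this conditional law does \emph{not} depend on $i$, so that summing over $i$ with the indicator $\ind_R(\tau^N_{\dir_i},\dir_i)$ and taking expectations gives
\[
\mathbb E^\sigma\Bigl[\sum_{(\tau,\dir)\in \SHpp^{(N)}} \ind_A\bigl([\shdif^N_\dir]^\tau\bigr)\ind_R(\tau,\dir)\Bigr] = \lambda^{(N)}_{\SHpp}(R)\,\mathcal D^\sigma_{\alpha_N}(A),
\]
where I condition on the value of $\tau^N_{\dir_i}$ inside each term, peel off the factor $\mathcal D^\sigma_{\alpha_N}(A)$, and recombine. (Here I should be slightly careful that the conditional distribution statement in Corollary \ref{cor:discrete_restart} integrated against the law of $\tau^N_{\dir_i}$ restricted to $(a,b)$ and the deterministic membership $\dir_i\in(c,d)$ yields exactly this product; this is a routine application of the disintegration formula, analogous to \eqref{uncondD}.)

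Next I would take $N\to\infty$. On the left side, \eqref{tau67} says the $\tau$-coordinates of $\SHpp^{(N)}$ increase to those of $\SHpp$; more precisely, for each pair $(\tau_\dir,\dir)\in\SHpp$ with $\dir$ not a dyadic rational at level $N$, there is a unique dyadic approximant, and by Theorem \ref{thm:SH_sticky_thm}\ref{itm:SH_stick} the restarted function $[\shdif^N_{\dir_i}]^{\tau^N_{\dir_i}}$ converges (uniformly on compacts) to $[\shdif_{\dir^\star}]^{\tau_{\dir^\star}}$ for the appropriate $\dir^\star\in(\dir_{i-1},\dir_i]\cap\XiSH$, by the sticky/step-function structure recorded in Theorem \ref{thm:SH_sticky_thm}\ref{itm:SH_split_Xi}. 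So after passing to a subsequence the sums on the left converge; to justify the interchange of limit and expectation I would use the local finiteness of $\SHpp$ (Theorem \ref{thm:SH_sticky_thm}) together with the integrability furnished by Lemma \ref{lm:ac}, which gives a uniform-in-$N$ bound $\lambda^{(N)}_{\SHpp}(R)\le \lambda^\sigma_{\SHpp}(\bar R)<\infty$ on a slightly enlarged rectangle, allowing dominated convergence. On the right side, $\lambda^{(N)}_{\SHpp}(R)\to\lambda^\sigma_{\SHpp}(R)$ by Lemma \ref{lm:ac} and \eqref{tau67} (the Radon--Nikodym density $\sigma/\sqrt{\pi\tau}$ is continuous, so the discrete mean measures converge to the absolutely continuous limit), while $\mathcal D^\sigma_{\alpha_N}(A)\to\mathcal D^\sigma(A)=\mathcal D^\sigma_0(A)$ since $\alpha_N\searrow 0$ and the running maximum of a Brownian motion with drift $\alpha$ converges weakly (indeed, pathwise on compacts) to the driftless running maximum as $\alpha\to0$. (A slight technical point: weak convergence $\mathcal D^\sigma_{\alpha_N}\Rightarrow\mathcal D^\sigma$ only immediately gives the identity for $A$ with $\mathcal D^\sigma(\partial A)=0$; I would then note that both sides of \eqref{Omr2}, as functions of $A$, are finite measures in $A$ — the left side because $\sum\ind_R(\tau,\dir)$ is integrable — and a measure is determined by its values on a convergence-determining class, so the identity extends to all Borel $A$.)

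The main obstacle I expect is the limiting argument on the left-hand side: matching up the dyadic point $(\tau^N_{\dir_i},\dir_i)$ with the correct limiting pair $(\tau_{\dir^\star},\dir^\star)\in\SHpp$ and showing that the restarted functions converge, rather than merely that the $\tau$-values do. This is where Theorem \ref{thm:SH_sticky_thm}\ref{itm:SH_stick}, \ref{itm:SH_all_jump} and \ref{itm:SH_split_Xi} do the real work: within the rectangle $R$ there are only finitely many jump directions contributing (local finiteness), each dyadic approximant eventually isolates exactly one true jump in $(\dir_{i-1},\dir_i]$, and the sticky structure forces $\tau^N_{\dir_i}=\tau_{\dir^\star}$ exactly for $N$ large (not just in the limit), so that the restarted functions $[\shdif^N_{\dir_i}]^{\tau^N_{\dir_i}}$ and $[\shdif_{\dir^\star}]^{\tau_{\dir^\star}}$ coincide on $[0,\delta]$ for some $\delta>0$ and converge uniformly on compacts as $N\to\infty$. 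Once this matching is in place, the rest is bookkeeping with Lemmas \ref{lm:ac} and \ref{cor:discrete_restart}.
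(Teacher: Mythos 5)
Your proposal is correct and follows essentially the same route as the paper: the discrete identity via Corollary \ref{cor:discrete_restart}, the matching of dyadic approximants to the true jump points via the sticky structure of Theorem \ref{thm:SH_sticky_thm}, dominated convergence using the monotone inclusion \eqref{tau67}, and a continuity-set reduction to handle general Borel $A$. The only loose spot is your parenthetical justification of $\lambda^{(N)}_{\SHpp}(R)\to\lambda^\sigma_{\SHpp}(R)$ (continuity of the limiting Radon--Nikodym density does not by itself yield convergence of the discrete mean measures); the paper instead deduces this from the almost-sure convergence $|\SHpp^{(N)}\cap R|\to|\SHpp\cap R|$ combined with the pathwise domination $|\SHpp^{(N)}\cap R|\le|\SHpp\cap((a,b)\times(c-1,d))|$ coming from \eqref{tau67}, both of which you already have available in your plan.
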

\begin{proof}  It suffices to prove \eqref{Omr2} for  continuity sets $A$ of the distribution $\mathcal{D}^\sigma$ of the type $ A = \{f\in C(\R_{\ge0}): f|_{[0,k]} \in A_k\}$
for $k > 0$ and Borel $A_k \subseteq C[0,k]$.  
Such sets form a $\pi$-system that generates the Borel $\sigma$-algebra of $C(\R_{\ge0})$.

We prove a discrete counterpart of \eqref{Omr2} for $\shdif^{N}$.	Condition on $\tau^N_{\dir_i}$ and  use \eqref{eq4}:  
	\begin{equation}\label{Omr3}
	\begin{aligned}
	&\E^\sigma\Big(\sum_{(\tau^N_{\dir_i}\!,\,\dir_i)\in R\cap\SHpp^{(N)}} \ind_A\big([\shdif^{N}_{\dir_i}]^{\tau^{N}_{\dir_i}}\big)\Big)
=\E^\sigma\Big(\sum_{\dir_i\in(c,d)} \ind_A\big([\shdif^{N}_{\dir_i}]^{\tau^{N}_{\dir_i}}\big)\ind_{(a,b)}\big(\tau^{N}_{\dir_i}\big)\Big)\\
	&=\sum_{\dir_i\in(c,d)} \E^\sigma\bigg(\ind_{(a,b)}\big(\tau^{N}_{\dir_i}\big)\, \E^\sigma\Big[\ind_A\big([\shdif^{N}_{\dir_i}]^{\tau^{N}_{\dir_i}}\big)\tspb\Big|\tspb\tau^N_{\dir_i}\Big]\bigg) \\&
	\stackrel{\eqref{eq4}}{=}\sum_{\dir_i\in(c,d)}\Pp\big(\tau^{N}_{\dir_i}\in (a,b)\big)\mathcal{D}_{\alpha_N}^\sigma(A)
 =\mathcal{D}_{\alpha_N}^\sigma(A)\tspb \lambda_{\SHpp}^{(N)}(R).
	\end{aligned}
	\end{equation}
	
	To conclude the proof, we check that \eqref{Omr2} arises as we  let $N\to\infty$ in the first and last member of the string of equalities above.  $\mathcal{D}^\sigma_{\alpha_N}(A)\to\mathcal{D}^\sigma(A)$ by the  continuity of $\alpha\mapsto \mathcal{D}^\sigma_\alpha$ in the weak topology of measures on $C(\R)$ with respect to uniform convergence on compact sets, and the assumption that $A$ is a continuity set.

	As an intermediate step, we verify that  $\forall k > 0$, 
	$\ind_{\mathcal{U}_N^k}\to 1$ almost surely 
	for the events 
 \newpage 
	\begin{align}\label{Uset}  
	    \mathcal{U}_N^k&=\bigl\{\,|\SHpp^{(N)}\cap R| = |\SHpp\cap R| \text{ \,and for every $(\tau,\dir)\in \SHpp\cap R$  there is a unique} \\
	    &\qquad 
	    \text{  $(\tau^N_{\dir_i},\dir_i)\in \SHpp^{(N)}\cap R$
	    such that  $[\shdif^{N}_{\dir_i}]^{\tau^{N}_{\dir_i}}\big|_{[0,k]}=[\shdif_{\dir}]^{\tau_{\dir}}\big|_{[0,k]}$}\bigr\}. \nonumber
	\end{align}
	$\Pp^\sigma$ almost surely,  $\SHpp\cap R$ is finite, and none of its points lie on the boundary of $R$ (Proposition \ref{prop:SH_cons}\ref{itm:SH_cont} and Theorem \ref{thm:SH_jump_process}).  For any such realization, the condition \eqref{Uset} holds when (i) all points $(\tau_\xi, \xi)\in\SHpp\cap R$ lie in distinct rectangles $(a,b)\times(\xi_{i-1}, \xi_i]\subset(a,b)\times(c,d)$, (ii) when no point $(\tau^N_{\dir_i},\dir_i)\in \SHpp^{(N)}\cap R$ is generated by a point $(\tau_\xi, \xi)\in\SHpp$ outside $R$, and (iii) when $N$ is large enough so that for the unique $i$ with $\dir_i < \dir \le \dir_{i + 1}$, $G_{\dir-}(x) = G_{\dir_i}(x)$ and $G_{\dir+}(x) = G_{\dir_{i + 1}}(x)$ for all $x \in [0,\tau_\dir + k]$. By Theorem \ref{thm:SH_sticky_thm}\ref{itm:SH_all_jump}, this happens for all the finitely many $(\tau,\dir) \in \Gamma \cap R$ when the mesh $2^{-N}$ is fine enough.  Thus, for each  $k > 0$, almost every realization lies eventually in $\mathcal{U}_N^k$.

\smallskip

We prove that 
	$\lambda_{\SHpp}^{(N)}(R)\to 		\lambda_{\SHpp}(R)$.   The paragraph above gave $|\SHpp^{(N)}\cap R| \to  |\SHpp\cap R|$ almost surely. We also have  $|\SHpp^{(N)}\cap R| \le   |\SHpp\cap ((a,b)\times(c-1,d))|$ because \eqref{tau45} shows that each  point $(\tau^N_{\dir_i},\dir_i)$ that is not matched to a unique point $(\tau_\dir, \dir)\in\SHpp\cap  R$ must be generated by some point $(\tau_\dir, \dir)\in \SHpp\cap  ((a,b)\times(c-1,d))$.  The limit $\lambda_{\SHpp}^{(N)}(R)\to 		\lambda^\sigma_{\SHpp}(R)$ comes now from dominated convergence.

	\smallskip
	
	It remains  to show that 
\[ 
	\E^\sigma\Big(\sum_{(\tau^N_{\dir_i}\!,\,\dir_i)\in R\cap\SHpp^{(N)}} \ind_{ A}([\shdif^{N}_{\dir_i}]^{\tau^{N}_{\dir_i}})\Big)
	\underset{N\to\infty}\longrightarrow
	\E^\sigma\Big(\sum_{(\tau_{\dir},\dir)\in R\cap\SHpp} \ind_{ A}([\shdif_\dir]^{\tau_\dir})\Big).
\] 
This follows by choosing $k > 0$ so that $A$ depends only on the domain $[0,k]$. Then, the difference in absolute values in the display below vanishes on $\mathcal{U}_N^k$.
\[
\begin{aligned}
	    &\lim_{N\to\infty} \E^\sigma\bigg[ \,\Big|\sum_{(\tau^N_{\dir_i}\!,\,\dir_i)\in R\cap\SHpp^{(N)}} \ind_{ A}\bigl([\shdif^{N}_{\dir_i}]^{\tau^{N}_{\dir_i}}\bigr)-\sum_{(\tau_{\dir},\dir)\in R\cap\SHpp} \ind_{ A}\bigl([\shdif_{\dir}]^{\tau_{\dir}}\bigr)\Big|
	    \cdot (\ind_{\mathcal{U}_N^k}+\ind_{(\mathcal{U}_N^k)^c})\bigg]    \\
&\qquad \qquad \le  \lim_{N\to\infty}  
2\tspb \E^\sigma\bigl[ \tspb |\SHpp\cap ((a,b)\times(c-1,d))| \cdot \ind_{(\mathcal{U}^k_N)^c} \bigr] =0,	 
	    \end{aligned}
	    \]
     and the last equality follows by dominated convergence.
\end{proof}

\smallskip 

To describe the distribution of $[\shdif_\xi]^{\tau_\xi}$, we augment the point measure  $\SHpp$ of \eqref{SHpp} to a point measure on    the space  $\R_{\ge0}\times \R\times C(\R_{\ge0})$:   
\begin{equation}\label{SHHpp}  
	\SHHpp=\sum_{(\tau_\dir,\,\dir)\, \in \,\SHpp} \delta_{(\tau_\dir,\,\dir,\, [\shdif_\xi]^{{\scaleobj{1.5}{\tau}}_{\!\!\xi}})}. 
\end{equation}
The \textit{Palm kernel} of $[\shdif_\xi]^{\tau_\xi}$  with respect to $\SHpp$ is the stochastic kernel $Q^\sigma$  from  $\R_{\ge0}\times \R$ into  $C(\R_{\ge0})$ that satisfies the following identity:  for every bounded Borel function $\Psi$ on $\R_{\ge0}\times \R\times C(\R_{\ge0})$ that is supported on  $B\times C(\R_{\ge0})$ for some bounded Borel set $B\subset \R_{\ge0}\times \R$,  
\begin{equation}\label{Opalm}\begin{aligned} 
\E^\sigma \sum_{(\tau_\dir,\,\dir)\, \in \, B\tspa \cap\tspa\SHpp} \Psi\bigl(\tau_\dir,\,\dir,\, [\shdif_\xi]^{\tau_\xi}\bigr)     
&\; = \; \E^\sigma\!\!\int\limits_{\R_{\ge0}\times \R\times C(\R_{\ge0})} \!\! \Psi(\tau, \xi, h) \,\SHHpp(d\tau, d\xi, dh) \\
&= \;  \int\limits_{\R_{\ge0}\times\mathbb{R}} \lambda^\sigma_{\SHpp}(d\tau, d\xi)
\int\limits_{C(\R_{\ge0})}  Q^\sigma(\tau, \xi, dh)\, \Psi(\tau, \xi, h)  .
\end{aligned} \end{equation}
For more background on the general theory of Palm conditioning, we refer the reader to \cite{Kallenberg-book}. The first equality above is a restatement of the definition of $\SHHpp$ and included to give greater clarity to the next theorem.  
The key result of this section is this  characterization of $Q^\sigma$.

\begin{theorem}\label{thm:Lac}
For 
Lebesgue-almost every $(\tau, \xi)$,  $Q^\sigma(\tau, \xi, \aabullet)=\mathcal{D^\sigma}(\aabullet)$,  the distribution of the running maximum of a Brownian motion with diffusivity $\sqrt{2\sigma^2}$. 
 \end{theorem}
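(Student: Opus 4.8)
The plan is to deduce Theorem~\ref{thm:Lac} from Lemma~\ref{lm:SH5} by a standard uniqueness-of-Palm-kernel argument. First I would recall that, by the defining identity \eqref{Opalm}, the Palm kernel $Q^\sigma$ is the $\lambda^\sigma_{\SHpp}$-a.e.\ unique stochastic kernel for which, for every bounded Borel $\Psi$ supported on $B\times C(\R_{\ge0})$ with $B\subset\R_{\ge0}\times\R$ bounded,
\begin{equation*}
\E^\sigma\!\!\sum_{(\tau_\dir,\,\dir)\,\in\,B\cap\SHpp}\Psi\bigl(\tau_\dir,\dir,[\shdif_\dir]^{\tau_\dir}\bigr)=\int_{\R_{\ge0}\times\R}\lambda^\sigma_{\SHpp}(d\tau,d\dir)\int_{C(\R_{\ge0})}Q^\sigma(\tau,\dir,dh)\,\Psi(\tau,\dir,h).
\end{equation*}
It therefore suffices to verify that the product kernel $(\tau,\dir)\mapsto\mathcal D^\sigma(\aabullet)$, which does not depend on $(\tau,\dir)$, satisfies the same identity; then $\lambda^\sigma_{\SHpp}$-a.e.\ equality of the two kernels follows, and by Lemma~\ref{lm:ac} the set where they disagree has Lebesgue measure zero as well.

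Next I would reduce the class of test functions. By a monotone-class argument it is enough to check the identity for $\Psi(\tau,\dir,h)=\ind_R(\tau,\dir)\ind_A(h)$, where $R=(a,b)\times(c,d)$ is an open rectangle and $A$ ranges over a generating $\pi$-system of Borel subsets of $C(\R_{\ge0})$, for instance the cylinder continuity sets $A=\{f: f|_{[0,k]}\in A_k\}$ used in the proof of Lemma~\ref{lm:SH5}. For such a $\Psi$ the left-hand side of \eqref{Opalm} is exactly
\begin{equation*}
\E^\sigma\Bigl[\sum_{(\tau,\dir)\in\SHpp}\ind_A\bigl([\shdif_\dir]^\tau\bigr)\ind_R(\tau,\dir)\Bigr],
\end{equation*}
which Lemma~\ref{lm:SH5} evaluates to $\lambda^\sigma_{\SHpp}(R)\,\mathcal D^\sigma(A)$. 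On the other hand the right-hand side with the candidate kernel $\mathcal D^\sigma$ is $\int_R\lambda^\sigma_{\SHpp}(d\tau,d\dir)\,\mathcal D^\sigma(A)=\lambda^\sigma_{\SHpp}(R)\,\mathcal D^\sigma(A)$, since $\mathcal D^\sigma$ is constant in $(\tau,\dir)$. The two sides agree, so the product kernel satisfies the defining relation on the $\pi$-system, hence (by the monotone class theorem and linearity) for all bounded $\Psi$.

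The only remaining point is the passage from equality on rectangles $R$ and cylinder sets $A$ to the a.e.\ identity of kernels: this is exactly the uniqueness statement for Palm kernels relative to a $\sigma$-finite mean measure $\lambda^\sigma_{\SHpp}$, which is locally finite by Theorem~\ref{thm:SH_jump_process} and mutually absolutely continuous with Lebesgue measure by Lemma~\ref{lm:ac}; I would cite \cite{Kallenberg-book} for the general disintegration/uniqueness theorem. I expect the main (and really the only) subtlety to be bookkeeping: making sure the class of sets $R\times A$ is measure-determining for kernels into the Polish space $C(\R_{\ge0})$ and that $\lambda^\sigma_{\SHpp}$ is genuinely $\sigma$-finite on $\R_{\ge0}\times\R$, after which everything reduces to Lemma~\ref{lm:SH5}. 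In particular, no new probabilistic input is needed beyond what is already proved; Theorem~\ref{thm:Lac} is essentially a repackaging of Lemma~\ref{lm:SH5} via the uniqueness of the Palm kernel.
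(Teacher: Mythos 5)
Your proposal is correct and is essentially identical to the paper's own (much terser) proof: the paper also takes $\Psi(\tau,\xi,h)=\ind_R(\tau,\xi)\ind_A(h)$ in \eqref{Opalm}, identifies the left-hand side with that of \eqref{Omr2} in Lemma \ref{lm:SH5}, and invokes Lemma \ref{lm:ac} to pass from $\lambda_{\SHpp}$-a.e.\ to Lebesgue-a.e. The extra bookkeeping you describe (monotone class, uniqueness of the Palm disintegration) is the implicit content the paper leaves to the reader.
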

 
 \begin{proof} This  comes from Lemma \ref{lm:SH5}: 
  take  $\Psi(\tau, \xi, h)=\ind_R(\tau, \xi)\ind_A(h)$  in \eqref{Opalm} and note that the left-hand side of \eqref{Omr2} is exactly the left-hand side of \eqref{Opalm}. Lemma \ref{lm:ac} turns   $\lambda_{\SHpp}$-almost everywhere into Lebesgue-almost everywhere. 
 \end{proof}

\noindent Denote the set of directions $\dir$ for which $G_\dir$ and $G_{\dir-}$ separate on $\R_{\ge0}$   by  
\[
	\begin{aligned}
		\XiSH'&=\{\dir\in\R:\tau_\dir < \infty\}.
	\end{aligned}
\]
We show in Corollary \ref{cor:dcLR} that $\Pp^\sigma(\XiSH' = \XiSH) = 1$ (recall the definition of $\XiSH$ in  \eqref{XiSHdef}), but it is not immediately obvious that these should be the same set. The key to proving equality is the following theorem.

\begin{theorem}\label{thm:Lac5}
Let $A\subseteq C(\R_{\ge0})$ be a Borel  set such that  $\mathcal{D}(A)=0$. Then
\begin{equation}\label{Oeq}
    \Pp^\sigma\big(\exists \dir\in \XiSH' :  [\shdif_\dir]^{\tau_\dir}\in A\big)=0.
\end{equation}
\end{theorem}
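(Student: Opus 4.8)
The plan is to deduce Theorem~\ref{thm:Lac5} from Theorem~\ref{thm:Lac} (equivalently, from Lemma~\ref{lm:SH5}) by a countable-covering argument, reducing the statement about the dense set $\XiSH'$ to the locally finite point processes $\SHpp \cap R$ over a countable family of rectangles. First I would recall that $\XiSH' = \bigcup_{\dir\in\R:\,\tau_\dir<\infty}\{\dir\}$, and that by Theorem~\ref{thm:SH_sticky_thm}\ref{itm:SH_all_jump} together with the definition of $\SHpp$, for each bounded rectangle $R = (a,b)\times(c,d)$ the set $\SHpp\cap R$ is $\Pp^\sigma$-almost surely finite, so in particular $\{(\tau_\dir,\dir):\dir\in\XiSH'\} = \SHpp = \bigcup_{R} (\SHpp\cap R)$ where the union runs over, say, all rectangles with rational corners. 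Hence the event in \eqref{Oeq} is contained in $\bigcup_R \{\exists (\tau_\dir,\dir)\in \SHpp\cap R:\ [\shdif_\dir]^{\tau_\dir}\in A\}$, a countable union, so it suffices to show each of these events has probability zero.

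Fix such a rectangle $R$. The key estimate is that, by \eqref{Omr2} of Lemma~\ref{lm:SH5} applied to the set $A$ with $\mathcal D^\sigma(A)=0$,
\[
\E^\sigma\Bigl[\sum_{(\tau_\dir,\dir)\in\SHpp}\ind_A([\shdif_\dir]^{\tau_\dir})\,\ind_R(\tau_\dir,\dir)\Bigr] = \lambda^\sigma_{\SHpp}(R)\,\mathcal D^\sigma(A) = 0.
\]
Since the summand is nonnegative, the random sum $\sum_{(\tau_\dir,\dir)\in\SHpp\cap R}\ind_A([\shdif_\dir]^{\tau_\dir})$ is $\Pp^\sigma$-almost surely zero; as $\SHpp\cap R$ is almost surely finite this forces $[\shdif_\dir]^{\tau_\dir}\notin A$ for every $(\tau_\dir,\dir)\in\SHpp\cap R$, almost surely. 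Taking the union over the countably many rational rectangles $R$ then gives $\Pp^\sigma(\exists\dir\in\XiSH':[\shdif_\dir]^{\tau_\dir}\in A)=0$, which is exactly \eqref{Oeq}. Strictly speaking one should note that Lemma~\ref{lm:SH5} is stated for open rectangles, which is fine here, and that the finiteness of $\lambda^\sigma_{\SHpp}(R)$ follows from Lemma~\ref{lm:ac}, so the product $\lambda^\sigma_{\SHpp}(R)\mathcal D^\sigma(A)$ is genuinely $0$ and not an indeterminate form.

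The only subtlety — and the step I expect to require the most care — is the passage from "the expected sum of indicators is zero" to "no jump direction in $R$ lands its restarted profile in $A$." This is immediate once one knows $\SHpp\cap R$ is a finite set almost surely (so the sum has finitely many terms and vanishing expectation of a nonnegative integer-valued random variable implies it is a.s.\ $0$), which is guaranteed by Theorem~\ref{thm:SH_jump_process} and Theorem~\ref{thm:SH_sticky_thm}\ref{itm:SH_all_jump}. One should also make sure the map $(\tau,\dir,G)\mapsto [\shdif_\dir]^{\tau_\dir}$ is measurable so that the events above are genuine events; this follows from the right-continuity of $G$ in $\dir$ and continuity in the spatial variable, as already used implicitly in the construction of $\SHHpp$ in \eqref{SHHpp}. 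No new ideas beyond Lemma~\ref{lm:SH5} are needed; the whole content is organizing the dense set $\XiSH'$ as a countable union of locally finite pieces.
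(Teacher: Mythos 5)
Your proposal is correct and follows essentially the same route as the paper: both reduce the statement to Lemma \ref{lm:SH5} with $\mathcal D^\sigma(A)=0$, bound the probability of existence by the expected count of jump points $(\tau_\dir,\dir)$ in bounded rectangles whose restarted profile lands in $A$, and exhaust $\R_{>0}\times\R$ by countably many such rectangles (the paper uses the increasing sequence $R_N=(0,N)\times(-N,N)$ and a single limit where you use rational rectangles and a union bound, a purely cosmetic difference). Your worry about needing finiteness of $\SHpp\cap R$ is unnecessary: a nonnegative (even countable) sum with zero expectation is almost surely zero term by term.
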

\begin{proof}  Let   $R_N=(0,N)\times(-N,N)$.  Since $\xi\in\XiSH'$ means that $\tau_\xi<\infty$, we have 
 \begin{align*} 
    &\Pp^\sigma\big(\exists \dir\in \XiSH :  [\shdif_\dir]^{\tau_\dir}\in A\big)
    =  \lim_{N\to\infty}  \Pp^\sigma\big(\exists \dir\in \XiSH : (\tau_\dir,\dir)\in R_N,  [\shdif_\dir]^{\tau_\dir}\in A\big) \\
& \qquad\quad    \leq  \lim_{N\to\infty}  \mathbb{E^\sigma}\sum_{(\tau,\,\dir)\in  \SHpp}\ind_{A}([\shdif_\dir]^\tau) \tspb\ind_{R_N}(\tau, \xi)   
\overset{\eqref{Omr2}}=  \lim_{N\to\infty}  \lambda_{\SHpp}(R_N) \tspb \mathcal{D}^\sigma(A) =0.
\qquad \ \qedhere  \end{align*} 
\end{proof}

\begin{corollary} \label{cor:dcLR}
For all $\sigma > 0$, $\Pp^\sigma(\XiSH' = \XiSH) = 1$. Furthermore, 
\[
\Pp^\sigma(\forall \dir \in \XiSH\;\; \lim_{x \to \pm \infty} \shdif_\dir(x) = \pm \infty) = 1.
\]
\end{corollary}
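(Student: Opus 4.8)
\textbf{Proof proposal for Corollary \ref{cor:dcLR}.}
The plan is to prove the two assertions in sequence, using Theorem \ref{thm:Lac5} for the first and the scaling/reflection symmetries together with the jump-process structure for the second. For the equality $\XiSH' = \XiSH$ almost surely: by definition $\XiSH' \subseteq \XiSH$, since $\tau_\dir < \infty$ forces $\shdif_\dir$ to be nonzero somewhere on $\R_{\ge 0}$, hence $G_{\dir-}(x,y) \ne G_{\dir+}(x,y)$ for some $x < y$. For the reverse inclusion, I would argue by contradiction: suppose with positive probability there exists $\dir \in \XiSH \setminus \XiSH'$, i.e. $\shdif_\dir$ vanishes identically on $\R_{\ge 0}$ (so $\tau_\dir = \infty$) but $\shdif_\dir$ is not identically zero on $\R$. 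By Theorem \ref{thm:SH_sticky_thm}\ref{itm:SH_dist_grows} (applied with the roles of left and right flipped, or directly via monotonicity of increments), such a $\dir$ must have $\shdif_\dir$ nonzero somewhere on $(-\infty,0]$. Running the entire construction of Section \ref{sec:SHPalm} for the reflected process---which is legitimate because of the reflection invariance in Theorem \ref{thm:SH_dist_invar}\ref{itm:SH_reflinv}---the backward separation point $\bck{\tau_\dir}$ plays the role of $\tau_\dir$, and the backward analogue of $\XiSH'$ contains all $\dir$ with $\shdif_\dir$ nonzero on $(-\infty,0]$. So it suffices to show: almost surely, every $\dir \in \XiSH$ has $\tau_\dir < \infty$ (equivalently, $G_\dir$ and $G_{\dir -}$ do separate to the right of the origin). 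Here I would use that $\dir \in \XiSH$ means $\shdif_\dir(x_0,y_0) > 0$ for some $x_0 < y_0$; if $\tau_\dir = \infty$ then $\shdif_\dir \equiv 0$ on $[0,\infty)$, so the support of the increment lies entirely in $(-\infty, 0)$. But then apply the reflection symmetry again: under reflection, this $\dir$ (more precisely $-\dir$ for the reflected SH) would be a direction that is in the reflected $\XiSH$ but not in the reflected $\XiSH'$, and by Theorem \ref{thm:Lac5} applied to the reflected process with $A = \{f \equiv 0\} = \{f : \sup_{[0,k]} f = 0 \ \forall k\}$, which has $\mathcal{D}$-measure zero, this happens with probability zero. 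Combining both directions gives $\XiSH' = \XiSH$ almost surely.

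For the second assertion, I would fix $\dir \in \XiSH$ and show $\lim_{x \to +\infty} \shdif_\dir(x) = +\infty$ (the limit as $x \to -\infty$ follows by the reflection symmetry, and the sign is forced because $\shdif_\dir$ is nondecreasing on $[0,\infty)$ and nonincreasing on $(-\infty,0]$ once we pass to $-\shdif_\dir(-\cdot)$, using Theorem \ref{thm:SH_sticky_thm}\ref{itm:SH_dist_grows}). Since we have just shown $\dir \in \XiSH'$, i.e. $\tau_\dir < \infty$, the restarted function $[\shdif_\dir]^{\tau_\dir}$ is, by Theorem \ref{thm:Lac5} applied to the event $A_M = \{f \in C(\R_{\ge 0}) : \sup_{x \ge 0} f(x) \le M\}$, almost surely \emph{not} in $A_M$ for each fixed $M$ --- because a running maximum of a nonzero-drift-free Brownian motion (distribution $\mathcal{D}^\sigma$) is almost surely unbounded, so $\mathcal{D}^\sigma(A_M) = 0$, and Theorem \ref{thm:Lac5} then gives $\Pp^\sigma(\exists \dir \in \XiSH' : [\shdif_\dir]^{\tau_\dir} \in A_M) = 0$. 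Taking a countable intersection over $M \in \N$, almost surely every $\dir \in \XiSH'$ has $\sup_{x \ge 0} \shdif_\dir(x) = +\infty$, and since $\shdif_\dir$ is nondecreasing on $[0,\infty)$ this supremum is the limit. Applying the reflected version of this statement handles $x \to -\infty$, and since $\XiSH' = \XiSH$ almost surely we are done.

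The main obstacle I anticipate is the bookkeeping needed to legitimately invoke the ``reflected'' version of Theorem \ref{thm:Lac5}: one must verify that running the point-process construction of Section \ref{sec:SHPalm} on the process $\{G^\sigma_{(-\dir)-}(-\aabullet)\}_{\dir}$ is consistent with the reflection invariance in Theorem \ref{thm:SH_dist_invar}\ref{itm:SH_reflinv}, and that the backward separation times $\bck{\tau_\dir}$ and the set $\bck{\XiSH}'$ correspond under this symmetry to $\tau$ and $\XiSH'$ for the reflected SH. This requires being careful that the measure $\Pp^\sigma$ is genuinely invariant under $G \mapsto \{G_{(-\dir)-}(-\aabullet)\}$, which is exactly Theorem \ref{thm:SH_dist_invar}\ref{itm:SH_reflinv}, and that the left limit in $\dir$ does not cause measurability issues (handled by Proposition \ref{prop:SH_cons}\ref{itm:SH_cont}). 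The remaining steps --- that $\mathcal{D}^\sigma(A_M) = 0$ and $\mathcal{D}^\sigma(\{f \equiv 0\}) = 0$ --- are immediate from the almost-sure unboundedness of Brownian running maxima, and the monotonicity inputs are all already recorded in Theorem \ref{thm:SH_sticky_thm}.
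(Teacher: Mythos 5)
Your second paragraph — deducing $\sup_{x\ge 0}\shdif_\dir(x)=+\infty$ for every $\dir\in\XiSH'$ by applying Theorem \ref{thm:Lac5} to $A_M=\{f:\sup f\le M\}$, intersecting over $M\in\N$, and converting the supremum to a limit via monotonicity — is exactly the paper's argument for that half of the statement, and the passage to $x\to-\infty$ via reflection invariance is also the same.

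The gap is in your proof that $\XiSH\subseteq\XiSH'$. You claim that a direction $\dir$ with $\shdif_\dir$ supported only in $(-\infty,0)$ becomes, under reflection, a direction that is in the reflected $\XiSH$ but \emph{not} in the reflected $\XiSH'$. The opposite is true: writing $\wt\shdif_\dir(x)=-\shdif_{-\dir}(-x)$ for the reflected jump process, a profile supported in $(-\infty,0)$ maps to one supported in $(0,\infty)$, so the reflected direction has a \emph{finite} forward separation time and lies in the reflected $\XiSH'$. Its restarted profile $[\wt\shdif]^{\wt\tau}$ is then not identically zero (it is strictly positive just past the separation time, by definition of $\tau$), so Theorem \ref{thm:Lac5} with $A=\{f\equiv 0\}$ is vacuous here — that choice of $A$ can never occur for any $\dir\in\XiSH'$, deterministically. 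More fundamentally, reflection invariance only tells you that the event ``some profile is supported purely on the left'' has the same probability as ``some profile is supported purely on the right''; it cannot by itself show either probability is zero, and trying to rule out the reflected configuration as a zero-probability event is circular, since that is precisely the inclusion being proved.

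The repair uses \emph{shift} invariance \eqref{H65} together with the unboundedness you establish in your second paragraph (so the two halves must be proved in the opposite order). If $\shdif_\dir\not\equiv 0$ on $(-\infty,0)$ but $\shdif_\dir\equiv 0$ on $[0,\infty)$, pick $m\in\Z_{<0}$ with $\shdif_\dir$ nonconstant on $[m,0]$ and consider $[\shdif_\dir]^m$. This shifted profile has a finite separation time on $\R_{\ge0}$ yet is eventually constant, hence bounded, on $[0,\infty)$. Since $[\shdif]^m\deq\shdif$, the probability that some direction exhibits this is bounded by the probability that some $\dir\in\XiSH'$ has $[\shdif_\dir]^{\tau_\dir}$ bounded, which is zero by your $A_M$ argument; a union over $m\in\Z_{<0}$ finishes the inclusion.
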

\begin{proof}
By Theorem \ref{thm:Lac5} and the associated fact for the running max of a Brownian motion, 
\be \label{XIshto+inf}
\Pp^\sigma(\forall \dir \in \XiSH', \lim_{x \to +\infty} \shdif_\dir(x) = +\infty) = 1.
\ee
Recall the notation $[J_\dir]^m(x) = J_\dir(m + x) - J_\dir(m)$ \eqref{shiftnot}. By definition, 
\[
\XiSH' = \{\dir \in \R: \shdif_\dir(x) \neq 0 \text{ for some }x >0\} \subseteq \{\dir \in \R: \shdif_\dir(x) \neq 0 \text{ for some }x \in \R\} = \XiSH.
\]
 We show that, $\Pp^\sigma$-almost surely, if $\shdif_\dir(x) \neq 0$ for some $x < 0$, then $\shdif_\dir(x) \neq 0$ for some $x > 0$. If not, then there exist $\dir \in \R$ and $m \in \Z_{<0}$ such that $[\shdif_\dir]^{m}|_{[0,\infty)} \neq 0$, but $[\shdif_\dir]^m|_{[-m,\infty)}$ is constant. In particular, $[\shdif_\dir]^{m}|_{[0,\infty)}$ is bounded. Let $\tau^m_\dir = \inf \{x > 0: [\shdif_\dir]^m(x) > 0\}$.  Then, $[\shdif_\dir]^{m}|_{[0,\infty)} \neq 0$ iff  $\tau^m_\dir<\infty$, and we have
\be \label{XiLR}
\begin{aligned}
    & \Pp^\sigma\bigl(\XiSH \neq \XiSH'\bigr) 
    \le \sum_{m \in \Z_{<0}}\Pp^\sigma\bigl(\exists \dir \in \R:  \; \tau^m_\dir<\infty  
    \text{ but }  [\shdif_\dir]^m|_{[0,\infty)} \text{ is bounded} \bigr) = 0. 
\end{aligned}
\ee
The probability equals zero by \eqref{XIshto+inf} because by shift invariance \eqref{H65}, $[\shdif]^m\deq \shdif$.
To finish, \eqref{XIshto+inf} proves the limits for $x \to +\infty$. The limits as $x \to -\infty$ then follow from \eqref{XIshto+inf} and the reflection invariance of Theorem \ref{thm:SH_dist_invar}\ref{itm:SH_reflinv}.
\end{proof}

Let $\nu_f$ denote the Lebesgue-Stieltjes measure of a non-decreasing function $f$ on $\R$.   Denote the  support of $\nu_f$ by $\text{supp}(\nu_f)$. The Hausdorff dimension of a set $A$ is denoted by $\dim_H(A)$ (Recall Section \ref{sec:notat}).

\begin{corollary} \label{cor:SHHaus1/2} Consider the Lebesgue-Stieltjes measure $\nu_{\shdif_\dir}$ for $\dir\in \XiSH$ on the entire real line.  Then, we have 
\begin{equation}\label{eq8}
	\Pp^\sigma\big(\forall \dir\in \XiSH :  \dim_H \big(\text{\rm supp}(\nu_{\shdif_\dir})\big)=1/2\big)=1.
\end{equation}
\end{corollary}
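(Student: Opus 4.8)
The plan is to combine Theorem~\ref{thm:Lac5} with the known fact about the Hausdorff dimension of the support of Brownian local time (equivalently, of the zero set of Brownian motion, or of the set of record times of a Brownian path). Recall that by Corollary~\ref{cor:dcLR}, $\Pp^\sigma$-almost surely $\XiSH' = \XiSH$ and for every $\dir \in \XiSH$ we have $\lim_{x\to\pm\infty}\shdif_\dir(x) = \pm\infty$; in particular each $\shdif_\dir$ is a nondecreasing continuous function that is eventually strictly increasing on both sides of $0$, so $\operatorname{supp}(\nu_{\shdif_\dir})$ is unbounded in both directions. Since Hausdorff dimension is stable under countable unions and $\operatorname{supp}(\nu_{\shdif_\dir}) = \bigcup_{m\in\Z}\bigl(\operatorname{supp}(\nu_{\shdif_\dir})\cap[m,m+1]\bigr)$, it suffices to control the dimension of the restriction of the support to a fixed compact interval, which by the shift invariance \eqref{H65} reduces (up to relabeling the base point) to controlling $\operatorname{supp}(\nu_{[\shdif_\dir]^{\tau_\dir}})$, the support of the Lebesgue-Stieltjes measure of the restarted function on $\R_{\ge 0}$.

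First I would fix the target: let $\mathcal{R}$ denote the set of functions $f\in C(\R_{\ge0})$ that are nondecreasing, nonconstant, with $f(0)=0$, such that $\dim_H(\operatorname{supp}(\nu_f)\cap[0,k]) = 1/2$ for all large $k$. The classical fact (see, e.g., the theory of Brownian local time / the zero set of Brownian motion) is that $\mathcal{D}^\sigma(\mathcal{R}^c) = 0$: the running maximum $M(s) = \sup_{0\le u\le s}\sqrt{2\sigma^2}X(u)$ has the property that $\operatorname{supp}(\nu_M)$ is exactly the (closure of the) set of record times of $X$, whose Hausdorff dimension is almost surely $1/2$. (Equivalently, $M$ is, up to a time change that does not affect dimension, a Brownian local time at zero, whose support is the zero set of a Brownian motion, of dimension $1/2$.) Next I would verify that $\mathcal{R}^c$ (or rather the complement of the relevant dimension-$1/2$ event) is a Borel subset of $C(\R_{\ge0})$; this is a standard measurability check, since $\dim_H$ of the support is a Borel function of $f$ via the usual covering-number characterization restricted to dyadic intervals. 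Then Theorem~\ref{thm:Lac5}, applied with $A = \mathcal{R}^c$, gives $\Pp^\sigma\bigl(\exists \dir\in\XiSH' : [\shdif_\dir]^{\tau_\dir}\in \mathcal{R}^c\bigr) = 0$, i.e.\ $\Pp^\sigma$-almost surely, for every $\dir\in\XiSH = \XiSH'$, the restarted profile $[\shdif_\dir]^{\tau_\dir}$ lies in $\mathcal{R}$, so $\dim_H(\operatorname{supp}(\nu_{[\shdif_\dir]^{\tau_\dir}})\cap[0,k]) = 1/2$ for all large $k$.

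Finally I would assemble the pieces. For $\dir\in\XiSH$, the measure $\nu_{\shdif_\dir}$ on $\R$ vanishes on $(-\bck{\tau_\dir^\sigma}, \tau_\dir^\sigma)$, a nondegenerate neighborhood of $0$ (Theorem~\ref{thm:SH_sticky_thm}), and to the right of $\tau_\dir^\sigma$ its support is the translate of $\operatorname{supp}(\nu_{[\shdif_\dir]^{\tau_\dir}})$, which has dimension $1/2$ inside every compact set by the previous paragraph; symmetrically to the left of $-\bck{\tau_\dir^\sigma}$, using the reflection invariance of Theorem~\ref{thm:SH_dist_invar}\ref{itm:SH_reflinv} to transfer the same statement to $x<0$. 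Since a countable union of sets each of dimension $1/2$ still has dimension $1/2$, and since each such set is nonempty (the support is unbounded on both sides by Corollary~\ref{cor:dcLR}), we conclude $\dim_H(\operatorname{supp}(\nu_{\shdif_\dir})) = 1/2$ simultaneously for all $\dir\in\XiSH$, on a single $\Pp^\sigma$-probability-one event. The main obstacle I anticipate is not the probabilistic input — Theorem~\ref{thm:Lac5} does all the heavy lifting by letting us pass from an almost-sure statement for a \emph{fixed} direction to a simultaneous statement over the dense random set $\XiSH$ — but rather (i) cleanly citing or supplying the classical computation that the support of the running maximum / Brownian local time has dimension exactly $1/2$ almost surely, and (ii) the bookkeeping needed to glue the right-of-$\tau_\dir$, left-of-$\bck{\tau_\dir}$, and countably-many-unit-intervals pieces together while checking the relevant events are genuinely Borel so that Theorem~\ref{thm:Lac5} applies.
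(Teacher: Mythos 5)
Your proof is correct and follows essentially the same route as the paper: a countable decomposition of the support, shift invariance \eqref{H65} to reduce to a single base point, and Theorem~\ref{thm:Lac5} applied to the zero-$\mathcal{D}^\sigma$-probability event that the support of the running maximum of Brownian motion has Hausdorff dimension other than $1/2$ (the paper cites \cite{morters_peres_2010} for this classical fact). The only cosmetic difference is that you handle the negative half-line via the reflection invariance of Theorem~\ref{thm:SH_dist_invar}, whereas the paper covers it by taking the union of the events for $\text{supp}(\nu_{\shdif_\dir})\cap[m,\infty)$ over $m\in\Z_{\le 0}$ and shifting.
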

\begin{proof}
	First, note that 
	\[ 
		\big\{\exists \dir\in \XiSH : \text{dim}_H\big(\text{supp}(\nu_{\shdif_\dir})\big)\neq \tfrac12\big\}\subseteq\!\bigcup_{m\in\Z_{\le0}}\!\!\big\{\exists \dir\in \XiSH : \text{dim}_H\big(\text{supp}(\nu_{\shdif_\dir})\cap[m,\infty)\big)\neq \tfrac12\big\}.
	\]  
By shift invariance \eqref{H65}, it is enough to take $m=0$ and show that 
 \[
 \Pp^\sigma\big(\exists \dir\in \XiSH :\text{dim}_H\big(\text{supp}(\nu_{\shdif_\dir})\cap[0,\infty)\big)\neq 1/2\big)=0.
 \]
 This last claim follows from  Theorem \ref{thm:Lac5} because the event in question has zero probability for the running maximum of Brownian motion 
 \cite[Theorem 4.24 and Exercise 4.12]{morters_peres_2010}. 
\end{proof}

\begin{remark}
Representation of the  difference of  Busemann functions   as the running maximum of random walk 
goes back to \cite{bala-busa-sepp-20}. It was used  in \cite{busa-ferr-20} to capture the local universality  of geodesics. 
The representation of the difference profile as the running maximum of  Brownian motion in the point-to-point setup emerges from the Pitman transform   \cite{Ganguly-Hegde-2021,Dauvergne-22}.  Theorem 1 and Corollary 2 in \cite{Ganguly-Hegde-2021} are   point-to-point analogues of  our Theorem \ref{thm:Lac} and Corollary \ref{cor:SHHaus1/2}. Their proof is different from ours. Although an analogue of the Pitman transform exists in the stationary case \cite[Section 3]{Busani-2021}, comparing the running maximum of a Brownian motion to the profile requires different tools in the two settings.  
\end{remark}

\subsection{Decoupling} \label{sec:decoup}  	By Corollary \ref{cor:dcLR},  whenever $\dir$ is a jump direction, the difference profile $x \mapsto J_\dir(x)$ for both positive and negative $x$ are nontrivial.  
We extend   Theorem \ref{thm:Lac} to show that    these two  difference profiles  are independent and equal in distribution.  
We spell out only the   modifications needed in  the arguments of the previous section.    For the difference profile on the left, define for $x\ge0$
\[ 
		\bck{\shdif}_{\xi}(x):=-\shdif_{\xi}(-x) 
\quad\text{and}\quad 
		\bck{\tau_\xi}:=\inf \{x >  0:\bck{\shdif}_{\xi}(x) > 0\}.
\] 
For $N\in \Z_{>0}$ and  $\xi_i$ as in \eqref{Hd}, the discrete approximations are 
\[ 
	\bck{\shdif}^N_{\xi_i}(x):=-\shdif^N_{\xi_i}(-x)  
	\quad\text{and}\quad 
	\bck{\tau_{\xi_i}}^N:=\inf\{x >  0:\bck{\shdif}^N_{\xi_i}(x) > 0\}.
\] 
The measures $\bck{\SHpp}$, $\lambda_{\bck{\SHpp}}$,  $\bck{\SHpp}^{(N)}$, and $\lambda_{\bck{\SHpp}}^{(N)}$ are defined  as in \eqref{SHpp} and \eqref{SHppdis}, but now with   $(\bck{\tau_\dir},\dir)$ and $(\bck{\tau_{\dir_i}},\dir_i)$. 
	Extend the measure $\SHHpp$ of \eqref{SHHpp} with a component for the left profile: 
\[
\SHHpp'=\sum_{(\bck{\tau_\dir},\,\dir)\, \in \,\bck{\SHpp}} \delta_{(\bck{\tau_\dir},\,\dir,\; [\shdif_\xi]^{{\scaleobj{1.5}{\tau}}_{\!\!\dir}}, \; [\bck{\shdif}_\xi]^{\bck{\scaleobj{1.5}{\tau}}_{\!\!\dir}}) }. 
\]
Since  $\tau_\dir < \infty$ if and only if $\bck{\tau_\dir} < \infty$ (Corollary \ref{cor:dcLR}),  it is immaterial whether we sum over  $({\tau}_\dir,\dir)$ or  $(\bck{\tau_\dir},\dir)$.  The latter is more convenient for the next calculations.

	 The \textit{Palm kernel} of $\big([\shdif_\xi]^{\tau_\xi},[\bck{\shdif}_\xi]^{\bck{\tau_\dir}}\big)$  with respect to $\bck{\SHpp}$ is the stochastic kernel $Q_2^\sigma$  from  $\R_{\ge0}\times \R$ into  $C(\R_{\ge0})\times C(\R_{\ge0})$ that satisfies the following identity:  for every bounded Borel function $\Psi$ on $\R_{\ge0}\times \R\times C(\R_{\ge0})\times C(\R_{\ge0})$ that is supported on  $B\times C(\R_{\ge0})\times C(\R_{\ge0})$ for some bounded Borel set $B\subset \R_{\ge0}\times \R$,  
	\begin{equation}\label{Opalm2}
	\begin{aligned} 
	&\quad \,\E^\sigma \Bigl[ \; \sum_{(\bck{\tau_\dir},\,\dir)\, \in \, B\tspa \cap\tspa\bck{\SHpp}} \Psi\bigl(\bck{\tau_\dir},\,\dir,\, [\shdif_\xi]^{\tau_\xi},[\bck{\shdif}_\xi]^{\bck{\tau_\dir}}\bigr)  \Bigr]  \\ 
			&= \int\limits_{\R_{\ge 0} \times \R} \lambda_{\bck{\SHpp}}(d\bck{\tau}, d\xi)
			\int\limits_{C(\R_{\ge0})\times C(\R_{\ge 0})}  Q_2^\sigma(\bck{\tau}, \xi, dh^1,dh^2)\, \Psi(\bck{\tau}, \xi, h^1,h^2).  
	\end{aligned} 
	\end{equation}

	\begin{theorem} \label{thm:indep_loc}
	For 
	Lebesgue-almost every $(\tau, \xi)$,  $Q_2^\sigma(\tau, \xi, \aabullet)=(\mathcal{D}^\sigma \otimes \mathcal{D}^\sigma)(\aabullet)$,   the product of the distribution of the running maximum of a Brownian motion with diffusivity $\sqrt{2\sigma^2}$. In particular, for any Borel set $A \subseteq C(\R_{\ge 0}) \times C(\R_{\ge 0})$ such that $(\D^\sigma \otimes \D^\sigma)(A) = 0$,
	\[
        \Pp^\sigma\bigl( \exists \dir \in \XiSH: \bigl([\shdif_\xi]^{\tau_\xi},[\bck{\shdif}_\xi]^{\bck{\tau_\dir}}\bigr)   \in A\bigr) = 0.	
	\]
\end{theorem}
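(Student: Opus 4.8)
\textbf{Proof proposal for Theorem \ref{thm:indep_loc}.} The plan is to mirror the argument for Theorem \ref{thm:Lac}, adding the left-profile component and establishing the conditional independence of the two restarted profiles. The central new ingredient is an augmented version of Corollary \ref{cor:discrete_restart}: conditionally on $\tau^N_{\dir_i} = x$ (equivalently $\bck{\tau^N_{\dir_i}} = x'$ for the corresponding left splitting point), the restarted right profile $[\shdif^N_{\dir_i}]^{\tau^N_{\dir_i}}$ and the restarted left profile $[\bck{\shdif}^N_{\dir_i}]^{\bck{\tau^N_{\dir_i}}}$ are independent, each with law $\mathcal D^\sigma_{\alpha_N}$. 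To see this, recall from \eqref{Gsigdef}--\eqref{Jnrep} that $\wt\shdif^N(y) = \bigl[\sup_{0\le x\le y} W + Y(x)\bigr]$ with $Y = Z^2 - Z^1$ a Brownian motion of diffusivity $\sqrt{2\sigma^2}$ and drift $\sigma^2 2^{-N}$, and $W = -\sup_{-\infty < x\le 0} Y(x)$. For negative arguments, $-\wt\shdif^N(-y) = \bigl(\sup_{-y\le x\le 0}\{Y(x)\} - \sup_{-\infty < x\le 0}\{Y(x)\}\bigr)^+$ for $y > 0$, which depends only on $\{Y(x): x\le 0\}$, while $[\shdif^N_{\dir_i}]^{\tau^N_{\dir_i}}$, by the strong Markov property applied at the stopping time $\tau^N_{\dir_i}$ (as in \eqref{Oh2}), depends only on the increments of $Y$ after $\tau^N_{\dir_i} \ge 0$. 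Since $\{Y(x): x\le 0\}$ and $\{Y(\tau^N_{\dir_i} + s) - Y(\tau^N_{\dir_i}): s\ge 0\}$ are independent (the latter by strong Markov, independent of $\mathcal F_{\tau^N_{\dir_i}} \supseteq \sigma(\{Y(x):x\le 0\})$), and since the left splitting point $\bck{\tau^N_{\dir_i}}$ is itself a function of $\{Y(x):x\le 0\}$, the restarted left profile is a function of $\{Y(x):x\le 0\}$ too. Meanwhile, a reflection argument (or the symmetry $\Rf$) shows that the restarted left profile has exactly law $\mathcal D^\sigma_{\alpha_N}$, just as in Lemma \ref{lem:WBSM}. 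This gives the discrete identity
\[
\Pp^\sigma\Bigl(\bigl([\shdif^N_{\dir_i}]^{\tau^N_{\dir_i}}, [\bck{\shdif}^N_{\dir_i}]^{\bck{\tau^N_{\dir_i}}}\bigr) \in \aabullet \,\Big|\, \tau^N_{\dir_i} = x\Bigr) = (\mathcal D^\sigma_{\alpha_N} \otimes \mathcal D^\sigma_{\alpha_N})(\aabullet).
\]

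Next I would run the summation-over-$\SHpp^{(N)}$ computation exactly as in \eqref{Omr3}, but now for the product of indicator functions $\ind_{A^1}([\shdif^N_{\dir_i}]^{\tau^N_{\dir_i}})\ind_{A^2}([\bck{\shdif}^N_{\dir_i}]^{\bck{\tau^N_{\dir_i}}})$ with $A^1, A^2$ continuity sets of $\mathcal D^\sigma$ depending only on $[0,k]$, obtaining the discrete analogue of \eqref{Omr2}:
\[
\E^\sigma\Bigl[\sum_{(\bck{\tau^N_{\dir_i}},\dir_i)\in R\cap\bck{\SHpp}^{(N)}} \ind_{A^1\times A^2}\bigl([\shdif^N_{\dir_i}]^{\tau^N_{\dir_i}}, [\bck{\shdif}^N_{\dir_i}]^{\bck{\tau^N_{\dir_i}}}\bigr)\Bigr] = (\mathcal D^\sigma_{\alpha_N}\otimes\mathcal D^\sigma_{\alpha_N})(A^1\times A^2)\,\lambda^{(N)}_{\bck{\SHpp}}(R).
\]
The passage to the limit $N\to\infty$ then follows the same three-part scheme as the proof of Lemma \ref{lm:SH5}: weak continuity of $\alpha\mapsto\mathcal D^\sigma_\alpha$ handles the drift, an enlarged good event $\mathcal U^k_N$ — now requiring that the matched discrete profile agrees with the continuum one on $[-k,k]$ (so that \emph{both} the left and right restarted profiles on $[0,k]$ are correctly captured) — shows the matching is eventually exact, and dominated convergence (bounding the discrete point count by $|\SHpp\cap((a,b)\times(c-1,d))|$ as before) gives both $\lambda^{(N)}_{\bck{\SHpp}}(R)\to\lambda^\sigma_{\bck{\SHpp}}(R)$ and convergence of the sums. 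This yields, for all open rectangles $R$ and continuity sets $A^1, A^2$,
\[
\E^\sigma\Bigl[\sum_{(\bck{\tau_\dir},\dir)\in\bck{\SHpp}} \ind_{A^1}([\shdif_\dir]^{\tau_\dir})\,\ind_{A^2}([\bck{\shdif}_\dir]^{\bck{\tau_\dir}})\,\ind_R(\bck{\tau_\dir},\dir)\Bigr] = \lambda^\sigma_{\bck{\SHpp}}(R)\,(\mathcal D^\sigma\otimes\mathcal D^\sigma)(A^1\times A^2).
\]
Inserting this into the defining identity \eqref{Opalm2} with $\Psi(\bck\tau,\xi,h^1,h^2) = \ind_R(\bck\tau,\xi)\ind_{A^1}(h^1)\ind_{A^2}(h^2)$, and using $\lambda_{\bck{\SHpp}} = \lambda_{\SHpp}$ (Lemma \ref{lm:ac} applies verbatim by reflection invariance, Theorem \ref{thm:SH_dist_invar}\ref{itm:SH_reflinv}) to upgrade $\lambda_{\bck{\SHpp}}$-a.e. to Lebesgue-a.e., identifies $Q_2^\sigma(\tau,\xi,\aabullet) = (\mathcal D^\sigma\otimes\mathcal D^\sigma)(\aabullet)$ on a $\pi$-system generating the Borel $\sigma$-algebra of $C(\R_{\ge0})\times C(\R_{\ge0})$, hence everywhere.

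Finally, for the ``in particular'' statement, the argument of Theorem \ref{thm:Lac5} carries over: if $(\mathcal D^\sigma\otimes\mathcal D^\sigma)(A) = 0$, then writing $R_M = (0,M)\times(-M,M)$ and using $\XiSH = \XiSH'$ (Corollary \ref{cor:dcLR}, so that $\tau_\dir<\infty$ and $\bck{\tau_\dir}<\infty$ for all $\dir\in\XiSH$),
\[
\Pp^\sigma\bigl(\exists\dir\in\XiSH: ([\shdif_\dir]^{\tau_\dir},[\bck{\shdif}_\dir]^{\bck{\tau_\dir}})\in A\bigr) \le \lim_{M\to\infty}\E^\sigma\!\!\!\sum_{(\bck{\tau_\dir},\dir)\in\bck\SHpp}\!\!\!\ind_A\bigl([\shdif_\dir]^{\tau_\dir},[\bck{\shdif}_\dir]^{\bck{\tau_\dir}}\bigr)\ind_{R_M}(\bck{\tau_\dir},\dir) = 0,
\]
since the expectation equals $\lambda^\sigma_{\bck\SHpp}(R_M)(\mathcal D^\sigma\otimes\mathcal D^\sigma)(A) = 0$ by the identity just proved (extending from product continuity sets to general Borel $A$ with $(\mathcal D^\sigma\otimes\mathcal D^\sigma)(A)=0$ by a monotone class / outer regularity argument). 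The main obstacle I anticipate is the careful bookkeeping in the enlarged good event $\mathcal U^k_N$: one must confirm that, for fine enough mesh, the discrete splitting directions match the continuum jump directions in a way that simultaneously controls the profile on $[-k,k]$, which requires Theorem \ref{thm:SH_sticky_thm}\ref{itm:SH_stick}--\ref{itm:SH_all_jump} on both sides of the origin; the independence input itself, isolated above, is conceptually the heart but technically short once the decomposition of $Y$ into its negative-time part and its post-$\tau$ increments is in place.
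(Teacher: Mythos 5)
Your proposal follows essentially the same route as the paper: the same decomposition of $Y$ into its $\F_-=\sigma(Y(x):x\le 0)$ part (which determines the left profile and $W$) and its post-$\tau$ increments, the conditional law $\D^\sigma_{\alpha_N}$ of the restarted right profile from Lemma \ref{lem:WBSM} with $W$ frozen, the reflection/shift identity $\bck{\shdif}^N_{\dir_i}\deq\shdif^N_{\dir_i}$ for the left marginal, and then the same summation over $\bck{\SHpp}^{(N)}$ and passage to the limit via an enlarged good event. One formulation should be corrected: your displayed discrete identity conditions on $\tau^N_{\dir_i}=x$ and asserts the product law $\D^\sigma_{\alpha_N}\otimes\D^\sigma_{\alpha_N}$, but $\tau^N_{\dir_i}$ is a function of both $W$ and $\F_+$, so the conditional law of the left restarted profile given $\tau^N_{\dir_i}$ is not obviously $\D^\sigma_{\alpha_N}$ from what you prove (that would need, e.g., a Williams-type independence of the post-maximum process from the maximum); what the Palm computation with respect to $\bck{\SHpp}$ actually requires, and what your ingredients do deliver, is the factorization obtained by first conditioning on $\F_-$ to extract $\D^\sigma_{\alpha_N}(A_1)$ from the right profile and then conditioning on $\bck{\tau}^N_{\dir_i}$ (via Corollary \ref{cor:discrete_restart} applied to the reflected process) to extract $\D^\sigma_{\alpha_N}(A_2)$, exactly as in \eqref{Omr4}.
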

\begin{proof}
As in \eqref{Gsigdef} and \eqref{Jnrep},  as functions in $C(\R)$,
\be \label{supdif}
\shdif_{\dir_i}^N(y) \deq \wt \shdif_{\dir_i}^N(y) :=  \sup_{-\infty < x \le y}\{Y(x)\} - \sup_{-\infty < x \le 0}\{Y(x)\},
\ee
where $Y$ is a two-sided Brownian motion with drift $\alpha_N = \sigma^2 2^{-N}$ and diffusivity $\sqrt{2\sigma^2}$. Let $(\Omega,\F,P)$ be a probability space on which $Y$ is defined. 
Define two independent sub $\sigma$-algebras of $\F$:
\[
\F_- = \sigma(Y(x): x \le 0),\qquad\text{and}\qquad \F_+ = \sigma(Y(x): x \ge 0).
\]

\noindent When $y > 0$, \eqref{Jnrep} states that
\be \label{shdif2}
\wt \shdif_{\dir_i}^N(y) = \Bigl[W + \sup_{0 \le x \le y} Y(x)\Bigr]^+,
\ee
where $W = -\sup_{-\infty < x \le 0}\{Y(x)\} \in \F_-$, and $\sup_{0 \le x \le y} Y(x) \in \F_+$. Then, conditional on $\F_-$, $W$ is constant while the law of $Y(x)$ for $x \ge 0$ is unchanged. Let
\[
\wt \tau^N_{\dir_i} = \inf\{x > 0: \wt J^N_{\dir_i}(x) > 0\},\qquad\text{and}\qquad\bck{\wt \tau_{\dir_i}^N} = \inf\{x > 0: -\wt J^N_{\dir_i}(-x) > 0\}
\]
Then, by \eqref{shdif2} and Equation \eqref{uncondD} of Lemma \ref{lem:WBSM} in the special case where $W$ is constant (using the exact same reasoning as in the proof of Corollary \ref{cor:discrete_restart}),
\be \label{F-cond}
P(\big[ \wt \shdif_{\dir_i}^{N}\big]^{\wt \tau_{\dir_i}^{N}} \in \aabullet\,|\,\F_-)= P(\big[ \wt \shdif_{\dir_i}^{N}\big]^{\wt \tau_{\dir_i}^{N}} \in \aabullet\,|\,W) = \D^\sigma_{\alpha_N}(\aabullet).
\ee

 For a fixed $i$,  $\bck{\shdif}^N_{\xi_i}$ and  $\shdif^N_{\xi_i}$ have the same distribution as functions on $\R$. This comes  by   first applying the reflection invariance in Theorem \ref{thm:SH_dist_invar}\ref{itm:SH_reflinv} and then the shift invariance in \eqref{H65}, shifting the directions by $\xi_{i-1}+\xi_i$: 
\begin{align*}
 \bck{\shdif}^N_{\xi_i}(x) &= -{\shdif}^N_{\xi_i}(-x) = - G^\sigma_{\xi_i}(-x) +   G^\sigma_{\xi_{i-1}}(-x) 
 \deq - G^\sigma_{-\xi_i}(x) +   G^\sigma_{-\xi_{i-1}}(x)\\
 &\deq  -  G^\sigma_{\xi_{i-1}}(x) + G^\sigma_{\xi_i}(x)  
 ={\shdif}^N_{\xi_i}(x) . 
\end{align*}
	By \eqref{supdif}, $(\bck{ \wt  \shdif^N},\bck{\wt \tau^N}) \in \F_-$.  We mimic the calculation in \eqref{Omr3}, for two Borel sets $A_1,A_2\subseteq C(\R_{\ge 0})$ and an open rectangle $R=(a,b)\times(c,d)\subseteq \R_{\ge0}\times\R$: 
		\begin{align}\label{Omr4}
			&\quad\;\E^\sigma\Big(\sum_{(\bck{\tau}^N_{\!\!\dir_i},\;\dir_i)\in R\cap\bck{\SHpp}^{(N)}} \ind_{A_1}\big([\shdif^{N}_{\dir_i}]^{\tau^{N}_{\dir_i}}\big)\ind_{A_2}\big([\bck{\shdif}^{N}_{\dir_i}]^{\bck{\tau}^{N}_{\dir_i}}\big)\Big) \\
			&= \sum_{\dir_i\in(c,d)} E\bigg(\ind_{A_2}\big([\bck{ \wt \shdif}^{N}_{\dir_i}]^{\bck{ \wt \tau}^{N}_{\dir_i}}\big)\;\ind_{(a,b)}\big(\bck{\wt \tau^{N}_{\dir_i}}\big) E\Big[\big(\ind_{A_1}\big([\wt \shdif^{N}_{\dir_i}]^{\wt \tau^{N}_{\dir_i}}\big)\tspb\Big|\F_-\Big]\bigg) \nonumber\\
			&\stackrel{\eqref{F-cond}}{=} \sum_{\dir_i\in(c,d)}  E\bigg( E\Big[\ind_{A_2}\big([\bck{\wt \shdif}^{N}_{\dir_i}]^{\bck{\wt \tau}^{N}_{\dir_i}}\big)\;\ind_{(a,b)}\big(\bck{\wt \tau}^{N}_{\dir_i}\big)\tspb\Big|\bck{\wt \tau}_{\dir_i}^N\Big] \bigg)\D^\sigma_{\alpha_N}(A_1) \nonumber\\
			&= \sum_{\dir_i\in(c,d)}  E\bigg(\ind_{(a,b)}\big(\bck{\wt \tau}^{N}_{\dir_i}\big) E\Big[\ind_{A_2}\big([\bck{\wt \shdif}^{N}_{\dir_i}]^{\bck{\wt \tau}^{N}_{\dir_i}}\big)\;\tspb\Big|\bck{\wt \tau}_{\dir_i}^N\Big] \bigg)\D^\sigma_{\alpha_N}(A_1) \nonumber \\
			&\stackrel{\eqref{eq4}}{=} \sum_{\dir_i\in(c,d)} P\big(\bck{\wt \tau}^{N}_{\dir_i}\in (a,b)\big)\D^\sigma_{\alpha_N}(A_1)\D^\sigma_{\alpha_N}(A_2) \nonumber
			\\
   &= \sum_{\dir_i\in(c,d)} \Pp^\sigma\big(\bck{ \tau}^{N}_{\dir_i}\in (a,b)\big)\D^\sigma_{\alpha_N}(A_1)\D^\sigma_{\alpha_N}(A_2) \nonumber
					\\&
			=\mathcal{D}^\sigma_{\alpha_N}(A_1)\mathcal{D}^\sigma_{\alpha_N}(A_2)\tspb \lambda_{\bck{\SHpp}}^{(N)}(R).\nonumber
	\end{align}
As in the proof of Lemma \ref{lm:SH5}, we derive from  the above that 
\begin{equation}\label{Q2}
	\E^\sigma\Big(\sum_{(\bck{\tau}_{\dir},\dir)\in R\cap\bck{\SHpp}} \ind_{A_1}\big([\shdif_{\dir}]^{\tau_{\dir}}\big)\ind_{A_2}\big([\bck{\shdif}_{\dir}]^{\bck{\tau}_{\dir}}\big)\Big)=\mathcal{D}^\sigma(A_1)\mathcal{D}^\sigma(A_2)\tspb \lambda_{\bck{\SHpp}}(R), 
\end{equation}
through the convergence of line \eqref{Omr4} to the left-hand side of \eqref{Q2}. Instead of the events $\mathcal{U}_N^k$ in \eqref{Uset}, consider 
\begin{align*}  
	\mathcal{\wt U}_N^k&=\bigl\{\,|\bck{\SHpp}^{(N)}\cap R| = |\bck{\SHpp}\cap R|, \text{ \,and   $\forall \tspb(\bck{\tau},\dir)\in \bck{\SHpp}\cap R$, \  $\exists$  unique $(\tau^N_{\dir_i},\dir_i)\in \bck{\SHpp}^{(N)}\cap R$
		} \\
		&\qquad\qquad\text{ such that  $[\shdif^{N}_{\dir_i}]^{\tau^{N}_{\dir_i}}\big|_{[0,k]}=[\shdif_{\dir}]^{\tau_{\dir}}\big|_{[0,k]}$ and $[\bck{\shdif}^{N}_{\dir_i}]^{\bck{\tau}^{N}_{\dir_i}}\big|_{[0,k]}=[\bck{\shdif}_{\dir}]^{\bck{\tau}_{\dir}}\big|_{[0,k]}$}\bigr\}. 
\end{align*}
For each $k>0$,   $\ind_{\mathcal{\wt U}_N^k}\to 1$ almost surely, as it did for \eqref{Uset}.   Indeed, there are finitely many pairs $(\bck{\tau},\dir) \in \bck{\SHpp} \cap R$, and each has a finite forward splitting time $\tau$. All these can be confined  in a common compact rectangle.
From here, the proof 
continues as for  Lemma \ref{lm:SH5} and Theorem \ref{thm:Lac5}. 
\end{proof}

\chapter{The stationary horizon as the unique coupled invariant measure for the KPZ fixed point} \label{chap:invar}
\section{Introduction}

In this chapter, we prove that the stationary horizon is the unique invariant coupled distribution and an attractor for the KPZ fixed point (Theorem \ref{thm:invariance_of_SH}). This theorem was originally proved in \cite{Busa-Sepp-Sore-22arXiv} using a limit transition from exponential last-passage percolation. In this chapter, we present an alternate proof using a model called Brownian last-passage percolation (BLPP). There are three main ingredients: invariance of the SH for BLPP (proved in Section \ref{sec:BLPP_stat}), the convergence of BLPP to the directed landscape (DL) from \cite{Directed_Landscape,Dauvergne-Virag-21} (recorded here as Theorem \ref{thm:BLPPtoDL}), and exit point bounds for BLPP from its stationary initial condition. For this, we adapt techniques from \cite{Emrah-Janjigian-Seppalainen-21} to prove what we call the EJS-Rains identity (short for Emrah-Jenjigian-Sepp\"ai\"ainen-Rains) for BLPP (Theorem \ref{thm:BLPP-EJSR}). A detailed discussion of the hisotry of the EJS-Rains technique and related results comes in Section \ref{sec:ExitEJS}. The proof of Theorem \ref{thm:invariance_of_SH} comes in Section \ref{sec:invarattract_proof}.

\section{Brownian last-passage percolation, the directed landscape, and the KPZ fixed point}

\subsection{Brownian last-passage percolation}

On an appropriate probability space, let $\mathbf B = \{B_r\}_{r \in \Z}$ be a field of independent, two-sided standard Brownian motions. For $x \le y$ and $m \le n$, define
\be \label{BLPPdef}
L(x,m;y,n) = \sup\Bigl\{\sum_{r = m}^n B_r(x_{r - 1},x_r): x = x_{m - 1} \le x_m \le \cdots \le x_{n - 1} \le x_n = y \Bigr\}
\ee
We call this model \textit{Brownian last-passage percolation} (BLPP). Observe that for $n = m$, $L(x,m;y,m) = B_m(x,y)$. 

\begin{figure}[ht!]
    \centering
    \begin{tikzpicture}[scale = 0.9]
\draw[gray,thin] (0.5,0) -- (15.5,0);
\draw[gray,thin] (0.5,0.5) --(15.5,0.5);
\draw[gray, thin] (0.5,1)--(15.5,1);
\draw[gray,thin] (0.5,1.5)--(15.5,1.5);
\draw[gray,thin] (0.5,2)--(15.5,2);
\draw[black,thick] plot coordinates {(1.5,-0.1)(1.7,-0.2)(1.9,0.1)(2.1,0.4)(2.3,-0.3)(2.5,0.2)(2.7,0.1)(2.9,-0.3)(3.1,-0.4)(3.3,0.2)(3.5,0.3)(3.7,-0.4)(3.9,0.1)(4.1,-0.2)(4.3,-0.3)(4.45,0.2)};
\draw[red, ultra thick] plot coordinates {(4.5,0.4)(4.7,0.7)(4.9,0.3)(5.1,0.1)(5.4,0.6)(5.7,0.3)(5.7,0.6)(5.9,0.8)(6.1,0.4)(6.3,0.3)(6.6,0.4)(6.95,0.8)};
\draw[blue,thick] plot coordinates {(7,0.8)(7.3,1.2)(7.7,1.1)(8,0.6)(8.2,0.8)(8.5,1.3)(8.7,1.1)(9,0.7)(9.2,1.1)(9.45,1.3)};
\draw[green,thick] plot coordinates
{(9.5,1.1)(9.7,1.3)(9.9,1.7)(10.2,1.1)(10.4,1.3)(10.6,1.9)(10.8,1.4)
(11.1,1.2)(11.3,1.6)(11.5,1.9)(11.8,1.4)(12,1.1)(12.2,1.8)(12.4,1.3)(12.6,1.2)(12.8,1.1)(12.95,1.7)};
\draw[brown,thick] plot coordinates {(13,2.3)(13.3,1.7)(13.5,2.4)(13.7,2.3)(13.9,2.1)(14.1,1.9)(14.3,1.7)(14.4,2.1)(14.6,1.8)(14.8,1.6)(15,2.1)};
\node at (1.5,-0.5) {$x$};
\node at (4.5,-0.5) {$x_0$};
\node at (7,-0.5) {$x_1$};
\node at (9.5,-0.5) {$x_2$};
\node at (13,-0.5) {$x_3$};
\node at (15,-0.5) {$y$};
\node at (0,0) {$0$};
\node at (0,0.5) {$1$};
\node at (0,1) {$2$};
\node at (0,1.5) {$3$};
\node at (0,2) {$4$};
\end{tikzpicture}
    \caption{\small The Brownian increments $B_r(x_{r - 1},x_r)$ for $r=0,\dotsc,4$ in \eqref{BLPPdef} that make up the weight of the path depicted in Figure \ref{fig:BLPP_geodesic}. }
    \label{fig:BLPP maximizing path}
\end{figure}
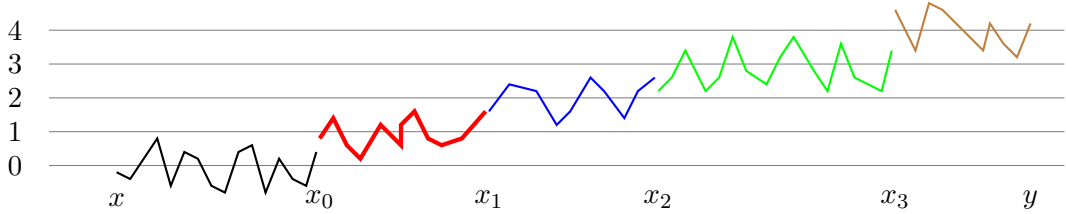
Figure \ref{fig:BLPP maximizing path} depicts the Brownian motions on each horizontal level. Maximizing sequences $x_{m - 1} \le x_m \le \cdots \le x_{n - 1} \le x_n$ exist in \eqref{BLPPdef} by continuity of the Brownian motions. For such a sequence, we associate an up-right path consisting of horizontal segments $[x_{r - 1},x_r] \times \{r\}$ and vertical segments $\{x_r\} \times [r,r + 1]$, as depicted in Figure \ref{fig:BLPP_geodesic}. 
Despite the fact that this is a maximal instead of minimal path, by convention, we call this path a \textit{geodesic} between the points.

\begin{figure}[ht]
\begin{tikzpicture}[scale = 0.9]
\draw[gray,thin] (0.5,0) -- (15.5,0);
\draw[gray,thin] (0.5,0.5) --(15.5,0.5);
\draw[gray, thin] (0.5,1)--(15.5,1);
\draw[gray,thin] (0.5,1.5)--(15.5,1.5);
\draw[gray,thin] (0.5,2)--(15.5,2);
\draw[red, ultra thick] (1.5,0)--(4.5,0)--(4.5,0.5)--(7,0.5)--(7,1)--(9.5,1)--(9.5,1.5)--(13,1.5)--(13,2)--(15,2);
\filldraw[black] (1.5,0) circle (2pt) node[anchor = north] {$(x,0)$};
\filldraw[black] (15,2) circle (2pt) node[anchor = south] {$(y,4)$};
\node at (4.5,-0.5) {$x_0$};
\node at (7,-0.5) {$x_1$};
\node at (9.5,-0.5) {$x_2$};
\node at (13,-0.5) {$x_3$};
\node at (0,0) {$0$};
\node at (0,0.5) {$1$};
\node at (0,1) {$2$};
\node at (0,1.5) {$3$};
\node at (0,2) {$4$};
\end{tikzpicture}
\caption{\small Example of a planar path from $(x,0)$ to $(y,4)$, represented by the sequence $(x=x_{-1} \le x_0 \le x_1 \le x_2\le  x_3 \le x_4=y)$.}
\label{fig:BLPP_geodesic}
\end{figure}
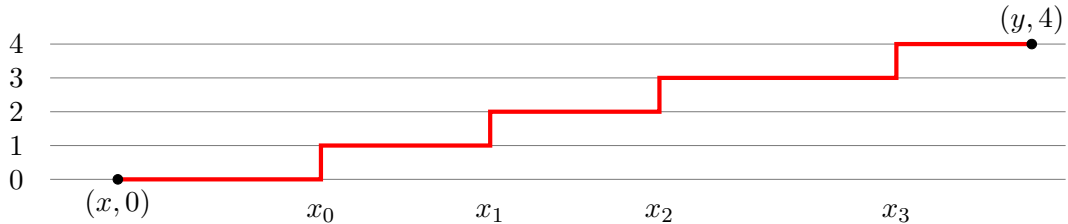

We often consider BLPP with a boundary condition. Let $f:\R \to \R$ be a function satisfying,
$
\liminf_{x \to -\infty} \f{f(x)}{x} > 0.
$
For $m < n$ and $y \in \R$, define
\be \label{BLPP_bdy}
H_L(n,y;m,f) = \sup_{-\infty < x \le y}\{f(x) + L(x,m + 1;y,n)\}.
\ee
When $m = 0$, we simply write $H_L(n,y;f)$. 

BLPP was first introduced in \cite{glynn1991}. We call it a semi-discrete model because there is one discrete and one continuous coordinate. As such, it is often used as an intermediate object for studying the directed landscape. A remarkable fact about BLPP (while we do not use this in the present dissertation) is that, for all $n \ge 1$, $L(1,0;n,1)$ has the same distribution as the largest eigenvalue of an $n \times n$ GUE matrix \cite{Baryshnikov,Gravner,rep_non_colliding}.

\subsection{The directed landscape} \label{sec:DL_geod}
  The directed landscape (DL), originally constructed in \cite{Directed_Landscape}, is a random continuous function $\Ll:\Rup \to \R$ that arises as the scaling limit of a large class of models in the KPZ universality class, and is expected to be a universal limit of such models. We  summarize some key points from \cite{Directed_Landscape} here. The directed landscape satisfies the metric composition law: for $(x,s;y,u) \in \Rup$ and $t \in (s,u)$,
\be \label{eqn:metric_comp}
\Ll(x,s;y,u) = \sup_{z \in \R}\{\Ll(x,s;z,t) + \Ll(z,t;y,u)\}.
\ee
This implies the reverse triangle inequality:  for $s < t < u$ and $(x,y,z) \in \R^3$, $\Ll(x,s;z,t) + \Ll(z,t;y,u) \le \Ll(x,s;y,u)$. 
Furthermore, over disjoint time intervals $(s_i,t_i)$, $1 \le i \le n$, the processes $(x,y) \mapsto \Ll(x,s_i;y,t_i)$ are independent. 

 Under the directed landscape, the length of  a continuous path $g:[s,t] \to \R$  is
\[
\Ll(g) = \inf_{k \in \Z_{>0}} \; \inf_{s = t_0 < t_1 < \cdots < t_k = t} \sum_{i = 1}^k \Ll(g(t_{i - 1}),t_{i - 1};g(t_i),t_i),
\]
where the second infimum is over all partitions $s = t_0 < t_1 < \cdots < t_k < t$.
By the reverse triangle inequality, $\Ll(g) \le \Ll(g(s),s;g(t),t)$. We call $g$ a \textit{geodesic} if equality holds. When this occurs, every  partition $s = t_0 < t_1 < \cdots < t_k = t$ satisfies 
\[
\Ll(g(s),s;g(t),t) = \sum_{i = 1}^k \Ll(g(t_{i - 1}),t_{i - 1};g(t_i),t_i).
\]
  For  fixed $(x,s;y,t) \in \Rup$, there exists a unique geodesic between $
(x,s)$ and $(y,t)$, almost surely \cite[Sect.~12--13]{Directed_Landscape}. Across all points, there exist leftmost and rightmost geodesics. The leftmost geodesic $g$ is such that, for each $u \in (t,s)$, $g(u)$ is the leftmost maximizer of $\Ll(x,s;z,u) + \Ll(z,u;y,t)$ over $z \in \R$.  The analogous fact holds for the rightmost geodesic. Geodesics in the directed landscape   have H\"older regularity  $\f{2}{3} - \ve$  but not  $\f{2}{3}$ \cite{Directed_Landscape,Dauvergne-Sarkar-Virag-2020}.

Perhaps the most straightforward way to understand the DL is through the limit of last-passage models. BLPP is the first model for which convergence to the DL was shown. While this convergence was first shown in \cite{Directed_Landscape}, the convergence for an arbitrary direction $\rho > 0$ was later recorded in \cite{Dauvergne-Virag-21}. 

\begin{theorem} \cite[Theorems 11.1,13.1]{Directed_Landscape} \cite[Theorems 1.7-1.8]{Dauvergne-Virag-21} \label{thm:BLPPtoDL}
Fix $\rho \in (0,\infty)$, and let $\chi,\alpha,\beta,\tau$ be defined by the relations
\be \label{eqn:BLPPchitau}
\chi^2 = \rho,\;\; \alpha = 2\sqrt \rho, \;\; \beta = \f{1}{\sqrt \rho}, \;\; \chi/\tau^2 = \f{1}{4\rho^{3/2}}.
\ee
Then, there exists a coupling $(\{L^N\}_{N \ge 1},\Ll)$ of copies $L^N$ of $L$ \eqref{BLPPdef} and $\Ll$ so that 
\begin{align*}
    &\chi N^{-1/3} \Bigl(L^N(N\rho s +N^{2/3}\tau x, \lfloor N  s \rfloor ;N \rho t + N^{2/3} \tau y, \lfloor N t \rfloor  ) - \alpha N(t - s) - \beta \tau N^{2/3}(y - x)\Bigr) \\
    &= (\Ll + o_N)(x,s;y,t),
\end{align*}
where the random function $o_\sigma$ is small in the sense that, for any compact set $K \subseteq \Rup$, $\sup_K |o_N| \to 0$ almost surely, and there is $c > 0$ such that 
\[
\E \exp\bigl(c \sup_K [(o_N^-)^{3/4} + (o_N^+)^{3/2}]\bigr) \to 1.
\]
Furthermore, under this coupling, let $\pi_N$ be the image of an arbitrary geodesic for $L^N$ under the linear map satisfying $(N\rho,N) \mapsto (0,1)$ and $N^{2/3}(\tau,0) \to (1,0)$. Then, as $N \to \infty$, if the endpoints of $\pi_N$ converge to points $p,q$ with $(p;q) \in \Rup$, then $\pi_N$ is precompact in the Hausdorff topology. All subsequential limits are geodesics from $p$ to $q$ for $\Ll$. 
\end{theorem}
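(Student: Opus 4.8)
The plan is to prove Theorem~\ref{thm:BLPPtoDL} in two stages --- first the convergence of the rescaled last-passage profile between two fixed time levels to the Airy sheet, then the bootstrap to the full directed landscape via the metric composition law~\eqref{eqn:metric_comp} --- and finally to extract the geodesic statement from the uniform convergence of the passage times. The scaling constants in~\eqref{eqn:BLPPchitau} are exactly those that linearize the centering in the endpoints and bring the residual fluctuations to order $N^{1/3}$; passing from the diagonal to a general direction $\rho\in(0,\infty)$ amounts to reparametrizing so that the macroscopic characteristic line in direction $\rho$ becomes the time axis, so it suffices to describe the argument for one fixed $\rho$.

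Stage~1 (the Airy sheet). The starting point is the exact identity, coming from the Pitman/$\mathrm{RSK}$-type correspondence, rewriting $L(x,1;y,n)$ as a last-passage functional $L_{\mathcal A_n}$ across the $n$-line ``melonized'' ensemble $\mathcal A_n$ obtained by iterating the Pitman transform on the Brownian rows. After the centering and scaling of~\eqref{eqn:BLPPchitau}, $\mathcal A_n$ converges to the parabolic Airy line ensemble by the known asymptotics of non-intersecting Brownian motions / Dyson Brownian motion. The crux is that the last-passage functional is continuous under this convergence, uniformly on compact sets; this requires an a priori localization of the maximizing path --- both the maximizing row indices and the horizontal maximizers --- furnished by the Brownian Gibbs property of the limit ensemble together with uniform upper- and lower-tail moderate-deviation estimates for $L$ on the correct scale. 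By the Airy-sheet characterization, the limiting functional across the parabolic Airy line ensemble is the Airy sheet $\mathcal S$, so $\Ll(x,0;y,1)$ agrees in law with $\mathcal S(x,y)$ minus the parabolic correction.

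Stage~2 and geodesics. Over disjoint time strips the prelimit profiles are independent (distinct rows carry independent Brownian motions), so Stage~1 upgrades to joint convergence over finitely many disjoint slabs; the semigroup property of BLPP is the prelimit metric composition law, which passes to the limit. Tightness in $C(\Rup)$, with the two-sided control of $o_N$ claimed in the theorem, follows from a chaining argument fed by uniform moderate-deviation bounds for $L^N$ minus its law of large numbers in all four variables. Any subsequential limit is a random continuous function on $\Rup$ satisfying the metric composition law, with independent increments over disjoint time intervals and the Airy-sheet marginal on each slab; the characterization of the directed landscape identifies it as $\Ll$, giving the first display. For the geodesics, fix endpoints and assume the rescaled endpoints of $\pi_N$ converge to $p,q$ with $(p;q)\in\Rup$. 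Transversal-fluctuation estimates for BLPP geodesics (polynomial tails that the geodesic stays within $O(t^{2/3})$ of the straight line, with constants controlled on compacts, as in the inputs behind~\cite{Dauvergne-Sarkar-Virag-2020}) make $\{\pi_N\}$ precompact in the Hausdorff topology on compact subsets of the strip. If $\pi$ is a subsequential limit, then for intermediate times $t_0<\dots<t_k$ the prelimit geodesic property reads $L^N(\pi_N)=\sum_i L^N\!\big(\pi_N(t_{i-1}),\cdot\,;\pi_N(t_i),\cdot\big)$; letting $N\to\infty$ along the subsequence and using the uniform convergence of the rescaled $L^N$ to $\Ll$ together with $\pi_N\to\pi$, the right side tends to $\sum_i\Ll(\pi(t_{i-1}),t_{i-1};\pi(t_i),t_i)$ and the left side to $\Ll(p;q)$, so $\pi$ attains the composition bound and is therefore a geodesic from $p$ to $q$.

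The main obstacle is the stability of the last-passage functional across the line ensembles under the weak convergence $\mathcal A_n\to\mathcal A$: ruling out that the maximizing indices or maximizing locations escape to infinity in the prelimit is the technical heart, and it is precisely there that the Brownian Gibbs property and the sharp one-point moderate-deviation (Tracy--Widom scale) estimates are indispensable. By comparison, the independence over disjoint strips, the metric composition law, and the compactness argument for geodesics are comparatively soft once those inputs are in hand.
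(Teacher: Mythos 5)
This theorem is not proved in the dissertation at all: it is imported verbatim from \cite{Directed_Landscape} and \cite{Dauvergne-Virag-21}, with only a cosmetic remark about the orientation of the discrete index. So there is no internal proof to compare against; the relevant comparison is with the cited works, and your outline does faithfully reproduce their architecture (melon/RSK reduction to a line ensemble, convergence to the parabolic Airy line ensemble, continuity of the last-passage functional to obtain the Airy sheet, independence over disjoint time strips plus the metric composition law to build $\Ll$, and geodesic convergence from uniform convergence of passage times plus transversal-fluctuation precompactness).

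That said, as a proof your text is a roadmap rather than an argument, and the items you defer are precisely the content of the cited theorems. Three points deserve flagging. First, ``the last-passage functional is continuous under this convergence'' is the single hardest step of \cite{Directed_Landscape}: the Airy sheet is not simply last passage across the parabolic Airy line ensemble between two finite points (the lines are unbounded below and the relevant paths start at depth tending to infinity), so one needs the quantile-coupling characterization of the sheet and a genuine uniqueness statement, not just localization of maximizers. Second, the theorem asserts a \emph{coupling} with almost sure uniform convergence and the asymmetric exponential moment bounds on $o_N^{-}$ and $o_N^{+}$; your argument only delivers convergence in distribution, and upgrading to the stated coupling with those quantitative tails is a separate step in \cite{Directed_Landscape,Dauvergne-Virag-21}. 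Third, BLPP is not exactly invariant under the shear taking direction $\rho$ to the diagonal, so the reduction of general $\rho$ to a single direction is not a pure reparametrization; this is why \cite{Dauvergne-Virag-21} restate the convergence for arbitrary $\rho$ rather than deducing it by symmetry. None of this makes your outline wrong, but it means the proposal is a summary of the cited proof rather than a self-contained alternative to it.
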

We remark here that our formulation of BLPP differs from that in \cite{Directed_Landscape,Dauvergne-Virag-21}. there, the index of the discrete level increases as we move downward. To compensate for that, the version of Theorem \ref{thm:BLPPtoDL} we cite has been modified so that the discrete coordinates are $\lfloor Ns \rfloor$ and $\lfloor Nt \rfloor$ instead of $-\lfloor Ns \rfloor$ and $-\lfloor Nt \rfloor$.

\subsection{The KPZ fixed point} \label{sec:KPZ_fixed}
The KPZ fixed point $h(t,\aabullet;\mathfrak h)$ started from initial state $\h$  is a Markov process on the space of upper semi-continuous functions. More precisely, its state space is defined as 
\be \label{UCdef}
\begin{aligned}
\UC &= \{\text{ upper semi-continuous functions }\h:\R \to \R \cup \{-\infty\}: \\\  &\quad \text{ there exist }a,b > 0 \text{ such that } \quad \h(x) \le a + b|x| \text{ for all }x \in \R, \\ &\quad \text{ and }\h(x) > -\infty \text{ for some }x \in \R\}.
\end{aligned}
\ee
The topology on this space is that of local Hausdorff convergence of hypographs. When restricted to continuous functions, this convergence is equivalent to uniform convergence on compact sets. See Section 3.1 in \cite{KPZfixed} for more on the topology on the space $\UC$. This subspace of continuous functions is preserved under the KPZ fixed point (see \cite{KPZfixed}, or alternatively, Lemma \ref{lem:max_restrict}). The process $\{h_\Ll(t,\aabullet;\h)\}_{t > s}$, started from initial data $\h$ at time $s$ can be  represented as \cite{reflected_KPZfixed}  
\be \label{eqn:KPZ_DL_rep}
h_\Ll(t,y;s,\h) = \sup_{x \in \R}\{\h(x) + \Ll(x,s;y,t)\}, \quad t > s, y\in\R, 
\ee
where $\Ll$ is the directed landscape. Note here the analogy with \eqref{BLPP_bdy}. Here, we add the subscript $\Ll$ to emphasize that a realization of $\Ll$ drives the growth of $h$. When $s = 0$, we simply write $h_\Ll(t,y;\h)$. This formulation allows us to couple the KPZ fixed point from different initial data, but with the same random noise.  
If $\h$ is a two-sided Brownian motion with diffusivity  $\sqrt 2$ and arbitrary drift, then $ h_\Ll(t,\aabullet;\h) - h_\Ll(t,0;\h) \deq \h(\aabullet)$  for each $t > 0$  \cite{KPZfixed,Pimentel-21a,Pimentel-21b}. 

\subsection{Main result}

The main result of this chapter is the unique  invariance and  attractiveness of SH  under the KPZ fixed point. This generalizes the invariance of a single Brownian motion with drift and provides a new uniqueness statement  (Remark \ref{rmk:k = 1 uniqueness} below). Attractiveness is proved under these assumptions on the asymptotic drift $\dir \in \R$ of the initial function  $\h \in \UC$:
\begin{equation} \label{eqn:drift_assumptions}
    \begin{aligned}
    &\text{If } \dir = 0, \quad &\limsup_{x \to +\infty} \f{\h(x)}{x} \in [-\infty,0] \qquad &\text{and}\quad &\liminf_{x \to -\infty} \f{\h(x)}{x} \in [0,+\infty], \\
    &\text{if } \dir > 0,\quad &\lim_{x \to +\infty} \f{\h(x)}{x} = 2\dir\qquad&\text{and}\quad &\liminf_{x \to -\infty} \f{\h(x)}{x} \in (-2\dir,+\infty], \\
    &\text{and if } \dir < 0,\quad &\lim_{x \to -\infty} \f{\h(x)}{x} = 2 \dir\qquad&\text{and}\quad &\limsup_{x \to +\infty} \f{\h(x)}{x} \in [-\infty, -2\dir).
    \end{aligned}
\end{equation}
As spelled out in the theorem below, these conditions describe the basins of attraction for the KPZ fixed point. When $\dir > 0$ and $x>0$ is large, this condition forces $\h(x)$  to be approximated by $2\dir x$. The directed landscape $\Ll(x,s;y,t)$ can be approximated by $-\f{(x - y)^2}{t - s}$ (Lemma \ref{lem:Landscape_global_bound}), so that $\h(x) + \Ll(x,0;y,t) \approx 2\dir x - \f{(y - x)^2}{t}$, which has its maximum at $x = y + \dir t$. Once we can control the maximizers, Lemma \ref{lem:DL_crossing_facts} allows us to compare the KPZ fixed point from different initial conditions. This, of course, must be made precise. In the $\dir > 0$ case of the proof of Lemma \ref{lem:unq}, the $\liminf$ condition as $x \to -\infty$ forces the maximizer to be positive, and an analogous statement holds for $\dir < 0$.  These drift conditions are analogous to the conditions on the drift studied in \cite{Bakhtin-Cator-Konstantin-2014,Bakhtin-Li-19,Bakhtin-16,Bakhtin-16chapter,Bakhtin-2013} for stationary solutions of the Burgers' equation with random Poisson and kick forcing.

We recall that $G^{\sqrt 2}$ is the parameterization of the SH where $G^{\sqrt 2}_\dir$ is a Brownian motion with diffusivity $\sqrt 2$ and drift $2\dir$. 

\newpage
\begin{theorem} \label{thm:invariance_of_SH}
Let $(\Omega,\F,\Pp)$ be a  probability space on which the stationary horizon $G^{\sqrt 2}=\{G^{\sqrt 2}_\dir\}_{\dir \in \R}$ and directed landscape $\Ll$ are defined, and such that  the processes \\$\{\Ll(x,0;y,t):x,y \in \R, t > 0\}$ and $G^{\sqrt 2}$ are independent. For each $\dir \in \R$, let $G^{\sqrt 2}_\dir$ evolve under the KPZ fixed point in the same environment $\Ll$, i.e., for each $\dir \in \R$,
\be \label{KPZSH2}
h_\Ll(t,y;G^{\sqrt 2}_\dir) = \sup_{x \in \R}\{G^{\sqrt 2}_\dir(x) + \Ll(x,0;y,t)\},\qquad\text{for all } y\in\R \text{ and } t > 0.
\ee

{\rm(Invariance)} 
For each $t > 0$,   the equality in distribution  \[\{h_\Ll(t,\aabullet;G^{\sqrt 2}_\dir) - h_\Ll(t,0;G^{\sqrt 2}_\dir)\}_{\dir \in \R} \deq G^{\sqrt 2} \] holds between  random elements of $D(\R,C(\R))$.

\smallskip 

{\rm(Attractiveness)}
Let $k \in \Z_{>0}$ and $\dir_1 < \cdots < \dir_k$ in $\R$. Let $(\h_1,\ldots,\h_k)$ be a $k$-tuple of functions in $\UC$,  coupled with $(G^{\sqrt 2},\Ll)$ {\rm arbitrarily},  and that almost surely satisfy \eqref{eqn:drift_assumptions} for $(\h, \dir) = (\h_i, \dir_i)$  for each  $i\in\{1,\dotsc,k\}$.  Then, for any $a > 0$, 
\begin{multline*}
\lim_{t \to \infty} \Pp\bigl\{   h_\Ll(t,x;\h_i)-h_\Ll(t,0;\h_i)= h_\Ll(t,x;G^{\sqrt 2}_{\xi_i})-h_\Ll(t,0;G^{\sqrt 2}_{\xi_i}) \\ \forall x \in [-a,a],1 \le i \le k\bigr\} = 1.
\end{multline*}
Consequently,  as $t \to \infty$, the distributional limit  
\[
\bigl(h_\Ll(t,\aabullet;\h_1) -h_\Ll(t,0;\h_1) ,\ldots,h_\Ll(t,\aabullet;\h_k) - h_\Ll(t,0;\h_k)\bigr)
\Longrightarrow
\bigl(G^{\sqrt 2}_{\dir_1}(\aabullet),\ldots,G^{\sqrt 2}_{\dir_k}(\aabullet)\bigr)
\]
holds in $\UC^k$ (or in $\C(\R)^k$ if the $\h_i$ are continuous). 

\smallskip 

{\rm(Uniqueness)}
In particular, on the space $\UC^k$,  $\bigl(G^{\sqrt 2}_{\dir_1}, \dotsc, G^{\sqrt 2}_{\dir_k})$ is the unique invariant distribution of the KPZ fixed point  such that for each  $i\in\{1,\dotsc,k\}$  the condition \eqref{eqn:drift_assumptions} holds for $(\h, \dir) = (\h_i, \dir_i)$. 
\end{theorem}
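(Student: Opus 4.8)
\textbf{Proof proposal for Theorem \ref{thm:invariance_of_SH}.}

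The plan is to prove the three assertions in sequence, with the first two doing essentially all the work and the uniqueness statement following as an immediate corollary.

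\emph{Invariance.} First I would establish that the finite-dimensional distributions are preserved, i.e., that for fixed $\dir_1 < \cdots < \dir_k$ and each $t>0$,
\[
\bigl(h_\Ll(t,\aabullet;G^{\sqrt 2}_{\dir_1}) - h_\Ll(t,0;G^{\sqrt 2}_{\dir_1}),\dotsc, h_\Ll(t,\aabullet;G^{\sqrt 2}_{\dir_k}) - h_\Ll(t,0;G^{\sqrt 2}_{\dir_k})\bigr) \deq \bigl(G^{\sqrt 2}_{\dir_1},\dotsc,G^{\sqrt 2}_{\dir_k}\bigr).
\]
The tool for this is Lemma \ref{lem:BLPP_invar} (invariance of the SH under BLPP) combined with the scaling relations of Theorem \ref{thm:SH_dist_invar}\ref{itm:SHscale} and the convergence of BLPP to the DL in Theorem \ref{thm:BLPPtoDL}. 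Concretely: start a $k$-tuple of SH-distributed initial profiles in BLPP from level $0$; Lemma \ref{lem:BLPP_invar} says the joint law of the increment profiles is again (a scaled copy of) the SH at every level $\lfloor Nt\rfloor$; apply the diffusive rescaling of Theorem \ref{thm:BLPPtoDL}, under which BLPP height functions converge to $h_\Ll$ and, by Proposition \ref{prop:SH_cons} together with the weak continuity in Lemma \ref{weak continuity and consistency}\ref{weak continuity}, the rescaled SH initial data converges to the SH; the exit-point bounds (Theorem \ref{thm:BLPP_exit_pts}) guarantee the relevant suprema localize, so the limit commutes with the last-passage operation. This yields the finite-dimensional identity. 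To upgrade to equality in distribution of the full $D(\R,C(\R))$-valued processes, I would invoke that the $\sigma$-algebra $\F'$ on $D(\R,C(\R))$ is generated by the projections (as stated in Section \ref{sec:SH_cons_intro}), together with the monotonicity $h_\Ll(t,\aabullet;G^{\sqrt 2}_{\dir_1}) \li h_\Ll(t,\aabullet;G^{\sqrt 2}_{\dir_2})$ for $\dir_1 < \dir_2$ (which follows from $G^{\sqrt 2}_{\dir_1}\li G^{\sqrt 2}_{\dir_2}$ and the variational formula \eqref{eqn:KPZ_DL_rep}) and the fact that the right-continuous-with-left-limits structure is determined by the rational marginals, exactly as in the proof of Proposition \ref{prop:SH_cons}.

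\emph{Attractiveness.} Here I would fix $\dir_1 < \cdots < \dir_k$, the initial $k$-tuple $(\h_1,\dotsc,\h_k)$ satisfying \eqref{eqn:drift_assumptions}, and $a>0$, and argue one index at a time (the union bound over $i \in \{1,\dotsc,k\}$ costs nothing). Consider first $\dir = \dir_i > 0$. Using the global parabolic bounds on the directed landscape (Lemma \ref{lem:Landscape_global_bound}) and the linear-growth hypothesis $\h(x)/x \to 2\dir$ as $x \to +\infty$, I would show that for large $t$ the maximizer in $h_\Ll(t,y;\h) = \sup_x\{\h(x)+\Ll(x,0;y,t)\}$ lies, with probability tending to one, in a window around $y + \dir t$ of width $o(t)$; the $\liminf$ condition as $x\to-\infty$ rules out competing maximizers on the negative axis. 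The same localization applies to $h_\Ll(t,y;G^{\sqrt 2}_{\dir})$ since $G^{\sqrt 2}_{\dir}$ has asymptotic drift $2\dir$. Then I would use a crossing/ordering lemma for the directed landscape (Lemma \ref{lem:DL_crossing_facts}): since on the relevant window $\h$ and $G^{\sqrt 2}_{\dir}$ can be sandwiched between translates of one another (their difference is bounded on compacts once both are close to the line $2\dir x$, and the geodesics for both processes targeting $[-a,a]$ pass through a common localized region), the increment profiles $h_\Ll(t,\aabullet;\h) - h_\Ll(t,0;\h)$ and $h_\Ll(t,\aabullet;G^{\sqrt 2}_{\dir}) - h_\Ll(t,0;G^{\sqrt 2}_{\dir})$ on $[-a,a]$ must coincide for $t$ large, because both are computed from the same piece of $\Ll$ via the same localized maximization. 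The cases $\dir = 0$ and $\dir < 0$ are handled by the symmetric drift conditions in \eqref{eqn:drift_assumptions} (for $\dir<0$ via reflection, using Theorem \ref{thm:SH_dist_invar}\ref{itm:SH_reflinv}; for $\dir = 0$ the two one-sided $\limsup/\liminf$ conditions pin the maximizer near the origin). Combining with the invariance statement — which identifies the law of $(h_\Ll(t,\aabullet;G^{\sqrt 2}_{\dir_1}) - h_\Ll(t,0;G^{\sqrt 2}_{\dir_1}),\dotsc)$ as exactly $(G^{\sqrt 2}_{\dir_1},\dotsc,G^{\sqrt 2}_{\dir_k})$ for every $t$ — gives the claimed distributional convergence in $\UC^k$ (and in $C(\R)^k$ when the $\h_i$ are continuous, using that the continuous subspace is preserved, Lemma \ref{lem:max_restrict}).

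\emph{Uniqueness.} Suppose $\mu$ is any invariant distribution of the KPZ fixed point on $\UC^k$ supported on $k$-tuples satisfying \eqref{eqn:drift_assumptions} for $(\h_i,\dir_i)$. Draw $(\h_1,\dotsc,\h_k) \sim \mu$, couple arbitrarily with $(G^{\sqrt 2},\Ll)$, and run time to infinity: by attractiveness the law of the time-$t$ increment profiles converges to $(G^{\sqrt 2}_{\dir_1},\dotsc,G^{\sqrt 2}_{\dir_k})$; by invariance of $\mu$ it equals $\mu$ for every $t$ (after centering, which is the natural state space here since the KPZ fixed point is only defined up to a global additive constant and \eqref{eqn:drift_assumptions} is centering-invariant); hence $\mu = \operatorname{Law}(G^{\sqrt 2}_{\dir_1},\dotsc,G^{\sqrt 2}_{\dir_k})$.

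\emph{Main obstacle.} I expect the hard part to be the localization-and-comparison step in attractiveness: making rigorous that, under only the one-sided asymptotic drift hypotheses of \eqref{eqn:drift_assumptions}, the maximizers in \eqref{eqn:KPZ_DL_rep} for $\h$ and for $G^{\sqrt 2}_{\dir}$ are confined to a common $o(t)$-window with high probability \emph{uniformly enough} that the two increment profiles on $[-a,a]$ become exactly equal rather than merely close. This requires careful quantitative control of the parabolic curvature of $\Ll$ against the (possibly only sublinear on one side) growth of $\h$, and an argument that two geodesics landing in $[-a,a]$ from initial data that agree to leading order must share their lower portion — the crossing lemma for the DL does the geometric work, but setting up the event on which it applies, and showing its probability tends to one, is the delicate part.
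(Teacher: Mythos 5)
Your treatment of invariance and uniqueness matches the paper's: the paper likewise reduces to finite-dimensional distributions via the projection-generated $\sigma$-algebra, runs the SH through BLPP using Lemma \ref{lem:BLPP_invar}, rescales with Theorem \ref{thm:BLPPtoDL} and the SH scaling relations, and uses the exit-point bounds of Theorem \ref{thm:BLPP_exit_pts} to localize the suprema so the limit passes through the variational formula; uniqueness is then the immediate corollary you describe.

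The gap is in attractiveness, and it is exactly the one you flag in your ``main obstacle'' paragraph: localizing the maximizers of $\h_i$ and of $G^{\sqrt 2}_{\dir_i}$ to a common $o(t)$-window does \emph{not} force the two increment profiles on $[-a,a]$ to coincide --- two different initial conditions whose geodesics exit through the same region still generally produce different increments, and ``sandwiched between translates of one another'' gives closeness at best, not equality. The paper's resolution, which your proposal does not identify, is to sandwich $\h_i$ not against $G^{\sqrt 2}_{\dir_i}$ itself but against the perturbed profiles $G^{\sqrt 2}_{\dir_i-\delta}$ and $G^{\sqrt 2}_{\dir_i+\delta}$. The stickiness of the SH (Theorem \ref{thm:SH_sticky_thm}\ref{itm:SH_stick}) gives a $\delta>0$ with $G^{\sqrt 2}_{\dir_i\pm\delta}=G^{\sqrt 2}_{\dir_i}$ on $[-a,a]$ with probability $\ge 1-\ve/2$, and --- crucially --- the already-proved invariance transports this exact local equality to the time-$t$ increment profiles. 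Lemma \ref{lem:unq} then orders the exit points, $Z^{a,0,t}_{G^{\sqrt 2}_{\dir_i-\delta}}\le Z^{a,0,t}_{\h_i}\le Z^{a,0,t}_{G^{\sqrt 2}_{\dir_i+\delta}}$ for large $t$, so the crossing lemma (Lemma \ref{lem:DL_crossing_facts}\ref{itm:KPZ_crossing_lemma}) traps the increment of $h_\Ll(t,\aabullet;\h_i)$ between two profiles that are \emph{equal} on $[-a,a]$, collapsing the sandwich to the exact identity. Without this two-sided perturbation in the direction parameter (and the invariance statement feeding back into the attractiveness proof), your argument cannot close.
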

\begin{remark}
Theorem \ref{thm:DL_Buse_summ}\ref{itm:global_attract} in  Section \ref{sec:Buse_geod_results} states that the Busemann process   is a global attractor of the backward KPZ fixed point. Namely, start the KPZ fixed point at time $t$ with initial data $\h$ satisfying \eqref{eqn:drift_assumptions} and run it backward in time to a fixed final time $s$.  Then, in a given a compact set,  for large enough $t$  the increments of the backwards KPZ fixed point at time $s$, started from initial data $\h$ at time $t$, match those of the Busemann function in direction $\dir$.   
\end{remark}
\begin{remark}
 The process $t \mapsto \{h_\Ll(t,\aabullet;\h_\dir) - h_\Ll(t,0;\h_\dir) \}_{\dir \in \R}$ is a well-defined Markov process on a state space which is a  Borel subset of  $\{\h_\dir\}_{\dir \in \R} = D(\R,C(\R))$ (Lemma \ref{lem:KPZ_preserve_Y}). By the uniqueness result for finite-dimensional distributions, $G^{\sqrt 2}$ is the unique invariant distribution on this space of $C(\R)$-valued cadlag paths. 
\end{remark}
\begin{remark} \label{rmk:k = 1 uniqueness}
In the above strength, the attractiveness result was previously unknown even in the case $k = 1$ (a single initial function).  \cite{Pimentel-21a,Pimentel-21b} proved attractiveness for $k =1$ and $\dir = 0$  under the following condition on the initial data $\h$: there exist $\gamma_0 > 0$ and $\psi(r)$ such that for all $\gamma > \gamma_0$ and $r \ge 1$, 
\be \label{eqn:ergodicity_assumption}
\Pp(\gamma^{-1}\h(\gamma^2 x) \le r|x| \, \forall x \ge 1) \ge 1 - \psi(r), \  \text{ where } \lim_{r \to \infty} \psi(r) = 0.
\ee
\end{remark}

\section{Invariance of the SH for BLPP}
\begin{lemma} \label{lem:BLPP_inductive}
Recall the map $D$ \eqref{Ddef}. Let $f \in \CRpin$ be a function satisfying
\[
\liminf_{x \to -\infty}\f{f(x)}{x} > 0.
\]
Fix $m \in \Z$. Then, almost surely, for all $y \in \R$ and $n \ge m$,
\be \label{HLind}
H_L(n + 1,y;m,f) = \sup_{-\infty < w \le y}\{H_L(n,w;m,f) + B_{n + 1}(w,y)\} < \infty,
\ee
where we define $H_L(m,w;m,f) = f$. 
Furthermore, for $n \ge m$, define \[f_n(y) = H_L(n,y;m,f) - H_L(n;0,m,f).\] Then, for $n \ge m$, 
$
f_{n +1} = D(f_{n},B_{n + 1}).
$
\end{lemma}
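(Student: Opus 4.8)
\textbf{Proof plan for Lemma \ref{lem:BLPP_inductive}.}

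The plan is to verify the recursion for $H_L$ by conditioning on the levels up to $n$ and using the last-passage structure of \eqref{BLPPdef}, and then to translate the recursion for $H_L$ into the queuing identity $f_{n+1} = D(f_n, B_{n+1})$ by unwinding the definitions of $H_L$, $f_n$, and $D$.

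First I would establish \eqref{HLind}. By definition \eqref{BLPPdef}, a path from $(x,m)$ to $(y,n+1)$ is determined by its values $x = x_{m-1} \le x_m \le \cdots \le x_n \le x_{n+1} = y$, and its weight decomposes as the weight of the path from $(x,m)$ to $(x_n, n)$ plus $B_{n+1}(x_n, y)$. Taking the supremum first over $x_{m-1} \le \cdots \le x_{n-1} \le x_n$ with $x_n = w$ fixed, and then over $w \le y$, gives
\[
H_L(n+1,y;m,f) = \sup_{-\infty < w \le y}\Big\{ \sup_{-\infty < x \le w}\{f(x) + L(x,m+1;w,n)\} + B_{n+1}(w,y) \Big\},
\]
which is exactly $\sup_{w \le y}\{H_L(n,w;m,f) + B_{n+1}(w,y)\}$; the base case $n = m$ uses the convention $H_L(m,w;m,f) = f(w)$ together with $L(x,m+1;w,m) $ being vacuous. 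For finiteness and the existence of maximizers, I would argue by induction on $n$: the hypothesis $\liminf_{x\to-\infty} f(x)/x > 0$ is preserved, since adding a Brownian increment $B_{n+1}(w,y)$ (which grows sublinearly a.s.\ by the law of large numbers for Brownian motion) keeps the relevant $\limsup_{w \to -\infty}[H_L(n,w;m,f) - B_{n+1}(w,y)] = -\infty$, so the supremum in \eqref{HLind} is attained and finite; this is the same mechanism already used repeatedly in Lemmas \ref{DRcont}--\ref{D and R preserve limits}. One subtlety to handle carefully is that this must hold \emph{simultaneously} for all $y \in \R$ and all $n \ge m$ on a single almost sure event; this follows by taking a countable intersection over $n$ and using continuity in $y$.

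Next, with \eqref{HLind} in hand, I would derive $f_{n+1} = D(f_n, B_{n+1})$. Recall $f_n(y) = H_L(n,y;m,f) - H_L(n,0;m,f)$, so $f_n$ is a recentered version of $H_L(n,\cdot\,;m,f)$ with $f_n(0) = 0$. From \eqref{HLind},
\begin{align*}
H_L(n+1,y;m,f) &= \sup_{-\infty < w \le y}\{H_L(n,w;m,f) + B_{n+1}(w,y)\} \\
&= H_L(n,0;m,f) + \sup_{-\infty < w \le y}\{f_n(w) + B_{n+1}(w) - B_{n+1}(y)\} + B_{n+1}(y),
\end{align*}
and subtracting the $y = 0$ case yields
\[
f_{n+1}(y) = B_{n+1}(y) + \sup_{-\infty < w \le y}\{f_n(w) - B_{n+1}(w)\} - \sup_{-\infty < w \le 0}\{f_n(w) - B_{n+1}(w)\}.
\]
Comparing with \eqref{DZB2} in the proof of Lemma \ref{DRcont} (the identity $D(Z,B)(y) = B(y) + \sup_{u \le y}\{Z(u) - B(u)\} - \sup_{u \le 0}\{Z(u) - B(u)\}$, valid since $Z(0) = B(0) = 0$), this is precisely $D(f_n, B_{n+1})(y)$. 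The hypothesis needed for $D$ to be well-defined, namely $\limsup_{x \to -\infty}[f_n(x) - B_{n+1}(x)] = -\infty$, is exactly what the inductive finiteness argument in the first step provides.

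The main obstacle I anticipate is not any single computation but the bookkeeping around almost sure events and asymptotic growth: one must check that the drift condition propagates up the levels so that every supremum is attained, and that all the identities hold on one common probability-one event for all $n$ and all $y$ simultaneously. Once the single-level decomposition of last-passage weights is written down correctly, the rest is a direct translation through the definitions \eqref{BLPP_bdy}, \eqref{Ddef}, and the recentering in $f_n$.
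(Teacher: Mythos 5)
Your proposal is correct and follows essentially the same route as the paper: the level-$n$ path decomposition (the dynamic programming principle), the recentering computation identifying $f_{n+1}$ with $D(f_n,B_{n+1})$ via the identity from Lemma \ref{DRcont}, and the inductive propagation of the drift condition for finiteness. The only small quibble is that the precise tool for propagating the drift condition is Lemma \ref{liminflem} (which only needs a $\liminf$, as assumed for $f$) rather than Lemma \ref{D and R preserve limits} (which assumes the limit exists), but the mechanism you describe is the right one.
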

\begin{proof}
BLPP satisfies the dynamic programming principle: for $m  <  r \le n$,
\[
L(x,m + 1;y,n + 1) = \sup_{x \le w \le y}\{L(x,m + 1;w,r) + L(w,r + 1;y,n + 1)\}.
\]
In the case $r = n$, this becomes
\[
L(x,m + 1;y,n + 1) = \sup_{x \le w \le y}\{L(x,m;w,n) + B_{n + 1}(w,y)\}.
\]
Below, if $m = n$, we define $L(x,m + 1;w,n) = 0$. Applying dynamic programming,
\begin{align*}
  &\qquad \; H_L(n + 1,y;m,f)\\ &= \sup_{-\infty < x \le y} \{f(x) + L(x,m + 1;y,n + 1)\} \\
  &=\sup_{-\infty < x \le y}\sup_{x \le w \le y}\{f(x) + L(x,m + 1;w,n) + B_{n + 1}(w,y)\} \\
  &= \sup_{-\infty < w \le y}\{H_L(n,w;m,f) + B_{n + 1}(w,y)\},
\end{align*}
this proving the equality \eqref{HLind} (we prove finiteness a the end). Next, we have 
\begin{align*}
    &\quad \; f_{n + 1}(y) =  H_L(n + 1,y;m,f) -  H_L(n + 1,0;m,f) \\
    &=\sup_{-\infty < w \le y}\{H_L(n,w;m,f) + B_{n + 1}(w,y)\} - \sup_{-\infty < w \le 0}\{H_L(n,w;m,f) + B_{n + 1}(w,0)\} \\
    &= B_{n + 1}(y) + \sup_{-\infty < w \le y}\{H_L(n,w;m,f) -H_L(n,0;m,f)  - B_{n + 1}(w)\} \\
    &\qquad- \sup_{-\infty < w \le 0}\{H_L(n,w;m,f) -H_L(n,0;m,f)  - B_{n + 1}(w)\} 
    = D(f_{n},B_{n + 1})(y),
\end{align*}
as desired. To show finiteness in \eqref{HLind}, we use the almost sure limits $\lim_{x \to -\infty} \f{B_{n}(x)}{x} = 0$. By inductively applying Lemma \ref{liminflem}, for each $n \ge m$, $\liminf_{x \to -\infty} \f{f_n(x)}{x}  > 0$. Hence, 
\[
\limsup_{x \to -\infty}[f_n(x) - B_{n + 1}(x)] - \infty,
\]
so $f_{n + 1}(y) = D(f_n,B_{n + 1})(y)$ is finite for all $y \in \R$. 
\end{proof}

\begin{lemma} \label{lem:BLPP_invar}
Let $\{B_r\}_{r \ge \in\Z}$ be a sequence of i.i.d. two-sided Brownian motions with diffusivity $1$ and zero drift that define BLPP times $L$ \eqref{BLPPdef}. Assume $\{B_r\}_{r \ge \in\Z}$  is coupled with the SH $G^1 = \{G^1_\dir\}_{\dir \in \R}$ so that $G^1$ and $\{B_r\}_{r > m}$ are independent.  Recall the notation $H_L$ \eqref{BLPP_bdy}. Let $0 < \dir_1 < \cdots < \dir_k$. Then, for each $n \ge m$, as processes on $C(\R,\R^k)$,
\[
\{H_L(n,\abullet;m,G^1_{\dir_i}) - H_L(n,0;m,G^1_{\dir_i})\}_{1 \le i \le k} \deq \{G^1_{\dir_i}\}_{1 \le i \le k} .
\]
\end{lemma}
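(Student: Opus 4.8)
The plan is to reduce the statement to a single application of the multiline invariance already proved (Theorem \ref{multiline invariant distribution}), using Lemma \ref{lem:BLPP_inductive} to recast the evolution of BLPP with boundary data as an iteration of the queuing map $D$. First I would fix $n = m$ as the base case: by the convention $H_L(m,\abullet;m,f) = f$ in Lemma \ref{lem:BLPP_inductive}, the left-hand side at time $n = m$ is exactly $\{G^1_{\dir_i}\}_{1 \le i \le k}$, so there is nothing to prove. The inductive step is the heart of the argument. Assume the claim holds at level $n$, i.e. writing $f_n^i(\abullet) = H_L(n,\abullet;m,G^1_{\dir_i}) - H_L(n,0;m,G^1_{\dir_i})$, that $(f_n^1,\dots,f_n^k) \deq (G^1_{\dir_1},\dots,G^1_{\dir_k})$. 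By Proposition \ref{prop:SH_cons}\ref{itm:SH_dist} (together with Lemma \ref{weak continuity and consistency}\ref{consistency}), the right-hand side has law $\mu_1^{(\dir_1,\dots,\dir_k)} = \nu_1^{(\dir_1,\dots,\dir_k)} \circ (\D^{(k)})^{-1}$; so we may realize $(f_n^1,\dots,f_n^k) = \D^{(k)}(Z^1,\dots,Z^k)$ for independent Brownian motions $Z^i$ with diffusivity $1$ and drift $\dir_i$.

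Next I would observe that Lemma \ref{lem:BLPP_inductive} gives $f_{n+1}^i = D(f_n^i, B_{n+1})$ for each $i$, with $B_{n+1}$ a standard two-sided Brownian motion independent of $(f_n^1,\dots,f_n^k)$ (independence holds because $f_n^i$ is a function of $G^1$ and $\{B_r : m < r \le n\}$, all independent of $B_{n+1}$ by hypothesis, using also that by Lemma \ref{image of script D lemma}\ref{itm:Ynlim} the asymptotic drifts are preserved so the queuing maps are well-defined almost surely). Thus the map $(f_n^1,\dots,f_n^k) \mapsto (f_{n+1}^1,\dots,f_{n+1}^k)$ is precisely one step of the Busemann Markov chain \eqref{Busemann Markov chain} with driving Brownian motion $F = B_{n+1}$, applied to $\D^{(k)}(Z^1,\dots,Z^k)$. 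Theorem \ref{existence of an invariant measure for Busemann MC} states that $\mu_1^{(\dir_1,\dots,\dir_k)}$ is invariant for exactly this chain, so $(f_{n+1}^1,\dots,f_{n+1}^k) \deq \mu_1^{(\dir_1,\dots,\dir_k)} = (G^1_{\dir_1},\dots,G^1_{\dir_k})$, closing the induction. One can alternatively feed the chain through the intertwining: writing $\D^{(k)}(Z)$ and applying the single-step identity $\T^B(\D^{(k)}(Z)) = \D^{(k)}(\Ss^B(Z))$ from the proof of Theorem \ref{existence of an invariant measure for Busemann MC}, together with Theorem \ref{multiline invariant distribution} which gives $\Ss^B(Z) \deq Z \sim \nu_1^{(\dir_1,\dots,\dir_k)}$ when $B$ is an independent standard Brownian motion (here $a = 0$, consistent with the requirement $\dir_i > 0 = a$ in Theorem \ref{multiline invariant distribution}).

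The only genuine subtlety — and the step I would be most careful about — is the independence and measurability bookkeeping: one must check that at each level $n$ the pair $\big((f_n^1,\dots,f_n^k),\, B_{n+1}\big)$ has the product law needed to invoke the invariance theorems, that the driving noise $B_{n+1}$ used at step $n\to n+1$ has the correct diffusivity ($1$, with zero drift, matching the $\sigma = 1$, $a = 0$ setup), and that all suprema defining $H_L$ and the maps $D$ are almost surely finite, which follows from the $\liminf$-at-$-\infty$ propagation in Lemma \ref{liminflem} and Lemma \ref{image of script D lemma} exactly as in the finiteness argument at the end of Lemma \ref{lem:BLPP_inductive}. No compactness, tightness, or limiting argument is needed here, since $n$ ranges over integers and the statement is a finite-dimensional distributional identity at each fixed $n$; the path-level statement in $D(\R,C(\R))$ for the full process follows, if desired, from the finite-dimensional identities together with the consistency already established, but the lemma as stated requires only the $C(\R,\R^k)$-level equality, so the induction above suffices.
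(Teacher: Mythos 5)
Your proposal is correct and follows essentially the same route as the paper: identify the increment process $(f_n^1,\dots,f_n^k)$ as the Markov chain \eqref{Busemann Markov chain} via Lemma \ref{lem:BLPP_inductive}, note that the initial data has law $\mu_1^{(\dir_1,\dots,\dir_k)}$ by Proposition \ref{prop:SH_cons}\ref{itm:SH_dist}, and invoke the invariance in Theorem \ref{existence of an invariant measure for Busemann MC}. Your extra care about the $a=0$ drift of the driving Brownian motions (which Theorem \ref{multiline invariant distribution} accommodates even though Theorem \ref{existence of an invariant measure for Busemann MC} is stated for $a>0$) and the independence bookkeeping is a welcome, if unstated, refinement of the paper's argument.
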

\begin{proof}
For $n \ge m$ and $1 \le i \le k$, set 
\[
f_n^i(y) =  H_L(n,\abullet;m,G^1_{\dir_i}) - H_L(n,0;m,G^1_{\dir_i}).
\]
By Lemma \ref{lem:BLPP_inductive}, $f_n^i = D(f_{n - 1}^i,B_n)$, so as a process in $n \ge m$ with state space $C(\R,\R^k)$, $(f_n^1,\ldots,f_n^k)$ evolves as the Markov chain \eqref{Busemann Markov chain} started from initial data $(G^1_{\dir_1},\ldots,G^1_{\dir_k})$. From the construction in Proposition \ref{prop:SH_cons}\ref{itm:SH_dist}, $(G^1_{\dir_1},\ldots,G^1_{\dir_k}) \sim \mu_1^{(\dir_1,\ldots,\dir_k)}$, and Theorem \ref{existence of an invariant measure for Busemann MC} states that this measure is invariant for the Markov chain.
\end{proof}

The SH is in fact the unique coupled invariant measure in the sense of Lemma \ref{lem:BLPP_invar}. To see this, we first cite the following result. 

\begin{theorem}\cite[Theorem 3]{Cator-Lopez-Pimentel-2019} \label{thm:CLP}
Let $B_\dir$ be a two-sided Brownian motion with diffusivity $1$ and drift $\dir \in (0,\infty)$, independent of the field of Brownian motions $\{B_r\}_{r \in \Z}$ defining $L$. Let $f$ be a random continuous function $\R \to \R$, independent of the environment defining $L$, so that $(f,B_\dir)$ has jointly stationary and ergodic increments and such that, almost surely,
\be \label{fBLPPcond}
\liminf_{x \to -\infty} \f{f(x)}{x} \ge \dir,\qquad\text{and}\qquad \limsup_{x \to +\infty} \f{f(x)}{x} \le \dir. 
\ee
Let $m \in \Z$. Then, for all compact sets $K \subseteq \R$ and $\ve > 0$,
\begin{multline*}
\lim_{n \to \infty}\Pp( \sup_{y \in K} \bigl|\bigl (H_L(n,y;m,f) - H_L(n,0;m,f)) \\ - (H_L(n,y;m,B_\dir) - H_L(n,0;m,B_\dir))\bigr| > \ve) = 0.
\end{multline*}
\end{theorem}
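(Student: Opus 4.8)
The statement is \cite[Theorem 3]{Cator-Lopez-Pimentel-2019}, so strictly nothing new need be proved; but the route that fits the tools already developed here is a coupling-and-monotonicity argument, run entirely through the queuing map $D$ of \eqref{Ddef}. The plan is to propagate BLPP from the two boundary functions $f$ and $B_\dir$ in the \emph{same} bulk environment $\{B_r\}_{r>m}$, and to show that the level-$n$ increment profiles are squeezed together as $n\to\infty$.

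First I would record the dynamics. Writing $f_n(y)=H_L(n,y;m,f)-H_L(n,0;m,f)$ and $b_n(y)=H_L(n,y;m,B_\dir)-H_L(n,0;m,B_\dir)$, Lemma \ref{lem:BLPP_inductive} says both evolve by the \emph{same} recursion, $f_{n+1}=D(f_n,B_{n+1})$ and $b_{n+1}=D(b_n,B_{n+1})$. Two facts are then immediate: by the $k=1$ case of Lemma \ref{lem:BLPP_invar} (equivalently Theorem \ref{multiline invariant distribution}), $b_n$ is a two-sided Brownian motion with diffusivity $1$ and drift $\dir$ for \emph{every} $n$; and by Lemma \ref{queue_order}, the map $g\mapsto D(g,B_{n+1})$ is monotone in the increment order $\li$. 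Hence, if $f$ can be trapped (outside a compact set) between two boundary functions whose LPP profiles both converge to a drift-$\dir$ profile, monotonicity finishes the job. The trapping step is where the hypotheses \eqref{fBLPPcond} enter: using $\liminf_{x\to-\infty}f(x)/x\ge\dir$ and $\limsup_{x\to+\infty}f(x)/x\le\dir$, the exact slope $\dir$ of $B_\dir$ at both ends, and Birkhoff's ergodic theorem applied to the jointly stationary and ergodic increments of $(f,B_\dir)$, one shows that for every $\ve>0$ there are a.s.\ finite random constants sandwiching $f$ between increment-shifted versions of $B_{\dir\pm\ve}$ on each half-line. Feeding these through the recursion and using that the $B_{\dir\pm\ve}$-profiles remain drift-$(\dir\pm\ve)$ Brownian motions, one gets, on a fixed compact $K$, $b_n-o(1)\li f_n\li b_n+o(1)$ \emph{provided} the geodesics from $\{y\}\times\{n\}$, $y\in K$, down to level $m$ do not see the region where $f$ and $B_\dir$ actually differ --- i.e.\ their exit points on the boundary escape every compact set while the discrepancy $f-B_\dir$ is negligible there relative to the transversal scale.

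The main obstacle is precisely this localization of the exit points for stationary BLPP: one needs that the exit point of the $B_\dir$-geodesic to $(y,n)$, $y\in K$, is of order $n^{2/3}$ and leaves any compact set, while the boundary discrepancy at that scale is $o(n^{2/3})$. This is exactly the stationary exit-point/EJS--Rains estimate (Theorem \ref{thm:BLPP_exit_pts}, proved later in this chapter by adapting \cite{Emrah-Janjigian-Seppalainen-20}), and there is an additional semi-discrete subtlety --- the spatial variable is continuous, so ``exit point'' means an optimal location on $\R$ rather than a lattice site, and the competition-interface monotonicity must be set up accordingly. A secondary technical point is keeping the $\ve\to0$ sandwich and the random trapping constants uniform enough to survive the limit $n\to\infty$ over $y\in K$; this is why the conclusion is phrased as convergence in probability of $\sup_{y\in K}|f_n-b_n|$ rather than as an almost sure statement.
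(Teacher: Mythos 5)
This theorem is not proved in the dissertation at all: the text says ``We now cite the following result'' and imports it wholesale as \cite[Theorem 3]{Cator-Lopez-Pimentel-2019}, so there is no internal proof to compare your strategy against. Judged on its own merits, your outline contains a genuine gap at the trapping step. The monotonicity you want to exploit, Lemma \ref{queue_order}, requires domination in the increment order $\li$, i.e.\ $f(x,y)\le B_{\dir+\ve}(x,y)$ for \emph{every} pair $x<y$. The hypothesis \eqref{fBLPPcond} only controls asymptotic slopes, and since increments are unchanged by adding a constant, no ``increment-shifted version of $B_{\dir\pm\ve}$'' can repair this: a function with asymptotic slope $\dir$ can have arbitrarily large local increments and so need not be $\li$-comparable to any Brownian motion with drift near $\dir$. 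Without an $\li$-sandwich, the recursion $f_{n+1}=D(f_n,B_{n+1})$ gives you no order relation between $f_n$ and $b_n^{\pm\ve}$, and the argument does not start.

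Even granting some sandwich, a second issue remains: your bounding profiles converge to Brownian profiles with drifts $\dir\pm\ve$, not to $b_n$, so you must show the gap between the coupled drift-$(\dir-\ve)$ and drift-$(\dir+\ve)$ profiles on $K$ vanishes as $\ve\to0$ \emph{uniformly in $n$}; the weak continuity of Lemma \ref{weak continuity and consistency}\ref{weak continuity} applies to the stationary coupled measures $\mu_\sigma^{\bar\dir}$, not to profiles grown from independent boundary data, so this needs a separate argument. Note also that the exit-point estimate you invoke, Theorem \ref{thm:BLPP_exit_pts}, is stated in the near-characteristic KPZ scaling ($y=\rho tN+yN^{2/3}$, drift perturbed by $N^{-1/3}$), whereas what your localization step needs is the off-characteristic statement that for fixed $y\in K$ and $n\to\infty$ the exit point escapes every compact set; these are different estimates. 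The argument actually used in \cite{Cator-Lopez-Pimentel-2019} runs through exit-point control combined with the ergodic theorem applied directly to the jointly stationary increments of $(f,B_\dir)$, rather than through $\li$-monotonicity of the queueing map, and that is the route you would need to reconstruct.
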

\newpage
Since each $G^1_{\dir_i}$ is a two-sided Brownian motion with diffusivity $1$ and drift $\dir_i$, we get the following extension to coupled initial data from a simple union bound. 
\begin{corollary}
Let $G^1$ be the SH, independent of $\{B_r\}_{r \ge m}$. Let $0 < \dir_1 < \cdots < \dir_k$, and assume that $(f_1,\ldots,f_k)$ is a random function $\R \to \R^k$ independent of $\{B_r\}_{r \ge m}$ so that each $(f_i,G^1_{\dir_i})$ has jointly stationary and ergodic increments and satisfies \eqref{fBLPPcond} with $\dir = \dir_i$.  Then,
\begin{multline*}
\lim_{n \to \infty} \Pp(\sup_{\substack{y \in K \\ 1 \le i \le k}} \bigl|\bigl (H_L(n,y;m,f_i) - H_L(n,0;m,f_i))  \\
 - (H_L(n,y;m,G^1_{\dir_i}) - H_L(n,0;m,G^1_{\dir_i}))\bigr| > \ve) = 0.
\end{multline*}
In particular, $(G^1_{\dir_1},\ldots,G^1_{\dir_k})$ is the unique stationary distribution in the sense of Lemma \ref{lem:BLPP_invar} such that, for each $1 \le i \le k$, $G^1_{\dir_i}$ has stationary and ergodic increments and satisfies \eqref{fBLPPcond} with $\dir = \dir_i$.
\end{corollary}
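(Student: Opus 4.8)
The corollary has two assertions — the displayed limit, and the ``in particular'' uniqueness claim — and the plan is to obtain the first by a union bound over $i$ and the second by comparing an arbitrary candidate invariant law against the SH.

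For the displayed limit, I would fix $i \in \{1,\dots,k\}$ and apply Theorem \ref{thm:CLP} with $f = f_i$ and $B_\dir = G^1_{\dir_i}$. The hypotheses transfer coordinatewise: by Proposition \ref{prop:SH_cons}\ref{itm:SHBM} each $G^1_{\dir_i}$ is a two-sided Brownian motion with diffusivity $1$ and drift $\dir_i \in (0,\infty)$; it is independent of the environment $\{B_r\}_{r\in\Z}$ by assumption; and the pair $(f_i, G^1_{\dir_i})$ has jointly stationary and ergodic increments and satisfies \eqref{fBLPPcond} with $\dir = \dir_i$ by assumption. Theorem \ref{thm:CLP} then gives, for each fixed $i$, compact $K$, and $\ve > 0$,
\[
\lim_{n\to\infty}\Pp\Bigl(\sup_{y\in K}\bigl|(H_L(n,y;m,f_i)-H_L(n,0;m,f_i)) - (H_L(n,y;m,G^1_{\dir_i})-H_L(n,0;m,G^1_{\dir_i}))\bigr|>\ve\Bigr)=0 ,
\]
and bounding the joint supremum event by the union of the $k$ individual events yields the displayed limit.

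For the uniqueness claim, let $\mu$ be any distribution on $C(\R,\R^k)$ that is invariant in the sense of Lemma \ref{lem:BLPP_invar} and whose marginals each have stationary, ergodic increments and satisfy \eqref{fBLPPcond} with the corresponding $\dir_i$. I would realize $(f_1,\dots,f_k)\sim\mu$ together with an \emph{independent} copy of the SH $G^1$ and an environment $\{B_r\}_{r\in\Z}$ independent of both; since Brownian increments form a mixing system, the product of the shift system generated by the increments of $f_i$ with that of $G^1_{\dir_i}$ is again ergodic, so the hypotheses of the first part hold. Set
\[
X_n := \bigl\{H_L(n,\abullet;m,f_i)-H_L(n,0;m,f_i)\bigr\}_{1\le i\le k}
\quad\text{and}\quad
Y_n := \bigl\{H_L(n,\abullet;m,G^1_{\dir_i})-H_L(n,0;m,G^1_{\dir_i})\bigr\}_{1\le i\le k}.
\]
The first part shows that $X_n - Y_n \to 0$ in probability, uniformly on compacts. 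By invariance of $\mu$, $X_n \deq \mu$ for every $n \ge m$; by Lemma \ref{lem:BLPP_invar}, $Y_n \deq (G^1_{\dir_1},\dots,G^1_{\dir_k})$ for every $n \ge m$. Since both $X_n$ and $Y_n$ are distributionally constant in $n$, their difference tends to $0$ in probability, and $C(\R,\R^k)$ under uniform convergence on compacts is Polish, a Slutsky-type argument forces $\mu$ to be the law of $(G^1_{\dir_1},\dots,G^1_{\dir_k})$.

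I expect the argument to be essentially routine given Theorem \ref{thm:CLP} and Lemma \ref{lem:BLPP_invar}. The only points that need care are bookkeeping ones: checking that joint stationarity and ergodicity of the pair increments and independence from the environment are inherited by each coordinate in the realization above, and making precise the deduction ``difference $\to 0$ in probability while both laws are constant in $n$ $\Rightarrow$ the laws coincide.'' I do not anticipate a genuine obstacle; the substantive content is entirely contained in the cited results.
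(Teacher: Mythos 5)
Your proposal is correct and follows the same route as the paper: the displayed limit is obtained exactly as in the text, by applying Theorem \ref{thm:CLP} coordinatewise (each $G^1_{\dir_i}$ being a drift-$\dir_i$ Brownian motion independent of the environment) and taking a union bound over $i$. The paper treats the ``in particular'' uniqueness claim as immediate from the limit together with Lemma \ref{lem:BLPP_invar}; your Slutsky-type elaboration of that step is a correct and harmless filling-in of what the paper leaves implicit.
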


\section{Exit point bounds for stationary BLPP and the EJS-Rains identity} \label{sec:ExitEJS}
\subsection{Main theorem for exit point bounds}
We have seen in Lemma \ref{lem:BLPP_invar} that the SH is a stationary initial condition for BLPP with coupled boundary conditions. In this section, we focus on a single stationary initial condition. 
For a continuous initial condition $f:\R \to \R$ with $\liminf_{x \to -\infty}\f{f(x)}{x} > 0$, we set 
\be \label{BLPPZgen}
Z_L^f(n,y) = \sup\{x \le y: f(x) + L(x,1;y,n) = H_L(n,y;f)\}.
\ee
That is, $Z_L^f$ is the largest maximizer of $x \mapsto f(x) + L(x,1;y,n)$ over $x \le y$.

The goal of Section \ref{sec:ExitEJS} is to prove the following.

\begin{theorem} \label{thm:BLPP_exit_pts}
Let $t,\rho > 0$, and  $y,\dir \in \R$.  For $N \ge 1$, let $\lambda_N = \rho^{-1/2} + \dir N^{-1/3}$. For each $N$, let $B_{\lambda_N}$ be a two-sided Brownian motion with diffusivity $1$ and zero drift, independent of the Brownian motions $\{B_r\}_{r \ge 0}$ defining $L$. Let $(\Omega^N,\Ff^N,\Pp^N)$ be probability spaces on which these are defined.  Then, there exists a constant $C = C(y,\dir,\rho,t)$ so that for all sufficiently large $M \ge 1$,
\begin{align}
&\limsup_{N \to \infty} \Pp^N(Z_L^{B_{\lambda_N}}(\lfloor Nt \rfloor,\rho t N + y N^{2/3}) > M N^{2/3}) \le e^{-CM^3} \qquad \text{and} \label{BLPPub1} \\
&\limsup_{N \to \infty} \Pp^N(Z_L^{B_{\lambda_N}}(\lfloor Nt\rfloor ,\rho t N + yN^{2/3}) < -M  N^{2/3}) \le e^{-CM^3}. \label{BLPPub2}
\end{align}
\end{theorem}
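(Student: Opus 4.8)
\textbf{Proof plan for Theorem \ref{thm:BLPP_exit_pts}.} The plan is to adapt the exit-point bound technology of Emrah--Janjigian--Sepp\"al\"ainen from exponential LPP to the zero-temperature Brownian setting, via an identity controlling the moment generating function of the difference between passage times with stationary boundary and with a ``corner'' (or free) boundary. First I would set up the stationary BLPP process: fix $\lambda = \lambda_N = \rho^{-1/2} + \dir N^{-1/3}$, take $B_\lambda$ a two-sided Brownian motion, and consider $H_L(n,y;B_\lambda)$ as in \eqref{BLPP_bdy}. The exit point $Z_L^{B_\lambda}(n,y)$ is, up to sign conventions, the argmax in that variational problem, and monotonicity (crossing) arguments for last-passage paths show that $\{Z_L^{B_\lambda}(n,y) > a\}$ for $a > 0$ is controlled by comparing the stationary passage time to the passage time restricted to start at $x \geq a$, which in turn one bounds by the passage time with the \emph{free} (zero) boundary condition shifted to $x=a$. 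Concretely, $\{Z_L^{B_\lambda}(n,y) \geq a\}$ forces $\sup_{x \geq a}\{B_\lambda(x) + L(x,1;y,n)\} \geq \sup_{x \leq a}\{B_\lambda(x) + L(x,1;y,n)\}$; the right side dominates $B_\lambda(a) + $ (point-to-point time), so after a Brownian comparison this reduces to lower-tail estimates for last-passage times started near $a N^{2/3}$ and upper-tail estimates for the stationary process, exactly the structure exploited in \cite{Emrah-Janjigian-Seppalainen-20,Emrah-Janjigian-Seppalainen-21}.

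The key analytic input I would establish is the EJS--Rains identity for BLPP (Theorem \ref{thm:BLPP-EJSR}, referenced in the chapter introduction): an exact formula for $\Ee[e^{\alpha(H_L(n,y;B_\lambda) - \text{free passage time})}]$, or equivalently a formula linking the MGF of the passage time increment to the Laplace transform of the exit point distribution. This is derived by the same mechanism as in exponential LPP: the stationary boundary $B_\lambda$ together with Burke-type stationarity (here the Brownian queue invariance recorded as Theorem \ref{thm:OCY_orig}/\ref{thm:OCY_DR}) gives an explicit description of the joint law of the boundary increments and the interior geodesic weights; summing/integrating the geometric--exponential-type telescoping yields a closed expression. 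One then does a Legendre-type optimization over the auxiliary parameter $\alpha$ (the ``dual'' to $\lambda$), choosing $\alpha$ of order $N^{-1/3}$ tuned to the characteristic direction, and the $e^{-CM^3}$ cubic rate emerges from the quadratic-in-$\alpha$ behavior of the cumulant combined with the $N^{2/3}$ spatial scaling — this is the standard ``$N^{1/3}$ fluctuation, $N^{2/3}$ transversal, cube of the deviation'' bookkeeping. The scaling relations \eqref{eqn:BLPPchitau} fix the correct constants $\chi,\alpha,\beta,\tau$; I would check that $\lambda_N = \rho^{-1/2}+\dir N^{-1/3}$ sits in the right window so that the characteristic direction of the $\lambda_N$-stationary process passes near $(\rho t N, \lfloor Nt\rfloor)$ with transversal displacement $y N^{2/3}$, making $Z_L \approx 0$ on the $N^{2/3}$ scale the typical behavior.

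For the two bounds \eqref{BLPPub1} and \eqref{BLPPub2} separately: the $\{Z_L > M N^{2/3}\}$ event is handled by the comparison above — a large positive exit point means the stationary path stays on the boundary too long, which is atypical because the boundary has the ``wrong'' slope far out, so one gets an exponential penalty; quantitatively, apply the EJS--Rains MGF bound with a well-chosen $\alpha < 0$ (or $>0$) and Markov's inequality. The $\{Z_L < -M N^{2/3}\}$ event means the path leaves the boundary to the far left and then travels a long transversal distance through the bulk; here one needs the lower-tail large deviation bound for point-to-point BLPP (of order $e^{-c M^3}$ on the $N^{2/3}$ scale), which follows from known one-point estimates for BLPP / GUE together with the $N^{2/3}$ rescaling, combined again with an upper bound on the stationary partition function. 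Taking $N \to \infty$ and using continuity of the limiting constants gives the stated $\limsup$ form with a single $C = C(y,\dir,\rho,t)$ (absorbing the finitely many error terms, valid for $M$ large enough that the subleading corrections are dominated).

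The main obstacle I anticipate is proving the EJS--Rains identity itself in the Brownian/zero-temperature setting with full rigor: one must justify the exchange of suprema and expectations, handle the fact that the relevant ``dual'' variable is continuous (integrals rather than sums), and verify that the Brownian queue stationarity in the precise form of Theorem \ref{thm:OCY_DR} yields the needed joint law of boundary and bulk — including the independence statement ``$\{d(x,y),e(x,y):x\le y\}$ independent of $\{q(x):x\ge y\}$'' which is what makes the telescoping close. A secondary technical point is controlling the optimization over $\alpha$ uniformly in $N$ and $M$ so that the cubic rate is genuinely uniform, and checking the boundary drift window condition on $\lambda_N$; but these are of the routine-estimate type once the identity is in hand. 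I would also need to make sure the sign conventions relating $Z_L^{f}$ in \eqref{BLPPZgen} to the geodesic exit point are consistent with the direction in which the discrete index increases, since the cited convergence Theorem \ref{thm:BLPPtoDL} already required such a flip.
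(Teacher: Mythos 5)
Your overall strategy matches the paper's: the proof does go through a BLPP version of the EJS--Rains identity (Theorem \ref{thm:BLPP-EJSR}), proved from the Burke-type stationarity of the Brownian queue (Corollary \ref{cor:east_bdy}) together with the Cameron--Martin--Girsanov theorem (Theorem \ref{cameron-martin}), followed by a Taylor expansion of the cumulant $R^{\rho,\lambda}$ around the characteristic parameter $\zeta(n,y)=\sqrt{n/y}$ and the shift $y \mapsto \rho tN+(y-M)N^{2/3}$ that converts the event $\{Z>MN^{2/3}\}$ into $\{Z>0\}$ for a tilted characteristic direction; the cubic rate comes out exactly as you describe. Two points where your plan diverges from (or underspecifies) the actual argument are worth flagging. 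First, the core probabilistic step is not ``MGF bound plus Markov'': in Lemma \ref{lem:BLPPexitbdmain} one writes $\Pp(Z^\rho\ge 0)=\Ee[\exp(\lambda(L^{\zeta,\zeta-2\lambda}-L^{\zeta-4\lambda,\zeta-2\lambda}))\ind(Z^\rho\ge 0)]$, which is an \emph{identity} because the two passage times coincide on the event $\{Z^\rho\ge0\}$ (Lemmas \ref{lem:BLPPsigmamont} and \ref{lem:BLPPLrl=}); one then drops the indicator and applies H\"older, so both factors are exact exponentials of $R^{\rho,\lambda}$ and no optimization over a dual parameter is needed beyond choosing the drift spacing $\lambda=(\zeta-\rho)/4$. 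Second, your proposed treatment of \eqref{BLPPub2} via lower-tail estimates for point-to-point BLPP is unnecessary and heavier than what the paper does: the left-exit bound \eqref{EJRbd2} is obtained by the mirror-image of the same coupling argument (with $\rho>\zeta$), so no one-point tail input for BLPP/GUE enters anywhere. Your route for that bound could likely be made to work, but it imports integrable estimates the coupling method is designed to avoid, and you would still need to quantify the boundary contribution on $(-\infty,-MN^{2/3}]$ carefully to recover the full $e^{-CM^3}$ rate rather than a polynomial one.
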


Theorem \ref{thm:BLPP_exit_pts} states that the stationary BLPP model obeys the KPZ wandering exponent $\f{2}{3}$. In analogy with the results of \cite{Emrah-Janjigian-Seppalainen-21,BasuSarkarSly_Coalescence,Bhatia-2020} for exponential LPP, the bound of $e^{-CM^3}$ should be of optimal order. However, the optimality of this bound is not needed in the present dissertation, so we do not prove it here. To my knowledge, such bounds have not appeared in the literature for BLPP. Exit point bounds have, however, appeared for a positive-temperature variant of this model known as the O'Connell-Yor polymer \cite{Sepp_and_Valko, Noack-Sosoe-2020,Landon-Sosoe-22b}. 

The study of exit point bounds has a rich history and has been done using both integrable and non-integrable techniques. Using what is known as the \textit{coupling technique} developed in \cite{Cator-Groeneboom-2005}, exit point bounds with $CM^{-3}$ instead of $e^{-CM^3}$ were proved for Poisson last-passage percolation in \cite{Cator-Groeneboom-06} and then in exponential last-passage percolation in \cite{Balazs-Cator-Seppalainen-2006}. Since then, there has been a large amount of research centered around the stationary exponential LPP model. Refinements to the results of \cite{Balazs-Cator-Seppalainen-2006} have come in \cite{Pimentel-18,Pimentel-21a,Sepp_lecture_notes}. Closely related to the study of exit points is that of the coalescence times of semi-infinite geodesics, as shown by the duality in \cite{pimentel2016}, and studied further in \cite{Seppalainen-Shen-2020}.  

On the integrable side of things, improvements to the exit point upper bounds came in \cite{Ferrari-Occelli-18, Ferrari-Ghosal-Nejjar-19}, which gave exit point bounds of order $e^{-CM^2}$. Fluctuation bounds for geodesics in the bulk (as opposed to the exit point) appeared in \cite{Ferrari-Occelli-18, BasuSarkarSly_Coalescence,Bhatia-2020,Martin-Sly-Zhang-21} using integrable methods. These methods allow one to derive the optimal-order exit point bounds with the appropriate inputs (see for example \cite[Theorem 2.5]{Bhatia-2020} and \cite[Lemma 2.5]{Ferrari-Occelli-18}).

A major breakthrough came using the coupling technique in the work of Emrah, Janigian, and Sepp\"al\"ainen \cite{Emrah-Janjigian-Seppalainen-21}. Adapting a moment generating function identity of Rains \cite{Rains-2000} (now called the EJS-Rains identity), they obtained the optimal $e^{-CM^3}$ order bounds for exit points in exponential LPP from an arbitrary down-right path. Shortly after \cite{Bhatia-2020} proved these same bounds using integrable methods for exit points from the axes. Since then, Busani and Ferrari \cite{busa-ferr-20} used the results of \cite{Emrah-Janjigian-Seppalainen-21} to develop optimal-order bounds for fluctuations of geodesics in the bulk. The EJS-Rains identity has been adapted for interacting diffusions and polymers in \cite{Landon-Sosoe-Noack-2020, Landon-Sosoe-22a,Landon-Sosoe-22b}. 

In the present chapter, we prove Theorem \ref{thm:BLPP_exit_pts} using the coupling technique. We first develop some facts about stationary BLPP from \cite{brownian_queues} in Section \ref{sec:BLPP_stat}. In Section \ref{sec:EJS-Rains-BLPP}, we derive the analogue of the EJS-Rains identity for BLPP (Theorem \ref{thm:BLPP-EJSR}) and use it to prove the bounds of Theorem \ref{thm:BLPP_exit_pts}. 

Before moving to the next section, we make the following observation, which follows from a standard paths-crossing argument (See, for example, \cite{blpp_utah,Basu-Ganguly-Hammond-21,Ganguly-Hegde-2021,Pimentel-21b})
\begin{lemma} \label{lem:Zfmont}
Let $f:\R \to \R$ be continuous with $\liminf_{x \to -\infty}\f{f(x)}{x} > 0$. Then, if $x < y$,
\[
Z_L^f(n,x) \le Z_L^f(n,y). 
\]
\end{lemma}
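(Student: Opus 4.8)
The plan is to argue by contradiction using the standard tail-swapping (path-crossing) argument for semi-discrete last-passage models. Suppose the claim fails, so that for some $x < y$ we have $u := Z_L^f(n,x) > v := Z_L^f(n,y)$. Since $u \le x$ and $v \le y$ by \eqref{BLPPZgen}, this gives the ordering $v < u \le x < y$, comparing the two paths at levels $1$ and $n$. Fix geodesics $\pi_1$ from $(u,1)$ to $(x,n)$ realizing $L(u,1;x,n)$ and $\pi_2$ from $(v,1)$ to $(y,n)$ realizing $L(v,1;y,n)$; these exist by continuity of the driving Brownian motions, as noted after \eqref{BLPPdef}.

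Next I would produce a common space-time point of $\pi_1$ and $\pi_2$. Writing $\pi_j$ at level $r$ as the horizontal segment $[x^{(j)}_{r-1}, x^{(j)}_r]$ (with $x^{(1)}_0 = u$, $x^{(1)}_n = x$, $x^{(2)}_0 = v$, $x^{(2)}_n = y$), the difference $x^{(1)}_r - x^{(2)}_r$ is positive at $r=0$ and negative at $r=n$, so there is $r^\star \in \{0,\dots,n-1\}$ with $x^{(1)}_{r^\star} \ge x^{(2)}_{r^\star}$ and $x^{(1)}_{r^\star+1} < x^{(2)}_{r^\star+1}$. Setting $q = x^{(1)}_{r^\star}$ and $\ell = r^\star+1$, monotonicity of the staircase coordinates forces $x^{(2)}_{\ell-1} \le q \le x^{(2)}_\ell$, so the point $p := (q,\ell)$ lies on $\pi_1$ (as the left endpoint of its level-$\ell$ segment) and on $\pi_2$ (on its level-$\ell$ segment).

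Then I would split each path at $p$ into a ``pre'' piece (from its start up to $p$) and a ``post'' piece (from $p$ to its end); since the Brownian increments are additive, $B_\ell(a,q)+B_\ell(q,b)=B_\ell(a,b)$, the weight of each geodesic is the sum of the weights of its two pieces. Concatenating $\pi_1^{\mathrm{pre}}$ with $\pi_2^{\mathrm{post}}$ gives an up-right path from $(u,1)$ to $(y,n)$, and $\pi_2^{\mathrm{pre}}$ with $\pi_1^{\mathrm{post}}$ one from $(v,1)$ to $(x,n)$; bounding the weight of each concatenation by the corresponding last-passage value and summing yields
\[
L(u,1;x,n) + L(v,1;y,n) \;\le\; L(u,1;y,n) + L(v,1;x,n).
\]
On the other hand, since $v \le x$, optimality of $u = Z_L^f(n,x)$ gives $f(u)+L(u,1;x,n) \ge f(v)+L(v,1;x,n)$, and since $u \le y$, optimality of $v = Z_L^f(n,y)$ gives $f(v)+L(v,1;y,n) \ge f(u)+L(u,1;y,n)$; adding these and cancelling $f(u)+f(v)$ produces the reverse of the displayed inequality. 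Hence equality holds throughout, and in particular $f(u)+L(u,1;y,n)=f(v)+L(v,1;y,n)=H_L(n,y;f)$, so $u$ is a maximizer of $w\mapsto f(w)+L(w,1;y,n)$ over $w\le y$ strictly larger than $v=Z_L^f(n,y)$, contradicting the definition of $Z_L^f(n,y)$ as the \emph{largest} such maximizer.

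I expect the only genuinely delicate point to be the construction of the common point $p$ and the verification that the path weights split additively there in the semi-discrete geometry; the rest is bookkeeping. One should also record in passing that $Z_L^f(n,\cdot)$ is well defined: under $\liminf_{x\to-\infty} f(x)/x > 0$ the map $w\mapsto f(w)+L(w,1;y,n)$ is continuous and tends to $-\infty$ as $w\to-\infty$, and its supremum is finite by Lemma \ref{lem:BLPP_inductive}, so its set of maximizers on $(-\infty,y]$ is nonempty, closed and bounded above and thus has a greatest element.
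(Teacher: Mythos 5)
Your proof is correct and follows essentially the same path-crossing (tail-swapping) argument as the paper: produce a common point of the two geodesics, exchange the tails there, and combine the resulting quadrangle inequality with optimality of the two exit points to contradict maximality of $Z_L^f(n,y)$. The only differences are cosmetic — you make explicit the crossing point that the paper obtains ``by planarity,'' and you package the conclusion via the quadrangle inequality rather than the paper's direct chain of inequalities.
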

\begin{proof}
    Suppose, to the contrary, that $z_x := Z_L^f(n,x) > Z_L^f(n,y) =: z_y$. By planarity, the BLPP geodesic from $(z_x,1)$ to $(x,n)$ must cross the BLPP geodesic from $(z_y,1)$ to $(y,n)$ at some point $(w,r)$. Since $z_x$ is maximal for $f(x) + L(x,1;y,n)$ and $(w,r)$ lies on the geodesic from $(z_x,1)$ to $(x,n)$,
    \begin{align*}
    &\quad\; f(z_x) + L(z_x,1;w,r) + L(w,r;x,n) \\
    &= f(z_x) + L(z_x,1;x,n) \\ 
    &\ge f(z_y) + L(z_y,1;w,r)  + L(w,r;x,n),
    \end{align*}
    which implies $f(z_x) + L(z_x,1;w,r) \ge f(z_y) + L(z_y,1;w,r)$.
    Next, since $(w,r)$ lies on the geodesic from $(z_y,1)$ to $(y,n)$,
    \begin{align*}
    &\quad \; f(z_x) + L(z_x,1;y,n) \\
    &\ge f(z_x) + L(z_x,1;w,r) + L(w,r;y,n) \\
    &\ge f(z_y) + L(z_y,1;w,r) + L(w,r;y,n) \\
    &= f(z_y) + L(z_y,1;y,n) = \sup_{x \in \R}\{f(x) + L(x,1;y,n)\}.
    \end{align*}
    Hence, $z_x$ is a maximizer for $f(x) + L(x,1;y,n)$. This contradicts the definition of $z_y$ as the largest such maximizer. 
\end{proof}

\subsection{Stationary and near-stationary BLPP} \label{sec:BLPP_stat}
In this section, we study a coupling of initial conditions that is \textit{different} from the SH. This coupling is not jointly stationary under the BLPP dynamics, but will allow us to derive valuable information about a single initial condition. Let $B$ be a two-sided Brownian motion with diffusivity $1$ and zero drift, independent of the field  of Brownian motions $\{B_r\}_{r \ge 0}$ defining $L$ \eqref{BLPPdef}. We let $(\Omega,\Ff,\Pp)$ be the probability space on which these are defined. Henceforth, we consider BLPP from an initial boundary condition on level $0$. For $n \in \Z_{> 0}$, $y \ge 0$, and $\rho,\lambda > 0$, set 
\begin{align*}
L^{\rho,\lambda}(n,y) &= \sup_{-\infty < x \le 0}\{B(x) + \rho x + L(x,1;y,n)\}\vee \sup_{0\le x \le y}\{B(x) + \lambda x + L(x,1;y,n)\} \\
&=\sup_{-\infty < x \le y}\{f^{\rho,\lambda}(x) + L(x,1;y,n)\}
\end{align*}
where $f^{\rho,\lambda}(x) = B(x) + \rho(x)$ for $x < 0$ and $B(x) + \lambda x$ for $x > 0$.
When $\lambda = \rho$, we use the shorthand notation $L^\lambda$. 
We adopt the notation
\[
Z^{\rho,\lambda}(n,y) = Z_L^{f^{\rho,\lambda}}(n,y) = \sup\{x \le y: f^{\rho,\lambda}(x) + L(x,1;y,n) = L^{\rho,\lambda}(n,y)\},
\]
where we denote $Z^\lambda(n,y) = Z^{\lambda,\lambda}(n,y)$. We record the following facts.
\begin{lemma} \label{lem:BLPPsigmamont}
Let $n\in \Z_{>0}$ and  $y \ge 0$. If $\rho'\ge \rho > 0$ and $\lambda'\ge \lambda > 0$, then $Z^{\rho',\lambda'}(n,y) \ge Z^{\rho,\lambda}(n,y)$.
\end{lemma}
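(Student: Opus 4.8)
The plan is to prove the monotonicity statement by a path-crossing argument, exactly parallel to the proof of Lemma \ref{lem:Zfmont} above, but now comparing two \emph{different} initial conditions rather than two endpoints. Write $z = Z^{\rho,\lambda}(n,y)$ and $z' = Z^{\rho',\lambda'}(n,y)$, and suppose toward a contradiction that $z' < z$. First I would observe that increasing the drift parameters only tilts the boundary function in a way that is increasing in $x$: indeed, for $x < 0$ we have $f^{\rho',\lambda'}(x) - f^{\rho,\lambda}(x) = (\rho' - \rho)x$, and for $x \ge 0$ we have $f^{\rho',\lambda'}(x) - f^{\rho,\lambda}(x) = (\lambda' - \lambda)x$; in both regimes $x \mapsto f^{\rho',\lambda'}(x) - f^{\rho,\lambda}(x)$ is nondecreasing (the two pieces agree at $x=0$). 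This is the only structural input beyond planarity.

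The core step is then the crossing argument. The BLPP geodesic from $(z,0)$ to $(y,n)$ and the geodesic from $(z',0)$ to $(y,n)$ share the endpoint $(y,n)$, and since $z' < z$ at the bottom, by planarity they must intersect at some space-time point $(w,r)$ (possibly $(y,n)$ itself). Using that $z$ is a maximizer for $f^{\rho,\lambda}(\abullet) + L(\abullet,1;y,n)$ and that $(w,r)$ lies on its geodesic, one gets, after subtracting the common increment $L(w,r;y,n)$,
\[
f^{\rho,\lambda}(z) + L(z,1;w,r) \ge f^{\rho,\lambda}(z') + L(z',1;w,r).
\]
Since $z' \le w \le z$ is impossible in general, I should instead note $w \le z' < z$ or $z' < z \le w$ etc.; in any case the increment inequality above combined with the drift monotonicity $f^{\rho',\lambda'}(z) - f^{\rho,\lambda}(z) \ge f^{\rho',\lambda'}(z') - f^{\rho,\lambda}(z')$ (valid because $z' < z$ and the difference is nondecreasing) yields
\[
f^{\rho',\lambda'}(z) + L(z,1;w,r) \ge f^{\rho',\lambda'}(z') + L(z',1;w,r).
\]
Now use that $(w,r)$ lies on the geodesic from $(z',0)$ to $(y,n)$ for the $f^{\rho,\lambda}$ (equivalently, for the environment $L$) to concatenate: $L(z,1;w,r) + L(w,r;y,n) \le L(z,1;y,n)$ by super-additivity of last-passage values, while $L(z',1;w,r) + L(w,r;y,n) = L(z',1;y,n)$. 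Hence
\[
f^{\rho',\lambda'}(z) + L(z,1;y,n) \ge f^{\rho',\lambda'}(z') + L(z',1;y,n) = L^{\rho',\lambda'}(n,y),
\]
so $z$ is a maximizer for the $(\rho',\lambda')$ problem, contradicting that $z' < z$ is the \emph{largest} such maximizer.

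The main obstacle, and the place to be careful, is bookkeeping the sign of $L(w,r;y,n) - L(w,r;y,n)$ and making sure the concatenation inequalities point the right way: last-passage values satisfy $L(a,1;y,n) \ge L(a,1;w,r) + L(w,r;y,n)$ with equality precisely when $(w,r)$ is on a geodesic from $(a,0)$ to $(y,n)$, so I must apply the equality only for the geodesic I actually chose $(w,r)$ to lie on, and the inequality for the other. A second minor point is the degenerate case $w = y$, $r = n$, where the crossing point is the common endpoint; there the argument still goes through since $L(y,n;y,n)=0$. I would also remark that the drift-monotonicity of $f^{\rho',\lambda'} - f^{\rho,\lambda}$ is exactly what fails if one tried to compare non-monotone reparametrizations, which is why the statement is restricted to $\rho' \ge \rho$ and $\lambda' \ge \lambda$ simultaneously.
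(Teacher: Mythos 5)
Your proof is correct, but it takes a more roundabout route than the paper's. The paper isolates the same key fact you do --- that $x \mapsto f^{\rho',\lambda'}(x) - f^{\rho,\lambda}(x)$ is nondecreasing, i.e.\ $f^{\rho,\lambda} \li f^{\rho',\lambda'}$ --- and then simply invokes the deterministic Lemma \ref{lemma:max_monotonicity}, which says that if two functions have ordered increments then their leftmost and rightmost maximizers are ordered; applying it to $x \mapsto f^{\rho,\lambda}(x) + L(x,1;y,n)$ versus $x \mapsto f^{\rho',\lambda'}(x) + L(x,1;y,n)$ finishes the proof in two lines. Your geodesic-crossing step is actually superfluous here: unlike in Lemma \ref{lem:Zfmont}, the two variational problems share both the terminal point $(y,n)$ and the environment $L$, so the passage-time terms are literally identical and cancel. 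Concretely, from maximality of $z = Z^{\rho,\lambda}(n,y)$ and the monotone difference one gets directly
\[
f^{\rho',\lambda'}(z) + L(z,1;y,n) \;\ge\; f^{\rho,\lambda}(z') + L(z',1;y,n) + \bigl(f^{\rho',\lambda'}(z') - f^{\rho,\lambda}(z')\bigr) \;=\; L^{\rho',\lambda'}(n,y),
\]
which is your contradiction without ever introducing the crossing point $(w,r)$; your argument in effect reproves this by taking $(w,r) = (y,n)$ plus detours. The crossing machinery is the right tool when the \emph{endpoints} differ and the boundary function is fixed (Lemma \ref{lem:Zfmont}); when the \emph{boundary functions} differ and the endpoint is fixed, the increment-ordering lemma is the cleaner and more general tool, since it needs no geodesic uniqueness or planarity input at all. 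Your steps are all valid (the superadditivity and equality-on-geodesic identities point the right way, and the degenerate case $(w,r)=(y,n)$ is handled), so this is a matter of economy rather than correctness.
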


\begin{proof}
By Lemma \ref{lemma:max_monotonicity}, it suffices to prove that $f^{\rho,\lambda} \li f^{\rho',\lambda'}$, meaning $f^{\rho,\lambda}(x,y) \le f^{\rho',\lambda'}(x,y)$ for all $x < y$. If $x < y \le 0$ or $0 \le x \le y$, the result is immediate. If $x < 0 < y$, then
\[
f^{\rho,\lambda}(x,y)  = B(x,y) + \lambda y - \rho x \le B(x,y) +\lambda' y - \rho 'x = f^{\rho',\lambda'}(x,y). \qedhere
\]
\end{proof}

\begin{lemma} \label{lem:BLPPLrl=}
Let $n\in \Z_{>0}$, $y \ge 0$, and $\rho,\alpha,\lambda > 0$. If $Z^{\rho,\alpha}(n,y) \le 0$ and $Z^{\rho,\lambda}(n,y) \le 0$, then $L^{\rho,\alpha}(n,y) = L^{\rho,\lambda}(n,y)$. If $Z^{\alpha,\lambda}(n,y) \ge 0$ and $Z^{\rho,\lambda}(n,y) \ge 0$, then $L^{\alpha,\lambda}(n,y) = L^{\rho,\lambda}(n,y)$.
\end{lemma}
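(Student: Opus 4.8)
\textbf{Proof plan for Lemma~\ref{lem:BLPPLrl=}.} The statement is about when two stationary-type last-passage values with different right-boundary slopes coincide, and it follows from the same paths-crossing and max-monotonicity reasoning used throughout this section. I will prove the first claim; the second is symmetric (exchanging the roles of the left and right boundary slopes and the sign of the exit point). First I would record the elementary fact that the only difference between $f^{\rho,\alpha}$ and $f^{\rho,\lambda}$ is on $(0,\infty)$, where $f^{\rho,\alpha}(x) = B(x) + \alpha x$ and $f^{\rho,\lambda}(x) = B(x) + \lambda x$, while on $(-\infty,0]$ they are identical, both equal to $B(x) + \rho x$. In particular $f^{\rho,\alpha}$ and $f^{\rho,\lambda}$ agree on $(-\infty,0]$.

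The key step is the observation that $Z^{\rho,\alpha}(n,y)\le 0$ means the supremum defining $L^{\rho,\alpha}(n,y)$ is attained at some $x\le 0$, i.e.
\[
L^{\rho,\alpha}(n,y) = \sup_{-\infty < x \le 0}\{f^{\rho,\alpha}(x) + L(x,1;y,n)\} = \sup_{-\infty < x \le 0}\{f^{\rho,\lambda}(x) + L(x,1;y,n)\},
\]
where the second equality uses that $f^{\rho,\alpha} = f^{\rho,\lambda}$ on $(-\infty,0]$. The same argument with $\alpha$ replaced by $\lambda$ gives
\[
L^{\rho,\lambda}(n,y) = \sup_{-\infty < x \le 0}\{f^{\rho,\lambda}(x) + L(x,1;y,n)\}.
\]
Comparing the two displays yields $L^{\rho,\alpha}(n,y) = L^{\rho,\lambda}(n,y)$, completing the proof of the first claim.

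For the second claim, $Z^{\alpha,\lambda}(n,y)\ge 0$ and $Z^{\rho,\lambda}(n,y)\ge 0$ mean that both suprema are attained at some $x\ge 0$; since $f^{\alpha,\lambda}$ and $f^{\rho,\lambda}$ agree on $[0,\infty)$ (both equal $B(x)+\lambda x$ there), restricting each supremum to $x\ge 0$ gives $L^{\alpha,\lambda}(n,y) = \sup_{0\le x\le y}\{f^{\rho,\lambda}(x)+L(x,1;y,n)\} = L^{\rho,\lambda}(n,y)$. The only subtlety, and the one point that needs a sentence of care rather than being routine, is justifying that the exit-point hypothesis genuinely lets us restrict the supremum to the appropriate half-line: this is where the definition \eqref{BLPPZgen} of $Z_L^f$ as the \emph{largest} maximizer matters, together with the fact that $L(x,1;y,n)$ is continuous in $x$, so the supremum over $(-\infty,0]$ (resp.\ $[0,y]$) is attained and equals the global supremum precisely when some maximizer lies in that half-line. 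I expect no real obstacle here; the lemma is a short bookkeeping consequence of how the boundary functions are glued at the origin.
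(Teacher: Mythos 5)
Your proof is correct and is essentially the paper's own argument: since the exit-point hypotheses let you restrict each supremum to the half-line where the two boundary functions coincide, the two passage values agree. The extra sentence about attainment of the supremum via continuity is a reasonable bit of added care but does not change the route.
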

\begin{proof}
If $Z^{\rho,\alpha}(n,y) \le 0$ and $Z^{\rho,\lambda}(n,y) \le 0$, then
\[
L^{\rho,\alpha}(n,y) = \sup_{-\infty \le x \le 0}\{B(x) + \rho x + L(x,1;y,n)\} = L^{\rho,\lambda}(n,y).
\]
The other statement is proved analogously. 
\end{proof}
\begin{lemma} \label{lem:BLPPshinvar}
BLPP satisfies the following shift invariance: for $r \in \Z$ and $z \in \R$,
\[
 \{L(x,m;y,n):x \le y, m \le n\} \deq \{L(x + z,m + r;y + z,n + r):x \le y, m \le n\}
\]
\end{lemma}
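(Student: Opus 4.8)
\textbf{Proof plan for Lemma \ref{lem:BLPPshinvar} (shift invariance of BLPP).}

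The statement to prove is that for any $r \in \Z$ and $z \in \R$,
\[
\{L(x,m;y,n):x \le y, m \le n\} \deq \{L(x + z,m + r;y + z,n + r):x \le y, m \le n\}
\]
as processes. The approach is entirely distributional and rests on the definition \eqref{BLPPdef}, which expresses every $L(x,m;y,n)$ as a deterministic (measurable) functional of the Brownian field $\mathbf B = \{B_r\}_{r \in \Z}$, and on the invariance of the law of $\mathbf B$ under two independent symmetries: spatial translation of each coordinate Brownian motion, and relabeling of the index set $\Z$.

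First I would record the two facts about the driving field. Since each $B_r$ is a two-sided standard Brownian motion with stationary increments, for any fixed $z \in \R$ the shifted field $\{B_r(z + \aabullet) - B_r(z)\}_{r \in \Z}$ has the same law as $\{B_r\}_{r \in \Z}$; note the recentering by $B_r(z)$ is harmless because $L$ in \eqref{BLPPdef} depends on the $B_r$ only through their increments $B_r(x_{r-1},x_r)$. Independently, since the $B_r$ are i.i.d.\ across $r \in \Z$, the relabeled field $\{B_{r + s}\}_{r \in \Z}$ has the same law as $\{B_r\}_{r \in \Z}$ for any fixed $s \in \Z$. Composing, for fixed $(r,z)$ the field $\wt{\mathbf B} := \{B_{s + r}(z + \aabullet)\}_{s \in \Z}$ is equal in distribution to $\mathbf B$.

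Next I would verify the deterministic identity at the level of a single quantity: for any realization of $\mathbf B$,
\[
L(x + z, m + r; y + z, n + r)[\mathbf B] = L(x,m;y,n)[\wt{\mathbf B}],
\]
where the right side denotes the functional \eqref{BLPPdef} applied to the field $\wt{\mathbf B}$. This is just a change of variables in the supremum in \eqref{BLPPdef}: writing a generic path for the left side as $x + z = x_{m + r - 1} \le \cdots \le x_{n + r} = y + z$, substitute $x_j = z + u_{j - r}$ so that $u_{m-1} = x \le \cdots \le u_n = y$, and observe $\sum_{j = m + r}^{n + r} B_j(x_{j-1},x_j) = \sum_{s = m}^{n} B_{s + r}(z + u_{s-1}, z + u_s) = \sum_{s=m}^n \wt B_s(u_{s-1},u_s)$. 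The correspondence between admissible paths is a bijection, so the two suprema agree. This identity holds jointly over all $(x,m;y,n)$ with the same $(r,z)$, since the same substitution is used throughout.

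Finally, combining the two displays: the process $\{L(x+z,m+r;y+z,n+r)\}$ is the image of $\wt{\mathbf B}$ under the measurable map $\mathbf C \mapsto \{L(x,m;y,n)[\mathbf C]\}$, while $\{L(x,m;y,n)\}$ is the image of $\mathbf B$ under the same map; since $\wt{\mathbf B} \deq \mathbf B$, the two image processes are equal in distribution. I expect no real obstacle here — the only mild point to be careful about is to phrase the equality in distribution at the level of the whole process (all finite-dimensional marginals, or equivalently the pushforward on the relevant path space) rather than pointwise, and to note explicitly that the recentering $B_r(z)$ does not affect $L$ because only increments enter. The case $z$ and $r$ of arbitrary sign is handled identically since both symmetries of $\mathbf B$ are two-sided.
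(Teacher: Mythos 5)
Your proposal is correct and follows essentially the same route as the paper: the $r$-shift comes from the i.i.d.\ structure of $\{B_r\}_{r\in\Z}$ and the $z$-shift from the stationarity of Brownian increments, combined via a change of variables in the supremum of \eqref{BLPPdef}. Your write-up is somewhat more explicit than the paper's (which simply performs the substitution inside the displayed supremum), but there is no substantive difference.
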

\begin{proof}
The invariance under $r$-shifts follows because $\{B_r\}_{r \in \Z}$ is i.i.d. For $z \in \R$, the following distributional equality holds on the process level:
    \begin{align*}
    &\quad \; L(x,m;y,n) \\&= \sup\Bigl\{\sum_{r = m}^n B_r(x_{r - 1},x_r) : x = x_{m - 1} \le x_m \le \cdots \le x_{n - 1} \le x_n = y\Bigr\} \\
    &\deq \sup\Bigl\{\sum_{r = m}^n B_r(x_{r - 1} + z,x_r + z) : x = x_{m - 1} \le x_m \le \cdots \le x_{n - 1} \le x_n = y\Bigr\} \\
    &= \sup\Bigl\{\sum_{r = m}^n B_r(x_{r - 1} ,x_r ) : x + z = x_{m - 1} \le x_m \le \cdots \le x_{n - 1} \le x_n = y + z\Bigr\} \\
    &= L(x + z,m;y + z,n). \qquad\qquad\qquad\qquad\qquad\qquad \qquad\qquad\qquad\qquad\qquad\qquad \qedhere
    \end{align*}
\end{proof}
For $n \in \Z_{>0}$ and $y \ge 0$, define
\be \label{Erl}
\Erl(n,y) = L^{\rho,\lambda}(n,y) - B(y) - \lambda y.
\ee
Again, when $\rho = \lambda$, we write a single superscript.
We now cite the following result. 

\begin{lemma}\cite[Theorem 2 and Equation (24)]{brownian_queues} \label{lem:Wrldist}
The random variable 
\[
\sup_{-\infty < x \le 0}\{B(x) + \rho x + L(x,1;0,n)\}
\]
has the distribution of the sum of $n$ i.i.d. exponential random variables with rate $\rho$. 
\end{lemma}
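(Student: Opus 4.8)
The quickest route is to quote \cite[Theorem 2 and Eq.~(24)]{brownian_queues} directly; what follows is the plan for a self‑contained argument, which amounts to the classical Burke's‑theorem induction phrased through the queuing maps of Section \ref{sec:queue_intro}. Fix $\rho>0$, let $f=f_0$ be the two‑sided Brownian motion $x\mapsto B(x)+\rho x$ (diffusivity $1$, drift $\rho>0$), and, as in Lemma \ref{lem:BLPP_inductive} with $m=0$, set $f_j=D(f_{j-1},B_j)$ for $j\ge 1$. Put $W_n=H_L(n,0;f)$, which by \eqref{BLPP_bdy} is precisely the random variable in the statement. Using the recursion \eqref{HLind}, the relation $H_L(n-1,w;f)=W_{n-1}+f_{n-1}(w)$ from Lemma \ref{lem:BLPP_inductive}, and the identity $f_{n-1}(w)+B_n(w,0)=B_n(w,0)-f_{n-1}(w,0)$ (valid because $f_{n-1}(0)=0$), one gets, for $n\ge 1$ with $W_0:=0$,
\[
W_n-W_{n-1}=\sup_{-\infty<w\le 0}\{B_n(w,0)-f_{n-1}(w,0)\}=Q(f_{n-1},B_n)(0)=:Q_n(0),
\]
with $Q$ the map \eqref{Qdef}. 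Hence $W_n=\sum_{j=1}^n Q_j(0)$, and it remains only to show that $Q_1(0),\dots,Q_n(0)$ are i.i.d.\ $\Exp(\rho)$.

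For the marginal law I would first check by induction on $j$ that each $f_j$ is a two‑sided Brownian motion of diffusivity $1$ and drift $\rho$: the base case is the definition of $f_0$, and the step is Theorem \ref{thm:OCY_DR} applied to the pair $(B_j,f_{j-1})$ (drifts $0<\rho$, common diffusivity $1$, independent since $B_j$ is independent of $(f,B_1,\dots,B_{j-1})$), whose output includes $f_j=D(f_{j-1},B_j)$ being Brownian of drift $\rho$. Since $Q_j(0)=\sup_{w\le 0}\{f_{j-1}(w)-B_j(w)\}$ and the reflected process $u\mapsto f_{j-1}(-u)-B_j(-u)$ on $[0,\infty)$ is a Brownian motion of diffusivity $\sqrt 2$ and drift $-\rho$, Lemma \ref{lemma:sup of BM with drift} gives $Q_j(0)\sim\Exp(\rho)$ for every $j$. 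Finiteness of all suprema involved follows from the strictly positive drift, exactly as in Lemmas \ref{D and R preserve limits} and \ref{lem:BLPP_inductive}.

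For the independence I would run an induction on $j$ with hypothesis: $(Q_1(0),\dots,Q_j(0))$ has independent $\Exp(\rho)$ coordinates and is independent of $\mathcal G_j:=\sigma(\{f_j(x):x\le 0\})$. The step uses two inputs: (i) the ``furthermore'' clause of Theorem \ref{thm:OCY_DR} (equivalently Theorem \ref{thm:OCY_orig}) applied to $(B_{j+1},f_j)$ at $y=0$, which states that the increments of $f_{j+1}=D(f_j,B_{j+1})$ on $(-\infty,0]$, i.e.\ $\mathcal G_{j+1}$, are independent of $\{Q(f_j,B_{j+1})(x):x\ge 0\}$, in particular of $Q_{j+1}(0)$; and (ii) Lemma \ref{DRcont}, which exhibits $f_{j+1}|_{(-\infty,0]}$ and $Q_{j+1}(0)$ as measurable functions of $f_j|_{(-\infty,0]}$ and $B_{j+1}|_{(-\infty,0]}$ only. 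Because $B_{j+1}$ is independent of $(f,B_1,\dots,B_j)$, while $(Q_1(0),\dots,Q_j(0))$ and $\mathcal G_j$ are $\sigma(f,B_1,\dots,B_j)$‑measurable, (ii) together with the induction hypothesis yields that $(Q_1(0),\dots,Q_j(0))$ is independent of the pair $(Q_{j+1}(0),\mathcal G_{j+1})$. Combining this with (i) via the elementary fact that $X\perp(Y,Z)$ and $Y\perp Z$ force $X,Y,Z$ mutually independent, one concludes that $(Q_1(0),\dots,Q_{j+1}(0))$ has independent $\Exp(\rho)$ coordinates and is independent of $\mathcal G_{j+1}$; the base case $j=0$ is vacuous. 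Taking $j=n$ shows $W_n=\sum_{j=1}^n Q_j(0)$ is a sum of $n$ i.i.d.\ $\Exp(\rho)$ variables.

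The main obstacle is the independence step: the single‑step Burke identity of Theorem \ref{thm:OCY_DR} concerns only one pair $(B_{j+1},f_j)$, and one must ensure the resulting independence survives conditioning on the earlier queue lengths $Q_1(0),\dots,Q_j(0)$. This is exactly why the argument is organized so that the fresh Brownian motion $B_{j+1}$ enters independently at level $j+1$ and why one propagates the half‑line $\sigma$‑algebras $\mathcal G_j$ rather than the full paths $f_j$; everything else is bookkeeping with the deterministic formulas of Lemma \ref{DRcont} and the elementary independence lemma quoted above.
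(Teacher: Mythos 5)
Your argument is correct. Note that the paper itself offers no proof of this lemma — it is a pure citation of \cite[Theorem 2 and Eq.~(24)]{brownian_queues} — so what you have written is a self-contained reconstruction of that result from the ingredients the paper does develop, and it is essentially the iterated-Burke argument of O'Connell and Yor. The telescoping identity $W_n-W_{n-1}=Q(f_{n-1},B_n)(0)$ follows correctly from \eqref{HLind} and Lemma \ref{lem:BLPP_inductive} (using $f_{n-1}(0)=B_n(0)=0$), the marginal $\Exp(\rho)$ law is exactly Lemma \ref{lemma:sup of BM with drift} applied to the drift-$\rho$, diffusivity-$\sqrt2$ difference process, and your independence induction is sound: propagating the half-line $\sigma$-algebras $\mathcal G_j$ is precisely what is needed, since the ``furthermore'' clause of Theorem \ref{thm:OCY_DR} at $y=0$ gives $Q_{j+1}(0)\perp\mathcal G_{j+1}$, while the measurability of $(Q_{j+1}(0),\mathcal G_{j+1})$ with respect to $(\mathcal G_j, B_{j+1})$ (via Lemma \ref{DRcont}) transfers the inductive independence of $(Q_1(0),\dots,Q_j(0))$ forward, and the elementary fact that $X\perp(Y,Z)$ together with $Y\perp Z$ yields mutual independence closes the step. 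The only cosmetic caveat is that Theorem \ref{thm:OCY_DR} as printed has typos ($R(Y,C)$ for $R(Z,B)$, and both inputs called $B$), but the intended content — that $D(Z,B)$ inherits the larger drift and that the departure increments up to time $y$ are independent of the queue length from time $y$ onward — is exactly what you invoke, with the hypothesis $\rho>0$ guaranteeing the required strict ordering of drifts and the finiteness of all suprema.
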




\begin{corollary} \label{cor:east_bdy}
    For $n \in \Z_{>0}$ and $y \ge 0$, the random variable $\mathcal E^\rho(n,y)$ has the distribution of a sum of $n$ i.i.d. exponential random variables with rate $\rho$. 
\end{corollary}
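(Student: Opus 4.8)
\textbf{Proof proposal for Corollary \ref{cor:east_bdy}.}

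The plan is to reduce the statement about $\mathcal E^\rho(n,y)$ to the already-established Lemma \ref{lem:Wrldist} by exploiting the shift invariance of BLPP (Lemma \ref{lem:BLPPshinvar}). Recall that $\mathcal E^\rho(n,y) = L^\rho(n,y) - B(y) - \rho y$, and that $L^\rho(n,y) = \sup_{-\infty < x \le y}\{f^\rho(x) + L(x,1;y,n)\}$ where $f^\rho(x) = B(x) + \rho x$ for all $x$ when $\rho = \lambda$. The key observation is that $L^\rho(n,y) - B(y) - \rho y = \sup_{-\infty < x \le y}\{(B(x) - B(y)) + \rho(x - y) + L(x,1;y,n)\}$, and after the change of variables $x = y + w$ with $w \le 0$ this becomes $\sup_{-\infty < w \le 0}\{(B(y+w) - B(y)) + \rho w + L(y + w, 1; y, n)\}$.

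First I would note that, as a process in $w \le 0$, the pair $\big(w \mapsto B(y+w) - B(y),\ w \mapsto L(y+w,1;y,n)\big)$ has the same joint law as $\big(w \mapsto B(w),\ w \mapsto L(w,1;0,n)\big)$. For the Brownian part this is the translation invariance of two-sided Brownian motion (and the fact that $B$ is independent of the environment $\{B_r\}_{r \ge 1}$ defining $L$). For the last-passage part this is precisely the $z$-shift invariance recorded in Lemma \ref{lem:BLPPshinvar} with $z = -y$ (taking $m = 1$, $n = n$), which gives $\{L(x,1;y,n)\}_{x \le y} \deq \{L(x - y, 1; 0, n)\}_{x \le y}$ on the process level; the independence of $B$ from the environment lets us combine the two. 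Hence
\[
\mathcal E^\rho(n,y) \deq \sup_{-\infty < w \le 0}\{B(w) + \rho w + L(w,1;0,n)\}.
\]

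Then I would invoke Lemma \ref{lem:Wrldist}, which states exactly that the right-hand side has the distribution of a sum of $n$ i.i.d.\ rate-$\rho$ exponential random variables, completing the proof. The only mild subtlety — and the step I would be most careful about — is making sure the joint distributional identity is applied correctly: one needs the shift to act simultaneously and consistently on the Brownian boundary $B$ and on the bulk environment, which is legitimate because $B \perp \{B_r\}_{r \ge 1}$ and each is separately shift/translation invariant, so the product measure is invariant under the simultaneous shift. This is routine but worth stating explicitly rather than hand-waving. No integrable input beyond Lemma \ref{lem:Wrldist} is needed.
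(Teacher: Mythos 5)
Your proposal is correct and follows essentially the same route as the paper: rewrite $\mathcal E^\rho(n,y)$ as a supremum of increments, apply the joint shift invariance of $B$ and of $L$ (Lemma \ref{lem:BLPPshinvar}) to move the endpoint to the origin, and invoke Lemma \ref{lem:Wrldist}. Your explicit remark that the simultaneous shift is legitimate because $B$ is independent of the environment $\{B_r\}$ is a point the paper's proof leaves implicit, so if anything your write-up is slightly more careful.
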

\begin{proof}
    Using shift invariance of $B$ as well as the shift invariance of $L$ from Lemma \ref{lem:BLPPshinvar},
    \begin{align*}
    \mathcal E^\rho(n,y) &= \sup_{-\infty < x \le y}\{B(x) + \rho x + L(x,1;y,n)\} - B y - \lambda y \\
    &= \sup_{-\infty < x \le y}\{B(y,x) + \rho(x - y) + L(x,1;y,n)\} \\
    &\deq \sup_{-\infty < x \le y}\{B(x - y) + \rho(x - y) + L(x - y,1;0,n)\} \\
    &= \sup_{-\infty < x \le 0}\{B(x) + \rho x + L(x,1;0,n)\}, 
    \end{align*}
    and the result now follows from Lemma \ref{lem:Wrldist}.
\end{proof}

Corollary \ref{cor:east_bdy} illustrates one way in which Brownian motion with drift is a stationary initial condition for BLPP. Much richer information can be derived about the stationarity in this model, to which we refer the reader to \cite{brownian_queues,blpp_utah}, and Appendix C of \cite{Seppalainen-Sorensen-21a}.  
\subsection{The EJS-Rains identity for BLPP and Proof of Theorem \ref{thm:BLPP_exit_pts}} \label{sec:EJS-Rains-BLPP} 
Recalling the definition of $\mathcal E^\rho(n,y)$ \eqref{Erl}, Corollary \ref{cor:east_bdy} implies that 
\[
\Ee[L^\rho(n,y)] = \Ee[\mathcal E^\rho(n,y)] + \Ee[B(y) + \rho y] = \f{n}{\rho} + \rho y.
\]
In light of this calculation, we make the following definitions.
For $n \in \Z_{>0}$ and $y \ge 0$,
\[
\begin{aligned}
M^\rho(n,y) &= \f{n}{\rho} + \rho y \qquad R^{\rho,\lambda}(n,y) = \int_{\lambda}^\rho M^\alpha(n,y)\,d\alpha = \f{\rho^2 - \lambda^2}{2}y + n \log \rho - n \log \lambda, \\
\zeta(n,y) &= \arg \inf_{\rho > 0} M^\rho(n,y) = \sqrt{\f{n}{y}}, \qquad \gamma(n,y) = \inf_{\rho > 0} M^{\rho}(n,y) = \f{n}{\zeta} + \zeta y = 2\sqrt{ny} 
\end{aligned}
\]
We make a simple observation: For $\ve  \in (0,1)$, there exists a constant $c = c(\ve) > 0$ so that for all $n \in \N$ and $n\ve < y < n\ve^{-1}$, 
\be \label{n+ynybd}
c(n + y) \le \gamma(n,y) = 2\sqrt{ny} \le n + y.
\ee
The upper bound holds by comparing geometric and arithmetic means, and the lower bound holds because $\gamma(n,y) \ge n\sqrt \ve$, and $n + y \le n(1 + \ve^{-1})$.

\begin{theorem}[The 
EJS-Rains identity for BLPP] \label{thm:BLPP-EJSR}
Let $y \ge 0$ and $n \in \Z_{>0}$. For $\rho,\lambda > 0$, we have 
\[
\Ee\Bigl[\exp\Bigl((\rho - \lambda) L^{\rho,\lambda}(n,y)\Bigr)\Bigr] = \exp\Bigl(R^{\rho,\lambda}(n,y)\Bigr).
\]
\end{theorem}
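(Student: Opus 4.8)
The plan is to adapt the coupling technique of \cite{Emrah-Janjigian-Seppalainen-21} to the semi-discrete setting. The key is to recognize the right-hand side as $R^{\rho,\lambda}(n,y) = \int_\lambda^\rho M^\alpha(n,y)\,d\alpha$ with $M^\alpha(n,y) = \frac{n}{\alpha} + \alpha y = \Ee[L^\alpha(n,y)]$ (the last equality via Corollary \ref{cor:east_bdy}), and then to prove the identity by establishing a first-order ODE in the parameter $\rho$, with $n$, $y$, and $\lambda$ held fixed.

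First I would dispose of the degenerate cases that fix the initial condition. When $\rho = \lambda$ both sides equal $1$. When $y = 0$ we have $L^{\rho,\lambda}(n,0) = \sup_{-\infty < x \le 0}\{B(x) + \rho x + L(x,1;0,n)\}$, which by Lemma \ref{lem:Wrldist} is distributed as a sum of $n$ i.i.d.\ rate-$\rho$ exponentials; since $\rho - \lambda < \rho$, its exponential moment at $\rho-\lambda$ equals $(\rho/\lambda)^n = e^{R^{\rho,\lambda}(n,0)}$. I would also record here the exponential-moment bound $\Ee[e^{\theta L^{\rho,\lambda}(n,y)}] < \infty$ for all $\theta$ in a neighborhood of $\rho-\lambda$: this follows from the pointwise domination $f^{\rho,\lambda} \li f^{\mu,\mu}$ with $\mu = \max(\rho,\lambda)$, hence $L^{\rho,\lambda}(n,y) \le L^{\mu}(n,y)$, together with the stationary structure of BLPP (Lemma \ref{lem:Wrldist}, Corollary \ref{cor:east_bdy}, and \cite{brownian_queues}). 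This bound legitimizes the interchanges of expectation and differentiation below.

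Set $F(\rho) := \Ee[e^{(\rho-\lambda)L^{\rho,\lambda}(n,y)}]$; the goal becomes $(\log F)'(\rho) = M^\rho(n,y)$. The dependence on $\rho$ in $L^{\rho,\lambda}(n,y) = \sup_{-\infty < x \le y}\{f^{\rho,\lambda}(x) + L(x,1;y,n)\}$ enters only through the term $\rho\, x\,\ind(x < 0)$ of $f^{\rho,\lambda}$, and $\rho \mapsto L^{\rho,\lambda}(n,y)$ is convex (a supremum of functions affine in $\rho$), so the envelope theorem gives, for a.e.\ $\rho$, $\partial_\rho L^{\rho,\lambda}(n,y) = Z^{\rho,\lambda}(n,y) \wedge 0$, with $Z^{\rho,\lambda}(n,y)$ the exit point of \eqref{BLPPZgen}. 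Differentiating under the expectation yields
\[
F'(\rho) = \Ee\bigl[\bigl(L^{\rho,\lambda}(n,y) + (\rho-\lambda)(Z^{\rho,\lambda}(n,y)\wedge 0)\bigr)\,e^{(\rho-\lambda)L^{\rho,\lambda}(n,y)}\bigr],
\]
so the theorem reduces to the coupling identity $\Ee[(L^{\rho,\lambda}(n,y) + (\rho-\lambda)(Z^{\rho,\lambda}(n,y)\wedge 0))\,e^{(\rho-\lambda)L^{\rho,\lambda}(n,y)}] = M^\rho(n,y)\,F(\rho)$. To prove it I would decompose according to the sign of $Z^{\rho,\lambda}(n,y)$: on $\{Z^{\rho,\lambda}(n,y) > 0\}$ the correction term vanishes and, by Lemmas \ref{lem:BLPPsigmamont} and \ref{lem:BLPPLrl=}, the value $L^{\rho,\lambda}(n,y)$ is unaffected by lowering $\lambda$ toward $0$; on $\{Z^{\rho,\lambda}(n,y) \le 0\}$ the correction term is $(\rho-\lambda)Z^{\rho,\lambda}(n,y)$ and $L^{\rho,\lambda}(n,y)$ agrees with the genuinely stationary value $L^{\rho}(n,y)$, again by Lemma \ref{lem:BLPPLrl=}. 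The two pieces are then evaluated using the Burke-type output structure of stationary BLPP that underlies Lemma \ref{lem:Wrldist} and Corollary \ref{cor:east_bdy}, together with the Gaussian law of the boundary increment $B$ on $[0,y]$. Integrating $(\log F)'(\alpha) = M^\alpha(n,y)$ over $\alpha \in [\lambda,\rho]$ and using $F(\lambda) = 1$ then gives $\log F(\rho) = \int_\lambda^\rho M^\alpha(n,y)\,d\alpha = R^{\rho,\lambda}(n,y)$.

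The main obstacle is the coupling identity in the previous paragraph, i.e.\ expressing the tilted first moment $\Ee[L^{\rho,\lambda}(n,y)\,e^{(\rho-\lambda)L^{\rho,\lambda}(n,y)}]$ and the exit-point term through the untilted mean $M^\rho(n,y)$. This is precisely where the EJS--Rains argument does its work, going back to Rains \cite{Rains-2000}; in the Brownian setting it calls for an integration-by-parts / change-of-measure computation that exploits the explicit joint law of the stationary boundary and the vertical increments, and a careful justification of the limit interchanges, all supported by the exponential-moment bound established above.
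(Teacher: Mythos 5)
Your reduction of the identity to the ``coupling identity''
\[
\Ee\bigl[\bigl(L^{\rho,\lambda}(n,y) + (\rho-\lambda)(Z^{\rho,\lambda}(n,y)\wedge 0)\bigr)e^{(\rho-\lambda)L^{\rho,\lambda}(n,y)}\bigr] = M^\rho(n,y)\,F(\rho)
\]
is a legitimate setup (the envelope-theorem computation of $\partial_\rho L^{\rho,\lambda}$ and the initial condition $F(\lambda)=1$ are fine), but that identity \emph{is} the theorem: it carries the entire content, and you do not prove it. The sketch you give for it does not close. On $\{Z^{\rho,\lambda}(n,y)>0\}$ you would need to evaluate $\Ee[L^{\rho,\lambda}e^{(\rho-\lambda)L^{\rho,\lambda}};\,Z^{\rho,\lambda}>0]$, and the event $\{Z^{\rho,\lambda}>0\}$ is strongly correlated with $L^{\rho,\lambda}$ in a way that Lemmas \ref{lem:BLPPsigmamont} and \ref{lem:BLPPLrl=} (which only compare exit points and values across different parameter pairs) do not control; ``the Burke-type output structure'' is invoked but never used to produce a formula. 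So the proposal has a genuine gap precisely at the step you yourself flag as the main obstacle.

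The paper's proof is much shorter and needs no differentiation in $\rho$ at all: the change of measure you allude to at the end is the whole argument. Write
\[
\Ee\bigl[e^{(\rho-\lambda)L^{\rho,\lambda}(n,y)}\bigr]
= \Ee\Bigl[e^{(\rho-\lambda)(B(y)+\lambda y)}\,e^{(\rho-\lambda)\left(L^{\rho,\lambda}(n,y)-B(y)-\lambda y\right)}\Bigr],
\]
and observe that $L^{\rho,\lambda}(n,y)$ is a function of the three independent ingredients $\{B(x)+\rho x\}_{x\le 0}$, $\{B(x)+\lambda x\}_{x\in[0,y]}$, and the bulk $L$. The tilt $e^{(\rho-\lambda)B(y)}$ is exactly the Cameron--Martin--Girsanov density (Theorem \ref{cameron-martin}) that changes the drift of the boundary segment on $[0,y]$ from $\lambda$ to $\rho$; extracting the deterministic factor $e^{(\rho^2-\lambda^2)y/2}$, the expectation becomes $e^{(\rho^2-\lambda^2)y/2}\,\Ee\bigl[e^{(\rho-\lambda)\mathcal E^{\rho}(n,y)}\bigr]$ with $\mathcal E^{\rho}(n,y)=L^{\rho}(n,y)-B(y)-\rho y$. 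By Corollary \ref{cor:east_bdy} this is a sum of $n$ i.i.d.\ rate-$\rho$ exponentials, whose exponential moment at $\rho-\lambda$ is $(\rho/\lambda)^n$, giving $\exp(R^{\rho,\lambda}(n,y))$ directly. In short: your instinct that a boundary change of measure drives the identity is correct, but embedding it inside an ODE-in-$\rho$ framework postpones it to a step you cannot complete with the stated tools, whereas applying it once, up front, finishes the proof.
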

\begin{proof}
Recall
\[
L^{\rho,\lambda}(n,y) = \sup_{-\infty < x \le 0}\{B(x) + \rho x + L(x,1;y,n)\} \vee \sup_{0 \le x \le y}\{B(x) + \lambda x + L(x,1;y,n)\},
\]
and this is a function of three independent processes, namely 
\[
\{B(x) + \rho x: x \le 0\},\qquad\{B(x) + \lambda x: x \in [0,y]\},\qquad \text{and}\qquad L.
\]
Now, we observe that 
\begin{align*}
    &\quad\;\Ee\Bigl[\exp\Bigl((\rho - \lambda) L^{\rho,\lambda}(n,y)\Bigr)\Bigr] \\
    &= \Ee\Bigl[e^{(\rho - \lambda)(B(y) + \lambda y)}\exp\Bigl((\rho - \lambda) (L^{\rho,\lambda}(n,y) - B(y) - \lambda y)\Bigr)\Bigr] \\
    &= \exp\Bigl(\f{\rho^2 - \lambda^2}{2}y\Bigr)\Ee\Bigl[\exp\Bigl((\rho - \lambda)(L^{\rho}(n,y) - B(y) - \rho y) \Bigr)\Bigr] 
    \end{align*}
    Here, the last step came from the Cameron-Martin-Girsanov Theorem (Theorem \ref{cameron-martin}): we changed the process $\{B(x) + \lambda x: [0,y]\}$ to a Brownian motion with drift $\rho$, thus producing $L^\rho$. Now, we recall \eqref{Erl} that $\mathcal E^\rho(n,y) = L^{\rho}(n,y) - B(y) - \rho y$, which has the distribution of the sum of $n$ i.i.d. exponential random variables with rate $\rho$ (Corollary \ref{cor:east_bdy}). Hence, we can compute the expectation above:
    \begin{align*}
    &\quad \;\exp\Bigl(\f{\rho^2 - \lambda^2}{2}y\Bigr)\Ee\Bigl[\exp\Bigl((\rho - \lambda)\mathcal E^{\rho}(n,y) \Bigr)\Bigr] \\
    &= \exp\Bigl(\f{\rho^2 - \lambda^2}{2}y\Bigr) \int_0^\infty e^{(\rho - \lambda)x}\f{\rho^n}{\Gamma(n)} x^{n - 1}e^{-\rho x}\,dx \\
    &= \exp\Bigl(\f{\rho^2 - \lambda^2}{2}y + n \log(\rho) - n \log(\lambda)\Bigr),
\end{align*}
and this is $\exp(R^{\rho,\lambda}(n,y))$, as desired. 
\end{proof}
\noindent Before proving Theorem \ref{thm:BLPP_exit_pts}, we first prove the following technical lemmas.  
\begin{lemma} \label{lem:MaBLPPbd}
    For all $n \in \Z_{>0}$, $y \ge 0$ and $\rho > 0$,
    \[
    M^\rho(n,y) - \gamma(n,y) - \f{\gamma(n,y)}{2\zeta(n,y)^2}(\rho - \zeta)^2 = - \f{\gamma(n,y)}{2 \rho \zeta^2} (\rho - \zeta)^3.
    \]
    Consequently, for each $\ve \in (0,1)$, there exists a constant $C = C(\ve)$ so that whenever $\ve < \f{n}{y} < \ve^{-1}$ and $\ve < \rho < \ve^{-1}$, for $\zeta = \zeta(n,y)$ and $\gamma = \gamma(n,y)$,
    \[
    \Bigl| M^\rho(n,y) - \gamma - \f{\gamma}{2\zeta^2} (\rho - \zeta)^2\Bigr| \le C(n + y) |\rho - \zeta|^3.
    \]
\end{lemma}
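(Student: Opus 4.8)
The plan is to treat the first identity in the lemma as a purely algebraic fact and then read off the inequality from it together with the already-established bound \eqref{n+ynybd}.

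\textbf{Step 1: the exact identity.} I would pass to the variables $\zeta = \zeta(n,y) = \sqrt{n/y}$ and $\gamma = \gamma(n,y) = 2\sqrt{ny}$, which satisfy $n = \zeta^2 y$ and $y = \gamma/(2\zeta)$; note that $\gamma,\zeta$ are \emph{constants} in what follows, while $\rho$ ranges over $(0,\infty)$. Substituting $n = \zeta^2 y$ into $M^\rho(n,y) = \tfrac n\rho + \rho y$ gives $M^\rho(n,y) = y(\zeta^2 + \rho^2)/\rho$, and then $y = \gamma/(2\zeta)$ yields $M^\rho(n,y) = \gamma(\zeta^2 + \rho^2)/(2\zeta\rho)$. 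Writing $\gamma = \gamma\cdot\frac{2\zeta\rho}{2\zeta\rho}$ one computes
\[
M^\rho(n,y) - \gamma = \frac{\gamma(\zeta^2 + \rho^2 - 2\zeta\rho)}{2\zeta\rho} = \frac{\gamma(\rho - \zeta)^2}{2\zeta\rho}.
\]
Subtracting $\frac{\gamma}{2\zeta^2}(\rho - \zeta)^2$ and combining over the common denominator $2\zeta^2\rho$ gives $\gamma(\rho - \zeta)^2\cdot\frac{\zeta - \rho}{2\zeta^2\rho} = -\frac{\gamma}{2\rho\zeta^2}(\rho - \zeta)^3$, which is exactly the claimed identity. (Conceptually this is nothing but the remainder in the second-order Taylor expansion of the convex function $\rho \mapsto M^\rho(n,y)$ about its minimizer $\zeta$, written out explicitly.)

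\textbf{Step 2: the estimate.} By Step 1 the left-hand side of the displayed inequality equals $\frac{\gamma(n,y)}{2\rho\,\zeta(n,y)^2}\,|\rho - \zeta(n,y)|^3$, so it suffices to bound the prefactor $\frac{\gamma(n,y)}{2\rho\,\zeta(n,y)^2}$ by $C(\ve)(n+y)$. From \eqref{n+ynybd} we have $\gamma(n,y) = 2\sqrt{ny} \le n + y$. The hypotheses $\ve < n/y < \ve^{-1}$ and $\ve < \rho < \ve^{-1}$ give $\zeta(n,y)^2 = n/y > \ve$ and $\rho > \ve$, hence $\frac{\gamma(n,y)}{2\rho\,\zeta(n,y)^2} \le \frac{n+y}{2\ve^2}$. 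Taking $C(\ve) = \tfrac{1}{2\ve^2}$ finishes the proof.

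\textbf{On the difficulty.} There is no genuine obstacle: the first part is forced once $M^\rho$ is expressed through $(\gamma,\zeta)$, and the second part is a one-line estimate using \eqref{n+ynybd}. The only point needing a little care is bookkeeping — keeping straight that $n,y$ (equivalently $\gamma,\zeta$) are the fixed parameters while $\rho$ is the variable, so that $\gamma$ and $\zeta$ may be treated as constants throughout the algebra of Step 1.
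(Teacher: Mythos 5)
Your proof is correct and follows essentially the same route as the paper: both arguments reduce to the exact computation $M^\rho(n,y) - \gamma = \frac{\gamma}{2\rho\zeta}(\rho-\zeta)^2$ (you via the substitutions $n=\zeta^2 y$, $y=\gamma/(2\zeta)$, the paper by direct expansion) and then combine the two fractions to get the cubic remainder, with the final estimate read off from \eqref{n+ynybd} and the lower bounds $\rho,\zeta^2>\ve$. Your Step 2 is in fact more explicit than the paper's one-sentence justification, but there is no substantive difference.
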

\begin{proof}
The ``consequently" part comes because of the assumption that $\rho$ and $\zeta$ are bounded away from $0$ and $\infty$ and \eqref{n+ynybd}. Now, observe that 
\begin{align*}
M^\rho(n,y) -\gamma &= \f{n}{\rho} + \rho y - \f{n}{\zeta} - \zeta y = (\rho - \zeta)y + \f{n(\zeta - \rho)}{\rho \zeta} = (\rho - \zeta)\f{\rho \zeta y - n}{\rho \zeta} \\
&=(\rho - \zeta) \Bigl(\f{\rho \sqrt{\f{n}{y}} y - n}{\rho \zeta}\Bigr) = \f{\sqrt{ny}}{\rho \zeta}(\rho - \zeta)^2 = \f{\gamma}{2\rho \zeta}(\rho - \zeta)^2.
\end{align*}
Then,
\[
M^\rho(n,y) -\gamma - \f{\gamma}{2\zeta^2}(\rho - \zeta)^2 = (\rho - \zeta)^2 \Bigl(\f{\gamma}{2\rho \zeta} - \f{\gamma}{2\zeta^2}\Bigr) = - \f{\gamma}{2\rho \zeta^2}(\rho - \zeta)^3. \qedhere
\]
\end{proof}

\begin{lemma} \label{lem:RrlBLPPbd}
For $\ve \in (0,1)$, there exists a constant $C = C(\ve)$ so that for $\rho,\lambda, \f{n}{y} \in (\ve,\ve^{-1})$ and $\zeta = \zeta(n,y) = \sqrt{\f{n}{y}}$,
\[
\Bigl|R^{\rho,\lambda}(n,y) - \gamma(\rho - \lambda) - \f{\gamma}{6\zeta^2}((\rho - \zeta)^3 - (\lambda - \zeta)^3)\Bigr| \le C(n + y) ((\rho - \zeta)^4 + (\lambda - \zeta)^4).
\]
\end{lemma}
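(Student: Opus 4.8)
The plan is to reduce the whole estimate to the exact algebraic identity already recorded in Lemma \ref{lem:MaBLPPbd} and then integrate. Abbreviate $\gamma=\gamma(n,y)=2\sqrt{ny}$ and $\zeta=\zeta(n,y)=\sqrt{n/y}$. Lemma \ref{lem:MaBLPPbd} says that for every $\alpha>0$,
\[
M^\alpha(n,y)=\gamma+\f{\gamma}{2\zeta^2}(\alpha-\zeta)^2-\f{\gamma}{2\alpha\zeta^2}(\alpha-\zeta)^3 .
\]
Integrating this in $\alpha$ over the interval with endpoints $\lambda$ and $\rho$, and using $\int_\lambda^\rho(\alpha-\zeta)^2\,d\alpha=\tf13\bigl((\rho-\zeta)^3-(\lambda-\zeta)^3\bigr)$, the first two terms contribute exactly $\gamma(\rho-\lambda)$ and $\f{\gamma}{6\zeta^2}\bigl((\rho-\zeta)^3-(\lambda-\zeta)^3\bigr)$ to $R^{\rho,\lambda}(n,y)=\int_\lambda^\rho M^\alpha(n,y)\,d\alpha$, so that
\[
R^{\rho,\lambda}(n,y)-\gamma(\rho-\lambda)-\f{\gamma}{6\zeta^2}\bigl((\rho-\zeta)^3-(\lambda-\zeta)^3\bigr)=-\f{\gamma}{2\zeta^2}\int_\lambda^\rho\f{(\alpha-\zeta)^3}{\alpha}\,d\alpha .
\]

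It then remains only to bound the right-hand side. First I would observe that the hypothesis $\rho,\lambda,n/y\in(\ve,\ve^{-1})$ forces $\zeta\in(\ve,\ve^{-1})$ (since $\ve<\sqrt\ve$ and $\ve^{-1/2}<\ve^{-1}$ for $\ve\in(0,1)$), and every $\alpha$ between $\lambda$ and $\rho$ also lies in $(\ve,\ve^{-1})$, so $1/\alpha\le\ve^{-1}$ on the range of integration. Moreover such an $\alpha$ has $\alpha-\zeta$ lying between $\lambda-\zeta$ and $\rho-\zeta$, whence $|\alpha-\zeta|\le|\rho-\zeta|+|\lambda-\zeta|$, and also $|\rho-\lambda|\le|\rho-\zeta|+|\lambda-\zeta|$. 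Therefore
\[
\Bigl|\int_\lambda^\rho\f{(\alpha-\zeta)^3}{\alpha}\,d\alpha\Bigr|\le\ve^{-1}\,|\rho-\lambda|\,\bigl(|\rho-\zeta|+|\lambda-\zeta|\bigr)^3\le\ve^{-1}\bigl(|\rho-\zeta|+|\lambda-\zeta|\bigr)^4\le 16\ve^{-1}\bigl((\rho-\zeta)^4+(\lambda-\zeta)^4\bigr),
\]
the last step just using $(a+b)^4\le 16(a^4+b^4)$.

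Finally I would control the prefactor $\f{\gamma}{2\zeta^2}$: since $\gamma\le n+y$ by \eqref{n+ynybd} and $\zeta^2=n/y\ge\ve$, we get $\f{\gamma}{2\zeta^2}\le\f{n+y}{2\ve}$. Multiplying the two bounds gives
\[
\Bigl|R^{\rho,\lambda}(n,y)-\gamma(\rho-\lambda)-\f{\gamma}{6\zeta^2}\bigl((\rho-\zeta)^3-(\lambda-\zeta)^3\bigr)\Bigr|\le C(\ve)\,(n+y)\bigl((\rho-\zeta)^4+(\lambda-\zeta)^4\bigr),
\]
which is the claim. There is no genuine obstacle here beyond bookkeeping; the only point requiring a little care is that the integral $\int_\lambda^\rho$ is signed, so the estimates should be phrased in terms of the interval with endpoints $\lambda$ and $\rho$ (regardless of their order), and one must check that the constant $C(\ve)$ can be chosen uniformly over that interval — which is immediate from the bounds $1/\alpha\le\ve^{-1}$ and $|\alpha-\zeta|\le|\rho-\zeta|+|\lambda-\zeta|$ above.
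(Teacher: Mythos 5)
Your proof is correct and follows essentially the same route as the paper, which simply integrates the estimate of Lemma \ref{lem:MaBLPPbd} over $\alpha$ between $\lambda$ and $\rho$; your variant of integrating the exact identity first and then bounding the remainder $-\f{\gamma}{2\zeta^2}\int_\lambda^\rho\f{(\alpha-\zeta)^3}{\alpha}\,d\alpha$ is the same computation presented in a slightly different order. The bookkeeping (the bounds $1/\alpha\le\ve^{-1}$, $\gamma\le n+y$, $\zeta^2\ge\ve$, and the signed-interval caveat) is all handled correctly.
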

\begin{proof}
Since $R^{\rho,\lambda}(n,y)= \int_\lambda^\rho M^\alpha(n,y)\,d\alpha$, the proof follows from integrating inside the absolute value on the left in Lemma \ref{lem:MaBLPPbd}. 
\end{proof}

\begin{lemma} \label{lem:BLPPexitbdmain}
For each $\ve \in (0,1)$, there exists a  constant $C = C(\ve) > 0$ so that for $\zeta =\zeta(n,y) = \sqrt{\f{n}{y}}$, whenever $\ve < \rho < \zeta  < \ve^{-1}$,
\be \label{EJRbd1}
\Pp(Z^\rho(n,y) \ge 0) \le \exp(-C(n + y)(\zeta - \rho)^3),
\ee
and whenever $\ve < \zeta < \rho < \ve^{-1}$, 
\be \label{EJRbd2}
\Pp(Z^\rho(n,y) \le 0) \le \exp(-C(n + y)(\rho - \zeta)^3).
\ee
\end{lemma}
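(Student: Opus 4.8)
The plan is to prove Lemma \ref{lem:BLPPexitbdmain} by adapting the exponential-Markov-inequality argument that underlies the EJS--Rains proof of exit point bounds, using Theorem \ref{thm:BLPP-EJSR} as the main input. I would first establish a coupling inequality that relates the sign of the exit point $Z^\rho(n,y)$ under the initial condition $f^{\rho,\rho} = f^\rho$ to a comparison of last-passage values with a \emph{different} drift on the positive (or negative) axis. Concretely, using Lemma \ref{lem:BLPPLrl=} and Lemma \ref{lem:BLPPsigmamont}: if $\rho < \zeta$ and we wish to bound $\Pp(Z^\rho(n,y) \ge 0)$, I would compare the stationary process $L^\rho$ with the mixed-drift process $L^{\rho,\lambda}$ for a suitable $\lambda$ with $\rho < \lambda < \zeta$ (to be optimized), exploiting that $\{Z^\rho(n,y) \ge 0\}$ forces $L^{\rho,\lambda}(n,y)$ and $L^\rho(n,y)$ to differ by a controllable amount, or more precisely that on this event $L^{\rho,\lambda}(n,y) \ge L^\rho(n,y)$ with a strict gap governed by the difference of linear boundary terms. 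The standard trick is: $\{Z^\rho(n,y)\ge 0\}$ implies the $\sup$ defining $L^\rho$ is attained at some $x \ge 0$, where $f^\rho(x) = B(x) + \rho x = f^{\rho,\lambda}(x) - (\lambda - \rho)x \le f^{\rho,\lambda}(x)$; hence on a further localization of the maximizer (using Lemma \ref{lem:Zfmont}/\ref{lem:BLPPsigmamont} plus a coarse bound to say the maximizer is not too far out, or handling the tail separately), $L^{\rho,\lambda}(n,y) - L^\rho(n,y)$ is bounded below, leading to an exponential-moment bound.

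The second step is the change-of-measure estimate itself. Having reduced to an event on which $L^{\rho,\lambda}(n,y)$ exceeds a deterministic threshold, I would apply the exponential Chebyshev inequality with parameter $\rho - \lambda < 0$ (or $\rho - \lambda > 0$ in the other case), so that $\Ee[\exp((\rho-\lambda)L^{\rho,\lambda}(n,y))]$ appears and is computed \emph{exactly} by Theorem \ref{thm:BLPP-EJSR} to equal $\exp(R^{\rho,\lambda}(n,y))$. The resulting bound is of the form $\exp(R^{\rho,\lambda}(n,y) - (\rho - \lambda)\cdot(\text{threshold}))$, and the threshold should be chosen near $\gamma(n,y) = 2\sqrt{ny}$, the typical value of $L^\rho(n,y)$ near the characteristic. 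Then Lemma \ref{lem:RrlBLPPbd} gives the Taylor expansion $R^{\rho,\lambda}(n,y) - \gamma(\rho - \lambda) \approx \frac{\gamma}{6\zeta^2}((\rho-\zeta)^3 - (\lambda - \zeta)^3)$ up to $O((n+y)((\rho-\zeta)^4 + (\lambda-\zeta)^4))$, and optimizing over $\lambda \in (\rho,\zeta)$ — the natural choice being roughly $\lambda = (\rho + \zeta)/2$ or $\lambda$ a fixed fraction of the way from $\rho$ to $\zeta$ — yields a net exponent of order $-C(n+y)(\zeta - \rho)^3$, which is \eqref{EJRbd1}. The bound \eqref{EJRbd2} is symmetric: for $\zeta < \rho$ one bounds $\Pp(Z^\rho(n,y) \le 0)$ by comparing with $L^{\lambda,\rho}$ for $\zeta < \lambda < \rho$ (changing the drift on the negative axis) and using the second halves of Lemmas \ref{lem:BLPPLrl=} and \ref{lem:RrlBLPPbd}, with the exponential Chebyshev parameter now positive.

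The main obstacle, I expect, is controlling the location of the maximizer to make the coupling inequality quantitatively tight — i.e., justifying that on $\{Z^\rho(n,y) \ge 0\}$ the gap $L^{\rho,\lambda}(n,y) - L^\rho(n,y)$ really is bounded below by something like $(\lambda - \rho)$ times a quantity of order $\sqrt{ny}$, rather than being eaten up by the maximizer $x$ being close to $0$. The clean way around this is the by-now-standard two-step argument: (i) directly bound $\Pp(Z^\rho(n,y) \ge M(n+y)^{2/3}$-ish$)$ is not what we want here since we only need the sign, so instead (ii) use that on $\{Z^\rho(n,y)\ge 0\}$ one has the \emph{deterministic} inequality $L^{\rho,\lambda}(n,y) \ge \sup_{0\le x\le y}\{f^{\rho,\lambda}(x) + L(x,1;y,n)\} \ge L^\rho(n,y)$ combined with $L^{\rho,\lambda}(n,y) \ge L^\rho(n,y)$ always, and then the exponential-moment bound is applied to $L^{\rho,\lambda}(n,y)$ alone after noting $\{Z^\rho(n,y)\ge 0\}$ implies $L^\rho(n,y) = L^{\rho,\lambda}(n,y)$ exactly when additionally $Z^{\rho,\lambda}(n,y) \ge 0$ (Lemma \ref{lem:BLPPLrl=}), so one chains through monotonicity of exit points in the drift (Lemma \ref{lem:BLPPsigmamont}) to say $Z^\rho(n,y)\ge 0 \Rightarrow Z^{\rho,\lambda}(n,y) \ge 0$, hence $L^{\rho,\lambda}(n,y) = L^\rho(n,y)$, and then one separately shows $\Pp(L^\rho(n,y) \le \gamma - \text{something})$ is small via the same EJS--Rains identity with the opposite-sign parameter, combining the two. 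I would write this as a short self-contained lemma and then assemble \eqref{EJRbd1}--\eqref{EJRbd2}; the rest is the elementary calculus of choosing $\lambda$, which Lemmas \ref{lem:MaBLPPbd}--\ref{lem:RrlBLPPbd} have already packaged.
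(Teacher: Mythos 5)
Your overall toolkit is the right one (exit-point monotonicity, Lemma \ref{lem:BLPPLrl=}, the identity of Theorem \ref{thm:BLPP-EJSR}, and the expansion in Lemma \ref{lem:RrlBLPPbd}), but the key coupling step as you state it is false. You claim that $Z^\rho(n,y)\ge 0$ together with $Z^{\rho,\lambda}(n,y)\ge 0$ forces $L^{\rho,\lambda}(n,y)=L^\rho(n,y)$. Lemma \ref{lem:BLPPLrl=} equates passage times only when the drift perturbation lives on the half-axis \emph{not} containing the exit point: its first statement changes the positive-axis drift when the maximizer is at $x\le 0$, its second changes the negative-axis drift when the maximizer is at $x\ge 0$. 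The processes $L^{\rho,\lambda}$ and $L^{\rho}=L^{\rho,\rho}$ differ precisely on the positive axis, which is where the maximizer sits on $\{Z^\rho\ge 0\}$; indeed if $x^\star\ge 0$ is the maximizer for $L^\rho$ then $L^{\rho,\lambda}(n,y)\ge L^\rho(n,y)+(\lambda-\rho)x^\star$, which is typically strict. The correct comparison is between two processes sharing the \emph{same} positive-axis drift and differing only for $x<0$: with $\lambda=(\zeta-\rho)/4$, one shows via Lemma \ref{lem:BLPPsigmamont} that $\{Z^\rho\ge 0\}$ forces both $Z^{\zeta,\zeta-2\lambda}\ge 0$ and $Z^{\zeta-4\lambda,\zeta-2\lambda}\ge 0$, whence $L^{\zeta,\zeta-2\lambda}=L^{\zeta-4\lambda,\zeta-2\lambda}$ on that event. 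One then writes $\Pp(Z^\rho\ge0)=\Ee[\exp(\lambda L^{\zeta,\zeta-2\lambda}-\lambda L^{\zeta-4\lambda,\zeta-2\lambda})\ind(Z^\rho\ge0)]$, drops the indicator, and applies Cauchy--Schwarz to split into two moment generating functions computed exactly by Theorem \ref{thm:BLPP-EJSR}; in the resulting $\tfrac12 R^{\zeta,\zeta-2\lambda}-\tfrac12 R^{\zeta-2\lambda,\zeta-4\lambda}$ the linear terms $\gamma(2\lambda)$ cancel and Lemma \ref{lem:RrlBLPPbd} leaves $-\tfrac{8\gamma}{\zeta^2}\lambda^3+O((n+y)\lambda^4)$.

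Your fallback in the last paragraph (exponential Chebyshev against a deterministic threshold near $\gamma$, plus a separate lower-tail bound for the stationary passage time) also does not reach the stated bound in the regime that matters. Since $L^\rho(n,y)$ has mean $M^\rho(n,y)=\gamma+\tfrac{\gamma}{2\rho\zeta}(\rho-\zeta)^2$ and fluctuations of order $\sqrt{n+y}$, the event $\{L^\rho\le\gamma\}$ has probability roughly $\exp(-c(n+y)(\zeta-\rho)^4)$, which \emph{exceeds} $\exp(-C(n+y)(\zeta-\rho)^3)$ when $\zeta-\rho$ is small. In the application (Theorem \ref{thm:BLPP_exit_pts}) one has $\zeta-\rho\asymp MN^{-1/3}$, so a quartic exponent evaluates to $\exp(-CM^4N^{-1/3})\to1$ and is useless. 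The one-step ratio-of-MGFs argument above is exactly what circumvents this loss; I would rebuild your proof around it.
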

\begin{proof}
We prove \eqref{EJRbd1}, and \eqref{EJRbd2} follows by a symmetric argument. Let $\ve < \rho < \zeta < \ve^{-1}$, and let $\lambda = (\zeta - \rho)/4$ so that $\rho = \zeta - 4\lambda$. Lemma \ref{lem:BLPPsigmamont} implies that, on the event $\{Z^\rho \ge 0\}$  (dropping the $(n,y)$ argument for simplicity), we also have 
\[
Z^{\zeta,\zeta - 2\lambda} = Z^{\rho + 4\lambda,\rho + 2\lambda} \ge 0, \qquad \text{and} \qquad Z^{\zeta - 4\lambda,\zeta - 2\lambda} = Z^{\rho,\rho + 2\lambda} \ge 0.
\]
Hence, by Lemma \ref{lem:BLPPLrl=}, $L^{\zeta,\zeta - 2\lambda} = L^{\zeta - 4\lambda,\zeta - 2\lambda}$ on the event $\{Z^\rho \ge 0\}$.
Then,
\begin{align*}
\Pp(Z^\rho \ge 0) &= \Ee[\exp(\lambda L^{\zeta, \zeta - 2\lambda}- \lambda L^{\zeta - 4\lambda,\zeta - 2\lambda}) \ind(Z^\rho \ge 0)]\\
&\le \Ee[\exp(\lambda L^{\zeta,\zeta - 2\lambda} - \lambda L^{\zeta - 4\lambda,\zeta - 2\lambda})] \\
&\le  \Ee[\exp(2\lambda L^{\zeta,\zeta - 2\lambda} )]^{1/2} \Ee[\exp(-2\lambda L^{\zeta - 4\lambda,\zeta - 2\lambda})]^{1/2} \\
&= \exp\Bigl(\f{1}{2}R^{\zeta,\zeta - 2\lambda} + \f{1}{2}R^{\zeta - 4\lambda,\zeta - 2\lambda}\Bigr) = \exp\Bigl(\f{1}{2}R^{\zeta,\zeta - 2\lambda} - \f{1}{2}R^{\zeta - 2\lambda,\zeta - 4\lambda}\Bigr).
\end{align*}
In the second inequality, we used H\"older's inequality, and in the last equality, we used $R^{\rho,\lambda} = -R^{\lambda,\rho}$. Then, using the bounds of Lemma \ref{lem:RrlBLPPbd}, we obtain (changing the constant $C$ from line to line),
\begin{align*}
    &\quad \; R^{\zeta,\zeta - 2\lambda} - R^{\zeta - 2\lambda,\zeta - 4\lambda} \\
    &\le \gamma(2\lambda) + \f{\gamma}{6\zeta^2}(-(-2\lambda)^3) + C(n + y)((4\lambda)^4) \\
    &\qquad\qquad\qquad\qquad - \Bigl(\gamma(2\lambda)  + \f{\gamma}{6\zeta^2}((-2\lambda)^3 - (-4\lambda)^3) - C(n + y)((2\lambda^4) + (4\lambda)^4  \Bigr)) \\
    &=  -\f{8\gamma}{\zeta^2} \lambda^3 + C(n + y)\lambda^4 \le -C(n + y)\lambda^3,
\end{align*}
thus proving \eqref{EJRbd1}.
Here, we used \eqref{n+ynybd} and the assumption that $\zeta^2 = \f{n}{y} \in (\ve ,\ve^{-1})$.  
\end{proof}

\noindent We now complete this section by proving Theorem \ref{thm:BLPP_exit_pts}.
\begin{proof}[Proof of Theorem \ref{thm:BLPP_exit_pts}]
We prove this via the coupling introduced in Section \ref{sec:BLPP_stat}. Here, the probability space  is $(\Omega,\Ff,\Pp)$. Let $\lambda_N = \rho^{-1/2} + \dir N^{-1/3}$.  From shift-invariance of BLPP (Lemma \ref{lem:BLPPshinvar}),
\[
\Pp(Z^{\lambda_N}(\lfloor Nt\rfloor ,\rho t N + y N^{2/3}) > M N^{2/3}) = \Pp(Z^{\lambda_N}(\lfloor Nt \rfloor , \rho tN + (y - M) N^{2/3})> 0).
\]
Note that 
\begin{align*}
\zeta_N &:= \zeta(Nt, \rho tN + (y - M)N^{2/3}) = \sqrt{\f{Nt}{\rho tN + (y - M)N^{2/3}}} - \rho^{-1/2} \\
&= \f{1}{\sqrt{\rho + (y-M)N^{-1/3}/t}} - \f{1}{\sqrt \rho} = \f{(M - y)N^{-1/3}/t}{\sqrt \rho \sqrt{\rho + (y-M)N^{-1/3}/t} (\sqrt \rho +\sqrt{\rho + (y-M)N^{-1/3}/t} )} \\
    &\ge \f{(M - y)N^{-1/3}/t}{\sqrt \rho \sqrt{\rho + y N^{-1/3}/t} (\sqrt \rho +\sqrt{\rho + y N^{-1/3}/t} )} \ge C(M - y)N^{-1/3}.
\end{align*}
Comparing to $\lambda_N$, we have $\zeta_N > \lambda_N$ for sufficiently large $M$ (independent of $N$). Choose such an $M$ and let $ve \in (0,1)$. Let $N$ be sufficiently large (depending on the choice of $M$) so that $\ve < \lambda_N < \zeta_N < \ve^{-1}$.
By a Taylor approximation, 
\begin{align*}
&\quad \; \zeta_N 
&= \rho^{-1/2} \sqrt{\f{1}{1 +\rho^{-1}(y - M) N^{-1/3}}} = \rho^{-1/2} - \f{1}{2}(y- M) \rho^{-3/2} N^{-1/3} + O(N^{-2/3}),
\end{align*}
where the $O(N^{-2/3})$ term may depend on $M$. However, this presents no substantive issue because the term will vanish in the limit with $N$. Indeed, by Lemma \ref{lem:BLPPexitbdmain}, there exists a constant $C = C(t,\dir,\rho,y) > 0$ so that
\begin{align*}
    &\quad \; \limsup_{N \to \infty}\Pp(Z^{\lambda_N}(Nt, \rho tN + (y - M) N^{2/3})>  0) \\
    &\le \limsup_{N \to \infty}\exp\Biggl[-C\Bigl(Nt + \rho tN + (y - M)N^{2/3}\Bigr)((M-y)N^{-1/3} + O(N^{-2/3}))^3\Biggr] \\
    &= \exp(-C(t + \rho t) (M-y)^3).
\end{align*}
This proves \eqref{BLPPub1} for $M$ large enough. The proof of \eqref{BLPPub2} is  analogous. 
\end{proof}

\section{Maximizers and bounds for the KPZ fixed point}
We now prove the following technical lemmas for the DL and the KPZ fixed point.  
\begin{lemma}\label{lem:unq}
	 Fix $\xi\in \R$ and $a>0$. Consider the KPZ fixed point starting at time $s$ from a function $\h \in \UC$. 
For $t > s$, let $Z_\h^{a,s,t}\in\R$ denote the set of exit points from the time horizon $\Hh_s$ of the geodesics associated with $\h$ and that terminate in $\{t\}\times [-a,a]$. That is,
	\be \label{exitpt}
		Z_\h^{a,s,t}=\bigcup_{y\in [-a,a]}\argmax_{x\in\R} \{\h(x)+\Ll(x,s;y,t)\}.
	\ee
	Then, on the full probability event of Lemma \ref{lem:Landscape_global_bound}, whenever $\h \in \UC$ satisfies condition \eqref{eqn:drift_assumptions}, and when  $\ve>0$, $a > 0$, and $s \in \R$, there exists  a random $t_0 = t_0(\ve,a,s) > s \vee 0$ such that for any $t> t_0$,
	\be \label{upexit}
	Z_\h^{a,s,t}\subset \big[(\xi-\ve)t,(\xi +\ve)t\big].
	\ee
	In particular, if $\h$ is a random function almost surely satisfying condition \eqref{eqn:drift_assumptions}, then this random $t_0$ exists almost surely, and
\[
	\lim_{t \to \infty} \Pp\Big(Z_\h^{a,s,t}\subset \big[(\xi-\ve)t,(\xi +\ve)t\big]\Big) = 1.
\]  
Furthermore, an analogous statement holds on the same full-probability event if $t$ is held fixed and $s \to -\infty$. That is, there exists a random $s_0 = s_0(\ve,a,t)< t \wedge 0 $ such that for any $s < s_0$,
\be \label{downexit}
Z_\h^{a,s,t}\subset \big[-(\xi-\ve)s,-(\xi +\ve)s\big]
\ee
\end{lemma}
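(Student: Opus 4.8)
\textbf{Proof plan for Lemma \ref{lem:unq}.}
The strategy is to reduce everything to the exit-point estimates already available for the directed landscape together with the crossing/ordering lemmas. The key is the global parabolic bound of Lemma \ref{lem:Landscape_global_bound}: on a full-probability event there are random constants so that $\Ll(x,s;y,t) \le -\tfrac{(x-y)^2}{t-s} + (\text{lower order in } |x|,|y|,t,s)$, while $\Ll$ along a suitably chosen straight line is bounded below. First I would treat the case $\dir > 0$; the cases $\dir < 0$ and $\dir = 0$ are handled symmetrically (for $\dir=0$ one uses that the $\limsup$/$\liminf$ conditions force the maximizer not to escape to $\pm\infty$ too fast in either direction). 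Fix $\dir > 0$, $\ve \in (0,\dir)$, $a>0$, $s \in \R$. The drift condition gives, for large $|x|$, $\h(x) \approx 2\dir x$ as $x \to +\infty$ and $\h(x) \ge (c - \ve')|x|$ as $x\to -\infty$ for some $c > -2\dir$. Combined with the parabolic upper bound for $\Ll(x,s;y,t)$ with $y \in [-a,a]$, the function $x \mapsto \h(x) + \Ll(x,s;y,t)$ is dominated, for $x$ outside a window around $\dir t$, by a concave parabola in $x$ whose maximizer is near $y + \dir t$; its value there exceeds the value of $\h + \Ll$ along the line $x = y + \dir(t-s)$ only inside $[(\dir-\ve)t, (\dir+\ve)t]$ once $t$ is large. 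Making this precise is a matter of choosing $t_0$ large enough (depending on the random constants of Lemma \ref{lem:Landscape_global_bound}, on $\ve$, $a$, $s$, and the thresholds beyond which the drift asymptotics in \eqref{eqn:drift_assumptions} take effect) so that: (i) for $x > (\dir+\ve)t$, the parabolic bound beats $\h(x)+\Ll(x,s;y+\dir(t-s),t) \ge \h(y+\dir(t-s)) - o(t)$, and (ii) for $x < (\dir - \ve)t$ — in particular for $x \le 0$, where the $\liminf$ condition as $x \to -\infty$ controls $\h(x)$ — the same comparison holds. This yields \eqref{upexit}, and hence \eqref{itm} the probabilistic statement, since if $\h$ is random and a.s.\ satisfies \eqref{eqn:drift_assumptions} then the random $t_0$ is a.s.\ finite, so $\Pp(t_0 < t) \to 1$ as $t\to\infty$.

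For the backward statement \eqref{downexit}, I would use the time-reversal symmetry of the directed landscape: $\{\Ll(x,s;y,t)\}$ has the same law as $\{\Ll(-y,-t;-x,-s)\}$ (reflection and time reversal of $\Ll$, recorded in the appendix/cited from \cite{Directed_Landscape,Dauvergne-Virag-21}). Under this symmetry, running the KPZ fixed point backward from time $t$ with terminal data $\h$ down to a fixed time $s$ and sending $s \to -\infty$ is exactly the forward situation with the roles of the two time-levels swapped; the exit points from $\Hh_s$ of backward geodesics terminating in $\{t\}\times[-a,a]$ become the exit points of forward geodesics started at a shrinking horizon. Applying the already-established forward estimate on the same full-probability event then gives a random $s_0(\ve,a,t) < t \wedge 0$ with $Z_\h^{a,s,t} \subset [-(\dir-\ve)s, -(\dir+\ve)s]$ for $s < s_0$. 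One small bookkeeping point: because the forward result was proved \emph{on the full-probability event of Lemma \ref{lem:Landscape_global_bound}}, and that event is invariant (up to null sets) under the reflection/time-reversal maps applied to $\Ll$, the backward conclusion holds on that same event, which is what the lemma asserts.

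The main obstacle, and the step that deserves the most care, is the lower bound part of (i)–(ii): one must show that the supremum defining $h_\Ll(t,y;s,\h)$ is \emph{not} attained far from $\dir t$, which requires a genuine lower bound on $\h(x) + \Ll(x,s;x',t)$ for a well-chosen $x'$, competing against the parabolic upper bound elsewhere. The subtlety is uniformity over $y \in [-a,a]$ and the fact that the ``error'' terms in Lemma \ref{lem:Landscape_global_bound} and in the drift asymptotics \eqref{eqn:drift_assumptions} are only $o(|x|)$ and $o(t)$, not explicit; so the threshold $t_0$ genuinely depends on the random environment and one must argue it is a.s.\ finite rather than produce it explicitly. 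Once the one-sided comparison is set up correctly — parabola dominates affine-plus-parabola outside a linear window — the rest is routine: continuity of $x \mapsto \h(x) + \Ll(x,s;y,t)$ (upper semicontinuity of $\h$, continuity of $\Ll$) guarantees the argmax set is nonempty and closed, and the window $[(\dir-\ve)t,(\dir+\ve)t]$ captures it for all $y \in [-a,a]$ simultaneously.
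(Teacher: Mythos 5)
Your plan matches the paper's proof essentially step for step: the paper likewise sandwiches $\h(x)+\Ll(x,s;y,t)$ between explicit functions $M_L$ and $M_U$ built from the parabolic bound of Lemma \ref{lem:Landscape_global_bound} and the drift asymptotics, first uses the $\liminf$ condition to force maximizers onto the correct half-line, then compares $\sup_{x\notin I_\ve}M_U$ with $M_L(\xi(t-s);t)$, treats $\xi=0$ separately, and obtains \eqref{downexit} by the analogous (reflected) argument on the same event. One minor slip worth flagging: for $x\to-\infty$ the condition $\liminf_{x\to-\infty}\h(x)/x>-2\xi$ yields an \emph{upper} bound $\h(x)\le 2\gamma x$ with $\gamma>-\xi$ (dividing by $x<0$ reverses the inequality), not the lower bound you wrote, and it is precisely this upper bound that rules out maximizers on the negative half-line.
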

\begin{proof}
We show \eqref{upexit}, and \eqref{downexit} follows by an analogous proof. The idea of the proof is that  $\h(x)+\Ll(x,0;y,t)$ is a noisy version of $2\xi x-\frac{x^2}{t}$ and  $\argmax_{x\in\R} (2\xi x-\frac{x^2}{t})=\xi t$, but the noise  cannot change the exit point by much when $t$ is large. The drift conditions \eqref{eqn:drift_assumptions} are used in the proof to ensure that for $t$ large enough, all maximizers are positive for $\dir > 0$ and negative for $\dir < 0$. Below, we prove the result for $\xi>0$, and the proof for $\xi<0$ follows by symmetry. The case $\xi=0$ will be proven separately. Fix $\ve>0$. Suppressing the dependence on $a,s$, set
\begin{equation} \label{F}
	F(x;t)=C_{\text{DL}}(t-s)^{1/3}\log^2\big(2\sqrt{a^2+x^2+s^2 + t^2}+4\big),
\end{equation}
where $C_{\text{DL}}$ is the random positive constant from Lemma \ref{lem:Landscape_global_bound}. 

\newpage
\begin{lemma} \label{lem:dFbd}
For $\ve > 0,a > 0,$ and $s < t \in \R$, there exists $t_1 = t_1(\ve,a,s) > s$ such that for all $t > t_0$,  $|F'(x;t)| < \ve$ uniformly for all $x \in \R$. In addition, for $t > t_0$, $F'(x;t) > 0$ for $x > 0$ while $F'(x;t) < 0$ for $x < 0$.
\end{lemma}
\begin{proof}
A quick computation shows that
\[
	F'(x;t)=\f{2 C_\text{DL}(t-s)^{1/3} x\log\big(2\sqrt{a^2+x^2+t^2 + s^2}+4 \big)}{(\sqrt{a^2+x^2+t^2 + s^2}+2)\sqrt{a^2+x^2+t^2 + s^2}}.
\]
We note that
$
|x|(a^2 + x^2 +t^2 + s^2)^{1/2} \le 1,
$
and that $\log(x)/x$ is decreasing for $x > e$. 
Hence, for all $a > 0,s < t,\ve > 0$, and $x \in \R$,
\[
|F'(x;t)| \le 2C(t - s)^{1/3} \f{\log(2|t| +4)}{|t| + 2} \overset{t \to \infty}{\longrightarrow} 0. \qedhere
\]
\end{proof}

Back to the main proof,  from the drift assumption \eqref{eqn:drift_assumptions}, for each $\ve > 0$, there exists $R_\ve > 0$ so that for all $x \ge R_\ve$, $|\f{\h(x)}{x} - \dir| \le \f{\ve}{2}$. Let $C_\ve = \sup_{0 \le R_\ve} \h(x)$. By the global bound $\h(x) \le a + b|x|$ (assumed in the definition of $\UC$), $C_\ve < \infty$. Further, observe that $-\f{(x - y)^2}{t - s} = -\f{x^2}{t - s} + \f{2xy}{t - s} - \f{y^2}{t - s}$, and $|\f{2xy}{t - s} - \f{y^2}{t - s}| \le \ve + \f{\ve}{2}|x|$ for large $t$, uniformly for $y \in [-a,a]$. Using this and the bounds of Lemma \ref{lem:Landscape_global_bound}, for $t  > s + 1$ sufficiently large (depending on $\ve,a$), for all $y \in [-a,a]$ and $x \ge 0$,
	\begin{equation}\label{ub1'}
		\begin{aligned}
		 &\h(x)+\Ll(x,s;y,t)\leq M_{U}(x;t):=  C_\ve + 2\xi x + \ve x-\frac{x^2}{t - s}+ \ve + F(x;t),
	\end{aligned}
	\end{equation}
 Further, for all $x\ge R_\ve$, and sufficiently large $t > s + 1$,
\be \label{ub2'}
 	 \h(x)+\Ll(x,s;y,t)\geq M_{L}(x;t):= 2\xi x - \ve x -\frac{x^2}{t - s} - \ve -F(x;t). 
\ee

By the assumption \eqref{eqn:drift_assumptions}, we may choose $\gamma$ so that 
$-2\dir < 2\gamma < \liminf_{x \to -\infty} \f{\h(x)}{x} \le + \infty$. Then, applying a similar procedure as before and adjusting the constant $C_\ve$ if needed, for all $y \in [-a,a]$ and $x \le 0$,
	\be \label{ub3'}
	\h(x) + \Ll(x,s;y,t) \le  M_U^-(x;t) := C_\ve  + 2\gamma x - \f{x^2}{t - s} + F(x;t).
	\ee
We start by using these bounds to show that when $t > s$ is sufficiently large,
\be \label{max_right}
\sup_{x \ge 0}\{\h(x) + \Ll(x,s;y,t)\} > \sup_{x \le 0}\{\h(x) + \Ll(x,s;y,t)\}, \qquad\forall y \in [-a,a].
\ee
so that all maximizers of $\h(x) + \Ll(x,s;y,t)$ over $x \in \R$ are nonnegative. First, we observe that for $t$ large enough so that $\dir(t - s) \ge R_\ve$, for all $y \in [-a,a]$,
\begin{equation}\label{Mlm}
\begin{aligned}
	\quad \;\sup_{x \ge 0}\{\h(x) + \Ll(x,s;y,t)\}\ge M_L(\xi (t-s),t) 
	= (\dir^2 - \dir \ve)(t - s) + o(t).
	\end{aligned}
\end{equation}

Next, using \eqref{ub3'}, we obtain
\begin{align*}
&\quad \;\sup_{x \le 0}\{\h(x) + \Ll(x,s;y,t) \} \\ &\le \sup_{x \le 0}\{2\gamma x - \f{x^2}{t-s}  + C_\ve + F(x;t)\} \\
&\le\sup_{x \le 0}\{2\gamma x - 2\ve x - \f{x^2}{t-s}\}+ \sup_{x \le 0}\{2\ve x + F(x;t) \} + C_\ve. =  (\gamma - \ve)^2(t - s) + o(t).
\end{align*}
To justify the last equality, we note that the first supremum on the RHS above is equal to  $(\gamma - \ve)^2(t - s)$, while for large enough $t$, Lemma \ref{lem:dFbd} implies that the function inside the second supremum is increasing, so the maximum is achieved at $x = 0$. Note that $F(0;t) = o(t)$. Since $\gamma > -\dir$, by choosing $\ve > 0$ small enough, a comparison with \eqref{Mlm} verifies \eqref{max_right} for sufficiently large $t$.

Next, we find (approximately) where the maximizers of $M_{U}$ on $x\geq 0$ are. The function $M_U(x;t)$ has leading order $-(x - y)^2/(t-s)$, so maximizers exist on $x \ge 0$. A quick computation shows that $M_U(0;t) = o(t)$, so by \eqref{Mlm}, all maximizers are strictly positive for sufficiently large $t$. Hence, for any maximizer $x$, $M_U'(x;t) = 0$. 
First, note that
\begin{equation}
	M'_{U}(x;t)=2(\xi+\ve)-\frac{2x}{t - s}+F'(x;t).
\end{equation}
By Lemma \ref{lem:dFbd}, for sufficiently large $t > s$, $0 < F'(x;t) < \ve$ for all $x > 0$. For such $t$, and $y \in [-a,a]$, 
\begin{equation}
	 \{x \ge 0 :M'_U(x;t)=0\}\subseteq \big((\xi + \ve) (t - s), (\xi+2\ve)(t - s)\big).
\end{equation}
and that
\begin{equation}\label{d}
	\begin{aligned}
		&M'_U(x;t)<0 \quad \forall x\geq (\xi + 2\ve)(t - s)\\
		& M'_U(x;t)>0 \quad \forall x\leq   (\xi + \ve) (t - s).
	\end{aligned}
\end{equation}
Next we consider the supremum of $x\mapsto M_U$ outside the interval 
\[
I_{\ve}:=[(\xi-2\sqrt{\xi \ve})(t - s),(\xi+2\sqrt{\xi\ve})(t - s)].
\]
By choosing $\ve > 0$ small enough, then for $t$ large enough (depending on $\ve,a$), 
\begin{equation}\label{sub}
 \big((\xi - \ve) (t - s),(\xi+2\ve)(t - s)\big) \subseteq I_{\ve}.
\end{equation}
 From \eqref{d} and \eqref{sub}, we see that to determine the supremum of $M_U$ outside $I_{\ve}$ it is enough to take the maximum of $M_U$ at the endpoints of $I_{\ve}$. Plugging the end points of the interval on the right-hand side of  \eqref{sub} in $M_U$, for small enough $\ve$,
\begin{equation}\label{sup1}
	\begin{aligned}
	&M_U((\xi-2\sqrt{\xi\ve})(t - s);t)=\big[\xi^2 -3\xi\ve - 2\dir^{1/2}\ve^{3/2}\big](t - s) + o(t), \qquad\text{and}\\
	&M_U( (\xi+2\sqrt{\xi}\ve^{1/2})(t -s);t)=\big[\xi^2-3\xi\ve+2\xi^{1/2}\ve^{3/2}\Big](t - s) + o(t).
	\end{aligned}
\end{equation}
 It follows that for $\ve$ small enough and $t>t_0(\ve,a,s,C_{\text{DL}},C_\ve)$,
\begin{equation}\label{Mum}
	\sup_{x\notin I_{\ve}} M_U(x;t)\leq \max\{M_U((\xi-2\sqrt{\xi\ve})(t-s)),M_U((\xi+2\sqrt{\xi\ve})(t -s))\}\leq (\xi^2-2\xi\ve)(t-s).
\end{equation}
From \eqref{ub1'}, \eqref{max_right}, and \eqref{sub}, for sufficiently large $t$,
\begin{align}\label{sub2}
		&\big\{\sup_{x\notin I_{\ve}}  M_U(x;t)<\sup_{x\in I_{\ve}} M_L(x;t)\big\} \\
	&\subseteq\big\{\sup_{x\notin I_{\ve}}  \h(x)+\Ll(x,s;y,t)<\sup_{x\in I_{\ve}} \h(x)+\Ll(x,s;y,t)\quad \forall y\in[-a,a]\big\}\subseteq\{Z_\h^{a,s,t}\subset I_{\ve}\}.\nonumber 
	\end{align}
\eqref{Mlm} and \eqref{Mum}, imply that, almost surely, there is a random $t_0 = t_0(\ve,a,s) > 0$ so that for $t > t_0$,
\begin{equation}
	\sup_{x\notin I_{\ve}}  M_U(x;t)<\sup_{x\in I_{\ve}} M_L(x;t),
\end{equation}
so by replacing $\ve$ with $\ve^2/(4\dir)$, the inclusion \eqref{sub2} completes the proof in the case $\dir > 0$.

Now we prove the separate $\xi = 0$ case. This time, we set  $I_{\ve}=[2\sqrt{\ve}\tspb (t-s), 2\sqrt{\ve}\tspb (t-s)]$. Fix a point $x^\star \in \R$ so that $\h(x^\star) > -\infty$. Then, for $t$ large enough $x^\star \in I_{\ve}$, and
\be \label{ub4'}
\sup_{x \in I_{\ve}} \h(x) + \Ll(x,s;y,t) \ge \h(x^\star) + \Ll(x^\star,s;y,t)\ge \h(x^\star) - \f{(x^\star - y)^2}{t - s} +F(x^\star,t) = o(t).
\ee
 
 By the assumption \eqref{eqn:drift_assumptions} and upper semi-continuity, following a similar argument as in the previous case, for all $y \in [-a,a]$ and $x \in \R$,
\[
\begin{aligned}
\h(x)+\Ll(x,s;y,t)\leq M_{U}(x;t) &:= -\frac{x^2}{t-s}+\ve x+C_\ve+F(x;t)
\end{aligned}
\]
A similar proof as before shows that 
\[
\sup_{x \notin I_{\ve}} M_U(x;t) \le \max\{M_U(-2\sqrt \ve(t - s)),M_U(2\sqrt \ve (t - s))\} = -3\ve (t-s) + o(t),
\]
and comparison with \eqref{ub4'} completes the proof. 
\end{proof}

We believe the following Lemma is well-known, but we do not have a precise reference. In particular, \cite{KPZfixed} states that the KPZ fixed point preserves the space of linearly bounded continuous functions 
and gives regularity estimates for the KPZ fixed point.

\begin{lemma} \label{lem:max_restrict}
Let $\h \in \UC$ be initial data for the KPZ fixed point sampled at time $s \in \R$. Let $h_{\Ll}(t,y;s,\h)$ be defined as in \eqref{992}. 
Then, on the full-probability event of Lemma \ref{lem:Landscape_global_bound}, the following hold
\begin{enumerate}[label=\rm(\roman{*}), ref=\rm(\roman{*})]  \itemsep=3pt
    \item \label{itm:KPZcont} If $\h$ is continuous, then $(t,y) \mapsto h_\Ll(t,y;s,\h)$ is continuous on $(s,\infty) \times \R$. 
    \item \label{itm:KPZ_unif_line} For each compact set $K \subseteq (s,\infty)$, there exist constants $A = A(a,b,K)$ and $B = B(a,b,K)$ such that for all $t \in K$ and all $y \in \R$, $h_\Ll(t,y;s,\h) \le A + B|y|$. If we assume that $\h(x) \ge -a - b|x|$ for some constants $a,b > 0$, then we also obtain the bound $h_\Ll(t,y;s,\h) \ge -A - B|y|$ for all $t \in K$ and $y \in \R$ \rm{(}the upper bound $\h(x) \le a + b|x|$ is assumed in the definition of $\UC$\rm{)}.
    \item \label{itm:KPZrestrict}
    If there exists $a,b > 0$ so that $|\h(x)| \le a + b|x|$ for all $x$, then for any $t > s$, $\delta > 0$, there exists $Y = Y(t,\delta) > 0$ so that when $|y| \ge Y$, all maximizers of $\h(x) + \Ll(x,s;y,t)$ over $x \in \R$ lie in the interval $(y - |y|^{1/2 + \delta},y + |y|^{1/2+ \delta})$
\end{enumerate} 
\end{lemma}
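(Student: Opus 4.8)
The plan is to prove the three items of Lemma \ref{lem:max_restrict} in order, each leaning on the global quadratic bound for the directed landscape from Lemma \ref{lem:Landscape_global_bound}, which (as used already in the proof of Lemma \ref{lem:unq}) gives a random constant $C_{\text{DL}}$ with $|\Ll(x,s;y,t) + \tfrac{(x-y)^2}{t-s}| \le F(x;t)$ for a polylogarithmic correction $F$. Throughout I will work on that full-probability event.

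For Item \ref{itm:KPZrestrict} I would argue first, since Items \ref{itm:KPZcont} and \ref{itm:KPZ_unif_line} are cleanest once one knows the maximizers are localized. Fix $t > s$ and $\delta > 0$. Writing $g(x) = \h(x) + \Ll(x,s;y,t)$, the linear bound $|\h(x)| \le a + b|x|$ together with the landscape bound gives $g(x) \le a + b|x| - \tfrac{(x-y)^2}{t-s} + F(x;t)$ and $g(y) \ge -a - b|y| - \tfrac{0}{t-s} + F(y;t)$ (more carefully, $g(y) \ge -a-b|y| - F(y;t)$). Since $F$ is polylogarithmic in $x$ and the quadratic term $-\tfrac{(x-y)^2}{t-s}$ dominates, for $|x - y| \ge |y|^{1/2+\delta}$ and $|y|$ large one has $\tfrac{(x-y)^2}{t-s} \ge \tfrac{|y|^{1+2\delta}}{t-s}$, which beats the $b|x| + F(x;t) + 2(a + b|y|)$ terms (these grow at most linearly in $|x|$ and, for $|x-y|$ comparably sized to $|y|$, at most linearly in $|y|$ up to logs). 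Hence $g(x) < g(y)$ for all such $x$ once $|y| \ge Y(t,\delta)$, so no maximizer lies outside $(y - |y|^{1/2+\delta}, y + |y|^{1/2+\delta})$. One routine point to check: the sup of $g$ is actually attained (upper semicontinuity of $\h$ plus continuity and quadratic decay of $\Ll$ — a standard compactness argument already invoked in \cite{KPZfixed} and implicit in \eqref{eqn:KPZ_DL_rep}).

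For Item \ref{itm:KPZ_unif_line}, fix compact $K \subseteq (s,\infty)$, so $t - s$ is bounded above and below on $K$. The upper bound is direct: $h_\Ll(t,y;s,\h) = \sup_x\{\h(x) + \Ll(x,s;y,t)\} \le \sup_x\{a + b|x| - \tfrac{(x-y)^2}{t-s} + F(x;t)\}$, and completing the square in $x$ (the quadratic caps the linear and logarithmic growth in $x$) produces a bound of the form $A + B|y|$ with $A,B$ depending only on $a,b,K$ and the random constant $C_{\text{DL}}$. The lower bound, under $\h(x) \ge -a - b|x|$, comes from evaluating at $x = y$: $h_\Ll(t,y;s,\h) \ge \h(y) + \Ll(y,s;y,t) \ge -a - b|y| - F(y;t) \ge -A - B|y|$ after absorbing the polylogarithmic $F(y;t)$ into a linear bound (valid uniformly for $t \in K$).

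For Item \ref{itm:KPZcont}, assume $\h$ continuous. By Item \ref{itm:KPZrestrict} (applied with the two-sided bound $|\h(x)| \le a + b|x|$, which holds when $\h$ is continuous and in $\UC$ — for the lower bound one uses continuity to get local boundedness and the fact that $\UC$ gives the upper linear bound; if $\h$ lacks a lower linear bound one instead localizes via Lemma \ref{lem:unq}), the supremum defining $h_\Ll(t,y;s,\h)$ is, locally uniformly in $(t,y) \in (s,\infty)\times\R$, a supremum over a compact $x$-interval. On such a compact set, $(x,t,y)\mapsto \h(x) + \Ll(x,s;y,t)$ is jointly continuous (continuity of $\h$ and of $\Ll$ on $\Rup$), and a sup of a jointly continuous function over a compact parameter set, depending continuously on the outer parameters, is continuous. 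Hence $(t,y)\mapsto h_\Ll(t,y;s,\h)$ is continuous on $(s,\infty)\times\R$.

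The main obstacle is the bookkeeping in Item \ref{itm:KPZrestrict}: one must be careful that the polylogarithmic error $F(x;t)$ in the landscape bound, which itself grows with $|x|$, does not interfere with the quadratic localization, and one must handle the case where $\h$ has only a one-sided linear bound (genuine $\UC$ data) — there the clean argument requires either the stated two-sided hypothesis (as in the lemma) or an appeal to Lemma \ref{lem:unq} to first trap the maximizers. Everything else is routine completion-of-the-square and compactness.
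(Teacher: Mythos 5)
Your Items \ref{itm:KPZ_unif_line} and \ref{itm:KPZrestrict} are correct and follow essentially the same route as the paper: complete the square against the quadratic $-\tfrac{(x-y)^2}{t-s}$ for the upper bound, evaluate at $x=y$ for the lower bound, and for Item \ref{itm:KPZrestrict} compare the upper envelope at $x$ outside $(y-|y|^{1/2+\delta},y+|y|^{1/2+\delta})$ against the value at $x=y$. Your ordering (proving \ref{itm:KPZrestrict} first) is harmless.

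The gap is in your localization step for Item \ref{itm:KPZcont}. You route it through Item \ref{itm:KPZrestrict}, which requires the \emph{two-sided} bound $|\h(x)|\le a+b|x|$, and you assert that this two-sided bound holds for any continuous $\h\in\UC$. It does not: continuity gives local boundedness, not a global linear lower bound, and $\h(x)=-x^2$ is a continuous element of $\UC$ with no linear minorant. Your fallback via Lemma \ref{lem:unq} does not close the gap either, since that lemma assumes the drift conditions \eqref{eqn:drift_assumptions}, which are not hypotheses of Item \ref{itm:KPZcont} and which a general continuous $\h\in\UC$ need not satisfy. The repair is simpler than either of your two routes and is what the paper does: to localize the supremum you only need the \emph{upper} bound $\h(x)\le a+b|x|$ (automatic from the definition of $\UC$). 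Indeed, for $(t,y)$ in a compact subset of $(s,\infty)\times\R$ the candidate values $\h(x)+\Ll(x,s;y,t)\le a+b|x|-\tfrac{(x-y)^2}{t-s}+F(x;t)$ tend to $-\infty$ uniformly as $|x|\to\infty$, while the supremum itself is bounded below by evaluating at any single point $x_0$ with $\h(x_0)>-\infty$. Hence the supremum is attained in a common compact $x$-interval, and continuity then follows from joint continuity of $\h$ and $\Ll$ exactly as you conclude. With that substitution your proof of Item \ref{itm:KPZcont} is complete and matches the paper's.
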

\begin{proof}
 \noindent \textbf{Item \ref{itm:KPZcont}}
By definition of $\UC$, there exists constants $a,b > 0$ so that $\h(x) \le a + b|x|$ for all $x \in \R$. Combined with the bounds on the directed landscape in Lemma \ref{lem:Landscape_global_bound}, this implies that when $(y,t)$ varies over a compact set of $(s,\infty) \times \R$, the supremum in \eqref{992} can be taken uniformly over a  common compact set. Then, continuity of $\h$ and $\Ll$ gives the continuity of $h$.

 \noindent \textbf{Item \ref{itm:KPZ_unif_line}:}
By Lemma \ref{lem:Landscape_global_bound}, for each $x \in \R$,
\be \label{414}
\h(x) + \Ll(x,s;y,t) \le  a + b|x| -\tspc \f{(x - y)^2}{t - s} + C(t - s)^{1/3} \log^{2}\Bigl(\f{2\sqrt{x^2 + y^2 + s^2 + t^2} + 4}{(t - s)\wedge 1}\Bigr).
\ee
 The $\log$ term in \eqref{414}.  can be bounded by an affine function uniformly for $t \in K$ and $x,y \in \R$. Then, for constants $a_1 = a_1(a,b,K), b_1 = b_1(a,b,K)$, and $b_2 = b_2(a,b,K)$,
\begin{align*}
    h_\Ll(t,y;s,\h)
    &\le \sup_{x \in \R}\Bigl\{-\f{(x - y)^2}{t - s} + a_1 + b_1|x| + b_2|y| \Bigr\}  \\
    &\le \sup_{x \in \R}\Bigl\{-\f{(x - y)^2}{t - s} + a_1 + b_1x + b_2|y| \Bigr\}  \vee \Bigl\{-\f{(x - y)^2}{t - s} + a_1 - b_1 x + b_2|y| \Bigr\}\\
    &\le a_1 + b_2|y| + \Bigl(b_1 y +\f{b_1^2 t}{4}\Bigr) \vee \Bigl(-b_1y + \f{b_1^2t}{4}\Bigr),
\end{align*}
giving a linear bound, uniformly for $t \in K$. The lower bound is simpler: By Lemma \ref{lem:Landscape_global_bound} and the assumption $\h(x) \ge -a - b|x|$ for all $x \in \R$,
\be \label{413}
\begin{aligned}
h_\Ll(t,y;s,\h) &= \sup_{x \in \R}\{\h(x) + \Ll(x,s;y,t)\} \\ &\ge \h(y) + \Ll(y,s;y,t) \ge -a - b|y| - C(t - s)^{1/3}\log^2\Bigl(\f{2\sqrt{2y^2 + t^2 + s^2} + 4}{(t - s)\wedge 1}\Bigr),
\end{aligned}
\ee
and again, the $\log$  term can be bounded by an affine function, uniformly for $t \in K$ and $y \in \R$.

 \noindent \textbf{Item \ref{itm:KPZrestrict}:} By comparing \eqref{414} to \eqref{413}, when $|y|$ is sufficiently large, for $x \notin (y - |y|^{1/2 + \delta},y + |y|^{1/2 + \delta})$, \eqref{414} is strictly less than $h_\Ll(t,y;s,\h)$, so maximizers cannot lie outside the interval $(y - |y|^{1/2 + \delta},y + |y|^{1/2 + \delta})$.
\end{proof}

\begin{lemma} \label{lem:KPZ_preserve_lim}
The following holds simultaneously for all initial data and all $t > s$ on the event of probability one from Lemma \ref{lem:Landscape_global_bound}. Let $\h \in \UC$ be initial data for the KPZ fixed point, sampled at time $s$. 
For $t > s$, let $h_\Ll$ be defined as in \eqref{992}.
Then, simultaneously for all $t > s$,
\be \label{hliminfbd1}
\liminf_{x \to +\infty} \f{h_\Ll(t,x;s,\h)}{x} \ge \liminf_{x \to + \infty} \f{\h(x)}{x},\qquad\text{and}\qquad \limsup_{x \to - \infty} \f{h_\Ll(t,x;s,\h)}{x} \le \limsup_{x \to -\infty} \f{\h(x)}{x}.
\ee
Furthermore, assuming that $\h:\R \to \R$ is \textbf{continuous} and satisfies
\be \label{liminfsupfinite}
\liminf_{x \to \pm \infty} \f{\h(x)}{x} > -\infty \qquad\text{and}\qquad \limsup_{x \to \pm \infty} \f{\h(x)}{x} < +\infty,
\ee
then also
\be \label{hliminfbd2}
\limsup_{x \to + \infty} \f{h_\Ll(t,x;s,\h)}{x} \le \limsup_{x \to +\infty} \f{\h(x)}{x},\qquad\text{and}\qquad \liminf_{x \to -\infty} \f{h_\Ll(t,x;s,\h)}{x} \ge \liminf_{x \to - \infty} \f{\h(x)}{x}.
\ee

In particular, for \textbf{continuous} initial data $\h$ satisfying \eqref{liminfsupfinite}, if either \rm{(}or both\rm{)} of the limits
$
\lim_{x \to \pm \infty} \f{\h(x)}{x}
$
exist \rm{(}potentially with different limits on each side\rm{)}, then for $t > s$,
\[
\lim_{x \to \pm \infty} \f{h_\Ll(t,x;s,\h)}{x} = \lim_{x \to \pm \infty} \f{\h(x)}{x}.
\]
\end{lemma}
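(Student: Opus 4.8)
\textbf{Proof proposal for Lemma \ref{lem:KPZ_preserve_lim}.}

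The plan is to establish the two inequalities \eqref{hliminfbd1}, then the matching bounds \eqref{hliminfbd2} under the continuity-and-finiteness hypothesis, after which the final ``in particular'' claim is immediate: if $\lim_{x \to +\infty} \h(x)/x = \ell$ exists and is finite, then combining the $\liminf$ bound from \eqref{hliminfbd1} with the $\limsup$ bound from \eqref{hliminfbd2} pins $\lim_{x\to+\infty} h_\Ll(t,x;s,\h)/x$ exactly to $\ell$, and similarly at $-\infty$. So the real content is the four one-sided estimates, and by the reflection symmetry $x \mapsto -x$ it suffices to treat the behavior as $x \to +\infty$.

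For the lower bound in \eqref{hliminfbd1}, I would simply use the test point $x$ itself in the supremum \eqref{992}: $h_\Ll(t,x;s,\h) \ge \h(x) + \Ll(x,s;x,t)$. By Lemma \ref{lem:Landscape_global_bound}, $\Ll(x,s;x,t) \ge -C(t-s)^{1/3}\log^2\bigl(\tfrac{2\sqrt{2x^2 + t^2 + s^2}+4}{(t-s)\wedge 1}\bigr)$, which is $o(x)$ as $x \to +\infty$ for fixed $s < t$. Dividing by $x$ and taking $\liminf$ gives $\liminf_{x\to+\infty} h_\Ll(t,x;s,\h)/x \ge \liminf_{x\to+\infty}\h(x)/x$. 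This works without any continuity or finiteness assumption, which is why \eqref{hliminfbd1} is the unconditional statement.

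For the upper bound in \eqref{hliminfbd2}, the idea is that the parabolic decay of the landscape confines the maximizer. Fix $t > s$ and write $\beta = \limsup_{x\to+\infty}\h(x)/x$, $\gamma_\pm = \limsup/\liminf_{x\to\pm\infty}\h(x)/x$; by \eqref{liminfsupfinite} all of these are finite. Given $\ve > 0$, choose $R_\ve$ so that $\h(u) \le (\beta + \ve)u$ for $u \ge R_\ve$ and $\h(u) \le (-\gamma_- + \ve)|u| = (|\gamma_-| + \ve)|u|$-type bounds hold for $u \le -R_\ve$ (using finiteness of $\gamma_-$), and bound $\h$ by a constant $C_\ve$ on the compact middle. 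Then for $x$ large and positive, split the supremum over $u \in \R$ into the regions $u \ge R_\ve$, $u \le -R_\ve$, and $|u| \le R_\ve$. On the first region, $\h(u) + \Ll(u,s;x,t) \le (\beta+\ve)u - \tfrac{(u-x)^2}{t-s} + (\text{log term})$; maximizing the parabola $(\beta+\ve)u - \tfrac{(u-x)^2}{t-s}$ in $u$ gives a value $\le (\beta+\ve)x + O(1)$, and the log term is $o(x)$ uniformly in the relevant range (here I would invoke Lemma \ref{lem:max_restrict}\ref{itm:KPZrestrict} or just the crude observation that the parabola forces the maximizing $u$ to be within $O(\sqrt{x}\,)$ of $x$, so the log term is $O(\log^2 x)$). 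The other two regions contribute $\le (\text{linear in } R_\ve) - \tfrac{(x \mp R_\ve)^2}{t-s} + o(x) \to -\infty$ relative to $x$, hence are negligible. Dividing by $x$ and letting $\ve \searrow 0$ yields $\limsup_{x\to+\infty} h_\Ll(t,x;s,\h)/x \le \beta$.

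The main obstacle I anticipate is bookkeeping the uniformity of the logarithmic error term from Lemma \ref{lem:Landscape_global_bound} over the relevant range of $u$ when $x\to\infty$: one must be sure that after the parabola localizes the candidate maximizers (to a window of width $O(\sqrt x)$ around $x$, say, as in Lemma \ref{lem:max_restrict}\ref{itm:KPZrestrict}), the term $C(t-s)^{1/3}\log^2\bigl(\tfrac{2\sqrt{u^2+x^2+s^2+t^2}+4}{(t-s)\wedge1}\bigr)$ is genuinely $o(x)$ rather than merely $O(x)$. This is true because for $u$ within $O(\sqrt x)$ of $x$ the argument of the log is $O(x)$, so the whole term is $O(\log^2 x) = o(x)$; making this precise (rather than using the a priori bound $|u| \le$ linear in $x$, which would only give $O(\log^2 x)$ anyway) is the one place care is needed. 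Everything else is routine parabola optimization and the symmetry reduction $x \mapsto -x$ for the $-\infty$ statements. I would cite Lemma \ref{lem:max_restrict}\ref{itm:KPZrestrict} to localize maximizers and Lemma \ref{lem:Landscape_global_bound} for the landscape bounds, and note that the continuity of $\h$ enters only to guarantee that the supremum in \eqref{992} is attained and that $\h$ is bounded on compacts, which together with \eqref{liminfsupfinite} gives the two-sided linear bound needed to run the argument at $-\infty$ as well.
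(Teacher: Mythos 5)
Your proposal is correct and follows essentially the same route as the paper: the unconditional lower bound comes from the diagonal test point $x$ in the variational formula together with the $o(x)$ logarithmic error from Lemma \ref{lem:Landscape_global_bound}, and the upper bound uses the global linear bound $|\h(x)|\le a+b|x|$ (from continuity plus \eqref{liminfsupfinite}) to localize maximizers via Lemma \ref{lem:max_restrict}\ref{itm:KPZrestrict} and then optimizes the parabola. The only cosmetic remark is that your sign bookkeeping for the region $u\le -R_\ve$ is slightly muddled (the upper bound on $\h(u)$ for negative $u$ comes from the $\liminf>-\infty$ half of \eqref{liminfsupfinite}, not the $\limsup$), but since all you actually need there is the global bound $|\h|\le a+b|\cdot|$, this does not affect the argument.
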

\begin{proof}
We start with \eqref{hliminfbd1} by proving the first inequality, and the other is analogous.
If $\liminf_{x \to +\infty} \f{\h(x)}{x} = -\infty$, there is nothing to show. Otherwise, let $\dir_1 \in \R$ be an arbitrary number smaller than $\liminf_{x \to +\infty} \f{\h(x)}{x}$. Let $y$ be sufficiently large and positive so that $\h(y)\ge \dir_1 y$. Then, using Lemma \ref{lem:Landscape_global_bound}, for such sufficiently large positive $y$,
\be \label{105}
\begin{aligned}
&\quad \;\sup_{x \in \R}\{\h(x) + \Ll(x,s;y,t)\} \\
&\ge \h(y) + \Ll(y,s;y,t) \ge \dir_1 y - C(t-s)^{1/3} \log^{2}\Bigl(\f{2\sqrt{2y^2 + t^2 + s^2} + 4}{(t - s)\wedge 1}\Bigr),
\end{aligned}
\ee
where $C$ is a constant. Therefore,
$
\liminf_{y \to \infty} \f{h_\Ll(t,y;s,\h)}{y} \ge \dir_1,
$
but this is true for all $\dir_1 < \liminf_{x \to +\infty} \f{\h(x)}{x}$, so 
$
\liminf_{y \to \infty} \f{h_\Ll(t,y;s,\h)}{y} \ge \liminf_{x \to +\infty} \f{\h(x)}{x}.
$

Next, we turn to proving \eqref{hliminfbd2}. Again, we prove the first inequality and the second follows analogously. Set $\dir_2 = \limsup_{x \to +\infty} \f{\h(x)}{x}$ and let $\ve > 0$. By continuity,  the assumption \eqref{liminfsupfinite} on the asymptotics of $\h$ implies there exist constants $a,b > 0$ so that $|\h(x)| \le a + b|x|$ for all $x \in \R$.  Lemma \ref{lem:max_restrict}\ref{itm:KPZrestrict} implies that for $\ve  > 0$ and sufficiently large $y > 0$, 
\begin{align*}
&\quad\; \sup_{x \in \R}\{\h(x) + \Ll(x,s;y,t)\} \\
&\le \sup_{x \in (y - y^{2/3},y + y^{2/3})}\Big\{(\dir_2 + \ve)x -\f{(x - y)^2}{t - s} + C(t - s)^{1/3} \log^{2}\Bigl(\f{2\sqrt{x^2 + y^2 + t^2 + s^2} + 4}{(t - s)\wedge 1}\Bigr) \Big\} \\
&\le \sup_{x \in (y - y^{2/3},y + y^{2/3})}\Big\{(\dir_2 + \ve)x-\f{(x - y)^2}{t - s } + \ve(x + y)  \Big\} \\
&= (\dir_2 + 3\ve)y + C(\ve,s,t,\dir_2),
\end{align*}

and so
\[
\limsup_{y \to \infty} \f{h_\Ll(t,y;s,\h)}{y} \le \dir_2 + 3\ve. \qedhere
\]
\end{proof}

\section{Proof of invariance and attractiveness (Theorem \ref{thm:invariance_of_SH})} \label{sec:invarattract_proof}

On a probability space $(\Omega,\F,\Pp)$, let $G^{\sqrt 2} = \{G^{\sqrt 2}_\dir\}_{\dir \in \R}$ be the stationary horizon with $\sigma = \sqrt 2$, independent of $\{\Ll(x,0;y,t):x,y \in \R, t > 0\}$. For $\dir \in \R$, define $h_\Ll(t,y;G_\dir^{\sqrt 2})$ as in \eqref{KPZSH2}:
\[
h_\Ll(t,y;G_\dir) = \sup_{x \in \R}\{G_\dir^{\sqrt 2}(x) + \Ll(x,0;y,t)\},\qquad\text{for all }y\in\R \text{ and } t > 0.
\]
Define the following state space: 
\be\label{Y}
\begin{aligned}
\Y &:= \bigl\{\{\h_\dir\}_{\dir \in \R} \in D(\R,C(\R)): \h_{\dir_1} \li \h_{\dir_2} \text{ for }\dir_1 < \dir_2,\\
&\qquad\qquad \text{ and  for all }\dir \in \R, \; \h_\dir(0) = 0 \text{ and } \h_\dir \text{ satisfies condition \eqref{eqn:drift_assumptions}} \\
&\qquad\qquad\qquad  \text{ with all $\limsup$ and $\liminf$ terms finite}\bigr\}. 
\end{aligned}
\ee
\begin{lemma} \label{lem:KPZ_preserve_Y}
 The space $\Y$   defined in \eqref{Y}  is a measurable subset of $D(\R,C(\R))$. Let  $\Ll$ be  the directed landscape, $\{\h_\dir\}_{\dir \in \R} \in \Y$, $h_0(\aabullet;\h_\dir)=\h_\dir$, and 
 \[
h_\Ll(t,y;\h_\dir) = \sup_{x \in \R}\{\h_\dir(x) + \Ll(x,0,y;t)\} \qquad \text{for } \ t>0, \  y\in\R \text{ and } \dir \in \R. 
\]
Then 
$t \mapsto \{h_\Ll(t,\aabullet;\h_\dir) - h_\Ll(t,0;\h_\dir) \}_{\dir \in \R}$ is a Markov process on $\Y$. Specifically, on the event of full probability from Lemma \ref{lem:Landscape_global_bound},    $\{h_\Ll(t,\aabullet;\h_\dir) - h_\Ll(t,0;\h_\dir) \}_{\dir \in \R} \in \Y$ for each $t > 0$.
\end{lemma}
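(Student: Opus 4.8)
The plan is to verify the three claims in turn: (i) $\Y$ is measurable; (ii) the process $t\mapsto\{h_\Ll(t,\aabullet;\h_\dir)-h_\Ll(t,0;\h_\dir)\}_{\dir\in\R}$ takes values in $\Y$ for each $t>0$ on the full-probability event of Lemma \ref{lem:Landscape_global_bound}; and (iii) it is Markov. For measurability, I would note that $D(\R,C(\R))$ carries the $\sigma$-algebra $\F'$ generated by the evaluation maps $G\mapsto G_\dir(x)$. The condition ``$\h_\dir(0)=0$ for all $\dir$'' is an intersection of measurable events over rational $\dir$ together with right-continuity; the condition ``$\h_{\dir_1}\li\h_{\dir_2}$ for $\dir_1<\dir_2$'' can be written as $\bigcap_{\dir_1<\dir_2\in\Q}\bigcap_{x<y\in\Q}\{\h_{\dir_2}(x,y)\ge\h_{\dir_1}(x,y)\}$ using monotonicity and continuity as in the measurability proof for $\X_n^A$; and the drift conditions \eqref{eqn:drift_assumptions}, with all $\limsup$/$\liminf$ finite, are measurable by the same argument as in the measurability proof of $\Y_n^A$ (the relevant $\limsup$ and $\liminf$ along $x\to\pm\infty$ are measurable functions of the coordinates, and one restricts to rational $x$ by continuity). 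This step is routine.

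For the invariance of the state space, fix $t>0$ and work on the full-probability event of Lemma \ref{lem:Landscape_global_bound}. Write $\wt\h_\dir := h_\Ll(t,\aabullet;\h_\dir)-h_\Ll(t,0;\h_\dir)$. First, $\wt\h_\dir(0)=0$ is immediate. Second, $\wt\h_\dir\in C(\R)$ and $\dir\mapsto\wt\h_\dir$ lies in $D(\R,C(\R))$: continuity of $\wt\h_\dir$ in $y$ comes from Lemma \ref{lem:max_restrict}\ref{itm:KPZcont} (the $\h_\dir$ are continuous since all $\limsup$/$\liminf$ are finite), and the cadlag property in $\dir$ plus the monotonicity $\wt\h_{\dir_1}\li\wt\h_{\dir_2}$ follow from the corresponding properties of $\dir\mapsto\h_\dir$ together with Lemma \ref{lemma:max_monotonicity}: if $\h_{\dir_1}\li\h_{\dir_2}$ then $h_\Ll(t,\aabullet;\h_{\dir_1})\li h_\Ll(t,\aabullet;\h_{\dir_2})$, and right-continuity/left-limits transfer because the sup in \eqref{992} can be taken over a compact $x$-set uniformly as $\dir$ ranges over a compact interval (using Lemma \ref{lem:max_restrict}\ref{itm:KPZrestrict} or the linear bounds of \ref{itm:KPZ_unif_line} together with the uniform-in-$\dir$ linear bounds on $\h_\dir$ coming from monotonicity in $\dir$). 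Third, and this is the crux, one must check that $\wt\h_\dir$ again satisfies \eqref{eqn:drift_assumptions} with finite $\limsup$/$\liminf$; this is exactly the content of Lemma \ref{lem:KPZ_preserve_lim}: if $\dir>0$ then $\lim_{x\to+\infty}\h_\dir(x)/x=2\dir$ forces $\lim_{x\to+\infty}\wt\h_\dir(x)/x=2\dir$, and the $\liminf_{x\to-\infty}\h_\dir(x)/x\in(-2\dir,+\infty]$ condition gives $\liminf_{x\to-\infty}\wt\h_\dir(x)/x\ge\liminf_{x\to-\infty}\h_\dir(x)/x>-2\dir$ by \eqref{hliminfbd1}; when the initial $\liminf$ is finite we also get the upper bound from \eqref{hliminfbd2}, so all $\limsup$/$\liminf$ stay finite. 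The cases $\dir=0$ and $\dir<0$ are handled symmetrically using both inequalities of Lemma \ref{lem:KPZ_preserve_lim}.

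For the Markov property, the representation \eqref{992} together with the metric composition law \eqref{eqn:metric_comp} for $\Ll$ gives, for $0<s<t$,
\[
h_\Ll(t,y;\h_\dir) = \sup_{z\in\R}\{h_\Ll(s,z;\h_\dir) + \Ll(z,s;y,t)\},
\]
i.e.\ the evolution from time $s$ to time $t$ depends on the past only through the state at time $s$, and the centered increments $\{h_\Ll(t,\aabullet;\h_\dir)-h_\Ll(t,0;\h_\dir)\}_{\dir\in\R}$ are a deterministic function of $\{h_\Ll(s,\aabullet;\h_\dir)-h_\Ll(s,0;\h_\dir)\}_{\dir\in\R}$ and the increment $\{\Ll(z,s;y,t)\}$, which is independent of $\{\Ll(\cdot,0;\cdot,s)\}$ by the independence-over-disjoint-time-intervals property of $\Ll$ stated in Section \ref{sec:DL_geod}. (One needs the added constant $h_\Ll(s,0;\h_\dir)$ to cancel out of the sup; it does, since it is $\dir$-dependent but $z$- and $y$-independent.) Combined with the fact just proved that the state stays in $\Y$, this gives the Markov property on $\Y$. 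The main obstacle is the second step — controlling the asymptotic slopes of $\wt\h_\dir$ and simultaneously the cadlag/monotonicity structure in $\dir$ uniformly — but Lemmas \ref{lem:KPZ_preserve_lim}, \ref{lem:max_restrict}, and \ref{lemma:max_monotonicity} are precisely tailored to this, so the argument is assembly rather than invention.
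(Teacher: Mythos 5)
Your proposal is correct and follows essentially the same route as the paper: measurability via countable intersections over rationals, preservation of the state space via Lemma \ref{lemma:max_monotonicity}/Lemma \ref{lem:DL_crossing_facts} for the ordering, Lemma \ref{lem:max_restrict} for continuity and compact localization of the supremum, Lemma \ref{lem:KPZ_preserve_lim} for the drift conditions, and the Markov property from the metric composition law together with the independent time increments of $\Ll$ (with the $\dir$-dependent constant cancelling in the sup). No gaps.
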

\begin{proof}  
Recall that we defined the $\sigma$-algebra on $D(\R,C(\R))$ be the smallest $\sigma$-algebra such that $h_\dir(x)$ is measurable for each $x,\dir \in \R$. We claim that
\begin{align*} \Y &= \bigcap_{\alpha_1 < \alpha_2 \in \Q}\Bigl\{ \{\h_\dir\}_{\dir \in \R} \in D(\R,C(\R)):\h_{\alpha_1}(q_1,q_2) \le \h_{\alpha_2}(q_1,q_2) \text{ for $q_1 < q_2 \in \Q$,  } \\&\qquad\qquad\qquad\qquad \h_{\alpha_1}(0) = 0, \text{ and }\h_{\alpha_1} \text{ satisfies condition \eqref{eqn:drift_assumptions} with }\dir = \alpha_1 \\
&\qquad\qquad\qquad\qquad\qquad\qquad\qquad  \text{and} \limsup_{|x| \to \infty} \f{|h_{\alpha_1}(x)|}{|x|} < \infty \Bigr\}.\end{align*}
In particular, for $\{h_{\dir}\}_{\dir \in \R}$ lying in the set on the right, $\h_{\alpha_1} \li \h_{\alpha_2}$ for $\alpha_1 \le \alpha_2 \in \Q$ by continuity of each function. Then, $\h_{\dir_1} \li \h_{\dir_2}$ extends to all $\dir_1 < \dir_2 \in \R$ by taking limits of rational numbers $\alpha_1^n \searrow \dir_1$ and $\alpha_2^n \searrow \dir_2$. Since $\h_\dir(0) = 0$ and $\h_{\dir_1} \le \h_{\dir_2}$ for $\dir_1 \le \dir_2$, $\h_{\dir_1}(x) \le \h_{\dir_2}(x)$ for  $x > 0$, and the inequality flips for $x < 0$. Hence, if Condition \eqref{eqn:drift_assumptions} holds $\alpha \in \Q$ and all $\limsup,\liminf$ terms are finite, this extends to all $\dir \in \R$. To finish the proof of measurability of $\Y$, it remains to show that the four quantities $\liminf_{x \to \pm \infty}\f{\h_\dir(x)}{x}$ and $\limsup_{x \to \pm \infty}\f{\h_\dir(x)}{x}$ are measurable. This was demonstrated previously in Section \ref{sec:queue_intro}.

We now  show that $\{h_\Ll(t,\aabullet;\h_\dir)-h_\Ll(t,0;\h_\dir)\}_{\dir \in \R} \in \Y$ for all $t > 0$. Lemma \ref{lem:DL_crossing_facts}\ref{itm:KPZ_crossing_lemma} shows the preservation of the ordering of functions, Lemma \ref{lem:KPZ_preserve_lim} shows the preservation of limits, and Lemma \ref{lem:max_restrict}\ref{itm:KPZcont} shows that $h_\Ll(t,\abullet;\h_\dir) \in C(\R)$ for all $\dir$. It remains to show that $\{h_\Ll(t,\aabullet;\h)\}_{\dir \in \R} \in D(\R,C(\R))$ for each $t > 0$. Since $\h_{\dir_1} \li \h_{\dir_2}$,   Lemma \ref{lemma:max_monotonicity} and the global bounds of Lemma \ref{lem:Landscape_global_bound} imply  that, for each compact $K \subseteq \R$ and $\dir \in \R$, there exists a random $M = M(\dir,t,K) > 0$ such that for all $y \in K$, $\alpha \in (\dir - 1,\dir + 1)$,
\[
\sup_{x \in \R}\{\h_\alpha(x) + \Ll(x,0;y,t)\} = \sup_{x \in [-M,M]}\{\h_\alpha(x) + \Ll(x,0;y,t)\}.
\]
Then, it follows that $\{h_\Ll(t,\aabullet;\h_\dir)\}_{\dir \in \R}$, as an $\R \to C(\R)$ function of $\dir$, is right-continuous with left limits because this is true of $\{\h_\dir\}_{\dir \in \R}$. 
By the metric composition \eqref{eqn:metric_comp}  of the directed landscape $\Ll$,  for $0 < s < t$,
\begin{align*} 
&\quad \; h_\Ll(t,y;\h_\dir) - h_\Ll(t,0;\h_\dir) \\
&= \sup_{x \in \R, z \in \R}\{\h_\dir(x) + \Ll(x,0,z,s) + \Ll(z,s;y,t)\} \\
&\qquad\qquad\qquad\qquad - \sup_{x \in \R, z \in \R}\{\h_\dir(x) + \Ll(x,0,z,s) + \Ll(z,s;0,t)\} \\
&= \sup_{z \in \R}\{h_\Ll(s,z;\h_\dir) + \Ll(z,s;y,t)\} - \sup_{z \in \R}\{h_\Ll(s,z;\h_\dir) + \Ll(z,s;0,t)\} \\
&=\sup_{z \in \R}\{h_\Ll(s,z;\h_\dir) -h_\Ll(s,0;\h_\dir)+ \Ll(z,s;y,t)\}\\
&\qquad\qquad\qquad\qquad 
- \sup_{z \in \R}\{h_\Ll(s,z;\h_\dir)- h_\Ll(s,0;\h_\dir) + \Ll(z,s;0,t)\}.
\end{align*}  
Then, $t \mapsto \{h_\Ll(t,\aabullet;\h_\dir) - h_\Ll(s,0;\h_\dir)\}_{\dir \in \R}$ is Markov because $\Ll$ has i.i.d. time increments. 
\end{proof}

\begin{proof}[Proof of Theorem \ref{thm:invariance_of_SH}]
\textbf{Invariance:} Since the $\sigma$-algebra on $D(\R,C(\R))$ is generated by the projection maps and since the subspace $\Y \subseteq D(\R,C(\R))$ is preserved under $\Ll$ (Lemma \ref{lem:KPZ_preserve_Y}), it suffices to show that for  $-\infty < \dir_1 < \cdots < \dir_k < \infty$ and $t > 0$, 
\be \label{606}
\bigl(h_\Ll(t,\aabullet;G^{\sqrt 2}_{\dir_1})-h_\Ll(t,0;G^{\sqrt 2}_{\dir_1}),\ldots, h_\Ll(t,\aabullet;G^{\sqrt 2}_{\dir_k})-h_\Ll(t,0;G^{\sqrt 2}_{\dir_k})\bigr) \deq (G^{\sqrt 2}_{\dir_1},\ldots,G^{\sqrt 2}_{\dir_k}).
\ee

We prove this by using the invariance of $G^1$ for BLPP (Lemma \ref{lem:BLPP_invar}), the convergence of BLPP to $\Ll$ (Theorem \ref{thm:BLPPtoDL}), and the exit point bounds for BLPP from its stationary initial condition proved in Theorem \ref{thm:BLPP_exit_pts}. For this, choose $\rho > 0$, and let $\alpha,\beta,\tau$ be defined as in \eqref{eqn:BLPPchitau}:
\[
\chi^2 = \rho,\;\; \alpha = 2\sqrt \rho, \;\; \beta = \f{1}{\sqrt \rho}, \;\; \chi/\tau^2 = \f{1}{4\rho^{3/2}}.
\]
We observe here that
\be \label{eqn:diff2}
\f{\tau}{\chi^2} = \f{\tau}{\rho} = \f{\sqrt{\chi 4\rho^{3/2}}}{p} = \f{2\rho}{\rho} = 2
\ee

We will assume that our probability space $(\Omega,\F,\Pp)$ contains the coupling of $\{L^N\}_{N \ge 1}$ and $\Ll$ from Theorem \ref{thm:BLPPtoDL} and an independent realization of the SH $G^1$  with $\sigma = 1$. 
By convention, we say that $L^N(x,m;y,n) = -\infty$ if $m > n$ or $x > y$. For $1 \le i \le k, N \ge 1, x,y \in \R$, and $t > s$,  define
\begin{align*}
\lambda_i^N &= \beta + \f{\dir_i}{\chi N^{1/3}}, \\
H_i^N(t,y) &= \f{1}{\chi N^{1/3}} \sup_{x \in \R}\Bigl( \Bigl\{G_{\lambda_i^N}^1(x) + L^N(x,1;N\rho t + N^{2/3}\tau y, \lfloor Nt \rfloor)  \Bigr\} \\
&\qquad\qquad\qquad\qquad\qquad\qquad\qquad\qquad\qquad\qquad- \alpha N t - \beta \tau N^{2/3}y\Bigr), \\
Z_i^N(t,y) &= Z_{L^N}^{G_{\lambda_i^N}}(\lfloor Nt \rfloor, \rho t N + yN^{2/3} y) \qquad \qquad \text{(Recall \eqref{BLPPZgen})}, \\
G_i^N(x) &= \f{1}{\chi N^{1/3}}\Bigl(G^1_{\lambda_i^N}(N^{2/3} \tau x) - \beta \tau N^{2/3} x\Bigr), \qquad\text{and} \\
\Ll_N(x,s;y,t) &= \f{1}{\chi N^{1/3}} \Bigl(L^N(N\rho s + N^{2/3}\tau x, \lfloor Ns \rfloor ;N \rho t + N^{2/3} \tau y, \lfloor N t \rfloor  ) \\
&\qquad\qquad\qquad\qquad\qquad\qquad\qquad\qquad\qquad\qquad- \alpha N - \beta \tau N^{2/3}(y - x)\Bigr). 
\end{align*}

By the invariance of $G^1$ for $L^N$ (Lemma \ref{lem:BLPP_invar}) followed by shift invariance of the SH (Theorem \ref{thm:SH_dist_invar}\ref{itm:shinv}), the scaling relations of the SH (Theorem \ref{thm:SH_dist_invar}\ref{itm:SHscale} with \\$b = \f{1}{\chi N^{1/3}}$, $c = N^{1/3}\sqrt \tau$, and $\nu = \f{\beta \chi}{\tau} N^{1/3}$), and lastly \eqref{eqn:diff2}, for $t > 0$ and $Nt \ge 1$,
\be \label{Hiinvar}
\begin{aligned}
&\quad \; \{H_i^N(t,\abullet) - H_i^N(t,0)\}_{1 \le i \le k} \\
&\deq  \Bigl\{\f{1}{\chi N^{1/3}}\bigl(G_{\lambda_i^N}^1(N\rho t + N^{2/3} \tau \abullet) - G_{\dir_i}^1(N\rho t) - \beta \tau N^{2/3} \abullet\bigr)\Bigr\}_{1 \le i \le k} \\
&\deq \Bigl\{\f{1}{\chi N^{1/3}} (G_{\lambda_i^N}^1(N^{2/3} \tau \abullet) - \beta \tau N^{2/3} \abullet)\Bigr\}_{1 \le i \le k} \\
&= \Bigl\{\f{1}{\chi N^{1/3}} G_{\lambda_i^N}^1(N^{2/3} \tau \abullet) - \f{\tau}{\chi^2} \beta \chi N^{1/3} \abullet)\Bigr\}_{1 \le i \le k} \\
&\deq \bigl\{G^{\sqrt \tau/\chi}_{(\lambda_i^N - \beta)\chi N^{1/3}}\bigr\}_{1 \le i \le k} = \{G^{\sqrt 2}_{\dir_i}\}_{1 \le i \le k}. 
\end{aligned}
\ee
Therefore, to show
$
\{h_\Ll(t,\aabullet;G^{\sqrt 2}_\dir) - h_\Ll(t,0;G^{\sqrt 2}_\dir)\}_{\dir \in \R} \deq G^{\sqrt 2}
$
it suffices to show that, for each $t > 0$, the  following distributional convergence holds as processes on $C(\R,\R^k)$:
\be \label{BLPPKPZconv}
\{H_i^N(t,\abullet)\}_{1 \le i \le k} \overset{N \to \infty}{\Longrightarrow} \{h_{\Ll}(t,\abullet;G_{\dir_i}^{\sqrt 2})\}_{1 \le i \le k}.
\ee
From definition of $H_i^N$, we observe the following:
\be \label{Hicomp}
\begin{aligned}
    H_i^N(t,y) &= \f{1}{\chi N^{1/3}} \sup_{x \in \R}\Bigl( \Bigl\{G_{\lambda_i^N}^1(x) + L^N(x,1;N\rho t + N^{2/3}\tau y, \lfloor Nt \rfloor)  \Bigr\} \\
&\qquad\qquad\qquad\qquad\qquad\qquad\qquad\qquad\qquad\qquad- \alpha N t - \beta \tau N^{2/3}y\Bigr) \\
&= \f{1}{\chi N^{1/3}} \sup_{x \in \R}\Bigl\{G_{\lambda_i^N}^1(N^{2/3}\tau x) - \beta \tau N^{1/3} x \\
&\qquad\qquad\qquad + L^N(N^{2/3} \tau x,1;N\rho t + N^{2/3}\tau y, \lfloor Nt \rfloor)    
- \alpha N t - \beta \tau N^{2/3}(y - x) \Bigr\} \\
&= \sup_{x \in \R}\{G_i^N(x) + \Ll_N(x - \rho \tau^{-1} N^{-2/3},N^{-1};y,t)\} - \f{\alpha- \beta \rho}{\chi N^{1/3}}, 
\end{aligned}
\ee
where the $\f{\alpha - \beta \rho}{\chi N^{-1/3}}$ is a correction term that appears simply because we start $L^N$ along level $1$ instead of level $0$. We observed in \eqref{Hiinvar} that the distribution of $G_i^N$ does not depend on $N$; specifically, $\{G_i^N\}_{1 \le i \le k} \deq \{G^{\sqrt 2}_{\dir_i}\}_{1 \le i \le k}$. We define the process $\{G^{\sqrt 2}_{\dir_i}\}_{1 \le i \le k}$   on our probability space by setting $G^{\sqrt 2}_{\dir}(x) = G^1_\dir(2x)$ (Theorem \ref{thm:SH_dist_invar}\ref{itm:SHscale}). For $1 \le i \le k$, define
\[
\wt H_i^N(t,y) = \sup_{x \in \R} \{G^{\sqrt 2}_{\dir_i}(x) + \Ll_N(x - \rho \tau^{-1} N^{-2/3}; N^{-1};y,t)\}, 
\]
and let $\wt Z_i^N(t,y)$ be the largest maximizer of the above. Then, by \eqref{Hicomp},
\be \label{Hitilde}
(\{\wt H_i^N - \f{\alpha - \beta \rho}{\chi N^{1/3}}\}_{1 \le i \le k}, \{ N^{2/3} \tau \wt Z_i^N \}_{1 \le i \le k}) \deq (\{H_i^N\}_{1 \le i \le k},\{Z_i^N\}_{1 \le i \le k}),
\ee
noting that we made the change of variable $x \mapsto N^{2/3} \tau x$ in \eqref{Hicomp}. 
To show \eqref{BLPPKPZconv}, it therefore suffices to show that, for any $t,a,\ve > 0$, 
\[
\lim_{N \to \infty} \Pp\Bigl(\sup_{\substack{1 \le i \le k \\ y \in [-a,a]}} \Bigl|\wt H_i^N(t,y) - h_\Ll(t,y;G_{\dir_i})   \Bigr| > \ve   \Bigr) = 0.
\]
Before estimating this probability, we make one last shorthand definition:
\[
Z_{\Ll}^{i}(t,y) = \max \argmax \{G_{\dir_i}(x) + \Ll(x,0;y,t)\}.
\]
Observe that $Z_i^N(t,x) \le Z_i^N(t,y)$ for $x < y$ (Lemma \ref{lem:Zfmont}). By a similar argument, $Z_{\Ll}^i(t,x) \le Z_{\Ll}^i(t,y)$ for $x < y$. Then, we observe that  for any choice of $M > 0$,
\begin{align}
&\quad \; \Pp\Bigl(\sup_{\substack{1 \le i \le k \\ y \in [-a,a]}} \Bigl|\wt H_i^N(t,y) - h_\Ll(t,y;G_{\dir_i})   \Bigr| > \ve   \Bigr) \nonumber \\
&= \Pp\Bigl(\sup_{\substack{1 \le i \le k \\ y \in [-a,a]}} \Bigl|\sup_{x \in \R} \{G^{\sqrt 2}_{\dir_i}(x) + \Ll_N(x - \rho \tau^{-1} N^{-2/3}; N^{-1};y,t)\} \nonumber \\
&\qquad\qquad\qquad\qquad\qquad\qquad\qquad- \sup_{x \in \R}\{G^{\sqrt 2}_{\dir_i}(x) + \Ll(x,0;y,t)\}   \Bigr| > \ve   \Bigr)  \nonumber \\
&\le \sum_{i = 1}^k \Pp\Bigl(\sup_{y \in [-a,a]}\Bigl|\sup_{x \in [-M,M]} \{G^{\sqrt 2}_{\dir_i}(x) + \Ll_N(x - \rho \tau^{-1} N^{-2/3}; N^{-1};y,t)\} \nonumber \\
&\qquad\qquad\qquad\qquad\qquad\qquad\qquad - \sup_{x \in [-M,M]}\{G^{\sqrt 2}_{\dir_i}(x) + \Ll(x,0;y,t)\}   \Bigr| >\ve \Bigr)\label{606a} \\
&+ \Pp(Z_\Ll^i(t,-a) < -M) + \Pp(Z_\Ll^i(t,a) > M) \label{606b} \\
&+\sum_{i = 1}^k\bigl[ \Pp(\wt Z_i^N(t,-a) < -M) + \Pp(\wt Z_i^N(t,a) > M)\bigr]. \label{606d}
\end{align}

The term \eqref{606a} goes to $0$ as $N \to \infty$ by the uniform convergence on compact sets $\Ll_N \to \Ll$ (Theorem \ref{thm:BLPPtoDL}) and continuity of $\Ll$, the term \eqref{606b} goes to $0$ as $M \to \infty$ because, by Lemma \ref{lem:Landscape_global_bound}, $\Ll(x,0;y,t) \sim - \f{(x - y)^2}{t}$ and $G_{\dir_i}^{\sqrt 2}$ is a Brownian motion with drift, so  $x \mapsto G_{\dir_i}^{\sqrt 2}(x) + \Ll(x,0;y,t)$ almost surely has a finite maximizer. Then, using \eqref{Hitilde},\eqref{606d}  and the exit point bounds of Theorem \ref{thm:BLPP_exit_pts},
\begin{align*}
&\quad \; \limsup_{N \to \infty} \Pp\Bigl(\sup_{\substack{1 \le i \le k \\ y \in [-a,a]}} \Bigl|\wt H_i^N(t,y) - h_\Ll(t,y;G_{\dir_i})   \Bigr| > \ve   \Bigr) \\
&\le \sum_{i = 1}^k \limsup_{M \to \infty} \limsup_{N \to \infty}[ \Pp(\wt Z_i^N(t,-a) < -M) + \Pp(\wt Z_i^N(t,a) > M)\bigr]\\
&= \sum_{i = 1}^k \limsup_{M \to \infty} \limsup_{N \to \infty}[ \Pp( Z_i^N(t,-a) < -M\tau N^{2/3}) + \Pp(Z_i^N(t,a) > M \tau N^{2/3})\bigr] = 0.  
\end{align*}
This concludes the proof of invariance.

\noindent \textbf{Attractiveness and uniqueness:} The proof idea is similar to that of Theorem 3.3 in \cite{Bakhtin-Cator-Konstantin-2014}. 
Let $k\in\N$ and let $\bar{\dir} = (\dir_1,\ldots,\dir_k) \in \R^k$ be a strictly increasing vector. Let $\bar{\h}=(\h_1,...,\h_k) \in \UC^k$ satisfy \eqref{eqn:drift_assumptions} with $\h = \h_i$ and $\dir = \dir_i$ for $1 \le i \le k$.  
	Let $\ve > 0$. By the $\sigma = \sqrt 2$ case of Theorem \ref{thm:SH_sticky_thm}\ref{itm:SH_stick}, almost surely, there exists $\delta > 0$ such that $G^{\sqrt 2}_{\dir_i \pm \delta}(x) = G^{\sqrt 2}_{\dir_i}(x) \ \text{ for all } x \in [-a,a], \, 1 \le i \le k$. Hence, we may choose $\delta > 0$ small so that 
	\[
	\Pp\bigl\{G^{\sqrt 2}_{\dir_i \pm \delta}(x) = G^{\sqrt 2}_{\dir_i}(x) \ \,\forall x \in [-a,a], \, 1 \le i \le k\bigr\} \ge 1 - {\ve}/{2}.
	\]
	Then, by invariance of $G^{\sqrt 2}$ under $h_{\Ll}$, for all $t > 0$, 
	\be\label{lb1}\begin{aligned}
	\Pp\bigl\{h_\Ll(t,x;G^{\sqrt 2}_{\xi_i\pm \delta})-h_\Ll(t,0;G^{\sqrt 2}_{\xi_i\pm\delta})&= h_\Ll(t,x;G^{\sqrt 2}_{\xi_i})-h_\Ll(t,0;G^{\sqrt 2}_{\xi_i}) \\ &\qquad   \forall  x\in[-a,a], \, 1 \le i \le k\bigr\} \ge 1 - {\ve}/{2}.
\end{aligned}\ee
Recall the  sets $Z^{a,0,t}_f$ of exit points from \eqref{exitpt}.  Because $G^{\sqrt 2}_{\xi_i\pm\delta}$ is a Brownian motion with diffusivity $\sqrt 2$ and drift $2(\dir_i \pm \delta)$ (Proposition \ref{prop:SH_cons}\ref{itm:SHBM}), it  satisfies \eqref{eqn:drift_assumptions} for $\xi_i\pm\delta$. By 
the temporal reflection symmetry of Lemma \ref{lm:landscape_symm},  Lemma \ref{lem:unq} implies that, almost surely for all $t$ sufficiently large, 
\[
\begin{aligned}
Z^{a,0,t}_{G^{\sqrt 2}_{\xi_i-\delta}} &\subseteq [(\dir_i - 5\delta/4)t,(\dir_i - 3\delta/4)t],\;\; Z^{a,0,t}_{\h_i} \subseteq [(\dir_i - \delta/4)t,(\dir + \delta/4)t], \\
\;\;\text{and}\;\; Z^{a,0,t}_{G^{\sqrt 2}_{\xi_i+\delta}} &\subseteq [(\dir_i + 3\delta/4)t,(\dir_i + 5\delta/4)t]. 
\end{aligned}
\]
Hence, for sufficiently large $t$, 
\be \label{largp}
\Pp\bigl(Z^{a,0,t}_{G^{\sqrt 2}_{\xi_i-\delta}} \leq Z^{a,0,t}_{\h_i}\leq Z^{a,0,t}_{G^{\sqrt 2}_{\xi_i+\delta}} \ \, \forall \tspa 1 \le i \le k\bigr) > 1 - {\ve}/{2},
\ee
where    for $A,B\subseteq \R$ we say$A\leq  B$ if $\sup A\leq \inf B$.  By Lemma \ref{lem:DL_crossing_facts}\ref{itm:KPZ_crossing_lemma},   on the event in \eqref{largp} the following holds for all $x \in [0,a]$ and $1 \le i \le k$: 
\be\label{lb2}
\begin{aligned}
	&h_\Ll(t,x;G^{\sqrt 2}_{\xi_i-\delta})-h_\Ll(t,0;G^{\sqrt 2}_{\xi_i-\delta}) \\ &\qquad\qquad\qquad\qquad\qquad\leq h_\Ll(t,x;\h_i)-h_\Ll(t,0;\h_i) \\ &\qquad\qquad\qquad\qquad\qquad\qquad\qquad\qquad\qquad\qquad\leq h_\Ll(t,x;G^{\sqrt 2}_{\xi_i+\delta})-h_\Ll(t,0;G^{\sqrt 2}_{\xi_i+\delta}).  
 \end{aligned}
\ee
The reverse inequalities hold for $x \in [-a,0]$. Combining \eqref{lb1}--\eqref{lb2}, we have that for sufficiently large $t$,
\[
	\Pp\bigl\{h_\Ll(t,x;G^{\sqrt 2}_{\xi_i})-h_\Ll(t,0;G^{\sqrt 2}_{\xi_i})= h_\Ll(t,x;\h_i)-h_\Ll(t,0;\h_i)\ \, \forall  x\in[-a,a],1 \le i \le k\bigr\} \ge 1 - \ve. 
\]
The proof of Theorem \ref{thm:invariance_of_SH} is complete.  
\end{proof}

\chapter{Busemann process and the global structure of semi-infinite geodesics in the directed landscape} \label{chap:Buse}
\section{Introduction}
\subsection{Semi-infinite geodesics in the DL}
Recall the notion of geodesics for the directed landscape (DL) discussed in Section \ref{sec:DL_geod}. A \textit{semi-infinite geodesic} starting  from $(x,s) \in \R^2$ is a continuous path $g:[s,\infty) \to \R$ such that $g(s) = x$ and   the restriction of $g$ to each compact interval $[s,t]\subseteq[s,\infty)$ is a geodesic between $(x,s)$ and $(g(t),t)$. Such an infinite path $g$ has {\it direction} $\dir \in \R$ if $\lim_{t \to \infty} g(t)/t=\dir$.  Two semi-infinite geodesics $g_1$ and $g_2$ \textit{coalesce} if there exists $t$ such that $g_1(u) = g_2(u)$ for all $u\ge t$. If $t$ is the minimal such time, then  $(g_1(t),t)$ is the \textit{coalescence point}.
Two semi-infinite geodesics $g_1,g_2:[s,\infty) \to \R$ are \textit{distinct} if $g_1(t) \neq g_2(t)$ for at least some  $t\in(s,\infty)$ and \textit{disjoint} if $g_1(t) \neq g_2(t)$ for all $t\in(s,\infty)$.

In this  chapter, we give a detailed study of semi-infinite geodesics in the DL, as has appeared in the author's joint work with Busani and Sepp\"al\"ainen \cite{Busa-Sepp-Sore-22arXiv}. A significant consequence of Theorem \ref{thm:invariance_of_SH} is that the stationary horizon characterizes the distribution of the  Busemann process of the directed landscape (Theorem \ref{thm:Buse_dist_intro}). The Busemann process is a key tool that allows us to construct semi-infinite geodesics in every direction and from every initial point.    Many of the tools necessary for this study were developed in the BLPP context in the author's work with Sepp\"al\"ainen \cite{Seppalainen-Sorensen-21a,Seppalainen-Sorensen-21b}, to which we refer the reader for a detailed study of infinite geodesics in BLPP. 

It is not immediately obvious that semi-infinite geodesics should exist, nor is it obvious that they should have directions. The first to study semi-infinite geodesics in the DL was Rahman and Vir\'ag \cite{Rahman-Virag-21}, and we give a summary of their results in Section \ref{sec:RV_summ}. For a fixed direction, they established the existence of semi-infinite geodesics from all points and showed the coalescence of these geodesics. Closely tied to the study of geodesics is that of  \textit{Busemann functions}, a tool originating from differential geometry \cite{Busemann-55}. The work of \cite{Rahman-Virag-21} showed the existence of Busemann functions for a fixed direction. They also showed that, for a fixed point, there exists semi-infinite geodesics in each direction, with an at most countable set of directions for which the geodesic is not unique.

Starting from the definition in \cite{Rahman-Virag-21}, we construct the full  Busemann process across all directions. Through the properties of this process, we establish a classification of uniqueness and coalescence of semi-infinite geodesics in  the directed landscape. Similar constructions of the Busemann process and classifications for discrete and semi-discrete models have previously been achieved  \cite{Sepp_lecture_notes,Janjigian-Rassoul-2020b,Janjigian-Rassoul-Seppalainen-19,Seppalainen-Sorensen-21b}, but the procedure in the directed landscape is more delicate. One reason is that the  space is fully continuous.  Another difficulty is that Busemann functions in DL possess monotonicity only in horizontal directions, while    discrete and semi-discrete models  exhibit   monotonicity   in both  horizontal and vertical directions. A new perspective is needed to construct the Busemann process for arbitrary initial points. 

The full Busemann process is necessary for a complete understanding of the geometry of semi-infinite geodesics.  In particular, countable dense sets of initial points or directions cannot capture non-uniqueness of geodesics or the singularities of the Busemann process.

After the first version of \cite{Busa-Sepp-Sore-22arXiv} was posted, Ganguly and Zhang \cite{Ganguly-Zhang-2022a} gave an independent construction of a Busemann function and semi-infinite geodesics, again for a fixed direction. They   defined a notion of ``geodesic local time'' which was key to understanding the global fractal geometry of geodesics in DL. Later in \cite{Ganguly-Zhang-2022b}, the same authors showed that the discrete analogue of geodesic local time in exponential LPP converges to geodesic local time for the DL.

Matching the conjectures emanating from the nonexistence of bi-infinite geodesics in discrete models, it was recently shown by Bhatia \cite{Bhatia-23} that, with probability one, bi-infinite geodesics do not exist for the DL.

\subsection{Non-uniqueness of geodesics and random fractals} Among the key questions   is the uniqueness of semi-infinite geodesics in the directed landscape. We show the existence of a countably infinite, dense  random  set $\DLBusedc$ of directions $\dir$ such that, from each initial point in $\R^2$,    two semi-infinite geodesics in direction $\dir$ emanate,   separate immediately or after some time, and never return back together. It is interesting to relate this result and its proof to earlier work on disjoint finite geodesics.  

The set of exceptional pairs of points between which there is a non-unique geodesic in DL was studied in \cite{Bates-Ganguly-Hammond-22}. Their approach relied on \cite{Basu-Ganguly-Hammond-21} which studied the random nondecreasing function $z \mapsto \Ll(y,s;z,t) - \Ll(x,s;z,t)$ for fixed $x < y$ and $s < t$. This process is locally constant except on an exceptional set of Hausdorff dimension $\f{1}{2}$. From here \cite{Bates-Ganguly-Hammond-22} showed that for fixed $s < t$ and $x < y$, the set of $z \in \R$ such that there exist disjoint geodesics from $(x,s)$ to $(z,t)$ and from $(y,s)$ to $(z,t)$ is exactly the set of local variation of the function $z \mapsto \Ll(x,s;z,t) - \Ll(y,s;z,t)$, and therefore has Hausdorff dimension $\f{1}{2}$. Going further, they showed that for fixed $s < t$, the set of pairs $(x,y) \in \R^2$ such that there exist two disjoint geodesics from $(x,s)$ to $(y,t)$ also has Hausdorff dimension $\f{1}{2}$, almost surely. Later, this exceptional set in the time direction was studied in \cite{Ganguly-Zhang-2022a}, and was shown to have Hausdorff dimension $2/3$. Across the entire plane, this set has Hausdorff dimension $\f{5}{3}$.
In a similar spirit, Dauvergne \cite{Dauvergne-23} recently posted a paper detailing all the possible configurations of non-unique point-to-point geodesics, along with the Hausdorff dimensions--with respect to a particular metric--of the sets of points with those configurations.

Our focus is on the  limit  of the measure studied in \cite{Basu-Ganguly-Hammond-21},  namely, the nondecreasing function $\dir \mapsto \W_{\dir}(y,s;x,s)= \lim_{t \to \infty}[\Ll(y,s;t\dir,t) - \Ll(x,s;t\dir,t)]$, which is exactly the Busemann function in direction $\dir$. 
  The support of its Lebesgue-Stieltjes measure corresponds to the existence of disjoint geodesics (Theorem \ref{thm:Buse_pm_equiv}), but in contrast to \cite{Bates-Ganguly-Hammond-22},  the measure is supported on a countable discrete set instead of on a set of Hausdorff dimension $\f{1}{2}$ (Theorem \ref{thm:DLBusedc_description}\ref{itm:DL_Buse_no_limit_pts} and Remark \ref{rmk:shock_measure}). 

We encounter a Hausdorff dimension $\f{1}{2}$ set if we look along a fixed time level  $s$ for those space-time points $(x,s)$ out of  which there are disjoint semi-infinite geodesics in a {\it random, exceptional} direction (Theorem \ref{thm:Split_pts}\ref{itm:Hasudorff1/2}). Up to the removal of an at-most countable set, this Hausdorff dimension $\f{1}{2}$ set is the support of the random measure defined by the function
$
x \mapsto f_{s,\dir}(x) = \W_{\dir +}(x,s;0,s) - \W_{\dir -}(x,s;0,s),
$
where $\W_{\dir\pm}$ 
are the right and left-continuous Busemann processes (Theorem \ref{thm:random_supp}). This is a semi-infinite analogue of the result in \cite{Bates-Ganguly-Hammond-22}.  

The distribution of $f_{s,\dir}$ is  delicate.  The set of directions $\dir$ such that $\W_{\dir -} \neq \W_{\dir +}$, or equivalently such that 
$\tau_\xi=\inf\{x>0: f_{s,\dir}(x)>0\}<\infty $,
is the set   $\DLBusedc$ mentioned above. A fixed direction $\dir$ lies in $\DLBusedc$ with probability $0$. Because the SH describes the Busemann process for the DL,  Theorem \ref{thm:BusePalm} shows that the law of $f_{s,\dir}(\tau_\xi+\aabullet)$ on $\R_{\ge0}$, conditioned on $\dir \in \DLBusedc$ in the appropriate Palm sense,  is exactly that of the running maximum  of a Brownian motion, or equivalently, that of Brownian local time. This complements the fact that the function $z \mapsto \Ll(y,s;z,t) - \Ll(x,s;z,t)$ is locally absolutely continuous with respect to Brownian local time \cite{Ganguly-Hegde-2021}.  

Since the first version of \cite{Busa-Sepp-Sore-22arXiv} appeared, Bhatia \cite{Bhatia-22,Bhatia-23} posted two papers that use the results as inputs. The first, \cite{Bhatia-22} studies the Hausdorff dimension of the set of splitting points of geodesics along a geodesic itself. The second, \cite{Bhatia-23} answers an open problem presented in \cite{Busa-Sepp-Sore-22arXiv}. Namely, for all points in the set $\NU^{\dir \sig}$ defined in \eqref{NU0}, the geodesics split immediately from the initial point, and for a fixed direction $\dir$ the set $\NU^\dir$ almost surely has Hausdorff dimension $\f{4}{3}$. 

\section{Main results} \label{sec:BuseSIGmain}
In this chapter, the Busemann  process is used to construct a special class of semi-infinite geodesics called \textit{Busemann geodesics} simultaneously from all initial points and in all directions  (Theorem \ref{thm:DL_SIG_cons_intro}).  The definition of Busemann geodesics, along with a detailed study, comes in Section \ref{sec:Buse_geod_results}. 

The first theorem  states our conclusions for general semi-infinite geodesics.  The random countably infinite dense set $\DLBusedc$ of directions  is later characterized in \eqref{eqn:DLBuseDC_def} as the discontinuity set of the Busemann process, and its properties stated in  Theorem \ref{thm:DLBusedc_description}.

 We assume the probability space $(\Omega,\F,\Pp)$ of  the directed landscape $\Ll$  complete. All statements about semi-infinite geodesics are with respect to $\Ll$.



\begin{theorem} \label{thm:DLSIG_main} The following statements hold on a single event of full probability. There exists a random countably infinite dense subset $\DLBusedc$, of $\R$, such that parts \ref{itm:good_dir_coal}--\ref{itm:bad_dir_split} below hold. 
\begin{enumerate} [label=\rm(\roman{*}), ref=\rm(\roman{*})]  \itemsep=3pt
    \item \label{itm:all_dir} Every semi-infinite geodesic has a direction $\dir \in \R$. From each initial point $p \in \R^2$ and in each direction $\dir \in \R$, there exists at least one semi-infinite geodesic from $p$ in direction $\dir$.
    \item \label{itm:good_dir_coal} When $\dir \notin \DLBusedc$, all semi-infinite geodesics in direction $\dir$ coalesce. There exists a random set of initial points, of zero planar Lebesgue measure, outside of which the semi-infinite geodesic in each direction $\dir \notin \DLBusedc$ is unique. 
    \item \label{itm:bad_dir_split} When $\dir \in \DLBusedc$, there exist at least two families of semi-infinite geodesics in direction $\dir$, called the $\dir -$ and $\dir +$ geodesics. From every initial point $p \in \R^2$ there exists both a $\dir -$ geodesic and a $\dir +$ geodesic which  eventually separate and never come back together. All $\dir -$ geodesics coalesce, and all $\dir +$ geodesics coalesce.
    \end{enumerate}
    \end{theorem}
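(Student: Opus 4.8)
The plan is to build the set $\DLBusedc$ as the discontinuity set of the Busemann process of the directed landscape, and then transfer the fine path properties of the stationary horizon established in Chapter~\ref{chap:SHcons} (especially Theorems \ref{thm:SH_jump_process} and \ref{thm:SH_sticky_thm}) together with the invariance result Theorem~\ref{thm:invariance_of_SH} to deduce the statements about existence, coalescence and splitting of semi-infinite geodesics. The backbone is the identification, proved later in this chapter (Theorem \ref{thm:Buse_dist_intro}), that the Busemann process $\dir\mapsto \{\W_\dir(x,s;y,s)\}_{x,y}$ along a fixed time level $s$, suitably normalized, has the law of the stationary horizon $G^{\sqrt 2}$, and moreover the monotonicity and cadlag structure in $\dir$ carry over. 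Granting that, $\DLBusedc$ is defined as the set of $\dir$ at which $\W_{\dir-}\ne \W_{\dir+}$, i.e. the analogue of $\Xi_{G^{\sqrt 2}}$; Theorem~\ref{thm:SH_sticky_thm}\ref{itm:SH_all_jump} and \ref{itm:SH_Xi_dense} then give immediately that $\DLBusedc$ is countably infinite and dense in $\R$, and independence of $\Ll$ across disjoint time intervals plus the horizontal stationarity of the Busemann process make $\DLBusedc$ a single random set not depending on $s$ (here one uses that $\Xi_{G^\sigma}(x,y)$ increases to $\Xi_{G^\sigma}$ as $x\to-\infty$, $y\to+\infty$, and Corollary \ref{cor:dcLR}).

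Next I would address part \ref{itm:all_dir}. Existence of Busemann geodesics from every initial point in every direction is handled by the construction in Section~\ref{sec:Buse_geod_results} (Theorem \ref{thm:DL_SIG_cons_intro}), taking the Busemann geodesic associated to $\W_{\dir\pm}$; every such geodesic has direction $\dir$ because the Busemann function in direction $\dir$ controls the asymptotic slope of its maximizers. The statement that \emph{every} semi-infinite geodesic (not just Busemann ones) has a direction requires a separate argument: one shows that if a semi-infinite geodesic $g$ did not converge to a slope, it would have to cross Busemann geodesics of two different rational directions infinitely often, contradicting planarity and the ordering of Busemann geodesics. This is the step where I would lean on the Rahman--Vir\'ag inputs summarized in Section~\ref{sec:RV_summ}, which already give, for a fixed direction, existence of coalescing geodesics and of Busemann functions, combined with a sandwiching argument between $\dir-\ve$ and $\dir+\ve$ Busemann geodesics and Lemma~\ref{lem:DL_crossing_facts} on crossing.

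For part \ref{itm:good_dir_coal}, the key is that for $\dir\notin\DLBusedc$ the Busemann function $\W_\dir$ is continuous in $\dir$ at that point, so $\W_{\dir-}=\W_\dir=\W_{\dir+}$; coalescence of all direction-$\dir$ geodesics then follows from the standard Busemann argument (any semi-infinite geodesic in direction $\dir$ must follow the Busemann gradient, and there is essentially one family of maximizers). Uniqueness off a Lebesgue-null set of initial points is proved by showing the exceptional set is contained in the (countable in $\dir$, but uncountable union over $\dir$) set where the Busemann difference profile jumps at the origin; a Fubini/Palm argument as in Theorem~\ref{thm:random_supp} shows this union has zero planar Lebesgue measure. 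For part \ref{itm:bad_dir_split}, when $\dir\in\DLBusedc$ we have genuinely two ordered Busemann functions $\W_{\dir-}\li\W_{\dir+}$; their associated Busemann geodesics from a common point $p$ are ordered, and Theorem~\ref{thm:SH_sticky_thm}\ref{itm:SH_dist_grows} (monotone non-decreasing separation) together with Corollary~\ref{cor:dcLR} (the difference profile tends to $\pm\infty$) guarantees they separate and never rejoin. Coalescence within the $\dir-$ family (resp. $\dir+$ family) is again the fixed-Busemann-function coalescence argument applied to $\W_{\dir-}$ (resp. $\W_{\dir+}$).

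The main obstacle I anticipate is not any single inequality but the passage from the stationary-horizon picture along one time level to the genuinely two-dimensional statement that $\DLBusedc$ is the \emph{same} set for all $s$ and that the split/coalescence dichotomy holds \emph{simultaneously from all initial points on a single full-probability event}. Handling all directions and all initial points at once forces one to work with the left- and right-continuous versions $\W_{\dir\pm}$ and to run countable-dense-then-limit arguments carefully, since a countable dense set of directions or initial points cannot detect the exceptional behavior; controlling the limits uses the regularity (stickiness) of the stationary horizon from Theorem~\ref{thm:SH_sticky_thm}\ref{itm:SH_stick}, the monotonicity in the horizontal variable, and the exit-point/crossing lemmas. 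I would organize the proof so that all these countable constructions are performed first, the relevant null sets are unioned, and only then are the statements \ref{itm:all_dir}--\ref{itm:bad_dir_split} read off on the resulting single event.
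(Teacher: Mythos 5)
Your overall architecture matches the paper's: $\DLBusedc$ is the discontinuity set of the Busemann process, its countability and density come from the stationary horizon via Theorem~\ref{thm:Buse_dist_intro}\ref{itm:SH_Buse_process} and Theorem~\ref{thm:SH_sticky_thm}, directedness of arbitrary geodesics comes from sandwiching between Busemann geodesics, and the split/coalescence dichotomy comes from the classification of directions. One step, however, points at the wrong object and would not close.

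For the Lebesgue-null statement in part~\ref{itm:good_dir_coal} you propose to contain the exceptional set in ``the set where the Busemann difference profile jumps at the origin'' and to control it with a Palm argument as in Theorem~\ref{thm:random_supp}. This conflates the two distinct types of non-uniqueness. For $\dir \notin \DLBusedc$ the difference profile $x \mapsto \W_{\dir+}(x,s;0,s) - \W_{\dir-}(x,s;0,s)$ is identically zero, so it cannot detect the relevant exceptional points at all: the non-uniqueness at stake in part~\ref{itm:good_dir_coal} is the $L/R$ non-uniqueness (distinct leftmost and rightmost maximizers of $z \mapsto \Ll(x,s;z,t) + \W_{\dir}(z,t;0,t)$), which is invisible to the Busemann process restricted to a horizontal line. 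The Palm machinery of Theorem~\ref{thm:random_supp} and Theorems~\ref{thm:BusePalm}, \ref{thm:indep_loc} describes the supports $\Split_{s,\dir}$ for $\dir \in \DLBusedc$, i.e.\ the $\pm$ non-uniqueness, and is irrelevant here. The correct route is: the exceptional set is $\NU$ of \eqref{NU0_global}; the local constancy (stickiness) of the Busemann process in $\dir$ (Theorem~\ref{thm:g_basic_prop}\ref{itm:DL_SIG_unif}) reduces it to the countable union $\NU = \bigcup_{\dir \in \Q} \NU^{\dir}$; for fixed rational $\dir$ and fixed $p$, the Rahman--Vir\'ag uniqueness (Theorem~\ref{thm:RV-SIG-thm}\ref{itm:pd_fixed}) gives $\Pp(p \in \NU^\dir) = 0$; and Tonelli's theorem then yields zero expected planar Lebesgue measure. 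No Palm conditioning is needed, and without the reduction to rational directions the uncountable union over $\dir$ cannot be controlled by fixed-direction probability-zero statements.

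A smaller compression worth flagging in part~\ref{itm:bad_dir_split}: Theorem~\ref{thm:SH_sticky_thm}\ref{itm:SH_dist_grows} and Corollary~\ref{cor:dcLR} show that the Busemann \emph{functions} $\W_{\dir-}$ and $\W_{\dir+}$ separate along a time level, but the claim that the $\dir-$ and $\dir+$ \emph{geodesics} from every initial point separate and never rejoin needs the bridge between Busemann increments and coalescence points (Lemma~\ref{lem:Buse_equality_coal} and Theorem~\ref{thm:Buse_pm_equiv}, feeding into the equivalence \ref{itm:DL_good_dir}$\Leftrightarrow$\ref{itm:DL_good_dir_L_unique} of Theorem~\ref{thm:DL_good_dir_classification}), plus the observation that two distinct leftmost geodesics that reunite would violate uniqueness of the leftmost geodesic between two points. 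Your sketch gestures at this but does not supply the link.
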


 
    \begin{remark}[Busemann geodesics and general geodesics] 
Theorem \ref{thm:DLSIG_main} is proved   by controlling all semi-infinite geodesics with  Busemann geodesics. Namely,  from each initial point $p$ and in each direction $\dir$, all  semi-infinite geodesics lie between  the  leftmost and rightmost  Busemann geodesics (Theorem \ref{thm:all_SIG_thm_intro}\ref{itm:DL_LRmost_SIG}). Furthermore, for  all   $p$ outside a random set of Lebesgue measure zero and all   $\dir \notin \DLBusedc$, the two extreme  Busemann geodesics coincide and thereby imply the uniqueness of the  semi-infinite geodesic from $p$ in direction $\dir$ (Theorem \ref{thm:DLSIG_main}\ref{itm:good_dir_coal}). 
   Even more generally, whenever $\dir \notin \DLBusedc$,      all semi-infinite geodesics in direction $\dir$ are Busemann geodesics (Theorem \ref{thm:DL_good_dir_classification}\ref{itm:DL_allBuse}).  This is presently unknown for $\dir \in \DLBusedc$, but may be expected   by virtue of what is known about  exponential LPP  \cite{Janjigian-Rassoul-Seppalainen-19}. 
   
   This work therefore gives a nearly complete  description of the global behavior of semi-infinite geodesics in the directed landscape. The conjecture that all semi-infinite geodesics are Busemann geodesics is equivalent to the following statement: In Item \ref{itm:bad_dir_split}, for $\dir \in \DLBusedc$, there are \textit{exactly} two families of coalescing semi-infinite geodesics in direction $\dir$. That is,  each $\dir$-directed semi-infinite geodesic  coalesces either with the $\dir-$ geodesics or the $\dir +$ geodesics.
\end{remark}

\begin{remark}[Non-uniqueness of geodesics]
The non-uniqueness of geodesics from initial points in a Lebesgue null set  in Theorem \ref{thm:DLSIG_main}\ref{itm:good_dir_coal} is temporary in the sense that these geodesics eventually coalesce. This forms a  ``bubble."  The first point of intersection after the split is the coalescence point (Theorem \ref{thm:DL_all_coal}\ref{itm:DL_split_return}). Hence, these particular geodesics form at most one bubble. This contrasts with  the non-uniqueness of  Theorem \ref{thm:DLSIG_main}\ref{itm:bad_dir_split}, where geodesics do not return together (Figure \ref{fig:non_unique_comp}).  Non-uniqueness is discussed in detail in Section \ref{sec:LR_sig}.
\end{remark}

\begin{remark}
The authors of \cite{Rahman-Virag-21} alluded to  non-uniqueness of geodesics. They showed that for a fixed initial point, with probability one, there are at most countably many directions with a non-unique geodesic. On page 23 of \cite{Rahman-Virag-21}, they note that the set of directions with a non-unique geodesic ``should be dense over the real line.'' Our result is that this set is   dense and, furthermore, it is the set $\DLBusedc$ of discontinuities of the Busemann process. 
\end{remark}

    \begin{figure}[t]
    \centering
    \includegraphics[height = 1.5in]{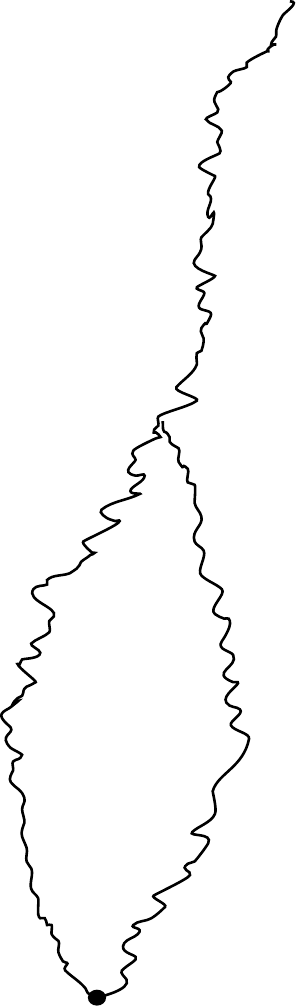} \qquad\qquad  
    \includegraphics[height = 1.5in]{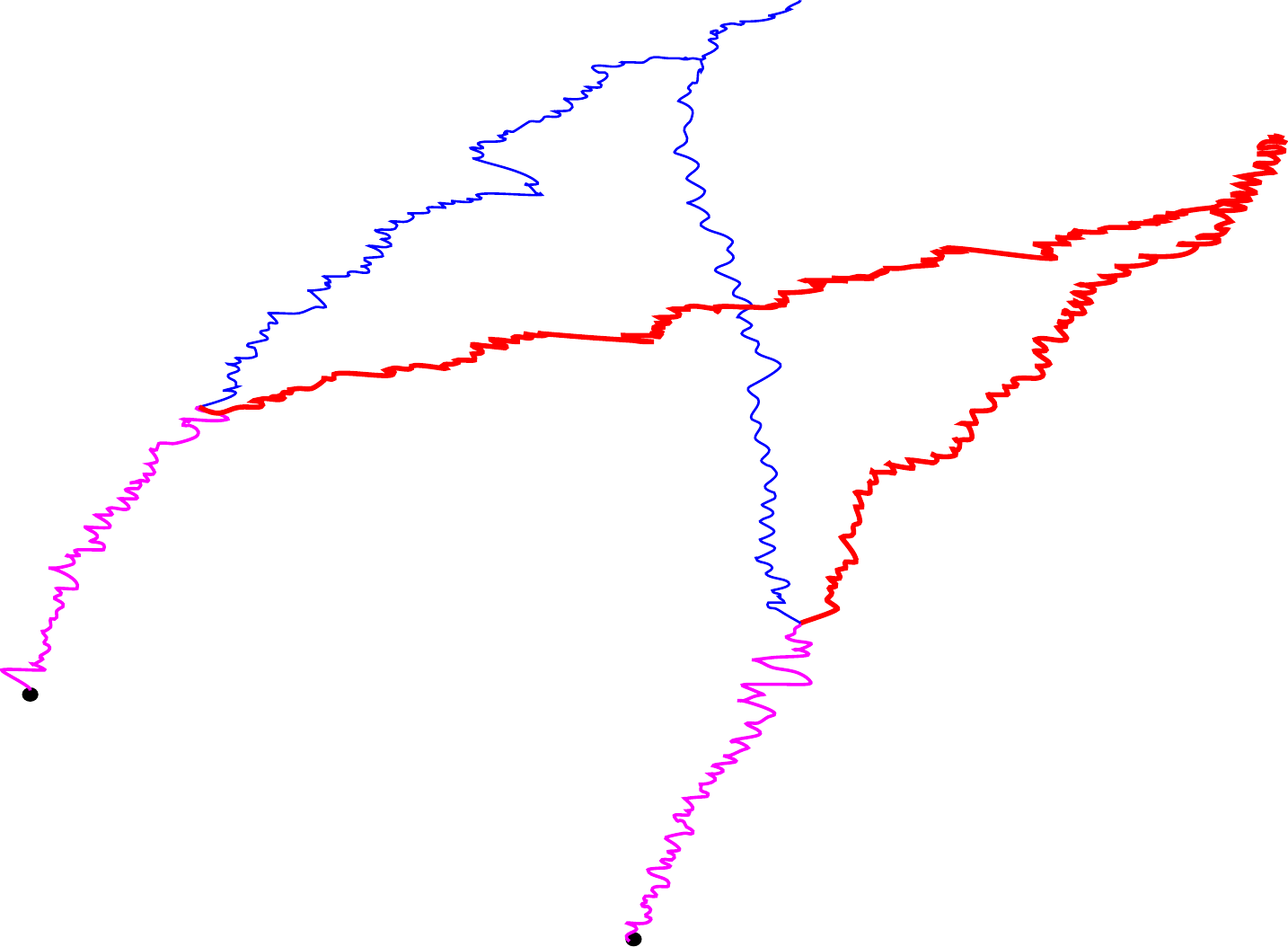}
    \caption{\small On the left, a depiction of the non-uniqueness in Theorem \ref{thm:DLSIG_main}\ref{itm:good_dir_coal}: geodesics separate and coalesce back together, forming a bubble. Due to work of Bhatia \cite{Bhatia-23} and Dauvergne \cite{Dauvergne-23}, this is the only possible configuration for this type of non-uniqueness--that is, geodesics which split and later coalesce can only split at the initial point. On the right, $\dir\in \DLBusedc$. The  blue/thin paths depict the $\dir -$ geodesics, while the red/thick paths  depict the $\dir +$ geodesics. From each point, the  $\dir -$ and $\dir +$ geodesics separate at points of $\Split$. The $\dir -$ and $\dir +$ families each have a coalescing structure.}
    \label{fig:non_unique_comp}
\end{figure}

\newpage 
    The second theorem of this section describes   the  set of initial  points with disjoint geodesics in the same direction. By disjoint, we mean that the geodesics only common point in the plane is the initial point. Let $\DLBusedc$ be the random set from Theorem \ref{thm:DLSIG_main} (precisely characterized in \eqref{eqn:DLBuseDC_def}).  Define the following random sets of splitting points.
    \begin{align}
    \Split_{s,\dir} &:= \{x \in \R:  \exists \text{ 
    \textbf{disjoint}}   \text{   }\text{semi-infinite  geodesics from  }(x,s) \text{ in direction }\dir\} \label{Split_sdir} \\
    \Split &:= \bigcup_{s \tspb\in\tspb \R, \, \dir\tspb \in\tspb \DLBusedc} \Split_{s,\dir} \times \{s\}. \label{eqn:gen_split_set}
    \end{align}
    \begin{remark}
    From Theorem \ref{thm:DLSIG_main}\ref{itm:good_dir_coal}, $\Split_{s,\dir} = \varnothing$  whenever $\dir \notin \DLBusedc$.
    \end{remark}
    
    \begin{theorem} \label{thm:Split_pts} The following hold.
    \begin{enumerate} [label=\rm(\roman{*}), ref=\rm(\roman{*})]  \itemsep=3pt 
    \item \label{itm:split_dense}
    On a single event of full probability, the set $\Split$ is dense in $\R^2$.
    \item \label{itm:splitp0} For each fixed $p \in \R^2$, $\Pp(p \in \Split) = 0$.
    \item \label{itm:Hasudorff1/2} For each $s \in \R$, on an $s$-dependent full-probability event, for every $\dir \in \DLBusedc$, the set $\Split_{s,\dir}$ has Hausdorff dimension $\f{1}{2}$.
    \item \label{itm:nonempty} On a single event of full probability, simultaneously for every $s \in \R$ and $\dir \in \DLBusedc$, the set $\Split_{s,\dir}$ is nonempty and unbounded in both directions.
\end{enumerate}
\end{theorem}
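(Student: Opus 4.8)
The plan is to prove Theorem~\ref{thm:Split_pts} by translating statements about disjoint semi-infinite geodesics in direction $\dir$ into statements about the difference profile of the Busemann process $x \mapsto f_{s,\dir}(x) = \W_{\dir+}(x,s;0,s) - \W_{\dir-}(x,s;0,s)$, and then invoking the structural results on the stationary horizon proved in Chapter~\ref{chap:SHcons}. The first step is to record the equivalence $\Split_{s,\dir} = \mathrm{supp}(\nu_{f_{s,\dir}})$ up to an at-most countable set (this is Theorem~\ref{thm:random_supp} referenced in the introduction), and the fact that $\dir \in \DLBusedc$ if and only if $\W_{\dir-} \neq \W_{\dir+}$, equivalently $f_{s,\dir} \not\equiv 0$. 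Since Theorem~\ref{thm:invariance_of_SH} identifies the law of the Busemann process with that of the stationary horizon $G^{\sqrt 2}$, the function $f_{s,\dir}$ has the law of the jump process $J^{\sqrt 2}_\dir$ studied in Section~\ref{sec:SHPalm}, and $\DLBusedc$ corresponds to $\Xi_{G^{\sqrt 2}}$.

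For part~\ref{itm:split_dense}, I would use density of $\DLBusedc$ in $\R$ (Theorem~\ref{thm:DLSIG_main} / Theorem~\ref{thm:SH_sticky_thm}\ref{itm:SH_Xi_dense}) together with Corollary~\ref{cor:dcLR}: for $\dir \in \DLBusedc$ the difference profile $f_{s,\dir}$ satisfies $f_{s,\dir}(x) \to \pm\infty$ as $x \to \pm\infty$, so it is non-constant and its Lebesgue--Stieltjes measure has unbounded support; moreover by Theorem~\ref{thm:SH_sticky_thm}\ref{itm:SH_split_Xi} and the monotone structure of $\XiSH(x,y)$ in $(x,y)$, the support accumulates near any prescribed $x_0$ by choosing a jump direction whose forward/backward split time is small (using the density of $\XiSH$ as we widen the space interval). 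Combined with density of the relevant directions and the translation invariance of $\Ll$, this yields density of $\Split$ in $\R^2$. For part~\ref{itm:splitp0}, fix $p=(x_0,s)$; then $p \in \Split$ requires some $\dir \in \DLBusedc$ with $x_0 \in \Split_{s,\dir}$, and by Corollary~\ref{cor:SHHaus1/2} the support of $\nu_{f_{s,\dir}}$ has Hausdorff dimension $1/2$, hence Lebesgue measure zero; a Fubini/Palm argument over the point process $\Gamma$ (Lemma~\ref{lm:ac}, whose mean measure is absolutely continuous) shows $\Pp(x_0 \in \Split_{s,\dir}\text{ for some }\dir) = 0$, since for each fixed $x_0$ the event is null under the Palm kernel and the base measure has no atoms on vertical lines.

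For part~\ref{itm:Hasudorff1/2}, I would argue as follows: fix $s$, work on the $s$-dependent event where the Busemann process in direction $\dir$ equals (in law and via the identification) a realization of $G^{\sqrt 2}$; for each $\dir \in \DLBusedc = \Xi_{G^{\sqrt 2}}$, Corollary~\ref{cor:SHHaus1/2} gives $\dim_H(\mathrm{supp}(\nu_{J^{\sqrt2}_\dir})) = 1/2$ simultaneously for all such $\dir$, and Theorem~\ref{thm:random_supp} identifies $\Split_{s,\dir}$ with this support up to a countable set (which does not change Hausdorff dimension). Part~\ref{itm:nonempty} then follows by combining Corollary~\ref{cor:dcLR}, which on a single full-probability event gives $\lim_{x\to\pm\infty} J^{\sqrt2}_\dir(x) = \pm\infty$ for \emph{every} $\dir \in \Xi_{G^{\sqrt2}}$ simultaneously, with the support characterization: a nondecreasing function tending to $+\infty$ (resp.\ $-\infty$) at $+\infty$ (resp.\ $-\infty$) must have support unbounded above and below, and in particular nonempty. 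The translation invariance in $s$ (Theorem~\ref{thm:SH_dist_invar}\ref{itm:shinv} transported through Theorem~\ref{thm:invariance_of_SH} and the metric composition law) upgrades fixed-$s$ statements to the single-event-simultaneous-in-$s$ statement where needed, except for part~\ref{itm:Hasudorff1/2}, which intrinsically requires fixing $s$ because the Hausdorff-dimension event depends on $s$.

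The main obstacle will be establishing the support characterization $\Split_{s,\dir} = \mathrm{supp}(\nu_{f_{s,\dir}})$ up to a countable set, i.e.\ the geometric content of Theorem~\ref{thm:random_supp}: one direction (disjoint geodesics $\Rightarrow$ the Busemann increments strictly separate $\Rightarrow$ $x$ is in the support) is a relatively clean crossing-and-monotonicity argument, but the converse, that every point in the support of the difference measure actually carries two disjoint Busemann geodesics, requires careful use of the leftmost/rightmost Busemann geodesic construction from Section~\ref{sec:Buse_geod_results} and an argument that they do not re-merge, paralleling the BLPP coalescence technique of \cite{Seppalainen-Sorensen-21b}. Once that geometric dictionary is in place, parts~\ref{itm:split_dense}--\ref{itm:nonempty} are essentially corollaries of the stationary-horizon path properties already proved in Chapter~\ref{chap:SHcons}.
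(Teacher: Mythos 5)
Your overall strategy — reducing everything to the difference profile $f_{s,\dir}$ and the stationary horizon results of Chapter \ref{chap:SHcons}, with Theorem \ref{thm:random_supp} as the geometric dictionary — matches the paper for parts \ref{itm:Hasudorff1/2} and \ref{itm:nonempty}, and you correctly identify Theorem \ref{thm:random_supp} as the real work. But there is one genuine gap: in part \ref{itm:nonempty} you claim that "translation invariance \ldots upgrades fixed-$s$ statements to the single-event-simultaneous-in-$s$ statement where needed." This does not work. Translation invariance gives you, for each \emph{fixed} $s$, a full-probability event on which $f_{s,\dir}(x)\to\pm\infty$ for all $\dir\in\DLBusedc$; you cannot intersect these events over uncountably many $s$. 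The paper's actual mechanism (Theorem \ref{thm:DLBusedc_description}\ref{itm:Busedc_t}, proved on the event $\Omega_3$) is a deterministic propagation argument: on a single event one has the divergence of $f_{T,\dir}$ for all integer $T$, and the backward variational formula \eqref{883} is then used to show, pathwise, that $f_{T,\dir}\not\equiv 0$ forces $\liminf_{x\to+\infty} f_{s,\dir}(x)\ge R-A$ for every $s<T$ and every $R$. Without some such propagation step your part \ref{itm:nonempty} only holds for countably many $s$.

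On the parts where you take a different route: for density (part \ref{itm:split_dense}) the paper does not go through the SH at all; it uses Lemma \ref{lem:rm_geod}, which says that \emph{every} finite leftmost geodesic passes through a point of $\Split^L$, applied to the leftmost geodesics from $(x,s)$ to $(x,s+n^{-1})$. This works on the single event $\Omega_2$ simultaneously for all $(x,s)$ and avoids both the identification with the SH and any countable-approximation step. Your route (splitting times $S_x^{\pm}(\dir_0,\dir_0+\dir)\to 0$ as $\dir\to\infty$, realized by exceptional directions via Theorem \ref{thm:SH_sticky_thm}\ref{itm:SH_split_Xi}) can be made to work for rational $(x,s)$ and suffices for density, but it is more roundabout and buys nothing extra. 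For part \ref{itm:splitp0} the paper simply observes that disjoint semi-infinite geodesics from $p$ force disjoint finite geodesics from $p$ to two points on a common later time level, and cites \cite[Remark 1.12]{Bates-Ganguly-Hammond-22}; your Fubini/Palm alternative is viable (countable union of Lebesgue-null supports, plus spatial translation invariance of the Busemann process along $\Hh_s$ to pass from almost-every $x_0$ to every $x_0$, plus $\Pp(p\in\NU)=0$ for the countable exceptional set $\Split_{s,\dir}\setminus\D_{s,\dir}$), but you should state the translation-invariance step and the $\NU$ step explicitly, since the Hausdorff-dimension bound alone only rules out Lebesgue-a.e.\ fixed points.
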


\begin{remark} \label{rmk:supports}
For each $s \in \R$ and $\dir \in \DLBusedc$, the set 
$\Split_{s,\dir}$
has an interpretation as the support of a random measure, up to the removal of a countable set. Thus, since $\DLBusedc$ is countable, for each $s \in \R$, the set $\{x \in \R: (x,s) \in \Split\}$ is the countable union of supports of random measures, up to the removal of an at most countable set. By Item \ref{itm:Hasudorff1/2}, this set also has Hausdorff dimension $\f{1}{2}$. 
Conditioning in the appropriate Palm sense on $\dir \in \DLBusedc$, the random measure whose support is ``almost'' $\Split_{s,\dir}$ is   equal to the local time of a Brownian motion  (Theorems \ref{thm:random_supp}, \ref{thm:BusePalm},and \ref{thm:indep_loc}). 
We expect that, simultaneously for all $s\in\R$, the set $\Split_{s,\dir}$ has Hausdorff dimension $\f{1}{2}$, but currently lack  a global result stronger than Item \ref{itm:nonempty}.  
\end{remark}

\subsection{Organization of the chapter}
In the following section, we cite results from \cite{Rahman-Virag-21}. The remainder of the chapter covers finer results on the Busemann process and semi-infinite geodesics. Sections \ref{sec:Buse_geod_results}--\ref{sec:geometry_sec} each start with several theorems that are then proved later in the section. The theorems can be read independently of the proofs.  Each section depends on the sections that came before. Section \ref{sec:Buse_geod_results} describes the construction of the Busemann process and infinite geodesics in all directions. Section \ref{sec:LR_sig} gives a detailed discussion of non-uniqueness of geodesics. Section \ref{sec:geometry_sec} is concerned with coalescence and connects the regularity of the Busemann process to the geometry of geodesics. The proofs of Theorems \ref{thm:DLSIG_main} and \ref{thm:Split_pts} come in Section \ref{sec:Busegeod_finalproofs}.

\section{Summary of the Rahman--Vir\'ag results}
\label{sec:RV_summ}
The paper \cite{Rahman-Virag-21} shows existence of the Busemann function for a fixed direction.
Below is a summary of their results that we use.

\begin{theorem}[\cite{Rahman-Virag-21}]\label{thm:RV-SIG-thm}
The following hold.
\begin{enumerate}  [label=\rm(\roman{*}), ref=\rm(\roman{*})]  \itemsep=3pt
    \item \label{itm:p_fixed} For fixed initial point $p$, there exist almost surely leftmost and rightmost semi-infinite geodesics $g_p^{\dir,\ell}$ and $g_p^{\dir,r}$ from $p$ in every direction $\dir$ simultaneously. There are at most countably many directions $\dir$ such that $g_p^{\dir,\ell}\neq g_p^{\dir,r}$ 
    \item \label{itm:d_fixed} For fixed direction $\dir$, there exist almost surely leftmost and rightmost geodesics $g_p^{\dir,\ell}$ and $g_p^{\dir,r}$ in direction $\dir$  from every initial point $p$.
    \item \label{itm:pd_fixed} For fixed $p =(x,s) \in \R^2$ and $\dir \in \R$, $g := g_p^{\dir,\ell}= g_p^{\dir,r}$ with probability one. 
    \item \label{itm:fixed_coal} Given  $\dir\in\R$, all semi-infinite geodesics in direction $\dir$ coalesce with probability one.
\end{enumerate}
\end{theorem}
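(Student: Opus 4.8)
The plan is to recover all four parts of Theorem~\ref{thm:RV-SIG-thm} from the classical Newman--Licea program for constructing semi-infinite geodesics, adapted to the directed landscape $\Ll$.

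First I would build the extreme semi-infinite geodesics as limits of point-to-point geodesics. Fix an initial point $p=(x,s)$ and a direction $\dir\in\R$, and for $M>0$, $n>s$ let $\gamma_{n,M}^{\ell}$ (resp.\ $\gamma_{n,M}^{r}$) be the leftmost (resp.\ rightmost) geodesic from $p$ to $(\dir n-M,n)$ (resp.\ $(\dir n+M,n)$). The parabolic bounds of Lemma~\ref{lem:Landscape_global_bound} confine any geodesic with an endpoint near direction $\dir$ to a spatial window of width $O(t^{2/3+\varepsilon})$, in particular $o(t)$, around $\dir t$, and the H\"older-$(\tf23-\varepsilon)$ regularity of DL geodesics gives equicontinuity on each compact time interval; hence the families $\{\gamma_{n,M}^{\ell}\}$ and $\{\gamma_{n,M}^{r}\}$ are precompact in the topology of uniform convergence on compacts. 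By the metric composition law \eqref{eqn:metric_comp} and continuity of $\Ll$, every subsequential limit is a semi-infinite geodesic, and the window bound forces it to have direction $\dir$. Taking iterated limits $n\to\infty$ then $M\to\infty$, and invoking the planar ordering lemma for leftmost/rightmost point-to-point geodesics (geodesics with ordered endpoints stay ordered), produces well-defined leftmost and rightmost semi-infinite geodesics $g_p^{\dir,\ell},g_p^{\dir,r}$ with $g_p^{\dir,\ell}(t)\le g_p^{\dir,r}(t)$ for all $t$; the same ordering lemma passed to the limit shows that $p\mapsto g_p^{\dir,\ell}$ and $\dir\mapsto g_p^{\dir,\ell}$ are monotone, so the construction can be run on a single almost sure event simultaneously over all $p$ and all $\dir$. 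This gives \ref{itm:p_fixed} and \ref{itm:d_fixed}, and monotonicity of the path-valued map $\dir\mapsto(g_p^{\dir,\ell},g_p^{\dir,r})$ forces $g_p^{\dir,\ell}=g_p^{\dir,r}$ off an at most countable set of directions.

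Next, for \ref{itm:pd_fixed}, I would upgrade the countability in \ref{itm:p_fixed} to a statement about a single fixed pair $(p,\dir)$. By \ref{itm:p_fixed} and Fubini, for fixed $p$ the set $\{\dir:g_p^{\dir,\ell}\ne g_p^{\dir,r}\}$ is almost surely Lebesgue null, so $\Pp(g_p^{\dir,\ell}\ne g_p^{\dir,r})=0$ for Lebesgue-almost every $\dir$. To promote this to every $\dir$ I would use the symmetries of the directed landscape: its skew (shear) invariance maps direction $\dir$ to a translate of itself while, after a spatial translation, fixing the time level of $p$, so $\dir\mapsto\Pp\bigl(g_{(x,s)}^{\dir,\ell}\ne g_{(x,s)}^{\dir,r}\bigr)$ is constant, hence identically $0$; space-time translation invariance then removes the dependence on $(x,s)$. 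This proves \ref{itm:pd_fixed}.

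The main obstacle is \ref{itm:fixed_coal}. Fix $\dir$ and work with the ordered family $\{g_{(u,0)}^{\dir,\ell}\}_{u\in\R}$ along the time-$0$ line: any two of these either eventually coincide or stay strictly ordered and hence disjoint forever, trapping a nondegenerate region between them. Call $u$ a \emph{class boundary} if $g_{(u,0)}^{\dir,\ell}$ is disjoint from $g_{(v,0)}^{\dir,\ell}$ for all $v$ slightly below $u$; by time-translation invariance the set of class boundaries is a translation-invariant point set on $\R$, and if it had positive intensity there would be a linearly growing number of mutually disjoint, non-crossing direction-$\dir$ semi-infinite geodesics crossing a long segment of the time-$1$ line. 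Ruling this out is the Licea--Newman contradiction, and it requires a genuine quantitative estimate for the DL: the trapped regions would force $\Ll$ to support $\Theta(N)$ disjoint infinite geodesics emanating from $[0,N]\times\{1\}$ in a single direction, contradicting a transversal-fluctuation bound of the type underlying the exit-point estimates proved earlier for BLPP in Theorem~\ref{thm:BLPP_exit_pts}, transported to the landscape. This no-linear-coalescence-classes estimate is the hard input and the step I expect to require the most work. Once all leftmost direction-$\dir$ geodesics are shown to coalesce, the same follows for the rightmost ones by symmetry; combining with \ref{itm:pd_fixed} (leftmost equals rightmost almost surely at each fixed point, hence at a countable dense set of points simultaneously) and a sandwiching/ordering argument that squeezes an arbitrary direction-$\dir$ semi-infinite geodesic between extreme geodesics from the dense set, one concludes that all semi-infinite geodesics in direction $\dir$ coalesce, almost surely.
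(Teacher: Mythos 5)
First, a point of orientation: the paper does not prove Theorem \ref{thm:RV-SIG-thm} at all --- it is quoted verbatim as an external input from Rahman--Vir\'ag \cite{Rahman-Virag-21} (see the surrounding text ``Below is a summary of their results that we use''), and the dissertation's own geodesic theory in Chapter \ref{chap:Buse} is built \emph{on top of} it (the event $\Omega_1$ explicitly assumes items \ref{itm:d_fixed}--\ref{itm:fixed_coal} for rational directions). So there is no internal proof to compare against; what you have written is an attempt to reprove the cited result, and it has to stand on its own.

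As such: your parts \ref{itm:p_fixed}--\ref{itm:pd_fixed} are a reasonable sketch. The compactness-plus-planar-ordering construction of $g_p^{\dir,\ell},g_p^{\dir,r}$, countability of non-uniqueness directions from monotonicity of $\dir\mapsto g_p^{\dir,\ell}$, and the Fubini-plus-shear argument for \ref{itm:pd_fixed} (skew stationarity of Lemma \ref{lm:landscape_symm}\ref{itm:skew_stat} makes $\dir\mapsto\Pp(g_p^{\dir,\ell}\neq g_p^{\dir,r})$ constant) are all standard and essentially sound, modulo routine details (uniform transversal control to get directedness of the limits, joint measurability for Fubini). The genuine gap is \ref{itm:fixed_coal}, and you acknowledge it yourself: the entire Licea--Newman scheme reduces to a ``no linearly many disjoint $\dir$-directed semi-infinite geodesics'' input, and you never supply it. The substitute you gesture at --- ``a transversal-fluctuation bound of the type underlying the exit-point estimates proved earlier for BLPP in Theorem \ref{thm:BLPP_exit_pts}, transported to the landscape'' --- is not available in the paper (Theorem \ref{thm:BLPP_exit_pts} is a statement about stationary BLPP, not DL), and even granted a DL analogue it is not obviously sufficient: a wandering exponent bound controls where geodesics go, not how many disjoint coalescence classes cross a unit segment. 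The classical route to ruling out a positive density of class boundaries is a Burton--Keane/modification argument, which has no direct analogue in the continuum DL because there is no finite-energy resampling structure; the alternative route is via stationary measures/Busemann functions, but in this dissertation that machinery (Chapter \ref{chap:Buse}) is constructed \emph{using} Rahman--Vir\'ag's coalescence for rational directions, so invoking it here would be circular. Until that single quantitative input is actually proved for $\Ll$, part \ref{itm:fixed_coal} --- and with it the final sandwiching argument for arbitrary (non-extreme) $\dir$-directed geodesics --- remains unproven.
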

\begin{remark}

Article  \cite{Rahman-Virag-21} used $-$ and $+$ in place of the superscripts $\ell$ and $r$ used above.  We replaced    $-/+$   with $\ell/r$ to avoid confusion with our $\pm$ notation that links with the left- and right-continuous Busemann processes.  As demonstrated in Section \ref{sec:LR_sig}, non-uniqueness of geodesics is properly characterized by two parameters $\sigg \in \{-,+\}$ and $S \in \{L,R\}$. 
\end{remark}

For fixed direction $\dir$, \cite{Rahman-Virag-21}  defines $\kappa^\dir(p,q)$ as the coalescence point of the rightmost geodesics in direction $\dir$ from initial points  $p$ and $q$. Then, they define the Busemann function
\be\label{RVW-def}
\W_\dir(p;q) = \Ll(p;\kappa^\dir(p,q)) - \Ll(q;\kappa^\dir(p,q)).
\ee


\begin{theorem}[\cite{Rahman-Virag-21}, Corollary 3.3, Theorem 3.5, Remark 3.1] \label{thm:RV-Buse}
$ $ 
\begin{enumerate}[label=\rm(\roman{*}), ref=\rm(\roman{*})]  \itemsep=3pt
    \item \label{itm:DL_Buse_BM} For each $t \in \R$, the process $x \mapsto \W_\dir(x,t;0,t)$ is a two-sided Brownian motion with diffusivity $\sqrt 2$ and drift $2\xi$.
\end{enumerate} 
Given a direction $\dir$,  the following hold on a $\dir$-dependent event of  probability one.
\begin{enumerate} [resume, label=\rm(\roman{*}), ref=\rm(\roman{*})]  \itemsep=3pt
\item \label{itm:fixed_additive} 
Additivity: $\W_\dir(p;q) + \W_\dir(q;r) = \W_\dir(p;r)$ for all $p,q,r \in \R^2$. 

    \item \label{itm:DL_Buse_var} For all $s < t$ and $x,y \in \R$,
    \[
    \W_\dir(x,s;y,t) = \sup_{z \in \R}\{\Ll(x,s;z,t) + \W_\dir(z,t;y,t)\}.
    \]
    The supremum is attained exactly at those $z$ such that $(z,t)$ lies on a semi-infinite geodesic from $(x,s)$ in direction $\dir$. 
    \item \label{itm:DL_Buse_cont} The function $\W_\dir:\R^4 \to \R$ is continuous. 
\end{enumerate}
Moreover:  
\begin{enumerate} [resume, label=\rm(\roman{*}), ref=\rm(\roman{*})]  \itemsep=3pt
\item \label{itm:DL_Buse_mont} For a pair of fixed directions $\dir_1 < \dir_2$, with probability one, for every  $t \in \R$ and 
      $x < y$, $\W_{\dir_1}(y,t;x,t) \le \W_{\dir_2}(y,t;x,t)$.
      \end{enumerate}
\end{theorem}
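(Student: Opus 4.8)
The final statement to prove is Theorem~\ref{thm:RV-Buse}\ref{itm:DL_Buse_mont}: for fixed directions $\dir_1 < \dir_2$, with probability one, for every $t \in \R$ and $x < y$, $\W_{\dir_1}(y,t;x,t) \le \W_{\dir_2}(y,t;x,t)$. This is a monotonicity statement for the Busemann process in the direction parameter, holding simultaneously over all space-time levels $t$ and all pairs $x<y$.

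The plan is to derive this monotonicity from the corresponding monotonicity of the stationary horizon (Proposition~\ref{prop:SH_cons}\ref{itm:SH_mont}) together with the characterization of the Busemann process as a scaling limit of last-passage increments. First I would recall that $\W_\dir(y,t;x,t) = \lim_{u \to \infty}[\Ll(x,t;u\dir',u) - \Ll(y,t;u\dir',u)]$ along an appropriate parameterization, which is Theorem~\ref{thm:RV-Buse}\ref{itm:DL_Buse_var}-type content combined with the defining identity \eqref{RVW-def}; alternatively and more directly, one invokes the attractiveness half of Theorem~\ref{thm:invariance_of_SH}: running the backward KPZ fixed point from linear initial data with slope $2\dir_i$ over a long time horizon produces, on compact spatial sets, exactly the increments of $\W_{\dir_i}(\aabullet,t;0,t)$. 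Since the KPZ fixed point evolution \eqref{eqn:KPZ_DL_rep} is monotone in the initial data (Lemma~\ref{lem:DL_crossing_facts}\ref{itm:KPZ_crossing_lemma}, used via the ordering $2\dir_1 x \li 2\dir_2 x$ of the linear initial conditions through a common origin), the increment ordering $h_\Ll(t,\aabullet;\h_{\dir_1}) \li h_\Ll(t,\aabullet;\h_{\dir_2})$ is preserved, and passing to the limit gives $\W_{\dir_1}(x,t;y,t) \le \W_{\dir_2}(x,t;y,t)$ — equivalently $\W_{\dir_1}(y,t;x,t) \le \W_{\dir_2}(y,t;x,t)$ after the sign convention $\W_\dir(x,y) = -\W_\dir(y,x)$ implicit in additivity.

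The structural steps in order would be: (i) fix $\dir_1 < \dir_2$ and a fixed level $t$ (WLOG $t=0$ by stationarity of the DL in time, Section~\ref{sec:DL_geod}); (ii) use additivity Theorem~\ref{thm:RV-Buse}\ref{itm:fixed_additive} to reduce to showing $x \mapsto \W_{\dir_2}(x,0;0,0) - \W_{\dir_1}(x,0;0,0)$ is nondecreasing, i.e.\ that the two Brownian motions with drifts $2\dir_1 < 2\dir_2$ are increment-ordered; (iii) realize both $\W_{\dir_i}(\aabullet,0;0,0)$ as limits of $h_\Ll(T,\aabullet;\h_i) - h_\Ll(T,0;\h_i)$ as $T \to \infty$ for suitable $\h_i \in \UC$ satisfying \eqref{eqn:drift_assumptions} with $\dir = \dir_i$, chosen coupled so that $\h_1 \li \h_2$ (e.g.\ $\h_i(x) = 2\dir_i x$); (iv) apply the crossing/monotonicity lemma for the KPZ fixed point to get $h_\Ll(T,\aabullet;\h_1) \li h_\Ll(T,\aabullet;\h_2)$ for every $T$, which is an increment inequality stable under the limit; (v) obtain the almost-sure statement at fixed $t$ and $x<y$; (vi) upgrade to ``simultaneously for all $t$ and all $x<y$'' by continuity of $\W_\dir$ in all four arguments (Theorem~\ref{thm:RV-Buse}\ref{itm:DL_Buse_cont}) and density of rationals — if the inequality holds for all rational $t$ and all rational $x<y$ on one full-probability event, continuity extends it to all reals.

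The main obstacle I anticipate is step (iii)–(vi): reconciling the fixed-direction, fixed-level almost-sure statements from \cite{Rahman-Virag-21} (each $\dir$ and each $t$ carries its own null set) with the desired joint statement over all $t$ simultaneously, and making sure the approximation of $\W_{\dir_i}$ by the long-time KPZ fixed point is valid on the same probability space as the Busemann process rather than merely in distribution. The cleanest route is probably to avoid the limit-transition altogether and instead invoke the already-established fact (Theorem~\ref{thm:Buse_dist_intro}, which the chapter builds toward) that $\{\W_\dir(\aabullet,t;0,t)\}_{\dir \in \R}$ has the law of the stationary horizon $G^{\sqrt 2}$; then Proposition~\ref{prop:SH_cons}\ref{itm:SH_mont} gives $G^{\sqrt 2}_{\dir_1} \li G^{\sqrt 2}_{\dir_2}$ \emph{simultaneously for all pairs} on one event, and this transfers directly. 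Since at this point in the dissertation the full distributional identification may not yet be available for the argument, the fallback is the monotonicity-preservation argument above, with the ``simultaneous in $t$'' upgrade handled by a continuity/separability argument as in step (vi); I would flag that the vertical (time) monotonicity is \emph{not} claimed here, consistent with the remark in Section~\ref{sec:Buse_geod_results} that DL Busemann functions possess monotonicity only in the horizontal direction.
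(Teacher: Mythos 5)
Theorem~\ref{thm:RV-Buse} is not proved in this dissertation at all: it is an imported input, quoted verbatim from Rahman and Vir\'ag \cite{Rahman-Virag-21} (their Corollary 3.3, Theorem 3.5, Remark 3.1), and Section~\ref{sec:RV_summ} explicitly frames it as ``a summary of their results that we use.'' So there is no internal proof to match your argument against; any proof you supply must in particular avoid the machinery the dissertation builds \emph{on top of} this theorem.

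That is where your proposal runs into trouble. Your preferred route --- invoke Theorem~\ref{thm:Buse_dist_intro}\ref{itm:SH_Buse_process} to identify $\{\W_\dir(\aabullet,t;0,t)\}_\dir$ with the stationary horizon and then read off $G^{\sqrt 2}_{\dir_1}\li G^{\sqrt2}_{\dir_2}$ from Proposition~\ref{prop:SH_cons}\ref{itm:SH_mont} --- is circular in this logical structure: the event $\Omega_1$ in \eqref{omega1} includes Item~\ref{itm:DL_Buse_mont} for rational direction pairs precisely so that $\W_{\dir\pm}$ can be \emph{defined} as monotone limits in \eqref{eqn:Buse_def}, and the SH identification is proved only afterwards. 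Your fallback (realize $\W_{\dir_i}$ as the long-time limit of the backward KPZ fixed point from linear data $\h_i(x)=2\dir_i x$ and propagate $\h_1\li\h_2$) has the same problem: the identification of that long-time limit with the Busemann function is exactly the attractiveness statement (Theorem~\ref{thm:DL_Buse_summ}\ref{itm:global_attract}), which is proved far downstream using coalescence and the regularity of the Busemann process. The non-circular argument is the one you mention only in passing: use the Busemann limit representation from \cite{Rahman-Virag-21}, $\W_{\dir_i}(y,t;x,t)=\lim_{u\to\infty}\bigl[\Ll(y,t;\dir_i u,u)-\Ll(x,t;\dir_i u,u)\bigr]$, apply the crossing inequality of Lemma~\ref{lem:DL_crossing_facts}\ref{itm:DL_crossing_lemma} to the finite-$u$ differences with terminal points ordered $\dir_1 u<\dir_2 u$, and pass to the limit; the upgrade from rational $t,x,y$ to all reals by continuity (Item~\ref{itm:DL_Buse_cont}) is then fine, and is consistent with the statement fixing the pair $\dir_1<\dir_2$ while quantifying over all $t$ and $x<y$. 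If you intend to prove this item rather than cite it, that should be the primary argument, not a footnote.
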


\section{Busemann process and Busemann geodesics} \label{sec:Buse_geod_results}
In this section, we first present a list of theorems regarding the Busemann process in Section \ref{sec:Buse_results}. Section \ref{sec:DL_SIG_intro} defines Busemann geodesics and states their main properties.  The proofs are found in Section \ref{sec:DL_Buse_cons}, except for the proofs of Theorem \ref{thm:DL_Buse_summ}\ref{itm:BuseLim1}-\ref{itm:global_attract} and the mixing in Theorem \ref{thm:Buse_dist_intro}\ref{itm:stationarity}, which are proved in Section \ref{sec:last_proofs1}, and Theorem \ref{thm:DLBusedc_description}\ref{itm:Busedc_t}, which is proved in Section \ref{sec:last_proofs2}.
\subsection{The Busemann process} \label{sec:Buse_results}

The Busemann process $\{\W_{\dir \sig}(p;q)\}$ is indexed by points $p,q \in \R^2$, a direction $\dir \in \R$, and a sign $\sigg \in \{-,+\}$. 
The following theorems describe this  process. The parameter $\sigg \in \{-,+\}$ denotes the left- and right-continuous versions of this process as a function of $\dir$. 

\begin{theorem} \label{thm:DL_Buse_summ}
 On $(\Omega,\F,\Pp)$, there exists a process
\[
\{\W_{\dir \sig}(p;q): \dir \in \R, \,  \sigg \in \{-,+\}, \, p,q \in \R^2\}
\]
satisfying the following properties.  All the  properties below hold on a single event of probability one, simultaneously for all directions $\dir \in \R$, signs $\sigg \in \{-,+\}$, and points $p,q \in \R^2$, unless otherwise specified. Below, for $p,q \in \R^2$, we define the sets
\be \label{eqn:DLBuseDC_def}
\DLBusedc(p;q) = \{\dir \in \R: \W_{\dir -}(p;q) \neq \W_{\dir +}(p;q)\}\qquad\text{and}\qquad\DLBusedc = \textstyle\bigcup_{p,q \,\in\, \R^2} \DLBusedc(p;q).
\ee
\begin{enumerate} [label=\rm(\roman{*}), ref=\rm(\roman{*})]  \itemsep=3pt
\item{\rm(Continuity)} \label{itm:general_cts}  As an $\R^4 \to \R$ function,  $(x,s;y,t) \mapsto \W_{\dir \sig}(x,s;y,t)$ is  continuous. 
 \item {\rm(Additivity)} \label{itm:DL_Buse_add} For all $p,q,r \in \R^2$, 
    $\W_{\dir \sig}(p;q) + \W_{\dir \sig}(q;r) = \W_{\dir \sig}(p;r)$.   In particular, $\W_{\dir \sig}(p;q) = -\W_{\dir \sig}(q;p)$ and $\W_{\dir \sig}(p;p) = 0$.
    \item {\rm(Monotonicity along a horizontal line)}
    \label{itm:DL_Buse_gen_mont} Whenever $\dir_1< \dir_2$, $x < y$, and $t \in \R$,
    \[
    \W_{\dir_1 -}(y,t;x,t) \le \W_{\dir_1 +}(y,t;x,t) \le \W_{\dir_2 -}(y,t;x,t) \le \W_{\dir_2 +}(y,t;x,t).
    \]
    \item {\rm(Backwards evolution as the KPZ fixed point)}\label{itm:Buse_KPZ_description} For 
    all $x,y \in \R$ and $s < t$,
    \be\label{W_var}
    \W_{\dir \sig}(x,s;y,t) = \sup_{z \in \R}\{\Ll(x,s;z,t) + \W_{\dir \sig}(z,t;y,t)\}.
    \ee
    \item {\rm(Regularity in the direction parameter)}
    \label{itm:DL_unif_Buse_stick}
    The process $\dir\mapsto\W_{\dir +}$ is right-continuous in the sense of uniform convergence on compact sets of functions $\R^4 \to \R$, and $\dir\mapsto\W_{\dir -}$ is left-continuous in the same sense. The restrictions to compact sets are locally constant in the parameter $\dir$:  for each $\dir \in \R$ and compact set $K \subseteq \R^4$ there exists a random $\ve =\ve(\dir,K)>0$ such that, whenever $\dir - \ve < \alpha < \dir < \beta < \dir + \ve$ and $\sigg \in \{-,+\}$,   we have these  equalities  for all $(x,s;y,t) \in K$: 
    \be \label{208}
    \W_{\alpha \sig}(x,s;y,t) = \W_{\dir -}(x,s;y,t)\qquad\text{and}\qquad\W_{\beta \sig}(x,s;y,t) = \W_{\dir +}(x,s;y,t). 
    \ee
   
    \item \rm{(Busemann limits I)} \label{itm:BuseLim1}  If $\dir \notin \DLBusedc$, then, for any compact set $K \subseteq \R^2$ and any net $r_t= (z_t,u_t)_{t \in \R_{\ge 0}}$ with $u_t \to \infty$ and $z_t/u_t \to \dir$ as $t \to \infty$, there exists $R \in \R_{>0}$ such that, for all $p,q \in K$ and $t \ge R$, 
    \[
    \W_{\dir}(p;q) = \Ll(p;r_t) - \Ll(q;r_t).
    \]
    \item {\rm(Busemann limits II)} \label{itm:BuseLim2} For all $\dir \in \R$, $s \in \R$, $x < y \in \R$, and any net $(z_t,u_t)_{t \in \R_{\ge 0}}$ in $\R^2$ such that   $u_t \to \infty$ and $z_t/u_t \to \dir$ as $t \to \infty$,
    \begin{align*}
    \W_{\dir -}(y,s;x,s) &\le \liminf_{t \to \infty} \Ll(y,s;z_t,u_t) - \Ll(x,s;z_t,u_t) \\  &\le \limsup_{t \to \infty} \Ll(y,s;z_t,u_t) - \Ll(x,s;z_t,u_t) \le \W_{\dir +}(y,s;x,s).
    \end{align*}
    \item \rm{(Global attractiveness)} \label{itm:global_attract} Assume that $\dir \notin \DLBusedc$, and let $\h \in \UC$ satisfy condition \eqref{eqn:drift_assumptions} for the parameter $\dir$. For $s < t$, let 
    \[
        h_{s,t}(x;\h) = \sup_{z \in \R}\{\Ll(x,s;z,t) + \h(z)\}.   
    \]
    Then, for any $s \in \R$ and $a > 0$, there exists a random $t_0  = t_0(a,\dir,s)<\infty$ such that for all $t > t_0$ and $x \in [-a,a]$, $h_{s,t}(x;\h) - h_{s,t}(0;\h) = \W_{\dir}(x,s;0,s)$.
    \end{enumerate}
\end{theorem}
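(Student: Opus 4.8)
\textbf{Proof plan for Theorem \ref{thm:DL_Buse_summ}.}

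The plan is to construct the Busemann process $\W_{\dir\sig}$ by combining the fixed-direction Rahman--Vir\'ag process $\W_\dir$ of Theorem \ref{thm:RV-Buse} with the monotonicity and continuity estimates that we have already developed for the stationary horizon and the KPZ fixed point. First I would fix a countable dense set $\mathcal{D}\subset\R$ of directions and, by Theorem \ref{thm:RV-Buse}, obtain on a single full-probability event a family $\{\W_\dir\}_{\dir\in\mathcal{D}}$ of Busemann functions, each satisfying additivity \ref{itm:fixed_additive}, the variational identity \ref{itm:DL_Buse_var}, continuity \ref{itm:DL_Buse_cont}, and pairwise monotonicity \ref{itm:DL_Buse_mont} along horizontal lines --- the last holding simultaneously for all rational pairs $\dir_1<\dir_2$ in $\mathcal{D}$ since $\mathcal{D}$ is countable. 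This gives a monotone lattice of Brownian-type increment processes indexed by $\mathcal{D}$, which I can then extend to all real $\dir$ by setting $\W_{\dir+}(x,t;y,t)=\lim_{\mathcal D\ni\alpha\searrow\dir}\W_\alpha(x,t;y,t)$ and $\W_{\dir-}(x,t;y,t)=\lim_{\mathcal D\ni\alpha\nearrow\dir}\W_\alpha(x,t;y,t)$, exactly as in the construction of the SH in Proposition \ref{prop:SH_cons}. The monotonicity \ref{itm:DL_Buse_mont} guarantees these one-sided limits exist; the ordering statement \ref{itm:DL_Buse_gen_mont} then follows immediately. Finiteness and the fact that the limiting increment process is again a two-sided Brownian motion with diffusivity $\sqrt2$ and drift $2\dir$ come from Theorem \ref{thm:RV-Buse}\ref{itm:DL_Buse_BM} together with a sandwiching argument using rational directions straddling $\dir$, just as in the proof of Proposition \ref{prop:SH_cons}\ref{itm:SH_drifts}.

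The next block of work is to upgrade from horizontal lines to all of $\R^4$ and to establish the functional properties. For this I would use the variational identity \ref{itm:DL_Buse_var} / \eqref{W_var} as a \emph{definition} of $\W_{\dir\sig}(x,s;y,t)$ for $s<t$ in terms of its restriction to the horizontal line at level $t$, i.e.\ set $\W_{\dir\sig}(x,s;y,t)=\sup_{z}\{\Ll(x,s;z,t)+\W_{\dir\sig}(z,t;y,t)\}$. One must check this is finite (the drift of $\W_{\dir\sig}(\cdot,t;y,t)$ is $2\dir$ and $\Ll(x,s;z,t)\sim -(x-z)^2/(t-s)$, so by Lemma \ref{lem:Landscape_global_bound} the supremum is attained at a finite $z$ --- compare Lemma \ref{lem:max_restrict}), consistent with the metric composition law \eqref{eqn:metric_comp} of $\Ll$ (this yields additivity \ref{itm:DL_Buse_add} and the ``backwards KPZ fixed point'' property \ref{itm:Buse_KPZ_description}), and continuous in $(x,s;y,t)$. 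Continuity \ref{itm:general_cts} follows from continuity of $\Ll$, of $\W_{\dir\sig}(\cdot,t;y,t)$, and the uniform-on-compacts control of the maximizer location, in the same spirit as Lemma \ref{lem:max_restrict}\ref{itm:KPZcont}; for the $t$-continuity one also invokes the local uniform convergence $\W_{\dir\sig}(\cdot,t';y,t')\to\W_{\dir\sig}(\cdot,t;y,t)$ as $t'\to t$, which can be read off from \ref{itm:DL_Buse_var} applied over a short time increment.

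The regularity-in-direction statement \ref{itm:DL_unif_Buse_stick} and the two Busemann-limit statements \ref{itm:BuseLim1}, \ref{itm:BuseLim2} are where the real content lies. For \ref{itm:DL_unif_Buse_stick}, since $\W_{\dir\sig}$ is pinned to $\W_\dir$ for $\dir\in\mathcal{D}$, the ``local constancy'' of $\dir\mapsto\W_{\dir\sig}$ on compact $K\subset\R^4$ should be transferred, via the variational formula \eqref{W_var} and the exit-point control of Lemma \ref{lem:unq}, from the analogous local-constancy property of the SH in Theorem \ref{thm:SH_sticky_thm}\ref{itm:SH_stick}: indeed by the invariance Theorem \ref{thm:invariance_of_SH} the restriction of $\{\W_{\dir\sig}(x,t;y,t)\}_{\dir}$ to a fixed horizontal line has the SH distribution, so the horizontal increments stabilize in $\dir$ on a neighborhood, and \eqref{W_var} propagates this to the vertical direction because the maximizer is confined to a common compact set (Lemma \ref{lem:unq}). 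For \ref{itm:BuseLim1} and \ref{itm:BuseLim2}, I would combine Theorem \ref{thm:RV-SIG-thm}\ref{itm:fixed_coal} (coalescence of all direction-$\dir$ geodesics, so that along a coalescing ray the landscape increments converge to $\W_\dir$) with a pinching argument between rational directions $\alpha<\dir<\beta$: $\Ll(y,s;z_t,u_t)-\Ll(x,s;z_t,u_t)$ is, for $t$ large, sandwiched between $\W_{\alpha}$ and $\W_{\beta}$ by the monotonicity of landscape increments under perturbation of the endpoint direction (a crossing-of-geodesics argument, cf.\ Lemma \ref{lem:DL_crossing_facts}), and one lets $\alpha\nearrow\dir$, $\beta\searrow\dir$. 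Finally, \ref{itm:global_attract} is essentially the Attractiveness half of Theorem \ref{thm:invariance_of_SH} specialized to a single initial function, combined with \ref{itm:BuseLim1}: once $\dir\notin\DLBusedc$, the exit points of the geodesics terminating in $\{t\}\times[-a,a]$ lie near $\dir t$ for large $t$ (Lemma \ref{lem:unq}), so $h_{s,t}(\cdot;\h)$ increments match those of $\W_\dir$ on $[-a,a]$ by the crossing lemma. The main obstacle I anticipate is \ref{itm:DL_unif_Buse_stick} for general space-time points: monotonicity of Busemann functions in DL holds only along horizontal lines, not vertical ones, so the transfer of the SH's stickiness to $\R^4$ must go entirely through the variational formula \eqref{W_var} and a careful, uniform control of where the maximizing $z$ sits as $(x,s;y,t)$ ranges over $K$ and $\dir$ over a small interval --- this is the step that genuinely needs the new exit-point/maximizer estimates rather than soft monotone-limit arguments.
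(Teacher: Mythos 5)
Your plan follows the same architecture as the paper's proof: define $\W_\dir$ for a countable dense set of directions via Rahman--Vir\'ag on a single event, take monotone one-sided limits along horizontal lines to get $\W_{\dir\pm}$, extend to all of $\R^4$ by declaring the variational formula \eqref{W_var} to be the definition for $s<t$, and then push the SH stickiness from a fixed horizontal level through the variational formula by confining the maximizers to a compact interval. Items \ref{itm:general_cts}--\ref{itm:Buse_KPZ_description}, \ref{itm:DL_unif_Buse_stick}, \ref{itm:BuseLim2}, and \ref{itm:global_attract} are all handled essentially as in the paper (for the maximizer confinement in \ref{itm:DL_unif_Buse_stick} the relevant tool is the boundedness statement of Lemma \ref{lem:bounded_maxes} rather than the large-$t$ exit-point Lemma \ref{lem:unq}, but that is a labeling issue, not a conceptual one). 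You also correctly identify \ref{itm:DL_unif_Buse_stick} as the step where horizontal-only monotonicity forces everything through the variational formula.

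The one genuine gap is in your treatment of Busemann limits I, Item \ref{itm:BuseLim1}. You propose to get it from Theorem \ref{thm:RV-SIG-thm}\ref{itm:fixed_coal}, but that coalescence statement holds only on a $\dir$-dependent full-probability event, whereas \ref{itm:BuseLim1} must hold on a \emph{single} event simultaneously for every $\dir\notin\DLBusedc$ --- an uncountable, random set of directions, so no union bound over a countable dense set reaches them. The sandwiching between rational $\alpha<\dir<\beta$ that you describe only yields the two-sided bounds of Item \ref{itm:BuseLim2}; to upgrade to the exact equality of \ref{itm:BuseLim1} one needs that for each $\dir\notin\DLBusedc$ \emph{all} $\dir$-directed semi-infinite geodesics (not just Busemann geodesics) coalesce, uniformly over compact sets of initial points, and that finite geodesics to far targets eventually agree with the coalesced ray. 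In the paper this is supplied by the global geodesic machinery of Sections \ref{sec:Buse_geod_results}--\ref{sec:geometry_sec}: the construction of Busemann geodesics for all points and directions (Theorem \ref{thm:DL_SIG_cons_intro}), the control of arbitrary geodesics by the extreme Busemann geodesics (Theorem \ref{thm:all_SIG_thm_intro}), and the equivalence $\dir\notin\DLBusedc\iff$ all $\dir$-directed geodesics coalesce (Theorem \ref{thm:DL_good_dir_classification}), whose proof itself runs through the stickiness of Item \ref{itm:DL_unif_Buse_stick}. Your plan should make explicit that this intermediate layer is required; as written, the step from fixed-direction coalescence to the simultaneous statement would fail.
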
   
\begin{remark}
 Item \ref{itm:BuseLim1} is novel in   that it shows the limits simultaneously for all $\dir \notin \DLBusedc$, uniformly over  compact subsets of $\R^2$.  The existence of Busemann limits in fixed directions
 is shown in \cite{Rahman-Virag-21} and \cite{Ganguly-Zhang-2022a}.
Item \ref{itm:global_attract} is analogous to  Theorem 3.3 in \cite{Bakhtin-Cator-Konstantin-2014} and Theorem 3.3 in \cite{Bakhtin-Li-19} on the global solutions of the Burgers' equation with random forcing.  
When comparing with  \cite{Bakhtin-Cator-Konstantin-2014,Bakhtin-Li-19},  note  that our geodesics travel north while theirs head south. 

\end{remark}

\noindent We describe the distribution of this process. The key to Item \ref{itm:SH_Buse_process} is Theorem \ref{thm:invariance_of_SH}.
\begin{theorem} \label{thm:Buse_dist_intro}
The following hold.
\begin{enumerate} [label=\rm(\roman{*}), ref=\rm(\roman{*})] \itemsep=3pt
\item {\rm(Independence)} \label{itm:indep_of_landscape} For each $T \in \R$, these processes are independent: 
    \begin{align*}
    &\{\W_{\dir \sig}(x,s;y,t): \dir \in \R, \,\sigg \in \{-,+\}, \, x,y \in \R, \, s,t \ge T \} \\[4pt] 
    &\qquad\qquad\qquad \text{and } \ \ 
    \{\Ll(x,s;y,t): x,y \in \R,\, s < t \le T\}. 
    \end{align*}
    
 \item {\rm(Stationarity and mixing)} \label{itm:stationarity} The process
 \be \label{eqn:stat}
 \{\Ll(v),\W_{\dir \sig}(p;q):v \in \Rup, \, p,q \in \R^2, \,\dir \in \R, \,\sigg \in \{-,+\} \}
 \ee
 is stationary and mixing under shifts in any space-time direction. More precisely,  let $a,b \in \R$, not both $0$, and $z > 0$. Set $r_z = (az,bz)$.  Then, the process \eqref{eqn:stat} is stationary and mixing {\rm(}for fixed $a,b$ as $z \to +\infty${\rm)} under the transformation
 \begin{align*}
 \bigl\{\Ll(v), \W_{\dir \sig}(p;q )\bigr\} \mapsto T_{z;a,b}\{\Ll,\W \} := \{\Ll(v + (r_z;r_z)), \W_{\dir \sig}(p + r_z;q +r_z)\},
 \end{align*}
 where the process on each side is a function of $(v,(p,q))\in\Rup \times \R^4$.  Mixing means that, for all $k\in\Z_{>0}$, $\dir_1,\dotsc,\dir_k\in\R$,  and Borel subsets $A,B \subseteq C(\Rup,\R)\times C(\R^4,\R)^k$, if we denote $\W_{\dir_{1:k}}=(\W_{\dir_{1}},\dotsc,\W_{\dir_{k}})\in C(\R^4,\R)^k$, then 
 \begin{align*}
&\lim_{z \to \infty}\Pp\Bigl(\{\Ll, \W_{\dir_{1:k}}\} \in A, \{T_{z;a,b} \Ll, T_{z;a,b}\W_{\dir_{1:k}}\} \in B\Bigr)  \\
     &\qquad\qquad\qquad =\Pp\bigl( \{\Ll, \W_{\dir_{1:k}}\} \in A\bigr) \Pp\bigl(\{\Ll, \W_{\dir_{1:k}}\} \in B \bigr) .
 \end{align*}
    \item  {\rm(Distribution along a time level)}\label{itm:SH_Buse_process} For each $t \in \R$,  the following equality in distribution holds between random elements of the Skorokhod space $D(\R,C(\R))$:
\[
\{\W_{\dir +}(\aabullet,t;0,t)\}_{\dir \in \R} \deq \bigl\{G^{\sqrt 2}_{\dir}(\aabullet) \bigr\}_{\dir \in \R},
\]
where $G^{\sqrt 2}$ is the stationary horizon with diffusivity $\sqrt 2$ and drifts $2\xi$.
\end{enumerate}
\end{theorem}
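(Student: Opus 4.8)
\textbf{Proof plan for Theorem \ref{thm:Buse_dist_intro}.}

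The statements in parts \ref{itm:indep_of_landscape} and \ref{itm:SH_Buse_process} are the ones I would focus on, since the mixing assertion in \ref{itm:stationarity} is explicitly deferred to a cited input from \cite{Dauvergne-22} and the stationarity under space-time shifts follows directly from the analogous invariance of the directed landscape together with the construction of the Busemann process. So the plan is to prove \ref{itm:indep_of_landscape} and \ref{itm:SH_Buse_process} first, and then read off stationarity from the shift-covariance built into the construction in Section \ref{sec:DL_Buse_cons}.

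For the independence statement \ref{itm:indep_of_landscape}, I would start from the limit characterization of the Busemann process. By Theorem \ref{thm:DL_Buse_summ}\ref{itm:BuseLim1}--\ref{itm:BuseLim2}, for each $\dir\notin\DLBusedc$ (a full-probability event for fixed $\dir$) the increments $\W_\dir(p;q)$ are obtained as $t\to\infty$ limits of $\Ll(p;r_t)-\Ll(q;r_t)$ with $r_t=(z_t,u_t)$, $u_t\to\infty$. Fixing the final time $T$, one can choose the approximating net so that all the space-time segments $(p;r_t)$ and $(q;r_t)$ lie in the half-space $\{\text{second coordinate}\ge T\}$, whereas $\{\Ll(x,s;y,t):s<t\le T\}$ depends only on the landscape restricted to $\{t\le T\}$. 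The directed landscape has independent increments over disjoint time intervals (Section \ref{sec:DL_geod}), so for $p,q$ with time coordinates $\ge T$, each finite collection $\{\W_{\dir_i}(p_i;q_i)\}$ is a measurable function of $\{\Ll(v):v\text{ has both time coordinates}\ge T\}$, which is independent of $\{\Ll(v):v\text{ has both time coordinates}\le T\}$. Passing from rational (or countable dense) directions and points to all of them is handled by the uniform-on-compacts continuity and local constancy in $\dir$ from Theorem \ref{thm:DL_Buse_summ}\ref{itm:general_cts},\ref{itm:DL_unif_Buse_stick}, plus a monotone-class argument to upgrade from finite-dimensional events to the full $\sigma$-algebras generated by the two processes.

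For \ref{itm:SH_Buse_process}, the key observation is that the backward evolution of the Busemann process is exactly a KPZ fixed point evolution run backward in time, by Theorem \ref{thm:DL_Buse_summ}\ref{itm:Buse_KPZ_description}: for $s<t$, $\W_{\dir+}(\,\cdot\,,s;0,s)$ is obtained from $\W_{\dir+}(\,\cdot\,,t;0,t)$ by applying $h_{\Ll}$ (in the environment $\Ll$ restricted to $[s,t]$, suitably time-reflected). Combined with part \ref{itm:indep_of_landscape}, which makes the driving landscape piece independent of the ``initial'' Busemann data at time $t$, this places us exactly in the hypotheses of the invariance statement of Theorem \ref{thm:invariance_of_SH}: the unique coupled invariant distribution of the KPZ fixed point with drifts $2\dir_i$ is $(G^{\sqrt2}_{\dir_1},\dots,G^{\sqrt2}_{\dir_k})$. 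Since by Theorem \ref{thm:RV-Buse}\ref{itm:DL_Buse_BM} each $x\mapsto\W_\dir(x,t;0,t)$ is already a two-sided Brownian motion with diffusivity $\sqrt2$ and drift $2\dir$, and the whole collection is monotone in $\dir$ (Theorem \ref{thm:DL_Buse_summ}\ref{itm:DL_Buse_gen_mont}), the law of $\{\W_{\dir+}(\,\cdot\,,t;0,t)\}_{\dir\in\R}$ is a $\Ll$-stationary coupled distribution. Invoking the uniqueness half of Theorem \ref{thm:invariance_of_SH} on each finite-dimensional marginal $(\dir_1<\dots<\dir_k)$ — after checking that the conditions \eqref{eqn:drift_assumptions} hold for $(\W_\dir(\,\cdot\,,t;0,t),\dir)$, which they do since a Brownian motion with drift $2\dir$ satisfies them almost surely — identifies the finite-dimensional distributions with those of $\mu_{\sqrt2}^{\bar\dir}$, hence with those of $G^{\sqrt2}$ by Proposition \ref{prop:SH_cons}\ref{itm:SH_dist}. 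Since the $\sigma$-algebra on $D(\R,C(\R))$ is generated by the projections, this gives the equality in distribution of the full cadlag processes.

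The main obstacle I anticipate is the bookkeeping needed to make the backward-KPZ identification rigorous at the level of \emph{processes in $\dir$} rather than fixed finite tuples: Theorem \ref{thm:invariance_of_SH} is stated for a fixed increasing tuple $\dir_1<\dots<\dir_k$ with the same environment $\Ll$, so one must verify that the Busemann process at time $t$ evolves jointly (across all $\dir$) under one common realization of the landscape — this is exactly what \ref{itm:Buse_KPZ_description} provides, but one needs to be careful that the independence in \ref{itm:indep_of_landscape} is between the time-$t$ Busemann \emph{data} and the \emph{forward-in-time} landscape increments that drive the backward evolution from $t$ down to $s$. A secondary technical point is confirming that the equality of all finite-dimensional marginals genuinely determines the $D(\R,C(\R))$-law; this is immediate from the definition of $\Ff'$ in Section \ref{sec:SH_intro} but should be stated. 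Everything else is an application of results already in hand: Theorem \ref{thm:invariance_of_SH}, Theorem \ref{thm:RV-Buse}, and the regularity package in Theorem \ref{thm:DL_Buse_summ}.
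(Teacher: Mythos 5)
Your treatments of parts \ref{itm:stationarity} and \ref{itm:SH_Buse_process} match the paper's proof essentially exactly: stationarity is read off from the space-time stationarity of $\Ll$ and the shift-covariance of the construction, mixing is deferred, and part \ref{itm:SH_Buse_process} is obtained by observing that $\{\W_{\dir+}(\aabullet,t;0,t)\}_{t}$ is a reverse-time Markov process in the state space $\Y$ driven by independent landscape increments (via part \ref{itm:indep_of_landscape} and Theorem \ref{thm:DL_Buse_summ}\ref{itm:Buse_KPZ_description}), whose stationary law must, after the temporal reflection of Lemma \ref{lm:landscape_symm}\ref{itm:DL_reflect}, coincide with the unique coupled invariant measure of the forward KPZ fixed point from Theorem \ref{thm:invariance_of_SH}. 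Your checks of the drift conditions \eqref{eqn:drift_assumptions} and of the fact that finite-dimensional marginals determine the $D(\R,C(\R))$-law are exactly the points the paper also records.

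The one step I would not accept as written is your route to part \ref{itm:indep_of_landscape} through the Busemann limits of Theorem \ref{thm:DL_Buse_summ}\ref{itm:BuseLim1}--\ref{itm:BuseLim2}. In the paper those limit statements are proved only in Section \ref{sec:last_proofs1}, and their proofs rest on the coalescence results (Theorems \ref{thm:DL_all_coal} and \ref{thm:DL_good_dir_classification}), which in turn rest on the regularity event $\Omega_2$, which is built from Theorem \ref{thm:Buse_dist_intro}\ref{itm:SH_Buse_process} --- and that item uses \ref{itm:indep_of_landscape}. So invoking \ref{itm:BuseLim1} here is circular. The fix is the paper's direct argument: by the Rahman--Vir\'ag definition \eqref{RVW-def}, $\W_\dir(p;q)=\Ll(p;\kappa^\dir(p,q))-\Ll(q;\kappa^\dir(p,q))$ with $\kappa^\dir(p,q)$ a coalescence point of upward geodesics, so for rational $\dir$ and points with time coordinates $\ge T$ the Busemann function is already a measurable function of $\{\Ll(x,s;y,t): T\le s<t\}$; the extension to all $\dir$ and all points with $s,t\ge T$ goes through the monotone limits \eqref{eqn:Buse_def} and the variational formula \eqref{eqn:gen_Buse_var}, which stay inside the same half-space. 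Independence then follows from the independent time increments of $\Ll$, with no appeal to Busemann limits. (Alternatively, you could cite the Busemann limit statement for a fixed direction directly from \cite{Rahman-Virag-21} rather than from Theorem \ref{thm:DL_Buse_summ}, which also breaks the circle, but the measurability argument is simpler.)
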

\begin{remark}
 Combining  Items   \ref{itm:indep_of_landscape} and \ref{itm:SH_Buse_process} with  Theorem \ref{thm:DL_Buse_summ}\ref{itm:Buse_KPZ_description} gives a description of the   Busemann process on the full plane $\R^2$. 
\end{remark}

We describe the random sets of Busemann  discontinuities defined in \eqref{eqn:DLBuseDC_def}. Item\ref{itm:Busedc_t} below states that the discontinuities of the Busemann process are present along each horizontal line. Since the Busemann process along each line is described by the SH (Theorem \ref{thm:Buse_dist_intro}\ref{itm:SH_Buse_process}), the distributional invariances for $\XiSH$ proved in Theorem \ref{thm:SH_sticky_thm} also hold for $\DLBusedc(\abullet,t;\abullet,t)$.
  
\begin{theorem} \label{thm:DLBusedc_description}
The following hold on a single event of probability one.
    \begin{enumerate} [label=\rm(\roman{*}), ref=\rm(\roman{*})]  \itemsep=3pt
    \item \label{itm:Busedc_horiz_mont} For each $t \in \R$, the set    $\DLBusedc(x,t;-x,t)$ is nondecreasing as a function of $x \in \R_{\ge 0}$.
    \item \label{itm:Busedc_t} For $s,\dir \in \R$, define the function
    \be \label{fsdir}
x \mapsto f_{s,\dir}(x) := \W_{\dir +}(x,s;0,s) - \W_{\dir -}(x,s;0,s).
\ee
Then, $\dir \in \DLBusedc$ if and only if, for all $s \in \R$,
\be \label{bad_ub}
\lim_{x \to \pm \infty} f_{s,\dir}(x) = \pm \infty.
\ee
In particular, simultaneously for all $s,x \in \R$ and all sequences  $|x_k|\to\infty$,
    \be \label{eqn:dcset_union1}
    \DLBusedc = \bigcup_k \tspb\DLBusedc(x_k,s;x,s).
    \ee
    \item \label{itm:DL_dc_set_count} The set $\DLBusedc$ is countably infinite and dense in $\R$, while for each fixed $\dir \in \R$, \\ $\Pp(\dir \in \DLBusedc) = 0$. In particular, the full-probability event of the theorem can be chosen so that $\DLBusedc$ contains no directions $\dir \in \Q$.
    \item \label{itm:DL_Buse_no_limit_pts} For each $p \neq q$ in $\R^2$, the set $\DLBusedc(p;q)$ is discrete, that is, has no limit points in $\R$. The function $\dir \mapsto \W_{\dir -}(p;q) = \W_{\dir +}(p;q)$ is constant on each open interval $I \subseteq (\R \setminus \DLBusedc(p;q))$. For $t \in \R$, on a $t$-dependent full-probability event, for all $x < y$, $\DLBusedc(y,t;x,t)$ is infinite and unbounded, for both positive and negative $\dir$.    
\end{enumerate}
\end{theorem}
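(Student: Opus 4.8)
The plan is to prove Theorem \ref{thm:DLBusedc_description} by transferring the structural properties of the stationary horizon established in Chapter \ref{chap:SHcons} to the Busemann process via the distributional identity in Theorem \ref{thm:Buse_dist_intro}\ref{itm:SH_Buse_process}, and then upgrading ``for each fixed $t$'' statements to ``for all $t$ simultaneously'' using the monotonicity and additivity of the Busemann process (Theorem \ref{thm:DL_Buse_summ}\ref{itm:DL_Buse_add},\ref{itm:DL_Buse_gen_mont}) together with the variational representation \eqref{W_var}. I would handle the four items roughly in the order \ref{itm:Busedc_horiz_mont}, \ref{itm:DL_Buse_no_limit_pts}, \ref{itm:Busedc_t}, \ref{itm:DL_dc_set_count}, since the later items lean on the earlier ones.

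For Item \ref{itm:Busedc_horiz_mont}: fix $t$ and apply the crossing/monotonicity bound exactly as in the proof of Theorem \ref{thm:SH_sticky_thm}\ref{itm:SH_set_contain} --- since $\W_{\dir-}(\aabullet,t;\aabullet,t) \li \W_{\dir+}(\aabullet,t;\aabullet,t)$ (this follows from Theorem \ref{thm:DL_Buse_summ}\ref{itm:DL_Buse_gen_mont} applied with $\dir_1 = \dir_2 = \dir$, or directly from right/left continuity), Lemma \ref{lem:ext_mont} gives $0 \le \W_{\dir+}(x,t;-x,t) - \W_{\dir-}(x,t;-x,t) \le \W_{\dir+}(y,t;-y,t) - \W_{\dir-}(y,t;-y,t)$ for $0 \le x \le y$, hence $\DLBusedc(x,t;-x,t) \subseteq \DLBusedc(y,t;-y,t)$. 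The only subtlety is making this hold simultaneously for all $t$; I would first prove it for rational $t$ and then invoke the continuity of $(x,s;y,t)\mapsto \W_{\dir\sig}(x,s;y,t)$ from Theorem \ref{thm:DL_Buse_summ}\ref{itm:general_cts} to extend. For Item \ref{itm:DL_Buse_no_limit_pts}: for fixed $p \ne q$, Theorem \ref{thm:DL_Buse_summ}\ref{itm:DL_unif_Buse_stick} already gives that $\dir \mapsto \W_{\dir\sig}(p;q)$ is locally constant away from $\DLBusedc(p;q)$, so $\DLBusedc(p;q)$ is discrete; that it is constant on open intervals of the complement is then immediate. The ``$t$-dependent'' statement that $\DLBusedc(y,t;x,t)$ is infinite and unbounded for all $x<y$ follows from Theorem \ref{thm:Buse_dist_intro}\ref{itm:SH_Buse_process} and Theorem \ref{thm:SH_sticky_thm}\ref{itm:SH_all_jump}: along a single time level the Busemann process is the SH, for which $\XiSH(x,y)$ is countably infinite, discrete, and unbounded in both directions.

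Item \ref{itm:Busedc_t} is the heart of the theorem and the main obstacle, because it asserts a dichotomy that holds \emph{simultaneously over all} $s$. The forward direction of the ``if and only if'' is the hard one: given $\dir \in \DLBusedc$, I must show \eqref{bad_ub} for every $s$. The strategy: by definition $\dir \in \DLBusedc$ means $\W_{\dir-}(p;q) \ne \W_{\dir+}(p;q)$ for some pair $p,q$; using additivity \eqref{itm:DL_Buse_add} and the backwards KPZ evolution \eqref{W_var} I would push the discontinuity down to a single time level $s_0$ (the variational formula \eqref{W_var} says $\W_{\dir\sig}(x,s;y,t)$ is built from $\W_{\dir\sig}(\aabullet,t;\aabullet,t)$, and a discontinuity in $\dir$ upstairs forces one downstairs on the horizon $\Hh_t$). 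On a single horizon, Theorem \ref{thm:Buse_dist_intro}\ref{itm:SH_Buse_process} identifies $f_{s,\dir}$ with the SH jump profile $J^{\sqrt 2}_\dir$, and Corollary \ref{cor:dcLR} gives precisely $\lim_{x\to\pm\infty} J_\dir(x) = \pm\infty$ whenever $\dir$ is a jump direction. The remaining work is to show this ``jump direction on $\Hh_s$'' property does not depend on $s$: I would argue that if $\dir$ is a jump direction on $\Hh_t$ then, by \eqref{W_var} and Theorem \ref{thm:DL_Buse_summ}\ref{itm:DL_unif_Buse_stick} (local constancy in $\dir$), it is a jump direction on every $\Hh_s$ with $s < t$ --- intuitively because the discontinuity $\W_{\dir+}(\aabullet,t;\aabullet,t) - \W_{\dir-}(\aabullet,t;\aabullet,t) \not\equiv 0$ propagates through the supremum in \eqref{W_var} to give $\W_{\dir+}(\aabullet,s;\aabullet,s) - \W_{\dir-}(\aabullet,s;\aabullet,s)\not\equiv 0$; the reverse time direction follows from the fact that, for any fixed $s$, the horizon-level process is again an SH. Equation \eqref{eqn:dcset_union1} is then a restatement combining \ref{itm:Busedc_horiz_mont}, \ref{itm:Busedc_t}, and the SH fact $\XiSH = \bigcup_k \XiSH(x_k,x)$ (Theorem \ref{thm:SH_sticky_thm}\ref{itm:SH_set_contain}, Corollary \ref{cor:dcLR}).

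Finally, Item \ref{itm:DL_dc_set_count}: countability of $\DLBusedc$ follows from \eqref{eqn:dcset_union1} --- it is a countable union of the discrete sets $\DLBusedc(x_k,s;x,s)$ from Item \ref{itm:DL_Buse_no_limit_pts} --- while density and infinitude follow from Theorem \ref{thm:Buse_dist_intro}\ref{itm:SH_Buse_process} and Theorem \ref{thm:SH_sticky_thm}\ref{itm:SH_Xi_dense},\ref{itm:SH_all_jump}. That $\Pp(\dir \in \DLBusedc) = 0$ for each fixed $\dir$ follows from Theorem \ref{thm:DL_Buse_summ}\ref{itm:general_cts} plus the fixed-direction Busemann results of \cite{Rahman-Virag-21} (Theorem \ref{thm:RV-Buse}), where $\W_\dir$ is well-defined and continuous --- equivalently, from $\Pp(\dir \in \XiSH) = 0$ in Theorem \ref{thm:SH_sticky_thm} transported along one time level --- and then a countable-intersection argument over $\dir \in \Q$ lets us choose the full-probability event so that $\DLBusedc \cap \Q = \varnothing$. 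I expect the bookkeeping to make all of \ref{itm:Busedc_horiz_mont}--\ref{itm:DL_dc_set_count} hold on one common event to be the most delicate part, resolved by taking countable intersections over rational parameters and invoking continuity of $\W_{\dir\sig}$ in $(x,s;y,t)$ throughout.
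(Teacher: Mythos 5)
Your treatment of Items \ref{itm:Busedc_horiz_mont}, \ref{itm:DL_dc_set_count}, and \ref{itm:DL_Buse_no_limit_pts} matches the paper's in substance (the worry about extending Item \ref{itm:Busedc_horiz_mont} from rational to all $t$ is a non-issue, since Theorem \ref{thm:DL_Buse_summ}\ref{itm:DL_Buse_gen_mont} already holds for all $t$ on one event; and for countability the paper uses the decomposition \eqref{881} over integer time levels $T\in\Z$ rather than your \eqref{eqn:dcset_union1}, which lets it prove Item \ref{itm:DL_dc_set_count} without first proving Item \ref{itm:Busedc_t}). The genuine gap is in Item \ref{itm:Busedc_t}, exactly where you locate the heart of the theorem. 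Your claim that ``the discontinuity $\W_{\dir+}(\aabullet,t;\aabullet,t)-\W_{\dir-}(\aabullet,t;\aabullet,t)\not\equiv 0$ propagates through the supremum in \eqref{W_var}'' is asserted only ``intuitively,'' and as stated it is false in general: $f_{s,\dir}(x)$ is a difference of differences of suprema, and two variational problems whose input data differ on part of the line can perfectly well produce identical outputs. What makes the propagation work is quantitative: on the event $\Omega_3$ one knows $f_{T,\dir}(z)\to+\infty$ as $z\to+\infty$ at integer levels $T$ (Corollary \ref{cor:dcLR} applied to countably many levels), and the maximizer localization in Equation \eqref{373} of Theorem \ref{thm:g_basic_prop}\ref{itm:DL_SIG_conv_x} forces the suprema in \eqref{883} to localize, for large $x$, to the region $\{z\ge Z\}$ where $f_{T,\dir}\ge R$; one then gets $f_{s,\dir}(x)\ge R-A$ with $A$ independent of $x$ and $R$ arbitrary. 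Your proposal never invokes this localization, and without it the downward-in-time step does not close.

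A second, related problem is your appeal to ``the fact that, for any fixed $s$, the horizon-level process is again an SH'' to handle the other time direction. That identification (Theorem \ref{thm:Buse_dist_intro}\ref{itm:SH_Buse_process}) is a distributional statement holding on an $s$-dependent full-probability event; it cannot be invoked simultaneously for uncountably many $s$ to produce the single event the theorem requires. The paper only uses the SH structure at integer levels and earns the simultaneity over all real $s$ deterministically: the upward direction ($f_{s,\dir}\ne 0\Rightarrow f_{T,\dir}\ne 0$ for $T>s$) is the easy contrapositive of \eqref{883}, and the downward direction is the quantitative argument above. You would need to restructure your Item \ref{itm:Busedc_t} along these lines for the proof to be complete.
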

\begin{remark} \label{rmk:shock_measure}
Item \ref{itm:Busedc_t} states that all discontinuities of the Busemann process are present on each  horizontal ray. By Item \ref{itm:DL_Buse_no_limit_pts}  $\dir \mapsto \W_{\dir \pm}(p;q)$ are the left- and right-continuous versions of a jump process. This function defines a random signed measure  supported on a discrete set. When $p$ and $q$ lie on the same horizontal line, this function is monotone  (Theorem \ref{thm:DL_Buse_summ}\ref{itm:DL_Buse_gen_mont}) and the support of the measure is exactly the set of directions   at which a properly chosen coalescence point of semi-infinite geodesics  jumps (see Definition \ref{def:coal_pt} and Theorems \ref{thm:DL_eq_Buse_cpt_paths}--\ref{thm:Buse_pm_equiv}).
\end{remark}

\subsection{Busemann geodesics} \label{sec:DL_SIG_intro}
The study of Busemann geodesics starts with this definition. 
\begin{definition} \label{def:LR_maxes}
For $\dir  \in \R$, $\sigg \in \{-,+\}$,  $(x,s) \in \R^2$ and $t\in[s,\infty)$, let $g_{(x,s)}^{\dir \sig,L}(t)$ and $g_{(x,s)}^{\dir \sig,R}(t)$ denote, respectively, the leftmost and rightmost maximizer of $\Ll(x,s;y,t) + \W_{\dir \sig}(y,t;0,t)$ over $y \in \R$.
\end{definition}
\begin{remark}
The modulus of continuity bounds of the directed landscape recorded in Lemma \ref{lem:Landscape_global_bound}, along with continuity of $W_{\dir \sig}$, imply that $\lim_{t \searrow s} g_{(x,s)}^{\dir \sig,L/R}(t) = x$, so we define $g_{(x,s)}^{\dir \sig,L/R}(s) = x$.
\end{remark}

As noted earlier, Rahman and Vir\'ag \cite{Rahman-Virag-21} showed the existence of semi-infinite  geodesics, almost surely for a fixed initial point  across all directions and almost surely for a fixed direction across all initial points. We  extend this  simultaneously across both all initial points and directions. 
Theorem \ref{thm:RV-Buse}\ref{itm:DL_Buse_var}, quoted from \cite{Rahman-Virag-21}, states that for a {\it fixed} direction $\dir$, with probability one at times $t > s$, 
the maximizers $z$ of the function $\Ll(x,s;z,t) + \W_{\dir}(z,t;0,t)$ are exactly the points on semi-infinite $\xi$-directed geodesics from $(x,s)$.
Theorem \ref{thm:DL_SIG_cons_intro} clarifies this on a global scale: across all directions, initial points and signs, one can construct semi-infinite geodesics from the Busemann process.   
Furthermore,  $g_{(x,s)}^{\dir \sig,L}$ and $g_{(x,s)}^{\dir \sig,R}$ both define  semi-infinite geodesics in direction $\dir$  
and give  the leftmost (or rightmost) geodesic between any two of their points. We use this fact heavily in the present chapter. 

\begin{theorem} \label{thm:DL_SIG_cons_intro}
 The following hold on a single event of probability one across all initial points $(x,s) \in \R^2$, times $t>s$, directions $\dir  \in \R$, and signs $\sigg \in \{-,+\}$.
 \begin{enumerate} [label=\rm(\roman{*}), ref=\rm(\roman{*})]  \itemsep=3pt
 \item \label{itm:intro_SIG_bd} 
 All maximizers of $z\mapsto\Ll(x,s;z,t) + \W_{\dir \sig}(z,t;0,t)$ are finite. Furthermore, as  $x,s,t$ vary over a compact set $K\subseteq \R$ with $s \le t$, the set of all maximizers is bounded.
    \item \label{itm:arb_geod_cons} 
    Let $s = t_0 < t_1 < t_2 < \cdots$ be an arbitrary increasing sequence with $t_n \to \infty$. Set $g(t_0) = x$, and for each $i \ge 1$, let $g(t_i)$ be \textit{any} maximizer of $\Ll(g(t_{i - 1}),t_{i - 1};z,t_i) + W_{\dir \sig}(z,t_i;0,t_i)$ over $z \in \R$. Then, pick \textit{any} geodesic of $\Ll$ from $(g(t_{i - 1}),t_{i - 1})$ to $(g(t_i),t_i)$, and for $t_{i - 1} < t < t_i$, let $g(t)$ be the location of this geodesic at time $t$. Then, regardless of the choices made at each step, the following hold.
    \begin{enumerate} [label=\rm(\alph{*}), ref=\rm(\alph{*})]
        \item \label{itm:g_is_geod} The path $g:[s,\infty)\to \R$ is a semi-infinite geodesic.
        \item \label{itm:weight_of_geod} For all  $ t < u$ in $[s,\infty)$,
    \be \label{eqn:SIG_weight}
    \Ll(g(t),t;g(u),u) = \W_{\dir \sig}(g(t),t;g(u),u).
    \ee
    \item \label{itm:maxes} For all  $ t < u$ in $[s,\infty)$, $g(u)$ maximizes $\Ll(g(t),t;z,u) + \W_{\dir \sig}(z,u;0,u)$ over $z \in \R$. 
    \item \label{itm:geo_dir} The geodesic $g$ has direction $\dir$, i.e., $g(t)/t \to \dir$ as $t \to \infty$. 
    \end{enumerate}
    \item \label{itm:DL_all_SIG} For 
    $S \in \{L,R\}$, $g_{(x,s)}^{\dir \sig,S}:[s,\infty) \to \R$ is a semi-infinite geodesic from $(x,s)$ in direction $\dir$. Moreover, for any $s \le t < u$, we have that 
    \[
    \Ll\bigl(g_{(x,s)}^{\dir \sig,S}(t),t;g_{(x,s)}^{\dir \sig,S}(u),u\bigr) = \W_{\dir \sig}\bigl(g_{(x,s)}^{\dir \sig,S}(t),t;g_{(x,s)}^{\dir \sig,S}(u),u\bigr),
    \]
    and $g_{(x,s)}^{\dir \sig,S}(u)$ is the leftmost/rightmost {\rm(}depending on $S${\rm)} maximizer of \\$\Ll(g_{(x,s)}^{\dir \sig,S}(t),t;z,u) + \W_{\dir \sig}(z,u;0,u)$ over $z \in \R$.
    \item \label{itm:DL_LRmost_geod} 
    The path $g_{(x,s)}^{\dir \sig,L}$ is the leftmost geodesic between any two of its points, and $g_{(x,s)}^{\dir \sig,R}$ is the rightmost geodesic between any two of its points.
    \end{enumerate}
    \end{theorem}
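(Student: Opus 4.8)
\textbf{Proof proposal for Theorem \ref{thm:DL_SIG_cons_intro}.}

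The plan is to establish the items in the order \ref{itm:intro_SIG_bd}, \ref{itm:arb_geod_cons}, \ref{itm:DL_all_SIG}, \ref{itm:DL_LRmost_geod}, since the later items are essentially specializations and refinements of the earlier ones. First I would handle \ref{itm:intro_SIG_bd}. The function $z \mapsto \Ll(x,s;z,t) + \W_{\dir \sig}(z,t;0,t)$ has, by Lemma \ref{lem:Landscape_global_bound}, a parabolic upper bound $\Ll(x,s;z,t) \le -\frac{(x-z)^2}{t-s} + (\text{log correction})$, while $\W_{\dir \sig}(z,t;0,t)$ is a two-sided Brownian motion with drift $2\dir$ (Theorem \ref{thm:DL_Buse_summ}\ref{itm:general_cts} for continuity, and Theorem \ref{thm:Buse_dist_intro}\ref{itm:SH_Buse_process} together with Proposition \ref{prop:SH_cons}\ref{itm:SH_drifts} for the linear growth), so it is $o(z^2)$. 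Hence the sum tends to $-\infty$ as $|z| \to \infty$, giving a finite maximizer; uniform boundedness over compact $(x,s,t)$ follows because all the estimates are uniform on compacts, using the local-constancy of $\dir \mapsto \W_{\dir \sig}$ on compact sets of $\R^4$ (Theorem \ref{thm:DL_Buse_summ}\ref{itm:DL_unif_Buse_stick}) to reduce a compact range of directions to finitely many.

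Next, for \ref{itm:arb_geod_cons}, the key identity is the backwards KPZ-fixed-point equation \eqref{W_var} of Theorem \ref{thm:DL_Buse_summ}\ref{itm:Buse_KPZ_description}. For a maximizer $z=g(t_i)$ of $\Ll(g(t_{i-1}),t_{i-1};z,t_i) + \W_{\dir \sig}(z,t_i;0,t_i)$, additivity \ref{itm:DL_Buse_add} and \eqref{W_var} give $\W_{\dir \sig}(g(t_{i-1}),t_{i-1};g(t_i),t_i) = \Ll(g(t_{i-1}),t_{i-1};g(t_i),t_i)$; summing over $i$ and using metric composition \eqref{eqn:metric_comp} plus the reverse triangle inequality yields that the concatenated path achieves $\Ll(g(t),t;g(u),u) = \W_{\dir \sig}(g(t),t;g(u),u)$ for $t,u$ in the grid, hence is a geodesic on each $[t_{i-1},t_i]$ and, since the $\Ll$-geodesics chosen inside each subinterval are genuine geodesics, a geodesic on all of $[s,\infty)$ — this gives \ref{itm:g_is_geod}, \ref{itm:weight_of_geod}, \ref{itm:maxes} (the last by reversing the argument: if $g(u)$ achieves the Busemann weight then it is a maximizer in \eqref{W_var}). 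For the direction claim \ref{itm:geo_dir}, I would invoke the global attractiveness Theorem \ref{thm:DL_Buse_summ}\ref{itm:global_attract}, or more directly compare with Lemma \ref{lem:unq}: the maximizer location of $\W_{\dir \sig}(z,t;0,t) + \Ll(\,\abullet\,;z,t)$-type quantities is $\dir t + o(t)$, so $g(t)/t \to \dir$; alternatively, sandwich $g$ between the Rahman–Vir\'ag leftmost/rightmost geodesics in directions $\dir - \ve$ and $\dir + \ve$ using monotonicity, and let $\ve \to 0$.

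Items \ref{itm:DL_all_SIG} and \ref{itm:DL_LRmost_geod} then follow by applying \ref{itm:arb_geod_cons} to the specific choices $g(t_i) = g_{(x,s)}^{\dir \sig,S}(t_i)$ (leftmost or rightmost maximizer) and the leftmost/rightmost $\Ll$-geodesics in between; one must check that the restriction of $g_{(x,s)}^{\dir \sig,S}$ to a grid-free time $t$ agrees with the definition via maximizers, which is where a crossing/planarity argument (as in Lemma \ref{lem:Zfmont} or the crossing lemma \ref{lem:DL_crossing_facts}) shows that the leftmost maximizer at time $u$ lies on the leftmost maximizer path from any earlier time, so the construction is consistent and independent of the grid. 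The leftmost-geodesic property \ref{itm:DL_LRmost_geod} is the delicate point: I would argue by contradiction, supposing some geodesic between two points of $g_{(x,s)}^{\dir \sig,L}$ strays strictly to the left at some time; a planarity argument combined with \ref{itm:maxes} (the leftmost maximizer of \eqref{W_var}) forces that geodesic's endpoint to be a maximizer lying to the left of $g_{(x,s)}^{\dir \sig,L}$, contradicting leftmost-ness.

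\textbf{Main obstacle.} The hardest part is the simultaneous-in-all-parameters aspect: Theorem \ref{thm:RV-Buse} and \eqref{W_var} hold on a $\dir$-dependent (and, in \cite{Rahman-Virag-21}, fixed-direction) null set, whereas we need a single full-probability event valid for \emph{all} $\dir \in \R$, all signs, all initial points, and all times at once. The resolution is the regularity Theorem \ref{thm:DL_Buse_summ}\ref{itm:DL_unif_Buse_stick}–\ref{itm:general_cts}: on compact sets the Busemann process is locally constant in $\dir$ and jointly continuous in $(x,s;y,t)$, so the uncountably many directions collapse to the countable set $\DLBusedc$ plus density arguments, and the maximizer/geodesic statements, being closed conditions stable under these limits, propagate from a countable dense set of parameters to all parameters. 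Carefully setting up this "one event rules them all" reduction — and checking that taking $\dir \to \dir\pm$ limits of maximizers does not create spurious maximizers (uniform boundedness from \ref{itm:intro_SIG_bd} and upper semicontinuity of $\argmax$) — is where the real work lies.
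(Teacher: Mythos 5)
Your proposal matches the paper's proof in all essentials: finiteness and uniform boundedness of maximizers from the parabolic decay of $\Ll$ plus the linear growth of the Busemann function (the paper's Lemma \ref{lem:bounded_maxes}); the geodesic property via the inequality $\Ll \le \W_{\dir\sig}$ with equality exactly at maximizers (Lemma \ref{lem:L_and_Buse_ineq}) combined with additivity to rule out a strict inequality on any subinterval; directedness by sandwiching between rational-direction geodesics via monotonicity of maximizers; and the leftmost/rightmost property by the planarity contradiction you sketch (the paper's Lemma \ref{lem:RM_geod_SIG}). Two cautions on ordering: your first suggested route to \ref{itm:arb_geod_cons}\ref{itm:geo_dir} via Theorem \ref{thm:DL_Buse_summ}\ref{itm:global_attract} would be circular, since that item is proved later using the Busemann geodesics themselves — only your alternative sandwich argument is viable; similarly, the local constancy \ref{itm:DL_unif_Buse_stick} is proved \emph{after} this theorem (its proof uses the boundedness of maximizers), so the "one event" reduction must rest, as in the paper, on defining $\W_{\dir\pm}$ as monotone limits from rational directions on a single event $\Omega_1$ and on the monotonicity of maximizers in $\dir$, not on local constancy in the direction parameter.
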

\begin{definition}
We refer to the  geodesics constructed  in Theorem \ref{thm:DL_SIG_cons_intro}\ref{itm:arb_geod_cons} as $\dir \sig$ \textit{Busemann geodesics}, or simply {\it $\dir \sig$ geodesics}. 
\end{definition} 
\begin{remark}
The geodesics  $g_{(x,s)}^{\dir \sig,L}$ and $g_{(x,s)}^{\dir \sig,R}$ are special  Busemann geodesics. By   Theorem \ref{thm:DL_SIG_cons_intro}\ref{itm:DL_all_SIG}--\ref{itm:DL_LRmost_geod}, for any sequence $s=t_0 < t_1< t_2 < \cdots$ with $t_n \to \infty$, the path $g = g_{(x,s)}^{\dir \sig,L}$ can be constructed by choosing $g(t_i)$ as the leftmost maximizer of $\Ll(g(t_{i - 1}),t_{i - 1};z,t_i) + \W_{\dir \sig}(z,t_i;0,t_i)$ over $z \in \R$, and   for $t \in (t_{i - 1},t_i)$, taking $g(t)$ to be     the leftmost geodesic from $(g(t_{i - 1}),t_{i - 1})$ to $(g(t_i),t_i)$. The analogous statement holds for $L$ replaced with $R$ and ``leftmost'' replaced with ``rightmost''.
\end{remark}

\subsection{Construction and proofs for the Busemann process and Busemann geodesics} \label{sec:DL_Buse_cons}
This section proves the results of Sections \ref{sec:Buse_results} and \ref{sec:DL_SIG_intro}. The order in which the items are proved is somewhat delicate, so we outline that here. After proving some lemmas, we prove Theorem \ref{thm:DL_Buse_summ}\ref{itm:general_cts}--\ref{itm:Buse_KPZ_description} and Theorem \ref{thm:Buse_dist_intro}. We then skip ahead to constructing the semi-infinite geodesics, culminating in the proof of Theorem \ref{thm:DL_SIG_cons_intro}. Afterward, we turn to the proof of the regularity in Theorem \ref{thm:DL_Buse_summ}\ref{itm:DL_unif_Buse_stick}, then prove Theorem \ref{thm:DLBusedc_description}, except for Item \ref{itm:Busedc_t}, which is proved in Section \ref{sec:last_proofs2}.

We construct a full-probability event $\Omega_1$.  Later in \eqref{omega2} and \eqref{omega3} follow full-probability events $\Omega_3 \subseteq \Omega_2 \subseteq \Omega_1$. For the rest of the proofs, we work  almost exclusively on these events. Once the events are constructed and shown to have full probability, the remaining proofs are deterministic statements that hold on those events.
\be \label{omega1}
\text{We define $\Omega_1 \subseteq \Omega$ to be the event of probability one on which the following hold.}
\ee
\begin{enumerate} [label=\rm(\roman{*}), ref=\rm(\roman{*})]  \itemsep=3pt
    \item \label{om1lrf} Simultaneously for all $(x,s;y,t) \in \Rup$ there exist leftmost and rightmost geodesics {\rm(}possibly in agreement{\rm)} between $(x,s)$ and $(y,t)$ (see Section \ref{sec:DL_geod}).
    \item \label{om1rsi} For each rational direction $\dir \in \Q$ and each point $p \in \R^2$, there exist leftmost and rightmost semi-infinite geodesics {\rm(}possibly in agreement{\rm)} from $p$ in direction $\dir$, and all semi-infinite geodesics in direction $\dir$ coalesce {\rm(}see Theorem \ref{thm:RV-SIG-thm}, Items \ref{itm:d_fixed} and \ref{itm:fixed_coal}{\rm)}.
    \item \label{om1pdf} For each rational direction $\dir \in \Q$ and each rational point $p \in \Q^2$, there is a unique semi-infinite geodesic from $p$ in direction $\dir$ (see Theorem \ref{thm:RV-SIG-thm}\ref{itm:pd_fixed}).
    \item \label{om1dirrat} For each rational direction $\dir\in \Q$, the Busemann process defined by \eqref{RVW-def} satisfies conditions \ref{itm:fixed_additive}--\ref{itm:DL_Buse_cont} of Theorem \ref{thm:RV-Buse}. For any pair $\dir_1 < \dir_2$ or rational directions, Item \ref{itm:DL_Buse_mont} of Theorem \ref{thm:RV-Buse} holds.
    \item \label{om1agree} For each $(x,t,y,\dir) \in \Q^4$,
    $
        \lim_{\Q \ni \alpha \to \dir} \W_\alpha(y,t;x,t) = \W_{\dir}(y,t;x,t).    
    $
    \item \label{om1asym} For every rational time $t \in \Q$ and rational direction $\dir \in \Q$, 
    \be \label{eqn:rat_asymp}
        \lim_{x \to \pm \infty} x^{-1}  {\W_\dir(x,t;0,t)} = 2\dir. 
    \ee
    This holds with probability one by properties of Brownian motion and Theorem \ref{thm:RV-Buse}\ref{itm:DL_Buse_BM}.
    \item \label{om1appB} The conclusions of Lemmas \ref{lem:Landscape_global_bound}, \ref{lm:BGH_disj}, and \ref{lem:geod_pp} hold for  $\Ll$. Note that then  Lemma \ref{lem:Landscape_global_bound} holds also for the reflected version  
    $
    \{\Ll(y;-t,x;-s):(x,s;y,t) \in \Rup\}. 
    $   
\end{enumerate}

To justify $\Pp(\Omega_1)=1$, it remains to check Item \ref{om1agree}. By   Theorem \ref{thm:RV-Buse}\ref{itm:DL_Buse_mont},  for $y \ge x$,
\be \label{lrlimit}
\lim_{\Q \ni \alpha \nearrow \dir} \W_{\alpha}(y,t;x,t) \le \W_{\dir}(y,t;x,t) \le \lim_{\Q \ni \alpha \searrow \dir} \W_{\alpha}(y,t;x,t).
\ee
By Theorem \ref{thm:RV-Buse}\ref{itm:DL_Buse_BM}, $\W_{\alpha}(y,t;x,t) \sim \Nor(2\alpha(y - x),2(y - x))$. Hence, 
all terms in \eqref{lrlimit} have the same distribution and are almost surely equal.

Now, on the full-probability event $\Omega_1$, we have defined the process
 \be \label{rat_process}
 \{\W_{\alpha}(p;q):p,q \in \R^2,\alpha \in \Q \}.
 \ee
 On this event, for an arbitrary direction $\dir$, and $t,x,y \in \R$, define
\be \label{eqn:Buse_def}
\W_{\dir -}(y,t;x,t) = \lim_{\Q \ni \alpha \nearrow \dir}\W_{\alpha }(y,t;x,t)\;\;\text{and}\;\; \W_{\dir+}(y,t;x,t) = \lim_{\Q \ni \alpha \searrow \dir} \W_{\alpha }(y,t;x,t).
\ee
By Theorem \ref{thm:RV-Buse}\ref{itm:DL_Buse_mont}, these limits exist for all $t \in \R$. Complete the definition by setting,  
\be \label{eqn:gen_Buse_var}\begin{aligned} 
\text{  for $s < t$, } \ 
\W_{\dir \sig}(x,s;y,t) &= \sup_{z \in \R}\{\Ll(x,s;z,t) + \W_{\dir \sig}(z,t;y,t)\},\\
\text{ and finally for $s > t$, } \ \W_{\dir \sig}(x,s;y,t) &= -\W_{\dir \sig}(y,t;x,s).
\end{aligned} \ee
With this construction in place, we prove an intermediate lemma.

\begin{lemma} \label{lem:DL_horiz_Buse}
The following hold on the event $\Omega_1$, across all points, directions and signs.
\begin{enumerate} [label=\rm(\roman{*}), ref=\rm(\roman{*})]  \itemsep=3pt
\item \label{itm:DL_agree_horiz} For all $x,y,t \in \R$, and $\dir \in \Q$, $\W_{\dir -}(y,t;x,t) = \W_{\dir +}(y,t;x,t) = \W_\dir(y,t;x,t)$, where $W_\dir$ is the originally defined Busemann function from \eqref{rat_process}. 
    \item \label{itm:DL_h_add} Horizontal Busemann functions are additive: $\forall\,x,y,z,t \in \R$, $\dir \in \R$, and $\sigg \in \{-,+\}$,
    \[
    \W_{\dir \sig}(x,t;y,t) + \W_{\dir \sig}(y,t;z,t) = \W_{\dir \sig}(x,t;z,t).
    \]
    \item \label{itm:DL_h_unif_conv} For every $t,\dir \in \R$, the limits   \eqref{eqn:Buse_def} hold uniformly over $(x,y)$ on compact sets. Further, for each $t,\dir \in \R$ and $\sigg \in \{-,+\}$,
    these limits hold in the same sense: 
    \be \label{itm:horiz_lim}
    \lim_{\alpha \nearrow \dir} \W_{\alpha \sig}(y,t;x,t) = \W_{\dir -}(y,t;x,t)\ \text{ and }\  \lim_{\alpha \searrow \dir}\W_{\alpha \sig}(y,t;x,t) = \W_{\dir +}(y,t;x,t). 
    \ee
    
    \item \label{itm:DL_lim} For every $\dir \in \R$, $\sigg \in \{-,+\}$, $(p,q) \mapsto \W_{\dir \sig}(p;q)$ is continuous, and for each $t \in \R$,
    \be \label{eqn:horiz_asymptotics}
    \lim_{x \to \pm \infty}  x^{-1} {\W_{\dir \sig}(x,t;0,t)} = 2\dir.
    \ee
    \end{enumerate}

\end{lemma}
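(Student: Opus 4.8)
\textbf{Proof plan for Lemma \ref{lem:DL_horiz_Buse}.}

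The plan is to establish the four items in the order (i), (ii), (iv), (iii), since the uniform-convergence statement in (iii) leans on the continuity and asymptotics packaged into (iv). For Item \ref{itm:DL_agree_horiz}, I would argue that for rational $\dir$ the left- and right-limits in \eqref{eqn:Buse_def} are taken along rational sequences converging to $\dir$, and by the monotonicity in Theorem \ref{thm:RV-Buse}\ref{itm:DL_Buse_mont} (which holds on $\Omega_1$ by \ref{om1dirrat}) these limits are squeezed between the same quantities as in \eqref{lrlimit}; since $\W_{\alpha}(y,t;x,t)\sim\Nor(2\alpha(y-x),2(y-x))$ all three agree almost surely, and on the event \ref{om1agree} the convergence $\lim_{\Q\ni\alpha\to\dir}\W_\alpha(y,t;x,t)=\W_\dir(y,t;x,t)$ holds for rational $(x,t,y,\dir)$. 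I would then upgrade from rational to all real $(x,y,t)$ using the continuity of each $\W_\alpha$ on $\R^4$ (Theorem \ref{thm:RV-Buse}\ref{itm:DL_Buse_cont}, available on $\Omega_1$) together with a monotone-squeeze argument exactly as in the construction of the SH in the proof of Proposition \ref{prop:SH_cons}: on $\Omega_1$, for rational $\alpha<\dir<\beta$ and $A<x<y<B$, $\W_\alpha(\aabullet,t;\aabullet,t)\li\W_\beta(\aabullet,t;\aabullet,t)$ on the horizontal line, so Lemma \ref{lem:ext_mont} gives uniform control of the approximating differences on compact sets, forcing the left/right limits to exist for all real arguments and be continuous in $(x,y)$.

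For Item \ref{itm:DL_h_add}, additivity for rational $\dir$ is Theorem \ref{thm:RV-Buse}\ref{itm:fixed_additive} restricted to a horizontal line (and holds on $\Omega_1$); passing to the limit along rational $\alpha\nearrow\dir$ (resp. $\alpha\searrow\dir$) in $\W_\alpha(x,t;y,t)+\W_\alpha(y,t;z,t)=\W_\alpha(x,t;z,t)$ and invoking the existence of the one-sided limits just established yields additivity for $\W_{\dir\sig}$ for every real $\dir$ and $\sigg\in\{-,+\}$. Item \ref{itm:DL_lim}: continuity of $(p,q)\mapsto\W_{\dir\sig}(p;q)$ on all of $\R^4$ follows because, for $p,q$ on the same time level, continuity was obtained in (i); for general $p,q$ the definition \eqref{eqn:gen_Buse_var} is a variational formula $\W_{\dir\sig}(x,s;y,t)=\sup_z\{\Ll(x,s;z,t)+\W_{\dir\sig}(z,t;y,t)\}$, and on $\Omega_1$ the modulus-of-continuity bounds for $\Ll$ (Lemma \ref{lem:Landscape_global_bound}, via \ref{om1appB}) let us restrict the supremum to a compact $z$-range that varies continuously with $(x,s,y,t)$, after which continuity of $\Ll$ and of the horizontal Busemann function give continuity of the sup; the case $s>t$ is handled by the antisymmetry in \eqref{eqn:gen_Buse_var}. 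The horizontal asymptotics \eqref{eqn:horiz_asymptotics} come from the rational-direction asymptotics \eqref{eqn:rat_asymp} on \ref{om1asym} combined with the monotonicity sandwich $\W_{\dir-\ve}(x,t;0,t)\le\W_{\dir\sig}(x,t;0,t)\le\W_{\dir+\ve}(x,t;0,t)$ for rational $\dir\pm\ve$ and $x>0$ (inequalities reversed for $x<0$), then letting $\ve\searrow0$, exactly the argument used for Proposition \ref{prop:SH_cons}\ref{itm:SH_drifts}.

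Finally, for Item \ref{itm:DL_h_unif_conv}: the first claim — that the limits \eqref{eqn:Buse_def} hold uniformly on compact $(x,y)$-sets — is essentially a byproduct of the monotone-squeeze in step (i), since the monotone family $\alpha\mapsto\W_\alpha(\aabullet,t;\aabullet,t)$ of increment-ordered continuous functions converges pointwise to a continuous limit and hence (by Lemma \ref{lem:ext_mont} / Dini-type monotonicity) uniformly on compacts. For the second claim, \eqref{itm:horiz_lim} with a fixed sign $\sigg$, I would use additivity \ref{itm:DL_h_add} to reduce to differences $\W_{\alpha\sig}(y,t;x,t)$, note that $\W_{\alpha\sig}$ for $\alpha$ near $\dir$ is again squeezed between $\W_{\dir'}$ for rational $\dir'$ bracketing $\alpha$, and take limits; the point is simply that inserting the extra one-sided index $\sigg$ does not change the monotone bracketing. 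I expect the main obstacle to be bookkeeping rather than a genuine difficulty: one must be careful that all the monotone-squeeze and uniform-convergence arguments are run on the single event $\Omega_1$ and that the passage from rational to real arguments is done before, not after, introducing the $\sigg$-index, so that the definition of $\W_{\dir\sig}$ for irrational $\dir$ on a horizontal line is genuinely the one used in \eqref{eqn:Buse_def}. The variational-formula continuity in (iv) for non-horizontal point pairs is the only place where a nontrivial input (the $\Ll$ modulus bounds) is needed to justify restricting the sup to a compact set.
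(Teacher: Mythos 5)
Your toolkit is the right one (monotonicity in the direction parameter, Lemma \ref{lem:ext_mont} for uniform convergence of increment-ordered functions on compacts, the variational formula for off-horizontal continuity), and Items \ref{itm:DL_h_add} and \ref{itm:DL_h_unif_conv} and the off-horizontal continuity in \ref{itm:DL_lim} are handled essentially as in the paper. But your ordering hides a genuine gap at \emph{irrational time levels}. The event $\Omega_1$ only controls rational data: \ref{om1agree} gives $\lim_{\Q\ni\alpha\to\dir}\W_\alpha(y,t;x,t)=\W_\dir(y,t;x,t)$ for $(x,t,y,\dir)\in\Q^4$, and \ref{om1asym} gives \eqref{eqn:rat_asymp} only for rational $t$ and $\dir$. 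Your squeeze via Lemma \ref{lem:ext_mont} controls the $(x,y)$-dependence for a \emph{fixed} $t$, but to conclude Item \ref{itm:DL_agree_horiz} at an irrational $t$ you need $\W_\beta(B,t;A,t)-\W_\alpha(B,t;A,t)\to 0$ as $\Q\ni\alpha\nearrow\dir$, $\Q\ni\beta\searrow\dir$ at that $t$ — which is exactly the statement being proved. Semicontinuity of the monotone limits in $t$ does not close this (an increasing limit of continuous functions is only lsc, a decreasing one only usc, and agreement on a dense set of $t$'s does not propagate). The paper resolves this by proving Item \ref{itm:DL_lim} \emph{first}, in particular the joint continuity of $(p;q)\mapsto\W_{\dir\sig}(p;q)$ on $\R^4$ via the variational formula \eqref{eqn:gen_Buse_var} over a higher time level, and only then deduces Item \ref{itm:DL_agree_horiz} for real $t$ by continuity from the dense rational set. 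Your plan proves (i) first "for all real $(x,y,t)$", which is circular as written.

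The same oversight affects the asymptotics \eqref{eqn:horiz_asymptotics}: the monotone sandwich $\W_{\dir-\ve}\le\W_{\dir\sig}\le\W_{\dir+\ve}$ transfers information across \emph{directions}, not across \emph{time levels}, so it cannot upgrade \eqref{eqn:rat_asymp} from rational $t$ to real $t$. The paper's fix is to fix a rational $\dir$ and a rational $t>s$, write $\W_\dir(x,s;0,s)=\sup_{z}\{\Ll(x,s;z,t)+\W_\dir(z,t;0,t)\}+\W_\dir(0,t;0,s)$ using additivity and Theorem \ref{thm:RV-Buse}\ref{itm:DL_Buse_var}, and invoke Lemma \ref{lem:KPZ_preserve_lim} (drift preservation under the backward KPZ fixed point) to push the drift $2\dir$ down to the real time level $s$; only after that does the sandwich in $\dir$ give the asymptotics for irrational directions and both signs. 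You should insert this step before running your $\ve\searrow0$ argument, and reorder so that the $\R^4$ continuity precedes the real-$t$ case of Item \ref{itm:DL_agree_horiz}.
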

\begin{proof}
We prove Item \ref{itm:DL_agree_horiz} last.

\noindent
\textbf{Item \ref{itm:DL_h_add}} follows from the same property in rational directions (Theorem \ref{thm:RV-Buse}\ref{itm:fixed_additive}).

\noindent \textbf{Item \ref{itm:DL_h_unif_conv}:} The monotonicity of the horizontal Busemann process from Theorem \ref{thm:RV-Buse}\ref{itm:DL_Buse_mont} extends to all directions by limits. That is, for any two rational directions $\dir_1 < \dir_2$ and any real $x < y$, and $t$,
    \be \label{eqn:Buse_mont}
    \W_{\dir_1 -}(y,t;x,t) \le \W_{\dir_1}(y,t;x,t) \le  \W_{\dir_1 +}(y,t;x,t) \le \W_{\dir_2 -}(y,t;x,t), 
    \ee
    and when $\dir_1 \notin \Q$, the same monotonicity holds, removing the middle term that does not distinguish between $\pm$. 
Hence, the limits as $\alpha \nearrow \dir$ and $\alpha \searrow \dir$ exist and agree with the limits from rational directions (without the $\sigg$). Without loss of generality, we take the compact set to be $[a,b]^2$. Then, by \eqref{eqn:Buse_mont} and Lemma \ref{lem:ext_mont}, for $\alpha < \dir$, $\sigg \in \{-,+\}$, and $a \le x \le y \le b$,
\be \label{156}
0 \le \W_{\dir-}(y,t;x,t)- \W_{\alpha \sig }(y,t;x,t)   \le   \W_{\dir-}(b,t;a,t)- \W_{\alpha \sig }(b,t;a,t),
\ee
and for general $(x,y) \in [a,b]^2$, 
\[
|\W_{\dir-}(y,t;x,t)- \W_{\alpha \sig }(y,t;x,t)|   \le   |\W_{\dir-}(b,t;a,t)- \W_{\alpha \sig }(b,t;a,t)|,
\]
so the limit as $\alpha \nearrow \dir$ is uniform on compacts. An analogous argument applies to   $\alpha \searrow\dir$.

\noindent \textbf{Item \ref{itm:DL_lim}:} 
For $t,\dir \in \R$ and $\sigg \in \{-,+\}$,  the continuity of  
$
(x,y) \mapsto \W_{\dir \sig}(y,t;x,t)
$
follows from Item \ref{itm:DL_h_unif_conv} and the continuity for rational $\xi$ in Theorem \ref{thm:RV-Buse}\ref{itm:DL_Buse_cont}. Before showing the general continuity, we show   the limits \eqref{eqn:horiz_asymptotics}. For   $\dir, t \in \Q$, \eqref{eqn:rat_asymp} holds by definition of $\Omega_1$.
Keeping $\dir \in \Q$, let $s \in \R$, and let $t > s$ be rational. By Theorem \ref{thm:RV-Buse}\ref{itm:fixed_additive}--\ref{itm:DL_Buse_var}, 
\begin{align*}
\W_{\dir}(x,s;0,s) 
&= \W_{\dir}(x,s;0,t) + \W_{\dir}(0,t;0,s) \\
&= \sup_{z \in \R}\{\Ll(x,s;z,t) + \W_{\dir}(z,t;0,t)\}+ \W_{\dir}(0,t;0,s).
\end{align*}
 Then, by Lemma \ref{lem:KPZ_preserve_lim} (for the temporally reflected $\Ll$),
 $
 \ddd\lim_{x \to \pm \infty} x^{-1}  {\W_{\dir}(x,s;0,s)} = 2\dir.
 $
 Now, let $\dir \in \R$, $\sigg \in \{-,+\}$, and $t \in \R$ be arbitrary. Then, the monotonicity of \eqref{eqn:Buse_mont} implies that for $\alpha < \dir < \beta$ with $\alpha,\beta \in \Q$,
 \[
 \alpha \le \liminf_{x \to \infty} x^{-1}{\W_{\dir \sig}(x,t;0,t)} \le \limsup_{x \to \infty} x^{-1} {\W_{\dir \sig}(x,t;0,t)}  \le \beta.
 \]
 Sending $\Q \ni \alpha \nearrow \dir$ and $\Q \ni \beta \searrow \dir$ implies \eqref{eqn:horiz_asymptotics} for $+\infty$. The case    $x \to -\infty$ follows a symmetric argument.

Lastly, the continuity of $(x,y) \mapsto \W_{\dir \sig}(y,t;x,t)$ and \eqref{eqn:horiz_asymptotics} imply that $\W_{\dir \sig}(x,t;0,t) \le a + b|x|$ for some constants $a,b$. The general continuity follows from \eqref{eqn:gen_Buse_var} and  Lemma \ref{lem:max_restrict}\ref{itm:KPZcont}.

\noindent \textbf{Item \ref{itm:DL_agree_horiz}:} 
The statement holds for all $x,y,t,\dir \in \Q$ by Item \ref{om1agree} of $\Omega_1$. The continuity proved in Item \ref{itm:DL_lim} extends this to all $x,y,t \in \R$. 
\end{proof}

\noindent Recall Definition \ref{def:LR_maxes} of  the extreme maximizers $g_{(x,s)}^{\dir \sig,L/R}(t)$.

\begin{lemma} \label{lem:bounded_maxes}
For each $\omega \in \Omega_1$, $(x,s;y,t) \in \Rup$, $\dir \in \R$, and $\sigg \in \{-,+\}$,
    \be \label{eqn:zto_pminfty}
    \lim_{z \to \pm \infty} \Ll(x,s;z,t) + \W_{\dir \sig}(z,t;y,t) = -\infty
    \ee
    so that   $g_{(x,s)}^{\dir \sig,L/R}$ are well-defined.
    Let $K \subseteq \R$ be a compact set,   $\dir \in \R$ and $\sigg \in \{-,+\}$.  Then, there exists a random $Z = Z(\dir \sig,K)  \in (0,\infty)$ such that for all $x,s,t \in K$ with $s < t$ and $S \in \{L,R\}$, $|g_{(x,s)}^{\dir \sig,S}(t)| \le Z$.
\end{lemma}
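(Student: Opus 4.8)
The plan is to prove the two assertions of Lemma~\ref{lem:bounded_maxes} separately: first the coercivity statement \eqref{eqn:zto_pminfty}, which guarantees that $g_{(x,s)}^{\dir\sig,L/R}$ are well-defined (the supremum defining them is attained), and then the uniform bound on the extreme maximizers over a compact set $K$.

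For \eqref{eqn:zto_pminfty}, the key inputs are the global bound on the directed landscape from Lemma~\ref{lem:Landscape_global_bound} and the asymptotics of the horizontal Busemann function from Lemma~\ref{lem:DL_horiz_Buse}\ref{itm:DL_lim}. Specifically, Lemma~\ref{lem:Landscape_global_bound} gives, on $\Omega_1$, a bound of the form $\Ll(x,s;z,t) \le -\tfrac{(z-x)^2}{t-s} + C(t-s)^{1/3}\log^2(2\sqrt{x^2+z^2+s^2+t^2}+4)$, so $\Ll(x,s;z,t)$ decays like $-z^2/(t-s)$ as $|z|\to\infty$, with only a logarithmic correction. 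Meanwhile Lemma~\ref{lem:DL_horiz_Buse}\ref{itm:DL_lim} (equation \eqref{eqn:horiz_asymptotics}) shows $\W_{\dir\sig}(z,t;y,t) = \W_{\dir\sig}(z,t;0,t) + \W_{\dir\sig}(0,t;y,t)$ grows at most linearly in $z$ (in fact $\sim 2\dir z$). Adding the two, the quadratic decay of $\Ll$ dominates the linear growth of $\W_{\dir\sig}$ and the logarithmic correction, so $\Ll(x,s;z,t) + \W_{\dir\sig}(z,t;y,t) \to -\infty$ as $z \to \pm\infty$; since the sum is continuous in $z$ (continuity of $\Ll$ and of $z\mapsto\W_{\dir\sig}(z,t;y,t)$ from Lemma~\ref{lem:DL_horiz_Buse}\ref{itm:DL_lim}), the supremum is attained on a compact set, hence the leftmost and rightmost maximizers $g_{(x,s)}^{\dir\sig,L}(t)$ and $g_{(x,s)}^{\dir\sig,R}(t)$ exist.

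For the uniform bound, fix $\omega\in\Omega_1$, a compact $K\subseteq\R$, $\dir\in\R$, $\sigg\in\{-,+\}$, and let $x,s,t\in K$ with $s<t$. We want to quantify the argument above uniformly in $(x,s,t)$. Write $K \subseteq [-a,a]$ for some $a>0$. For $z$ with $|z|$ large, combine the upper bound on $\Ll(x,s;z,t)$ from Lemma~\ref{lem:Landscape_global_bound} with the linear-growth bound $\W_{\dir\sig}(z,t;0,t) \le c_1 + c_2|z|$ valid uniformly over $t\in K$ (this uniformity needs a short argument: by Lemma~\ref{lem:DL_horiz_Buse}\ref{itm:DL_lim} the function $(x,y,t)\mapsto\W_{\dir\sig}(y,t;x,t)$ is continuous, and by \eqref{eqn:Buse_mont} it is squeezed between $\W_{(\dir-1)\sig}$ and $\W_{(\dir+1)\sig}$ which have linear asymptotics; alternatively one can use the fact that $\W_{\dir\sig}(\cdot,t;0,t)$ is, along the horizontal line, the SH which is a Brownian motion with drift, combined with a modulus-of-continuity argument in $t$). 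Then for $|z| \ge Z_0$ with $Z_0$ depending only on $(\dir\sig,K,\omega)$, the value $\Ll(x,s;z,t) + \W_{\dir\sig}(z,t;y,t)$ is strictly smaller than $\Ll(x,s;z_0,t) + \W_{\dir\sig}(z_0,t;y,t)$ for some reference point $z_0\in[-a,a]$ (e.g.\ $z_0 = x$), because the left side is $\le -z^2/(t-s) + O(|z|\log|z|)$ while the right side is bounded below uniformly over $x,s,t\in K$ by another application of Lemma~\ref{lem:Landscape_global_bound} (taking $z_0=x$ gives $\Ll(x,s;x,t) \ge -C(t-s)^{1/3}\log^2(\cdots)$, bounded below on $K$). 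Hence no maximizer can lie outside $[-Z_0,Z_0]$, and we set $Z = Z(\dir\sig,K) = Z_0$.

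The main obstacle I anticipate is establishing the \emph{uniform-in-$K$} linear bound on $z\mapsto\W_{\dir\sig}(z,t;0,t)$ and the uniform lower bound on $\Ll(x,s;z_0,t)$ as $(x,s,t)$ ranges over $K$ with $s<t$ --- in particular handling $t-s\to 0$, where the $(t-s)^{1/3}$ prefactor in Lemma~\ref{lem:Landscape_global_bound} helps but the $(t-s)\wedge 1$ in the denominator of the logarithm and the $1/(t-s)$ in the quadratic term must be tracked carefully. This is the same kind of bookkeeping already carried out in Lemma~\ref{lem:max_restrict} and Lemma~\ref{lem:unq}, so I would model the estimates on those; the quadratic term $-(z-x)^2/(t-s)$ only gets \emph{more} negative as $t-s\to0$, so coercivity is in fact easier there, and the only real care is that the logarithmic and linear terms in $z$ are dominated, which they are for $|z|$ large regardless of how small $t-s$ is (the threshold $Z_0$ may depend on $K$ but not on the particular triple). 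Everything else is a routine consequence of the continuity and monotonicity statements in Lemma~\ref{lem:DL_horiz_Buse} together with the global landscape bound on the event $\Omega_1$.
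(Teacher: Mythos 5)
Your proposal is correct and follows essentially the same route as the paper: quadratic decay of $\Ll$ from Lemma \ref{lem:Landscape_global_bound} beats the linear growth of $z\mapsto\W_{\dir\sig}(z,t;0,t)$, and the reference value $\Ll(x,s;x,t)+\W_{\dir\sig}(x,t;0,t)$ gives the uniform lower bound on the supremum over $K$. For the uniformity in $t$ of the linear bound on the Busemann function, note that the squeeze via \eqref{eqn:Buse_mont} alone does not suffice (it controls the direction parameter, not $t$); the paper's mechanism — which you correctly flagged as the model — is that $\W_{\dir\sig}(\cdot,t;0,t)$ is the backward KPZ fixed point evolution, so Lemma \ref{lem:max_restrict}\ref{itm:KPZ_unif_line} yields constants $a,b$ uniform over $t\in K$.
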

\newpage
\begin{proof}
By the continuity and asymptotics of Lemma \ref{lem:DL_horiz_Buse}\ref{itm:DL_lim},  for all $t \in \R$, there  exists $ a,b > 0$ such that $|\W_{\dir \sig}(x,t;0,t)| \le a + b|x|$ for all $x \in \R$.   Lemma \ref{lem:Landscape_global_bound} implies  $\Ll(x,s;z,t) \sim -\f{(z - x)^2}{t - s}$,  which gives   \eqref{eqn:zto_pminfty}. 
Next we observe that
\be \label{107}
\begin{aligned}
&\inf_{x,s,t \in K, s < t}\sup_{z \in \R}\{\Ll(x,s;z,t) + \W_{\dir \sig}(z,t;0,t)\} \\[-3pt] 
 &\qquad\qquad\qquad \ge \inf_{x,s,t \in K, s < t} \Ll(x,s;x,t) + \W_{\dir \sig}(x,t;0,t) > -\infty.
\end{aligned}
\ee
The last inequality is justified as follows. Since $\W_{\dir \sig}(x,t;0,t)$ evolves backwards in time as the KPZ fixed point \eqref{eqn:gen_Buse_var}, Lemma \ref{lem:max_restrict}\ref{itm:KPZ_unif_line} implies that   $a$ and $b$ can be chosen uniformly for $t \in K$.   Lemma \ref{lem:Landscape_global_bound} states that $\forall x,s,t \in \R$ with $s < t$, there is a constant $C$ such that 
\[
\Ll(x,s;x,t) \ge - C(t - s)^{1/3}\log^2\bigl(\tfrac{2\sqrt{2x^2 + s^2 + t^2} + 4}{(t - s)\wedge 1}\bigr).
\]
Taking the infimum over $x,s,t \in K$ with $s < t$ yields the last inequality in \eqref{107}.  

To  contradict the last statement of the lemma, assume there exist   maximizers $z_n$ of $\Ll(x_n,s_n;z,t_n) + \W_{\dir \sig}(z,t_n;0,t_n)$ over $z \in \R$ such that  $x_n,s_n,t_n \in K$  but $|z_n| \to \infty$.
Then, by \eqref{107},
\be \label{9}
\liminf_{n \to \infty} \Ll(x_n,s_n;z_n,t_n) + \W_{\dir \sig}(z_n,t_n;0,t_n) > -\infty,
\ee
but since $z_n \to \infty$ and $x_n,s_n,t_n \in K$ for all $n$, $\Ll(x_n,s_n;z_n;t_n) \sim -\f{(z_n - x_n)^2}{t_n - s_n}$ by Lemma \ref{lem:Landscape_global_bound}. By the bound $|\W_{\dir \sig}(x,t;0,t)| \le a + b|x|$ that holds uniformly for $t \in K$ and $x \in \R$, the inequality \eqref{9} cannot hold.
\end{proof}

\begin{proof}[Proof of Theorem \ref{thm:DL_Buse_summ}, Items  \ref{itm:general_cts}--\ref{itm:Buse_KPZ_description}]
The full-probability event of these items is $\Omega_1$. The remaining items are proved later.

\noindent
\textbf{Item \ref{itm:general_cts} (Continuity):} This was proved in Lemma \ref{lem:DL_horiz_Buse}\ref{itm:DL_lim}. 

\noindent \textbf{Item \ref{itm:DL_Buse_add} (Additivity):} 
First, we show that on $\Omega_1$ for $s < t$, $x \in \R$, $\dir_1 < \dir_2$, and $S \in \{L,R\}$,
    \be \label{eqn:mont_maxes}
    -\infty < g_{(x,s)}^{\dir_1 -,S}(t) \le g_{(x,s)}^{\dir_1 +,S}(t) \le g_{(x,s)}^{\dir_2 -,S}(t) \le g_{(x,s)}^{\dir_2 +,S}(t)  < \infty. 
    \ee
    The finiteness of the maximizers comes from Lemma \ref{lem:bounded_maxes}. The rest of \eqref{eqn:mont_maxes} follows from the monotonicity of \eqref{eqn:Buse_mont} and Lemma \ref{lemma:max_monotonicity}. Next, we show that for $(x,s;y,t) \in \R^4$ and $\dir \in \R$, $\W_{\alpha }(x,s;y,t)$ converges pointwise to $\W_{\dir -}(x,s;y,t)$ as $\Q \ni \alpha \nearrow \dir$. The same holds for limits from the right, with $\dir -$ replaced by $\dir +$ (Later we prove that the convergence is locally uniform).   By \eqref{eqn:gen_Buse_var}, it suffices to assume $s < t$. By \eqref{eqn:mont_maxes} and the additivity of Lemma \ref{lem:DL_horiz_Buse}\ref{itm:DL_h_add} when $s = t$, for all $\alpha \in [\dir - 1,\dir + 1] \cap \Q$ and $\sigg \in \{-,+\}$,
\begin{align*}
\W_{\alpha }(x,s;y,t) &= \sup_{z \in \R}\{\Ll(x,s;z,t) + \W_{\alpha }(z,t;y,t)\} \\
&= \sup_{z \in \R}\{\Ll(x,s;z,t) + \W_{\alpha}(z,t;0,t)\} + \W_{\alpha}(0,t;y,t) \\
&= \sup_{z \in [g_{(x,s)}^{(\dir - 1)-,L}(t),g_{(x,s)}^{(\dir + 1)+,R}(t)]}\{\Ll(x,s;z,t) + \W_\alpha(z,t;0,t)\} + \W_{\alpha}(0,t;y,t).
\end{align*}
By Lemma \ref{lem:DL_horiz_Buse}\ref{itm:DL_h_unif_conv}, $\W_{\alpha}(z,t;y,t)$ converges uniformly on compact sets to $\W_{\dir -}(x,t;y,t)$ as $\Q \ni \alpha \nearrow \dir$ and to $\W_{\dir +}(x,t;y,t)$ as $\Q \ni \alpha \searrow \dir$. This implies the desired pointwise convergence. The additivity follows from the additivity  for rational $\dir$  (Theorem \ref{thm:RV-Buse}\ref{itm:fixed_additive}). 

\noindent \textbf{Item \ref{itm:DL_Buse_gen_mont} (Monotonicity along a horizontal line):} This was previously proven as Equation \eqref{eqn:Buse_mont}.

\noindent \textbf{Item \ref{itm:Buse_KPZ_description} (Backwards evolution as the KPZ fixed point):} This follows directly from the construction \eqref{eqn:gen_Buse_var}.

\noindent   Item \ref{itm:DL_unif_Buse_stick} is proved after the proof of Theorem \ref{thm:Buse_dist_intro}, and Items \ref{itm:BuseLim2}--\ref{itm:global_attract} are proved after the proof of Theorem \ref{thm:DL_good_dir_classification}. No subsequent results  depend on Items \ref{itm:BuseLim2}--\ref{itm:global_attract}, except  the mixing in Theorem \ref{thm:Buse_dist_intro}\ref{itm:stationarity}, proven later.
\end{proof}

\begin{proof}[Proof of Theorem \ref{thm:Buse_dist_intro} (Distributional properties)]
\noindent \textbf{Item \ref{itm:indep_of_landscape} (Independence):} We \\ know that, for $T \in \R$, $\{\Ll(x,s;y,t):s,y \in \R, s < t \le T\}$ is independent of $\{\Ll(x,s;y,t):s,y \in \R, T \le s < t\}$ . From the definition of the Busemann process and \eqref{eqn:Buse_def}--\eqref{eqn:gen_Buse_var}, the process
\[
\{\W_{\dir \sig}(x,s;y,t): \dir \in \R, \,\sigg \in \{-,+\}, \, x,y \in \R, \, s,t \ge T \} 
\]
is a function of 
$\{\Ll(x,s;y,t):s,y \in \R, T \le s < t\}$, and independence follows.

\newpage 
\noindent \textbf{Item \ref{itm:stationarity} (Stationarity):}
Similarly as the previous item, the stationarity of the process follows from the stationarity of the directed landscape from Lemma \ref{lm:landscape_symm}\ref{itm:time_stat}. The mixing properties  will be proven in Section \ref{sec:last_proofs1}, along with Items \ref{itm:BuseLim2}--\ref{itm:global_attract} of Theorem \ref{thm:DL_Buse_summ}.

\noindent \textbf{Item \ref{itm:SH_Buse_process} (Distribution along a time level):} By the additivity of Theorem \ref{thm:DL_Buse_summ}\ref{itm:DL_Buse_add} and the variational definition \eqref{eqn:gen_Buse_var}, for $x \in \R$, $s < t$, and $\sigg \in \{-,+\}$, on the full-probability event $\Omega_1$,
\begin{align*}
&\W_{\dir \sig}(x,s;0,s) = \W_{\dir \sig}(x,s;0,t) - \W_{\dir \sig}(0,s;0,t) \\
&\qquad 
=\sup_{y \in \R}\{\Ll(x,s;y,t) + \W_{\dir \sig}(y,t;0,t)\} - \sup_{y \in \R} \{\Ll(0,s;y,t) + \W_{\dir \sig}(y,t;0,t)\}.
\end{align*}
By Item \ref{itm:indep_of_landscape}, Theorem \ref{thm:DL_Buse_summ}\ref{itm:DL_Buse_gen_mont},  and  Items \ref{itm:DL_h_unif_conv} and \ref{itm:DL_lim} of Lemma \ref{lem:DL_horiz_Buse}, $\{\W_{\dir +}(\abullet,t;0,t):\dir \in \R\}_{t \in \R}$ is a reverse-time Markov process that almost surely lies in the state space $\Y$ defined in \eqref{Y}. By the stationarity of Item \ref{itm:stationarity}, the law of $\{\W_{\dir +}(\abullet,t;0,t):\dir \in \R\}$ must be invariant for this process.  By the temporal reflection invariance of the directed landscape  (Lemma \ref{lm:landscape_symm}\ref{itm:DL_reflect}),  $\{\W_{\dir +}(\abullet,t;0,t):\dir \in \R\}$ is also invariant for the KPZ fixed point, forward in time. The uniqueness part of Theorem \ref{thm:invariance_of_SH} completes the proof. 
\end{proof}

\begin{lemma} \label{lem:L_and_Buse_ineq}
For every $\omega \in \Omega_1$ and $(x,s;y,t) \in \Rup$, $\Ll(x,s;y,t) \le \W_{\dir \sig}(x,s;y,t)$, and equality occurs if and only if $y$ maximizes $\Ll(x,s;z,t) + \W_{\dir \sig}(z,t;0,t)$ over $z \in \R$. 
\end{lemma}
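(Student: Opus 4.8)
The plan is to read the statement directly off the variational definition \eqref{eqn:gen_Buse_var} of the Busemann process together with additivity along a horizontal line. Fix $\omega\in\Omega_1$, $(x,s;y,t)\in\Rup$ (so $s<t$), a direction $\dir\in\R$, and a sign $\sig\in\{-,+\}$. By \eqref{eqn:gen_Buse_var} one has $\W_{\dir\sig}(x,s;y,t)=\sup_{z\in\R}\{\Ll(x,s;z,t)+\W_{\dir\sig}(z,t;y,t)\}$, and by the horizontal additivity of the Busemann process (Theorem \ref{thm:DL_Buse_summ}\ref{itm:DL_Buse_add}, equivalently Lemma \ref{lem:DL_horiz_Buse}\ref{itm:DL_h_add}) we may write $\W_{\dir\sig}(z,t;y,t)=\W_{\dir\sig}(z,t;0,t)-\W_{\dir\sig}(y,t;0,t)$, so that
\[
\W_{\dir\sig}(x,s;y,t)=\Bigl(\sup_{z\in\R}\{\Ll(x,s;z,t)+\W_{\dir\sig}(z,t;0,t)\}\Bigr)-\W_{\dir\sig}(y,t;0,t).
\]
I would first invoke Lemma \ref{lem:bounded_maxes} to record that on $\Omega_1$ this supremum is finite and attained, so that the maximizers referred to in the statement exist and the word ``maximizes'' is meaningful.

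The inequality then follows by restricting the supremum to the single point $z=y$: this gives $\W_{\dir\sig}(x,s;y,t)\ge \Ll(x,s;y,t)+\W_{\dir\sig}(y,t;0,t)-\W_{\dir\sig}(y,t;0,t)=\Ll(x,s;y,t)$. For the equality case, observe that the bound just used is an equality precisely when $z=y$ attains $\sup_{z\in\R}\{\Ll(x,s;z,t)+\W_{\dir\sig}(z,t;0,t)\}$, i.e.\ precisely when $y$ maximizes $z\mapsto\Ll(x,s;z,t)+\W_{\dir\sig}(z,t;0,t)$ over $\R$. Since the displayed identity shows that $\W_{\dir\sig}(x,s;y,t)-\Ll(x,s;y,t)$ equals exactly the gap between this supremum and its value at $z=y$, the equivalence is immediate in both directions.

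There is no substantive obstacle here: the content is entirely bookkeeping on the variational formula. The only things that must be in place are that \eqref{eqn:gen_Buse_var}, additivity, and continuity all hold on $\Omega_1$ — which is part of the already-established portion of Theorem \ref{thm:DL_Buse_summ} (Items \ref{itm:general_cts}--\ref{itm:Buse_KPZ_description}) — and that the relevant supremum is genuinely a maximum, which is Lemma \ref{lem:bounded_maxes}; without the latter the ``if and only if'' could fail to make sense for those $z$ at which no maximizer exists.
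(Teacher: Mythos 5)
Your proof is correct and follows essentially the same route as the paper: the paper's proof likewise combines the variational identity of Theorem \ref{thm:DL_Buse_summ}\ref{itm:Buse_KPZ_description} with additivity to isolate $\sup_{z}\{\Ll(x,s;z,t)+\W_{\dir\sig}(z,t;0,t)\}$ and then sets $z=y$. Your extra appeal to Lemma \ref{lem:bounded_maxes} for attainment of the supremum is a harmless (and reasonable) precaution that the paper leaves implicit.
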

\begin{proof}
For $s < t$, Theorem \ref{thm:DL_Buse_summ}\ref{itm:DL_Buse_add},\ref{itm:Buse_KPZ_description} gives
\begin{align} \label{106}
\W_{\dir \sig}(x,s;y,t) &= \sup_{z \in \R}\{\Ll(x,s;z,t) + \W_{\dir \sig}(z,t;y,t)\} \nonumber
\\ &= \sup_{z \in \R}\{\Ll(x,s;z,t) + \W_{\dir \sig}(z,t;0,t)\} + \W_{\dir \sig}(0,t;y,t).
\end{align}
Setting $z = y$ on the right-hand side of \eqref{106}, it follows that $\W_{\dir \sig}(x,s;y,t) \ge \Ll(x,s;y,t)$, and equality holds if and only if $y$ is a maximizer.
\end{proof}
\newpage
\begin{proof}[Proof of Theorem \ref{thm:DL_SIG_cons_intro} (Construction of the Busemann geodesics)]
The \\ full-probability event of this theorem is $\Omega_1$ \eqref{omega1}.

\noindent
\textbf{Item \ref{itm:intro_SIG_bd} (Finiteness of the maximizers):} This follows immediately from Lemma \ref{lem:bounded_maxes}.

\noindent 
 We prove \textbf{Items \ref{itm:arb_geod_cons}--\ref{itm:DL_LRmost_geod}} together. By Lemma \ref{lem:L_and_Buse_ineq}, for any such construction of a path from the sequence of times $s = t_0 < t_1 < \cdots$ and any $i \ge 1$,
\[
\Ll(g(t_{i - 1}),t_{i - 1};g(t_i),t_i) = \W_{\dir \sig}(g(t_{i - 1}),t_{i - 1};g(t_i),t_i).
\]
Furthermore, for any $t_{i - 1} \le t < u \le t_i$, it must hold that  
\[
\Ll(g(t),t;g(u),u) = \W_{\dir \sig}(g(t),t;g(u),u),
\]
for otherwise, by additivity of the Busemann functions (Theorem \ref{thm:DL_Buse_summ}\ref{itm:DL_Buse_add}),
\begin{align*}
&\quad \Ll(g(t_{i - 1}),t_{i - 1};g(t_i),t_i)\\
&= \Ll(g(t_{i - 1}),t_{i - 1};g(t),t) +\Ll(g(t),t;g(u),u) + \Ll(g(u),u;g(t_i),t_i)\\
&<  \W_{\dir \sig}(g(t_{i - 1}),t_{i - 1};g(t),t) +\W_{\dir \sig}(g(t),t;g(u),u) + \W_{\dir \sig}(g(u),u;g(t_i),t_i) \\
&= \W_{\dir \sig}(g(t_{i - 1}),t_{i - 1};g(t_i),t_i),
\end{align*}
a contradiction. Additivity extends \eqref{eqn:SIG_weight} to all $s \le t < u$. Therefore, the path is a semi-infinite geodesic because the weight of the path in between any two points is optimal by Lemma \ref{lem:L_and_Buse_ineq}. From the equality \eqref{eqn:SIG_weight} and Lemma \ref{lem:L_and_Buse_ineq}, for \textit{every} $t \ge s$, $g(t)$ maximizes $\Ll(x,s;z,t) + \W_{\dir \sig}(z,t;0,t)$ over $z \in \R$.

\begin{figure}
    \centering
    \includegraphics[height = 2in]{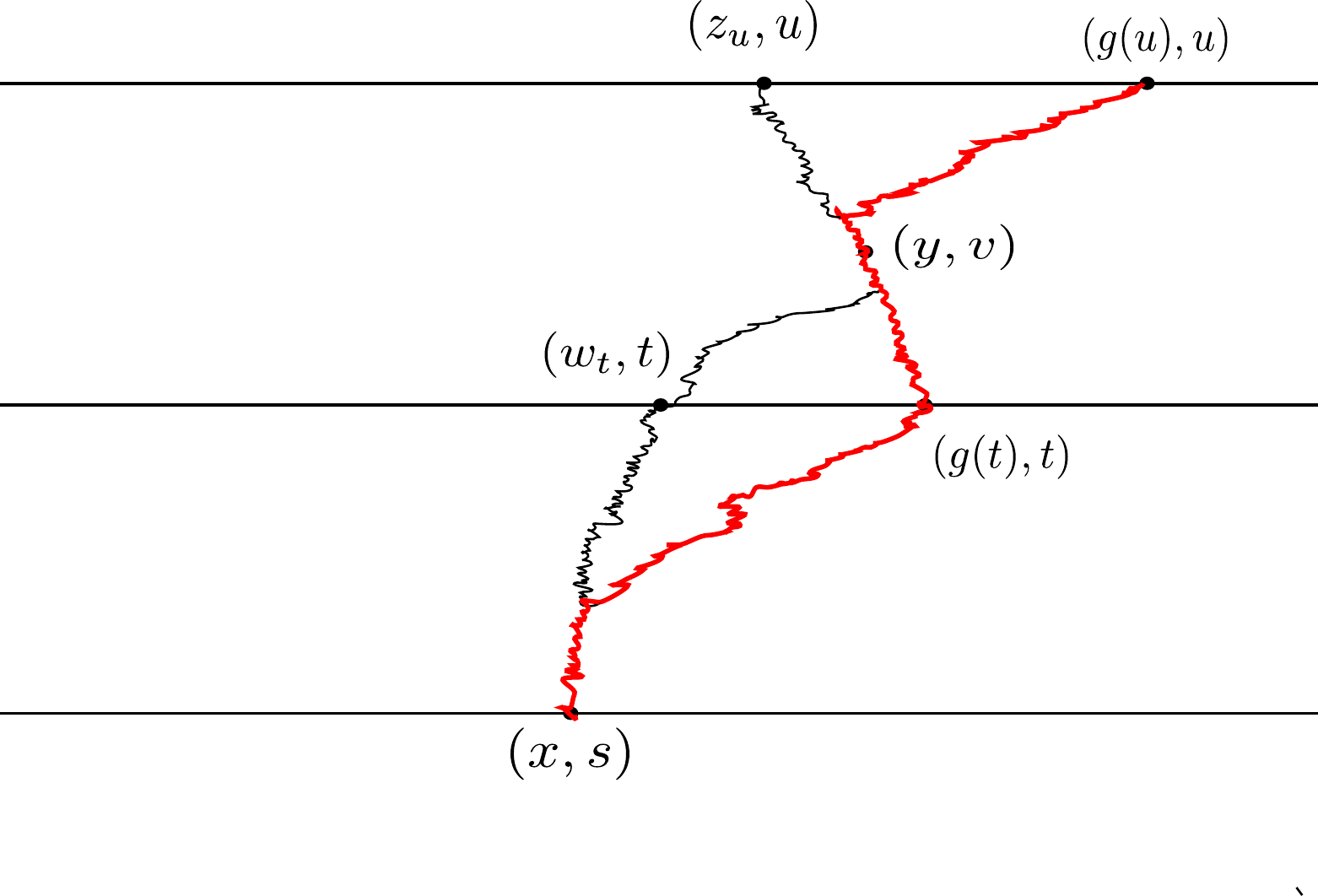}
    \caption{\small Illustration of the proof of Lemma \ref{lem:RM_geod_SIG}. Here, the red/thick path denotes the path $\hat \gamma$ in the case $w_t < g(t)$, which is to the right of the rightmost geodesic between $(x,s)$ and $(g(u),u)$, which passes through $(w_t,t)$ by assumption. This gives the contradiction. }
    \label{fig:RM_geod}
\end{figure}

Before global directedness of all  geodesics, we show that $g_{(x,s)}^{\dir \sig,S}$ are semi-infinite geodesics and the leftmost/rightmost geodesics between any two of their points. Take $S = R$, and the result for $S = L$ follows similarly. Omit $x,s,\dir$, and $\sigg$ from the notation temporarily, and write $g(t) = g_{(x,s)}^{\dir \sig,R}(t)$. 
By what was just proved, it is sufficient to prove the following lemma.
\begin{lemma} \label{lem:RM_geod_SIG}
Let $g$ be as defined above. For $s < t < u$, let $z_u$ be the rightmost maximizer of $\Ll(g(t),t;z,u) + \W_{\dir \sig}(z,u;0,u)$ over $z \in \R$, and let $w_t$ be the rightmost maximizer of $\Ll(x,s;w,t) + \Ll(w,t;g(u),u)$ over $w \in \R$ {\rm(}Equivalently, $(w_t,t)$ is the point at level $t$ on the rightmost geodesic between $(x,s)$ and $(g(u),u)${\rm)}. Then, $g(t) = w_t$ and $g(u) = z_u$.
\end{lemma}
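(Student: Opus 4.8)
The plan is to prove the two equalities $g(t) = w_t$ and $g(u) = z_u$ by a planarity/crossing argument, exploiting the definition of $g = g_{(x,s)}^{\dir\sig,R}$ as the path of rightmost maximizers and the fact (already established in the proof of Theorem \ref{thm:DL_SIG_cons_intro}, Items \ref{itm:arb_geod_cons}--\ref{itm:DL_LRmost_geod}, together with Lemma \ref{lem:L_and_Buse_ineq}) that for every $v\ge s$ the point $g(v)$ maximizes $\Ll(x,s;z,v)+\W_{\dir\sig}(z,v;0,v)$ over $z\in\R$, and that $g$ restricted to $[s,v]$ is a geodesic of weight $\W_{\dir\sig}(x,s;g(v),v)$. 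First I would record the inequality $w_t \le g(t)$: since $(w_t,t)$ lies on the rightmost geodesic between $(x,s)$ and $(g(u),u)$, and $g$ restricted to $[s,u]$ is \emph{a} geodesic between those same endpoints (by Item \ref{itm:arb_geod_cons}), the rightmost such geodesic lies weakly to the right of $g$ at every time, so $w_t \ge g(t)$; conversely, I will argue $w_t \le g(t)$, so in fact $w_t = g(t)$. The heart of the matter is this converse direction, which is where the crossing argument enters.

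For the converse, suppose toward a contradiction that $w_t > g(t)$. Consider the concatenated path $\hat\gamma$ that follows the rightmost geodesic from $(x,s)$ to $(w_t,t)$ on $[s,t]$ and then follows $g$ from $(g(t),t)$ onward — more precisely, one must first splice using whichever of $g\!\restriction_{[s,t]}$, the rightmost geodesic to $(w_t,t)$, and the continuation of $g$ on $[t,u]$ produces a path that is to the right of $g_{(x,s)}^{\dir\sig,R}$ on $[s,u]$ yet still has optimal weight; see Figure \ref{fig:RM_geod}. The key computation is to add up weights: by additivity of $\W_{\dir\sig}$ (Theorem \ref{thm:DL_Buse_summ}\ref{itm:DL_Buse_add}), $\Ll(x,s;w_t,t)+\Ll(w_t,t;g(u),u) = \Ll(x,s;g(u),u) = \W_{\dir\sig}(x,s;g(u),u)$, and similarly the $g$-path has this same optimal weight; hence the spliced path $\hat\gamma$ is itself a geodesic from $(x,s)$ to $(g(u),u)$. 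But $\hat\gamma$ passes through $(w_t,t)$ with $w_t > g(t)$, so at level $t$ it lies strictly to the right of $g$. Now I invoke the characterization of $g(u) = g_{(x,s)}^{\dir\sig,R}(u)$ as the \emph{rightmost} maximizer and the rightmost-geodesic structure already proved for $g_{(x,s)}^{\dir\sig,R}$: a geodesic from $(x,s)$ to $(g(u),u)$ passing to the right of $g$ at time $t$ forces the rightmost geodesic between these two points to pass at $(w_t',t)$ with $w_t' \ge w_t > g(t)$, contradicting that $g\!\restriction_{[s,u]}$ is a geodesic between the same endpoints and that $w_t$ was \emph{defined} as the rightmost maximizer of $w\mapsto \Ll(x,s;w,t)+\Ll(w,t;g(u),u)$ only if one also knows $g(t)$ is \emph{a} maximizer of that function — which it is, since $g\!\restriction_{[s,u]}$ is a geodesic through $(g(t),t)$. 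Reconciling the ``rightmost'' clauses yields $w_t = g(t)$.

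With $g(t) = w_t$ in hand, the second equality $g(u) = z_u$ follows quickly: $z_u$ is by definition the rightmost maximizer of $\Ll(g(t),t;z,u)+\W_{\dir\sig}(z,u;0,u)$, while $g(u)$ is a maximizer of $\Ll(x,s;z,u)+\W_{\dir\sig}(z,u;0,u)$; using additivity and the fact that $g\!\restriction_{[s,u]}$ is a geodesic through $(g(t),t)=(w_t,t)$, one checks $\Ll(x,s;z,u) = \Ll(x,s;g(t),t)+\Ll(g(t),t;z,u)$ holds \emph{with equality along $g$} and with $\le$ in general (reverse triangle inequality), so maximizers of the $(g(t),t)$-based functional that are consistent with the geodesic through $(g(t),t)$ coincide with maximizers of the $(x,s)$-based one; taking rightmost maximizers on both sides and using that $g(u)$ is the rightmost such (definition of $g_{(x,s)}^{\dir\sig,R}$) gives $g(u)=z_u$. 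I expect the main obstacle to be bookkeeping the several ``leftmost/rightmost'' qualifiers simultaneously — making sure the spliced path $\hat\gamma$ is genuinely admissible (continuous, and a legitimate $\Ll$-geodesic on each subinterval) and that the crossing it would induce really contradicts the \emph{rightmost} status of $g$ rather than merely the existence of some geodesic; Lemma \ref{lem:DL_crossing_facts} (planarity of geodesics) and the already-proved rightmost-geodesic property of $g_{(x,s)}^{\dir\sig,R}$ between any two of its points are the tools that close this gap.
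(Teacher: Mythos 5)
There is a genuine gap here, and it is a circularity. At the point where this lemma is needed, $g(t)=g_{(x,s)}^{\dir\sig,R}(t)$ is only defined pointwise, as the rightmost maximizer of $z\mapsto\Ll(x,s;z,t)+\W_{\dir\sig}(z,t;0,t)$ for each $t$ separately; it is \emph{not} yet known that $t\mapsto g(t)$ is a geodesic, let alone the rightmost geodesic between any two of its points --- that consistency is precisely what this lemma exists to supply. Your ``easy direction'' $w_t\ge g(t)$ rests on the claim that $g|_{[s,u]}$ is a geodesic from $(x,s)$ to $(g(u),u)$ ``by Theorem \ref{thm:DL_SIG_cons_intro}\ref{itm:arb_geod_cons}'', but that item only produces geodesics from a \emph{chosen sequence} of maximizers together with connecting geodesics; it does not assert that the pointwise rightmost-maximizer function is one of them. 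Concretely, $w_t\ge g(t)$ would require $\Ll(x,s;g(t),t)+\Ll(g(t),t;g(u),u)=\Ll(x,s;g(u),u)$, which by Lemma \ref{lem:L_and_Buse_ineq} and additivity is equivalent to $g(u)$ being a maximizer of $z \mapsto \Ll(g(t),t;z,u)+\W_{\dir\sig}(z,u;0,u)$ --- essentially the second conclusion of the lemma. The same circularity reappears in your argument for $g(u)=z_u$ and in the appeal to ``the already-proved rightmost-geodesic property of $g_{(x,s)}^{\dir\sig,R}$.''

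You also have the easy and hard inequalities reversed. The inequality that comes for free is $w_t\le g(t)$ (and likewise $z_u\le g(u)$): since $(w_t,t)$ lies on a geodesic from $(x,s)$ to $(g(u),u)$ and $\Ll(x,s;g(u),u)=\W_{\dir\sig}(x,s;g(u),u)$, additivity of $\W_{\dir\sig}$ and the termwise bound $\Ll\le\W_{\dir\sig}$ force $\Ll(x,s;w_t,t)=\W_{\dir\sig}(x,s;w_t,t)$, so $w_t$ is \emph{a} maximizer of the time-$t$ Busemann functional and hence lies weakly to the left of the rightmost one, $g(t)$. The real work is ruling out \emph{strict} inequality, so the case to contradict is $w_t<g(t)$ (or $z_u<g(u)$), not $w_t>g(t)$. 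Finally, your splice is not an admissible path: concatenating a geodesic ending at $(w_t,t)$ with a continuation starting at $(g(t),t)$ when $w_t\ne g(t)$ is discontinuous. The paper's argument instead builds two genuine Busemann geodesic segments --- the rightmost geodesic $\gamma_1$ from $(x,s)$ to $(g(u),u)$, and $\gamma_2$ obtained by concatenating the rightmost geodesic to $(g(t),t)$ with the rightmost geodesic from there to $(z_u,u)$ --- observes that under the assumed strict inequality they are ordered oppositely at times $t$ and $u$ and so must cross at some point $(y,v)$, and splices \emph{at the crossing point}, where continuity is automatic and the weight--Busemann identity along both pieces makes the spliced path a geodesic that contradicts either the rightmost property of $\gamma_1$ or the rightmost-maximizer property of $z_u$.
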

\begin{proof}
 By Lemma \ref{lem:L_and_Buse_ineq} and Items \ref{itm:arb_geod_cons}\ref{itm:weight_of_geod}--\ref{itm:maxes}, $w_t$ maximizes $\Ll(x,s;z,t)+ \W_{\dir \sig}(z,t;0,t)$ over $z \in \R$, and $z_u$ maximizes $\Ll(x,s;z,u) + \W_{\dir \sig}(z,u;0,u)$ over $z \in \R$. By definition of $g(u)$ and $g(t)$ as the rightmost maximizers, we have $w_t \le g(t)$ and $z_u \le g(u)$ in general. Assume, to the contrary, that $g(t) \neq w_t$ or $g(u) \neq z_u$. We first prove a contradiction in the case $w_t < g(t)$. For the proof, refer to Figure \ref{fig:RM_geod} for clarity. Let $\gamma_1:[s,u]\to \R$ be the rightmost geodesic from $(x,s)$ to $(g(u),u)$ (which passes through $(w_t,t)$), and let $\gamma_2$ be the concatenation of the rightmost geodesic from $(x,s)$ to $(g(t),t)$ followed by the rightmost geodesic from $(g(t),t)$ to $(z_u,u)$.  By Item \ref{itm:arb_geod_cons}\ref{itm:weight_of_geod} for $i = 1,2$, the weight of the portion of any part of $\gamma_i$ is equal to the Busemann function between the points. Since $w_t < g(t)$ and $z_u \le g(u)$, $\gamma_1$ and $\gamma_2$ must split before time $t$, and then meet again before or at time $u$. Let $(y,v)$ be a crossing point, where $t < v \le u$. Let $\hat \gamma: [s,u] \to \R$ be defined by $\hat \gamma(r) = \gamma_2(r)$ for $r \in [s,v]$ and $\hat \gamma(r) = \gamma_1(r)$ from $(y,v)$ to $(g(u),u)$. Then, by the additivity of Busemann functions, the weight $\Ll$ of any portion of the path $\hat \gamma$ is equal to the Busemann function between the two points. By Lemma \ref{lem:L_and_Buse_ineq}, $\wh \gamma$ is then a geodesic between $(x,s)$ and $(g(u),u)$, which is to the right of $\gamma_1$, which was defined to be the rightmost geodesic between the points, a contradiction.

Now, we consider the case $z_u < g(u)$. Define $\gamma_1$ and $\gamma_2$ as in the previous case. Since $z_u < g(u)$, there is some point $(y,v)$ with $t \le v < u$ such that $\gamma_1$ splits from or crosses  $\gamma_2$ at $(y,v)$. Then, define $\hat \gamma$ as in the previous case. Again, the weight   $\Ll$ of any portion of the path $\hat \gamma$ is equal to the Busemann function between the two points. Specifically, $\Ll(g(t),t;g(u),u) = \W_{\dir \sig}(g(t),t;g(u),u)$, and by Item \ref{lem:L_and_Buse_ineq}, $g(u)$ maximizes $\Ll(g(t),t;z,u) + \W_{\dir \sig}(z,u;0,u)$ over $z \in \R$. This contradicts the definition of $z_u$ as the rightmost such maximizer.
\end{proof}
Returning to the proof of Theorem \ref{thm:DL_SIG_cons_intro}, we show the global directedness of all Busemann geodesics constructed in the manner described in Item \ref{itm:arb_geod_cons}. By \eqref{eqn:mont_maxes}, for $t \ge s$ and $\alpha < \dir < \beta$ with $\alpha,\beta \in \Q$,
\be \label{sig_sand}
g_{(x,s)}^{\alpha,L}(t) \le g_{(x,s)}^{\dir \sig,L}(t) \le g(t) \le g_{(x,s)}^{\dir \sig,R}(t) \le g_{(x,s)}^{\beta,R}(t).
\ee
Note that on $\Omega_1$ the $\pm$ distinction is absent for $\alpha,\beta \in \Q$ (Lemma \ref{lem:DL_horiz_Buse}\ref{itm:DL_agree_horiz}).
By definition \eqref{omega1} of the event $\Omega_1$ and Theorem \ref{thm:RV-Buse}\ref{itm:DL_Buse_var}, $\forall\alpha\in\Q$, the maximizers of $\Ll(x,s;z,t) + W_{\alpha}(z,t;0,t)$ over $z \in \R$ are exactly the locations $z$ where an $\alpha$-directed geodesic goes through $(z,t)$. Therefore, $g_{(x,s)}^{\alpha,L}(t)/t \to \alpha$ and $g_{(x,s)}^{\beta,R}(t)/t \to \beta$ when $\alpha, \beta\in\Q$. By \eqref{sig_sand},
\[
\alpha  \le \liminf_{t \to \infty} t^{-1}{g(t)} \le \limsup_{t \to \infty} t^{-1}{g(t)} \le \beta.
\]
Sending $\Q \ni \alpha \nearrow \dir$ and $\Q \ni \beta \searrow \dir$ completes the proof of   Theorem \ref{thm:DL_SIG_cons_intro}.
\end{proof}

We now define the next   full-probability event. 
\be \label{omega2}
\text{Let $\Omega_2$ be the subset of $\Omega_1$ on which the following hold.} 
\ee
\begin{enumerate} [label=\rm(\roman{*}), ref=\rm(\roman{*})] \itemsep=3pt
    \item \label{itm:stickT} For each integer $T \in \Z$ and each compact set $K \subseteq \R^2$, there exists $\ve =\ve(\dir,T,K) > 0$ such that for $\dir - \ve < \alpha < \dir < \beta < \dir + \ve$ and $(x,y) \in K$,
    \be \label{465}
\W_{\alpha \sig}(y,T;x,T) = \W_{\dir -}(y,T;x,T)\ \ \text{ and }\ \  \W_{\beta \sig}(y,T;x,T) = \W_{\dir +}(y,T;x,T).
\ee
\item For each integer $T \in \Z$, the set
\be \label{infinit_dense}
\{\dir \in \R: \W_{\dir -}(x,T;0,T) \neq \W_{\dir +}(x,T;0,T) \text{ for some }x \in \R\}
\ee
is countably infinite and dense in $\R$.
\item For each $s < t \in \R$, $x,\dir \in \R$, $\sigg \in \{-,+\}$, and $S \in \{L,R\}$,
\be \label{eqn:dirtoinf}
    \lim_{\xi \to \pm \infty} g_{(x,s)}^{\xi \sig,S}(t) = \pm \infty.
\ee
\end{enumerate}

\begin{lemma}
$\Pp(\Omega_2) = 1.$
\end{lemma}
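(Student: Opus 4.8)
The plan is to use the distributional identity of Theorem~\ref{thm:Buse_dist_intro}\ref{itm:SH_Buse_process}, which transports almost-sure path properties of the stationary horizon $G^{\sqrt 2}$ to the horizontal Busemann process at each fixed time level, together with the monotonicity of the Busemann maximizers in $\dir$ recorded in \eqref{eqn:mont_maxes}. Since $\Pp(\Omega_1)=1$ is already established, it suffices to show that each of the three families of events in the definition of $\Omega_2$ holds almost surely.

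For the local constancy \eqref{465} and the discontinuity set \eqref{infinit_dense}: fix an integer $T$. By Theorem~\ref{thm:Buse_dist_intro}\ref{itm:SH_Buse_process}, $\{\W_{\dir +}(\aabullet,T;0,T)\}_{\dir\in\R}\deq\{G^{\sqrt2}_\dir\}_{\dir\in\R}$ as random elements of $D(\R,C(\R))$, and since left limits are determined by the cadlag path, $\W_{\dir -}(\aabullet,T;0,T)$ corresponds to $G^{\sqrt2}_{\dir-}$. Hence every almost-sure path property of the SH transfers to $T$-level Busemann data. In particular Theorem~\ref{thm:SH_sticky_thm}\ref{itm:SH_stick} gives, for each $\dir$ and compact $K$, a random $\ve>0$ with $G_\alpha(x,y)=G_{\dir-}(x,y)$ and $G_\dir(x,y)=G_\beta(x,y)$ for $x,y\in K$, $\dir-\ve<\alpha<\dir<\beta<\dir+\ve$; since $\XiSH(a,b)$ is discrete (Theorem~\ref{thm:SH_sticky_thm}\ref{itm:SH_all_jump}) a punctured neighborhood of $\dir$ avoids the jumps relevant to $K$, so $G_{\alpha\pm}$ and $G_{\dir-}$ agree there, yielding \eqref{465} for both signs $\sigg$. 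Likewise the set in \eqref{infinit_dense} equals $\bigcup_x\{\dir:G_{\dir-}(x)\neq G_{\dir+}(x)\}=\XiSH$, a countable union of discrete sets (Theorem~\ref{thm:SH_sticky_thm}\ref{itm:SH_all_jump}) dense in $\R$ (Theorem~\ref{thm:SH_sticky_thm}\ref{itm:SH_Xi_dense}), hence countably infinite and dense. Intersecting over the countably many integers $T$ gives both properties on one event.

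For the divergence \eqref{eqn:dirtoinf}: by \eqref{eqn:mont_maxes}, $\dir\mapsto g_{(x,s)}^{\dir\sig,S}(t)$ is monotone, so the limit as $\dir\to+\infty$ exists in $(-\infty,+\infty]$. Suppose it equalled a finite $\ell$ (WLOG $\ell\ge1$). Then for every $\dir$ the maximizer $y_\dir^\star:=g_{(x,s)}^{\dir\sig,S}(t)$ of $V_\dir(y):=\Ll(x,s;y,t)+\W_{\dir\sig}(y,t;0,t)$ lies in $(-\infty,\ell]$, so $\sup_{y\le\ell}V_\dir(y)=\sup_y V_\dir(y)\ge V_\dir(\ell+1)$. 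Using additivity of horizontal Busemann functions, the horizontal monotonicity of Theorem~\ref{thm:DL_Buse_summ}\ref{itm:DL_Buse_gen_mont} (to bound $\W_{\dir\sig}$ on the relevant increments by the reference $\W_1$ for $\dir\ge1$), continuity of $\W_1$ and $\Ll$, and the global upper bound $\Ll(x,s;y,t)\le-\tfrac{(y-x)^2}{t-s}+O(\log^2)$ of Lemma~\ref{lem:Landscape_global_bound} (so that $\sup_{y<0}V_\dir(y)$ and $\sup_{y\in[0,\ell]}\Ll(x,s;y,t)$ are finite), one gets $\sup_{y\le\ell}V_\dir(y)\le C+\W_{\dir\sig}(\ell,t;0,t)$ with $C$ independent of $\dir$. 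Together with $V_\dir(\ell+1)\ge\Ll(x,s;\ell+1,t)+\W_{\dir\sig}(\ell+1,t;0,t)$ this gives $\W_{\dir\sig}(\ell+1,t;\ell,t)\le C'$ for all $\dir\ge1$. But $\dir\mapsto\W_{\dir +}(\ell+1,t;\ell,t)$ corresponds under the SH identity at time $t$ to $\dir\mapsto G^{\sqrt2}_\dir(\ell,\ell+1)$, which is a step function with $\lim_{\dir\to+\infty}G^{\sqrt2}_\dir(\ell,\ell+1)=+\infty$ by Theorem~\ref{thm:SH_jump_process}, and $\W_{\dir -}(\ell+1,t;\ell,t)\ge\W_{(\dir-1)+}(\ell+1,t;\ell,t)$ by horizontal monotonicity, so the $-$ version diverges too --- a contradiction. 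The $-\infty$ limit follows symmetrically (or by reflection, Theorem~\ref{thm:SH_dist_invar}\ref{itm:SH_reflinv}). To get \eqref{eqn:dirtoinf} simultaneously for all real $(x,s,t)$, $\sigg$, $S$ on one event, one runs the argument with $\ell$ in a countable dense set of heights and $t$ rational, and extends to all real parameters via monotonicity of leftmost/rightmost geodesics in the initial point together with \eqref{eqn:mont_maxes}.

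The main obstacle is exactly this last step: upgrading \eqref{eqn:dirtoinf} from fixed parameters to a single event simultaneously over all real $x<t$, since the SH identity is stated at a fixed time level and the horizontal Busemann increments carry no modulus of continuity uniform in $\dir$. This requires either pushing the time-$t$ increments back to a countable dense set of time levels through the variational formula of Theorem~\ref{thm:DL_Buse_summ}\ref{itm:Buse_KPZ_description} and checking that the ``$\to+\infty$'' property survives the max-with-landscape operator (via the Busemann limits), or a careful exploitation of geodesic monotonicity; the remaining steps are routine.
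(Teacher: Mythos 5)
Your handling of \eqref{465} and \eqref{infinit_dense} is exactly the paper's argument: transfer Theorem \ref{thm:SH_sticky_thm}\ref{itm:SH_stick}, \ref{itm:SH_all_jump}, \ref{itm:SH_Xi_dense} through the distributional identity of Theorem \ref{thm:Buse_dist_intro}\ref{itm:SH_Buse_process} and intersect over integer $T$; nothing to add there. For the divergence \eqref{eqn:dirtoinf} your fixed-time argument is correct and genuinely different from the paper's. The paper proves the fixed-parameter case by the skew-stationarity computation $g_{(x,s)}^{\dir}(t)\deq g_{(x,s)}^{0}(t)+\dir(t-s)$, a purely distributional identity valid only for one fixed $(x,s,t)$ at a time. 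You instead derive, from the assumption that the leftmost maximizer stays $\le\ell$, the uniform bound $\W_{\dir\sig}(\ell+1,t;\ell,t)\le C'$ for all $\dir\ge 1$, and contradict the divergence of $\dir\mapsto G^{\sqrt2}_\dir(\ell,\ell+1)$ from Theorem \ref{thm:SH_jump_process}. The inequality chain (additivity, horizontal monotonicity against the reference direction $1$, the quadratic decay of $\Ll$ making $C$ finite and $\dir$-independent) checks out. Your route in fact buys more than you claim: since the step-function/divergence property at a fixed level $t$ holds simultaneously for \emph{all} real $\ell$ on a single $t$-dependent event (Theorem \ref{thm:SH_sticky_thm}\ref{itm:SH_all_jump}), your contradiction is deterministic on that event and therefore covers every initial point $(x,s)$ with $s<t$ at once — there is no need for a countable dense set of heights $\ell$. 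Only the time variable $t$ needs globalizing.

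That globalization over $t$ is the genuine gap: you name it as ``the main obstacle,'' propose two strategies, and carry out neither, so the third defining property of $\Omega_2$ is not actually established on a single event. The paper closes precisely this gap with a geodesic-crossing argument anchored at rational space-time points: assuming $z:=\sup_{\dir}g_{(x,s)}^{\dir-,L}(t)<\infty$, it takes the leftmost point-to-point geodesic $g$ from $(x,s)$ to $(z,t)$, chooses rational $q_1\in(s,t)$, rational $w<g(q_1)$, and rational $q_2$ close enough to $t$ that $|g(q_2)-z|<1$, invokes the already-proved divergence at the rational anchor $(w,q_1)$ to get $g_{(w,q_1)}^{\dir-,L}(q_2)>z+1>g(q_2)\ge g_{(x,s)}^{\dir-,L}(q_2)$ for large $\dir$, and then observes that the two Busemann geodesics must cross in $(q_1,q_2)$, after which both time-$q_2$ positions are the leftmost maximizer of the same functional — a contradiction. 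Your second suggested strategy (``careful exploitation of geodesic monotonicity'') can be made to work along similar lines — restrict $g_{(x,s)}^{\dir-,L}$ to $[q,\infty)$ for a rational $q\in(s,t)$, note that it is the leftmost Busemann geodesic started from $(g_{(x,s)}^{\dir-,L}(q),q)$, and combine monotonicity in $\dir$ \eqref{eqn:mont_maxes} with the spatial limit \eqref{373} for one fixed reference direction — but as written the step is an acknowledged placeholder, not a proof.
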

\begin{proof}
Recall the distributional equality
$ 
\{W_{\dir+}(\abullet,T;0,T)\}_{\dir \in \R} \deq \{G^{\sqrt 2}_\dir\}_{\dir \in \R}
$ 
from Theorem \ref{thm:Buse_dist_intro}\ref{itm:SH_Buse_process}.
The fact that \ref{itm:stickT} holds with probability one is a direct consequence of the regularity of the SH from \ref{thm:SH_sticky_thm}\ref{itm:SH_stick}. The set~\eqref{infinit_dense} is countably infinite and dense for all $T \in \Z$  by the corresponding properties of $G^{\sqrt 2}$ from Theorem \ref{thm:SH_sticky_thm}\ref{itm:SH_all_jump},\ref{itm:SH_Xi_dense}.

Now, we prove that \eqref{eqn:dirtoinf} holds with probability one. By the monotonicity of \eqref{eqn:mont_maxes}, the limits $\lim_{\xi \to \infty} g_{(x,s)}^{\xi \sig,S}(t)$ and $\lim_{\xi \to -\infty} g_{(x,s)}^{\xi \sig,S}(t)$ exist in $\R \cup \{-\infty,\infty\}$. Furthermore, by this monotonicity, it is sufficient to show that 
\be \label{678}
\lim_{\xi \to \infty} g_{(x,s)}^{\xi -,L}(t) = \sup_{\dir \in \R}g_{(x,s)}^{\xi -,L}(t) = \infty\ \ \text{ and }\ \  \lim_{\dir \to -\infty} g_{(x,s)}^{\dir +,R}(t) = \inf_{\dir \in \R} g_{(x,s)}^{\dir +,R}(t) = -\infty.
\ee
First, we show that \eqref{678} holds with probability one for a fixed initial point $(x,s)$ and fixed $t > s$. It is therefore sufficient to take $(x,s) = (0,0)$ and then $t > 0$. By the monotonicity, it suffices to take limits over $\dir \in \Q$ so that by Theorem \ref{thm:RV-SIG-thm}\ref{itm:pd_fixed}, the $\pm$ and $L/R$ distinctions are unnecessary.   $\W_{\xi \sig}(z,t;0,t)$ is a two-sided Brownian motion with drift $2\xi$ and diffusivity $\sqrt 2$, independent of the random function $(x,y) \mapsto \Ll(x,0;y,t)$ (Theorem \ref{thm:Buse_dist_intro}\ref{itm:indep_of_landscape}). Let $B$ be a standard Brownian motion, independent of $\Ll$. Using skew stationarity with $c = -\dir$ in the third equality below  and time stationarity in the fifth equality (Lemma \ref{lm:landscape_symm}), we obtain, for $\dir \in \Q$,
\begin{align*}
   g_{(x,s)}^\dir(t) &= \argmax_{z \in \R} \{\Ll(x,s;z,t)  + \W_{\xi}(z,t;0,t) \} \\
   &\deq \argmax_{z \in \R} \{\Ll(x,s;z,t) + \sqrt 2 B(z) + 2\xi z\} \\
   &\deq \argmax_{z \in \R} \{\Ll(x - \dir s,s;z - \dir t,t) + 2\dir(x -z) + (t - s)\dir^2  + \sqrt 2 B(z) +2\dir z\}\\
   &= \argmax_{z \in \R} \{\Ll(x - \dir s,s;z - \dir t,t) + \sqrt 2(B(z) - B(\dir (t-s))) \} \\
    &\deq  \argmax_{z \in \R} \{\Ll(x,s;z - \dir (t-s),t) + \sqrt 2 B(z - \dir (t - s)) \} \\
    &= \argmax_{z \in \R}\{\Ll(x,s;z,t) + \sqrt 2 B(z) \} + \dir (t-s)  \deq  g_{(x,s)}^0(t) + \dir(t-s).
\end{align*}
Therefore, $\forall\dir \in \Q$, the distribution of $g_{(x,s)}^{\dir}(t)$ is that of a fixed, almost surely finite, random variable plus $\dir (t-s)$. Since we know  $\lim_{\Q \ni \dir \to \pm \infty} g_{(x,s)}^{\dir}(t)$ exists, the limit must be $\pm \infty$ a.s.

Now, consider the intersection of $\Omega_1$ with event of probability one on which for each triple $(w,q_1,q_2) \in \Q^3$ with $q_1 < q_2$, 
\be \label{692}
\lim_{\dir \to +\infty} g_{(w,q_1)}^{\dir-,L}(q_2) = + \infty\qquad\text{and}\qquad \lim_{\dir \to -\infty} g_{(w,q_1)}^{\dir+,R}(q_2) = - \infty.
\ee
On this event, let $(x,s,t) \in \R^3$ with $s < t$ be arbitrary. Assume, by way of contradiction, that 
\be \label{792}
z := \sup_{\dir \in \R} g_{(x,s)}^{\dir-,L}(t) < \infty,
\ee
and let $g:[s,t]$ denote the leftmost geodesic from $(x,s)$ to $(z,t)$. For this proof, refer to Figure \ref{fig:fanning_proof} for clarity. By the assumption \eqref{792} and the fact that $g_{(x,s)}^{\dir-,L}$ is the leftmost geodesic between any two of its points (Theorem \ref{thm:DL_SIG_cons_intro}\ref{itm:DL_LRmost_geod}),  $g_{(x,s)}^{\dir-,L}(t) \le g(t)$ for all $\dir \in \R$ and $t > s$.
Let $q_1 \in (s,t)$ be rational. Choose  $w \in \Q$ such that $w < g(q_1)$. By continuity of geodesics, we may choose $q_2 \in (q_1,t) \cap \Q$ to be sufficiently close to $t$ so that $|g(q_2) - z| < 1$. Next, by \eqref{692}, we may choose positive $\dir$ sufficiently large so that 
\be \label{892}
g_{(w,q_1)}^{\dir-,L}(q_2) > z + 1 > g(q_2) \ge g_{(x,s)}^{\dir -,L}(q_2).
\ee
Since $w < g(q_1)$,  $g_{(w,q_1)}^{\dir-,L}$ and $g_{(x,s)}^{\dir -,L}$ cross at some  $(\hat z,\hat t)$ with $\hat t \in (q_1,q_2)$.  By Theorem \ref{thm:DL_SIG_cons_intro}\ref{itm:DL_all_SIG}, both $g_{(w,q_1)}^{\dir-,L}(q_2)$ and $g_{(x,s)}^{\dir -,L}(q_2)$ are the leftmost maximizer of $\Ll(\hat z,\hat t; y,q_2) + \W_{\dir -}(y,q_2;0,q_2)$ over $y \in \R$, contradicting \eqref{892}. The proof for   $\dir \to -\infty$ is analogous.
\end{proof}
\begin{figure}[t]
    \centering
    \includegraphics[height = 2 in]{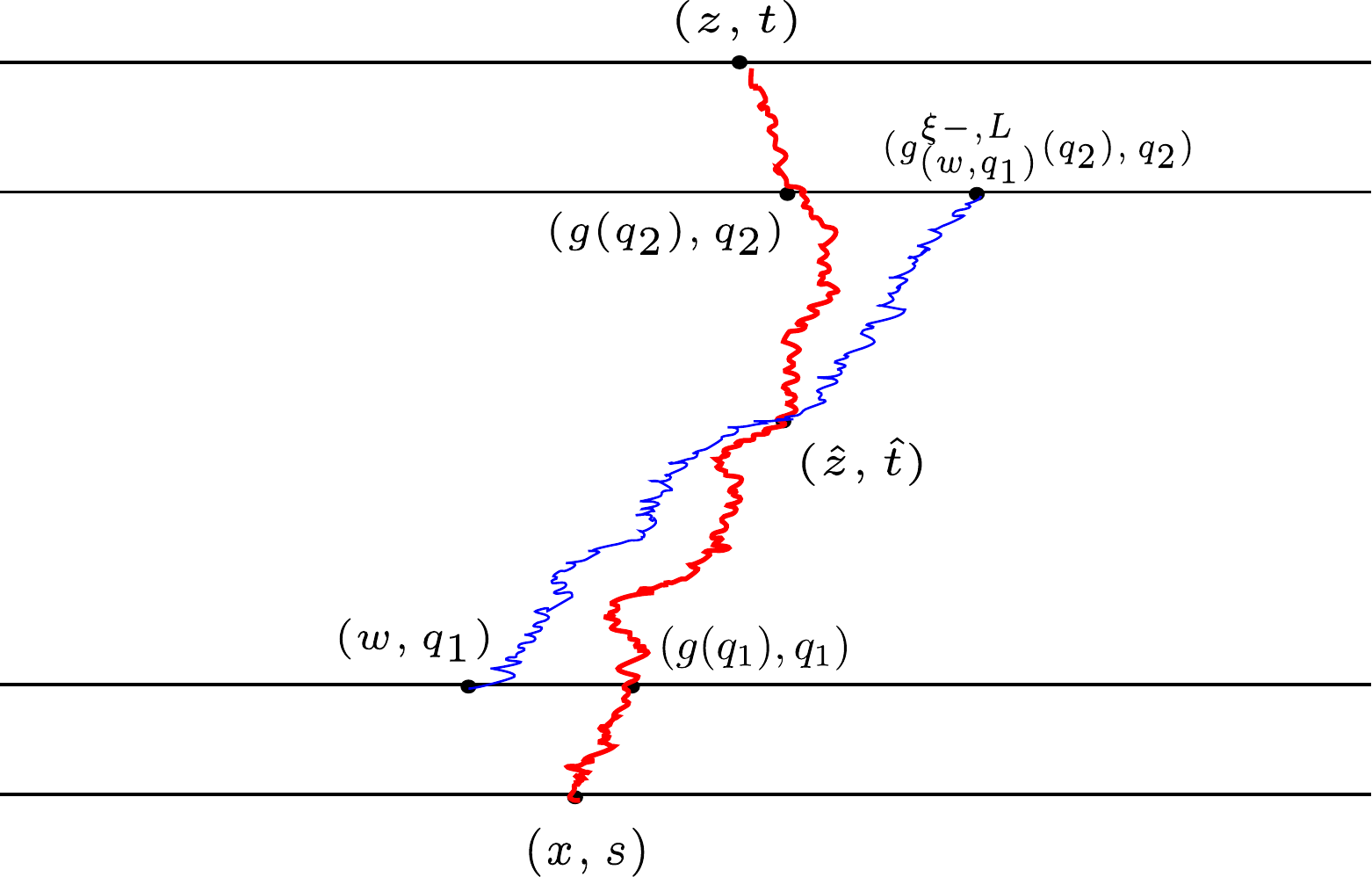}
    \caption{\small The blue/thin path represents $g_{(w,q_1)}^{\dir -,L}$ and the red/thick path represents $g$.}
    \label{fig:fanning_proof}
\end{figure}

\begin{proof}[Proof of Theorem \ref{thm:DL_Buse_summ}\ref{itm:DL_unif_Buse_stick} (Regularity of the Busemann process)]

 By definition of the \\event $\Omega_2$ \eqref{omega2}, for each $\dir \in \R$, each integer $T$ and compact set $K\subseteq \R^2$, there is a $\ve > 0$ so that \eqref{465} holds for all $(x,y) \in K$. Now, let $\dir \in \R$, let $K$ be a compact subset of $\R^4$, and let $T$ be an integer greater than $\sup\{t \vee s: (x,s;y,t) \in K\}$. Let 
\begin{align*}
A &:= \inf\{g_{(x,s)}^{(\dir - 1)-,L}(T)\wedge g_{(y,t)}^{(\dir - 1)-,L}(T) : (x,s;y,t) \in K\}, 
\qquad\text{and} \\
B &:= \sup\{g_{(x,s)}^{(\dir + 1)+,R}(T)\vee g_{(y,t)}^{(\dir + 1)+,R}(T) : (x,s;y,t) \in K\}.
\end{align*}
By \eqref{eqn:mont_maxes} and Lemma \ref{lem:bounded_maxes}, $-\infty < A < B < \infty$.
By  additivity (Theorem \ref{thm:DL_Buse_summ}\ref{itm:DL_Buse_add}) and \eqref{eqn:mont_maxes}, for all $(x,s;y,t) \in K$ and $\alpha \in (\dir - 1,\dir + 1)$, 
\be \label{Tdiff}
\begin{aligned}
&\W_{\alpha \sig}(x,s;y,t) = \W_{\alpha \sig}(x,s;0,T) - \W_{\alpha \sig}(y,t;0,T) \\
&
= \sup_{z \in \R}\{\Ll(x,s;z,T) + \W_{\alpha\sig}(z,T;0,T)\} - \sup_{z \in \R}\{\Ll(y,t;z,T) + \W_{\alpha \sig}(z,T;0,T)\} \\
&= \sup_{z \in [A,B]}\{\Ll(x,s;z,T) + \W_{\alpha \sig}(z,T;0,T)\} - \sup_{z \in [A,B]}\{\Ll(y,t;z,T) + \W_{\alpha \sig}(z,T;0,T)\}.
\end{aligned}
\ee
By \eqref{465}, the conclusion follows. 
\end{proof}

\begin{proof}[Proof of Theorem \ref{thm:DLBusedc_description} (Description of the discontinuity set)]
The full \\ probability event of this theorem is $\Omega_2$, except for Item \ref{itm:Busedc_t} whose proof is postponed until Section \ref{sec:last_proofs2}. The only result that relies on this Item is Theorem \ref{thm:Split_pts}, which is proved afterward. 

\noindent \textbf{Item \ref{itm:Busedc_horiz_mont} (Monotonicity):}
 By the monotonicity of Theorem \ref{thm:DL_Buse_summ}\ref{itm:DL_Buse_mont}, and by Lemma \ref{lem:ext_mont}, for $a \le x \le y \le b$,
 \be \label{801}
 0 \le \W_{\dir +}(y,t;x,t) - \W_{\dir -}(y,t;x,t) \le \W_{\dir +}(b,t;a,t) - \W_{\dir -}(b,t;a,t).
 \ee
Thus, discontinuities of $\dir \mapsto \W_{\dir \sig}(y,t;x,t)$ are also discontinuities for $\dir \mapsto \W_{\dir \sig}(b,t;a,t)$.

\noindent \textbf{Item \ref{itm:DL_dc_set_count} ($\DLBusedc$ is a countable dense set):}
Similarly as in \eqref{Tdiff}, if $(x,s;y,t) \in \R^4$, then for $\dir \in \R$, $\sigg \in \{-,+\}$, and any integer $T > s\vee t$,
\be \label{880}\begin{aligned} 
\W_{\dir \sig}(x,s;y,t) &= \sup_{z \in \R}\{\Ll(x,s;z,T) + \W_{\dir \sig}(z,T;0,T)\}\\
&\qquad - \sup_{z \in \R} \{\Ll(y,t;z,T) + \W_{\dir \sig}(z,T;0,T)\}.
\end{aligned} \ee
So if $\W_{\dir -}(z,T;0,T) = \W_{\dir+}(z,T;0,T) \; \forall z \in \R$, then  $\W_{\dir -}(x,s;y,t) = \W_{\dir +}(x,s;y,t)$, and 
\be \label{881}
\DLBusedc = \bigcup_{T \in \Z} \{\dir \in \R: \W_{\dir - }(x,T;0,T) \neq \W_{\dir +}(x,T;0,T) \text{ for some }x \in \R\}. 
\ee
On $\Omega_2$,  $\DLBusedc$   is countably infinite and dense  by  \eqref{omega2}.  Lemma \ref{lem:DL_horiz_Buse}\ref{itm:DL_agree_horiz}, along with \eqref{881} imply that $\DLBusedc$ contains no rational directions $\dir$. For an arbitrary $\dir \in \R$, $\W_{\dir -}(\abullet,T;0,T)$ and $\W_{\dir +}(\abullet,T;0,T)$ are both Brownian motions with the same diffusivity and drift, and $\W_{\dir -}(y,T;x,T) \le \W_{\dir +}(y,T;x,T)$ for $x < y$ by Theorem \ref{thm:DL_Buse_summ}\ref{itm:DL_Buse_gen_mont}. By \eqref{881} and continuity,
\[
\Pp(\dir \in \DLBusedc) \le \sum_{T \in \Z, x \in \Q} \Pp(\W_{\dir - }(x,T;0,T) \neq \W_{\dir +}(x,T;0,T)) = 0,
\]
where $\Pp(\W_{\dir - }(x,T;0,T) \neq \W_{\dir +}(x,T;0,T)) = 0$ because the two random variables have the same law and are ordered.

\noindent \textbf{Item \ref{itm:DL_Buse_no_limit_pts} ($\DLBusedc(p;q)$ is discrete):} The discreteness of the jumps is a direct consequence of the regularity of 
 the Busemann process from Theorem \ref{thm:DL_Buse_summ}\ref{itm:DL_unif_Buse_stick}.  The fact that $\DLBusedc(y,t;x,t)$ is countably infinite and unbounded on a $t$-dependent full probability event follows from Theorem \ref{thm:SH_sticky_thm}\ref{itm:SH_all_jump} 
\end{proof}

\section{Non-uniqueness of semi-infinite geodesics} \label{sec:LR_sig} 
Theorem \ref{thm:DL_SIG_cons_intro} established global existence of semi-infinite geodesics from each initial point and into each direction. 
We know from Theorem 3.3 of \cite{Rahman-Virag-21}, recorded earlier in  Theorem \ref{thm:RV-SIG-thm}\ref{itm:pd_fixed},  that for a fixed initial point and a fixed direction, there almost surely is a unique semi-infinite geodesic.  However, this uniqueness does not extend globally to all initial points and directions simultaneously.

 In fact, two qualitatively different types of non-uniqueness of    Busemann  geodesics from a given point into a given direction arise.  One is denoted by the $L/R$ distinction and the other by the  $\pm$ distinction. 
 All semi-infinite geodesics from $p$ in direction $\dir$ lie between the leftmost Busemann geodesic $g_{p}^{\dir -,L}$ and the rightmost Busemann geodesic   $g_p^{\dir +,R}$.  See Theorem \ref{thm:all_SIG_thm_intro}\ref{itm:DL_LRmost_SIG}. We refer the reader back to Figure \ref{fig:non_unique_comp} for the two types of non-uniqueness. The $L/R$ uniqueness is depicted on the left, where geodesics split and return to coalesce, while the $\pm$ non-uniqueness is depicted on the right in the figure, where geodesics split and stay apart, all the way to $\infty$.

The $L/R$ non-uniqueness is a feature of continuous space.  Only the $\pm$ non-uniqueness appears in the discrete corner growth model with exponential weights, while both $L/R$ and $\pm$  non-uniqueness are   present  in semi-discrete  BLPP \cite{Seppalainen-Sorensen-21a,Seppalainen-Sorensen-21b}.  

To capture $L/R$ non-uniqueness, we  introduce  the following random set of initial points.   For $\dir \in \R$ and $\sigg \in \{-,+\}$, let $\NU^{\dir \sig}$ be the set of points $p \in \R^2$ such that the $\dir \sigg$ geodesic from $p$ is not unique. In notational terms, 
\begin{align}
\NU^{\dir \sig} &= \{(x,s) \in \R^2: g_{(x,s)}^{\dir \sig,L}(t) < g_{(x,s)}^{\dir \sig,R}(t) \text{ for some } t > s\}, \label{NU0}.
\end{align}
Next, let
\be \label{NU0_global}
\NU = \textstyle\bigcup_{\dir \tspb\in\tspb  \R,\,\sig \tspb\in\tspb \{-,+\}} \NU^{\dir \sig}.
\ee

\begin{figure}[t]
    \centering
    \includegraphics[height = 2in]{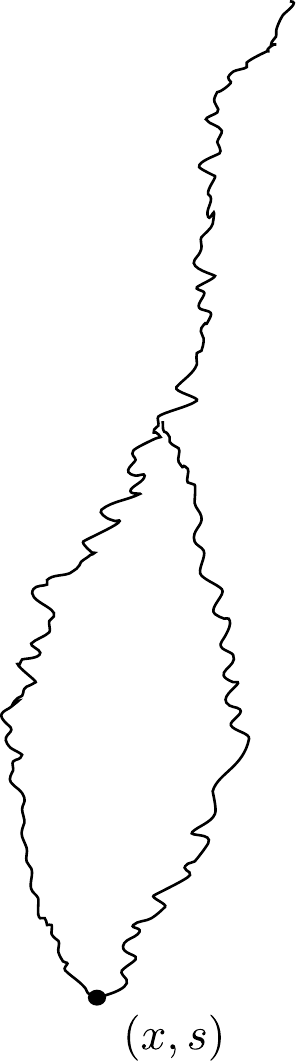} \qquad\qquad\qquad 
    \includegraphics[height = 2in]{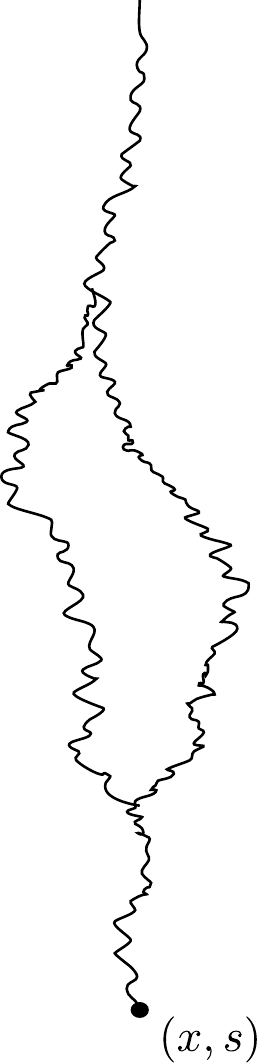}
    \caption{\small Given the definition of $\NU$ and the fact that the geodesics coalesce,  this figure shows two possible configurations for points $(x,s) \in \NU$. However, due to a recent result of Bhatia \cite{Bhatia-23} and Dauvergne \cite{Dauvergne-23} (recorded as Theorem \ref{thm:Bhatia} below), the configuration depicted on the right does not appear in the DL. That, is the splitting of geodesics for points in $\NU$ must occur immediately from the initial point. }  
    \label{fig:NU}
\end{figure}
Figure \ref{fig:NU} illustrates the set  $\NU$. Theorem \ref{thm:DLNU}\ref{itm:DL_NU_count} establishes that,   with probability one, for each $\dir \in \R$ and $\sigg \in \{-,+\}$, the restriction of $\NU^{\dir\sig}$ to each  time level $s$ is countably infinite. By Theorem \ref{thm:DL_all_coal}\ref{itm:DL_allsigns_coal},  on a single event of probability one,  for each direction $\dir$ and sign $\sigg \in \{-,+\}$, all $\dir \sig$ geodesics coalesce. Therefore, from each $p \in \NU^{\dir \sig}$, two $\dir \sig$ geodesics separate but eventually   come back together. This coalescence result was first proven in the author's joint work with Busani and Sepp\"al\"ainen \cite{Busa-Sepp-Sore-22arXiv}. Afterward, Bhatia \cite{Bhatia-23} gave greater clarity to the set $\NU$. In particular, for a point $(x,s) \in \NU^{\dir \sig}$, the splitting of geodesics must occur immediately from the initial point. This follows from the coalescence we proved in \cite{Busa-Sepp-Sore-22arXiv} and the following theorem, proved first Bhatia \cite{Bhatia-23} and also independently by Dauvergne \cite{Dauvergne-23}.

\begin{theorem}[\cite{Bhatia-23}, Theorem 1 and \cite{Dauvergne-23}, Lemma 3.3] \label{thm:Bhatia}
With probability one, there is no point $(x,s;y,t) \in \Rup$ and pairs of distinct geodesics $g_1,g_2$ from $(x,s)$ to $(y,t)$ satisfying, for some $\ve > 0$, $g_1(u) = g_2(u)$ for all $u \in (s,s + \ve)\cup (t - \ve,t)$. In words, geodesics cannot move together, then form bubbles, then move together again. See Figure \ref{fig:NU}. 
\end{theorem}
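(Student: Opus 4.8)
The statement is quoted from \cite{Bhatia-23,Dauvergne-23}; I describe the route of \cite{Bhatia-23}. The plan is to reduce the forbidden ``interior bubble'' configuration to a statement about \emph{point-to-point} geodesics and then to exclude the latter using the fine description of non-unique finite geodesics from \cite{Basu-Ganguly-Hammond-21,Bates-Ganguly-Hammond-22,Dauvergne-Sarkar-Virag-2020} recorded in Appendix \ref{appB}. \emph{Localization.} Suppose the event fails: there are $(x,s;y,t)\in\Rup$, distinct geodesics $g_1,g_2$ from $(x,s)$ to $(y,t)$, and $\ve>0$ with $g_1=g_2$ on $(s,s+\ve)\cup(t-\ve,t)$. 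Choose rationals $s'\in(s,s+\ve)$ and $t'\in(t-\ve,t)$. Then $g_1$ and $g_2$ agree on a neighborhood of $s'$ and of $t'$, and their restrictions to $[s',t']$ are distinct geodesics between the (random) points $p'=(g_1(s'),s')$ and $q'=(g_1(t'),t')$. It therefore suffices to show, for each fixed pair $(s',t')\in\Q^2$ with $s'<t'$, that on a full-probability event no such configuration with time span contained in $[s',t']$ occurs; the theorem then follows by a countable intersection over $(s',t')$.

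\emph{Min/max swap.} Put $g_-=\min(g_1,g_2)$ and $g_+=\max(g_1,g_2)$ on $[s',t']$. A standard consequence of the reverse triangle inequality and the metric composition law \eqref{eqn:metric_comp} is that $g_-$ and $g_+$ are again geodesics from $p'$ to $q'$: one switches between $g_1$ and $g_2$ at their (closed) set of crossing points, continuity is immediate, and adding all crossing points to any partition shows the path length equals $\Ll(p';q')$. Now $g_-\le g_+$, $g_-=g_+$ near $s'$ and near $t'$, and $g_-<g_+$ on a nonempty open subset of $(s',t')$. Let $(a,b)$ be a connected component of $\{u:g_-(u)<g_+(u)\}$; then $s'<a<b<t'$, $g_-(a)=g_+(a)=:x_0$ and $g_-(b)=g_+(b)=:y_0$, the restrictions $g_-|_{[a,b]}$ and $g_+|_{[a,b]}$ are \emph{disjoint} geodesics between $P=(x_0,a)$ and $Q=(y_0,b)$, and both $P$ and $Q$ are interior points of the single geodesic $g_-|_{[s',t']}$ running from time level $s'$ to time level $t'$.

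\emph{Excluding the reduced picture --- the main obstacle.} It remains to rule out, with probability one, the existence of disjoint geodesics between points $P=(x_0,a)$ and $Q=(y_0,b)$, $s'<a<b<t'$, such that $P$ and $Q$ both lie on one geodesic from level $s'$ to level $t'$. The difficulty is that $P$ and $Q$ are random, so one cannot directly invoke the almost sure uniqueness of the geodesic between \emph{fixed} endpoints. The plan is a covering argument over a third rational level $m\in(s',t')$ and over rational approximations $p'',q''$ of $p',q'$: for fixed $p'',q''$ the function $z\mapsto\Ll(p'';z,m)+\Ll(z,m;q'')$ has an a.s.\ unique maximizer, while \cite{Basu-Ganguly-Hammond-21,Bates-Ganguly-Hammond-22} quantify, through a Hausdorff dimension $\f{1}{2}$ bound, how sparse the pairs with a non-unique maximizer are; a Fubini/countable-cover argument then forces $P$ or $Q$ to coincide with a genuine split or merge location, contradicting the fact established in the previous paragraph that $P$ and $Q$ are interior points of the through-geodesic $g_-|_{[s',t']}$. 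Alternatively, following \cite{Dauvergne-23}, the configuration displayed above is one of the finitely many finite geodesic-network types shown there to occur on an empty set. In either approach the essential input is the dimension control on non-unique point-to-point geodesics, and the genuinely delicate part is transferring those fixed-endpoint estimates to the random interior endpoints $P$ and $Q$.
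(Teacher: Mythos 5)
First, a point of reference: the paper does not prove this statement at all — it is imported verbatim from \cite{Bhatia-23} (Theorem 1) and \cite{Dauvergne-23} (Lemma 3.3), so there is no internal proof to compare against. Your localization step (passing to rational levels $s',t'$ and a countable intersection) and the min/max swap (replacing $g_1,g_2$ by $g_-=\min(g_1,g_2)$ and $g_+=\max(g_1,g_2)$, which are again geodesics) are both correct and standard; the only cosmetic issue is that ``adding all crossing points to any partition'' needs rephrasing since the crossing set need not be finite, but the fact itself is the same one used to build leftmost and rightmost geodesics in \cite{Directed_Landscape}.

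The genuine gap is in the exclusion step, which is the entire content of the theorem and which you leave as a ``plan.'' The covering argument you sketch cannot work as stated. The Hausdorff dimension $\tfrac12$ results of \cite{Basu-Ganguly-Hammond-21,Bates-Ganguly-Hammond-22} control the sparsity of endpoint pairs on two \emph{fixed} time levels admitting disjoint geodesics; your bubble endpoints $P=(x_0,a)$ and $Q=(y_0,b)$ live at random time levels, and more importantly, sparsity of bubble endpoints is not by itself a contradiction. Indeed, bubbles whose lower endpoint is the \emph{initial} point of the geodesic demonstrably exist: that is exactly the set $\NU$ of \eqref{NU0}, which Theorem \ref{thm:DLNU}\ref{itm:DL_NU_count} shows is countably infinite and unbounded along every horizontal line (and which \cite{Bhatia-23} shows has Hausdorff dimension $\tfrac43$). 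So there is no tension between ``$P$ is a branch point of a bubble'' and any dimension bound; the theorem's content is specifically that a bubble cannot have \emph{both} endpoints interior to a longer geodesic, i.e.\ that the geodesic cannot re-enter a common trunk after the bubble closes. Your final sentence asserts a contradiction between ``$P$ or $Q$ is a split/merge location'' and ``$P$ and $Q$ are interior points of $g_-|_{[s',t']}$,'' but no such contradiction has been established — that coexistence is precisely the configuration to be excluded. Closing this requires the geodesic-network classification of \cite{Dauvergne-23} or the argument of \cite{Bhatia-23}, neither of which is reproduced or replaced here; as written, the proposal reduces the theorem to itself.
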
 

Since   $\NU^{\dir -} \cup \NU^{\dir +}$  captures only the $L/R$ distinction and not the $\pm$ distinction, it does  \textit{not} in general contain all  the   initial points from which  the $\dir$-directed  semi-infinite geodesic  is not unique. However, when the $\dir\pm$ distinction is absent, Theorem \ref{thm:all_SIG_thm_intro}\ref{itm:DL_LRmost_SIG} implies that $\NU^{\dir}=\NU^{\dir\pm}$ is exactly the set of points $p \in \R^2$ such that the semi-infinite geodesic from $p$ in direction $\dir$ is not unique. This happens under two scenarios: when $\dir \notin \DLBusedc$, and when we restrict attention to the $\dir$-dependent event of full probability on which $g_{p}^{\dir -,S} = g_p^{\dir +,S}$ for all $p \in \R^2$ and $S \in \{L,R\}$.

The failure to capture the $\pm$ non-uniqueness  is also evident from the size of $\NU$. Whenever $\dir \in \DLBusedc$, there are at least two semi-infinite geodesics with direction $\dir$ from {\it every} initial point. But along a fixed time level $\NU$ is  countable, and thereby  a strict subset of $\R^2$ (Theorem \ref{thm:DLNU}\ref{itm:DL_NU_count} below).

Recall that $\Hh_s=\{(x,s): x \in \R\}$ is the set of  space-time points at time level $s$. Theorem \ref{thm:DLBusedc_description}\ref{itm:DL_dc_set_count} states that on a single event of full probability, $\DLBusedc \subseteq \R \setminus \Q$, so for $\dir \in \Q$,  we can drop the $\pm$ distinction and write $\NU^\dir =\NU^{\dir -} = \NU^{\dir +}$.
\begin{theorem} \label{thm:DLNU}
On a single event of probability one, the set $\NU$ satisfies  
    \be \label{109}
    \NU = \textstyle\bigcup_{\dir \in \Q}\NU^{\dir}.
    \ee In particular, the following hold.
\begin{enumerate} [label=\rm(\roman{*}), ref=\rm(\roman{*})]  \itemsep=3pt
    \item \label{itm:DL_NU_p0} 
     For each $p \in \R^2$, $\Pp(p \in \NU) = 0$, and the full-probability event of the theorem can be chosen so that $\NU$ contains no points of $\Q^2$.
    \item \label{itm:DL_NU_count} On a single event of full probability, simultaneously for every $s \in \R$, $\dir \in \R$ and $\sigg \in \{-,+\}$, the set $\NU^{\dir \sig} \cap\, \Hh_s$ is countably infinite and unbounded in both directions. Specifically, for each $s \in \R$, there exist  sequences $x_n \to -\infty$ and $y_n \to +\infty$ such that $(x_n,s),(y_n,s) \in \NU^{\dir \sig}$. 
    By \eqref{109}, $\NU \cap\, \Hh_s$ is also countably infinite.
\end{enumerate}
\end{theorem}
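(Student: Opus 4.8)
\textbf{The identity \eqref{109}.} I would first establish, on the event $\Omega_2$ of \eqref{omega2}, that $\NU=\bigcup_{\dir\in\Q}\NU^{\dir}$. Since $\DLBusedc$ contains no rational directions on $\Omega_2$, the $\pm$ distinction is absent for rational $\dir$, so $\NU^{\dir}=\NU^{\dir+}\subseteq\NU$ gives $\supseteq$. For $\subseteq$, fix $p=(x,s)\in\NU^{\dir\sig}$ and a time $t>s$ with $g_p^{\dir\sig,L}(t)<g_p^{\dir\sig,R}(t)$. By the monotonicity \eqref{eqn:mont_maxes} of the extreme maximizers in $\dir$ and in the sign, together with Lemma \ref{lem:bounded_maxes}, all maximizers of $z\mapsto\Ll(x,s;z,t)+\W_{\dir'\,\sigg}(z,t;0,t)$, for $\dir'\in[\dir-1,\dir+1]$ and $\sigg\in\{-,+\}$, lie in one bounded interval $[-Z,Z]$. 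By the local constancy of the Busemann process in its direction parameter (Theorem \ref{thm:DL_Buse_summ}\ref{itm:DL_unif_Buse_stick}), there is $\ve>0$ such that $\W_{\beta\,\sigg}(z,t;0,t)=\W_{\dir+}(z,t;0,t)$ for all $z\in[-Z,Z]$ and all $\beta\in(\dir,\dir+\ve)$, and likewise $\W_{\alpha\,\sigg}(z,t;0,t)=\W_{\dir-}(z,t;0,t)$ on $[-Z,Z]$ for all $\alpha\in(\dir-\ve,\dir)$. Choosing a rational $\beta\in(\dir,\dir+\ve)$, the functions $z\mapsto\Ll(x,s;z,t)+\W_{\beta}(z,t;0,t)$ and $z\mapsto\Ll(x,s;z,t)+\W_{\dir+}(z,t;0,t)$ agree on $[-Z,Z]$ and have all their maximizers there, hence the same leftmost and rightmost maximizers; thus $g_p^{\beta,L}(t)=g_p^{\dir+,L}(t)<g_p^{\dir+,R}(t)=g_p^{\beta,R}(t)$, so $p\in\NU^{\beta}$ with $\beta\in\Q$. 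The case of a left limit uses a rational $\alpha\in(\dir-\ve,\dir)$, and if $\dir\notin\DLBusedc$ then $\W_{\dir-}=\W_{\dir+}$ and either choice works. This proves \eqref{109}.

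\textbf{Part \ref{itm:DL_NU_p0}.} Granting \eqref{109}, for a fixed $p\in\R^2$ I would bound $\Pp(p\in\NU)\le\sum_{\dir\in\Q}\Pp(p\in\NU^{\dir})$. For fixed $p$ and fixed $\dir$, $\Pp(\dir\in\DLBusedc)=0$ (Theorem \ref{thm:DLBusedc_description}\ref{itm:DL_dc_set_count}), so a.s.\ there is no $\pm$ splitting at $\dir$ and, by Theorem \ref{thm:all_SIG_thm_intro}\ref{itm:DL_LRmost_SIG}, every semi-infinite geodesic from $p$ in direction $\dir$ lies between $g_p^{\dir,L}$ and $g_p^{\dir,R}$; combined with the a.s.\ uniqueness of the semi-infinite geodesic from $p$ in direction $\dir$ (Theorem \ref{thm:RV-SIG-thm}\ref{itm:pd_fixed}) this forces $g_p^{\dir,L}=g_p^{\dir,R}$ a.s., i.e.\ $\Pp(p\in\NU^{\dir})=0$. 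Hence $\Pp(p\in\NU)=0$, and intersecting the full-probability event with $\bigcap_{p\in\Q^2}\{p\notin\NU\}$ removes all of $\Q^2$ from $\NU$.

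\textbf{Part \ref{itm:DL_NU_count}, countability.} By \eqref{109} and $\NU\cap\Hh_s=\bigcup_{\dir\in\Q}(\NU^{\dir}\cap\Hh_s)$, it suffices to show that for every $s\in\R$ and every rational $\dir$ the slice $\NU^{\dir}\cap\Hh_s$ is countable, and to do so on the single event $\Omega_3$ carrying all the geodesic facts of Section \ref{sec:geometry_sec}. Fix such $s,\dir$. For $x\in\NU^{\dir}\cap\Hh_s$ the geodesics $g_{(x,s)}^{\dir,L}$ and $g_{(x,s)}^{\dir,R}$ are distinct and coalesce (Theorem \ref{thm:DL_all_coal}\ref{itm:DL_allsigns_coal}), and by Theorem \ref{thm:Bhatia} they cannot separate, rejoin, and separate again; hence they split immediately at $(x,s)$ and enclose a nonempty bounded open ``bubble'' between them up to their coalescence point. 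I would then run the planar bubble argument of \cite{Seppalainen-Sorensen-21a}, using that $g_{(x,s)}^{\dir,L}$ (resp.\ $g_{(x,s)}^{\dir,R}$) is the leftmost (resp.\ rightmost) geodesic between any two of its points (Theorem \ref{thm:DL_SIG_cons_intro}\ref{itm:DL_LRmost_geod}), that the restriction of such a geodesic to $[v,\infty)$ is determined by its position at time $v$ (Theorem \ref{thm:DL_SIG_cons_intro}\ref{itm:DL_all_SIG}), and the monotonicity of $x\mapsto g_{(x,s)}^{\dir,S}$, to attach to each $x$ a rational marker so that the resulting map is countable-to-one. This gives countability of $\NU^{\dir}\cap\Hh_s$, and hence of $\NU^{\dir\sig}\cap\Hh_s\subseteq\NU\cap\Hh_s$ for every $\dir\in\R$ and $\sigg\in\{-,+\}$.

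\textbf{Part \ref{itm:DL_NU_count}, infinitude and unboundedness.} It remains to show that $\NU^{\dir\sig}\cap\Hh_s$ is infinite and unbounded in both directions, simultaneously for all $s,\dir,\sigg$, producing the sequences $x_n\to-\infty$ and $y_n\to+\infty$ in $\NU^{\dir\sig}$. By Theorem \ref{thm:Buse_dist_intro}\ref{itm:SH_Buse_process} the right-continuous Busemann process along any horizontal line is distributed as the stationary horizon $G^{\sqrt 2}$, whose jump set $\XiSH$ is dense, locally finite, and unbounded in both directions, and for which every splitting of two trajectories is witnessed by a jump (Theorem \ref{thm:SH_sticky_thm}\ref{itm:SH_all_jump},\ref{itm:SH_split_Xi},\ref{itm:SH_Xi_dense}). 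Feeding these path properties through the Busemann-geodesic construction of Theorem \ref{thm:DL_SIG_cons_intro} produces bubbles at points of $\Hh_s$ arbitrarily far to the left and right; the monotonicity \eqref{eqn:mont_maxes}, the limits \eqref{eqn:dirtoinf}, the coalescence of $\dir\sig$-geodesics (Theorem \ref{thm:DL_all_coal}\ref{itm:DL_allsigns_coal}), and the stationarity and mixing of the joint landscape--Busemann process (Theorem \ref{thm:Buse_dist_intro}\ref{itm:stationarity}) then let one upgrade from a dense set of time levels and directions to a single full-probability event valid for all $s$, $\dir$, and $\sigg$. The hard part will be exactly this last step: turning the qualitative features of the stationary horizon into bubbles \emph{uniformly far out} along a line, and promoting ``a.s.\ for each fixed $s$'' (as in Theorem \ref{thm:DLBusedc_description}\ref{itm:DL_Buse_no_limit_pts}) to ``simultaneously for all $s$''. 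This is where the proof mirrors, and adapts to the continuum setting, the semi-discrete treatment of \cite{Seppalainen-Sorensen-21a,Seppalainen-Sorensen-21b}.
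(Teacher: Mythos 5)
Your treatment of \eqref{109} and of Item \ref{itm:DL_NU_p0} is essentially the paper's: for \eqref{109} you unpack Theorem \ref{thm:g_basic_prop}\ref{itm:DL_SIG_unif} back into the local constancy of the Busemann process plus Lemma \ref{lem:bounded_maxes}, which is exactly how that lemma is proved, and for Item \ref{itm:DL_NU_p0} the union bound over $\Q$ together with Theorem \ref{thm:RV-SIG-thm}\ref{itm:pd_fixed} is the argument in the paper (your detour through $\Pp(\dir\in\DLBusedc)=0$ is unnecessary but harmless). The countability in Item \ref{itm:DL_NU_count} is also fine in spirit: the paper's version is simply that each point of $\NU^{\dir}\cap\Hh_s$ encloses a nonempty open ``bubble'' between its two geodesics, and by the ordering of Theorem \ref{thm:g_basic_prop}\ref{itm:DL_SIG_mont_x} these bubbles are pairwise disjoint for distinct points on the same line, so there are at most countably many; Theorem \ref{thm:Bhatia} is not needed.

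The genuine gap is in the infinitude and unboundedness of $\NU^{\dir\sig}\cap\Hh_s$. Your proposed route --- producing ``bubbles arbitrarily far out'' from the jump structure of the stationary horizon and then upgrading via stationarity and mixing --- targets the wrong kind of non-uniqueness. The jumps of the SH in the direction parameter correspond to $\DLBusedc$ and to the $\pm$ splitting (the set $\Split$ of points with \emph{disjoint} geodesics in an exceptional direction); they say nothing about the $L/R$ non-uniqueness that defines $\NU^{\dir\sig}$, which for a \emph{fixed} $\dir\sig$ is the event that $z\mapsto\Ll(x,s;z,t)+\W_{\dir\sig}(z,t;0,t)$ has multiple maximizers. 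The paper's actual argument is deterministic and geometric: by Lemma \ref{lem:geod_pp} the set $\{g_{(x,s)}^{\dir\sig}(s+1):x\in\Q\cap K\}$ is finite for each compact $K$, so $\{g_{(x,s)}^{\dir\sig}(s+1):x\in\Q\}$ is discrete with no limit points and, by \eqref{373}, unbounded in both directions; it therefore misses infinitely many open intervals with unbounded endpoints, and Lemma \ref{lem:NU_line} converts each such gap into a point $\hat x$ with $g_{(\hat x,s)}^{\dir\sig,L}(s+1)\le a<b\le g_{(\hat x,s)}^{\dir\sig,R}(s+1)$, i.e.\ $\hat x\in\NU^{\dir\sig}\cap\Hh_s$. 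The passage from rational $s$ to all real $s$ is likewise not a mixing argument: one fixes a rational $T>s$, takes $(z,T)\in\NU^{\dir\sig}$, and uses the monotone intermediate-value structure of $x\mapsto g_{(x,s)}^{\dir\sig,L/R}(T)$ to find $x$ at level $s$ whose left and right geodesics straddle $z$ at time $T$, inheriting the non-uniqueness. Neither of these two mechanisms appears in your sketch, so as written the existence half of Item \ref{itm:DL_NU_count} is not established.
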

\begin{remark}
The set $\Q$ can be replaced by any countable dense subset of $\R$, by adjusting the full-probability event. In all applications in this chapter, we use the set $\Q$. 
\end{remark}
\begin{remark}
As Item \ref{itm:DL_NU_count} states, the set $\NU$ is countably infinite along each horizontal line. Hence, the set $\NU$ is uncountable. Bhatia \cite{Bhatia-23} recently proved that the Hausdorff dimension of $\NU$ is $\f{4}{3}$. 
\end{remark}
The next theorem states properties of Busemann geodesics that involve the $L/R$ and $\pm$ distinctions.
\begin{theorem} \label{thm:g_basic_prop}
The following hold on a single event of full probability. 
\begin{enumerate} [label=\rm(\roman{*}), ref=\rm(\roman{*})]  \itemsep=3pt
    \item \label{itm:DL_mont_dir} For $s < t$, $x \in \R$, $\dir_1 < \dir_2$, and $S \in \{L,R\}$,
    \[
    g_{(x,s)}^{\dir_1 -,S}(t) \le g_{(x,s)}^{\dir_1 +,S}(t) \le g_{(x,s)}^{\dir_2 -,S}(t) \le g_{(x,s)}^{\dir_2 +,S}(t).  
    \]
     \item \label{itm:DL_SIG_unif} Let $\dir \in \R$, let $K \subseteq \R$ be a compact set, and let $T > \max K$. Then, there exists a random $\ve = \ve(\dir,T,K)>0$ such that, whenever $\dir - \ve < \alpha < \dir < \beta < \dir + \ve$, $\sigg \in \{-,+\}$, $S \in \{L,R\}$, and $x,s \in K$,
    \[
    g_{(x,s)}^{\alpha \sig,S}(t) = g_{(x,s)}^{\dir -,S}(t)\qquad\text{and}\qquad g_{(x,s)}^{\beta \sig,S}(t) = g_{(x,s)}^{\dir+,S}(t)\qquad\text{for all }t \in [s,T].
    \]
    \item \label{itm:limits_to_inf} For each $(x,s) \in \R^2$, $t > s$, $\sigg \in \{-,+\}$, and $S \in \{L,R\}$,
    $
    \lim_{\xi \to \pm \infty} g_{(x,s)}^{\xi \sig,S}(t) = \pm \infty.
    $
    \item \label{itm:DL_SIG_mont_x} For all $\dir \in \R$, $\sigg \in \{-,+\}$, $s < t$ and $x < y$, $g_{(x,s)}^{\dir \sig,R}(t) \le g_{(y,s)}^{\dir \sig,L}(t)$. More generally, if $x < y$, $s \in \R$, and  $g_1$ is a $\dir \sig$ geodesic from $(x,s)$ and $g_2$ is a $\dir \sig$ geodesic from $(y,s)$ such that $g_1(t) = g_2(t)$ for some $t > s$, then $g_1(u) = g_2(u)$ for all $u > t$. In other words, if   $g_1$ and $g_2$ intersect, they coalesce at their first point of intersection.
    \item \label{itm:DL_SIG_conv_x} For all $\dir \in \R$, $\sigg \in \{-,+\}$, $S \in \{L,R\}$, $x \in \R$, and $s < t$,
    \be \label{371}
    \lim_{w \nearrow x} g_{(w,s)}^{\dir \sig,S}(t) = g_{(x,s)}^{\dir \sig,L}(t),\qquad\text{and}\qquad \lim_{y \searrow x} g_{(y,s)}^{\dir \sig,S}(t) = g_{(x,s)}^{\dir \sig,R}(t),
    \ee
    and if \;  $g_{(x,s)}^{\dir \sig,L}(t) = g_{(x,s)}^{\dir \sig,R}(t) =: g_{(x,s)}^{\dir \sig}(t)$, then for $S \in \{L,R\}$,
    \be \label{372}
    \lim_{(w,u) \rightarrow (x,s)} g_{(w,u)}^{\dir \sig,S}(t) = g_{(x,s)}^{\dir \sig}(t).
    \ee
    Furthermore,
    \be \label{373}
    \lim_{x \to \pm \infty} g_{(x,s)}^{\xi \sig,S}(t) = \pm \infty.
    \ee
\end{enumerate}
\end{theorem}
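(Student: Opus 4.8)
The plan is to prove the six items by leveraging the construction of Busemann geodesics in Theorem \ref{thm:DL_SIG_cons_intro} together with the distributional identity $\{\W_{\dir+}(\abullet,t;0,t)\}_{\dir\in\R}\deq\{G^{\sqrt 2}_\dir\}_{\dir\in\R}$ (Theorem \ref{thm:Buse_dist_intro}\ref{itm:SH_Buse_process}) and the regularity of the SH (Theorem \ref{thm:SH_sticky_thm}). Many of the items are essentially restatements, in the language of extreme maximizers $g^{\dir\sig,L/R}_{(x,s)}$, of monotonicity or regularity facts already recorded. Item \ref{itm:DL_mont_dir} is exactly inequality \eqref{eqn:mont_maxes}, which was established in the proof of Theorem \ref{thm:DL_Buse_summ}\ref{itm:DL_Buse_add} from the horizontal monotonicity \eqref{eqn:Buse_mont} of the Busemann process and Lemma \ref{lemma:max_monotonicity}. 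Item \ref{itm:limits_to_inf} (and hence \eqref{373}) is precisely \eqref{eqn:dirtoinf}, part of the definition of the event $\Omega_2$ in \eqref{omega2}, already shown to hold with probability one. So for these two items I would simply cite the earlier work and collect them onto a single full-probability event.

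For Item \ref{itm:DL_SIG_unif} I would argue as follows. Fix $\dir$, a compact $K\subseteq\R$, and an integer $T>\max K$. As in the proof of Theorem \ref{thm:DL_Buse_summ}\ref{itm:DL_unif_Buse_stick}, use additivity and \eqref{eqn:mont_maxes} to confine all relevant maximizers: there exist finite $A<B$ (depending on $\dir,T,K$) such that for $\alpha\in(\dir-1,\dir+1)$, $S\in\{L,R\}$, $x,s\in K$, and $t\in[s,T]$, the point $g^{\alpha\sig,S}_{(x,s)}(t)$ is the leftmost/rightmost maximizer of $\Ll(x,s;z,t)+\W_{\alpha\sig}(z,t;0,t)$ over $z\in[A,B]$ — this uses that the Busemann function evolves backward as the KPZ fixed point (Theorem \ref{thm:DL_Buse_summ}\ref{itm:Buse_KPZ_description}), together with the global bounds of Lemma \ref{lem:Landscape_global_bound} exactly as in Lemma \ref{lem:bounded_maxes}. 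By the local-constancy event \eqref{465} from $\Omega_2$, there is $\ve=\ve(\dir,T,[A,B])>0$ such that $\W_{\alpha\sig}(z,T;0,T)=\W_{\dir-}(z,T;0,T)$ for $\dir-\ve<\alpha<\dir$ and $\W_{\beta\sig}(z,T;0,T)=\W_{\dir+}(z,T;0,T)$ for $\dir<\beta<\dir+\ve$, uniformly for $z\in[A,B]$; by \eqref{880} (or \eqref{Tdiff}) this equality of Busemann functions at level $T$ propagates to all $(x,s;z,t)$ with $s,t\le T$, hence the arg-max sets — and thus their leftmost and rightmost elements — coincide for the whole range $t\in[s,T]$. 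This gives Item \ref{itm:DL_SIG_unif}.

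Item \ref{itm:DL_SIG_mont_x} is the heart of the theorem and, I expect, the main obstacle. The ordering $g^{\dir\sig,R}_{(x,s)}(t)\le g^{\dir\sig,L}_{(y,s)}(t)$ for $x<y$ should follow from Lemma \ref{lemma:max_monotonicity} applied to the two initial configurations (a point mass shifted, composed with $\Ll$, then maximized against a common Busemann profile), together with the fact that $g^{\dir\sig,R}_{(x,s)}$ is the rightmost and $g^{\dir\sig,L}_{(y,s)}$ the leftmost geodesic between any two of their points (Theorem \ref{thm:DL_SIG_cons_intro}\ref{itm:DL_LRmost_geod}). The coalescence-at-first-intersection statement is the delicate part: if two $\dir\sig$ geodesics $g_1$ (from $(x,s)$) and $g_2$ (from $(y,s)$), $x<y$, satisfy $g_1(t)=g_2(t)=:w$, then on $[t,\infty)$ both are $\dir\sig$ Busemann geodesics emanating from $(w,t)$, and I would run an exchange/crossing argument in the spirit of the proof of Lemma \ref{lem:RM_geod_SIG}: if they separated after time $t$ and came back, one could splice the two pieces to produce a geodesic strictly to the left of the leftmost geodesic (or right of the rightmost) between a pair of its points, using additivity of Busemann functions (Theorem \ref{thm:DL_Buse_summ}\ref{itm:DL_Buse_add}) and Lemma \ref{lem:L_and_Buse_ineq} to certify that the spliced path is still a geodesic — a contradiction with Theorem \ref{thm:DL_SIG_cons_intro}\ref{itm:arb_geod_cons}\ref{itm:maxes}. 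The subtle point is handling the case where $g_1,g_2$ are not themselves the extreme Busemann geodesics; here one sandwiches $g_i$ between $g^{\dir\sig,L}_{(\cdot,s)}$ and $g^{\dir\sig,R}_{(\cdot,s)}$ at the relevant points and reduces to the extreme case.

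Finally, Item \ref{itm:DL_SIG_conv_x}. The one-sided limits \eqref{371} follow from Item \ref{itm:DL_SIG_mont_x}: for $w<x$, $g^{\dir\sig,R}_{(w,s)}(t)\le g^{\dir\sig,L}_{(x,s)}(t)$, so $w\mapsto g^{\dir\sig,S}_{(w,s)}(t)$ is monotone and the left limit exists; upper semicontinuity of the arg-max under uniform convergence of $\Ll(w,s;\cdot,t)$ in $w$ (Lemma \ref{lem:Landscape_global_bound}) identifies it with $g^{\dir\sig,L}_{(x,s)}(t)$, and symmetrically for the right limit. When $g^{\dir\sig,L}_{(x,s)}(t)=g^{\dir\sig,R}_{(x,s)}(t)$, a squeeze between these one-sided limits, combined with continuity in the time argument of the initial point (again via Lemma \ref{lem:Landscape_global_bound} and continuity of $\Ll$), upgrades this to the joint limit \eqref{372}; \eqref{373} restates \eqref{eqn:dirtoinf}. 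I would then assemble all items onto the single full-probability event $\Omega_2$ intersected with the events already used above.
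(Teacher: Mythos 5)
Items \ref{itm:DL_mont_dir}, \ref{itm:DL_SIG_unif}, \ref{itm:limits_to_inf} and the limits \eqref{371}--\eqref{372} of Item \ref{itm:DL_SIG_conv_x} are handled essentially as in the paper. The problem is Item \ref{itm:DL_SIG_mont_x}, where your argument has a genuine gap in both halves. First, Lemma \ref{lemma:max_monotonicity} compares leftmost maximizers with leftmost maximizers and rightmost with rightmost, so together with the quadrangle inequality it only yields the same-sign ordering $g_{(x,s)}^{\dir\sig,S}(t)\le g_{(y,s)}^{\dir\sig,S}(t)$ (this is \eqref{110}); it cannot produce the cross inequality $g_{(x,s)}^{\dir\sig,R}(t)\le g_{(y,s)}^{\dir\sig,L}(t)$ (take $f=g$ with two maximizers to see the cross version fails for general ordered functions). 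In the paper that cross inequality is a \emph{consequence} of the coalescence-at-first-meeting statement, not an input to it. Second, your splicing argument only excludes the scenario ``separate after time $t$ and later rejoin''; the negation of the claim also allows $g_1$ and $g_2$ to separate after $t$ and never meet again, and nothing in your argument rules this out. Worse, your argument never uses the hypothesis $x<y$: two $\dir\sig$ Busemann geodesics emanating from the \emph{same} point can and do separate permanently modulo later coalescence (that is exactly the set $\NU^{\dir\sig}$), and the global coalescence of $\dir\sig$ geodesics (Theorem \ref{thm:DL_all_coal}) is proved \emph{after} and \emph{from} this item, so you cannot invoke it here. The paper's proof exploits $x<y$ decisively: taking $t$ minimal with $g_1(t)=g_2(t)$, it picks a rational time $r\in(s,t)$ and a rational $q\in(g_1(r),g_2(r))$; on $\Omega_2$ the $\dir\sig$ geodesic $g$ from $(q,r)$ is unique (Theorem \ref{thm:DLNU}\ref{itm:DL_NU_p0}, already available at this stage since it only uses Item \ref{itm:DL_SIG_unif}); sandwiching $g_1\le g_{(x,s)}^{\dir\sig,R}\le g\le g_{(y,s)}^{\dir\sig,L}\le g_2$ as in \eqref{210}, all of these pass through $(g_1(t),t)$, and uniqueness of the continuation of $g$ forces the maximizer of $z\mapsto\Ll(g_1(t),t;z,u)+\W_{\dir\sig}(z,u;0,u)$ to be unique for every $u>t$, hence $g_1(u)=g_2(u)$. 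Some device of this kind is unavoidable.

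A smaller but real error: \eqref{373} is the limit as the \emph{spatial starting coordinate} $x\to\pm\infty$, not the limit in the direction parameter, so it does not ``restate \eqref{eqn:dirtoinf}.'' Monotonicity in $x$ alone does not give divergence; the paper derives \eqref{373} from Lemma \ref{lem:max_restrict}\ref{itm:KPZrestrict} (applied to the time-reflected landscape), which localizes the maximizer to $(x-|x|^{1/2+\delta},x+|x|^{1/2+\delta})$ for large $|x|$.
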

\begin{remark} \label{rmk:mixing_LR_pm}
 In general, Theorem \ref{thm:g_basic_prop}\ref{itm:DL_mont_dir} cannot be extended to mix $L$ with $R$. Pick a point $(x,s) \in \NU$, where $\NU$ is defined as in \eqref{NU0_global}. Then, on the full-probability event of Theorem \ref{thm:DLNU}, there exists a rational direction $\dir$ and $t > s$ such that 
 \[
 g_{(x,s)}^{\dir -,L}(t) = g_{(x,s)}^{\dir +,L}(t) < g_{(x,s)}^{\dir -,R}(t) = g_{(x,s)}^{\dir +,R}(t).
 \]
 By Theorem \ref{thm:g_basic_prop}\ref{itm:DL_SIG_unif}, we may choose $\dir_1 < \dir < \dir_2$ sufficiently close to $\dir$ such that 
\[
g_{(x,s)}^{\dir_2 -,L}(t) = g_{(x,s)}^{\dir_2+,L}(t) = g_{(x,s)}^{\dir -,L}(t) < g_{(x,s)}^{\dir +,R}(t) = g_{(x,s)}^{\dir_1 -,R}(t) = g_{(x,s)}^{\dir_1+,R}(t).
\]

Item \ref{itm:DL_SIG_mont_x} is an extension of Item 2 of Theorem 3.4 in \cite{Rahman-Virag-21} to all directions and all pairs of initial points on the same horizontal level.
It is not true that for all $\dir \in \R$, $s < t$, and $x < y$, $g_{(x,s)}^{\xi +,R}(t) \le g_{(y,s)}^{\xi -,L}(t)$. This is discussed later in Remark \ref{rmk:split_from_all_p}.
\end{remark}

The  next theorem    controls   all  semi-infinite geodesics with Busemann geodesics.
    \begin{theorem} \label{thm:all_SIG_thm_intro}
    The following hold on a single event of probability one.  Let \\ $(x_r,t_r)_{r \in \R_{\ge 0}}$ be any net such that $t_r \to \infty$ and $x_r/t_r \to \dir$. 
    \begin{enumerate} [label=\rm(\roman{*}), ref=\rm(\roman{*})]  \itemsep=3pt
    \item  \label{itm:DL_LRmost_SIG} 
    Let $(x,s) \in \R^2$ and $\dir \in \R$. For each $r$ large enough so that $t_r > s$, let $g_r:[s,t_r] \to \R$ be a geodesic from $(x,s)$ to $(x_r,t_r)$.  Then, for each $t \ge s$, 
    \be \label{987}
    g_{(x,s)}^{\dir -,L}(t) \le \liminf_{r \to \infty} g_r(t) \le \limsup_{r \to \infty} g_r(t) \le g_{(x,s)}^{\dir +,R}(t).
    \ee
    In particular, $g_{(x,s)}^{\dir-,L}$ is the leftmost and $g_{(x,s)}^{\dir+,R}$ the rightmost among \textbf{all} semi-infinite geodesics from $(x,s)$ in direction $\dir$. 
   
    
    \item \label{itm:finite_geod_stick} Let $K \subseteq \R^2$ be compact.  Suppose that there is a level $t$ after which all semi-infinite geodesics from $(x,s) \in K$ in direction $\dir$ have coalesced. For $u \ge t$, let $g(u)$ be  this geodesic. Then, given $T > t$, there exists $R \in \R_{>0}$ such that for $r \ge R$ and all $(x,s) \in K$, if $g_r:[s,t_r]\to\R$ is a geodesic from $(x,s)$ to $(x_r,t_r)$, then
    \[
    g_r(u) = g(u)  \qquad\text{for all }u \in [t,T].
    \]
    In particular, suppose there is a unique semi-infinite geodesic from $(x,s)$ in direction $\dir$, denoted by  $g_{(x,s)}^\dir$. Then given  $T > s$, for sufficiently large $r$, we have 
    \[
   g_r(u) = g_{(x,s)}^\dir(u)   \qquad\text{for all }u \in [s,T].
    \]
\end{enumerate}
\end{theorem}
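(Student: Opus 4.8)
The plan is to deduce both parts from a single crossing lemma comparing an arbitrary point-to-point geodesic $g_r$ (from $(x,s)$ to $(x_r,t_r)$) with the extreme Busemann geodesics $g_{(x,s)}^{\alpha,L}$ and $g_{(x,s)}^{\beta,R}$ for rational directions $\alpha<\dir<\beta$. All of this is carried out on the full-probability event $\Omega_2$ of \eqref{omega2}, where the earlier properties of the Busemann process and the Busemann geodesics are available. The crossing lemma states: for rational $\alpha<\dir<\beta$ and every $r$ large enough that $g_{(x,s)}^{\alpha,L}(t_r)<x_r<g_{(x,s)}^{\beta,R}(t_r)$, one has $g_{(x,s)}^{\alpha,L}(u)\le g_r(u)\le g_{(x,s)}^{\beta,R}(u)$ for all $u\in[s,t_r]$. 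I would prove this by planarity. Suppose $g_r(u^\star)<g_{(x,s)}^{\alpha,L}(u^\star)$ for some $u^\star\in(s,t_r)$; since $g_r(t_r)=x_r>g_{(x,s)}^{\alpha,L}(t_r)$, by continuity there is a minimal $w\in(u^\star,t_r]$ with $g_r(w)=g_{(x,s)}^{\alpha,L}(w)=:z$. Then $g_r|_{[s,w]}$ and $g_{(x,s)}^{\alpha,L}|_{[s,w]}$ are both geodesics between $(x,s)$ and $(z,w)$ (the latter because a subsegment of a Busemann geodesic realizes the corresponding landscape value, Theorem \ref{thm:DL_SIG_cons_intro}\ref{itm:weight_of_geod}), yet the former lies strictly to the left of the latter at the interior point $u^\star$, contradicting that $g_{(x,s)}^{\alpha,L}$ is the leftmost geodesic between any two of its points (Theorem \ref{thm:DL_SIG_cons_intro}\ref{itm:DL_LRmost_geod}). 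The upper bound is symmetric, using the rightmost property of $g_{(x,s)}^{\beta,R}$. Moreover, $g_{(x,s)}^{\alpha,L}(t_r)<x_r<g_{(x,s)}^{\beta,R}(t_r)$ does hold for all large $r$ because $g_{(x,s)}^{\alpha,L}(v)/v\to\alpha$ and $g_{(x,s)}^{\beta,R}(v)/v\to\beta$ while $x_r/t_r\to\dir\in(\alpha,\beta)$.

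\textbf{Part \ref{itm:DL_LRmost_SIG}.} Fix $t\ge s$ and rational $\alpha<\dir<\beta$. For all large $r$ the crossing lemma gives $g_{(x,s)}^{\alpha,L}(t)\le g_r(t)\le g_{(x,s)}^{\beta,R}(t)$, hence $g_{(x,s)}^{\alpha,L}(t)\le\liminf_{r}g_r(t)\le\limsup_{r}g_r(t)\le g_{(x,s)}^{\beta,R}(t)$. Now let $\alpha\nearrow\dir$ and $\beta\searrow\dir$ through rationals and use the local constancy of $\dir\mapsto g_{(x,s)}^{\dir\pm,S}$ on compact time intervals (Theorem \ref{thm:g_basic_prop}\ref{itm:DL_SIG_unif}, with the monotonicity \eqref{eqn:mont_maxes}): once $\alpha,\beta$ are close enough to $\dir$ one has $g_{(x,s)}^{\alpha,L}(t)=g_{(x,s)}^{\dir-,L}(t)$ and $g_{(x,s)}^{\beta,R}(t)=g_{(x,s)}^{\dir+,R}(t)$, which yields \eqref{987}. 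The ``in particular'' statement follows by applying \eqref{987} to $g_r:=g|_{[s,s+r]}$ for $g$ any semi-infinite geodesic from $(x,s)$ in direction $\dir$: then $x_r:=g(s+r)$ has $x_r/(s+r)\to\dir$, and $g_r(t)=g(t)$ for all large $r$, so $\liminf_r g_r(t)=\limsup_r g_r(t)=g(t)$, forcing $g_{(x,s)}^{\dir-,L}(t)\le g(t)\le g_{(x,s)}^{\dir+,R}(t)$ for every $t\ge s$.

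\textbf{Part \ref{itm:finite_geod_stick}.} Using Theorem \ref{thm:g_basic_prop}\ref{itm:DL_SIG_unif} with the compact set $K$ and the time horizon $T$, pick rational $\alpha<\dir<\beta$ so that $g_{(x,s)}^{\alpha,L}(u)=g_{(x,s)}^{\dir-,L}(u)$ and $g_{(x,s)}^{\beta,R}(u)=g_{(x,s)}^{\dir+,R}(u)$ simultaneously for all $(x,s)\in K$ and all $u$ in the relevant time window. By hypothesis, for $u\ge t$ the extreme Busemann geodesics $g_{(x,s)}^{\dir-,L}$ and $g_{(x,s)}^{\dir+,R}$ are among the semi-infinite geodesics from $(x,s)$ in direction $\dir$, which have all coalesced, so $g_{(x,s)}^{\dir-,L}(u)=g_{(x,s)}^{\dir+,R}(u)=g(u)$ there. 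The crossing lemma then squeezes $g_r(u)=g(u)$ for all $u\in[t,T]$, as soon as $r$ is large enough that $g_{(x,s)}^{\alpha,L}(t_r)<x_r<g_{(x,s)}^{\beta,R}(t_r)$ for every $(x,s)\in K$. The ``in particular'' assertion is the specialization $K=\{(x,s)\}$, $t=s$, where uniformity is vacuous.

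\textbf{Main obstacle.} The hard part is the uniformity over the compact set $K$ in part \ref{itm:finite_geod_stick}: one must produce a single $R$ after which $g_{(x,s)}^{\alpha,L}(t_r)<x_r<g_{(x,s)}^{\beta,R}(t_r)$ holds for \emph{all} $(x,s)\in K$ at once, i.e.\ uniform control of the rate at which $g_{(x,s)}^{\alpha,L}(v)/v\to\alpha$ and $g_{(x,s)}^{\beta,R}(v)/v\to\beta$ as the initial point varies over $K$. I would get this from the coalescence of all rational-direction semi-infinite geodesics (Theorem \ref{thm:RV-SIG-thm}\ref{itm:fixed_coal}) together with the monotonicity of Busemann geodesics in the starting point (Theorem \ref{thm:g_basic_prop}), reducing the supremum and infimum over $K$ to finitely many reference geodesics via a covering argument. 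The crossing lemma itself is routine, but its careful bookkeeping (the choice of the re-meeting time $w$ and the use of continuity of geodesics) needs the usual attention.
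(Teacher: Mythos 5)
Your proof of part \ref{itm:DL_LRmost_SIG} is correct and is essentially the paper's argument: sandwich $g_r$ between $g_{(x,s)}^{\alpha-,L}$ and $g_{(x,s)}^{\beta+,R}$ for $\alpha<\dir<\beta$ using directedness plus the leftmost/rightmost property from Theorem \ref{thm:DL_SIG_cons_intro}\ref{itm:DL_LRmost_geod}, then send $\alpha\nearrow\dir$, $\beta\searrow\dir$ via the local constancy in Theorem \ref{thm:g_basic_prop}\ref{itm:DL_SIG_unif}; your explicit crossing argument is just the justification the paper leaves implicit, and your derivation of the ``in particular'' clause by taking $g_r=g|_{[s,s+r]}$ is fine.

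For part \ref{itm:finite_geod_stick} your structure matches the paper's, but the step you flag as the ``main obstacle'' --- producing one $R$ with $g_{(x,s)}^{\alpha,L}(t_r)<x_r<g_{(x,s)}^{\beta,R}(t_r)$ for all $(x,s)\in K$ simultaneously --- is handled differently, and more cheaply, in the paper. After choosing $\alpha<\dir<\beta$ so that $g_p^{\alpha-,L}(u)=g_p^{\dir-,L}(u)=g(u)=g_p^{\dir+,R}(u)=g_p^{\beta+,R}(u)$ for all $p\in K$ and $u\in[t,T]$, one observes that every $g_p^{\alpha-,L}$ passes through the single point $(g(t),t)$; since it is the leftmost geodesic between any two of its points, its restriction to $[t,\infty)$ coincides with $g_{(g(t),t)}^{\alpha-,L}$, and likewise on the right. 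Hence $g_p^{\alpha-,L}(t_r)=g_{(g(t),t)}^{\alpha-,L}(t_r)$ and $g_p^{\beta+,R}(t_r)=g_{(g(t),t)}^{\beta+,R}(t_r)$ are the \emph{same} two numbers for every $p\in K$, and the threshold $R$ is determined by directedness of one pair of geodesics from $(g(t),t)$ --- no covering argument, no uniform rate of convergence over $K$, and no appeal to coalescence of rational-direction geodesics beyond the hypothesis already assumed. Your proposed route (bounding the family of Busemann geodesics over $K$ by finitely many reference geodesics via the spatial monotonicity of Theorem \ref{thm:g_basic_prop}\ref{itm:DL_SIG_mont_x}) can be made to work, but you would need to be careful that the monotonicity there is stated only for starting points on a common time level, so you must first push all starting points up to a common level $S\ge\sup\{s:(x,s)\in K\}$ as in the proof of Theorem \ref{thm:DL_all_coal}\ref{itm:unif_coal}; the paper's single-reference-point observation avoids this bookkeeping entirely.
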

\begin{remark}
Theorem \ref{thm:DL_all_coal}\ref{itm:DL_allsigns_coal} below states that the assumed coalescence in Item \ref{itm:finite_geod_stick} occurs whenever $\dir \notin \DLBusedc$. The second statement of Item \ref{itm:finite_geod_stick} is in Corollary 3.1 in \cite{Rahman-Virag-21}. We provide a different proof that uses the regularity of the Busemann process.  
\end{remark}

\subsection{Proofs}
 In this section, we prove Theorems \ref{thm:DLNU}, \ref{thm:g_basic_prop}, and \ref{thm:all_SIG_thm_intro}. In each of these, the full-probability event is $\Omega_2$ \eqref{omega2}.
We start by proving parts of Theorem \ref{thm:g_basic_prop}, then go to the proof of Theorem \ref{thm:DLNU}.

\begin{proof}[Proof of Theorem \ref{thm:g_basic_prop}, Items \ref{itm:DL_mont_dir}--\ref{itm:limits_to_inf}]

\noindent 
\textbf{Item \ref{itm:DL_mont_dir} (monotonicity of geodesics in the direction parameter)} was already proven as Equation \eqref{eqn:mont_maxes}. In fact, this item holds on $\Omega_1$.

\noindent \textbf{Item \ref{itm:DL_SIG_unif} (geodesics agree locally for close directions): } This follows a similar proof as the proof of Theorem \ref{thm:DL_Buse_summ}\ref{itm:DL_unif_Buse_stick}. Let $K$ be a compact subset of $\R$, and let $T$ be an integer greater than $\max K$. Set 
\[
A = \inf\{g_{(x,s)}^{(\dir - 1)-,L}(T):x,s \in K\},\qquad \text{and}\qquad B = \sup\{g_{(x,s)}^{(\dir + 1)+,R}(T):x,s \in K\}.
\]
By Lemma \ref{lem:bounded_maxes} and Item \ref{itm:DL_mont_dir}, $-\infty < A < B < \infty$. Then, for all $0 < \ve < 1$ sufficiently small, all $\dir- \ve < \alpha < \dir$, and all $x,s \in K$, the functions $z \mapsto \Ll(x,s;z,T) + \W_{\alpha \sig}(z,T;0,T)$ and  $z \mapsto \Ll(x,s;z,t) + \W_{\dir-}(z,T;0,T)$ agree on the set $[A,B]$, which contains all maximizers. Hence, for such $\alpha$ and $\sigg \in \{-,+\}$, and $S \in \{L,R\}$, $g_{(x,s)}^{\alpha \sig ,S}(T) = g_{(x,s)}^{\dir -,S}(T)$. 
Since $g_{(x,s)}^{\alpha \sig,L}:[s,\infty) \to \R$ and $g_{(x,s)}^{\alpha \sig,R}:[s,\infty) \to \R$ define semi-infinite geodesics that are, respectively, the leftmost and rightmost geodesics between any of their points (Theorem \ref{thm:DL_SIG_cons_intro}\ref{itm:DL_all_SIG}-\ref{itm:DL_LRmost_geod}), it must also hold that for $S \in \{L,R\}$ and $t \in [t,T]$,
$g_{(x,s)}^{\alpha \sig,S}(t) = g_{(x,s)}^{\dir -,S}(t)
$.
Otherwise, taking $S = L$ without loss of generality, there would exist two distinct leftmost geodesics from $(x,s)$ to $(g_{(x,s)}^{\dir -,L}(T),T)$, a contradiction. The proof for the $\dir +$ geodesics where $\beta$ is sufficiently close to $\dir$ from the right is analogous. 

\noindent \textbf{Item \ref{itm:limits_to_inf} (limit of geodesics as direction goes to $\pm \infty)$:} This holds on $\Omega_2$ by definition \eqref{omega2}. 

\noindent We postpone the proof of Items \ref{itm:DL_SIG_mont_x} and \ref{itm:DL_SIG_conv_x} until after the following proof.
\end{proof}

\begin{proof}[Proof of Theorem \ref{thm:DLNU} (Description of the set $\NU$)]

By Theorem \ref{thm:DLBusedc_description}\ref{itm:DL_NU_count}, on the event $\Omega_2$, $\alpha \notin \DLBusedc$ for all $\alpha \in \Q$, so we omit the $\pm$ distinction in this case. 
We first prove \eqref{109}. 
If $(x,s) \in \NU^{\dir \sig}$ then 
$g_{(x,s)}^{\dir \sig,L}(t) < g_{(x,s)}^{\dir \sig,R}(t)$ for some $t > s$. By  Theorem \ref{thm:g_basic_prop}\ref{itm:DL_SIG_unif}, there exists a rational direction $\alpha$ (greater than $\dir$ if $\sigg = +$ and less than $\dir$ if $\sigg = -$) such that
\[
g_{(x,s)}^{\alpha,L}(t) = g_{(x,s)}^{\dir \sig,L}(t) < g_{(x,s)}^{\dir \sig,R}(t) =g_{(x,s)}^{\alpha,R}(t).
\]
Hence, $(x,s) \in \NU^\alpha$.

\noindent \textbf{Item \ref{itm:DL_NU_p0}:} By
Theorem \ref{thm:RV-SIG-thm}\ref{itm:pd_fixed}, for fixed direction $\dir$ and fixed initial point $p$, there is a unique semi-infinite geodesic from $p$ in direction $\dir$, implying $(x,s) \notin \NU^\dir$. The result now follows directly from \eqref{109} and a union bound. In particular, by definition of the event $\Omega_1\supset \Omega_2$ \eqref{omega1}, for each $(q,r) \in \Q^2$ and $\dir \in \Q$, $(q,r) \notin \NU^\dir$. Then, by \eqref{109}, on the event $\Omega_2$, $\NU \subseteq \R^2 \setminus \Q^2$.

\noindent 
We postpone the proof of Item \ref{itm:DL_NU_count} until the end of this subsection.
\end{proof}


\begin{proof}[Remaining proofs of Theorem \ref{thm:g_basic_prop}]

\noindent 
 \textbf{Item \ref{itm:DL_SIG_mont_x} (Spatial monotonicity of geodesics):} We first prove a weaker result. Namely, for $s \in \R$, $x < y$, $\dir \in \R$, $\sigg \in \{-,+\}$, and $S \in \{L,R\}$, 
\be \label{110}
g_{(x,s)}^{\dir \sig,S}(t) \le g_{(y,s)}^{\dir \sig,S}(t)\qquad\text{for all }t \ge s.
\ee
By continuity of geodesics, it suffices to assume that $z := g_{(x,s)}^{\dir \sig,L}(t) = g_{(y,s)}^{\dir \sig,L}(t)$ for some $t > s$, and then show that $g_{(x,s)}^{\dir \sig,L}(u) = g_{(y,s)}^{\dir \sig,L}(u)$ for all $u >  t$. 
By Theorem \ref{thm:DL_SIG_cons_intro}\ref{itm:DL_all_SIG}, if $z := g_{(x,s)}^{\dir \sig,S}(t) = g_{(y,s)}^{\dir \sig,S}(t)$, then for $u > t$, both $g_{(x,s)}^{\dir \sig,L}(u)$ and $g_{(y,s)}^{\dir \sig,L}(u)$ are the leftmost maximizer of $\Ll(z,t;w,u) + \W_{\dir \sig}(w,u;0,u)$ over $w \in \R$, so they are equal.

Now, to prove the stated result, we follow a similar argument as Item 2 of Theorem 3.4 in \cite{Rahman-Virag-21}, adapted to give a global result across all direction, signs, and pairs of points along the same horizontal line. Let $g_1$ be a $\dir \sig$ geodesic from $(x,s)$ and let $g_2$ be a $\dir \sig$ geodesic from $(y,s)$, and assume that $g_1(t) = g_2(t)$ for some $t > s$. By continuity of geodesics, we may take $t$ to be the minimal such time. Choose $r \in (s,t)\cap \Q$ and then choose $q \in (g_1(r),g_2(r)) \cap \Q$. See Figure \ref{fig:choose_rational}. By Theorem \ref{thm:DLNU}\ref{itm:DL_NU_p0}, on the event $\Omega_2$, there is a unique $\dir \sig$ Busemann geodesic from $(q,r)$, which we shall call $g = g_{(q,r)}^{\dir \sig,L} = g_{(q,r)}^{\dir \sig,R}$.
For $u \ge r$, 
\be \label{210}
 g_1(u)\le g_{(x,s)}^{\dir \sig,R}(u) \le g(u) \le  g_{(y,s)}^{\dir \sig,L}(u) \le g_2(u). 
\ee
The two middle inequalities come from \eqref{110}.  The two outer inequalities come from the definition of $g_{(x,s)}^{\dir \sig,L/R}(u)$ as the left and rightmost maximizers.  

By assumption and \eqref{210}, $z := g_1(t) = g(t) = g_2(t)$. By Theorem \ref{thm:DL_SIG_cons_intro}\ref{itm:arb_geod_cons}\ref{itm:maxes}, for $u > t$, $g_1(u),g_2(u)$, and $g(u)$ are all maximizers of $\Ll(z,t;w,u) + \W_{\dir \sig}(w,u;0,u)$ over $w \in \R$. However, since there is a unique $\dir \sig$ geodesic from $(q,r)$, there can be only one such maximizer, so the inequalities in \eqref{210} are equalities for $u \ge t$.  
\begin{figure}[t]
    \centering
    \includegraphics[height = 1.5in]{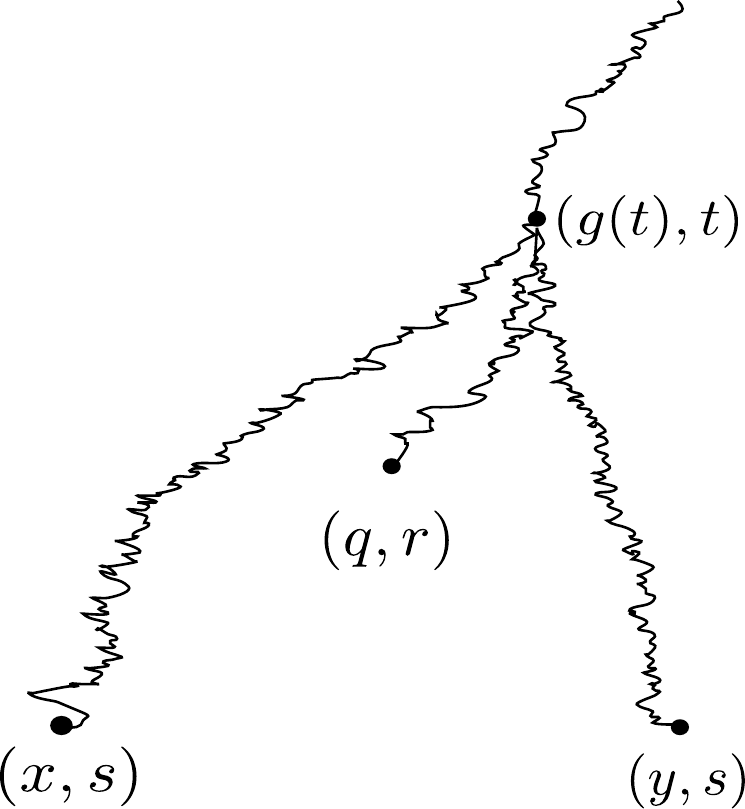}
    \caption{\small Choosing a point $(q,r) \in \Q^2$ whose $\dir \tiny{\boxempty}$ geodesic is unique}
    \label{fig:choose_rational}
\end{figure}

\noindent \textbf{Item \ref{itm:DL_SIG_conv_x} (limits of geodesics in the spatial parameter):} We start by proving \eqref{371}. We prove the statement for the limits as $w \nearrow x$, and the limits as $w \searrow x$ follow analogously. By Item \ref{itm:DL_SIG_mont_x}, $z := \lim_{w \nearrow x} g_{(w,s)}^{\dir \sig,S}(t)$ exists and is less than or equal to $g_{(x,s)}^{\dir \sig,L}(t)$. Further, by the same monotonicity, for all $w \in [x - 1,x]$, all maximizers of $\Ll(w,s;y,t) + \W_{\dir \sig}(y,t;0,t)$ over $y \in \R$ lie in the common compact set $[g_{(x - 1,s)}^{\dir \sig,L}(t),g_{(x,s)}^{\dir \sig,R}(t)]$. By continuity of the directed landscape (Lemma \ref{lem:Landscape_global_bound}), as $w \nearrow x$, the function $y \mapsto \Ll(w,s;y,t) + \W_{\dir \sig}(y,t;0,t)$ converges uniformly on compact sets to the function $y \mapsto \Ll(x,s;y,t) + \W_{\dir \sig}(y,t;0,t)$. Hence, Lemma \ref{lemma:convergence of maximizers from converging functions} implies that $z$ is a maximizer of $\Ll(x,s;y,t) + \W_{\dir \sig}(y,t;0,t)$ over $y \in \R$. Since $z \le g_{(x,s)}^{\dir \sig,L}(t)$, and $g_{(x,s)}^{\dir \sig,L}(t)$ is the leftmost such maximizer, equality holds. 

The proof of \eqref{372} is similar: in this case, Lemma \ref{lem:bounded_maxes} implies that for all $(w,u)$ sufficiently close to $(x,s)$, the maximizers of $y \mapsto\Ll(w,u;y,t) + \W_{\dir \sig}(y,t;0,t)$ lie in a common compact set. Then, by Lemma \ref{lemma:convergence of maximizers from converging functions}, every subsequential limit of $g_{(w,u)}^{\dir \sig,S}(t)$ as $(w,u) \to (x,s)$ is a maximizer of $y \mapsto\Ll(x,s;y,t) + \W_{\dir \sig}(y,t;0,t)$. By assumption, there is only one such maximizer, so the desired convergence holds.  

Lastly, to show \eqref{373}, we recall that the Busemann process evolves as the KPZ fixed point (Theorem \ref{thm:DL_Buse_summ}\ref{itm:Buse_KPZ_description}).  The Busemann functions are continuous and satisfy the asymptotics prescribed in Lemma \ref{lem:DL_horiz_Buse}\ref{itm:DL_lim}. Therefore, for each $t,\dir$, and $\sigg$, there exists constants $a,b > 0$ so that $|\W_{\dir \sig}(x,t;0,t)| \le a + b|x|$.  Lemma \ref{lem:max_restrict}\ref{itm:KPZrestrict} applied to the temporally reflected version of $\Ll$ states that for sufficiently large $|x|$, $g_{(x,s)}^{\dir \sig,S}(t) \in (x - |x|^{2/3},x + |x|^{2/3})$. 
\end{proof}

\begin{proof}[Proof of Theorem \ref{thm:all_SIG_thm_intro}]

We remind the reader that this theorem controls arbitrary\\ geodesics via the Busemann geodesics. 
  
\noindent \textbf{Item \ref{itm:DL_LRmost_SIG}:} Let $\alpha < \dir < \beta$. By directedness of Busemann geodesics (Theorem \ref{thm:DL_SIG_cons_intro}\ref{itm:DL_all_SIG}) and the assumption $x_r/r_r \to \dir$, for all sufficiently large $r$,
\[
g_{(x,s)}^{\alpha -,L}(t_r) < x_r < g_{(x,s)}^{\beta +,R}(t_r).
\]
Since $g_{(x,s)}^{\alpha -,L}$ is the leftmost geodesic between any of its points and $g_{(x,s)}^{\beta +,R}$ is the rightmost (Theorem \ref{thm:DL_SIG_cons_intro}\ref{itm:DL_LRmost_geod}), it follows that for $u \in [s,t_r]$,
\be \label{504}
g_{(x,s)}^{\alpha -,L}(u) \le g_r(u) \le g_{(x,s)}^{\beta +,R}(u).
\ee
Hence, for all $t \ge s$,
\[
g_{(x,s)}^{\alpha -,L}(t) \le \liminf_{r \to \infty} g_r(t) \le \limsup_{r \to \infty} g_r(t) \le g_{(x,s)}^{\beta +,R}(t).
\]
By Theorem \ref{thm:g_basic_prop}\ref{itm:DL_SIG_unif}, taking limits as $\alpha \nearrow \dir$ and $\beta \searrow \dir$ completes the proof. 

\noindent \textbf{Item \ref{itm:finite_geod_stick}:} Assume that all geodesics in direction $\dir$, starting from a point in the compact set $K$, have coalesced by time $t$, and for $u \ge t$, let $g(u)$ be the spatial location of this common geodesic. By Item \ref{itm:DL_LRmost_SIG}, for all $p \in K$ and $u \ge t$,
\[
g(u) = g_p^{\dir -,L}(u) = g_p^{\dir +,R}(u).
\]
Let $T > t$ be arbitrary. By Theorem \ref{thm:g_basic_prop}\ref{itm:DL_SIG_unif}, we may choose $\alpha < \dir < \beta$ such that, for all $p \in K$ and $u \in [t,T]$,
\be \label{301}
g_{(g(t),t)}^{\alpha-,L}(u) = g_p^{\alpha -,L}(u) = g(u) = g_p^{\beta +,R}(u) = g_{(g(t),t)}^{\beta +,R}(u).
\ee
The outer equalities hold because the geodesics pass through $(g(t),t)$. With this choice of $\alpha,\beta$, by the directedness of Theorem \ref{thm:DL_SIG_cons_intro}\ref{itm:DL_all_SIG} and since $x_r/t_r \to \dir$, we may choose $r$ large enough so that $t_r \ge T$ and
$
g_{(g(t),t)}^{\alpha -,L}(t_r) < x_r < g_{(g(t),t)}^{\beta +,R}(t_r).
$
 Then,  as in the proof of Item \ref{itm:DL_LRmost_SIG}, for all $u \in [t,t_r]$,
\[
g_{(g(t),t)}^{\alpha -,L}(u) \le g_r(u) \le g_{(g(t),t)}^{\beta +,R}(u).
\]
Combining this with \eqref{301} completes the proof. 
\end{proof}

\noindent It remains to prove Theorem \ref{thm:DLNU}\ref{itm:DL_NU_count}. We first prove a lemma.

\begin{lemma} \label{lem:NU_line}
Let $\omega \in \Omega_2$, $\dir \in \R$, $\sigg \in \{-,+\}$, $\Q \ni s < t \in \R$,  and assume that there is a nonempty interval $I = (a,b) \subseteq \R$ such that for all $x \in \Q$, $g_{(x,s)}^{\dir \sig}(t) \notin I$ {\rm(}By Theorem \ref{thm:DLNU}\ref{itm:DL_NU_p0}, we may ignore the $L/R$ distinction when $(x,s) \in \Q^2${\rm)}. Then, there exists $\hat x \in \R$ such that 
\be \label{794}
g_{(\hat x,s)}^{\dir \sig,L}(t) \le a < b \le g_{(\hat x,s)}^{\dir \sig,R}(t). 
\ee
\end{lemma}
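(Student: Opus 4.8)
The plan is to exploit the spatial monotonicity of Busemann geodesics (Theorem \ref{thm:g_basic_prop}\ref{itm:DL_SIG_mont_x}) together with the left/right continuity in the initial point (Theorem \ref{thm:g_basic_prop}\ref{itm:DL_SIG_conv_x}) to produce the critical initial position $\hat x$ as a supremum. Concretely, set
\[
\hat x = \sup\{x \in \R: g_{(x,s)}^{\dir \sig,R}(t) \le a\}.
\]
First I would check this set is nonempty and bounded above: by \eqref{373} in Theorem \ref{thm:g_basic_prop}\ref{itm:DL_SIG_conv_x}, $g_{(x,s)}^{\dir\sig,R}(t) \to -\infty$ as $x \to -\infty$, so the set contains all sufficiently negative $x$; likewise $g_{(x,s)}^{\dir\sig,L}(t) \to +\infty$ as $x \to +\infty$ (and $g^{\dir\sig,L}\le g^{\dir\sig,R}$), so the set is bounded above. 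Hence $\hat x \in \R$ is well-defined. Monotonicity in $x$ (the weak form \eqref{110}) guarantees that $g_{(x,s)}^{\dir\sig,R}(t)\le a$ for all $x < \hat x$ and $g_{(x,s)}^{\dir\sig,L}(t) > a$ — in fact $\ge b$ once we use the hypothesis, see below — for all $x > \hat x$.

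Next I would establish the two inequalities in \eqref{794} by taking one-sided limits into $\hat x$. For the left inequality: using \eqref{371}, $\lim_{w \nearrow \hat x} g_{(w,s)}^{\dir\sig,R}(t) = g_{(\hat x,s)}^{\dir\sig,L}(t)$, and since $g_{(w,s)}^{\dir\sig,R}(t) \le a$ for all $w < \hat x$ (approaching along rationals if desired, though \eqref{371} holds for real $w$), the limit satisfies $g_{(\hat x,s)}^{\dir\sig,L}(t) \le a$. For the right inequality I use the hypothesis that no rational initial point has its time-$t$ geodesic location in the open interval $(a,b)$. Approaching $\hat x$ from the right along rationals $y_n \searrow \hat x$, \eqref{371} gives $g_{(y_n,s)}^{\dir\sig,L}(t) = g_{(y_n,s)}^{\dir\sig}(t) \to g_{(\hat x,s)}^{\dir\sig,R}(t)$. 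Each $g_{(y_n,s)}^{\dir\sig}(t)$ avoids $(a,b)$ by assumption; for $y_n > \hat x$ sufficiently close, monotonicity and the definition of $\hat x$ force $g_{(y_n,s)}^{\dir\sig}(t) > a$ (otherwise $y_n$ would belong to the defining set of the supremum), hence $g_{(y_n,s)}^{\dir\sig}(t) \ge b$. Passing to the limit yields $g_{(\hat x,s)}^{\dir\sig,R}(t) \ge b$, which combined with the left inequality gives \eqref{794}.

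The main subtlety — the step I expect to require the most care — is the argument that $g_{(y_n,s)}^{\dir\sig}(t) \ge b$ rather than merely $> a$ for rationals $y_n$ just to the right of $\hat x$. This is where the interval-avoidance hypothesis is essential: without it one could only conclude $g_{(\hat x,s)}^{\dir\sig,R}(t) \ge a$, which would not separate the left and right maximizers. One must argue that for $y_n$ close enough to $\hat x$ the value $g_{(y_n,s)}^{\dir\sig}(t)$ cannot have "jumped past" $b$ and back — but monotonicity in $x$ rules this out, since $x \mapsto g_{(x,s)}^{\dir\sig,R}(t)$ is nondecreasing, so once it exceeds $a$ it must land in $[b,\infty)$ by the avoidance hypothesis and stay there. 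A secondary point to handle carefully is that \eqref{371} is stated for limits of $g_{(w,s)}^{\dir\sig,S}(t)$ with $S$ fixed but general real $w$; since for rational $w$ on the event $\Omega_2$ the $L/R$ distinction collapses (Theorem \ref{thm:DLNU}\ref{itm:DL_NU_p0}), restricting the approach to rational $w,y_n$ is harmless and keeps the hypothesis applicable.
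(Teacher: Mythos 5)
Your proof is correct and follows essentially the same route as the paper's: both define $\hat x$ as a supremum of initial positions whose time-$t$ geodesic location sits at or below a threshold, then combine spatial monotonicity, the interval-avoidance hypothesis for rational starting points, and the one-sided limits \eqref{371} to pin down $g_{(\hat x,s)}^{\dir\sig,L}(t)\le a$ and $g_{(\hat x,s)}^{\dir\sig,R}(t)\ge b$. The only cosmetic difference is that the paper takes the supremum over rational $x$ with $g_{(x,s)}^{\dir\sig}(t)<y$ for a fixed $y\in(a,b)$, whereas you use the threshold $\le a$ over real $x$ with the rightmost geodesic; this is an immaterial variation.
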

\begin{proof}
Choose some $y \in (a,b)$, and let 
\[
\hat x = \sup\{x \in \Q : g_{(x,s)}^{\dir \sig}(t) < y\}.
\]
By Equation \eqref{373} of Theorem \ref{thm:g_basic_prop}\ref{itm:DL_SIG_conv_x}, $\hat x \in \R$. By the monotonicity of Theorem \ref{thm:g_basic_prop}\ref{itm:DL_SIG_mont_x}, for all $\Q \ni x < \hat x$, $g_{(x,s)}^{\dir \sig}(t) < y$, while for all $\Q \ni x > \hat x$, $g_{(x,s)}^{\dir \sig}(t) \ge y$. By assumption of the lemma,
this further implies that for $\Q \ni x < \hat x$, $g_{(x,s)}^{\dir \sig}(t) \le a$ while for $\Q \ni x > \hat x$, $g_{(x,s)}^{\dir \sig}(t) \ge b$.  By taking limits via Equation \eqref{371} of Theorem \ref{thm:g_basic_prop}\ref{itm:DL_SIG_conv_x}, we obtain \eqref{794}.
\end{proof}

\begin{proof}[Proof of Theorem \ref{thm:DLNU}\ref{itm:DL_NU_count} ($\NU^{\dir \sig} \cap \Hh_s$ is countably infinite and unbounded)] 
We prove \\the statement in three steps. First, we show that on $\Omega_2$, for all $s \in \Q$, $\dir \in \R$, $\sigg \in \{-,+\}$, the set $\NU^{\dir \sig} \cap\, \Hh_s$ is infinite and unbounded in both directions. Next, we show that, on $\Omega_2$, $\NU^{\dir \sig} \cap\, \Hh_s$ is in fact infinite and unbounded in both directions for all $s \in \R$. Lastly, we show that the set $\NU \cap\, \Hh_s$ (the union over all directions and signs) is countable.

 For the first step, Theorem \ref{thm:DLNU}\ref{itm:DL_NU_p0} states that, on the event $\Omega_2$, for each $(x,s) \in \Q^2$, $\dir \in \R$, and $\sigg \in \{-,+\}$, there is a unique $\dir \sig$ geodesic $g_{(x,s)}^{\dir \sig}$, and therefore this geodesic is both the leftmost and rightmost $\dir \sig$ geodesic from $(x,s)$. Since leftmost (resp. rightmost) Busemann geodesics are leftmost (rightmost) geodesics between any two of their points (Theorem \ref{thm:DL_SIG_cons_intro}\ref{itm:DL_LRmost_geod}), it follows that  $g_{(x,s)}^{\dir \sig}$, restricted to times $t \in [s,s+2]$, is the unique geodesic from $(x,s)$ to $(g_{(x,s)}^{\dir \sig}(s + 2),s +2)$. By Lemma \ref{lem:bounded_maxes}, for each compact set $K$, the set 
 \[
 \{g_{(x,s)}^{\dir \sig}(s + 1): x \in \Q \cap K\} 
 \]
is contained in some compact set $K'$.
Then, we have the following inclusion of sets:
\begin{align} 
&\quad \; \{g_{(x,s)}^{\dir \sig}(s + 1): x \in \Q \cap K \} \label{873}
\subseteq  \bigcup_{g \in \mathcal A_{K,K'}}\{g(s + 1) \} 
\end{align}
where 
\[
\mathcal A_{K,K'} = \{g: \text{$g$  is the unique geodesic from }(x,s) \text{ to }(y,s+2) \text{ for some } x\in K,y \in K'\}. 
\]
By Lemma \ref{lem:geod_pp}, the set in the RHS of \eqref{873} is finite, so the set on the LHS  is finite as well. Therefore, the set
\be \label{875}
\{g_{(x,s)}^{\dir \sig}(s + 1): x \in \Q \} = \bigcap_{k \in \Z_{>0}}\{g_{(x,s)}^{\dir \sig}(s + 1): x \in \Q \cap [-k,k] \}
\ee
is a union of finite nested sets. Further, by the ordering of geodesics from Theorem \ref{thm:g_basic_prop}\ref{itm:DL_SIG_mont_x}, for each $k$, the difference 
\[
\{g_{(x,s)}^{\dir \sig}(s + 1): x \in \Q \cap [-(k + 1),k + 1] \} \setminus \{g_{(x,s)}^{\dir \sig}(s + 1): x \in \Q \cap [-k,k] \}
\]
lies entirely in the union of intervals
\[
\Bigl(-\infty, \inf \bigl\{g_{(x,s)}^{\dir \sig}(s + 1): x \in \Q \cap [-k,k] \bigr\}\Bigr] \cup \Bigl[\sup \bigl\{g_{(x,s)}^{\dir \sig}(s + 1): x \in \Q \cap [-k,k] \bigr\},\infty\Bigr).
\]
Therefore, the set \eqref{875} has no limit points. 
Further, by Equation \eqref{373} of Theorem \ref{thm:g_basic_prop}\ref{itm:DL_SIG_conv_x}, the set \eqref{875} is unbounded in both directions. These two facts imply that there exist infinitely many disjoint nonempty intervals whose intersection with the set \eqref{875} is empty, and the set of endpoints of such intervals is unbounded. By Lemma \ref{lem:NU_line}, for each $k > 0$, there exists $(x,s) \in \NU^{\dir \sig}$ such that $g_{(x,s)}^{\dir \sig,R}(s + 1) \ge k$, and there exists $(x,s) \in \NU^{\dir \sig}$ such that $g_{(x,s)}^{\dir \sig,L}(s + 1) \le -k$. Next, assume, by way of contradiction, that the set $\{x \in \R: (x,0) \in \NU^{\dir \sig}\}$ has an upper bound $b$. Then, by the monotonicity of Theorem \ref{thm:g_basic_prop}\ref{itm:DL_SIG_mont_x}, for all $x \in \R$ with $(x,s) \in \NU^{\dir \sig}$, $g_{(x,s)}^{\dir \sig,R}(s + 1) \le g_{(b,s)}^{\dir \sig,R}(s +1)$. But this contradicts the fact we showed that $\{g_{(x,s)}^{\dir \sig,R}(s + 1): x \in \R\}$ is not bounded above. Hence, there exists a sequence $y_n \to \infty$ such that $(y_n,s) \in \NU^{\dir \sig}$ for all $n$. By a similar argument, there exists a sequence $x_n \to -\infty$ such that $(x_n,s) \in \NU^{\dir \sig}$ for all $n$.

Now, for arbitrary $s \in \R$, pick a rational number $T > s$. Pick $(z,T) \in \NU^{\dir \sig}$, and let
\begin{align*}
x_1 = \sup\{  x \in \R:   g_{(x,s)}^{\dir\sig, L}(T) \le  z  \}, \qquad \text{and}\qquad
x_2 = \inf\{  x \in \R:   g_{(x,s)}^{\dir\sig, R}(T) \ge  z  \}.
\end{align*}
By the limits in Equation \eqref{373} of Theorem \ref{thm:g_basic_prop}\ref{itm:DL_SIG_conv_x}, 
$x_1$ and $x_2$ lie in $\R$.

We first show that $x_2 \le x_1$. If not, then choose $x \in (x_1,x_2)$. Then, $g_{(x,s)}^{\dir\sig, R}(T) <   z <    g_{(x,s)}^{\dir\sig, L}(T)$, contradicting the meaning of L and R.  Hence  $x_2 \le x_1$. For any $x > x_2$, $g_{(x,s)}^{\dir \sig,R}(T) \ge z$, and by the limit in Equation \eqref{371} of Theorem \ref{thm:g_basic_prop}\ref{itm:DL_SIG_conv_x}, $g_{(x_2,s)}^{\dir \sig,R}(T) \ge z$ as well. By an analogous argument, for $x < x_1$, $g_{(x,s)}^{\dir \sig,L}(T) \le z$, and the inequality $g_{(x_1,s)}^{\dir \sig,L}(T) \le z$ holds by the same argument. Hence, for $x \in [x_2,x_1]$,
\[
g_{(x,s)}^{\dir \sig,L}(T) \le z,\qquad\text{and}\qquad g_{(x,s)}^{\dir \sig,R}(T) \ge z.
\]
Then, by the monotonicity of Theorem \ref{thm:g_basic_prop}\ref{itm:DL_SIG_mont_x}, for $t \ge T$,
\be \label{1000}
g_{(x,s)}^{\dir \sig,L}(t) \le g_{(z,T)}^{\dir \sig,L}(t) \le g_{(z,T)}^{\dir \sig,R}(t) \le g_{(x,s)}^{\dir \sig,R}(t).
\ee
By assumption that $(z,T) \in \NU^{\dir \sig}$, there exists $t > T$ such that the middle inequality in \eqref{1000} is strict, so $(x,s) \in \NU^{\dir \sig}$. Furthermore, by assumption, the set $\{z \in \R: (z,T) \in \NU\}$ has neither an upper or lower bound. Then, by the $t = T$ case of \eqref{1000} and a similar argument as for the $s = 0$ case, the set $\{x \in \R: (x,s) \in \NU\}$ also has  neither an upper nor lower bound.

We lastly show countability of the sets. By \eqref{109}, it suffices to show that for each $\dir \in \Q$ and $s \in \R$, $\NU^{\dir} \cap\, \Hh_s$ is countable. The proof is that of Theorem 3.4, Item 3 in \cite{Rahman-Virag-21}, adapted to all horizontal lines simultaneously. For each $(x,s) \in \NU^\dir$, there exists $t > s$ such that $g_{(x,s)}^{\dir ,L}(t) < g_{(x,s)}^{\dir ,R}(t)$. By continuity of geodesics, the space between the two geodesics contains an open subset of $\R^2$. By the monotonicity of Theorem \ref{thm:g_basic_prop}\ref{itm:DL_SIG_mont_x}, for $x < y$, $g_{(x,s)}^{\dir ,R}(t) \le g_{(y,s)}^{\dir ,L}(t)$ for all $t \ge s$. Hence, for $x < y$, with $(x,s),(y,s) \in \NU^\dir$, the associated open sets in $\R^2$ are disjoint, and $\NU^\dir \cap\, \Hh_s$ is at most countably infinite. 
\end{proof}

\section{Coalescence and the global geometry of geodesics} \label{sec:geometry_sec}
We can now describe the global structure of the semi-infinite geodesics, beginning with coalescence. 
\begin{theorem} \label{thm:DL_all_coal}
On a single event of full probability, the following hold across all directions $\dir \in \R$ and signs $\sigg \in \{-,+\}$. 
\begin{enumerate} [label=\rm(\roman{*}), ref=\rm(\roman{*})]  \itemsep=3pt
    \item \label{itm:DL_allsigns_coal} For all 
    $p,q \in \R^2$, if $g_1$ and $g_2$ are $\dir \sig$ Busemann geodesics from $p$ and $q$, respectively, then $g_1$ and $g_2$ coalesce. If the first point of intersection of the two geodesics is not $p$ or $q$, then the first point of intersection is the coalescence point of the two geodesics. 
    \item \label{itm:DL_split_return} 
    Let $g_1$ and $g_2$ be two distinct  $\dir \sig$ Busemann geodesics  from an initial point $(x,s)\in \NU^{\dir \sig}$.  Then, the set $\{t > s: g_1(t) \neq g_2(t)\}$ is a bounded open interval. That is, after the geodesics split, they coalesce exactly when they meet again. 
    \item \label{itm:unif_coal} For each 
    compact set $K \subseteq \R^2$, there exists a random $T = T(K,\dir,\sigg)<\infty$ such that for any two $\dir \sig$ geodesics $g_1$ and $g_2$ whose starting points lie in $K$, $g_1(t) = g_2(t)$ for all $t \ge T$. That is, there is a time level $T$ after which all semi-infinite geodesics started from points in $K$ have coalesced into a single path.
\end{enumerate} 
\end{theorem}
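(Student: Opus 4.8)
The plan is to prove Theorem \ref{thm:DL_all_coal} by reducing everything to the structure already established for Busemann geodesics, exploiting the Busemann function identity \eqref{eqn:SIG_weight} and the monotonicity of geodesics in the spatial and direction parameters (Theorem \ref{thm:g_basic_prop}). I would work throughout on the full-probability event $\Omega_2$ of \eqref{omega2}, intersected with the event supporting Lemmas \ref{lem:Landscape_global_bound}, \ref{lm:BGH_disj}, \ref{lem:geod_pp}, so that all the cited facts about Busemann geodesics hold simultaneously.

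\medskip

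\noindent\textbf{Item \ref{itm:DL_allsigns_coal} (coalescence of Busemann geodesics in the same direction and sign).} First I would handle rational directions $\dir \in \Q$, where the $\pm$ distinction disappears on $\Omega_2$ and, by definition of $\Omega_1$, all semi-infinite geodesics in direction $\dir$ coalesce (Theorem \ref{thm:RV-SIG-thm}\ref{itm:fixed_coal}). So the content is the extension to arbitrary $\dir$ and $\sigg$. Given $\dir \sig$ Busemann geodesics $g_1$ from $p=(x,s)$ and $g_2$ from $q=(y,u)$, I would first reduce to the case $s=u$ (same time level): replace $g_1$ by its restriction starting from $(g_1(u\vee s), u\vee s)$ if $u>s$ (this is again a $\dir\sig$ Busemann geodesic by Theorem \ref{thm:DL_SIG_cons_intro}\ref{itm:DL_all_SIG}). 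Then, with both geodesics leaving time level $s$, I would squeeze them between Busemann geodesics from a nearby \emph{rational} point using Theorem \ref{thm:g_basic_prop}\ref{itm:DL_SIG_mont_x}: pick a rational time $r>s$ and a rational $q^\star$ with $g_1(r)\wedge g_2(r) \le q^\star \le g_1(r)\vee g_2(r)$; the unique $\dir\sig$ geodesic from $(q^\star,r)$ (unique by Theorem \ref{thm:DLNU}\ref{itm:DL_NU_p0}) is trapped between $g_1$ and $g_2$ after time $r$ via the same monotonicity argument used in the proof of Theorem \ref{thm:g_basic_prop}\ref{itm:DL_SIG_mont_x} (display \eqref{210}). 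Since both $g_1$ and $g_2$ touch a Busemann geodesic to one rational direction that coalesces, and since by Theorem \ref{thm:DL_SIG_cons_intro}\ref{itm:arb_geod_cons}\ref{itm:maxes} at any intersection point all $\dir\sig$ geodesics through that point are maximizers of the same functional $\Ll(\aabullet)+\W_{\dir\sig}(\aabullet,\aabullet;0,\aabullet)$, the first point of intersection forces coalescence — this is exactly the ``intersect implies coalesce at first intersection'' statement of Theorem \ref{thm:g_basic_prop}\ref{itm:DL_SIG_mont_x} for points on the same horizontal line, and I would note it transfers here because after time $r$ both geodesics live on the same time levels. The remaining point is to show the geodesics \emph{do} intersect eventually; this follows because they are sandwiched between $g_{(x,s)}^{\alpha-,L}$ and $g_{(x,s)}^{\beta+,R}$ for rational $\alpha<\dir<\beta$ (as in \eqref{sig_sand}), which coalesce with the corresponding geodesics from $q$, and the direction property forces the sandwiching gap to shrink relative to $t$; a cleaner route is to observe that, by Theorem \ref{thm:g_basic_prop}\ref{itm:DL_SIG_mont_x}, the positions $g_1(t)$ and $g_2(t)$ are monotone once they cross, and by the $r$-rational squeeze they must coincide with the unique geodesic from $(q^\star,r)$ for all large $t$.

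\medskip

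\noindent\textbf{Item \ref{itm:DL_split_return} (the ``bubble'' is a bounded open interval).} Given two distinct $\dir\sig$ Busemann geodesics $g_1,g_2$ from $(x,s)\in\NU^{\dir\sig}$, the set $\{t>s: g_1(t)\ne g_2(t)\}$ is open by continuity of geodesics and nonempty by definition of $\NU^{\dir\sig}$. By Item \ref{itm:DL_allsigns_coal}, $g_1$ and $g_2$ coalesce, so the set is bounded above. That the set is an \emph{interval} — i.e. once the geodesics rejoin they never separate again — follows from Theorem \ref{thm:DL_SIG_cons_intro}\ref{itm:arb_geod_cons}\ref{itm:maxes}: if $g_1(t)=g_2(t)=:z$ at some $t>s$, then for all $u>t$ both $g_1(u)$ and $g_2(u)$ are maximizers of $\Ll(z,t;\aabullet,u)+\W_{\dir\sig}(\aabullet,u;0,u)$, and the extreme such maximizers are $g_{(z,t)}^{\dir\sig,L}(u)$ and $g_{(z,t)}^{\dir\sig,R}(u)$, so $g_1(u),g_2(u)\in[g_{(z,t)}^{\dir\sig,L}(u),g_{(z,t)}^{\dir\sig,R}(u)]$. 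If these two extreme geodesics ever agreed and then split, that would contradict the leftmost/rightmost property (Theorem \ref{thm:DL_SIG_cons_intro}\ref{itm:DL_LRmost_geod}); more directly, I would invoke the recently established Theorem \ref{thm:Bhatia} (no ``bubble after reattachment''), which states precisely that distinct geodesics between two points cannot agree near both endpoints, to conclude that once $g_1$ and $g_2$ meet they stay together. Combining openness, boundedness and the interval property gives the claim, and identifies the right endpoint of the interval as the coalescence point.

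\medskip

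\noindent\textbf{Item \ref{itm:unif_coal} (uniform coalescence over a compact set).} Fix a compact $K\subseteq\R^2$, direction $\dir$ and sign $\sigg$. By Item \ref{itm:DL_LRmost_SIG} (Theorem \ref{thm:all_SIG_thm_intro}), every $\dir\sig$ geodesic from a point $p\in K$ lies between $g_p^{\dir-,L}$ and $g_p^{\dir+,R}$, so it suffices to show all the extreme Busemann geodesics from points of $K$ coalesce by a common time. First I would reduce to a single starting point: using the spatial monotonicity of Theorem \ref{thm:g_basic_prop}\ref{itm:DL_SIG_mont_x} together with Lemma \ref{lem:bounded_maxes}, there is a rational time $r$ and a compact interval $[A,B]$ such that all of these geodesics, evaluated at time $r$, lie in $[A,B]$; and by the ``intersect implies coalesce'' property it is enough to show that the leftmost $\dir\sig$ geodesic from $(A,r)$ and the rightmost from $(B,r)$ eventually coincide. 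By Item \ref{itm:DL_allsigns_coal} they coalesce, say at time $T_0$. I would then argue that $T_0$ works uniformly: any $\dir\sig$ geodesic from $p\in K$, at time $r$, is squeezed (Theorem \ref{thm:g_basic_prop}\ref{itm:DL_SIG_mont_x}) between $g_{(A,r)}^{\dir\sig,L}$ and $g_{(B,r)}^{\dir\sig,R}$, hence for $t\ge T_0$ it is squeezed between two copies of the single coalesced path, so it equals that path. Setting $T=T_0$ finishes the proof.

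\medskip

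\noindent\textbf{Main obstacle.} The delicate step is Item \ref{itm:DL_allsigns_coal} for irrational $\dir$ and a fixed sign $\sigg$ — in particular guaranteeing that the two geodesics actually meet (not merely that they are sandwiched between coalescing rational-direction geodesics, which a priori only bounds them without forcing intersection). I expect the right way around this is the rational-intermediate-point squeeze: a rational point $(q^\star,r)$ chosen between $g_1(r)$ and $g_2(r)$ has a \emph{unique} $\dir\sig$ geodesic on $\Omega_2$, and by Theorem \ref{thm:g_basic_prop}\ref{itm:DL_SIG_mont_x} both $g_1$ and $g_2$ stay on opposite sides of it while also being its leftmost/rightmost-geodesic companions in the sense of \eqref{210}; pushing this through as in the proof of Theorem \ref{thm:g_basic_prop}\ref{itm:DL_SIG_mont_x} yields equality with the unique geodesic for all large $t$. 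Getting the bookkeeping of left/right versus $\pm$ exactly right here, so that the monotonicity inequalities close up, is the part that needs the most care.
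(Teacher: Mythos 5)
Your Items \ref{itm:DL_split_return} and \ref{itm:unif_coal} are essentially the paper's arguments (trap everything between two extreme Busemann geodesics from a single later time level and invoke Item \ref{itm:DL_allsigns_coal}), but Item \ref{itm:DL_allsigns_coal} — the heart of the theorem — has a genuine gap that you yourself flag and do not close. Everything you propose (the rational intermediate point $(q^\star,r)$, the squeeze of display \eqref{210}, the ``intersect implies coalesce at first intersection'' statement of Theorem \ref{thm:g_basic_prop}\ref{itm:DL_SIG_mont_x}) is \emph{conditional on the two geodesics meeting}: it shows that if $g_1(t)=g_2(t)$ for some $t$ then they stay together, but provides no mechanism forcing two $\dir\sig$ geodesics from distinct points to ever intersect. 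This cannot be dismissed as a technicality: for $\dir\in\DLBusedc$ the $\dir-$ and $\dir+$ geodesics from the very same point remain disjoint forever, so disjointness of geodesics with the same asymptotic direction is a real phenomenon that must be actively ruled out for a fixed sign. Sandwiching between rational-direction geodesics $g^{\alpha,L}$ and $g^{\beta,R}$ with $\alpha<\dir<\beta$ also does not help, since those bounds diverge linearly and cannot force the gap to close.

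The paper's resolution is different in kind and is the one ingredient missing from your plan: the local constancy of the Busemann process in the direction parameter on compact sets (Theorem \ref{thm:DL_Buse_summ}\ref{itm:DL_unif_Buse_stick}). One places $a<b$ on a common time level so that both $g_1,g_2$ are trapped between $g_{(a,t)}^{\dir\sig,R}$ and $g_{(b,t)}^{\dir\sig,L}$, then chooses $\alpha\neq\dir$ close enough that $\W_{\alpha\sig}(b,t;a,t)=\W_{\dir\sig}(b,t;a,t)$, and applies Lemma \ref{lem:DL_LR_coal}. That lemma is where intersection is manufactured: the geodesics $g_{(a,t)}^{\dir\sig,R}$ and $g_{(b,t)}^{\alpha\sig,L}$ have \emph{different} asymptotic directions ($\dir>\alpha$) while starting at $a<b$, so by directedness they must cross; Lemma \ref{lem:Buse_equality_coal} then converts the equality of Busemann increments into the identities $g_{(a,t)}^{\alpha\sig,R}=g_{(a,t)}^{\dir\sig,R}$ and $g_{(b,t)}^{\alpha\sig,L}=g_{(b,t)}^{\dir\sig,L}$ up to the crossing time, which turns the forced crossing into a coalescence point for the same-direction pair. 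Without this (or an equivalent device) Item \ref{itm:DL_allsigns_coal} does not go through for $\dir\notin\Q$. A secondary, smaller issue: in Item \ref{itm:DL_split_return} your first argument (``if the two extreme geodesics from $(z,t)$ ever agreed and then split, that would contradict the leftmost/rightmost property'') is false — that is exactly the behavior at points of $\NU^{\dir\sig}$ — so you are really leaning on Theorem \ref{thm:Bhatia} there, whereas the paper's proof of \ref{itm:DL_split_return} is self-contained: it applies Item \ref{itm:DL_allsigns_coal} to the restrictions of $g_1,g_2$ started from the two \emph{distinct} points $(g_1(t_1),t_1)$ and $(g_2(t_1),t_1)$, for which the first intersection is the coalescence point.
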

\begin{remark}
Theorem 1 of \cite{Bhatia-23} implies the following refinements of the results in this section. In Theorem \ref{thm:DL_all_coal}\ref{itm:DL_split_return}, $\{t > s: g_1(t) \neq g_2(t)\}=(s, r)$ for some $r\in(s,\infty)$.  Under Condition \ref{itm:DL_good_dir} of Theorem \ref{thm:DL_good_dir_classification} below, the entire collection of semi-infinite geodesics in direction $\dir$ is a tree. 
\end{remark}

The following gives a full classification of the directions in which geodesics coalesce. We refer the reader to Theorems \ref{thm:DL_eq_Buse_cpt_paths} and \ref{thm:Buse_pm_equiv} below for the connection between coalescence and the regularity of the Busemann process.
 
\begin{theorem} \label{thm:DL_good_dir_classification}
On  a single event of probability one, the following are equivalent. 
\begin{enumerate} [label=\rm(\roman{*}), ref=\rm(\roman{*})]  \itemsep=3pt
    \item \label{itm:DL_good_dir} $\dir \notin \DLBusedc$.
    \item \label{itm:DL_LR_all_agree} $g_{p}^{\dir -,S} = g_{p}^{\dir +,S}$ for all $p \in \R^2$ and $S \in \{L,R\}$.
    \item \label{itm:DL_good_dir_coal} All semi-infinite geodesics in direction $\dir$ coalesce {\rm(}whether Busemann geodesics or not{\rm)}.
    \item \label{itm:DL_good_dir_unique_geod} For all $p \in \R^2 \setminus \NU$, there is a unique geodesic starting from $p$ with direction $\dir$.
    \item \label{itm:DL_good_dir_pt_unique}   There is a unique $\dir$-directed semi-infinite geodesic from some $p\in \R^2$.
    \item \label{itm:DL_good_dir_L_unique} There exists $p \in \R^2$ such that $g_{p}^{\dir -,L} = g_{p}^{\dir +,L}$.
    \item \label{itm:DL_good_dir_R_unique} There exists $p \in \R^2$ such that $g_{p}^{\dir -,R} = g_{p}^{\dir +,R}$
\end{enumerate}
Under these equivalent conditions, the following also holds.
\begin{enumerate}[resume, label=\rm(\roman{*}), ref=\rm(\roman{*})]  \itemsep=3pt
    \item \label{itm:DL_allBuse}  From any $p \in \R^2$, all semi-infinite geodesics in direction $\dir$ are Busemann geodesics. 
\end{enumerate}
\end{theorem}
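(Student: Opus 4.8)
\textbf{Proof plan for Theorem \ref{thm:DL_good_dir_classification}.}

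The plan is to establish the cycle of equivalences \ref{itm:DL_good_dir} $\Rightarrow$ \ref{itm:DL_LR_all_agree} $\Rightarrow$ \ref{itm:DL_good_dir_coal} $\Rightarrow$ \ref{itm:DL_good_dir_unique_geod} $\Rightarrow$ \ref{itm:DL_good_dir_pt_unique}, then branch to \ref{itm:DL_good_dir_L_unique} and \ref{itm:DL_good_dir_R_unique}, and close the loop by showing \ref{itm:DL_good_dir_L_unique} $\Rightarrow$ \ref{itm:DL_good_dir} (and symmetrically for $R$). All of this will be done on the event $\Omega_2$ from \eqref{omega2}, intersected with the full-probability event of Theorem \ref{thm:DL_all_coal}. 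The implication \ref{itm:DL_good_dir} $\Rightarrow$ \ref{itm:DL_LR_all_agree} is the crux: if $\dir \notin \DLBusedc$ then $\W_{\dir -} = \W_{\dir +}$ on all of $\R^4$ by \eqref{eqn:DLBuseDC_def} and Theorem \ref{thm:DL_Buse_summ}\ref{itm:DL_Buse_add}, so the functions $z \mapsto \Ll(x,s;z,t) + \W_{\dir \sig}(z,t;0,t)$ defining $g_{(x,s)}^{\dir -,S}$ and $g_{(x,s)}^{\dir +,S}$ literally coincide, forcing $g_{(x,s)}^{\dir -,S} = g_{(x,s)}^{\dir +,S}$ for $S \in \{L,R\}$ and all $p$; write $g_p^{\dir,S}$ for the common geodesic. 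For \ref{itm:DL_LR_all_agree} $\Rightarrow$ \ref{itm:DL_good_dir_coal}: given \ref{itm:DL_LR_all_agree}, any semi-infinite geodesic from $p$ in direction $\dir$ lies between $g_p^{\dir -,L} = g_p^{\dir +,R}$... here I must be careful — the sandwich \eqref{987} in Theorem \ref{thm:all_SIG_thm_intro}\ref{itm:DL_LRmost_SIG} gives that every $\dir$-directed geodesic from $p$ lies between $g_p^{\dir -,L}$ and $g_p^{\dir +,R}$, and under \ref{itm:DL_LR_all_agree} these two Busemann geodesics are themselves forced to coalesce (they coalesce by Theorem \ref{thm:DL_all_coal}\ref{itm:DL_allsigns_coal}), so all geodesics in direction $\dir$ coalesce; moreover since $g_p^{\dir -,L} = g_p^{\dir +,R}$ is squeezed on both sides, it shows \ref{itm:DL_allBuse}, every such geodesic equals a Busemann geodesic on a tail, hence is a Busemann geodesic in the sense of Theorem \ref{thm:DL_SIG_cons_intro}\ref{itm:arb_geod_cons}.

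For \ref{itm:DL_good_dir_coal} $\Rightarrow$ \ref{itm:DL_good_dir_unique_geod}: fix $p \notin \NU$; then $g_p^{\dir -,S} = g_p^{\dir +,S}$ for $S \in \{L,R\}$ by definition \eqref{NU0} of $\NU$, but we need $g_p^{\dir,L} = g_p^{\dir,R}$. If not, these two distinct Busemann geodesics from $p$ would (by \ref{itm:DL_good_dir_coal}) coalesce downstream, contradicting the ``bubble only at the start'' structure — more directly, apply Theorem \ref{thm:DL_all_coal}\ref{itm:DL_split_return}: two distinct Busemann geodesics from $p$ can only split and rejoin if $p \in \NU^{\dir \sig}$, contradiction; so there is a unique Busemann geodesic from $p$, and by \eqref{987} it is the unique $\dir$-directed geodesic. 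The implication \ref{itm:DL_good_dir_unique_geod} $\Rightarrow$ \ref{itm:DL_good_dir_pt_unique} is immediate since $\NU$ is a strict subset (indeed countable along each line by Theorem \ref{thm:DLNU}\ref{itm:DL_NU_count}), so pick any $p \notin \NU$. Then \ref{itm:DL_good_dir_pt_unique} $\Rightarrow$ \ref{itm:DL_good_dir_L_unique} and \ref{itm:DL_good_dir_R_unique}: uniqueness of the $\dir$-directed geodesic from $p$ forces $g_p^{\dir -,L} = g_p^{\dir +,R}$ via \eqref{987}, and then monotonicity $g_p^{\dir -,L} \le g_p^{\dir -,R} \le g_p^{\dir +,R}$ (from \eqref{eqn:mont_maxes} and Theorem \ref{thm:g_basic_prop}\ref{itm:DL_mont_dir}) collapses everything, giving both \ref{itm:DL_good_dir_L_unique} and \ref{itm:DL_good_dir_R_unique}.

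It remains to close the cycle with \ref{itm:DL_good_dir_L_unique} $\Rightarrow$ \ref{itm:DL_good_dir} (and \ref{itm:DL_good_dir_R_unique} $\Rightarrow$ \ref{itm:DL_good_dir} symmetrically), which I expect to be the main obstacle since it runs against the monotonicity direction. Suppose $\dir \in \DLBusedc$; I will show $g_p^{\dir -,L} \neq g_p^{\dir +,L}$ for \emph{every} $p$, contradicting \ref{itm:DL_good_dir_L_unique}. By Theorem \ref{thm:DLBusedc_description}\ref{itm:Busedc_t}, $\dir \in \DLBusedc$ implies $f_{s,\dir}(x) = \W_{\dir +}(x,s;0,s) - \W_{\dir -}(x,s;0,s) \to +\infty$ as $x \to +\infty$ and $\to +\infty$ as $x\to -\infty$, so on any horizontal line the functions $z \mapsto \W_{\dir -}(z,t;0,t)$ and $z \mapsto \W_{\dir +}(z,t;0,t)$ are genuinely different with an unbounded gap in both tails. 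For a fixed $p = (x,s)$ and any $t > s$, I need that the leftmost maximizer of $z \mapsto \Ll(x,s;z,t) + \W_{\dir -}(z,t;0,t)$ differs from that of $z \mapsto \Ll(x,s;z,t) + \W_{\dir +}(z,t;0,t)$ for some $t$ — equivalently that $g_{(x,s)}^{\dir -,L}$ and $g_{(x,s)}^{\dir +,L}$ are not the same path. I will argue this by a monotonicity-and-limits argument analogous to the proof of Theorem \ref{thm:DLNU}\ref{itm:DL_NU_count}: since $\DLBusedc(x_k,s;x,s)$ exhausts $\DLBusedc$ as $|x_k| \to \infty$ (Theorem \ref{thm:DLBusedc_description}\ref{itm:Busedc_t}, eq.\ \eqref{eqn:dcset_union1}), the jump direction $\dir$ is witnessed by $\W_{\dir -}(x_k,s;x,s) < \W_{\dir+}(x_k,s;x,s)$ for large $|x_k|$; combining with Theorem \ref{thm:all_SIG_thm_intro}\ref{itm:DL_LRmost_SIG}, the $\dir-$ and $\dir+$ geodesic families from the whole line have distinct coalescence structures (a $\dir-$ geodesic from a far-left point and a $\dir+$ geodesic from a far-right point cannot both pass through a common coalescence column), which forces some initial point — and by the spatial-shift structure, \emph{every} initial point — to emit distinct $\dir-$ and $\dir+$ leftmost geodesics. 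An alternative and cleaner route for this last step is to invoke Theorems \ref{thm:DL_eq_Buse_cpt_paths} and \ref{thm:Buse_pm_equiv} (stated later in the chapter) which directly equate $\dir \in \DLBusedc$ with the existence of disjoint $\dir-$ and $\dir+$ geodesics from every point; if those are available as forward references, the implication is immediate. I will use whichever of these is logically prior in the chapter's development, and note that Theorem \ref{thm:DLSIG_main}\ref{itm:bad_dir_split} is essentially a restatement of this branch. Finally, \ref{itm:DL_allBuse} has already been obtained in the course of proving \ref{itm:DL_LR_all_agree} $\Rightarrow$ \ref{itm:DL_good_dir_coal} above.
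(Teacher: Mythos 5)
Your skeleton is essentially the paper's: the same ingredients (the sandwich of Theorem \ref{thm:all_SIG_thm_intro}\ref{itm:DL_LRmost_SIG}, the coalescence Theorem \ref{thm:DL_all_coal}\ref{itm:DL_allsigns_coal}, and Theorem \ref{thm:Buse_pm_equiv}, which is indeed proved earlier in the subsection and is legitimately available) drive the argument, and your implications \ref{itm:DL_good_dir}$\Rightarrow$\ref{itm:DL_LR_all_agree}$\Rightarrow$\ref{itm:DL_good_dir_coal}, \ref{itm:DL_good_dir_unique_geod}$\Rightarrow$\ref{itm:DL_good_dir_pt_unique}$\Rightarrow$\ref{itm:DL_good_dir_L_unique},\ref{itm:DL_good_dir_R_unique}, and the treatment of \ref{itm:DL_allBuse}, match the paper. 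Two steps have concrete problems. In \ref{itm:DL_good_dir_coal}$\Rightarrow$\ref{itm:DL_good_dir_unique_geod} you assert that $p \notin \NU$ gives $g_p^{\dir -,S} = g_p^{\dir +,S}$ ``by definition \eqref{NU0}.'' That is backwards: \eqref{NU0} records the $L/R$ distinction for each \emph{fixed} sign, so $p \notin \NU$ yields $g_p^{\dir -,L} = g_p^{\dir -,R}$ and $g_p^{\dir +,L} = g_p^{\dir +,R}$, and the missing ingredient is exactly the $\pm$ agreement. The fix is the argument the paper uses for \ref{itm:DL_good_dir_L_unique}$\Rightarrow$\ref{itm:DL_LR_all_agree}: under \ref{itm:DL_good_dir_coal} the paths $g_p^{\dir -,L}$ and $g_p^{\dir +,L}$ coalesce, and since each is the leftmost geodesic between any two of its points (Theorem \ref{thm:DL_SIG_cons_intro}\ref{itm:DL_LRmost_geod}), coalescence forces them to be identical. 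Theorem \ref{thm:DL_all_coal}\ref{itm:DL_split_return} is not the right tool here, as it concerns two geodesics of the \emph{same} sign from a point of $\NU^{\dir\sig}$.

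The second, more serious issue is the closure \ref{itm:DL_good_dir_L_unique}$\Rightarrow$\ref{itm:DL_good_dir}. Your primary argument passes from ``some initial point emits distinct $\dir-$ and $\dir+$ leftmost geodesics'' to ``every initial point does, by the spatial-shift structure''; this is invalid, since shift invariance is a distributional statement and gives nothing almost surely pointwise. Your fallback also mischaracterizes Theorem \ref{thm:Buse_pm_equiv}: it equates $\W_{\dir -}(y,s;x,s) < \W_{\dir +}(y,s;x,s)$ with disjointness of $g_{(x,s)}^{\dir -,R}$ and $g_{(y,s)}^{\dir +,L}$ for a pair of \emph{distinct} starting points $x<y$, not with distinctness of the two signed geodesics out of a single point (that single-point statement is Remark \ref{rmk:split_from_all_p}, which the paper \emph{derives from} the theorem you are proving). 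The closure can nonetheless be completed with your tools: if $\dir \in \DLBusedc$, some pair $x<y$ on some level $s$ has $g_{(x,s)}^{\dir -,R}$ disjoint from $g_{(y,s)}^{\dir +,L}$ by Theorem \ref{thm:Buse_pm_equiv}; but under \ref{itm:DL_good_dir_L_unique} each of these coalesces within its own sign family (Theorem \ref{thm:DL_all_coal}\ref{itm:DL_allsigns_coal}) with the common path $g_p^{\dir -,L} = g_p^{\dir +,L}$, hence eventually with each other, a contradiction. The paper avoids this extra step by routing \ref{itm:DL_good_dir_L_unique}$\Rightarrow$\ref{itm:DL_LR_all_agree} through coalescence and closing the cycle at \ref{itm:DL_good_dir_coal}$\Rightarrow$\ref{itm:DL_good_dir}, which is the cleaner arrangement; you should either adopt that routing or spell out the coalescence step above.
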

\begin{remark} \label{rmk:split_from_all_p}
The equivalence \ref{itm:DL_good_dir}$\Leftrightarrow$\ref{itm:DL_good_dir_L_unique} implies that $\forall\dir \in \DLBusedc$ and $p \in \R^2$,  geodesics $g_{p}^{\dir -,L}$ and $g_{p}^{\dir +,L}$ are distinct. The same is true when $L$ is replaced with $R$. Since $g_{p}^{\dir -,L}$ and $g_p^{\dir +,L}$ are both leftmost geodesics between any two of their points (Theorem \ref{thm:DL_SIG_cons_intro}\ref{itm:DL_LRmost_geod}) 
then if $\dir\in\DLBusedc$,
these two geodesics must separate at some time $t\ge s$, and they cannot ever come back together. For each $\dir \in \DLBusedc$, there are two coalescing families of geodesics, namely the $\dir-$ and $\dir +$ geodesics. (See again Figure \ref{fig:non_unique_comp}.)  In particular, 
whenever $\xi \in \DLBusedc$, $s \in \R$,  and $x < y$, $g_{(x,s)}^{\xi +,L}(t) > g_{(y,s)}^{\xi -,R}(t)$ for sufficiently large $t$, as alluded to in Remark \ref{rmk:mixing_LR_pm}.
\end{remark}

\subsection{Proofs}
In each of these theorems, the full-probability event is $\Omega_2$ \eqref{omega2}. We start by proving some lemmas that allow us to prove Theorem \ref{thm:DL_all_coal}. The proof of Theorem \ref{thm:DL_good_dir_classification} comes at the very end of this subsection. Section \ref{sec:last_proofs1} proves Theorem \ref{thm:DLSIG_main} as well as lingering results from Section \ref{sec:Buse_geod_results}.
\begin{lemma} \label{lem:Buse_equality_coal}
Let $\omega \in \Omega_1$, $s \in \R$ and $x < y \in \R$. Assume, for some $\alpha < \dir$ and $\sigg_1,\sigg_2 \in \{-,+\}$, that $\W_{\alpha \sig_1}(y,s;x,s) = \W_{\dir \sig_2}(y,s;x,s)$. We also allow $\alpha = \dir$ if $\sigg_1 = -$ and $\sigg_2 = +$. If  $t > s$ and $g_{(x,s)}^{\dir \sig_2,R}(t) \le g_{(y,s)}^{\alpha \sig_1,L}(t)$, then for all $u \in [s,t]$,
\be \label{111}
g_{(x,s)}^{\alpha \sig_1,R}(u) = g_{(x,s)}^{\dir \sig_2,R}(u)\qquad\text{and}\qquad g_{(y,s)}^{\alpha \sig_1,L}(u) = g_{(y,s)}^{\dir \sig_2,L}(u).
\ee
\end{lemma}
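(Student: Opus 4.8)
The plan is to push the identity down to a single additivity computation at the time level $t$, avoiding any analysis of the geodesics at intermediate times. Write $g_1=g_{(x,s)}^{\dir\sig_2,R}$ and $g_2=g_{(y,s)}^{\alpha\sig_1,L}$, and set $p=g_1(t)$, $q=g_2(t)$, so the hypothesis reads $p\le q$. By Theorem \ref{thm:DL_SIG_cons_intro}\ref{itm:DL_all_SIG}, $g_1$ is a $\dir\sig_2$ Busemann geodesic from $(x,s)$ and $g_2$ an $\alpha\sig_1$ Busemann geodesic from $(y,s)$; in particular the weight identities $\Ll(x,s;p,t)=\W_{\dir\sig_2}(x,s;p,t)$ and $\Ll(y,s;q,t)=\W_{\alpha\sig_1}(y,s;q,t)$ hold. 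Also, the hypothesis $\W_{\alpha\sig_1}(y,s;x,s)=\W_{\dir\sig_2}(y,s;x,s)$ is equivalent, via additivity (Theorem \ref{thm:DL_Buse_summ}\ref{itm:DL_Buse_add}), to $\W_{\dir\sig_2}(x,s;y,s)=\W_{\alpha\sig_1}(x,s;y,s)$.

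The key computation: expand $\W_{\dir\sig_2}(p,t;q,t)$ and $\W_{\alpha\sig_1}(p,t;q,t)$ by additivity through the points $(x,s)$ and $(y,s)$, substitute $\W_{\dir\sig_2}(p,t;x,s)=-\Ll(x,s;p,t)$ and $\W_{\alpha\sig_1}(y,s;q,t)=\Ll(y,s;q,t)$ from the weight identities, and cancel the matching term $\W_{\dir\sig_2}(x,s;y,s)=\W_{\alpha\sig_1}(x,s;y,s)$. This yields
$$\W_{\dir\sig_2}(p,t;q,t)-\W_{\alpha\sig_1}(p,t;q,t)=\bigl[\W_{\alpha\sig_1}(x,s;p,t)-\Ll(x,s;p,t)\bigr]+\bigl[\W_{\dir\sig_2}(y,s;q,t)-\Ll(y,s;q,t)\bigr],$$
which is $\ge 0$ since each bracket is nonnegative by Lemma \ref{lem:L_and_Buse_ineq}. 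On the other hand, since $p\le q$ and $(\alpha,\sig_1)$ precedes $(\dir,\sig_2)$ in the ordering $(\beta,-)\prec(\beta,+)\prec(\beta',-)$ for $\beta<\beta'$ --- which is exactly the content of the hypothesis ``$\alpha<\dir$, or $\alpha=\dir$ with $\sig_1=-,\sig_2=+$'' --- horizontal monotonicity of the Busemann process (Theorem \ref{thm:DL_Buse_summ}\ref{itm:DL_Buse_gen_mont}) gives $\W_{\alpha\sig_1}(q,t;p,t)\le\W_{\dir\sig_2}(q,t;p,t)$, i.e. the left side of the display is $\le 0$. Hence the left side vanishes and so does each bracket: $\Ll(x,s;p,t)=\W_{\alpha\sig_1}(x,s;p,t)$ and $\Ll(y,s;q,t)=\W_{\dir\sig_2}(y,s;q,t)$.

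Finishing: by Lemma \ref{lem:L_and_Buse_ineq}, $\Ll(x,s;p,t)=\W_{\alpha\sig_1}(x,s;p,t)$ says $p$ is a maximizer of $z\mapsto\Ll(x,s;z,t)+\W_{\alpha\sig_1}(z,t;0,t)$, so $g_{(x,s)}^{\alpha\sig_1,R}(t)\ge p$; combined with $g_{(x,s)}^{\alpha\sig_1,R}(t)\le g_{(x,s)}^{\dir\sig_2,R}(t)=p$ from the direction-monotonicity of the extreme maximizers (Theorem \ref{thm:g_basic_prop}\ref{itm:DL_mont_dir}, equivalently \eqref{eqn:mont_maxes}), we get $g_{(x,s)}^{\alpha\sig_1,R}(t)=p$. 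Then $g_{(x,s)}^{\alpha\sig_1,R}$ and $g_{(x,s)}^{\dir\sig_2,R}$, restricted to $[s,t]$, are both the rightmost $\Ll$-geodesic from $(x,s)$ to $(p,t)$ (Theorem \ref{thm:DL_SIG_cons_intro}\ref{itm:DL_LRmost_geod}) and hence agree on $[s,t]$, which is the first equality of \eqref{111}. Symmetrically, $\Ll(y,s;q,t)=\W_{\dir\sig_2}(y,s;q,t)$ forces $g_{(y,s)}^{\dir\sig_2,L}(t)\le q$, while direction-monotonicity gives $g_{(y,s)}^{\dir\sig_2,L}(t)\ge g_{(y,s)}^{\alpha\sig_1,L}(t)=q$, so both geodesics equal the leftmost $\Ll$-geodesic from $(y,s)$ to $(q,t)$ on $[s,t]$, giving the second equality. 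The only real bookkeeping burden is the additivity expansion and keeping the sign/ordering conventions straight; there is no analytic difficulty, and in particular no crossing argument at intermediate times is needed since the sandwich is applied only at level $t$.
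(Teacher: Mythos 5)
Your proof is correct and is essentially the paper's argument in different packaging: where the paper routes the hypothesis through the variational representation \eqref{eqn:W_queue} and the two-sided chain \eqref{101}--\eqref{102}, you route it through additivity around the four points $(x,s),(y,s),(p,t),(q,t)$, but both arguments come down to showing that the two nonnegative gaps $\W_{\alpha\sig_1}(x,s;p,t)-\Ll(x,s;p,t)$ and $\W_{\dir\sig_2}(y,s;q,t)-\Ll(y,s;q,t)$ sum to a quantity that is also $\le 0$ by horizontal monotonicity at level $t$ applied with $p\le q$, and then conclude via Lemma \ref{lem:L_and_Buse_ineq} together with the ordering of extreme maximizers and the leftmost/rightmost-geodesic property to extend from time $t$ back to all of $[s,t]$. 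There is no gap; the sign and antisymmetry bookkeeping checks out, including the degenerate case $p=q$ where both sides of the squeeze vanish trivially.
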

\begin{proof}
By assumption, whenever $w < z$ and $t \in \R$, Theorem \ref{thm:DL_Buse_summ}\ref{itm:DL_Buse_gen_mont} gives 
\be \label{100}
\W_{\alpha \sig_1}(z,t;w,t) \le W_{\dir \sig_2}(z,t;w,t).
\ee
For the rest of the proof, we  suppress the $\sigg_1,\sigg_2$ notation.
By Theorem \ref{thm:DL_Buse_summ}\ref{itm:DL_Buse_add},\ref{itm:Buse_KPZ_description},
\begin{align}
\W_\dir(y,s;x,s) &= \W_\dir(y,s;0,t) - \W_\dir(x,s;0,t) \nonumber \\
&= \sup_{z \in \R}\{\Ll(y,s;z,t) + \W_\dir(z,t;0,t)\} - \sup_{z \in \R}\{\Ll(x,s;z,t) + \W_\dir(z,t;0,t)\}, \label{eqn:W_queue}
\end{align}
and the same with $\dir$ replaced by $\alpha$. Recall that $g_{(x,s)}^{\dir \sig,L}(t)$ and $g_{(x,s)}^{\dir \sig,R}(t)$ are, respectively, the leftmost and rightmost maximizers of $\Ll(x,s;z,t) + \W_{\dir \sig}(z,t;0,t)$ over $z \in \R$. 
Understanding that these quantities depend on $s$ and $t$, we use the shorthand notation $g_x^{\dir,R} = g_{(x,s)}^{\dir \sig_1,R}(t)$, and similarly with the other quantities. Then, we have
\begin{align}
    \Ll(x,s;g_x^{\dir,R},t) + W_\dir(g_x^{\dir,R},t;0,t) &- (\Ll(x,s;g_x^{\dir,R},t) + W_\alpha(g_x^{\dir,R},t;0,t)) \nonumber \\
    \ge \sup_{z \in \R}\{\Ll(x,s;z,t) + \W_\dir(z,t;0,t)\} &- \sup_{z \in \R}\{\Ll(x,s;z,t) + \W_\alpha(z,t;0,t)\} \label{101}\\
    =\sup_{z \in \R}\{\Ll(y,s;z,t) + \W_\dir(z,t;0,t)\} &- \sup_{z \in \R}\{\Ll(y,s;z,t) + \W_\alpha(z,t;0,t)\} \nonumber \\
    \ge \Ll(y,s;g_y^{\alpha,L},t) + W_\dir(g_y^{\alpha,L},t;0,t) &- (\Ll(y,s;g_y^{\alpha,L},t) + W_\alpha(g_y^{\alpha,L},t;0,t)), \label{102} 
\end{align}
where the middle equality came from the assumption that $\W_{\dir}(y,s;x,s) = \W_{\alpha}(y,s;x,s)$ and Equation \eqref{eqn:W_queue} applied to both $\dir$ and $\alpha$.
Rearranging the first and last lines yields 
\[
\W_\dir(g_y^{\alpha,L},t;g_{x}^{\dir,R},t) \le \W_\alpha(g_y^{\alpha,L},t;g_{x}^{\dir,R},t).
\]
However, the assumption $g_x^{\dir,R} \le g_{y}^{\alpha,L}$
combined with \eqref{100} implies that this inequality is an equality. Hence, inequalities \eqref{101} and \eqref{102} are also equalities. From the equality \eqref{101},  
\[
\Ll(x,s;g_x^{\dir,R},t) + W_\alpha(g_x^{\dir,R},t;0,t) = \sup_{z \in \R}\{\Ll(x,s;z,t) + \W_\alpha(z,t;0,t)\},
\]
so $z=g_x^{\dir,R}$ is a maximizer of $\Ll(x,s;z,t) + \W_\alpha(z,t;0,t)$. By definition, $g_{x}^{\alpha,R}$ is the rightmost maximizer, and by geodesic ordering (Theorem \ref{thm:g_basic_prop}\ref{itm:DL_mont_dir}), $g_x^{\dir,R} \ge g_x^{\alpha,R}$, so  $g_x^{\dir,R} = g_x^{\alpha,R}$. An analogous argument applied to \eqref{102} implies  $g_y^{\alpha,L} = g_y^{\dir,L}$. We have shown that 
\[
g_{(x,s)}^{\alpha \sig_1,R}(t) = g_{(x,s)}^{\dir \sig_2,R}(t),\qquad\text{and}\qquad g_{(y,s)}^{\alpha \sig_1,L}(t) = g_{(y,s)}^{\dir \sig_2,L}(t).
\]
Since $g_{(x,s)}^{\alpha \sig_1,R}$ and $g_{(x,s)}^{\dir \sig_2,R}$ are both the rightmost geodesics between any two of their points and similarly with the leftmost geodesics from $(y,s)$ (Theorem \ref{thm:DL_SIG_cons_intro}\ref{itm:DL_LRmost_geod}), Equation \eqref{111} holds for all $u \in [s,t]$, as desired. 
\end{proof}

\begin{lemma} \label{lem:DL_LR_coal}
Let $\omega \in \Omega_2$, $s \in \R$, and $x < y$. If, for some $\alpha < \dir$ and $\sigg_1,\sigg_2 \in \{-,+\}$ we have that $\W_{\alpha \sig_1}(y,s;x,s) = \W_{\dir \sig_2}(y,s;x,s)$, then $g_{(x,s)}^{\alpha \sig_1,R}$ coalesces with $g_{(y,s)}^{\alpha \sig_1,L}$, $g_{(x,s)}^{\dir \sig_2,R}$ coalesces with $g_{(y,s)}^{\dir \sig_2,L}$, and the coalescence points of the two pairs of geodesics are the same. 
\end{lemma}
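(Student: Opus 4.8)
\textbf{Proof proposal for Lemma \ref{lem:DL_LR_coal}.}

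The plan is to combine Lemma \ref{lem:Buse_equality_coal} with Theorem \ref{thm:DL_all_coal}\ref{itm:DL_allsigns_coal} (coalescence of Busemann geodesics). Fix $\omega \in \Omega_2$, $s \in \R$, $x < y$, and suppose $\W_{\alpha \sig_1}(y,s;x,s) = \W_{\dir \sig_2}(y,s;x,s)$ with $\alpha < \dir$ (allowing $\alpha = \dir$ only when $\sigg_1 = -$, $\sigg_2 = +$). First I would invoke Theorem \ref{thm:DL_all_coal}\ref{itm:DL_allsigns_coal}, applied in direction $\alpha$ with sign $\sigg_1$: the geodesics $g_{(x,s)}^{\alpha \sig_1,R}$ and $g_{(y,s)}^{\alpha \sig_1,L}$ coalesce. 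Let $t > s$ be their coalescence time, so $g_{(x,s)}^{\alpha \sig_1,R}(u) = g_{(y,s)}^{\alpha \sig_1,L}(u)$ for all $u \ge t$. In particular, at time $t$ we have the (a fortiori, weak) inequality $g_{(x,s)}^{\dir \sig_2,R}(t) \le g_{(y,s)}^{\alpha \sig_1,L}(t)$ — this holds because geodesic ordering in the direction parameter (Theorem \ref{thm:g_basic_prop}\ref{itm:DL_mont_dir}, extended through the $\pm$ signs via Theorem \ref{thm:DL_Buse_summ}\ref{itm:DL_Buse_gen_mont}) gives $g_{(x,s)}^{\dir \sig_2,R}(t) \ge g_{(x,s)}^{\alpha \sig_1,R}(t) = g_{(y,s)}^{\alpha \sig_1,L}(t)$ in the wrong direction, so I will instead need to pick $t$ more carefully, or argue as follows.

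A cleaner route: by Theorem \ref{thm:DL_all_coal}\ref{itm:DL_allsigns_coal} in direction $\dir$ with sign $\sigg_2$, the geodesics $g_{(x,s)}^{\dir \sig_2,R}$ and $g_{(y,s)}^{\dir \sig_2,L}$ also coalesce; call their coalescence time $t'$. Take $T = t \vee t'$ and any $u \ge T$. At time $u$, both pairs have already merged, so $g_{(x,s)}^{\alpha \sig_1,R}(u) = g_{(y,s)}^{\alpha \sig_1,L}(u) =: a(u)$ and $g_{(x,s)}^{\dir \sig_2,R}(u) = g_{(y,s)}^{\dir \sig_2,L}(u) =: b(u)$. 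Now apply the monotonicity of geodesics in the direction parameter to the four endpoints: $g_{(x,s)}^{\alpha \sig_1,R}(u) \le g_{(x,s)}^{\dir \sig_2,R}(u)$ gives $a(u) \le b(u)$, while $g_{(y,s)}^{\dir \sig_2,L}(u) \le g_{(y,s)}^{\alpha \sig_1,L}(u)$ — which requires ordering $g^{\dir\sig_2,L} \le g^{\alpha\sig_1,L}$ for $\dir > \alpha$; but in fact the relevant direction-monotonicity for leftmost geodesics (Theorem \ref{thm:g_basic_prop}\ref{itm:DL_mont_dir}) goes $g^{\alpha,L} \le g^{\dir,L}$. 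So instead, I use the hypothesis equality of Busemann functions together with Lemma \ref{lem:Buse_equality_coal}: since the pair $(g_{(x,s)}^{\alpha\sig_1,R}, g_{(y,s)}^{\alpha\sig_1,L})$ has coalesced by time $t$, at $u = t$ we have $g_{(x,s)}^{\dir\sig_2,R}(t) \le g_{(x,s)}^{\dir\sig_2,R}(t)$, and I want to verify the hypothesis $g_{(x,s)}^{\dir\sig_2,R}(t_0) \le g_{(y,s)}^{\alpha\sig_1,L}(t_0)$ of Lemma \ref{lem:Buse_equality_coal} for \emph{some} $t_0 > s$. Pick $t_0$ large enough that \emph{both} pairs of geodesics have coalesced and moreover $g_{(x,s)}^{\dir\sig_2,R}(t_0)$ and $g_{(y,s)}^{\alpha\sig_1,L}(t_0)$ lie on their respective common trunks; then by the ordering $g_{(x,s)}^{\dir\sig_2,R}(t_0) = g_{(y,s)}^{\dir\sig_2,L}(t_0) \le g_{(y,s)}^{\alpha\sig_1,L}(t_0)$, where the last inequality is the direction-monotonicity $g^{\alpha,L}\le g^{\dir,L}$ read backwards (i.e. $g^{\dir\sig_2,L} \le g^{\alpha\sig_1,L}$ is \emph{false} in general) — so this still needs the left/right swap.

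The correct and simplest argument, which I would write up, is: apply Lemma \ref{lem:Buse_equality_coal} directly. Its hypothesis is exactly $\W_{\alpha\sig_1}(y,s;x,s) = \W_{\dir\sig_2}(y,s;x,s)$ together with the existence of \emph{some} $t > s$ with $g_{(x,s)}^{\dir\sig_2,R}(t) \le g_{(y,s)}^{\alpha\sig_1,L}(t)$. Such a $t$ exists: by Theorem \ref{thm:DL_all_coal}\ref{itm:DL_allsigns_coal} the geodesics $g_{(x,s)}^{\dir\sig_2,R}$ and $g_{(x,s)}^{\alpha\sig_1,R}$ from the common point $(x,s)$, and $g_{(y,s)}^{\alpha\sig_1,L}$ from $(y,s)$, all must eventually interleave correctly; concretely, $g_{(x,s)}^{\dir\sig_2,R}$ and $g_{(y,s)}^{\dir\sig_2,L}$ coalesce at some time $t'$, and $g_{(y,s)}^{\dir\sig_2,L}(u) \le g_{(y,s)}^{\alpha\sig_1,L}(u)$ is the monotonicity $g^{\alpha\sig_1,L} \ge g^{\dir\sig_2,L}$ — which holds because $\alpha \le \dir$ reverses under the L-extreme-maximizer... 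I will simply cite Theorem \ref{thm:g_basic_prop}\ref{itm:DL_mont_dir} in the form $g_{(y,s)}^{\dir_1-,L}(t) \le g_{(y,s)}^{\dir_1+,L}(t) \le g_{(y,s)}^{\dir_2-,L}(t)$ for $\dir_1 < \dir_2$, giving $g_{(y,s)}^{\dir\sig_2,L}(t') \le g_{(y,s)}^{\alpha'\sig,L}(t')$ only when $\dir < \alpha'$, so I actually need $\alpha > \dir$, contradicting $\alpha < \dir$. This tension is precisely the main obstacle and resolves by noting that it is the \emph{rightmost} $\dir$ geodesic from $x$ and the \emph{leftmost} $\alpha$ geodesic from $y$ that are compared, and since $x < y$ these cross: by Theorem \ref{thm:g_basic_prop}\ref{itm:DL_SIG_mont_x}, $g_{(x,s)}^{\beta\sig,R}(u) \le g_{(y,s)}^{\beta\sig,L}(u)$ for every direction $\beta$; taking $\beta = \alpha$ gives $g_{(x,s)}^{\alpha\sig_1,R}(u) \le g_{(y,s)}^{\alpha\sig_1,L}(u)$, and since (Theorem \ref{thm:g_basic_prop}\ref{itm:DL_mont_dir}) $g_{(x,s)}^{\dir\sig_2,R}(u) \ge g_{(x,s)}^{\alpha\sig_1,R}(u)$ this does not immediately close — but once the $\alpha\sig_1$ pair has coalesced (time $t$), for $u \ge t$ we have $g_{(x,s)}^{\alpha\sig_1,R}(u) = g_{(y,s)}^{\alpha\sig_1,L}(u)$, so $g_{(x,s)}^{\dir\sig_2,R}(u) \ge g_{(y,s)}^{\alpha\sig_1,L}(u)$; combining with $g_{(x,s)}^{\dir\sig_2,R}(u) \le g_{(y,s)}^{\dir\sig_2,L}(u) \le g_{(y,s)}^{\alpha\sig_1,L}(u)$ once $\dir\sig_2$ has also coalesced and using L-monotonicity correctly at that stage forces equality $g_{(x,s)}^{\dir\sig_2,R}(u) = g_{(y,s)}^{\alpha\sig_1,L}(u)$, whence $g_{(x,s)}^{\dir\sig_2,R}(t_0) \le g_{(y,s)}^{\alpha\sig_1,L}(t_0)$ holds for this $t_0$, Lemma \ref{lem:Buse_equality_coal} applies and yields \eqref{111} on $[s,t_0]$, giving $g_{(x,s)}^{\alpha\sig_1,R} = g_{(x,s)}^{\dir\sig_2,R}$ and $g_{(y,s)}^{\alpha\sig_1,L} = g_{(y,s)}^{\dir\sig_2,L}$ on $[s,t_0]$ and hence everywhere (both are extreme geodesics between any two of their points, Theorem \ref{thm:DL_SIG_cons_intro}\ref{itm:DL_LRmost_geod}); consequently the coalescence of $g_{(x,s)}^{\alpha\sig_1,R}$ with $g_{(y,s)}^{\alpha\sig_1,L}$ (from Theorem \ref{thm:DL_all_coal}\ref{itm:DL_allsigns_coal}) is identical as a path to the coalescence of $g_{(x,s)}^{\dir\sig_2,R}$ with $g_{(y,s)}^{\dir\sig_2,L}$, so the coalescence points coincide. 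The delicate bookkeeping of which $L/R$ and $\pm$ pairings respect which monotonicity — reconciling $\alpha < \dir$ with the reversed ordering of leftmost versus rightmost geodesics from the two distinct points $x < y$ — is the step I expect to require the most care.
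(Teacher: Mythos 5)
There is a genuine gap, and it is structural: your argument is circular. Every route you try runs through Theorem \ref{thm:DL_all_coal}\ref{itm:DL_allsigns_coal} (coalescence of all $\dir\sig$ Busemann geodesics), but in the paper that theorem is \emph{deduced from} Lemma \ref{lem:DL_LR_coal} — its proof picks $\alpha$ close to $\dir$ with $\W_{\alpha\sig}(b,t;a,t)=\W_{\dir\sig}(b,t;a,t)$ and then invokes this very lemma. So you cannot assume any coalescence statement here; the lemma is the engine that produces coalescence in the first place. This is also why all your attempts to verify the hypothesis of Lemma \ref{lem:Buse_equality_coal} via direction-monotonicity keep colliding with inequalities pointing the wrong way: monotonicity alone cannot give you $g_{(x,s)}^{\dir \sig_2,R}(t) \le g_{(y,s)}^{\alpha \sig_1,L}(t)$, since for $\alpha<\dir$ it pushes the $\dir$-rightmost geodesic from $x$ to the right and says nothing useful about its position relative to the $\alpha$-leftmost geodesic from $y$.

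The missing idea is \emph{directedness plus continuity}, not monotonicity or coalescence. By Theorem \ref{thm:DL_SIG_cons_intro}\ref{itm:DL_all_SIG}, $g_{(x,s)}^{\dir \sig_2,R}(t)/t \to \dir$ while $g_{(y,s)}^{\alpha \sig_1,L}(t)/t \to \alpha < \dir$. Since the first path starts at $x<y$ (hence to the left) but eventually overtakes the second, continuity gives a minimal time $t>s$ with $g_{(x,s)}^{\dir \sig_2,R}(t) = g_{(y,s)}^{\alpha \sig_1,L}(t) =: z$. This equality (a fortiori the inequality $\le$) is exactly the hypothesis of Lemma \ref{lem:Buse_equality_coal}, which then yields $g_{(x,s)}^{\alpha \sig_1,R}=g_{(x,s)}^{\dir \sig_2,R}$ and $g_{(y,s)}^{\alpha \sig_1,L}=g_{(y,s)}^{\dir \sig_2,L}$ on $[s,t]$. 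Minimality of $t$ plus Theorem \ref{thm:g_basic_prop}\ref{itm:DL_SIG_mont_x} (same direction and sign, ordered starting points, first intersection is the coalescence point) then shows both pairs coalesce at $(z,t)$, and the identification of the two pairs on $[s,t]$ makes the coalescence points equal. Note also that your closing step — upgrading equality on $[s,t_0]$ to equality ``everywhere'' because both are extreme geodesics — is unjustified: two Busemann geodesics driven by different Busemann functions can agree up to a point and separate afterwards; the paper's argument never needs agreement beyond $t$.
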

\begin{proof}
By Theorem \ref{thm:DL_SIG_cons_intro}\ref{itm:DL_all_SIG}, $g_{(x,s)}^{\dir \sig_2,R}(t)/t \to \dir$ while $g_{(y,s)}^{\alpha \sig_1,L}(t)/t \to \alpha$ as $t \to \infty$. By this and continuity of geodesics, there exists a minimal time $t > s$ such that  \\
$z := g_{(x,s)}^{\dir \sig_2,R}(t) = g_{(y,s)}^{\alpha \sig_1,L}(t).
$
By Lemma \ref{lem:Buse_equality_coal}, 
\[
g_{(x,s)}^{\alpha \sig_1,R}(u) = g_{(x,s)}^{\dir \sig_2,R}(u)\qquad\text{and}\qquad g_{(y,s)}^{\alpha \sig_1,L}(u) = g_{(y,s)}^{\dir \sig_2,L}(u) \qquad \text{for all }u \in [s,t].
\]
Since $t$ was chosen to be minimal, Theorem \ref{thm:g_basic_prop}\ref{itm:DL_SIG_mont_x} implies that the pair $g_{(x,s)}^{\alpha \sig_1,R}$, $g_{(y,s)}^{\alpha \sig_1,L}$ and the pair $g_{(x,s)}^{\dir \sig_2,R}$,  $g_{(y,s)}^{\dir \sig_2,L}$ both coalesce at $(z,t)$.
\end{proof}

\begin{proof}[Proof of Theorem \ref{thm:DL_all_coal}]

\noindent
\textbf{Item \ref{itm:DL_allsigns_coal} (Coalescence):} Let $g_1$ and $g_2$ be $\dir \sigg$ Busemann \\ geodesics from $(x,s)$ and $(y,t)$, respectively, and take $s \le t$ without loss of generality. Let $a = (g_1(t) \wedge y) - 1$ and $b = (g_1(t) \vee y) + 1$. By Theorem \ref{thm:g_basic_prop}\ref{itm:DL_SIG_mont_x}, for all $u \ge t$,
\be \label{112}
g_{(a,t)}^{\dir \sig,R}(u) \le g_1(u) \wedge g_2(u) \le g_1(u)\vee g_2(u) \le g_{(b,t)}^{\dir \sig,L}(u).
\ee
By Theorem \ref{thm:DL_Buse_summ}\ref{itm:DL_unif_Buse_stick}, there exists $\alpha$, sufficiently close to $\dir$, (from the left for $\sigg = -$ and from the right for $\sigg = +$) such that $\W_{\dir  \sig}(b,t;a,t) = \W_{\alpha \sig}(b,t;a,t)$. By Lemma \ref{lem:DL_LR_coal}, $g_{(a,t)}^{\dir \sig,R}$ coalesces with $g_{(b,t)}^{\dir \sig,L}$. Then, for $u$ large enough, all inequalities in \eqref{112} are equalities, and $g_1$ and $g_2$ coalesce.  

If the first point of intersection is not $(y,t)$, then $g_1(t) \neq y$, and the coalescence point of $g_1$ and $g_2$ is the first point of intersection by Theorem \ref{thm:g_basic_prop}\ref{itm:DL_SIG_mont_x}.

\noindent \textbf{Item \ref{itm:DL_split_return} (Geodesics coalesce when they meet):} Let $(x,s) \in \NU^{\dir \sig}$, and let $g_1$ and $g_2$ be two distinct $\dir \sig$ Busemann geodesics from $(x,s)$. The set $\text{GNEQ} := \{t>s:g_1(t) \neq g_2(t)\}$ is therefore nonempty and infinite by continuity of $g_1$ and $g_2$. Assume, by way of contradiction, that $\text{GNEQ}$ is not an open interval. By continuity of geodesics, $\text{GNEQ}$ cannot be a closed or half-closed interval, so $\text{GNEQ}$ is not path connected. Thus, there exists $t_1 < t_2 < t_3$ so that 
\[
g_1(t_1) \neq g_2(t_1),\quad g_1(t_2) = g_2(t_2),\quad \text{ and}\quad  g_1(t_3) \neq g_2(t_3).
\]
 The geodesics $g_1|_{[t_1,\infty)}$ and $g_2|_{[t_1,\infty)}$ started from $(g_1(t_1),t_1)$ and $(g_2(t_1),t_1)$, respectively, are both Busemann geodesics by their construction in Theorem \ref{thm:DL_SIG_cons_intro}. Since the geodesics $g_1|_{[t_1,\infty)}$ and $g_2|_{[t_1,\infty)}$  start at different spatial locations (namely $g_1(t_1)$ and $g_2(t_1)$) along the same time level $t_1$, they cannot intersect at either of their starting points.  By Item \ref{itm:DL_allsigns_coal}, the two geodesics $g_1|_{[t_1,\infty)}$ and $g_2|_{[t_1,\infty)}$ must coalesce, and the first point of intersection is the coalescence point. Since $g_1(t_2)  = g_2(t_2)$, this implies that $g_1(t) = g_2(t)$ for all $t > t_2$, a contradiction to the existence of $t_3$.
\newpage
\noindent  \textbf{Item \ref{itm:unif_coal} (Uniformity of coalescence):} Let $\dir \in \R$, $\sigg \in \{-,+\}$, and let the compact set $K$ be given. Let $S$ be the smallest integer greater than $\max\{s: (x,s) \in K\}$. Set 
\[
A := \inf\{g_{(x,s)}^{\dir \sig,L}(S): (x,s) \in K\},\qquad\text{and}\qquad B := \sup\{g_{(x,s)}^{\dir \sig,R}(S):(x,s) \in K\}.
\]
By Lemma \ref{lem:bounded_maxes}, $-\infty < A \le B < \infty$.  Then, by Theorem \ref{thm:g_basic_prop}\ref{itm:DL_SIG_mont_x}, whenever $g$ is a $\dir \sig$ geodesic starting from $(x,s) \in K$,
\[
g_{(A,S)}^{\dir \sig,L}(t) \le g(t) \le g_{(B,S)}^{\dir \sig,R}(t) \qquad\text{for all }t \ge S.
\]
To complete the proof, let $T$ be the time at which $g_{(A,S)}^{\dir \sig,L}$ and $g_{(B ,S)}^{\dir \sig,R}$ coalesce, which is guaranteed to be finite by Item \ref{itm:DL_allsigns_coal}.
\end{proof}

For two initial points on a horizontal level, as $\dir$ varies, a constant  Busemann process corresponds to a  constant    coalescence point of the geodesics.
The non-uniqueness of geodesics  
requires us to be careful about the choice of left and right geodesic. 
\begin{definition} \label{def:coal_pt}
For $s \in \R$ and $x < y$, let $\mbf z^{\dir \sig}(y,s;x,s)$ be the coalescence point of $g_{(y,s)}^{\dir \sig,L}$ and $g_{(x,s)}^{\dir \sig,R}$.
\end{definition}

\begin{theorem} \label{thm:DL_eq_Buse_cpt_paths}
On a single event of probability one, for all reals $\alpha < \beta$, $s$, and $x < y$, the following are equivalent.
\begin{enumerate}[label=\rm(\roman{*}), ref=\rm(\roman{*})]  \itemsep=3pt
\item \label{itm:DL_buse_eq}$\W_{\alpha +}(y,s;x,s) = \W_{\beta -}(y,s;x,s)$.
\item \label{itm:DL_coal_pt_equal} $\mbf z^{\alpha +}(y,s;x,s) = \mbf z^{\beta -}(y,s;x,s)$.
\item \label{itm:DL_paths} There exist $t > s$ and $z \in \R$ such that there are paths $g_1:[s,t] \to \R$ {\rm(}connecting $(x,s)$ and $(z,t)${\rm)} and $g_2:[s,t] \to \R$ {\rm(}connecting $(y,s)$ to $(z,t)${\rm)} such that for all $\dir \in (\alpha,\beta)$, $\sigg \in \{-,+\}$, and $u \in [s,t)$,
\be \label{124}\begin{aligned} 
g_1(u) &= g_{(x,s)}^{\dir \sig,R}(u) = g_{(x,s)}^{\alpha +,R}(u) = g_{(x,s)}^{\beta -,R}(u) \\
&< g_2(u) =g_{(y,s)}^{\dir \sig,L}(u) = g_{(y,s)}^{\alpha +,L}(u) = g_{(y,s)}^{\beta -,L}(u).
\end{aligned} \ee
\end{enumerate}
\end{theorem}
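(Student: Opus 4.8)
\textbf{Proof proposal for Theorem \ref{thm:DL_eq_Buse_cpt_paths}.}

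The plan is to prove the cycle \ref{itm:DL_paths}$\Rightarrow$\ref{itm:DL_buse_eq}$\Rightarrow$\ref{itm:DL_coal_pt_equal}$\Rightarrow$\ref{itm:DL_paths}, working throughout on the event $\Omega_2$ of \eqref{omega2}, where all the structural results (regularity of the Busemann process, the monotonicity $\W_{\alpha+} \li \W_{\beta-}$ along horizontal lines, the geodesic monotonicity and stickiness of Theorem \ref{thm:g_basic_prop}, and coalescence from Theorem \ref{thm:DL_all_coal}) are available.

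For \ref{itm:DL_paths}$\Rightarrow$\ref{itm:DL_buse_eq}: if the displayed equalities \eqref{124} hold, then the paths $g_1$ and $g_2$ are both leftmost/rightmost Busemann geodesics, so by Theorem \ref{thm:DL_SIG_cons_intro}\ref{itm:weight_of_geod} the $\Ll$-weight of each of them between any two of their points equals the corresponding Busemann increment, for \emph{any} of the signs involved; since $g_1$ and $g_2$ both terminate at $(z,t)$ and start at $(x,s)$ and $(y,s)$ respectively, additivity of the Busemann functions (Theorem \ref{thm:DL_Buse_summ}\ref{itm:DL_Buse_add}) gives $\W_{\alpha+}(y,s;x,s) = \W_{\alpha+}(y,s;z,t)-\W_{\alpha+}(x,s;z,t) = \Ll(y,s;z,t)-\Ll(x,s;z,t) = \W_{\beta-}(y,s;z,t)-\W_{\beta-}(x,s;z,t)=\W_{\beta-}(y,s;x,s)$, which is \ref{itm:DL_buse_eq}. (Here I use that $g_1$ serves as a $\dir\sig$ geodesic for every $\dir\in(\alpha,\beta)$ and for $\sig\in\{-,+\}$ at the endpoints $\alpha+,\beta-$, which is exactly what \eqref{124} asserts.)

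For \ref{itm:DL_buse_eq}$\Rightarrow$\ref{itm:DL_coal_pt_equal}: apply Lemma \ref{lem:DL_LR_coal} with $\sig_1 = +$ at direction $\alpha$ and $\sig_2 = -$ at direction $\beta$; the hypothesis $\W_{\alpha+}(y,s;x,s)=\W_{\beta-}(y,s;x,s)$ is precisely what that lemma needs, and its conclusion is that $g_{(x,s)}^{\alpha+,R}$ coalesces with $g_{(y,s)}^{\alpha+,L}$, that $g_{(x,s)}^{\beta-,R}$ coalesces with $g_{(y,s)}^{\beta-,L}$, and that the two coalescence points coincide — which is exactly $\mbf z^{\alpha+}(y,s;x,s)=\mbf z^{\beta-}(y,s;x,s)$ by Definition \ref{def:coal_pt}. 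In fact the proof of Lemma \ref{lem:DL_LR_coal} (via Lemma \ref{lem:Buse_equality_coal}) also shows $g_{(x,s)}^{\alpha+,R}=g_{(x,s)}^{\beta-,R}$ and $g_{(y,s)}^{\alpha+,L}=g_{(y,s)}^{\beta-,L}$ up to the coalescence time, which I will want to reuse in the last implication.

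For \ref{itm:DL_coal_pt_equal}$\Rightarrow$\ref{itm:DL_paths}: let $(z,t)$ be the common coalescence point, set $g_1 = g_{(x,s)}^{\alpha+,R}$ and $g_2 = g_{(y,s)}^{\alpha+,L}$ restricted to $[s,t]$. Because the coalescence points agree, Lemma \ref{lem:Buse_equality_coal} (applied via the conclusion of \ref{itm:DL_buse_eq}, which holds by the already-proved implication \ref{itm:DL_coal_pt_equal}$\Rightarrow$\ref{itm:DL_buse_eq} obtained by going around the cycle — or more cleanly, re-run the argument of Lemma \ref{lem:Buse_equality_coal} directly) gives $g_{(x,s)}^{\alpha+,R}=g_{(x,s)}^{\beta-,R}$ and $g_{(y,s)}^{\alpha+,L}=g_{(y,s)}^{\beta-,L}$ on $[s,t]$. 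Then for any intermediate $\dir\in(\alpha,\beta)$ and $\sig\in\{-,+\}$, geodesic monotonicity in the direction parameter (Theorem \ref{thm:g_basic_prop}\ref{itm:DL_mont_dir}) sandwiches $g_{(x,s)}^{\dir\sig,R}$ between $g_{(x,s)}^{\alpha+,R}$ and $g_{(x,s)}^{\beta-,R}$ and likewise for the left geodesics from $(y,s)$, forcing all of \eqref{124}; the strict inequality $g_1(u)<g_2(u)$ for $u<t$ holds because $(z,t)$ is by definition the \emph{first} meeting point of $g_1$ and $g_2$ (using Theorem \ref{thm:g_basic_prop}\ref{itm:DL_SIG_mont_x} that these ordered geodesics meet only once). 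The main obstacle I anticipate is bookkeeping the dependence of the coalescence point and of Lemma \ref{lem:Buse_equality_coal}'s hypothesis "$g_{(x,s)}^{\dir\sig_2,R}(t)\le g_{(y,s)}^{\alpha\sig_1,L}(t)$" — one must verify that at (and just before) the common coalescence time this ordering hypothesis is met, which follows from the strictness just mentioned, but care is needed to route the implications so that each invocation of Lemma \ref{lem:Buse_equality_coal}/\ref{lem:DL_LR_coal} has its hypotheses genuinely in hand rather than circularly assumed.
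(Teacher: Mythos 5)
Your cycle \ref{itm:DL_paths}$\Rightarrow$\ref{itm:DL_buse_eq}$\Rightarrow$\ref{itm:DL_coal_pt_equal}$\Rightarrow$\ref{itm:DL_paths} is close in content to the paper's argument (the paper proves \ref{itm:DL_buse_eq}$\Leftrightarrow$\ref{itm:DL_coal_pt_equal} both ways, then \ref{itm:DL_coal_pt_equal}$\Rightarrow$\ref{itm:DL_paths}, with \ref{itm:DL_paths}$\Rightarrow$\ref{itm:DL_coal_pt_equal} immediate), and your \ref{itm:DL_paths}$\Rightarrow$\ref{itm:DL_buse_eq} and \ref{itm:DL_buse_eq}$\Rightarrow$\ref{itm:DL_coal_pt_equal} steps are sound. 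The problem is exactly the one you flag at the end and do not resolve: in \ref{itm:DL_coal_pt_equal}$\Rightarrow$\ref{itm:DL_paths} you invoke Lemma \ref{lem:Buse_equality_coal}, whose hypothesis is the Busemann equality \ref{itm:DL_buse_eq}, and your two proposed ways of obtaining it are both unavailable --- ``going around the cycle'' from \ref{itm:DL_coal_pt_equal} requires \ref{itm:DL_coal_pt_equal}$\Rightarrow$\ref{itm:DL_paths}, the very implication you are proving, and ``re-running Lemma \ref{lem:Buse_equality_coal} directly'' still needs that same hypothesis. As written the step is circular.

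The fix is short and you already have the ingredient in your \ref{itm:DL_paths}$\Rightarrow$\ref{itm:DL_buse_eq} computation: prove \ref{itm:DL_coal_pt_equal}$\Rightarrow$\ref{itm:DL_buse_eq} directly. If $(z,t)$ is the common coalescence point, then additivity and Theorem \ref{thm:DL_SIG_cons_intro}\ref{itm:DL_all_SIG} give
\[
\W_{\alpha +}(y,s;x,s) = \Ll(y,s;z,t) - \Ll(x,s;z,t) = \W_{\beta -}(y,s;x,s),
\]
after which Lemma \ref{lem:Buse_equality_coal} is legitimately in hand. In fact the paper avoids Lemma \ref{lem:Buse_equality_coal} here altogether: once the coalescence points agree, the restrictions of $g_{(x,s)}^{\alpha +,R}$ and $g_{(x,s)}^{\beta -,R}$ to $[s,t]$ are both rightmost geodesics from $(x,s)$ to $(z,t)$ by Theorem \ref{thm:DL_SIG_cons_intro}\ref{itm:DL_LRmost_geod}, hence coincide, and likewise for the leftmost geodesics from $(y,s)$; the directional monotonicity of Theorem \ref{thm:g_basic_prop}\ref{itm:DL_mont_dir} then sandwiches the intermediate directions exactly as you describe, and strictness before $t$ follows from Theorem \ref{thm:g_basic_prop}\ref{itm:DL_SIG_mont_x} as you say. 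With either repair your proof goes through.
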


\begin{proof}
 
\ref{itm:DL_buse_eq}$\Rightarrow$\ref{itm:DL_coal_pt_equal} follows from Lemma \ref{lem:DL_LR_coal}. 

 \noindent \ref{itm:DL_coal_pt_equal}$\Rightarrow$\ref{itm:DL_buse_eq}: Assume $(z,t) := \mbf z^{\alpha +}(y,s;x,s) = \mbf z^{\beta -}(y,s;x,s)$. \\ By additivity (Theorem \ref{thm:DL_Buse_summ}\ref{itm:DL_Buse_add}) and Theorem \ref{thm:DL_SIG_cons_intro}\ref{itm:DL_all_SIG},
\begin{align*}
\W_{\alpha +}(y,s;x,s) &= \W_{\alpha +}(y,s;z,t) - \W_{\alpha +}(x,s;z,t) \\
&= \Ll(y,s;z,t) - \Ll(x,s;z,t) 
\\ &= \W_{\beta -}(y,s;z,t) - \W_{\beta -}(x,s;z,t)
= \W_{\beta -}(y,s;x,s).
\end{align*}

 \noindent \ref{itm:DL_coal_pt_equal}$\Rightarrow$\ref{itm:DL_paths}: Let $(z,t)$ be as in the proof of \ref{itm:DL_coal_pt_equal}$\Rightarrow$\ref{itm:DL_buse_eq}. By Theorem \ref{thm:DL_SIG_cons_intro}\ref{itm:DL_LRmost_geod}, the restriction of $g_{(x,s)}^{\alpha +,R}$ and $g_{(x,s)}^{\beta -,R}$ to the domain $[s,t]$ are both rightmost geodesics between $(x,s)$ and $(z,t)$, and therefore they agree on this restricted domain. Similarly, $g_{(y,s)}^{\alpha +,L}$ and $g_{(y,s)}^{\beta -,L}$ agree on the domain $[s,t]$. By the monotonicity of Theorem \ref{thm:g_basic_prop}\ref{itm:DL_mont_dir}, and since $(z,t)$ is the common coalescence point, \eqref{124} holds for $u \in [s,t)$, as desired. 

 \noindent \ref{itm:DL_paths}$\Rightarrow$\ref{itm:DL_coal_pt_equal} is immediate.
\end{proof}

\begin{theorem} \label{thm:Buse_pm_equiv}
On a single event of probability one, for all reals $s,\dir \in \R$, and $x < y$, the following are equivalent. 
\begin{enumerate} [label=\rm(\roman{*}), ref=\rm(\roman{*})]  \itemsep=3pt
    \item \label{itm:DL_pm_Buse_eq} $\W_{\dir -}(y,s;x,s) = \W_{\dir +}(y,s;x,s).$
    \item \label{itm:DL_pm_coal_pt} $\mbf z^{\dir -}(y,s;x,s) = \mbf z^{\dir +}(y,s;x,s)$.
    \item \label{itm:DL_disjoint_paths} $g_{(x,s)}^{\dir-,R}(t) = g_{(y,s)}^{\dir +,L}(t)$ for some $t > s$, i.e., the paths $g_{(x,s)}^{\dir -,R}$ and $g_{(y,s)}^{\dir +,L}$  intersect.
\end{enumerate}
\end{theorem}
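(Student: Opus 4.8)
\textbf{Proof proposal for Theorem \ref{thm:Buse_pm_equiv}.}

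The plan is to prove the three equivalences in a cycle \ref{itm:DL_pm_Buse_eq}$\Rightarrow$\ref{itm:DL_pm_coal_pt}$\Rightarrow$\ref{itm:DL_disjoint_paths}$\Rightarrow$\ref{itm:DL_pm_Buse_eq}, working on the full-probability event $\Omega_2$ of \eqref{omega2}, and to reuse as much of the machinery from Theorem \ref{thm:DL_eq_Buse_cpt_paths} as possible. The key point is that the $\pm$ situation at a single direction $\dir$ is the ``degenerate'' limiting case of the two-direction situation $\alpha<\beta$ considered in Theorem \ref{thm:DL_eq_Buse_cpt_paths}: we are allowed to take $\alpha=\dir$ with sign $-$ and $\beta=\dir$ with sign $+$ in Lemma \ref{lem:Buse_equality_coal} and Lemma \ref{lem:DL_LR_coal} (both lemmas explicitly permit $\alpha=\dir$ when $\sigg_1=-,\sigg_2=+$). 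So the arguments will be structurally parallel to those in Theorem \ref{thm:DL_eq_Buse_cpt_paths}.

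First I would prove \ref{itm:DL_pm_Buse_eq}$\Rightarrow$\ref{itm:DL_pm_coal_pt}. Assuming $\W_{\dir -}(y,s;x,s)=\W_{\dir +}(y,s;x,s)$, apply Lemma \ref{lem:DL_LR_coal} with $\alpha=\dir$, $\sigg_1=-$, $\sigg_2=+$: it gives that $g_{(x,s)}^{\dir -,R}$ coalesces with $g_{(y,s)}^{\dir -,L}$, that $g_{(x,s)}^{\dir +,R}$ coalesces with $g_{(y,s)}^{\dir +,L}$, and that the two coalescence points agree. But by Definition \ref{def:coal_pt} the first coalescence point is $\mbf z^{\dir -}(y,s;x,s)$ and the second is $\mbf z^{\dir +}(y,s;x,s)$, so they are equal. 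For \ref{itm:DL_pm_coal_pt}$\Rightarrow$\ref{itm:DL_pm_Buse_eq}, set $(z,t):=\mbf z^{\dir -}(y,s;x,s)=\mbf z^{\dir +}(y,s;x,s)$. By additivity (Theorem \ref{thm:DL_Buse_summ}\ref{itm:DL_Buse_add}) and the fact that Busemann geodesics realize the Busemann increments (Theorem \ref{thm:DL_SIG_cons_intro}\ref{itm:DL_all_SIG}, specifically $\Ll(g_{(x,s)}^{\dir \sig,S}(u),u;g_{(x,s)}^{\dir \sig,S}(v),v)=\W_{\dir \sig}(\cdot)$ along the geodesic), I compute
\[
\W_{\dir -}(y,s;x,s) = \W_{\dir -}(y,s;z,t) - \W_{\dir -}(x,s;z,t) = \Ll(y,s;z,t) - \Ll(x,s;z,t),
\]
where the last equality uses that $(z,t)$ lies on both $g_{(y,s)}^{\dir -,L}$ and $g_{(x,s)}^{\dir -,R}$ and that these paths are geodesics realizing the $\dir-$ Busemann increments. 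The identical computation with $\dir+$ in place of $\dir-$ gives $\W_{\dir +}(y,s;x,s)=\Ll(y,s;z,t)-\Ll(x,s;z,t)$ as well, provided $(z,t)$ also lies on $g_{(y,s)}^{\dir +,L}$ and $g_{(x,s)}^{\dir +,R}$; since $(z,t)$ is the common coalescence point of the relevant pairs, it does. Hence the two Busemann increments are equal.

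Next, \ref{itm:DL_pm_Buse_eq}$\Rightarrow$\ref{itm:DL_disjoint_paths}: once we know $\W_{\dir -}(y,s;x,s)=\W_{\dir +}(y,s;x,s)$, apply Lemma \ref{lem:DL_LR_coal} (again with $\alpha=\dir$, $\sigg_1=-$, $\sigg_2=+$) to get that $g_{(x,s)}^{\dir +,R}$ coalesces with $g_{(y,s)}^{\dir +,L}$; in fact, re-examining Lemma \ref{lem:Buse_equality_coal}, the minimal time $t>s$ at which $g_{(x,s)}^{\dir +,R}(t)=g_{(y,s)}^{\dir -,L}(t)$ satisfies, by that lemma, $g_{(y,s)}^{\dir -,L}(u)=g_{(y,s)}^{\dir +,L}(u)$ for all $u\in[s,t]$, so at that time $g_{(x,s)}^{\dir -,R}(t)=g_{(x,s)}^{\dir +,R}(t)=g_{(y,s)}^{\dir +,L}(t)$, giving an intersection of $g_{(x,s)}^{\dir -,R}$ and $g_{(y,s)}^{\dir +,L}$. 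Finally, \ref{itm:DL_disjoint_paths}$\Rightarrow$\ref{itm:DL_pm_Buse_eq}: suppose $g_{(x,s)}^{\dir -,R}(t)=g_{(y,s)}^{\dir +,L}(t)=:z$ for some $t>s$. By Theorem \ref{thm:DL_SIG_cons_intro}\ref{itm:DL_all_SIG} these paths realize $\Ll(x,s;z,t)=\W_{\dir -}(x,s;z,t)$ and $\Ll(y,s;z,t)=\W_{\dir +}(y,s;z,t)$; by additivity plus monotonicity in the direction parameter (Theorem \ref{thm:DL_Buse_summ}\ref{itm:DL_Buse_gen_mont}), $\W_{\dir -}(y,s;x,s)\le \W_{\dir +}(y,s;x,s)$ always, and one obtains a chain of inequalities sandwiching $\W_{\dir -}(y,s;x,s)$ between $\Ll(y,s;z,t)-\Ll(x,s;z,t)$ (reading through the $\dir-$ side, with an inequality because $g_{(y,s)}^{\dir +,L}$ need not be a $\dir-$ geodesic) and $\W_{\dir +}(y,s;x,s)$ (reading through the $\dir+$ side), forcing equality. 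I expect this last implication to require the most care: one must argue that passing through the point $(z,t)$ via the ``wrong-sign'' geodesic still gives the correct two-sided bound, using that $g_{(x,s)}^{\dir -,R}$ is the rightmost and $g_{(y,s)}^{\dir +,L}$ the leftmost geodesic between their endpoints (Theorem \ref{thm:DL_SIG_cons_intro}\ref{itm:DL_LRmost_geod}) together with $\W_{\dir -}(w,t;v,t)\le\W_{\dir +}(w,t;v,t)$ for $v<w$; this is exactly the type of sandwich estimate carried out in the proof of Lemma \ref{lem:Buse_equality_coal}, so the main obstacle is bookkeeping rather than a new idea. The full-probability event is $\Omega_2$ throughout, since every cited ingredient (Lemmas \ref{lem:Buse_equality_coal}, \ref{lem:DL_LR_coal}, Theorems \ref{thm:DL_Buse_summ}, \ref{thm:DL_SIG_cons_intro}, \ref{thm:g_basic_prop}) holds there.
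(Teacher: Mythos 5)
Your implications \ref{itm:DL_pm_coal_pt}$\Rightarrow$\ref{itm:DL_pm_Buse_eq} and \ref{itm:DL_disjoint_paths}$\Rightarrow$\ref{itm:DL_pm_Buse_eq} are essentially sound and match the paper's computations. The gap is in \ref{itm:DL_pm_Buse_eq}$\Rightarrow$\ref{itm:DL_pm_coal_pt} (and the parallel step \ref{itm:DL_pm_Buse_eq}$\Rightarrow$\ref{itm:DL_disjoint_paths}), where you invoke Lemma \ref{lem:DL_LR_coal} with $\alpha=\dir$, $\sigg_1=-$, $\sigg_2=+$. Contrary to your claim, only Lemma \ref{lem:Buse_equality_coal} explicitly permits the degenerate choice $\alpha=\dir$; Lemma \ref{lem:DL_LR_coal} is stated for $\alpha<\dir$, and its proof genuinely needs that strictness. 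The crossing time $t$ with $g_{(x,s)}^{\dir\sig_2,R}(t)=g_{(y,s)}^{\alpha\sig_1,L}(t)$ is produced there by directedness: the geodesic from the left point $(x,s)$ has asymptotic direction $\dir$, the one from the right point $(y,s)$ has direction $\alpha<\dir$, so they must meet. With $\alpha=\dir$ both paths have the same direction and need not cross on directedness grounds; indeed, whether the $\dir-$ and $\dir+$ geodesics from $x<y$ intersect is exactly the content of item \ref{itm:DL_disjoint_paths}, so assuming the crossing here is circular. Lemma \ref{lem:Buse_equality_coal} alone does not rescue you, since it is conditional on the crossing having already occurred.

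The missing ingredient is Theorem \ref{thm:DL_Buse_summ}\ref{itm:DL_unif_Buse_stick}: on $\Omega_2$ the map $\dir\mapsto\W_{\dir\sig}$ restricted to a compact set is locally constant, so the hypothesis $\W_{\dir-}(y,s;x,s)=\W_{\dir+}(y,s;x,s)$ upgrades to $\W_{\alpha+}(y,s;x,s)=\W_{\beta-}(y,s;x,s)$ for some genuinely separated $\alpha<\dir<\beta$. One then applies Theorem \ref{thm:DL_eq_Buse_cpt_paths} (equivalently, Lemma \ref{lem:DL_LR_coal}) to the pair $(\alpha+,\beta-)$, for which the directedness argument is valid, and reads off the statements for $\dir\pm$ from the sandwiching \eqref{124}. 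This is how the paper proceeds; once \ref{itm:DL_pm_Buse_eq}$\Leftrightarrow$\ref{itm:DL_pm_coal_pt} is in place, \ref{itm:DL_pm_coal_pt}$\Rightarrow$\ref{itm:DL_disjoint_paths} is immediate and you avoid having to prove \ref{itm:DL_pm_Buse_eq}$\Rightarrow$\ref{itm:DL_disjoint_paths} directly.
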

\begin{remark}
In Item \ref{itm:DL_disjoint_paths}, if $\dir \in \DLBusedc$, then despite intersecting, the geodesics   $g_{(x,s)}^{\dir -,R}$ and $g_{(y,s)}^{\dir +,L}$ cannot coalesce. This follows from Theorem \ref{thm:DL_good_dir_classification}, which gives  a full classification of the directions in which all semi-infinite geodesics coalesce. 
\end{remark}
\begin{proof}[Proof of Theorem \ref{thm:Buse_pm_equiv}]
\ref{itm:DL_pm_Buse_eq}$\Rightarrow$\ref{itm:DL_pm_coal_pt}: If $\W_{\dir-}(y,s;x,s) = \W_{\dir+}(y,s;x,s)$, then \\ Theorem \ref{thm:DL_Buse_summ}\ref{itm:DL_unif_Buse_stick} implies that for some $\alpha < \dir < \beta$, $\W_{\alpha +}(y,s;x,s) = \W_{\beta -}(y,s;x,s)$. Then, we apply \ref{itm:DL_buse_eq}$\Rightarrow$\ref{itm:DL_paths} of Theorem \ref{thm:DL_eq_Buse_cpt_paths} to conclude that for some $t > s$ and $z \in \R$, 
\[
g_{(x,s)}^{\dir -,R}(u) = g_{(x,s)}^{\dir +,R}(u) < g_{(y,s)}^{\dir -,L}(u) = g_{(y,s)}^{\dir +,L}(u),\qquad\text{for }u \in [s,t),
\]
whereas for $u = t$, all terms above equal some common value $z$. Therefore, $(z,t) = \mbf z^{\dir -}(y,s;x,s) = \mbf z^{\dir +}(y,s;x,s)$.

 \noindent \ref{itm:DL_pm_coal_pt}$\Rightarrow$\ref{itm:DL_pm_Buse_eq}: 
Similarly as in the proof of \ref{itm:DL_coal_pt_equal}$\Rightarrow$\ref{itm:DL_buse_eq} of Theorem \ref{thm:DL_eq_Buse_cpt_paths}, if $(z,t) = \mbf z^{\dir -}(y,s;x,s) = \mbf z^{\dir +}(y,s;x,s)$, then
$
\W_{\dir -}(y,s;x,s) = 
\Ll(y,s;z,t) - \Ll(x,s;z,t) 
= \W_{\dir +}(y,s;x,s).
$

 \noindent \ref{itm:DL_pm_coal_pt}$\Rightarrow$\ref{itm:DL_disjoint_paths}: Assume $(z,t) = \mbf z^{\dir -}(y,s;x,s) = \mbf z^{\dir +}(y,s;x,s)$. Then, $g_{(x,s)}^{\dir -,R}(t) = z = g_{(y,s)}^{\dir +,L}(t)$. 

 \noindent \ref{itm:DL_disjoint_paths}$\Rightarrow$\ref{itm:DL_pm_coal_pt}: 
 Assume that $g_{(x,s)}^{\dir-,R}(t) = g_{(y,s)}^{\dir +,L}(t)$ for some $t > s$. Let $t$ be the minimal such time, and let $(z,t)$ be the point where the geodesics first intersect. By Theorem \ref{thm:g_basic_prop}, Items \ref{itm:DL_mont_dir} and \ref{itm:DL_SIG_mont_x}, for $u > s$,
 \be \label{406}
 g_{(x,s)}^{\dir-,R}(u) \le g_{(x,s)}^{\dir +,R}(u) \wedge g_{(y,s)}^{\dir -,L}(u) \le  g_{(x,s)}^{\dir +,R}(u) \vee g_{(y,s)}^{\dir -,L}(u)  \le  g_{(y,s)}^{\dir +,L}(u).
 \ee
 In particular, when $u = t$, all inequalities in \eqref{406} are equalities. Further, since  $g_{(x,s)}^{\dir -,R}$,$g_{(x,s)}^{\dir +,R}$ are rightmost geodesics between $(x,s)$ and $(z,t)$ (Theorem \ref{thm:DL_SIG_cons_intro}\ref{itm:DL_LRmost_geod}), $g_{(x,s)}^{\dir -,R}(u) = g_{(x,s)}^{\dir +,R}(u)$ for $u \in [s,t]$. Similarly, $g_{(y,s)}^{\dir -,L}(u) = g_{(y,s)}^{\dir +,L}(u)$ for $u \in [s,t]$. Since $t$ was chosen minimally for $g_{(x,s)}^{\dir-,R}(t) = g_{(y,s)}^{\dir +,L}(t)$, we have $(z,t) = \mbf z^{\dir -}(y,s;x,s) = \mbf z^{\dir +}(y,s;x,s)$. 
\end{proof}

\begin{proof}[Proof of Theorem \ref{thm:DL_good_dir_classification} (Classification of directions)]
\ref{itm:DL_good_dir}$\Rightarrow$\ref{itm:DL_LR_all_agree}: If $\dir \notin \DLBusedc$, then $W_{\dir -} = \W_{\dir +}$, so \ref{itm:DL_LR_all_agree} follows by the construction of the Busemann geodesics from the Busemann functions. 

 \noindent \ref{itm:DL_LR_all_agree}$\Rightarrow$\ref{itm:DL_good_dir_coal}: Since a geodesic in direction $\dir$ from $(x,s)$ must pass through each horizontal level $t > s$, it is sufficient to show that, for $s \in \R$ and $x < y$, whenever $g_1$ is a semi-infinite  geodesic from $(x,s)$ in direction $\dir$ and $g_2$ is a semi-infinite geodesic from $(y,s)$ in direction $\dir$, $g_1$ and $g_2$ coalesce. Assuming \ref{itm:DL_LR_all_agree} and using Theorem \ref{thm:all_SIG_thm_intro}\ref{itm:DL_LRmost_SIG}, for all $t > s$,
\[
g_{(x,s)}^{\dir +,L}(t) = g_{(x,s)}^{\dir -,L}(t) \le g_1(t) \wedge g_2(t) \le g_1(t) \vee g_2(t) \le g_{(y,s)}^{\dir +,R}(t).
\]
By Theorem \ref{thm:DL_all_coal}\ref{itm:DL_allsigns_coal}, $g_{(x,s)}^{\dir +,L}$ and $g_{(y,s)}^{\dir +,R}$ coalesce, so all inequalities above are equalities for large $t$, and $g_1$ and $g_2$ coalesce. 

 \noindent \ref{itm:DL_good_dir_coal}$\Rightarrow$\ref{itm:DL_good_dir}: We prove the contrapositive. If $\dir \in \DLBusedc$, then by Theorem \ref{thm:DL_Buse_summ}\ref{itm:DL_Buse_gen_mont}-\ref{itm:Buse_KPZ_description}, \\$\W_{\dir -}(y,s;x,s) < \W_{\dir +}(y,s;x,s)$ for some $x < y$ and $s \in \R$. By \ref{itm:DL_pm_Buse_eq}$\Leftrightarrow$\ref{itm:DL_disjoint_paths} of Theorem \ref{thm:Buse_pm_equiv}, $g_{(x,s)}^{\dir-,R}(t) < g_{(y,s)}^{\dir +,L}(t)$ for all $t > s$. In particular, $g_{(x,s)}^{\dir-,R}$ and $g_{(y,s)}^{\dir +,L}$ do not coalesce. 

 \noindent \ref{itm:DL_LR_all_agree}$\Rightarrow$\ref{itm:DL_good_dir_unique_geod}: By definition of $\NU$, whenever $p\notin \NU$, $g_p^{\dir \sig,L} = g_{p}^{\dir \sig,R}$ for $\dir \in \R$ and $\sigg \in \{-,+\}$. Hence, assuming $p \notin \NU$ and $g_{p}^{\dir -,R} = g_{p}^{\dir +,R}$, we also have $g_{p}^{\dir-,L} = g_{p}^{\dir +,R}$, so there is a unique  geodesic from $p$ in direction $\dir$ by Theorem \ref{thm:all_SIG_thm_intro}\ref{itm:DL_LRmost_SIG}.

 \noindent \ref{itm:DL_good_dir_unique_geod}$\Rightarrow$\ref{itm:DL_good_dir_pt_unique}: By Theorem \ref{thm:DLNU}\ref{itm:DL_NU_p0}, on the event $\Omega_2$, $\NU$ contains no points of $\Q^2$, and therefore, $\NU$ is not all of $\R^2$.

 \noindent \ref{itm:DL_good_dir_pt_unique}$\Rightarrow$\ref{itm:DL_good_dir_L_unique} and \ref{itm:DL_good_dir_pt_unique}$\Rightarrow$\ref{itm:DL_good_dir_R_unique} are direct consequences of Theorem \ref{thm:all_SIG_thm_intro}\ref{itm:DL_LRmost_SIG}: If there is a unique semi-infinite geodesic in direction $\dir$ from a point $p \in \R^2$, then $g_{p}^{\dir -,L} = g_{p}^{\dir +,L} = g_{p}^{\dir -,R} = g_{p}^{\dir +,R}$.

 \noindent \ref{itm:DL_good_dir_L_unique}$\Rightarrow$\ref{itm:DL_LR_all_agree}: Let $p$ be a point from which $g_{p}^{\dir-,L} = g_{p}^{\dir +,L}$, and call this common geodesic $g$. Let $q$ be an arbitrary point in $\R^2$. By Theorem \ref{thm:DL_all_coal}\ref{itm:DL_allsigns_coal}, $g_{q}^{\dir -,L}, g_{q}^{\dir +,L}, g_{q}^{\dir - ,R}$, and $g_{q}^{\dir +,R}$ each coalesce with $g$, so $g_{q}^{\dir -,L}$ and $g_{q}^{\dir +,L}$ coalesce. Since both geodesics are the leftmost geodesics between their points by Theorem \ref{thm:DL_SIG_cons_intro}\ref{itm:DL_LRmost_geod}, they must be the same. Similarly, $g_{q}^{\dir-,R} = g_{q}^{\dir +,R}$. 

 \noindent \ref{itm:DL_good_dir_R_unique}$\Rightarrow$\ref{itm:DL_LR_all_agree}: follows by the same proof.

 \noindent \textbf{Item \ref{itm:DL_allBuse}:} Let $\dir \in \R \setminus \DLBusedc$, and let $g$ be a  semi-infinite geodesic in direction $\dir$, starting from a point $(x,s) \in \R^2$. By Lemma \ref{lem:L_and_Buse_ineq} and Theorem \ref{thm:DL_SIG_cons_intro}\ref{itm:arb_geod_cons}, it is sufficient to show that for sufficiently large $t$, 
\be \label{647}
\Ll(x,s;g(t),t) = \W_{\dir}(x,s;g(t),t).
\ee
(we dropped the $\pm$ distinction since $\W_{\dir -} = \W_{\dir +}$). By Item \ref{itm:DL_good_dir_coal}, $g$ coalesces with $g_{(x,s)}^{\dir,R}$. Then, for sufficiently large $t$, $g(t) = g_{(x,s)}^{\dir,R}(t)$ and by Theorem \ref{thm:DL_SIG_cons_intro}\ref{itm:DL_all_SIG}, \eqref{647} holds. 
\end{proof}

\newpage
\section{Remaining proofs from Section \ref{sec:Buse_geod_results} and the proofs of the main theorems} \label{sec:Busegeod_finalproofs}

In this section, we complete the unfinished business of the chapter. In Section \ref{sec:last_proofs1}, we start by proving Items \ref{itm:BuseLim1}--\ref{itm:global_attract} of Theorem \ref{thm:DL_Buse_summ} and mixing in Theorem \ref{thm:Buse_dist_intro}\ref{itm:stationarity}. Then, we prove the first main theorem of the chapter, namely Theorem \ref{thm:DLSIG_main}. In Section \ref{sec:last_proofs2}, we prove Theorems \ref{thm:DLBusedc_description}\ref{itm:Busedc_t} and \ref{thm:Split_pts}, along with some necessary lemmas.

\subsection{Proof of Theorem \ref{thm:DLSIG_main} and related results} \label{sec:last_proofs1}

\begin{proof}[Proof of Items \ref{itm:BuseLim1}--\ref{itm:global_attract} of Theorem \ref{thm:DL_Buse_summ} and mixing in Theorem \ref{thm:Buse_dist_intro}\ref{itm:stationarity}]
We \\continue to work on  $\Omega_2$.

\noindent \textbf{Item \ref{itm:BuseLim1} of Theorem \ref{thm:DL_Buse_summ} (Busemann limits I):}  By Theorem \ref{thm:DL_good_dir_classification}\ref{itm:DL_allBuse}, if $\dir \notin \DLBusedc$, all $\dir$-directed semi-infinite geodesics are Busemann geodesics, and they all coalesce.  By Theorem \ref{thm:DL_all_coal}\ref{itm:unif_coal}, there exists a level $T$ such that all geodesics from points starting in the compact set $K$ have coalesced by time $T$. Let $(Z,T)$ denote the location of the point of the common geodesics at time $T$. Let $r_t = (z_t,u_t)_{t \in \R_{\ge 0}}$ be any net with $u_t \to \infty$ and $z_t/u_t \to \dir$. By Theorem \ref{thm:all_SIG_thm_intro}\ref{itm:finite_geod_stick}, for all sufficiently large $t$ and $p \in K$, all geodesics from $p$ to $r_t$ pass through $(Z,T)$. Then, for $p,q \in K$, 
\[
\Ll(p;r_t) - \Ll(q;r_t) = \Ll(p;Z,T) + \Ll(Z,T;r_t) - (\Ll(q;Z,T) + \Ll(Z,T;r_t)).
\]
By Theorems \ref{thm:DL_SIG_cons_intro}\ref{itm:arb_geod_cons}\ref{itm:weight_of_geod} and \ref{thm:DL_Buse_summ}\ref{itm:DL_Buse_add}, the right-hand side is equal to 
\[
\W_\dir(p;Z,T) - \W_\dir(q;Z,T) = \W_\dir(p;q). 
\]

\noindent
\textbf{Item \ref{itm:BuseLim2} of Theorem \ref{thm:DL_Buse_summ} (Busemann limits II):} By Theorem \ref{thm:DLBusedc_description}\ref{itm:DL_dc_set_count}, on the event $\Omega_2$, $\DLBusedc$ contains no rational directions. Then, for arbitrary $\dir \in \R$, $s \in \R$, $x < y \in \R$, $\alpha,\beta \in \Q$ with $\alpha < \dir < \beta$, and a net $(z_r,u_r)$ with $u_r \to \infty$ and $z_r/u_r \to \dir$, for sufficiently large $r$, $\alpha u_r < z_r < \beta u_r$. Theorem \ref{thm:DL_Buse_summ}\ref{itm:BuseLim1} gives the existence of the limits in the first and last lines below, while the monotonicity of Lemma \ref{lem:DL_crossing_facts}\ref{itm:DL_crossing_lemma} justifies the first and last inequalities:
\begin{align*}
\W_\alpha(y,s;x,s) &= \lim_{r \to \infty} \Ll(y,s;\alpha u_r,u_r) - \Ll(x,s;\alpha u_r,u_r) \\
&\le \liminf_{r \to \infty} \Ll(y,s;z_r,u_r) - \Ll(x,s;z_r,u_r) \\ 
&\le \limsup_{r \to \infty} \Ll(y,s;z_r,u_r) - \Ll(x,s;z_r,u_r)\\
&\le \lim_{r \to \infty} \Ll(y,s;\beta u_r,u_r) -\Ll(x,s;\beta u_r,u_r)= \W_{\beta}(y,s;x,s).
\end{align*}
Sending $\Q \ni \alpha \nearrow \dir$ and $\Q \ni \beta \searrow \dir$ and using Item \ref{itm:DL_unif_Buse_stick} completes the proof.

\noindent \textbf{Item \ref{itm:global_attract} of Theorem \ref{thm:DL_Buse_summ} (Global attractiveness):}   We follow a similar proof to the attractiveness in Theorem \ref{thm:invariance_of_SH}. Let $\dir \notin \DLBusedc$ and assume $\h \in \UC$ is a function satisfying the drift condition \eqref{eqn:drift_assumptions}. Recall that we define
\be \label{hst}
h_{s,t}(x) = \sup_{z \in \R}\{\Ll(x,s;z,t) + \h(z)\}.
\ee
For $a > 0$ and $s < t$, Theorems \ref{thm:DL_Buse_summ}\ref{itm:DL_unif_Buse_stick} and \ref{thm:DLBusedc_description}\ref{itm:DL_dc_set_count} allows us to choose $\ve = \ve(\dir) > 0$ small enough  so that $\dir \pm 2\ve \in \Q $ (and thus $\dir \pm 2\ve \notin \DLBusedc$),  and so for all $x \in [-a,a]$,
\be \label{pmeq}
\W_{\dir \pm 2\ve}(x,s;0,s) = \W_\dir(x,s;0,s).
\ee
By Theorem \ref{thm:DL_all_coal}\ref{itm:unif_coal}, there exists a random $T = T(a,\dir \pm \ve)$ such that all $\dir - 2\ve$ Busemann geodesics have coalesced by time $T$ and all $\dir + 2\ve$ Busemann geodesics have coalesced by time $T$. For $t > T$, let $g^{\dir \pm 2\ve}(t)$ be locations of these two common geodesics at time $t$. By Theorem \ref{thm:DL_SIG_cons_intro}\ref{itm:arb_geod_cons}\ref{itm:geo_dir}, $g^{\dir \pm 2\ve}(t)/t \to \dir \pm 2\ve$. By Equation \eqref{downexit} in Lemma \ref{lem:unq} applied to the temporally reflected version of $\Ll$, there exists $t_0(a,\ve(\dir),s)$ so that for $t > t_0$, whenever $x \in [-a,a]$ and $z$ is a maximizer in \eqref{hst},
$
 g^{\dir - 2\ve}(t) < z < g^{\dir + 2\ve}(t).
$
Then, by Lemma \ref{lem:DL_crossing_facts}\ref{itm:KPZ_crossing_lemma}, for such large $t$,
\[
\W_{\dir - 2\ve}(x,s;0,s) \le  h_{s,t}(x) - h_{s,t}(0) \le \W_{\dir + 2\ve}(x,s;0,s),
\]
while for $-a \le x \le 0$, the equalities reverse. Combined with \eqref{pmeq}, this completes the proof. 

\noindent \textbf{Item \ref{itm:stationarity} of Theorem \ref{thm:Buse_dist_intro} (Mixing):} 
This proof follows a similar idea as that in Lemma 7.5 of~\cite{Bakhtin-Cator-Konstantin-2014}, and the key is that, within a compact set, the Busemann functions are equal to differences of the directed landscape for large enough $t$. Then, we use Lemma~\ref{lm:horiz_shift_mix}, which states that, as a projection of $\{\Ll,\W\}$, the directed landscape $\Ll$ is mixing under the transformation $T_{z;a,b}$. Set $r_z = (az,bz)$.  By a standard $\pi-\lambda$ argument, it suffices to show that for $\dir_1,\ldots\dir_k \in \R$ (ignoring the sign $\sigg$ since $\dir_i \notin \DLBusedc$ a.s.), all compact sets $K := K_1 \times K_2^k \subseteq \Rup \times (\R^4)^k$, and all Borel sets $A,B \in C(K,\R)$,
\begin{align*}
&\lim_{z \to \infty} \Pp\Bigl(\{\Ll, \W_{\dir_{1:k}}\}\big|_K \in A, \{T_{z;a,b} \Ll, T_{z;a,b}\W_{\dir_{1:k}}\}\big|_K \in B\Bigr) \\
&\qquad\qquad= \Pp\bigl( \{\Ll, \W_{\dir_{1:k}}\}\big|_K \in A\bigr) \Pp\bigl(\{\Ll, \W_{\dir_{1:k}}\}\big|_K \in B \bigr),
\end{align*}
where we use the shorthand notation 
\[
\{\Ll, \W_{\dir_{1:k}}\}\big|_K := \{\Ll(v), \W_{\dir_i}(p;q):1 \le i \le k,(v,p,q) \in K\},
\]
and $T_{z;a,b}$ acts on $\Ll$ and $\W$ as projections of $\{\Ll,\W\}$.
By Theorem \ref{thm:DL_Buse_summ}\ref{itm:BuseLim1}, we may choose $t > 0$ sufficiently large so that 
\be \label{busc1}
\Pp(\W_{\dir_i}(p;q) = \Ll(p;(t\dir,t)) - \Ll(q;(t\dir,t)) \;\forall (p,q) \in K_2, 1 \le i \le k) \ge 1 - \ve. 
\ee
By stationarity of the process under space-time shifts, we also have that for such large $t$ and all $z \in \R$, 
\be \label{busc2}
\Pp(T_{z;a,b} \W_{\dir_i}(p;q) = T_{z;a,b} [\Ll(p ;(t\dir,t)) - \Ll(q;(t\dir,t))] \;\forall (p,q) \in K_2, 1 \le i \le k) \ge 1 - \ve
\ee
Let $C_{z,t}$ be the intersection of the events in \eqref{busc1} over $1 \le i \le k$ with the event \eqref{busc2}.  Then for large enough $t$, $\Pp(C_{z,t}) \ge 1 - 2\ve$, and
\begin{align*}
    &\Bigl|\Pp\Bigl(\{\Ll, \W_{\dir_{1:k}}\}|_K \in A, \{T_{z;a,b} \Ll, T_{z;a,b}\W_{\dir_{1:k}}\}|_K \in B\Bigr)  \\
    &\qquad- \Pp\Bigl( \{\Ll, \W_{\dir_{1:k}}\}|_K \in A\Bigr) \Pp\Bigl(\{\Ll, \W_{\dir_{1:k}}\}|_K \in B \Bigr) \Bigr|  \\
    &\le \Bigl|\Pp\Bigl(\{\Ll, \W_{\dir_{1:k}}\}|_K \in A, \{T_{z;a,b} \Ll, T_{z;a,b}\W_{\dir_{1:k}}\}|_K \in B, C_{z,t}\Bigr) \\
    &\qquad -\Pp\Bigl( \{\Ll, \W_{\dir_{1:k}}\}|_K \in A,C_{z,t}\Bigr) \Pp\Bigl(\{\Ll, \W_{\dir_{1:k}}\}|_K \in B,C_{z,t} \Bigr) \Bigr| + C\ve \\
    &= \Bigl|\Pp\Bigl(\{\Ll(v), \Ll(p;(t\dir_{1:k},t)) - \Ll(q;(t\dir_{1:k},t))\}|_K \in A, \\
    &\qquad\qquad \{T_{z;a,b} \Ll(v), T_{z;a,b}[\Ll(p;(t\dir_{1:k},t)) - \Ll(q;(t\dir_{1:k},t))]\}|_K \in B, C_{z,t}\Bigr) \\
    &\qquad -\Pp\Bigl( \{\Ll(v), \Ll(p;(t\dir_{1:k},t)) - \Ll(q;(t\dir_{1:k},t))\}|_K \in A,C_{z,t}\Bigr) \\ &\qquad\qquad\times \Pp\Bigl(\{\Ll(v), \Ll(p;(t\dir_{1:k},t)) - \Ll(q;(t\dir_{1:k},t))\}|_K \in B,C_{z,t} \Bigr) \Bigr| + C\ve  \\
    &\le \Bigl|\Pp\Bigl(\{\Ll(v), \Ll(p;(t\dir_{1:k},t)) - \Ll(q;(t\dir_{1:k},t))\}|_K \in A, \\
    &\qquad\qquad \{T_{z;a,b} \Ll(v), T_{z;a,b}[\Ll(p;(t\dir_{1:k},t)) - \Ll(q;(t\dir_{1:k},t))]\}|_K \in B\Bigr) \\
    &\qquad -\Pp\Bigl( \{\Ll(v), \Ll(p;(t\dir_{1:k},t)) - \Ll(q;(t\dir_{1:k},t))\}|_K \in A\Bigr) \\ &\qquad\qquad\times \Pp\Bigl(\{\Ll(v), \Ll(p;(t\dir_{1:k},t)) - \Ll(q;(t\dir_{1:k},t))\}|_K \in B \Bigr) \Bigr| + C'\ve,
\end{align*}
where the constants $C,C'$ came as the cost of adding and removing the high probability event $C_{z,t}$. The proof is complete by sending $z \to \infty$ and using the mixing of $\Ll$ under the shift $T_{z;a,b}$ (Lemma \ref{lm:horiz_shift_mix}).
\end{proof}

\begin{proof}[Proof of Theorem \ref{thm:DLSIG_main}]
\textbf{Item \ref{itm:all_dir} (All geodesics have a direction):} First, we show that, on $\Omega_2$, if $g$ is a semi-infinite geodesic starting from $(x,s)$, then
\be \label{594}
-\infty < \liminf_{t \to \infty} t^{-1}{g(t)} \le \limsup_{t \to \infty} t^{-1}{g(t)} < \infty.
\ee
We show the rightmost inequality, the leftmost being analogous. Assume, by way of contradiction, that $\limsup_{t \to \infty} g(t)/t = \infty$. By the directedness of Theorem \ref{thm:DL_SIG_cons_intro}\ref{itm:DL_all_SIG}, $\forall \dir \in \R$ there exists an infinite sequence $t_i \to \infty$ such that  $g(t_i) > g_{(x,s)}^{\dir +,L}(t_i)$ for all $i$. Since $g_{(x,s)}^{\dir +,L}$ is the leftmost geodesic between any two of its points (Theorem \ref{thm:DL_SIG_cons_intro}\ref{itm:DL_LRmost_geod}), we must have $g(t) \ge g_{(x,s)}^{\dir+,L}(t)$ $\forall \dir \in \R$ and  $t \in \R$. By Theorem \ref{thm:g_basic_prop}\ref{itm:limits_to_inf}, $g(t) = \infty$ $\forall t > s$, a contradiction. 

Having established \eqref{594},   assume by way of contradiction that
\[
\liminf_{t \to \infty} t^{-1} {g(t)} < \limsup_{t \to \infty} t^{-1}{g(t)}.
\]
Choose some $\dir$ strictly between the two values above. By the directedness of Theorem \ref{thm:DL_SIG_cons_intro}\ref{itm:DL_all_SIG}, there exists a sequence $t_i \to \infty$ such that $g_{(x,s)}^{\dir +,R}(t_i) < g(t_i)$ for $i$ even and $g_{(x,s)}^{\dir +,R}(t_i) > g(t_i)$ for $i$ odd. This cannot occur since  $g_{(x,s)}^{\dir +,R}$ is the rightmost geodesic between any two of its points.

By  Theorem \ref{thm:DL_SIG_cons_intro}\ref{itm:DL_all_SIG}, for each $\dir \in \R$ and $(x,s) \in \R^2$, $g_{(x,s)}^{\dir +,R}$, for example, is a semi-infinite geodesic from $(x,s)$ in direction $\dir$, justifying the claim that there is at least one semi-infinite geodesic from each point and in every direction. 

\noindent \textbf{Item \ref{itm:good_dir_coal} (Coalescence):} The first statement  follows from the equivalences \\\ref{itm:DL_good_dir}$\Leftrightarrow$\ref{itm:DL_good_dir_coal}$\Leftrightarrow$\ref{itm:DL_good_dir_unique_geod} of Theorem \ref{thm:DL_good_dir_classification}. 
By Theorem \ref{thm:DLNU}\ref{itm:DL_NU_p0}, $\Pp(p \in \NU)=0$ $\forall p \in \R^2$. This and Tonelli's theorem imply that the expected Lebesgue measure of the set $\NU$ is
\[
\Ee \int_{(x,s) \in \R^2} \ind(p \in \NU)\,dx\,ds = \int_{(x,s) \in \R^2} \Pp(p \in \NU)\,dx\,ds = 0,
\]
so $\NU$ almost surely has planar Lebesgue measure zero.

\noindent \textbf{Item \ref{itm:bad_dir_split} (Non-uniqueness in exceptional directions):} This follows from Remark \ref{rmk:split_from_all_p}. 
\end{proof}

\subsection{Proof of Theorem \ref{thm:Split_pts} and related results.} \label{sec:last_proofs2}

Recall the  functions $f_{s,\dir}(x) = \W_{\dir +}(x,s;0,s) - \W_{\dir -}(x,s;0,s)$ defined in \eqref{fsdir} and the sets $\Split_{s,\dir}$ from \eqref{Split_sdir}:
\be\label{eqn:gen_split_set4}\begin{aligned}
    \Split_{s,\dir} &:= \{x \in \R:  \exists \text{ 
    \textbf{disjoint}}   \text{   }\text{semi-infinite  geodesics from  }(x,s) \text{ in direction }\dir\}\\
    \Split &:= \bigcup_{s \tspb\in\tspb \R, \, \dir\tspb \in\tspb \DLBusedc} \Split_{s,\dir} \times \{s\}. 
    \end{aligned}\ee

\begin{theorem} \label{thm:random_supp}
On a single event of full probability, the function $f_{s,\dir}$ is nondecreasing simultaneously for all $s \in \R$ and $\dir \in \DLBusedc$.  Denote the set of local variation of $f_{s,\dir}$  by  
\be \label{Dsdir}  \D_{s,\dir} =  \{ x\in\R:  f_{s,\dir}(x - \ve) < f_{s,\dir}(x + \ve)\; \forall \ve > 0 \}.
\ee
Then, on a single event of full probability, simultaneously for each $s \in \R$ and $\dir \in \DLBusedc$,
\be \label{eqn:supp_set}
\D_{s,\dir} = \Split_{s,\dir}^L \cup \Split_{s,\dir}^R \;\subseteq  \;\Split_{s,\dir},
\ee
where for $S \in \{L,R\}$,
\be \label{eqn:split_LR_sdir}
\Split_{s,\dir}^S := \{x \in \R: g_{(x,s)}^{\dir -,S} \text{ and } g_{(x,s)}^{\dir +,S} \text{ are disjoint}\}.
\ee
 Furthermore, $(\Split_{s,\dir} \setminus \D_{s,\dir}) \times \{s\}$ is contained in the countable set $\NU^{\dir -} \cap \tspb\NU^{\dir +} \cap \,\Hh_s$. 
\end{theorem}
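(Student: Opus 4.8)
# Proof Proposal for Theorem~\ref{thm:random_supp}

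The plan is to work on the intersection of the full-probability events $\Omega_2$, the events where the results of Section~\ref{sec:geometry_sec} hold, and an additional event handling the countably many rational directions. First I would establish that $f_{s,\dir}$ is nondecreasing: this is immediate from Theorem~\ref{thm:DL_Buse_summ}\ref{itm:DL_Buse_gen_mont}, since for $x < y$ on a horizontal line, $\W_{\dir +}(y,s;x,s) \ge \W_{\dir -}(y,s;x,s)$, and additivity (Theorem~\ref{thm:DL_Buse_summ}\ref{itm:DL_Buse_add}) gives $f_{s,\dir}(y) - f_{s,\dir}(x) = [\W_{\dir+}(y,s;x,s) - \W_{\dir-}(y,s;x,s)] \ge 0$. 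To get this simultaneously for all $s \in \R$ and all $\dir \in \DLBusedc$, note $\DLBusedc$ is countable (Theorem~\ref{thm:DLBusedc_description}\ref{itm:DL_dc_set_count}) and the monotonicity statement in Theorem~\ref{thm:DL_Buse_summ}\ref{itm:DL_Buse_gen_mont} already holds simultaneously for all reals.

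Next I would prove the two-sided inclusion $\D_{s,\dir} = \Split_{s,\dir}^L \cup \Split_{s,\dir}^R$. The key tool is Theorem~\ref{thm:Buse_pm_equiv}: for $x < y$, the equality $\W_{\dir-}(y,s;x,s) = \W_{\dir+}(y,s;x,s)$ (i.e.\ $f_{s,\dir}(y) = f_{s,\dir}(x)$) is equivalent to the paths $g_{(x,s)}^{\dir-,R}$ and $g_{(y,s)}^{\dir+,L}$ intersecting, hence (by Definition~\ref{def:LR_maxes} and the monotonicity in Theorem~\ref{thm:g_basic_prop}\ref{itm:DL_mont_dir},\ref{itm:DL_SIG_mont_x}) to $g_{(w,s)}^{\dir-,S}$ and $g_{(w,s)}^{\dir+,S}$ agreeing for all $w \in [x,y]$ and $S \in \{L,R\}$ up through the coalescence level. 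So $x \notin \D_{s,\dir}$ means there is an interval $(x-\ve, x+\ve)$ on which $f_{s,\dir}$ is constant, which forces $g_{(x,s)}^{\dir-,L} = g_{(x,s)}^{\dir+,L}$ and $g_{(x,s)}^{\dir-,R} = g_{(x,s)}^{\dir+,R}$, i.e.\ $x \notin \Split_{s,\dir}^L \cup \Split_{s,\dir}^R$. Conversely, if $x \notin \Split_{s,\dir}^L \cup \Split_{s,\dir}^R$, then $g_{(x,s)}^{\dir-,L}$ meets $g_{(x,s)}^{\dir+,L}$ and $g_{(x,s)}^{\dir-,R}$ meets $g_{(x,s)}^{\dir+,R}$; I would use Theorem~\ref{thm:Buse_pm_equiv} applied to pairs $(w,x)$ and $(x,w)$ for $w$ slightly less than and greater than $x$, together with the monotonicity of geodesics in the spatial parameter (Theorem~\ref{thm:g_basic_prop}\ref{itm:DL_SIG_mont_x}), to deduce that $f_{s,\dir}$ is constant in a neighborhood of $x$, so $x \notin \D_{s,\dir}$. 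The inclusion $\Split_{s,\dir}^L \cup \Split_{s,\dir}^R \subseteq \Split_{s,\dir}$ is essentially definitional: disjointness of the left (or right) Busemann geodesics in the two signs $\dir\pm$ is a special case of the existence of disjoint semi-infinite geodesics in direction $\dir$, since $g_{(x,s)}^{\dir-,S}$ and $g_{(x,s)}^{\dir+,S}$ are both semi-infinite geodesics in direction $\dir$ (Theorem~\ref{thm:DL_SIG_cons_intro}\ref{itm:DL_all_SIG}).

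Finally I would handle the containment $(\Split_{s,\dir} \setminus \D_{s,\dir}) \times \{s\} \subseteq \NU^{\dir-} \cap \NU^{\dir+} \cap \Hh_s$. Suppose $x \in \Split_{s,\dir} \setminus \D_{s,\dir}$. By the previous paragraph $x \notin \Split_{s,\dir}^L \cup \Split_{s,\dir}^R$, so $g_{(x,s)}^{\dir-,L} = g_{(x,s)}^{\dir+,L} =: g^L$ and $g_{(x,s)}^{\dir-,R} = g_{(x,s)}^{\dir+,R} =: g^R$. Since $x \in \Split_{s,\dir}$ there are two disjoint semi-infinite geodesics $h_1, h_2$ from $(x,s)$ in direction $\dir$; by Theorem~\ref{thm:all_SIG_thm_intro}\ref{itm:DL_LRmost_SIG} all semi-infinite geodesics from $(x,s)$ in direction $\dir$ lie between $g_{(x,s)}^{\dir-,L} = g^L$ and $g_{(x,s)}^{\dir+,R} = g^R$, and $h_1 \ne h_2$ forces $g^L \ne g^R$, so $g_{(x,s)}^{\dir\sig,L} \ne g_{(x,s)}^{\dir\sig,R}$ for each $\sig \in \{-,+\}$, which is exactly the statement $(x,s) \in \NU^{\dir-} \cap \NU^{\dir+}$. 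The countability of $\NU^{\dir-} \cap \NU^{\dir+} \cap \Hh_s$ then follows from Theorem~\ref{thm:DLNU}\ref{itm:DL_NU_count}, which says each $\NU^{\dir\sig} \cap \Hh_s$ is countably infinite. The main obstacle I anticipate is bookkeeping the ``simultaneously for all $s$ and all $\dir \in \DLBusedc$'' uniformity: the cited theorems (Theorem~\ref{thm:Buse_pm_equiv}, Theorem~\ref{thm:DL_good_dir_classification}, Theorem~\ref{thm:all_SIG_thm_intro}) already hold on single full-probability events for all reals $s, \dir$, so intersecting finitely many such events with the countable union over $\dir \in \DLBusedc$ of the $s$-uniform events suffices—but one must verify that the relevant events are genuinely $(s,\dir)$-uniform and not merely holding for fixed $(s,\dir)$, which requires care in quoting the right version of each result.
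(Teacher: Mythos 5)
Your overall architecture matches the paper's: monotonicity from Theorem~\ref{thm:DL_Buse_summ}\ref{itm:DL_Buse_gen_mont} (via Lemma~\ref{lem:ext_mont}), the inclusion $\Split_{s,\dir}^L\cup\Split_{s,\dir}^R\subseteq\Split_{s,\dir}$ being essentially definitional, the direction ``$f_{s,\dir}$ locally constant at $x$ $\Rightarrow$ $x\notin\Split_{s,\dir}^L\cup\Split_{s,\dir}^R$'' via Lemma~\ref{lem:Buse_equality_coal} applied to pairs on either side of $x$, and the final containment in $\NU^{\dir-}\cap\NU^{\dir+}\cap\Hh_s$ via the sandwiching of Theorem~\ref{thm:all_SIG_thm_intro}\ref{itm:DL_LRmost_SIG}. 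Those parts are fine.

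The gap is in the hard direction of \eqref{eqn:supp_set}, namely $\D_{s,\dir}\subseteq\Split_{s,\dir}^L\cup\Split_{s,\dir}^R$ (equivalently, your contrapositive ``$x\notin\Split_{s,\dir}^L\cup\Split_{s,\dir}^R\Rightarrow f_{s,\dir}$ constant near $x$''). Theorem~\ref{thm:Buse_pm_equiv} converts Busemann $\pm$-equality into the intersection of geodesics started from \emph{two distinct points} $x<y$, whereas your hypothesis concerns geodesics of the two signs started from the \emph{same} point $x$. To bridge these you must show that if, say, $g_{(x,s)}^{\dir-,L}$ and $g_{(x,s)}^{\dir+,L}$ meet, then for $w<x$ sufficiently close the paths $g_{(w,s)}^{\dir-,R}$ and $g_{(x,s)}^{\dir+,L}$ also meet. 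Spatial monotonicity only gives $g_{(w,s)}^{\dir-,R}(t)\le g_{(x,s)}^{\dir-,L}(t)$ and the monotone \emph{pointwise} convergence $g_{(w,s)}^{\dir-,R}(t)\nearrow g_{(x,s)}^{\dir-,L}(t)$ as $w\nearrow x$ (Theorem~\ref{thm:g_basic_prop}\ref{itm:DL_SIG_conv_x}); that is not enough to force an intersection at any finite $w<x$, since geodesics can be pointwise close yet disjoint a priori. The paper closes this by arguing by contradiction in the other direction: assuming $W_{\dir-}(x,s;w,s)<W_{\dir+}(x,s;w,s)$ for all $w<x$ (which by Theorem~\ref{thm:Buse_pm_equiv} says $g_{(w,s)}^{\dir-,R}$ and $g_{(x,s)}^{\dir+,L}$ are disjoint for every $w<x$), upgrading the pointwise monotone convergence to \emph{uniform} convergence on the compact time interval via Dini's theorem, and then invoking Lemma~\ref{lm:BGH_disj} (the Bates--Ganguly--Hammond compactness result: uniformly close geodesics on a compact set cannot be disjoint) to derive a contradiction. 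Your sketch cites neither Dini nor Lemma~\ref{lm:BGH_disj}, and without that input this step does not go through.
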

\begin{remark} \label{rmk:NUsupp}
Presently, we do not know if $\D_{s,\dir}$ equals $\Split_{s,\dir}$.  Since  $\NU^{\dir -} \cap \NU^{\dir +} \subseteq \NU$ and  $\NU \cap \,\Hh_s$ is at most countable (Theorem \ref{thm:DLNU}\ref{itm:DL_NU_count}), $\Split_{s,\dir}$ and $\D_{s,\dir}$ have the same Hausdorff dimension for all $s \in \R$ and $\dir \in \DLBusedc$. 
\end{remark}

\begin{remark} \label{rmk:splitsetseq}
In contrast with  $\Split$ in \eqref{eqn:gen_split_set4},   the sets $\Split^S$ are concerned only with leftmost ($S=L$) and rightmost ($S=R$) Busemann geodesics.
In  BLPP, the analogues of $\Split^L$ and $\Split^R$ are both equal to the set of initial points from which some geodesic travels initially vertically (Theorems 2.10 and 4.30 in \cite{Seppalainen-Sorensen-21b}). Furthermore, in BLPP,  the analogue of this set contains $\NU$. We do not presently know whether either is true in DL. 
\end{remark}

\begin{proof}[Proof of Theorem \ref{thm:random_supp}]
The full-probability event is $\Omega_2$ in \eqref{omega2}.  The monotonicity of the function $f_{s,\dir}$ follows from \eqref{801}.  We now prove that $\D_{s,\dir} = \Split_{s,\dir}^L \cup \Split_{s,\dir}^R$. Assume that $y \notin \D_{s,\dir}$. Then, there exist $a < y < b$ such that $f_{s,\dir}$ is constant on $[a,b]$. Hence, for $a \le x < y$,
\[
\W_{\dir +}(x,s;0,s) - \W_{\dir -}(x,s;0,s) = \W_{\dir +}(y,s;0,s) - \W_{\dir -}(y,s;0,s),
\]
and by additivity (Theorem \ref{thm:DL_Buse_summ}\ref{itm:DL_Buse_add}), $\W_{\dir-}(y,s;x,s) = \W_{\dir +}(y,s;x,s)$. Choose $t > s$ sufficiently small so that $g_{(x,s)}^{\dir +,R}(t) < g_{(y,s)}^{\dir -,L}(t)$. By Lemma \ref{lem:Buse_equality_coal}, $g_{(y,s)}^{\dir -,L}(u) = g_{(y,s)}^{\dir +,L}(u)$ for $u \in [s,t]$.  By a symmetric argument, instead choosing a point $x > y$, $g_{(y,s)}^{\dir -,R}$ and $g_{(y,s)}^{\dir +,R}$ agree near the starting point $(y,s)$. Hence, $y \notin \Split_{s,\dir}^L \cup \Split_{s,\dir}^R$.

Next, assume that $y \in \D_{s,\dir}$. Then, for all $x < y < z$,
\[
\W_{\dir +}(x,s;0,s) - \W_{\dir -}(x,s;0,s) < \W_{\dir +}(z,s;0,s) - \W_{\dir -}(z,s;0,s),
\]
Since the difference is a monotone function,
either (i)  $\W_{\dir -}(y,s;x,s) < \W_{\dir +}(y,s;x,s)$    for all $x < y$  or (ii) $\W_{\dir -}(z,s;y,s) < \W_{\dir +}(z,s;y,s)$  for all $z > y.$

We show that $g_{(y,s)}^{\dir -,L}$ and $g_{(y,s)}^{\dir +,L}$ are disjoint in the first case.     A symmetric proof shows that  $g_{(y,s)}^{\dir -,R}$ and $g_{(y,s)}^{\dir +,R}$ are disjoint in the second case. So 
assume $\W_{\dir -}(y,s;x,s) < \W_{\dir +}(y,s;x,s)$ for all $x < y$.  Sending $x \nearrow y$, $g_{(x,s)}^{\dir -,R}$ converges to $g_{(y,s)}^{\dir -,L}$ by Theorem \ref{thm:g_basic_prop}\ref{itm:DL_SIG_conv_x}.   Assume, by way of contradiction, that $g_{(y,s)}^{\dir -,L}(u) = g_{(y,s)}^{\dir +,L}(u)$ for some $u > s$. This implies then $g_{(y,s)}^{\dir -,L}(t) = g_{(y,s)}^{\dir +,L}(t)$ for all $t \in [s,u]$ since both paths are the leftmost geodesic between any two of their points (Theorem \ref{thm:DL_SIG_cons_intro}\ref{itm:DL_LRmost_geod}). For $t \ge s$, the convergence $g_{(x,s)}^{\dir -,R}(t) \to g_{(y,s)}^{\dir -,L}(t)$ is monotone by Theorem \ref{thm:g_basic_prop}\ref{itm:DL_SIG_mont_x}. Since geodesics are continuous paths, Dini's theorem implies that, as $x \nearrow y$, $g_{(x,s)}^{\dir -,R}(t)$ converges to $g_{(y,s)}^{\dir -,L}(t) = g_{(y,s)}^{\dir + ,L}(t)$ uniformly in $t \in [s,u]$. Lemma \ref{lm:BGH_disj} implies that, for sufficiently close $x < y$, $g_{(x,s)}^{\dir -,R}$ and $g_{(y,s)}^{\dir +,L}$ are not disjoint. This contradicts \ref{itm:DL_pm_Buse_eq}$\Leftrightarrow$\ref{itm:DL_paths} of Theorem \ref{thm:Buse_pm_equiv} since we assumed $\W_{\dir -}(y,s;x,s) < \W_{\dir +}(y,s;x,s)$ for all $x < y$.

Lastly, we show that $(\Split_{s,\dir} \setminus \D_{s,\dir}) \times \{s\} \subseteq \NU^{\dir - } \cap \NU^{\dir +} \cap\, \Hh_s$. Let $x \in \Split_{s,\dir} \setminus \D_{s,\dir}$. By Theorem \ref{thm:all_SIG_thm_intro}\ref{itm:DL_LRmost_SIG}, $g_{(x,s)}^{\dir -,L}$ is the leftmost $\dir$-directed geodesic from $(x,s)$, and $g_{(x,s)}^{\dir +,R}$ is the rightmost. Since $x \in \Split_{s,\dir}$, these two geodesics must be disjoint. Since $x \notin \D_{s,\dir}$, $g_{(x,s)}^{\dir -,L}$ and $g_{(x,s)}^{\dir +,L}$ are not disjoint, and $g_{(x,s)}^{\dir-,R}$ and $g_{(x,s)}^{\dir +,R}$ are not disjoint. Since the leftmost/rightmost semi-infinite geodesics are leftmost/rightmost geodesics between their points (Theorem \ref{thm:DL_SIG_cons_intro}\ref{itm:DL_LRmost_geod}), there exists $\ve > 0$ such that for $t \in (s,s + \ve)$,
\[
g_{(x,s)}^{\dir -,L}(t) = g_{(x,s)}^{\dir +,L}(t) < g_{(x,s)}^{\dir -,R}(t) = g_{(x,s)}^{\dir +,R}(t),
\]
and therefore, $(x,s) \in \NU^{\dir -} \cap \NU^{\dir +} \cap\, \Hh_s$. Recall this set is at most countable by Theorem \ref{thm:DLNU}\ref{itm:DL_NU_count}.
\end{proof}

\begin{lemma} \label{lem:rm_geod}
Given $\omega \in \Omega_2$ and  $(x,s;y,u) \in \Rup$, let $g:[s,u] \to \R$ be the leftmost {\rm(}resp.\ rightmost{\rm)} geodesic between $(x,s)$ and $(y,u)$. Then, $(g(t),t) \in \Split^L$ ${\rm(} \text{resp.\ } \Split^R {\rm)}$ for some $t \in [s,u)$. Furthermore, among the directions $\dir$ for which $g_{(x,s)}^{\dir-,L}$ and $g_{(x,s)}^{\dir+,L}$ separate at some $t \in [s,u)$, there is a unique direction $\wh \dir$ such that 
\[
g_{(x,s)}^{\wh \dir-,L}(u) \le y < g_{(x,s)}^{\wh \dir+,L}(u).
\]
The same  holds with $L$ replaced by $R$ and the strict and weak inequalities swapped.
\end{lemma}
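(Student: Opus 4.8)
The plan is to prove the two assertions — that some intermediate point of the leftmost geodesic lies in $\Split^L$, and that there is a unique ``responsible'' direction $\wh\dir$ — together, working on the event $\Omega_2$. First I would set up notation: write $g$ for the leftmost geodesic from $(x,s)$ to $(y,u)$, and consider the family of leftmost Busemann geodesics $\{g_{(x,s)}^{\dir -,L}\}_{\dir\in\R}$ from the common initial point $(x,s)$. By Theorem~\ref{thm:g_basic_prop}\ref{itm:DL_mont_dir}, the map $\dir\mapsto g_{(x,s)}^{\dir\pm,L}(u)$ is nondecreasing, and by Theorem~\ref{thm:g_basic_prop}\ref{itm:limits_to_inf} it tends to $\pm\infty$ as $\dir\to\pm\infty$. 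Hence the value $y$ is ``straddled'': defining
\[
\wh\dir := \sup\{\dir\in\R : g_{(x,s)}^{\dir -,L}(u) \le y\},
\]
I would check, using the left/right limit relations \eqref{371} of Theorem~\ref{thm:g_basic_prop}\ref{itm:DL_SIG_conv_x} in the \emph{direction} parameter (more precisely, the monotone convergence from the left/right of the maximizers together with Lemma~\ref{lemma:convergence of maximizers from converging functions}) and the regularity \eqref{eqn:dirtoinf}/Theorem~\ref{thm:g_basic_prop}\ref{itm:DL_SIG_unif}, that $\wh\dir\in\R$ and that $g_{(x,s)}^{\wh\dir -,L}(u) \le y < g_{(x,s)}^{\wh\dir +,L}(u)$. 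The strict inequality on the right is the point where $\wh\dir$ must be a genuine discontinuity: if $g_{(x,s)}^{\wh\dir -,L}(u) = g_{(x,s)}^{\wh\dir +,L}(u)$, then by the locally-constant behavior of Theorem~\ref{thm:g_basic_prop}\ref{itm:DL_SIG_unif} there is $\beta>\wh\dir$ with $g_{(x,s)}^{\beta +,L}(u) = g_{(x,s)}^{\wh\dir +,L}(u) \le y$, contradicting the definition of $\wh\dir$ as a supremum.

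Next I would argue that $g_{(x,s)}^{\wh\dir -,L}$ and $g_{(x,s)}^{\wh\dir +,L}$ are in fact disjoint, so that $(g(t),t)\in\Split^L$ for the appropriate $t$. The key is that $g$ is the leftmost geodesic from $(x,s)$ to $(y,u)$, while $g_{(x,s)}^{\wh\dir -,L}$ is the leftmost Busemann geodesic in direction $\wh\dir$. Since $g_{(x,s)}^{\wh\dir -,L}(u) \le y$ and $g_{(x,s)}^{\wh\dir-,L}$ is the leftmost geodesic between any two of its points (Theorem~\ref{thm:DL_SIG_cons_intro}\ref{itm:DL_LRmost_geod}), a path-crossing argument (as in the proof of Lemma~\ref{lem:RM_geod_SIG}, using additivity of Busemann functions Theorem~\ref{thm:DL_Buse_summ}\ref{itm:DL_Buse_add} and Lemma~\ref{lem:L_and_Buse_ineq}) shows $g(t)\ge g_{(x,s)}^{\wh\dir -,L}(t)$ for all $t\in[s,u]$; similarly, because $y < g_{(x,s)}^{\wh\dir +,L}(u)$ and $g_{(x,s)}^{\wh\dir +,L}$ is also a leftmost geodesic between its points, $g(t)\le g_{(x,s)}^{\wh\dir +,L}(t)$ for all $t\in[s,u]$. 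Thus $g$ is sandwiched between the two $\wh\dir$-Busemann geodesics on $[s,u]$. By Theorem~\ref{thm:Buse_pm_equiv} (applied with nearby points on a horizontal line, or directly via its characterization of when $g_{(x,s)}^{\dir-,R}$ and $g_{(y,s)}^{\dir+,L}$ intersect) combined with the structure in Remark~\ref{rmk:split_from_all_p}, since $\wh\dir\in\DLBusedc$ the two leftmost geodesics $g_{(x,s)}^{\wh\dir-,L}$ and $g_{(x,s)}^{\wh\dir+,L}$ are distinct and, being both leftmost geodesics between any two of their points, once they separate they can never rejoin; hence they are disjoint after their (unique) splitting time $t\ge s$, and at that time $g(t)$ equals the common value $g_{(x,s)}^{\wh\dir-,L}(t)=g_{(x,s)}^{\wh\dir+,L}(t)$, so $(g(t),t)\in\Split^L$ by definition \eqref{eqn:split_LR_sdir}.

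For uniqueness of $\wh\dir$ among directions $\dir$ for which $g_{(x,s)}^{\dir-,L}$ and $g_{(x,s)}^{\dir+,L}$ separate by time $u$ and satisfy $g_{(x,s)}^{\dir-,L}(u)\le y<g_{(x,s)}^{\dir+,L}(u)$: if $\dir_1<\dir_2$ were two such directions, then by Theorem~\ref{thm:g_basic_prop}\ref{itm:DL_mont_dir} we would have $g_{(x,s)}^{\dir_1+,L}(u) \le g_{(x,s)}^{\dir_2-,L}(u) \le y$, contradicting $y < g_{(x,s)}^{\dir_1 +,L}(u)$. Finally, the assertion for the rightmost geodesic with $L$ replaced by $R$ and the inequalities swapped follows by the mirror-image argument, defining $\wh\dir := \inf\{\dir : g_{(x,s)}^{\dir+,R}(u)\ge y\}$ and using the rightmost analogues of the same lemmas. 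I expect the main obstacle to be the sandwiching step — carefully running the path-crossing argument to show $g_{(x,s)}^{\wh\dir-,L}(t)\le g(t)\le g_{(x,s)}^{\wh\dir+,L}(t)$ on all of $[s,u]$ from the endpoint inequalities at time $u$, and then correctly identifying the splitting time of the two $\wh\dir$-Busemann geodesics with a time at which $g$ passes through their common location, keeping track of the possibility that they split exactly at $(x,s)$.
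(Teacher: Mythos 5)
Your proposal is correct and follows essentially the same route as the paper: define $\wh\dir$ as the supremum of directions with $g_{(x,s)}^{\dir\sig,L}(u)\le y$, use the local constancy in the direction parameter (Theorem~\ref{thm:g_basic_prop}\ref{itm:DL_SIG_unif}) to get $g_{(x,s)}^{\wh\dir-,L}(u)\le y<g_{(x,s)}^{\wh\dir+,L}(u)$, sandwich $g$ between the two $\wh\dir\pm$ leftmost Busemann geodesics, and locate the splitting point on $g$ as a point of $\Split^L$, with uniqueness from monotonicity in $\dir$. The only cosmetic issue is the citation of \eqref{371} (a spatial limit) where the directional regularity of Theorem~\ref{thm:g_basic_prop}\ref{itm:DL_SIG_unif} is what is actually used, and the "once split, never rejoin" step needs only that both paths are leftmost geodesics between any two of their points, as in the paper.
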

\begin{proof}
We prove the statement for leftmost geodesics.  The proof  for rightmost geodesics is analogous. Set
\be \label{468}
\wh \dir := \sup \{\dir \in \R: g_{(x,s)}^{\dir \sig,L}(u) \le y\} = \inf \{\dir \in \R: g_{(x,s)}^{\dir \sig,L}(u) > y\}.
\ee
The monotonicity of Theorem \ref{thm:g_basic_prop}\ref{itm:DL_mont_dir} guarantees that the second equality holds, and that the definition is independent of the choice of $\sigg \in \{-,+\}$. Theorem \ref{thm:g_basic_prop}\ref{itm:limits_to_inf} guarantees that $\wh \dir \in \R$. By definition of $\wh \dir$ and the monotonicity of Theorem \ref{thm:g_basic_prop}\ref{itm:DL_mont_dir}, $g_{(x,s)}^{\alpha \sig,L}(u) \le y = g(u) < g_{(x,s)}^{\beta \sig,L}(u)$  whenever $\alpha < \wh \dir < \beta$ and $\sigg \in \{-,+\}$.  
But by Theorem \ref{thm:g_basic_prop}\ref{itm:DL_SIG_unif}, the $\beta\sigg$ and $\wh\xi+$ geodesics agree locally when  $\beta$ is close enough to $\wh\xi$. We can conclude  that 
\be \label{423}
g_{(x,s)}^{\wh \dir -,L}(u) \le y = g(u) < g_{(x,s)}^{\wh \dir +,L}(u).
\ee
Since all three are leftmost geodesics (recall Theorem \ref{thm:DL_SIG_cons_intro}\ref{itm:DL_LRmost_geod}), 
\be \label{523}
g_{(x,s)}^{\wh \dir -,L}(t)\le g(t) \le g_{(x,s)}^{\wh \dir +,L}(t)\qquad\text{for } t \in [s,u].
\ee
By \eqref{423} the paths $g_{(x,s)}^{\wh \dir -,L}$ and $g_{(x,s)}^{\wh \dir +,L}$ must separate at some time $t \in [s,u)$. Furthermore, once $g_{(x,s)}^{\wh \dir -,L}$ splits from $g_{(x,s)}^{\wh \dir +,L}$ at a point $(z_1,t_1)$, the geodesics must stay apart. Otherwise, they would meet again at a point $(z_2,t_2)$, and Theorem \ref{thm:DL_SIG_cons_intro}\ref{itm:DL_LRmost_geod} implies that both paths are the leftmost geodesic between $(z_1,t_1)$ and $(z_2,t_2)$. See Figure \ref{fig:splitting}.  Set $\hat t = \inf\{t > s: g_{(x,s)}^{\wh \dir-,L}(t) < g_{(x,s)}^{\wh \dir +,L}(t)\}$. Then, $g_{(x,s)}^{\wh \dir-,L}(t) < g_{(x,s)}^{\wh \dir +,L}(t)$ for all $t > \hat t$. By \eqref{523} and continuity of geodesics, $g_{(x,s)}^{\wh \dir-,L}(t) = g(t) = g_{(x,s)}^{\wh \dir +,L}(t)$ for $t \in [s,\hat t\tspa]$, and so $(g(\hat t),\hat t\tspa) \in \Split^L$. 
\end{proof}

\begin{figure}[t]
    \centering    \includegraphics[height = 1.5 in]{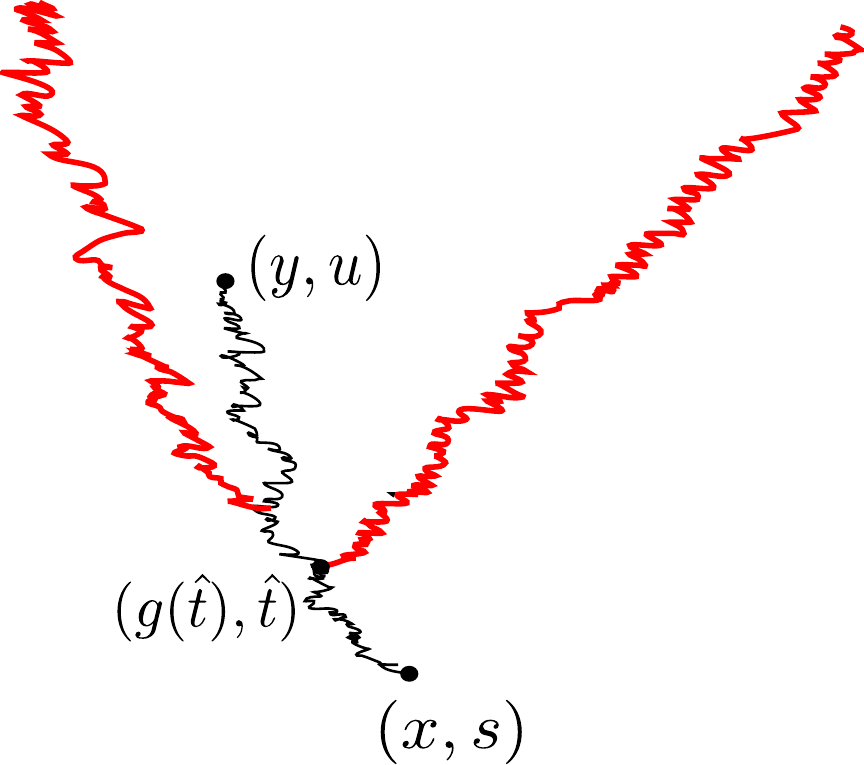}
    \caption{\small The black/thin path is the path $g$. The red/thick paths are the semi-infinite geodesics $g_{(x,s)}^{\wh \dir-,L}$ and $g_{(x,s)}^{\wh \dir+,L}$ after they split from $g$. Once the red paths split, they cannot return, or else there would be two leftmost geodesics from $(g(\hat t),\hat t)$ to the point where they come back together.}
    \label{fig:splitting}
\end{figure}

It remains to prove Theorems \ref{thm:DLBusedc_description}\ref{itm:Busedc_t} and \ref{thm:Split_pts}. Recall the definition of the function from \eqref{fsdir}: 
$
f_{s,\dir}(x) = \W_{\dir+}(x,s;0,s) - \W_{\dir -}(x,s;0,s). 
$
\be \label{omega3} \begin{aligned} 
&\text{Let $\Omega_3$ be the subset of $\Omega_2$ on which the following holds: for each $T \in \Z$,}\\ 
&\text{whenever  $\dir \in \R$ is such that $f_{T,\,\dir} \neq 0$, then}   
\lim_{x \to \pm \infty} f_{T,\,\dir}(x) = \pm \infty.
\end{aligned} \ee
 By Theorem \ref{thm:Buse_dist_intro}\ref{itm:SH_Buse_process} and Corollary \ref{cor:dcLR}, $\Pp(\Omega_3) = 1$. 
\begin{proof}[Proof of Theorem \ref{thm:DLBusedc_description}\ref{itm:Busedc_t}] We work on the full-probability event  $\Omega_3$.   The statement \eqref{bad_ub} to be proved  is  $\dir \in \DLBusedc \iff \forall s\in\R:\lim_{x \to \pm \infty} f_{s,\dir}(x) = \pm \infty$.
 If, for \textit{any} $s$, $f_{s,\dir} \to \pm \infty$ as $x \to \pm \infty$, then $\W_{\dir-}(x,s;0,s) \neq \W_{\dir +}(x,s;0,s)$ for $|x|$ sufficiently large, and $\dir \in \DLBusedc$. It remains to prove the converse statement.
From \eqref{881}, 
\[
\DLBusedc = \bigcup_{T \in \Z} \{\dir \in \R: \W_{\dir - }(x,T;0,T) \neq \W_{\dir +}(x,T;0,T) \text{ for some }x \in \R\}.
\]
To finish the proof of \eqref{bad_ub}, by definition of $\Omega_3$, it suffices to show these two statements:
\begin{enumerate}[label=\rm(\roman{*}), ref=\rm(\roman{*})]  \itemsep=3pt
    \item If $f_{s,\dir} \neq 0$ for some $s,\dir \in \R$ then $f_{T,\,\dir} \neq 0$ for all $T > s$. 
    \item For $T \in \Z, \dir \in \R$, if $f_{T,\,\dir} \neq 0$, then for all $s < T$, $\lim_{x \to \pm \infty} f_{s,\dir}(x) = \pm \infty$.
\end{enumerate}

Part  (i) follows from the equality below.   By \eqref{880}, for $s < T$,
\be \label{883}\begin{aligned} 
\W_{\dir \sig}(x,s;0,s) &= \sup_{z \in \R}\{\Ll(x,s;z,T) + \W_{\dir \sig}(z,T;0,T)\}\\[-3pt]
&\qquad\qquad 
- \sup_{z \in \R} \{\Ll(0,s;z,T) + \W_{\dir \sig}(z,T;0,T)\}. 
\end{aligned} \ee

To prove (ii), we show the limits as $x \to +\infty$, and the limits as $x \to -\infty$ follow analogously. Let $T \in \Z,\dir \in \R$ be such that $f_{T,\,\dir} \neq 0$, and let $R > 0$. By definition of the event $\Omega_3$, we may choose $Z > 0$ sufficiently large so that $\inf_{z \ge Z}\{f_{T,\,\dir}(z)\} \ge R$. Then, by Equation \eqref{373} of Theorem \ref{thm:g_basic_prop}\ref{itm:DL_SIG_conv_x}, for all sufficiently large $x$ and $\sigg \in \{-,+\}$,
\[
\sup_{z \in \R}\{\Ll(x,s;z,T) + \W_{\dir \sig}(z,T;0,T)\} = \sup_{z \ge Z}\{\Ll(x,s;z,T) + \W_{\dir \sig}(z,T;0,T)\}.
\]
Let 
\[
A := \sup_{z \in \R}\{\Ll(x,s;z,T) + \W_{\dir +}(z,T;0,T)\}- \sup_{z \in \R}\{\Ll(x,s;z,T) + \W_{\dir -}(z,T;0,T)\},
\]
and note that this does not depend on $x$. Then,
by \eqref{883},
\begin{align*}
    -f_{s,\dir}(x) &= \sup_{z \ge Z}\{\Ll(x,s;z,T) + \W_{\dir -}(z,T;0,T)\} \\
    &\qquad\qquad 
    - \sup_{z \ge Z}\{\Ll(x,s;z,T) + \W_{\dir +}(z,T;0,T)\} + A \\
    &\le \sup_{z \ge Z}\{\W_{\dir-}(z,T;0,T) - \W_{\dir +}(z,T;0,T) \} +A \\&
    = -\inf_{z \ge Z}\{f_{T,\,\dir}(z)\} + A \le -R + A,
\end{align*}
so that $f_{s,\dir}(x) \ge R - A$. Since $A$ is constant in $x$ and $R$ is arbitrary, the desired result follows. 

Note that \eqref{bad_ub} immediately proves \eqref{eqn:dcset_union1} in the case $x = 0$. The general case follows from additivity of the Busemann functions (Theorem \ref{thm:DL_Buse_summ}\ref{itm:DL_Buse_add}) and \eqref{bad_ub}.
\end{proof}
\begin{proof}[Proof of Theorem \ref{thm:Split_pts}]

\noindent \textbf{Item \ref{itm:split_dense} ($\Split$ is dense):} Work on the full-probability event   $\Omega_2$. Since $\Split \supseteq \Split^L \cup \Split^R$, it suffices to show that for $(x,s) \in \R^2$   there is  a sequence $(y_n,t_n) \in \Split^L$ converging to $(x,s)$. Let $g$ be the leftmost geodesic from $(x,s)$ to $(x,s + 1)$. Then $\forall n \ge 1$, $g|_{[s,s + n^{-1}]}$ is the leftmost geodesic from $(x,s)$ to $(x,s + n^{-1})$. By Lemma \ref{lem:rm_geod},  $\forall n \in \Z_{> 0}$  $\exists (x_n,t_n) \in \Split^L$ such that $x_n = g(t_n)$ and $s \le t_n \le s + n^{-1}$. The proof is complete by continuity of geodesics.

\noindent \textbf{Item \ref{itm:splitp0} ($\Pp(p \in \Split) = 0$ for all $p \in \R^2$):} If there exist disjoint semi-infinite geodesics from $(x,s)$, then for each level $t > s$, there exist disjoint geodesics from $(x,s)$ to some points $(y_1,t),(y_2,t)$. For each fixed $(x,s)$, with probability one, this occurs for no such points by \cite[Remark 1.12]{Bates-Ganguly-Hammond-22}.

\noindent \textbf{Item \ref{itm:Hasudorff1/2} (Hausdorff dimension of $\Split_{s,\dir})$:} Since $s$ is fixed, it suffices to take $s = 0$. By Theorem \ref{thm:Buse_dist_intro}\ref{itm:SH_Buse_process}, $\{\W_{\dir +}(\abullet,0;0,0)\}\deq G^{\sqrt 2}$, and by Theorem \ref{thm:DLBusedc_description}\ref{itm:Busedc_t}, $\dir \in \DLBusedc$ if and only if $f_{0,\dir} \neq 0$. Therefore, Corollary \ref{cor:SHHaus1/2} implies that, with probability one, $\dim_H(\D_{0,\dir}) = \f{1}{2}$ for all $\dir \in \DLBusedc$. Then, as noted in Remark \ref{rmk:NUsupp}, $\Pp(\dim_H(\Split_{0,\dir}) = \f{1}{2} \;\forall \dir \in \DLBusedc)  = 1$.

\noindent \textbf{Item \ref{itm:nonempty} ($\Split_{s,\dir}$ is nonempty and unbounded for all $s$):} 
By Theorem \ref{thm:DLBusedc_description}\ref{itm:Busedc_t}, on the event $\Omega_3$, whenever $\dir \in \DLBusedc$, for all $s \in \R$, $f_{s,\dir}(x) \to \pm \infty$ as $x \to \pm \infty$. Since $f_{s,\dir}$ is continuous (Theorem \ref{thm:DL_Buse_summ}\ref{itm:general_cts}), the set $\D_{s,\dir}$ is unbounded in both directions. The proof is complete since $\D_{s,\dir} \subseteq \Split_{s,\dir}$ by definition.
\end{proof}

\chapter{Scaling limit of the TASEP speed process} \label{chap:TASEP}
\section{Introduction}

\subsection{The totally asymmetric simple exclusion process}
In this final chapter, we show how the SH appears as a scaling limit for coupled initial data for a particle system know as the totally asymmetric simple exclusion process (TASEP). In the simplest TASEP dynamics each site of $\Z$  contains either a particle or a hole.  Each site has an  independent rate 1 Poisson clock. If at time $t$ the clock rings at site $x\in\Z$ the following happens. If there is a particle at site $x$ and no particle at site $x+1$ then the particle at site $x$ jumps to site $x+1$, while the other sites  remain unchanged. If there is no particle at site $x$ or there is a particle at site $x+1$ then the jump is suppressed. In other words, a particle can jump to the right only if the target site has no particle at the time of the jump attempt. This is the \textit{exclusion rule}. TASEP is a Markov process on the compact  state space $\{0,1\}^\Z$. Generic elements of $\{0,1\}^\Z$, or \textit{particle configurations}, are denoted by $\eta=\{\eta(x)\}_{x\in\Z}$, where $\eta(x)=1$  means that site $x$ is occupied by a particle and $\eta(x)=0$ that site $x$ is occupied by a hole, in other words, is empty. 

	For each density $\rho\in[0,1]$, the i.i.d.\ Bernoulli  distribution $\nu^\rho$ on $\{0,1\}^\Z$ with density $\rho$ is  the unique translation-invariant extremal stationary distribution of particle density $\rho$ under the TASEP dynamics. 
	
	There is a natural way to couple multiple  TASEPs from different initial conditions but with the same driving dynamics. Let $ \{\cN_x:x \in \Z\}$ be a $\Z$-indexed  collection of independent  rate $1$ Poisson  processes on $\R$. The clock at location $x$ rings at the times that correspond to points in $\cN_x$. One can then take two densities $0 <  \rho^1 <  \rho^2 <  1$ and ask whether there exists a coupling measure $\pi^{\rho_1,\rho_2}$ on $\{0,1\}^\Z\times\{0,1\}^\Z$ with Bernoulli marginals $\nu^{\rho_1}$ and $\nu^{\rho_2}$  that  is stationary under the joint TASEP dynamics
	and ordered. In other words, the twin requirements are    that if initially  $(\eta^1,\eta^2)\sim \pi^{\rho_1,\rho_2}$,  then $(\eta^1_t,\eta^2_t)\sim\pi^{\rho_1,\rho_2}$ at all subsequent times $t\ge0$, and $\eta^1(x)\leq \eta^2(x)$ for all $x\in\Z$ with $\pi^{\rho_1,\rho_2}$-probability one. Such a two-component  stationary distribution exists and is unique \cite{Liggett76}.
	
	One reason for the  interest in stationary measures of more than one density comes from the connection between the TASEP dynamics on $k$ coupled profiles in the state space $(\{0,1\}^\Z)^k$ and the TASEP dynamics on particles with classes in $\lzb1,k\rzb=\{1,\ldots,k\}$, called multiclass or multitype  dynamics. In the $k$-type dynamics, each  particle has a class in $\lzb1,k\rzb$ that remains the same for all time. A  particle  jumps to the right, upon the ring of a Poisson clock, only if there is either a hole or a particle of lower class (higher label) to the right. If this happens, the lower class particle moves left.  The state space of  $k$-type dynamics is  $\{1,\ldots,k,\infty\}^\Z$, with generic configurations denoted again by $\eta=\{\eta(x)\}_{x\in\Z}$. A value   $\eta(x)=i\in\lzb1,k\rzb$ means that site $x$ is occupied by  a particle of class $i$, and  $\eta(x)=\infty$ means  that site $x$ is empty, equivalently, occupied by a   hole.  Denoting a hole by $\infty$ is convenient now because holes can be equivalently viewed as particles of the absolute  lowest class.  
	For $k=1$ the multitype dynamics is the same as basic TASEP.
	
	The next question is whether we can  couple all the invariant multiclass distributions  so that the resulting construction is still invariant under TASEP dynamics. This was achieved by \cite{Amir_Angel_Valko11}:    such couplings can be realized by applying   projections to an object they constructed and named the \textit{TASEP speed process}. We describe briefly the construction. To start,  each site  $i\in\Z$ is occupied by a particle of class $i$.  This creates the initial   profile $\eta_0\in \Z^\Z$ such that $\eta_0(i)=i$. Let $\eta_t$ evolve under  TASEP dynamics, now interpreted so that a  particle switches places with the particle to its right only if the particle to the right  is of lower class, that is, has a higher label. Note that now each site is always occupied by a particle of some integer label.   The limit  from \cite{mountford2005motion}  
	implies  that each  particle has a well-defined limiting    speed:  if $X_t(i)$ denotes  the time-$t$ position of the particle initially at site $i$,  then the following random limit exists almost surely: 
	\begin{equation}\label{speed8}
	U_i= \lim_{t\to\infty} t^{-1}  X_t(i). 
	\end{equation}      
	The process $\{U_i\}_{i\in\Z}$ is   the TASEP speed process. It is a random element of the space  $[-1,1]^\Z$. 

\begin{theorem}[{\cite[Theorem 1.5]{Amir_Angel_Valko11}}]  \label{thm:AAV}
    The TASEP speed process $\{U_i\}_{i\in\Z}$ is the unique invariant distribution of TASEP that is ergodic under translations of the lattice $\Z$ and such that each $U_i$ is uniformly distributed on $[-1,1]$. 
\end{theorem}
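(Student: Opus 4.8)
The plan is to follow the argument of Amir, Angel, and Valk\'o \cite{Amir_Angel_Valko11}, organized into four steps: almost sure existence of the speeds together with identification of their marginal law, translation ergodicity, stationarity of the law of $\{U_i\}_{i\in\Z}$ under the multitype dynamics, and finally uniqueness, which is the substantive point. For existence of $U_i=\lim_{t\to\infty}t^{-1}X_t(i)$ I would quote the law of large numbers for second-class particles \cite{mountford2005motion}. To identify the marginal, project the rainbow configuration $\eta_0(i)=i$ by the two indicators $i\mapsto\ind(i\le c)$ and $i\mapsto\ind(i\le c-1)$: these produce step configurations with density $1$ to the left and density $0$ to the right, differing at the single site $c$. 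Since the multitype dynamics are compatible with both projections, under the basic coupling this lone discrepancy moves exactly as particle $c$, i.e.\ sits at $X_t(c)$ for all $t$, and a second-class particle in such a rarefaction fan has scaled position converging to a $\mathrm{Unif}[-1,1]$ variable (the Ferrari--Kipnis computation). Translation invariance of the construction then gives $U_i\sim\mathrm{Unif}[-1,1]$ for every $i$.

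\emph{Translation ergodicity and stationarity.} The process $\{U_i\}_{i\in\Z}$ is a shift-equivariant measurable function of the graphical representation, i.e.\ of the i.i.d.\ Poisson clocks $\{\cN_x\}_{x\in\Z}$, which is mixing under lattice shifts; hence $\{U_i\}_{i\in\Z}$ is a factor of a mixing system and is ergodic under $i\mapsto i+1$. Stationarity of the law of $\{U_i\}$ under the $[-1,1]$-valued multitype TASEP (swap adjacent coordinates when out of order) is the subtler of the two, and I would establish it as in \cite{Amir_Angel_Valko11}, exploiting the structure of the rainbow multiclass process and a color--position type symmetry: the future speed of a particle in $\eta_s$ depends only on the relative order of the configuration it currently sits in and on the clocks after time $s$, and the induced relabelling of $\eta_s$ reproduces the abstract dynamics in distribution. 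The technical content is the justification of the interchange of the $t\to\infty$ limit with the time shift. Together with the first step this shows $\{U_i\}$ satisfies all hypotheses of the theorem.

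\emph{Uniqueness --- the main obstacle.} Let $\mu$ be any translation-ergodic stationary law on $[-1,1]^\Z$ with uniform one-dimensional marginals. Given a finite partition $-1=a_0<a_1<\dots<a_k=1$, let $q$ send $u$ to $j$ whenever $u\in(a_{j-1},a_j]$. Since $q$ is nondecreasing it intertwines the $[-1,1]$-valued dynamics with the $k$-type TASEP, so $q_*\mu$ is a stationary, translation-ergodic (being a factor of $\mu$) measure for the $k$-type process, and because $\mu$ has uniform marginals the density of class $j$ under $q_*\mu$ is the prescribed number $(a_j-a_{j-1})/2$. The key external input is that such a measure is unique: for $k=2$ this is Liggett's theorem \cite{Liggett76}, and in general the relevant stationary measure is the Ferrari--Martin queueing measure for this density vector \cite{Ferrari-Martin-2007}. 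Hence $q_*\mu$ is determined, and in particular it agrees with the corresponding projection of the law of $\{U_i\}$, which satisfies the same hypotheses. Since the maps $q$, as the partitions refine, generate the Borel $\sigma$-algebra of $[-1,1]^\Z$, letting $k\to\infty$ forces $\mu$ to equal the law of $\{U_i\}$.

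I expect the delicate point to be precisely this uniqueness of the $k$-type stationary measures with the relevant (sorted, partial-sum) density vectors, together with the verification that the speed process indeed projects onto the Ferrari--Martin measures --- this last compatibility is exactly where the combinatorial structure of the speed process constructed in \cite{Amir_Angel_Valko11} enters, and it, rather than the soft existence and ergodicity statements, is the heart of the theorem.
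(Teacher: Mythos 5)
This statement is quoted verbatim from \cite{Amir_Angel_Valko11} (their Theorem 1.5); the dissertation does not prove it, but uses it as an external input alongside Theorem \ref{thm:TASEPprod} and Lemma \ref{lem:FU}. So there is no in-paper proof to compare against.

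Judged on its own terms, your outline is a faithful reconstruction of the Amir--Angel--Valk\'o argument: existence of the speeds from \cite{mountford2005motion}, the uniform marginal from the second-class-particle identification in the rarefaction fan, ergodicity because the speed process is a shift-equivariant factor of the i.i.d.\ Poisson clocks, and uniqueness by projecting a candidate measure $\mu$ through nondecreasing step maps $q$ onto $k$-type configurations, invoking the uniqueness of translation-ergodic stationary $k$-type measures with prescribed densities (exactly the content of Theorem \ref{thm:TASEPprod}, conditions (i) and (ii')), and refining the partition to recover $\mu$ on a generating $\pi$-system of cylinder sets. The two points you flag as delicate are indeed where the real work lives: stationarity of the law of $\{U_i\}$ under the abstract $[-1,1]$-valued swap dynamics (the interchange of the $t\to\infty$ limit with a time shift of the clocks), and the compatibility of the speed process with the Ferrari--Martin measures, which in this dissertation is recorded separately as Lemma \ref{lem:FU}. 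Since you defer both to \cite{Amir_Angel_Valko11} rather than supplying the arguments, the proposal is a correct proof sketch rather than a self-contained proof, but as a citation-level justification it is accurate and complete in structure.
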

In the context of the theorem above, the TASEP state  $\eta=\{\eta(i)\}_{i\in\Z}$ is a real-valued sequence, but the meaning of the dynamics is the same as before.  Namely, at each pair $\{i,i+1\}$ of nearest-neighbor sites, at the rings of a rate one exponential clock, the variables $\eta(i)$ and $\eta(i+1)$ are swapped if $\eta(i)<\eta(i+1)$, otherwise left unchanged.

 A key point  is that the
		 TASEP speed process projects to multitype stationary distributions. 
		
	
 \begin{theorem}[{\cite[Theorem 2.1]{Ferrari-Martin-2007}, \cite[Theorem 2.1]{Amir_Angel_Valko11}}] \label{thm:TASEPprod}
	Let $k\in\N$ be the number of \\ classes. Let  $\bar\rho=(\rho_1,\ldots,\rho_k)\in(0,1)^k$ be a parameter vector such that  $\sum_{i=1}^k  \rho_i \le 1$. Then there is a  translation-invariant  stationary distribution $\widecheck\pi^{\bar\rho}$ for the $k$-type TASEP which is unique under the conditions {\rm(i)} and {\rm(ii)}, and also under the conditions {\rm(i)} and {\rm(ii')} below:  
 
 {\rm(i)}  $\widecheck\pi^{\bar\rho}\{\eta\in \{1,\ldots,k,\infty\}^\Z:\eta(x) = j\} = \rho_j$ for each site $x\in\Z$ and class $j\in\lzb1,k\rzb$; 
 
 {\rm(ii)} under $\widecheck\pi^{\bar\rho}$, for each $\ell\in\lzb1,k\rzb$, the distribution of the sequence  $\{\ind[\eta(x) \le \ell]\}_{x \in \Z}
 $ of indicators is the i.i.d.\ Bernoulli measure $\nu^{\sum_{j=1}^{\ell} \rho_j}$ of intensity  $\sum_{j=1}^{\ell} \rho_j$;

 {\rm(ii')}  $\widecheck\pi^{\bar\rho}$ is  ergodic under the translation of the lattice $\Z$.
 
 Furthermore, $\widecheck\pi^{\bar\rho}$ is   extreme among translation-invariant stationary   measures of the $k$-type dynamics with jumps to the right. 
	\end{theorem}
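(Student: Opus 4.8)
The plan is to reconstruct the Ferrari--Martin queueing proof \cite{Ferrari-Martin-2007}, which is the discrete prelimit of the development in Chapter \ref{chap:SHcons}, and to fold in Liggett's characterization \cite{Liggett76, liggett1985interacting} of the translation-invariant stationary measures of basic TASEP. For \textbf{existence}, I would build $\widecheck\pi^{\bar\rho}$ as a pushforward of independent Bernoulli product measures under an iterated discrete queueing map. Set $r_\ell=\sum_{j=1}^\ell\rho_j$ for $0\le\ell\le k$, take independent i.i.d.\ sequences $X^1,\dots,X^k$ on $\{0,1\}^\Z$ with intensities chosen so that the class densities come out correctly (the discrete analogue of the drift vector $\bar\dir$ entering $\nu_\sigma^{\bar\dir}$ and $\mu_\sigma^{\bar\dir}$ in \eqref{musigma}), apply the discrete multiline map $\Phi$ (the analogue of $\D^{(k)}$ in \eqref{scrD}, assembled from iterated queueing operations as in \eqref{Diter}), and let $\widecheck\pi^{\bar\rho}$ be the law of $\Phi(X^1,\dots,X^k)$. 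Property (i) and property (ii) are then read off from the discrete-time Burke theorem for the M/M/1 queue, the exact analogue of Theorems \ref{thm:OCY_orig} and \ref{thm:OCY_DR}: collapsing all classes $\le\ell$ to ``particle'' realizes the partial construction from lines $1,\dots,\ell$, and the output of those $\ell$ queues is again an i.i.d.\ Bernoulli sequence of intensity $r_\ell$. Stationarity under the $k$-type dynamics I would obtain by the intertwining argument of Theorem \ref{existence of an invariant measure for Busemann MC}: the multiline process (the $k$ stacked TASEPs run with a shared clock) has $\bigotimes_i\nu^{(\text{line }i\text{ density})}$ as an invariant measure because each line marginally evolves as an ordinary TASEP with independent updates (the discrete analogue of Theorem \ref{multiline invariant distribution}), and $\Phi$ conjugates one step of the multiline process to one step of the $k$-type TASEP, so pushing the invariant measure forward gives stationarity of $\widecheck\pi^{\bar\rho}$.

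For \textbf{uniqueness under (i) and (ii)} I would induct on $k$. For $k=1$ condition (ii) with $\ell=1$ literally says the configuration is $\nu^{\rho_1}$. For the step, let $\mu$ be translation invariant, stationary, and satisfy (i)--(ii) for $k$ classes. Merging the bottom two classes (old class $k$ and holes) into one hole class is an equivariant factor map onto the $(k-1)$-type dynamics, so its image $\bar\mu$ is translation invariant and stationary; it inherits (i) for $(\rho_1,\dots,\rho_{k-1})$ and (ii) for $\ell\le k-1$, so by induction $\bar\mu$ equals the Ferrari--Martin measure $\widecheck\pi^{(\rho_1,\dots,\rho_{k-1})}$. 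To upgrade this to $\mu=\widecheck\pi^{\bar\rho}$ I would invoke bijectivity of the queueing map, the discrete analogue of Lemma \ref{DRbij}: the multiline configuration is a deterministic function $\Phi^{-1}$ of the multitype configuration, and stationarity of $\mu$ together with (ii) forces the $\mu$-pushforward on the multiline space to have Bernoulli lines that are stationary and, by a line-by-line coupling/second-class-particle argument in the spirit of \cite{Liggett76}, mutually independent, hence equal to the product measure. Since $\mu=\mathrm{law}$ of $(\mathbf 1[\eta\le1],\dots,\mathbf 1[\eta\le k])$ and $\Phi$ is a bijection, this identifies $\mu$ with $\widecheck\pi^{\bar\rho}$. \textbf{Extremality} among translation-invariant stationary measures is then automatic: if $\widecheck\pi^{\bar\rho}=\tfrac12(\mu_1+\mu_2)$ with $\mu_i$ translation invariant and stationary, projecting to level $\ell$ writes the extreme Bernoulli measure $\nu^{r_\ell}$ as a convex combination of translation-invariant stationary TASEP measures, forcing $\mu_i$ to satisfy (ii), and (i) holds for $\mu_i$ by linearity of the density, so $\mu_1=\mu_2=\widecheck\pi^{\bar\rho}$ by the uniqueness just established.

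For \textbf{uniqueness under (i) and (ii$'$)} I would simply reduce to the previous case. If $\mu$ is translation ergodic, stationary, and satisfies (i), then for each $\ell$ the level-$\ell$ factor $\eta\mapsto(\mathbf 1[\eta(x)\le\ell])_{x\in\Z}$ is equivariant, so it pushes $\mu$ to a translation-ergodic stationary measure of basic TASEP of density $r_\ell$; Liggett's classification says this measure must be $\nu^{r_\ell}$, which is precisely property (ii). Hence $\mu$ satisfies (i)--(ii) and the first uniqueness statement finishes the argument.

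The step I expect to be the real obstacle is the reconstruction inside the (i)--(ii) uniqueness proof: passing from ``every level-$\ell$ marginal is i.i.d.\ Bernoulli and the full configuration is stationary'' to ``the inverse-queueing image is the independent product.'' The Bernoulli marginals alone do not pin down the joint law of the nested indicator sequences, so one must genuinely exploit stationarity together with the algebraic identities behind $\Phi$ and $\Phi^{-1}$ --- the discrete counterparts of Lemma \ref{DRbij}, Lemma \ref{analogue of Lemma 4.4 in Joint Buse Paper}, and Lemma \ref{lem:Alternate Rep of Iterated Queues} --- and this is essentially where the Ferrari--Martin argument does its work. Everything else (the construction, verification of (i)--(ii), the intertwining for stationarity, extremality, and the ergodic case) is either a direct computation or a reduction to results already available.
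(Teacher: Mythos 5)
A preliminary remark: the paper does not prove this theorem. It is quoted from \cite{Ferrari-Martin-2007} and \cite{Amir_Angel_Valko11} with the explicit caveat that it is ``not stated exactly in this form in either reference'' and ``can be proved with the techniques of Section VIII.3 of Liggett,'' i.e.\ the coupling/discrepancy method for translation-invariant stationary measures of coupled exclusion processes. So your proposal is being measured against that pointer, not against a written argument. Your existence half is sound and is exactly the Ferrari--Martin route encoded later in the chapter (Theorem \ref{thm:FM}, Lemma \ref{lem:Cl}): push product Bernoullis through the iterated queueing map, read off (i) and (ii) from the discrete Burke theorem, and get stationarity from intertwining with the multiline dynamics. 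One quibble: product Bernoulli is invariant for the multiline process not ``because each line marginally evolves as an ordinary TASEP with independent updates'' --- the lines are coupled through the cascade of unused services --- but because of the Burke property itself; this is the discrete analogue of Theorem \ref{multiline invariant distribution}. The reduction of (i)+(ii$'$) to (i)+(ii) via Liggett's classification, and the extremality argument from extremality of $\nu^{r_\ell}$, are both fine \emph{modulo} the (i)+(ii) uniqueness.

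The genuine gap is that (i)+(ii) uniqueness step, and you have located the difficulty but neither resolved it nor attributed it correctly: Ferrari--Martin's Theorem 2.1 establishes \emph{invariance} of the constructed measure, not uniqueness by inverting the queueing map, so the missing step is not ``where the Ferrari--Martin argument does its work'' --- it is work that their theorem does not do. Your induction correctly pins down the joint law of $(\eta^{\le 1},\dots,\eta^{\le k-1})$ and the marginal law of $\eta^{\le k}$, but these do not determine the joint law, and the proposed upgrade rests on two unproved claims. First, condition (ii) gives you the laws of the nested departure processes of the pulled-back configuration (via Lemma \ref{lem:Cl}), not the laws of the individual lines of $\Phi^{-1}(\eta)$; recovering ``each line is Bernoulli and the lines are mutually independent'' from (ii) is a Burke theorem run backwards, which presupposes exactly the independence you are trying to establish. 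Second, even granting that the pullback is stationary for the multiline dynamics with Bernoulli line marginals, the assertion that it must therefore be the product measure is a uniqueness statement for the multiline process of the same order of difficulty as the one you started with; nothing has been gained by conjugating. The way this is actually closed is Liggett's VIII.3 technique: regard a stationary translation-invariant measure satisfying (i)--(ii) as a stationary ordered coupling of $k$ TASEPs with Bernoulli marginals, couple two such candidates, and show via the discrepancy (second-class particle) argument that the coupling concentrates on the diagonal. Without that step, the uniqueness claim --- and hence your extremality and (ii$'$) arguments, which rest on it --- is not established.
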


 Theorem \ref{thm:TASEPprod} is not stated exactly in this form in either reference.  It can be proved with the techniques of Section VIII.3 of Liggett \cite{liggett1985interacting}. 
	
	\begin{lemma}[{\cite[Corollary 5.4]{Amir_Angel_Valko11}}]\label{lem:FU}
		Let $F:[-1,1] \rightarrow \{1,\dotsc,k, \infty\}$ be a  nondecreasing function and  $\lambda_j= \f{1}{2} {\rm Leb}\big(F^{-1}(j)\big)$, i.e.,  one-half the Lebesgue measure of the interval  mapped to the value $j\in\{1,\dotsc,k, \infty\}$.
		Then  the distribution of the $\{1,\dotsc,k, \infty\}$-valued sequence  $\{F(U_i)\}_{i\in\Z}$ is   the stationary measure $\widecheck\pi^{(\lambda_1,\dotsc,\lambda_k)}$ described in Theorem \ref{thm:TASEPprod} for  the $k$-type TASEP with jumps to the right. 
	\end{lemma}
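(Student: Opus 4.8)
The plan is to pin down the law of $\{F(U_i)\}_{i\in\Z}$ by checking the hypotheses of the uniqueness clause in Theorem \ref{thm:TASEPprod}. The structural fact that makes this work is that applying a nondecreasing function $F:[-1,1]\to\lzb1,k\rzb\cup\{\infty\}$ coordinate-wise intertwines the dynamics of the TASEP speed process with the dynamics of the $k$-type TASEP with jumps to the right. Concretely, I would realize the speed process $\{U_i(t)\}_{i\in\Z}$ started from its stationary law via the graphical construction driven by the family $\{\cN_x\}_{x\in\Z}$ of independent rate-one Poisson clocks, and set $\eta_t(i)=F(U_i(t))$. When the clock at the edge $\{i,i+1\}$ rings, the speed process exchanges $U_i$ and $U_{i+1}$ precisely when $U_i<U_{i+1}$. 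Since $F$ is nondecreasing, $U_i<U_{i+1}$ implies $F(U_i)\le F(U_{i+1})$, and $F(U_i)>F(U_{i+1})$ implies $U_i>U_{i+1}$; hence the induced move on $\eta$ is a no-op when $F(U_i)=F(U_{i+1})$, is exactly the $k$-type swap when $F(U_i)<F(U_{i+1})$, and is absent when $F(U_i)\ge F(U_{i+1})$. So $\{\eta_t\}_{t\ge0}$ is a genuine realization of the $k$-type TASEP driven by the same clocks.

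Given this intertwining, stationarity and translation invariance of $\{F(U_i)\}_{i\in\Z}$ are immediate: by Theorem \ref{thm:AAV} the speed process is stationary under the TASEP dynamics and invariant under lattice shifts, so $\{\eta_t\}_{t\ge0}$ is a $k$-type TASEP whose time-$t$ marginal coincides with its time-$0$ marginal and which is shift invariant. It then remains to verify conditions (i) and (ii') of Theorem \ref{thm:TASEPprod} with $\bar\rho=(\lambda_1,\dotsc,\lambda_k)$. Condition (i) follows because each $U_x$ is uniform on $[-1,1]$ with density $\tfrac12$, so for every site $x$ and class $j\in\lzb1,k\rzb$ one has $\Pp(F(U_x)=j)=\Pp(U_x\in F^{-1}(j))=\tfrac12\,{\rm Leb}(F^{-1}(j))=\lambda_j$; moreover $\sum_{j=1}^k\lambda_j=\tfrac12\,{\rm Leb}(F^{-1}(\lzb1,k\rzb))\le 1$, so the theorem applies. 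Condition (ii') follows because $\{U_i\}\mapsto\{F(U_i)\}$ is a coordinate-wise factor map commuting with the lattice shift, hence it pushes the shift-ergodic speed process (Theorem \ref{thm:AAV}) to a shift-ergodic measure. The uniqueness assertion of Theorem \ref{thm:TASEPprod} under hypotheses (i) and (ii') then forces $\{F(U_i)\}_{i\in\Z}\sim\widecheck\pi^{(\lambda_1,\dotsc,\lambda_k)}$.

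The main obstacle I anticipate is simply making the intertwining precise at the level of the graphical construction while $F$ is only nondecreasing rather than strictly increasing; this is exactly what the case analysis above resolves, the essential point being that both the speed-process swap rule and the $k$-type swap rule are strict inequalities, so coincident $F$-values never create a discrepancy. A secondary point of care is degeneracy: if some $\lambda_j=0$ the vector $\bar\rho$ escapes the open cube $(0,1)^k$, but then $F^{-1}(j)$ is Lebesgue-null, class-$j$ particles are almost surely absent, and one reduces to the $k'$-type dynamics ($k'\le k$) carried by the values attained on sets of positive measure and applies Theorem \ref{thm:TASEPprod} there before reinterpreting; the extremal case $k=1$ with $F$ constant is handled trivially. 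Apart from these two checks, the argument is bookkeeping.
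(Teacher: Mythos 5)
Your argument is correct and is exactly the route the paper takes: it cites \cite{Amir_Angel_Valko11} and notes that the lemma ``follows readily from Theorems \ref{thm:AAV} and \ref{thm:TASEPprod} because the nondecreasing projection $F$ commutes with the pathwise dynamics,'' which is precisely the intertwining-plus-uniqueness argument you spell out. Your verification of conditions (i) and (ii') and your handling of the degenerate $\lambda_j=0$ case are the right bookkeeping for that one-line remark.
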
 
 For example, the case $k=1$ of  Lemma \ref{lem:FU} tells us that  to produce a particle configuration with Bernoulli distribution  $\nu^\rho$ from the TASEP speed process,  assign a particle to each site $x$ such that  $U_x\le2\rho-1$.   Lemma \ref{lem:FU} follows readily from Theorems \ref{thm:AAV} and  \ref{thm:TASEPprod} because the nondecreasing projection $F$ commutes with the pathwise dynamics. 

\begin{remark}[Direction of the jumps] Throughout this section, jumps in TASEP go to the right.  Later in Section \ref{sec:fdd}, we use the convention from \cite{Ferrari-Martin-2007} whereby TASEP jumps proceed left. This is convenient because then discrete  time in the queuing setting agrees with the order on $\Z$.  Notationally,  $\widecheck\pi^{\bar\rho}$ denotes the multiclass stationary measure under rightward jumps, as in Theorem \ref{thm:TASEPprod} and Lemma \ref{lem:FU} above, while 
  $\pi^{\bar\rho}$ will denote the stationary measure under leftward jumps. These  measures are simply reflections of each other (see Theorem \ref{thm:FM}). 
\end{remark}

	\subsection{Scaling limit of the speed process} \label{sec:scsp}
The space $\{0,1\}^\Z$ of TASEP particle configurations $\eta$ can be mapped bijectively  onto the space of  continuous  interfaces $f:\R\to\R$ such that $f(0)=0$, $|f(x)-f(x+1)|=1$ for all $x\in\Z$,  and $f(x)$  interpolates   linearly between integer points.  Define $\mathcal{P}:\{0,1\}^\Z\rightarrow C(\R)$  by stipulating that  on integers $i$ the image function $\mathcal{P}[\eta]$ is given by
	\begin{subequations} \label{eqn:TASEPh}
	\be
		\mathcal{P}[\eta](i)=
		\begin{cases}
			\sum_{j=0}^{i-1}(2\eta(j)-1), & i\in\N\\
			0, & i=0\\
			-\sum_{j={i}}^{-1}(2\eta(j)-1), & i\in -\N
		\end{cases}
		\ee
	and then extend $\mathcal{P}[\eta]$ to the reals by linear interpolation: 
	\be \text{for} \quad x\in \R\setminus \Z, \quad  \mathcal{P}[\eta](x)=(\ce x-x)\mathcal{P}[\eta](\fl x)+ (x-\fl x)\mathcal{P}[\eta](\ce x).
	\ee
	\end{subequations}
	TASEP can therefore be thought of as dynamics on continuous interfaces $f:\R \to \R$ such that for all $x \in \Z$, $f(x) \in\Z$ and  $f(x\pm1) \in \{f(x)-1,f(x) + 1\}$. When a particle at location $x$ lies immediately to the left of a hole at location $x + 1$, the interface  has a local maximum at location $x + 1$. When the particle changes places with the hole, the  local maximum becomes a local minimum.

	Let	$U=\{U_j\}_{j\in\Z}$ be the TASEP speed process and for $s\in\R$,  $\ind_{U < s}=\{\ind_{U_j< s}\}_{j\in\Z}$  a shorthand for the $\{0,1\}$-valued  sequence of indicators.   For each value of the centering $v \in (-1,1)$ and a scaling parameter $N\in\N$, use
	the mapping \eqref{eqn:TASEPh} to define from the speed process a    $C(\R)$-valued process indexed by $\dir\in\R$: 
 \begin{equation} \label{H138}
     H^N_\dir(x) = H^{\sigma,v,N}_\dir(x) =N^{-1/2}\,\mathcal{P}\big[\ind_{U \le v + \dir(1 -v^2) N^{-1/2}}\big]\Bigl(\f{\sigma^2 x}{1 - v^2}N\Bigr) - \f{\sigma^2 vx}{1-v^2}N^{1/2}, \qquad \dir, x\in\R.
 \end{equation}
The main theorem of this chapter is the process-level  weak limit of $H^{\sigma,v,N} = \{H^{\sigma,v,N}_\dir\}_{\dir \in \R}$.   The path space of $\dir\mapsto H^{\sigma,v,N}_\dir$ is the Skorokhod space $D(\R,C(\R))$ of $C(\R)$-valued cadlag paths on $\R$, with its usual Polish topology. This topology is  discussed in more detail in \cite{Busa-Sepp-Sore-22b}. 
	
	\begin{theorem}\label{thm:conv}
		Let $\sigma > 0$ and $G^{\sigma}$ be the stationary horizon. Then, for each $v \in (-1,1)$, as $N\rightarrow \infty$,  the  distributional limit $H^{\sigma,v,N} \Rightarrow G^\sigma$   holds on the path space  $D(\R, C(\R))$. 
		
	\end{theorem}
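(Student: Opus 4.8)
The plan is to prove Theorem \ref{thm:conv} by the two classical ingredients for weak convergence in a Polish path space: convergence of the finite-dimensional distributions of $H^{\sigma,v,N}$ to those of $G^\sigma$, and tightness of the laws of $H^{\sigma,v,N}$ on $D(\R,C(\R))$. By Proposition \ref{prop:SH_cons}\ref{itm:SH_dist} the law of $G^\sigma$ is the unique probability measure on $D(\R,C(\R))$ with finite-dimensional distributions $\mu_\sigma^{\bar\dir}$ along every tuple $\dir_1 < \cdots < \dir_k$, and by Proposition \ref{prop:SH_cons}\ref{itm:SH_cont} each fixed $\dir$ is almost surely a continuity point of $\dir \mapsto G^\sigma_\dir$, so there are no fixed discontinuities and it suffices to prove convergence of finite-dimensional distributions at every tuple. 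Since the $\sigma$-algebra on $D(\R,C(\R))$ is generated by the coordinate projections $\pi^{\dir_1,\dots,\dir_k}$, the two ingredients together give $H^{\sigma,v,N} \Longrightarrow G^\sigma$.

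For the finite-dimensional step, fix $\dir_1 < \cdots < \dir_k$ and put $s_i^N = v + \dir_i(1-v^2)N^{-1/2}$ and $\rho_i^N = \tfrac12(1 + s_i^N)$. The indicator sequences $\ind_{U \le s_i^N}$, $i = 1,\dots,k$, are the sublevel sets $\{\,j : F(U_j) \le i\,\}$ of a single nondecreasing step function $F : [-1,1] \to \{1,\dots,k,\infty\}$ with jumps at $s_1^N < \cdots < s_k^N$, so by Lemma \ref{lem:FU} their joint law (equivalently, the $k$-type configuration $F(U)$) is the multiclass stationary measure of Theorem \ref{thm:TASEPprod}, with class densities $\lambda_i^N = \tfrac12(s_i^N - s_{i-1}^N)$, $s_0^N = -1$; here $\lambda_1^N$ is of order $1$ while $\lambda_i^N = O(N^{-1/2})$ for $i \ge 2$, which is precisely the diffusive scaling regime. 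The Ferrari–Martin queuing representation of this measure — the prelimiting analogue of the definition $\mu_\sigma^{\bar\dir} = \nu_\sigma^{\bar\dir}\circ(\D^{(n)})^{-1}$, after a lattice reflection — expresses the rescaled coupled height functions $\mathcal P[\ind_{U\le s_i^N}]$ as the images, under iterated discrete queuing maps, of $k$ independent Bernoulli random walks of densities $\rho_1^N,\dots,\rho_k^N$. By Donsker's invariance principle each such walk, under the centering and scaling in \eqref{H138}, converges to a two-sided Brownian motion of diffusivity $\sigma$ and drift $\sigma^2\dir_i$, i.e.\ to the $i$-th coordinate of $\nu_\sigma^{\bar\dir}$; and the discrete queuing maps converge uniformly on compacts to the continuous maps $D^{(i)}$, by the deterministic continuity of the queuing operations (the arguments behind Lemma \ref{uniform convergence of queueing mapping} and the weak-continuity statement Lemma \ref{weak continuity and consistency}\ref{weak continuity}, transported to the lattice). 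One prerequisite here is localization of the queuing suprema: uniform-in-$N$ tail bounds on the discrete queue lengths are needed so that the defining suprema over $(-\infty,y]$ may be replaced by suprema over a fixed compact interval up to vanishing error. Combining these facts yields
\[
\bigl(H^{\sigma,v,N}_{\dir_1},\dots,H^{\sigma,v,N}_{\dir_k}\bigr)\;\Longrightarrow\;\bigl(Y^1,\; D^{(2)}(Y^2,Y^1),\;\dots,\; D^{(k)}(Y^k,\dots,Y^1)\bigr)\;\sim\;\mu_\sigma^{\bar\dir}.
\]

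For tightness, the structural input is monotonicity: the coupled TASEP profiles are ordered, so $H^{\sigma,v,N}_{\dir_1} \li H^{\sigma,v,N}_{\dir_2}$ for $\dir_1 < \dir_2$, uniformly in $N$. Tightness of the marginals in $C(\R)$ follows from the Donsker convergence just used. For the $\dir$-direction one uses a Billingsley-type oscillation criterion adapted to $D(\R,C(\R))$: it suffices to control, for $\dir_1 < \dir < \dir_2$ close together and a fixed $a > 0$, a quantity of the form $\Ee\bigl[\,(\sup_{|x|\le a}|H^{\sigma,v,N}_{\dir}(x) - H^{\sigma,v,N}_{\dir_1}(x)|)\wedge(\sup_{|x|\le a}|H^{\sigma,v,N}_{\dir_2}(x) - H^{\sigma,v,N}_{\dir}(x)|)\,\bigr]$ bounding the size of a jump of $\dir \mapsto H^{\sigma,v,N}_\dir$ near $\dir$. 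This is small because two coupled Bernoulli height functions of densities differing by $O(N^{-1/2})$ agree on a macroscopic box with probability close to $1$; the needed probability estimate is the prelimiting analogue of the increment distribution of Theorem \ref{thm:SH_inc_dist} together with the jump-rate computation of Theorem \ref{thm:SH_jump_process}, which together show that jumps in $\dir$ do not accumulate under scaling. The remaining bookkeeping for the path space $D(\R,C(\R))$ — the precise modulus, the double index $(\dir,x)$, and the verification of the compactness criterion — is carried out in \cite{Busa-Sepp-Sore-22b} following Busani's method for exponential LPP, which I would cite rather than reproduce.

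The main obstacle is the tightness. The finite-dimensional convergence, while requiring some care with the localization of the queuing suprema, is in essence a diffusive-scaling continuity statement for the Ferrari–Martin maps. The tightness, by contrast, demands both a quantitative control of the agreement of coupled TASEP profiles on microscopic and macroscopic scales (to rule out accumulation of $\dir$-jumps in the limit) and a nontrivial amount of topological work in the unfamiliar space $D(\R,C(\R))$, where formulating the correct compactness criterion is itself delicate.
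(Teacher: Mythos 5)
Your proposal is correct and follows essentially the same route as the paper: finite-dimensional convergence via the Ferrari--Martin queuing representation of the multiclass stationary measures (Lemma \ref{lem:FU}, Theorem \ref{thm:FM}, Lemma \ref{lem:Cl}), Donsker scaling of the Bernoulli walks, and convergence of the iterated discrete queuing maps to $D^{(k)}$ with the localization of the suprema handled by a random-walk maximum estimate, together with the citation of \cite{Busa-Sepp-Sore-22b} for tightness on $D(\R,C(\R))$ — exactly as the dissertation does in Proposition \ref{prop:fdd}. The only detail worth flagging is that the index reversal coming from the left-jump convention must be undone at the end via the reflection invariance of the SH (Theorem \ref{thm:SH_dist_invar}\ref{itm:SH_reflinv}), which you gesture at but do not state explicitly.
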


  Theorem \ref{thm:conv} is taken from the author's joint work with Busani and Sepp\"al\"ainen \cite{Busa-Sepp-Sore-22b}. In that work, we focus on the case $\sigma = \sqrt 2$, but the extension to general $\sigma$ follows the same proof (or alternatively from the scaling relations of Theorem \ref{thm:SH_dist_invar}\ref{itm:SHscale}). As is typical, the proof splits into two main steps: 
  (i) weak convergence of finite-dimensional distributions of  $H^{N}$ to the limiting object  and (ii) tightness of $\{H^{N}\}_{N\in\N}$ on $D(\R,C(\R))$.   Both parts  use  the Ferrari-Martin  queuing representation of the  multitype stationary measures. 
  The first part shows that, in the limit, the queuing representation recovers  the queuing structure that defines the SH \cite{Ferrari-Martin-2007}. In this dissertation, we only show the finite-dimensional convergence (Proposition \ref{prop:fdd}), and refer the reader to \cite{Busa-Sepp-Sore-22b} for the proof of tightness. 

 \section{Finite-dimensional convergence} \label{sec:fdd}

\subsection{The space $D(\R,C(\R))$}\label{sec:D(R)}	

	

We observe here why the path $\dir\mapsto H^N_\dir$ defined in \eqref{H138} lies in $D(\R,C(\R))$. Restriction of $x\mapsto H^N_\dir(x)$ to a  bounded interval $[-x_0, x_0]$ is denoted by  $H^{N,x_0}_\dir=H^N_\dir\vert_{[-x_0, x_0]}$.  Then note that for $\dir<\rho$, $H^{N,x_0}_\dir\ne H^{N,x_0}_\rho$	if and only if $U_j\in(v + \dir(1 - v^2) N^{-1/2},v  + (1 - v^2)\rho N^{-1/2}]$ for some  
$j\in\lzb\,\fl{-\,\f{\sigma^2 x_0}{1 - v^2} N}, \ce{\f{\sigma^2 x_0}{1 - v^2} N}\,\rzb$.  Since this range of indices is finite, for each $\dir \in \R$ and $x_0 > 0$ there exists $\ve>0$ such that $H^{N,x_0}_\dir= H^{N,x_0}_\rho$ for $\rho\in[\dir, \dir+\ve]$ and $H^{N,x_0}_\rho= H^{N,x_0}_\sigma$ for $\rho, \sigma\in[\dir-\ve, \dir)$. 

\subsection{Ferrari-Martin representation of multiclass measures} \label{sec:FM}

This section describes the queuing construction of stationary multiclass measures from \cite{Ferrari-Martin-2007}. We use the convention of \cite{Ferrari-Martin-2007} that TASEP particles jump to the left rather than to the right, because this choice leads to the more natural queuing set-up where time flows on $\Z$ from left to right. This switch is then accounted for when we apply the results of this section.  

\subsubsection{Queues with a single customer stream}
	Let $\Qs_1:=\{1,\infty\}^\Z$ 
	be the space of configurations of particles on $\Z$ with the following interpretation:  a configuration  $\bq{x}=\{x(j)\}_{j\in\Z}\in \Qs_1$  has a particle at time $j\in\Z$ if $x(j)=1$, otherwise   $\bq{x}$ has a hole at time $j\in\Z$.  
	Let $\arrv,\srvv\in\Qs_1$. Think of $\arrv$ as arrivals of customers to a queue, and of $\srvv$ as the available  services in  the queue. For $i\le j\in \Z$ let $a^{\leq1}[i,j]$  be the number of customers, that is, the number of $1$'s in $\bq{a}$, that  arrive to the queue during time interval $[i,j]$. Similarly, let $s[i,j]$ be the number of services available  during time interval $[i,j]$. The queue length at time $i$ is then given by 
	\begin{equation}\label{Q}
		\Qd_i=\sup_{j: j\leq i}\big(a^{\leq1}[j,i]-s[j,i]\tspb\big)^+.
	\end{equation}
In principle this makes sense for arbitrary sequences $\arrv$ and $\srvv$ if one allows infinite queue lengths $\Qd_i=\infty$. However, in our treatment $\arrv$ and $\srvv$ are always such that 
queue lengths are finite.  We will not repeat this point in the sequel.

	The departures from the queue come from the mapping $\depv=\Dd(\arrv,\srvv):\Qs_1\times\Qs_1 \rightarrow \Qs_1$,  given by 
	\begin{equation}\label{eq6}
		\dep(i)=
		\begin{cases}
		1 & \text{$s(i)=1$ and either $Q_{i-1}>0$ or $a(i)=1$},\\
		\infty & \text{otherwise}.
		\end{cases}
	\end{equation}
	In other words, a customer leaves the queue at time $i$ (and $d(i)=1$) if there is a service  at time $i$ and either the queue is not empty or a customer just arrived at time $i$.
			The sequence $\bq{u}:=\Ud(\arrv,\srvv)$ of unused services  is given by a mapping  $\Ud:\Qs_1\times\Qs_1\rightarrow \Qs_1$  defined by 
	\begin{equation}\label{defU}
	u(j)=
	\begin{cases}
	1 & \text{if $\srv(j)=1$,  $\Qd_{j-1}=0$, and $\arr(j)=\infty$},\\
	\infty & \text{otherwise}.
	\end{cases}
	\end{equation}
	Last, we define the map $\Rd:\Qs_1\times\Qs_1\rightarrow \Qs_1$ as $\bq{r}=\Rd(\arrv,\srvv)$ with 
	\begin{equation}\label{defR} 
		r(j)=
		\begin{cases}
		1 & \text{ if either $a(j)=1$ or $u(j)=1$},\\
		\infty & \text{ if $a(j)=u(j)=\infty$}.  
		\end{cases}.
	\end{equation}
Extend the departure operator $\Dd$ to  queues in tandem.   Let  $\Dd^1(\bq{x})=\bq{x}$ be the identity, and for $n\ge 2$,  
\begin{equation}\label{eq18}
\begin{aligned}
\Dd^{(2)}(\bq{x}_1,\bq{x}_2)&=\Dd(\bq{x}_1,\bq{x}_2)\\
\Dd^3(\bq{x}_1,\bq{x}_2,\bq{x}_3)&=\Dd\big(\Dd^{(2)}(\bq{x}_1,\bq{x}_2),\bq{x}_3\big)\\
&	\ \; \vdots \\
\Dd^{(n)}(\bq{x}_1,\bq{x}_2,\dotsc,\bq{x}_n)&=\Dd\big(\Dd^{(n-1)}(\bq{x}_1,\bq{x}_2,\dotsc,\bq{x}_{n-1}),\bq{x}_n\big).
\end{aligned}
\end{equation}

We use the notation $\Dd$ (with the subscript $d$) to denote discrete, in comparison to the operators on continuous functions in \eqref{Ddef},\eqref{Diter}.


	\subsubsection{Queues with priorities}
	Now consider queues with customers of different classes.  For $m\in\N$, let  \\$\Qs_m:=\{1,2,\dotsc,m,\infty\}^\Z$ be the space of configurations of particles on $\Z$ with classes in $\lzb1,m\rzb=\{1,2,\dotsc,m\}$. A lower label indicates higher class and, as before,  the value $\infty$ signifies an empty time slot.  To illustrate the notation for  an arrival sequence $\arrv\in \Qs_m$,   the  value  $a(j)=k\in\lzb1,m\rzb$ means that    a customer of class $k$ arrives at time $j\in\Z$,  while   $a(j)=\infty$ means   no  arrival  at time $j$. Define 
	\begin{equation*}
		a^{\leq k}[j]=
		\begin{cases}
		1 &\text{if $a(j)\leq k$},\\
		0 &\text{if $a(j)>k$}.
		\end{cases}
	\end{equation*}
	 Consistently with earlier definitions,  $a^{\leq k}[i,j]=\sum_{l=i}^{j}a^{\leq k}[l]$ is the number of customers in classes $\lzb1,k\rzb$ that arrive to the queue in the time interval $[i,j]$. Let $\srvv\in \Qs_1$ be the sequence of available services. The number of customers in classes $\lzb1,k\rzb$ in the queue at time $i$ is then 
	\begin{equation*}
		Q^{\leq k}_i(\arrv,\srvv)=\sup_{j: j\leq i}\big(a^{\leq k}[j,i]-s[j,i]\big)^+, \qquad i\in \Z.
	\end{equation*}
	 The multiclass departure map $\depv = F_m(\arrv,\srvv):\Qs_m\times \Qs_1 \rightarrow \Qs_{m+1}$ is defined so that  customers of  higher class (lower label) are served first.  These are the rules: 
	\begin{equation}\label{fm}
		\begin{cases}
		d(i)\leq k & \text{for $k\in\lzb1,m\rzb$ if $s(i)=1$ and either $Q^{\leq k}_{i-1}>0$ or $a(i)\leq k$}, \\
		d(i)=m+1 & \text{if $s(i)=1$,  $Q^{\leq m}_{i-1}=0$, and $a(i)= \infty$},\\
		d(i)=\infty &\text{if }  s(i)=\infty .
		\end{cases}
	\end{equation}
	The map $F_m$ works as follows.  The queue is fed with arrivals $\arrv\in\Qs_m$ of customers in classes $1$ to $m$. Suppose a service is available at time $i\in\Z$ ($s(i)=1$). Then  the customer of the highest class (lowest label in $\lzb1,m\rzb$)   in the queue at time $i$,  or  just arrived at time $i$, is served at time $i$, and its label becomes the value of $d(i)$. If no customer arrived at time $i$ ($a(i)=\infty$) and the queue is empty ($Q^{\leq m}_{i-1}=0$), then the unused service  $s(i)=1$ is converted into a departing  customer of class $m+1$:  $\dep(i)=m+1$. If there is no  service available at time $i\in\Z$ ($s(i)=\infty$), then no customer leaves at time $i$ and  $\dep(i)=\infty$.
	
	In particular, for $m=1$, the output    $\depv=F_1(\arrv,\srvv)$ satisfies 
	\be\label{Q670} 
	d(i)=\begin{cases} 1, &[\Dd(\arrv,\srvv)]_i=1\\ 2, &[\Ud(\arrv,\srvv)]_i=1 \\ \infty, &s(i)=\infty.   \end{cases}
	\ee
 	
	For  $n\in\N$  define the space  $\ml_n=\Qs_1^n=\{1,\infty\}^{\Z\times \{1,\dotsc,n\}}$ of $n$-tuples of sequences. Let $\bar{\lambda}=(\lambda_1,\dotsc,\lambda_n)\in(0,1)^n$ be a parameter vector such that  $\sum_{r=1}^n \lambda_r\le1$. Define  the product measure $\nu^{\bar{\lambda}}$ on  $\ml_n$ so that if   $\bar{\bq{x}}=(\bq{x}_1,\dotsc,\bq{x}_n)\sim\nu^{\bar{\lambda}}$ then the sequences  $\bq{x}_k$ are independent and  each   $\bq{x}_k$ has the i.i.d.\ product  Bernoulli distribution  with intensity $\sum_{i=1}^{k}\lambda_i$.  From this input we define a new process $\bar{\bq{v}}=(\bq{v}_1,\dotsc,\bq{v}_n)$ such that each $\bq{v}_m\in \Qs_m$   by the iterative  formulas 
	\begin{equation}\label{eq1}
		\begin{aligned}
		\bq{v}_1&=\bq{x}_1 \qquad\text{and} \\
		\bq{v}_m&=F_{m-1}(\bq{v}_{m-1},\bq{x}_m) \qquad \text{for } \ m= 2, \dotsc,n.
		\end{aligned}
	\end{equation}
	We denote this map  by $\bar{\bq{v}}=\Vmap(\bar{\bq{x}})=(\Vmap_1(\bar{\bq{x}}),\dotsc,\Vmap_n(\bar{\bq{x}}))$. For a vector $\bar{\lambda}=(\lambda_1,\dotsc,\lambda_n)$ and $\bar{\bq{x}}\sim \nu^{\bar{\lambda}}$, define the distribution  $\pi^{\bar{\lambda}}$ as the image of $\nu^{\bar{\lambda}}$ under this map:  
	\begin{equation}\label{mu}
\pi^{\bar{\lambda}}=\nu^{\bar{\lambda}}\circ \Vmap_n^{-1}  \ \iff \ 	\Vmap_n(\bar{\bq{x}})\sim 	\pi^{\bar{\lambda}}.
	\end{equation}
	\begin{theorem}\cite[Theorem 2.1]{Ferrari-Martin-2007}\label{thm:FM}   
 For each $m\in\lzb1,n\rzb$, 
		the distribution  of $\bq{v}_m$   under $\pi^{\bar{\lambda}}$ is 
    the unique translation-ergodic
  stationary distribution of the $m$-type TASEP on $\Z$ with leftward jumps and  with density $\lambda_r$ of particles of class $r\in\lzb1,m\rzb$. 
   The distribution of the reversed configuration $\{\bq{v}_m(-i)\}_{i \in \Z}$ is the unique  distribution $\widecheck\pi^{\bar\lambda}$ described in Theorem \ref{thm:TASEPprod}, in other words, the unique translation-ergodic stationary distribution of the $m$-type TASEP on $\Z$ with rightward  jumps, with density $\lambda_r$ of particles of class $r \in \lzb 1,m \rzb$. 
\end{theorem}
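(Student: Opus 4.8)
The plan is to prove the two assertions in the following form: (a) $\pi^{\bar\lambda}$ is a translation-invariant stationary distribution of the $m$-type TASEP with leftward jumps under which, for each $\ell\in\lzb1,m\rzb$, the cumulative indicator sequence $\{\ind[\bq v_m(x)\le\ell]\}_{x\in\Z}$ is i.i.d.\ Bernoulli of intensity $\sum_{i\le\ell}\lambda_i$ (hence class-$r$ particles have density $\lambda_r$); and (b) it is the unique such distribution that is moreover translation-ergodic. The reversal statement about $\{\bq v_m(-i)\}_{i\in\Z}$ then comes for free, because the lattice reflection $i\mapsto -i$ conjugates leftward-jump $m$-type TASEP with rightward-jump $m$-type TASEP, preserves all densities and the product/ergodicity structure, and therefore carries the unique translation-ergodic stationary distribution of one to that of the other; by Theorem \ref{thm:TASEPprod} the latter is $\widecheck\pi^{\bar\lambda}$. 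So everything reduces to (a) and (b).

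For the base case $m=1$ I would prove the discrete Burke-type theorem, the exact analogue of Theorem \ref{thm:OCY_orig} and Theorem \ref{thm:OCY_DR}: if $\arrv$ is i.i.d.\ Bernoulli$(\alpha)$ and $\srvv$ i.i.d.\ Bernoulli$(\beta)$ with $0<\alpha<\beta\le1$, independent, then the queue lengths $\Qd_i$ of \eqref{Q} are a.s.\ finite, the pair $\bigl(\Dd(\arrv,\srvv),\Rd(\arrv,\srvv)\bigr)$ is a pair of independent i.i.d.\ Bernoulli$(\alpha)$ and Bernoulli$(\beta)$ sequences, and for every $i$ the departure/merged streams up to time $i$ are independent of $\{\Qd_k:k\ge i\}$. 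This is standard: $\{\Qd_i\}$ is a birth--death chain, reversible with respect to a geometric stationary law, and time reversal swaps ``arrival'' with ``unused service'' and ``used service'' with ``departure'' --- Burke's theorem \cite{Burke1956} in discrete time. In particular the density-$\alpha$ Bernoulli measure is stationary for one-type leftward TASEP, which is (a) at $m=1$; the strict ordering $\alpha<\beta$ (here the strict increase of the cumulative intensities, guaranteed by $\lambda_i>0$) is exactly what makes the queue stable.

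The inductive step is an intertwining argument, mirroring the proof of Theorem \ref{existence of an invariant measure for Busemann MC}. I would introduce a driven Markovian dynamics $S$ on the input tuples $\bar{\bq x}\in\ml_n$ --- one step feeds a fresh Bernoulli clock through the tandem of single queues built from $\Dd$ and $\Rd$, exactly as in the multiline process of Section \ref{section:multiline-Markov process} --- and show by repeated application of the Burke lemma above (the discrete analogue of Theorem \ref{multiline invariant distribution}) that $S$ preserves $\nu^{\bar\lambda}$. Independently, one step of $m$-type leftward TASEP acts on $\bar{\bq v}$; call it $T$. The heart of the matter is the deterministic identity $\Vmap_n\circ S=T\circ\Vmap_n$, i.e.\ that applying the multiclass priority maps $F_1,\dots,F_{n-1}$ of \eqref{fm} commutes with one TASEP swap applied to the driving clock --- the discrete counterpart of the intertwining \eqref{intertwining} and of Lemmas \ref{analogue of Lemma 4.4 in Joint Buse Paper}--\ref{lem:Alternate Rep of Iterated Queues}. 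Granting it, stationarity is immediate: $T_*\pi^{\bar\lambda}=T_*(\Vmap_n)_*\nu^{\bar\lambda}=(\Vmap_n)_*S_*\nu^{\bar\lambda}=(\Vmap_n)_*\nu^{\bar\lambda}=\pi^{\bar\lambda}$, and the same holds for the $m$-type dynamics on each $\bq v_m$ by projection. The Bernoulli laws of the cumulative indicator sequences follow from the Burke lemma applied inductively up the tower of queues in \eqref{eq1}, and translation invariance and ergodicity of each $\bq v_m$-marginal are inherited from $\nu^{\bar\lambda}$ since every map commutes with the shift and a shift-commuting factor of an ergodic measure is ergodic.

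Uniqueness, part (b), I would obtain from the classification of translation-invariant stationary distributions of finite-class totally asymmetric exclusion in Liggett, Section VIII.3 of \cite{liggett1985interacting} (the input behind Theorem \ref{thm:TASEPprod}): the translation-ergodic stationary distributions of $m$-type leftward TASEP with prescribed densities $(\lambda_1,\dots,\lambda_m)$ are uniquely determined, and since the $\bq v_m$-marginal of $\pi^{\bar\lambda}$ is translation-ergodic with exactly those densities it must be that unique measure, which also yields the identification with it. The one place I expect genuine work is the intertwining identity $\Vmap_n\circ S=T\circ\Vmap_n$: as with the iterated-queue identities of Section \ref{sec:queue_proofs}, one must trace the interaction of the priority rule \eqref{fm} with a single-bond TASEP swap through all $n$ levels, and the induction on the class index $m$ is where the delicacy lies. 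As a fallback, one can bypass the pathwise identity and prove stationarity directly by a coupling/reversibility argument for the joint process of the $n$ driving streams and their $n$ queue lengths, which is closer to the original argument of \cite{Ferrari-Martin-2007}.
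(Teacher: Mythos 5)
Your plan is sound, but note that the paper does not actually prove the first assertion: it is imported wholesale from \cite{Ferrari-Martin-2007}, and the only argument the paper supplies is the short reflection remark for the second assertion (the conjugation $i\mapsto -i$ between leftward- and rightward-jump dynamics, preservation of densities, and appeal to the uniqueness in Theorem \ref{thm:TASEPprod}). Your treatment of the reversed configuration coincides with that remark essentially word for word, so that part is exactly the paper's proof. For the main part you are reconstructing the Ferrari--Martin argument itself, and your architecture — discrete Burke theorem for the Bernoulli queue, invariance of $\nu^{\bar\lambda}$ under a multiline dynamics, the intertwining $\Vmap_n\circ S=T\circ\Vmap_n$, and uniqueness from the Liggett-style classification behind Theorem \ref{thm:TASEPprod} — is the correct and standard route; it is the discrete template that Chapter 2 of the paper transplants to the Brownian setting (Theorems \ref{multiline invariant distribution} and \ref{existence of an invariant measure for Busemann MC}). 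Two caveats. First, you correctly identify the intertwining identity as the real work, but as written it remains an unproved combinatorial claim; a complete proof must trace the priority rule \eqref{fm} through a single bond update, and this is precisely the content of \cite{Ferrari-Martin-2007} that cannot be waved at. Second, your base case slightly conflates two different facts: Burke's theorem for the queue (whose ``time'' axis is the spatial coordinate $\Z$) shows that the queueing maps preserve Bernoulli product structure, which feeds the induction, whereas stationarity of the Bernoulli measure under one-type TASEP is a separate classical fact from \cite{liggett1985interacting}; the former does not imply the latter.
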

\begin{remark}
The statement about the TASEP with rightward jumps is not included in \cite{Ferrari-Martin-2007}, but its proof is straightforward: reflecting the index does not change the density of the particles, so the values $\lambda_r$ are preserved. Consider an $m$-type TASEP with left jumps $\{\eta_t\}_{t \ge 0}$ defined by the  Poisson clocks  $\{\cN_i\}_{i \in \Z}$ and started from initial profile $\eta_0 \sim \bq{v}_m$. Let $\{\widecheck \eta_t\}_{t \ge 0}$ be TASEP with right jumps defined by the   Poisson clocks $\{\cN_{-i}\}_{i \in \Z}$ and started from initial profile $\{\widecheck \eta_0(i)\}_{i \in \Z} := \{\eta_0(-i)\}_{i \in \Z}$, which has distribution $\{\bq{v}_m(-i)\}_{i \in \Z}$. Then, in the process $\eta_t$, a particle jumps from site $i$ to site $i - 1$ exactly when a particle in the process $\widecheck \eta_t$ jumps from site $-i$ to site $-i + 1$. By the invariance of $\eta_0$ under TASEP with left jumps,
\[
\{\widecheck \eta_t(i)\}_{i \in \Z} = \{\eta_t(-i)\}_{i \in \Z} \deq \{\eta_0(-i)\}_{i \in \Z} = \{\widecheck \eta_0(i)\}_{i \in \Z}, 
\]
so $\{\bq{v}_m(-i)\}_{i \in \Z}$ is the invariant measure for TASEP with right jumps and densities $\lambda_r$. 
\end{remark}

	For  $\bq{x}\in\Qs_n$, define 
\begin{equation*}
\Cls^{[i,j]}_m(\bq{x})=\#\{l\in [i,j]:{x}(l)\leq m\}, \qquad m\in\lzb1,n\rzb.
\end{equation*}  
$\Cls^{[i,j]}_m(\bq{x})$  records the number of customers in classes  $\lzb1,m\rzb$   during time interval  $[i,j]$  in the sequence  $\bq{x}$. Note that a customer of class $m$ appears in $[i,j]$ if and only if $\Cls^{[i,j]}_m(\bq{x})>\Cls^{[i,j]}_{m-1}(\bq{x})$, with the convention $\Cls_0\equiv0$. The key technical lemma is that the iteration in \eqref{eq1} can be represented by tandem queues.

\begin{lemma}\label{lem:Cl}
Let $n\in\N$ and $\bar{\bq{x}}=(\bq{x}_1,\dotsc,\bq{x}_n)\in\ml_n$. 
	Let $\bq{v}_n=\Vmap_n(\bar{\bq{x}})$, where $\Vmap_n$ is given in \eqref{eq1}.   Define
	 \begin{equation*}
	\begin{aligned}
	\depv^{n,i}:=\Dd^{(n - i +1)}(\bq{x}_{i},\bq{x}_{i+1},\dotsc,\bq{x}_n) \quad \text{for } \ i=1,\dotsc,n-1,  \quad \text{and} \quad 
	\depv^{n,n}:=\bq{x}_n.
	\end{aligned}
	\end{equation*}
	Then for all time intervals $[i,j]$, 
	\begin{equation}\label{ind}
		\big(\Cls^{[i,j]}_1(\bq{v}_n),\dotsc,\Cls^{[i,j]}_n(\bq{v}_n)\big)
		=  \big(d^{n,1}[i,j],\dotsc,d^{n,n}[i,j]\big).
	\end{equation}
\end{lemma}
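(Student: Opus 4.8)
\textbf{Proof plan for Lemma \ref{lem:Cl}.}

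The plan is to prove \eqref{ind} by induction on $n$, mirroring the structure of the iteration \eqref{eq1} that defines $\Vmap_n$ and using the single-class output rules \eqref{Q670} as the base case. First I would record the key reformulation: for any sequence $\bq{x}\in\Qs_m$, a class-$k$ customer sits at time $l$ precisely when $\Cls^{[l,l]}_k(\bq{x})-\Cls^{[l,l]}_{k-1}(\bq{x})=1$, so knowing all the counting functions $\Cls^{[i,j]}_k$ is equivalent to knowing the full configuration. Thus \eqref{ind} is really a statement about how the multiclass map $F_{m-1}$ distributes the incoming classes, and the point is that ``the customers of class $\le k$ departing a priority queue'' are exactly ``the customers departing after $k$ successive ordinary tandem queues fed by the appropriate sub-streams.''

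For the base case $n=1$ there is nothing to prove: $\bq{v}_1=\bq{x}_1$ and $\depv^{1,1}=\bq{x}_1$, so both sides of \eqref{ind} equal $\Cls^{[i,j]}_1(\bq{x}_1)$. The substantive step is the induction. Assume \eqref{ind} holds for $n-1$ and all input tuples. Write $\bq{v}_n=F_{n-1}(\bq{v}_{n-1},\bq{x}_n)$. The heart of the matter is the identity, valid for every time interval $[i,j]$ and every $k\in\lzb 1,n\rzb$,
\begin{equation} \label{eq:Cl_step}
\Cls^{[i,j]}_k(F_{n-1}(\bq{v}_{n-1},\bq{x}_n)) = \bigl[\Dd\bigl(\Cls_k^{\abullet}(\bq{v}_{n-1}),\bq{x}_n\bigr)\bigr][i,j] \quad\text{for } k\le n-1,
\end{equation}
together with the ``leftover'' relation for the new bottom class $k=n$, which simply says that the total number of departures equals $s[i,j]$. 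More precisely, the priority rules \eqref{fm} say that the sub-stream of customers of class $\le k$ seen by $F_{n-1}$ is served by the single queue with arrivals $a^{\le k}=\Cls_k(\bq{v}_{n-1})$ and services $\bq{x}_n$, and this sub-stream's departures are unaffected by the presence of lower-class (label $>k$) customers — lower-class customers only ever occupy services that the class-$\le k$ customers would not have used. This is the standard ``a priority queue, restricted to the top $k$ classes, is an ordinary queue'' fact; I would prove it by a direct induction on the time coordinate, comparing $Q^{\le k}_i$ from \eqref{fm} to the single-class queue length $\Qd_i$ from \eqref{Q} built from arrivals $\Cls_k(\bq{v}_{n-1})$ and services $\bq{x}_n$, showing they coincide, and then reading off \eqref{eq6} versus \eqref{fm}.

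Once \eqref{eq:Cl_step} is in hand, I would close the induction by substituting the induction hypothesis $\Cls^{[i,j]}_k(\bq{v}_{n-1})=d^{n-1,k}[i,j]$ for $k\le n-1$, where $d^{n-1,k}=\Dd^{(n-1-k+1)}(\bq{x}_k,\dots,\bq{x}_{n-1})$. The right-hand side of \eqref{eq:Cl_step} then becomes $\Dd\bigl(\Dd^{(n-k)}(\bq{x}_k,\dots,\bq{x}_{n-1}),\bq{x}_n\bigr)[i,j]=\Dd^{(n-k+1)}(\bq{x}_k,\dots,\bq{x}_n)[i,j]=d^{n,k}[i,j]$ by the definition \eqref{eq18}, which is exactly the $k$-th component of \eqref{ind}. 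For the final component $k=n$, the count $\Cls^{[i,j]}_n(\bq{v}_n)$ is the total number of non-$\infty$ entries of $\bq{v}_n$ in $[i,j]$, which by \eqref{fm} equals $s[i,j]$ for $\bq{x}_n$, and this matches $d^{n,n}[i,j]=x_n[i,j]$ since $\depv^{n,n}=\bq{x}_n$. This completes the induction.

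I expect the main obstacle to be the careful verification of \eqref{eq:Cl_step}, i.e.\ that restricting the priority queue $F_{n-1}$ to its top $k$ classes genuinely reproduces the single-class operator $\Dd$ with the counting-function arrivals. The bookkeeping is delicate because one must track simultaneously all the queue lengths $Q^{\le k}_i$ and argue that the service at time $i$ is allocated to the highest-priority waiting customer in a way that is ``consistent across $k$'': the service is used by a class-$\le k$ customer exactly when $Q^{\le k}_{i-1}>0$ or $a(i)\le k$, which is precisely the single-class rule \eqref{eq6} with the substituted arrival stream. Getting the induction on the time variable exactly right — including the boundary/initial behaviour that makes the queue lengths finite (which we have assumed throughout) — is where the real work lies; everything after that is formal manipulation of \eqref{eq18}.
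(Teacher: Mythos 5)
Your proposal is correct and follows essentially the same route as the paper's proof: induction on $n$, with the inductive step resting on the observation that the priority queue $F_{m}$ restricted to its top $k$ classes behaves as the ordinary single-class queue $\Dd$ fed by the class-$\le k$ sub-stream and the same service sequence, so that $\depv^{n,k}=\Dd(\depv^{n-1,k},\bq{x}_{n})$ closes the induction via \eqref{eq18}, with the last coordinate handled by noting that every service slot of $\bq{x}_n$ yields a departure of some class. The paper states this key compatibility more tersely ("since the same service process acts in both queuing maps, the outputs match") where you propose to verify it by a time-coordinate induction on the queue lengths, but the argument is the same.
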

\begin{proof}
	The proof goes by induction on $n$, with  base case $n=2$. From   \eqref{Q670}, $\depv^{2,1}=\Dd(\bq{x}_1,\bq{x}_2)$ registers the first class departures out of the queue $F_1(\bq{x}_1,\bq{x}_2)$ while $\depv^{2,2}=\bq{x}_2$ is the combined number of first and second class customers coming out of the queue.  The case $n=2$ of \eqref{ind}  has been verified.

	Assume   \eqref{ind} holds for some $n=k\ge2$. 	This means that  for each $m\in\lzb1,k\rzb$, $\depv^{k,m}$ registers the   customers in classes $\lzb1,m\rzb$ in $\bq{v}_k$. In the next step, $\bq{v}_{k+1}=F_k(\bq{v}_k, \bq{x}_{k+1})$, and 
	\begin{equation*}
		\big(\depv^{k+1,1},\dotsc,\depv^{k+1,k+1}\big)=\big(\Dd(\depv^{k,1},\bq{x}_{k+1}),\dotsc,\Dd(\depv^{k,k},\bq{x}_{k+1}),\bq{x}_{k+1}\big). 
	\end{equation*}
	Since the same service process $\bq{x}_{k+1}$ acts in both queuing maps, the outputs match in the sense that for each $m\le k$,  $\depv^{k+1,m}=\Dd(\depv^{k,m},\bq{x}_{k+1})$ registers the customers in classes $\lzb1,m\rzb$ in $\bq{v}_{k+1}$.   
	Under $F_k(\bq{v}_k, \bq{x}_{k+1})$, unused services become departures of class $k+1$. Hence,  every service event of $\bq{x}_{k+1}$ becomes a departure of some class in $\lzb1,k+1\rzb$. This verifies the equality $\Cls^{[i,j]}_{k+1}(\bq{v}_{k+1})={x}_{k+1}[i,j]=d^{k+1,k+1}[i,j]$ of the last coordinate. Thereby, the validity of \eqref{ind} has been extended from $k$ to $k+1$. 
\end{proof}

\subsection{Convergence of queues}

 \noindent 
This section shows the finite-dimensional weak  convergence of the TASEP speed process,  using the representation of stationary distributions in terms of  queuing mappings. To do this, we derive a convenient representation for the random walk defined by the departure mapping $\Dd$ \eqref{DQ}. Consistently with the count notation $x[i,j]$ introduced above for  $\bq x \in \Qs_1$, abbreviate 
   $x[i] =  x[i,i]=\ind_{x(i) = 1}$. With this convention,  configurations $\bq{x}$ can also be thought of as members of the sequence space  $\{0,1\}^\Z$. Recall the operation $\mathcal 
 P$ from \eqref{eqn:TASEPh}.
   \begin{lemma} 
   For $i \in \Z$,
   \be \label{DQ}
    \mathcal P[\Dd(\arrv,\srvv)](i) = \mathcal P[\srvv](i) + \sup_{-\infty < j \le 0}[\mathcal P[\srvv](j) -\mathcal P[\arrv](j)] - \sup_{-\infty < j \le i}[\mathcal P[\srvv](j) -\mathcal P[\arrv](j)].
   \ee
   \end{lemma}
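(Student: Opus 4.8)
The identity \eqref{DQ} is a purely deterministic statement about the queuing maps, and I would prove it by reducing \eqref{Q} to the Lindley recursion, extracting a pointwise departure identity, and then telescoping after translating everything into the height functions $\mathcal P[\,\cdot\,]$.

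First I would rewrite the explicit queue-length formula \eqref{Q} as the Lindley recursion $\Qd_i=\max(\Qd_{i-1}+a[i]-s[i],\,0)$. Splitting the supremum in \eqref{Q} into the term $j=i$, which contributes $a[i]-s[i]$, and the terms $j\le i-1$, and using $a[j,i]-s[j,i]=(a[j,i-1]-s[j,i-1])+(a[i]-s[i])$, one gets $\max_{j\le i}(a[j,i]-s[j,i])=(a[i]-s[i])+\max_{j\le i-1}(a[j,i-1]-s[j,i-1])$; taking positive parts gives the recursion. Next, from this recursion together with the definition \eqref{eq6} of the departure map, a short four-way case check --- according to whether $s[i]=0$, or $s[i]=1$ with $\Qd_{i-1}>0$, with $\Qd_{i-1}=0=a[i]$, or with $\Qd_{i-1}=0$ and $a[i]=1$ --- yields the pointwise identity $d[i]=\Qd_{i-1}+a[i]-\Qd_i$ for every $i\in\Z$. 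Summing this telescopes: for all $m\le n$,
\[
\sum_{l=m}^{n} d[l]\;=\;\Qd_{m-1}-\Qd_n+\sum_{l=m}^{n} a[l].
\]

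Then I would pass to the height functions. Set $W_i:=\mathcal P[\srvv](i)-\mathcal P[\arrv](i)$. Since $\mathcal P[\bq x](j)-\mathcal P[\bq x](k)=2\,x[k,j-1]-(j-k)$ for integers $k<j$, one has $a[j,i]-s[j,i]=\tfrac12\bigl(W_j-W_{i+1}\bigr)$, so \eqref{Q} becomes $2\Qd_i=\bigl(\sup_{j\le i}W_j-W_{i+1}\bigr)^+=\sup_{j\le i+1}W_j-W_{i+1}$; equivalently $2\Qd_{i-1}=\sup_{j\le i}W_j-W_i$, and in particular $2\Qd_{-1}=\sup_{j\le 0}W_j$ because $W_0=0$. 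Now $\mathcal P[\depv](0)=0$, and by \eqref{eqn:TASEPh} the value $\mathcal P[\depv](i)$ equals $\sum(2d[l]-1)$ summed over the range of integers strictly between $0$ and $i$ (with the sign dictated by whether $i>0$ or $i<0$); in either case, by the telescoping identity above and $\sum(2a[l]-1)=\mathcal P[\arrv](i)-\mathcal P[\arrv](0)=\mathcal P[\arrv](i)$, this equals $2\Qd_{-1}-2\Qd_{i-1}+\mathcal P[\arrv](i)$. Substituting $2\Qd_{-1}=\sup_{j\le 0}W_j$, $2\Qd_{i-1}=\sup_{j\le i}W_j-W_i$, and using $\mathcal P[\arrv](i)+W_i=\mathcal P[\srvv](i)$ gives exactly \eqref{DQ}.

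\textbf{Main obstacle.} There is no conceptual difficulty; the one place needing care is the endpoint bookkeeping --- the piecewise definition of $\mathcal P$ for positive versus negative $i$, and the index shift that makes $2\Qd_{i-1}$ (rather than $2\Qd_i$) pair with $W_i$. This is handled uniformly by noting the telescoping identity of the second step holds for every pair $m\le n$ and produces the same closed form for $\mathcal P[\depv](i)$ for all $i\in\Z$, so no case split on the sign of $i$ is truly needed. Finiteness of all suprema appearing in \eqref{DQ} is automatic from the standing assumption that queue lengths are finite, since $\sup_{j\le 0}W_j=2\Qd_{-1}<\infty$.
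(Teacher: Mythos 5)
Your proof is correct and follows essentially the same route as the paper's: both rest on the interval departure-count identity $\Dd(\arrv,\srvv)[j,i]=Q_{j-1}-Q_i+a[j,i]$ and then translate to height functions via $2x[j,i]-(i-j+1)=\mathcal P[\bq{x}](i+1)-\mathcal P[\bq{x}](j)$. The only difference is that you derive that identity carefully (Lindley recursion, the pointwise relation $d[i]=Q_{i-1}+a[i]-Q_i$, telescoping), where the paper asserts it with a one-line verbal justification, and your uniform closed form for $\mathcal P[\depv](i)$ replaces the paper's ``the case $i<0$ is analogous.''
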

  \begin{proof}
      Recall the definition of $D$ from \eqref{eq6}. Observe that
      \begin{equation} \label{gen:Qa}
    \Dd(\arrv,\srvv)[j,i]=Q_{j-1}-Q_i+a[j,i],
\end{equation}
because any arrival that  cannot be accounted for in $Q_i$ must have left by time $i$.
Use also the empty interval convention  $x[i + 1,i] = 0$. Then, from \eqref{Q}, we can equivalently write 
   \be \label{Qialt}
   Q_i = \sup_{j:j \le i + 1}\bigl(a^{\le 1}[j,i] - s[j,i]\bigr). 
   \ee
Now, observe that for $\bq{x} \in \Qs_1$,
\be \label{xincs}
2x[j,i] - (i - j + 1) = \sum_{k = j}^i (2x[k] - 1) = \mathcal P[\bq{x}](i + 1) - \mathcal P[\bq{x}](j)
\ee
Combining \eqref{gen:Qa}--\eqref{xincs} and the definition  $\mathcal P[\bq{x}](0) = 0$, gives for $i > 0$,
\begin{align*}
\mathcal P[\Dd(\arrv,\srvv)](i) &= \sum_{k = 0}^{i - 1} (2\Dd(\arrv,\srvv)[k] - 1) = 2\Dd(\arrv,\srvv)[0,i-1] - i \\
&=  2a[0,i-1] - i + 2Q_{-1} - 2Q_{i -1} \\
&\overset{\eqref{xincs}}{=} \mathcal P[\arrv](i) - \mathcal P[\arrv](0) + 2\sup_{-\infty < j \le 0}[a^{\le 1}[j,-1] - s[j,-1]]\\
&\qquad\qquad\qquad\qquad\qquad\qquad- 2\sup_{-\infty < j \le i }[a^{\le 1}[j,i-1] - s[j,i-1]] \\
&= \mathcal P[\arrv](i) + \sup_{-\infty < j \le 0}[2a^{\le 1}[j,-1] +j - (2s[j,-1] + j)] \\
&\qquad\qquad - \sup_{-\infty < j \le i}[2a^{\le 1}[j,i-1] - (i - j) - (2s[j,i-1] - (i - j))] \\
&\overset{\eqref{xincs}}{=} \mathcal P[\arrv](i) + \sup_{-\infty < j \le 0}[\mathcal P[\arrv](0) - \mathcal P[\arrv](j) - \mathcal P[\srvv](0) + \mathcal P[\srvv](j) ] \\
&\qquad\qquad\qquad - \sup_{-\infty < j \le i}[\mathcal P[\arrv](i) - \mathcal P[\arrv](j) - \mathcal P[\srvv](i) + \mathcal P[\srvv](j) ] \\
&= \mathcal P[\srvv](i) + \sup_{-\infty < j \le 0}[\mathcal P[\srvv](j) -\mathcal P[\arrv](j)] - \sup_{-\infty < j \le i}[\mathcal P[\srvv](j) -\mathcal P[\arrv](j)].
\end{align*}
The case $i < 0$ follows an analogous proof. 
  \end{proof}

\begin{proposition}\label{prop:fdd}   Fix the diffusion coeffieient $\sigma > 0$ the centering $v \in (-1,1)$. 
Then the scaled TASEP speed process  $H^N$ of  \eqref{H138} satisfies the weak convergence 
$(H_{\dir_1}^N,\dotsc,H_{\dir_k}^N)$ $\Rightarrow$ $(G^\sigma_{\dir_1},\dotsc,G^\sigma_{\dir_k})$  on $C(\R)^k$ for any finite sequence 
$(\dir_1,\dotsc,\dir_k)\in \R^k$.  
\end{proposition}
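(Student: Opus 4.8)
The plan is to realize $(H^N_{\dir_1},\dots,H^N_{\dir_k})$, via the Ferrari--Martin queuing representation of the multiclass TASEP stationary measures, as a rescaled tandem of discrete queuing maps, and to show that under the scaling of \eqref{H138} this tandem converges to the queuing construction that produces the finite-dimensional distributions of $G^\sigma$ in Proposition~\ref{prop:SH_cons}\ref{itm:SH_dist}. First I would reduce to distinct, increasingly ordered directions $\dir_1<\dots<\dir_k$: repeated directions give identical coordinates on both sides, and coordinate permutation is a homeomorphism of $C(\R)^k$.

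Next comes the algebraic identification. Put $s^N_i=v+\dir_i(1-v^2)N^{-1/2}$ and $s^N_0=-1$, so $s^N_1<\dots<s^N_k$, and let $F^N$ be the nondecreasing step function carrying $(s^N_{i-1},s^N_i]$ to $i$ and $(s^N_k,1]$ to $\infty$. Then $\eta^N:=\{F^N(U_j)\}_{j\in\Z}$ has, by Lemma~\ref{lem:FU}, the $k$-type stationary law $\widecheck\pi^{(\lambda^N_1,\dots,\lambda^N_k)}$ with $\lambda^N_1=\tfrac12(1+v+\dir_1(1-v^2)N^{-1/2})$ and $\lambda^N_i=\tfrac12(\dir_i-\dir_{i-1})(1-v^2)N^{-1/2}$ for $i\ge2$; moreover $\ind_{U\le s^N_i}=\ind_{\eta^N\le i}$, so each $H^N_{\dir_i}$ is the rescaling \eqref{H138} of the interface $\mathcal P[\ind_{\eta^N\le i}]$, which is determined by the class-count functions $\Cls_i(\eta^N)$ (as functions of the time interval). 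By Theorem~\ref{thm:FM}, $\{\eta^N(j)\}_j$ has the law of the reversed sequence $\{\bq{v}^N_k(-j)\}_j$, where $\bq{v}^N_k=\Vmap_k(\bar{\bq{x}}^N)$ and $\bar{\bq{x}}^N=(\bq{x}^N_1,\dots,\bq{x}^N_k)$ has independent i.i.d.\ Bernoulli coordinates of intensities $p^N_m:=\sum_{i\le m}\lambda^N_i=\tfrac12(1+v)+\dir_m\tfrac{1-v^2}{2}N^{-1/2}$; and Lemma~\ref{lem:Cl} identifies the class-counts of $\bq{v}^N_k$ with a tandem of iterated discrete departure maps, $\Cls_m(\bq{v}^N_k)=\Dd^{(k-m+1)}(\bq{x}^N_m,\dots,\bq{x}^N_k)$. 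Combining this with the Skorokhod-type identity \eqref{DQ}, and using that $\mathcal P$ applied to the reversal of a $0/1$-sequence equals $-\Rf(\mathcal P[\,\cdot\,])$ up to an $O(1)$ error, one obtains that $(H^N_{\dir_1},\dots,H^N_{\dir_k})$ equals in law $(-\Rf\Theta^N_1,\dots,-\Rf\Theta^N_k)$, i.e.\ $x\mapsto(-\Theta^N_m(-x))_m$, up to a deterministic error tending to $0$ in $C(\R)^k$, where $\Theta^N_m$ denotes the rescaling \eqref{H138} of $\mathcal P[\Dd^{(k-m+1)}(\bq{x}^N_m,\dots,\bq{x}^N_k)]$.

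Now the passage to the limit. The walk $\mathcal P[\bq{x}^N_m]$ has i.i.d.\ $\pm1$ increments of mean $2p^N_m-1=v+\dir_m(1-v^2)N^{-1/2}$ and variance $4p^N_m(1-p^N_m)\to1-v^2$, and its rescaling \eqref{H138} has mean exactly $\sigma^2\dir_m x$ and asymptotic variance $\sigma^2|x|$; so by Donsker's invariance principle it converges in $C(\R)$ to a two-sided Brownian motion $Y^m$ of diffusivity $\sigma$ and drift $\sigma^2\dir_m$, and since the $\bq{x}^N_m$ are independent this holds jointly with the $Y^m$ independent---exactly the input of Proposition~\ref{prop:SH_cons}\ref{itm:SH_dist}. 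It then remains to show that the discrete departure operator $\Dd$, written via \eqref{DQ}, converges to the continuous operator $D$ of \eqref{Ddef}--\eqref{DZB2}: if $\mathcal P[\arrv^N]\to Z$ and $\mathcal P[\srvv^N]\to B$ uniformly on compacts with left-end slopes converging and the slope of $\srvv^N$ exceeding that of $\arrv^N$ (which holds here because $\dir_m<\dir_\ell$, and which in the limit forces $\limsup_{u\to-\infty}[Z(u)-B(u)]=-\infty$, so $D(Z,B)$ is well defined), then the suprema over the ray below the observation point in \eqref{DQ} localize and \eqref{DQ} converges to \eqref{DZB2}; this is the discrete counterpart of Lemma~\ref{uniform convergence of queueing mapping}. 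Because $D$ preserves the drift (Theorem~\ref{thm:OCY_DR}), these hypotheses cascade up the tandem---and the ordering $\dir_1<\dots<\dir_k$ is exactly what keeps every queue in it stable---so $\Theta^N_m$ converges jointly to the corresponding iterated continuous queue applied to $(Y^m,\dots,Y^k)$. Undoing the transformation $f\mapsto-\Rf f$ and invoking the order-reversal identities for iterated queuing maps (Lemmas~\ref{lem:Alternate Rep of Iterated Queues} and \ref{lem:backwards_map}, whose discrete origin is the Ferrari--Martin triangular array) together with the reflection invariance of the queuing measures (Lemma~\ref{weak continuity and consistency}\ref{reflect}, equivalently Theorem~\ref{thm:SH_dist_invar}\ref{itm:SH_reflinv}), one identifies the limiting law as $\mu_\sigma^{(\dir_1,\dots,\dir_k)}$, which by Proposition~\ref{prop:SH_cons}\ref{itm:SH_dist} is the law of $(G^\sigma_{\dir_1},\dots,G^\sigma_{\dir_k})$.

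The main obstacle is the convergence of the iterated queuing map. The suprema in \eqref{DQ} run over the entire semi-infinite ray below the observation point, while the driving streams differ in drift only at order $N^{-1/2}$ and the suprema are achieved on spatial scale $N$---a near-critical queuing regime. Making the localization and the convergence of the suprema uniform requires Donsker's theorem in the joint form for a walk together with its running maximum (or a Skorokhod-embedding/KMT coupling with Brownian motion), plus a uniform tightness estimate that propagates through all $k$ stages of the tandem; some care is also needed to keep the spatial-reflection bookkeeping and the $O(N^{-1/2})$ boundary corrections under control. The heavier analytic input of \cite{Busa-Sepp-Sore-22b}, namely tightness of $\{H^N\}$ on $D(\R,C(\R))$, is not needed for this finite-dimensional statement.
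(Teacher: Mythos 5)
Your proposal follows essentially the same route as the paper's proof: Lemma \ref{lem:FU} and Theorem \ref{thm:FM} to realize the marginals as Ferrari--Martin tandem queues via Lemma \ref{lem:Cl}, the random-walk identity \eqref{DQ}, a Skorokhod/Donsker coupling of the Bernoulli walks with independent drifted Brownian motions, induction through the tandem to pass from the discrete departure map to the continuous $D$ of \eqref{Ddef}, and finally the reflection invariance of Theorem \ref{thm:SH_dist_invar}\ref{itm:SH_reflinv} to undo the index reversal. The one step you defer to "care needed" --- convergence of the near-critical suprema over semi-infinite rays, together with the sign and reflection bookkeeping in \eqref{DQ} versus \eqref{DZB2} --- is precisely where the paper invests its effort, via Lemma \ref{lem:convD} and the random-walk-supremum limit of Lemmas \ref{lem:genres}--\ref{lem:convprob}, so your identification of the main obstacle is accurate.
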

\begin{proof}

Without loss of generality, take $\dir_1 < \dir_{2} < \cdots < \dir_k$.  
For  $N > |\dir_1|^{3} \vee |\dir_k|^{3}$, consider the following nondecreasing map $F:[-1,1] \to \{1,\ldots,k\} \cup \{\infty\}$:
 \[
 \text{For } U \in [-1,1],\; F(U) =
 \begin{cases}
 1, & U \le v + \dir_1 (1 - v^2) N^{-1/2} \\
 2,& v + \dir_1(1 - v^2) N^{-1/2} < U \le v + \dir_2(1 - v^2) N^{-1/2} \\
 \vdots & \\
 k, &v + \dir_{k - 1}(1 - v^2)N^{-1/2} < U \le v +  \dir_k(1 -v^2) N^{-1/2} \\
 \infty, &U > v + \dir_k(1 -v^2) N^{-1/2}.
\end{cases}
 \]
 By considering the output of this map as classes, Lemma \ref{lem:FU} implies that $\{F(U_i)\}_{i \in \Z}$
 is distributed as the stationary distribution for $k$-type TASEP with right jumps and densities
 \[
 \bar \lambda = \biggl(\f{1 + v + \dir_1(1 - v^2) N^{-1/2}}{2},\f{(\dir_2 - \dir_1)(1 - v^2)N^{-1/2}}{2},\ldots,\f{(\dir_{k} - \dir_{k-1})(1 -v^2)N^{-1/2}}{2}\biggr).  \] 
 The reflection of Theorem \ref{thm:FM} and translation invariance then imply that $\{F(U_{-i - 1})\}_{i \in \Z}$
 has the stationary distribution $\pi^{\bar\lambda}$ for TASEP with left jumps.    
Lemma \ref{lem:Cl} implies that
 \be \label{bercoup}\begin{aligned} 
&\bigl(\ind_{U_{-i - 1} \le v + \dir_1(1 - v^2) N^{-1/2}},\ldots,  \ind_{U_{-i-1} \le v + \dir_{k-1} (1 -v^2) N^{-1/2}}\,, \ind_{U_{-i-1} \le v + \dir_k (1 -v^2) N^{-1/2}}\bigr)_{i \in \Z}\\
&\qquad\qquad\qquad
\deq \bigl(\Dd^{(k)}(\bq{x}_1^N,\ldots,\bq{x}_k^N)[i]\,,\ldots,\Dd^{(2)}(\bq{x}^N_{k - 1},\bq{x}_k^N)[i]\,,\bq{x}_k^N[i]\bigr)_{i \in \Z},
\end{aligned} \ee
 where $(\bq{x}_1^N,\ldots,\bq{x}_k^N) \sim \nu^{\bar \lambda}$.
 Remark \ref{rmk:L/R8} at the end of the section gives an alternative way to justify the index reversal on the left-hand side above when $v = 0$.  

Before proceeding with the proof, we give a roadmap. First, by definition of $\mathcal P$,  if $\widecheck{U}_i = U_{-i - 1}$, then for $x \in \R$,
\be \label{chU}
\begin{aligned}
H_{\dir}^N(x) &=  N^{-1/2}\mathcal P[\ind_{U \le v + \dir(1 - v^2) N^{-1/2}}]\Bigl(\f{\sigma^2 x}{1 - v^2}N\Bigr) -\f{\sigma^2 vx}{1 -v^2}N^{1/2} \\
&= -N^{-1/2}\mathcal P[\ind_{\widecheck U \le v + \dir(1 - v^2) N^{-1/2}}]\Bigl(-\f{\sigma^2 x}{1-v^2}N\Bigr) - \f{\sigma^2 vx}{1- v^2}N^{1/2}.
\end{aligned}
\ee
Our goal is to show the weak limit 
\be \label{convflip}
\begin{aligned}
&\biggl(-N^{-1/2}\mathcal P[\bq{x}_k^N]\Bigl(\f{\sigma^2 \tsp\aabullet}{1 - v^2}N\Bigr) + \f{\sigma^2 v\tsp\aabullet}{1 - v^2}N^{1/2}\,, \\[3pt]
&\qquad -N^{-1/2}\mathcal P[\Dd(\bq{x}^N_{k-1},\bq{x}_k^N)]\Bigl(\f{\sigma^2 \tsp\aabullet}{1 - v^2}N\Bigr) + \f{\sigma^2 v\tsp\aabullet}{1 - v^2}N^{1/2},\ldots, \\[3pt] 
&\qquad\qquad 
-N^{-1/2}\mathcal P[\Dd^k(\bq{x}_1^N,\ldots,\bq{x}_k^N)]\Bigl(\f{2\tsp\aabullet}{1 - v^2}N\Bigr) + \f{2v\tsp\aabullet}{1 - v^2}N^{1/2}\biggr) \\
&\qquad\qquad   
\Longrightarrow (G^\sigma_{-\dir_k},\ldots, G^\sigma_{-\dir_1}).
\end{aligned}
\ee
Once we show \eqref{convflip}, from \eqref{convflip},  \eqref{bercoup} and \eqref{chU} it follows that $(H_{\dir_1}^N,\ldots,H_{\dir_k}^N) \Longrightarrow (G^\sigma_{-\dir_1}(-\tsp\aabullet),\ldots, G^\sigma_{-\dir_k}(-\tsp\aabullet))$. This limit has the same distribution as $(G^\sigma_{\dir_1},\ldots,G^\sigma_{\dir_k})$ by Theorem \ref{thm:SH_dist_invar}\ref{itm:SH_reflinv}. As mentioned previously,  these reflections   in the proof are a consequence of having   time flow left to right in  the queuing setting. 

Now, we prove  \eqref{convflip}.
 By construction, for $j \in \lzb 1,k\rzb$, $\{{x}^N_j[i]\}_{i \in \Z}$ is an i.i.d. Bernoulli sequence with intensity $\sum_{\ell \le j} \lambda_\ell = \f12{(1 +v +  \dir_j(1 -v^2)N^{-1/2})}$.  Hence, for $j \in \lzb 1,k \rzb$, 
 \be \label{xconv}
 -N^{-1/2} \mathcal P[\bq{x}_j^N]\Bigl(\f{\sigma^2 \aabullet}{1 - v^2}N\Bigr) + \f{\sigma^2 v\tsp\aabullet}{1 - v^2}N^{1/2}
 \ee
 converges in distribution  to a Brownian motion with diffusivity $\sigma$ and drift $-\sigma^2 \dir_j$. 
 To elevate this to the  joint convergence of \eqref{convflip}, we utilize the queuing mappings in \eqref{bercoup} and the transformations $D^{(k)}$ from \eqref{Diter} that construct the SH.

By Skorokhod representation (\cite[Thm.~11.7.2]{dudl}, \cite[Thm.~3.1.8]{ethi-kurt}), we may couple $\{\bq{x}_j^N\}_{j = 1,\ldots,k}$ and independent Brownian motions $\{Z^j\}_{j \in \lzb 1,k \rzb}$ with diffusivity $\sigma$ and drift $-\sigma^2 \dir_j$ so that, with probability one, for $j  \in \lzb 1,k \rzb$, \eqref{xconv}
converges uniformly on compact sets to $Z^j$. Let $\Pp$ be the law of this coupling (To be precise, the sequences  $\bq{x}_j^N$ are  functions of the  converging processes \eqref{xconv}, which  Skorokhod representation couples with their  limiting  Brownian motions).


By Proposition \ref{prop:SH_cons}\ref{itm:SH_dist} and definition of $D^{(k)}$ \eqref{Diter}, for reals $\dir_1  < \cdots < \dir_k$, the valued marginal  $(G^\sigma_{-\dir_k},\dotsc,G^\sigma_{-\dir_1})$ of the SH can be constructed as follows:
\begin{align*}
 G^\sigma_{-\dir_k}&=D(Z^k)=Z^k, \qquad 
 G^\sigma_{-\dir_{k-1}}=D^{(2)}(Z^k, Z^{k-1})=D(Z^{k - 1}, Z^{k}), \ldots  \\
  G^\sigma_{-\dir_1}&=D^{(k)}(Z^1, Z^2, \dotsc,  Z^k) =  D(D^{(k - 1)}(Z^1,\ldots,Z^{k - 1}),Z^k). 
\end{align*}
We recall \eqref{Ddef} that The map $D$ is
\[
D(Z,B)(y) = B(y) + \sup_{-\infty <x \le y }\{Z(x) - B(x)\} - \sup_{-\infty < x \le 0}\{Z(x) - B(x)\}. 
\]

 By a union bound, it suffices to show that, under this coupling,  for each $\ve > 0, a > 0$, and each $j = 0,\ldots,k - 1$, 
\be \label{convPhi}\begin{aligned} 
\limsup_{N \to \infty}\Pp\biggl(\;\sup_{x \in [-a,a]}\Bigl|-N^{-1/2}\mathcal P[\Dd^{(j + 1)}(\bq{x}_{k - j}^N,\ldots, \bq{x}_k^N)] & \Bigl(\f{2 x}{1 - v^2}N\Bigr) + \f{2vx}{1 - v^2}N^{1/2}  \\
&- D^{(j+1)}(Z^{k - j},\ldots,Z^k)(x)\Bigr| > \ve  \biggr) = 0,
\end{aligned} \ee
where the argument of the discrete operator is understood via linear interpolation.
We prove \eqref{convPhi} by induction on $j$. The base case $j = 0$ follows by the almost sure uniform convergence on compact sets of \eqref{xconv} to $Z^j$. Now, assume the statement holds for some $j - 1 \in \{0,\ldots,k -2\}$. The proof is completed by Lemma \ref{lem:convD} below, once we recall \eqref{eq18} that \[
\Dd^{(j + 1)}(\bq{x}_{k - j}^N,\ldots,\bq{x}_{k}^N) = \Dd(\Dd^{(j)}(\bq{x}_{k - j}^N,\ldots,\bq{x}_{k - 1}^N),\bq{x}_k^N). \qedhere \]  
\end{proof}

 \begin{lemma} \label{lem:convD}
Let $v \in (-1,1)$. For each $N>0$, let $\arrv^N$ and $\srvv^N$ be $\{0,1\}^\Z$-valued i.i.d. sequences such that  the intensity of $\srvv^N$ is strictly greater than the intensity of $\arrv^N$. Assume further that these sequences are coupled together with Brownian motions $Z^1,Z^2$ with diffusivity $\sigma$ and drifts $-\sigma^2\dir_1 > -\sigma^2 \dir_2$ so that, for each $\ve > 0$ and $a > 0$,
\be \label{ipcon}
\begin{aligned}
&\limsup_{N \to \infty} \Pp\Biggl(\sup_{y \in [-a,a]}\Bigl|-N^{-1/2} \mathcal P[\arrv^N]\Bigl(\f{\sigma^2 y}{1 - v^2}N\Bigr) + \f{\sigma^2 vy}{1 - v^2}N^{1/2} - Z^1(y) \Bigr| \\
&\qquad \qquad
\vee \Bigl|-N^{-1/2} \mathcal P[\srvv^N]\Bigl(\f{\sigma^2 y}{1 - v^2}N\Bigr) + \f{\sigma^2 vy}{1 - v^2}N^{1/2} - Z^2(y) \Bigr| > \ve \Biggr) = 0.
\end{aligned}
\ee
 Then, for every $\ve > 0$ and $a > 0$, 
\begin{multline} \label{2ptprob}
\limsup_{N \to \infty} \Pp\Biggl(\sup_{y \in [-a,a]}\Bigl|-N^{-1/2} \mathcal P[\Dd(\mbf a^N,\mbf s^N)]\Bigl(\f{\sigma^2 y}{1 - v^2}N \Bigr) + \f{\sigma^2 vy}{1 - v^2}N^{1/2} \\
- D(Z^1,Z^2)(y) \Bigr| > \ve  \Biggr) = 0.
\end{multline}
 \end{lemma}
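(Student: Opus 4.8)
The plan is to use the closed-form representation \eqref{DQ} of the departure map to recast the scaled departure process as the continuous queuing map $D$ of \eqref{Ddef} applied to the scaled arrival and service walks, and then mimic the truncation argument in the proof of Lemma \ref{uniform convergence of queueing mapping}.

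Write $\hat A^N$ and $\hat S^N$ for the two scaled processes appearing inside the probability in \eqref{ipcon}, i.e.\ $\hat A^N(y) = -N^{-1/2}\mathcal P[\arrv^N]\bigl(\tfrac{\sigma^2 y}{1-v^2}N\bigr) + \tfrac{\sigma^2 vy}{1-v^2}N^{1/2}$ and likewise $\hat S^N$ with $\srvv^N$ in place of $\arrv^N$. Since $\srvv^N$ has strictly larger intensity than $\arrv^N$, the queue lengths in \eqref{Q} are a.s.\ finite, so \eqref{DQ} applies. Evaluating \eqref{DQ} at $i=\lfloor \tfrac{\sigma^2 y}{1-v^2}N\rfloor$ and telescoping the centering terms, one gets, for $y$ in a fixed compact set $[-a,a]$,
\[
-N^{-1/2}\mathcal P[\Dd(\arrv^N,\srvv^N)]\Bigl(\tfrac{\sigma^2 y}{1-v^2}N\Bigr) + \tfrac{\sigma^2 vy}{1-v^2}N^{1/2} = D(\hat A^N,\hat S^N)(y) + O(N^{-1/2}),
\]
where $D$ on the right is the continuous map, the error term is deterministic and uniform over $[-a,a]$ (it arises from replacing the integer endpoint by $\tfrac{\sigma^2 y}{1-v^2}N$ and from the deterministic bound $2$ on the per-step increments of $\mathcal P[\srvv^N]-\mathcal P[\arrv^N]$ and of $\mathcal P[\Dd(\arrv^N,\srvv^N)]$), and we have used that the supremum of a piecewise-linear function with integer breakpoints over $(-\infty,i]$ is attained at a lattice point, so the discrete supremum in \eqref{DQ} agrees with the continuous supremum defining $D(\hat A^N,\hat S^N)$ up to that same $O(N^{-1/2})$. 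Thus \eqref{2ptprob} reduces to proving $D(\hat A^N,\hat S^N)\to D(Z^1,Z^2)$ uniformly on compacts, in probability.

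To obtain this I would run the truncation of Lemma \ref{uniform convergence of queueing mapping} on a high-probability event, the point being that $D(Z,B)(y)$ for $y\in[-a,a]$ depends only on $\{Z-B\}|_{[-M_0,a]}$ once $M_0$ is large enough to capture the (finite) location of the running supremum. Fix $\varepsilon,a>0$. First, $Z^1-Z^2$ is a Brownian motion with strictly positive drift $\sigma^2(\dir_2-\dir_1)$ (this also makes $D(Z^1,Z^2)$ a.s.\ well defined), so there is a deterministic $M_0$ with $\Pp\bigl(\sup_{u\le y}\{Z^1(u)-Z^2(u)\} = \sup_{u\in[-M_0,y]}\{Z^1(u)-Z^2(u)\}\ \forall y\in[-a,a]\bigr)\ge 1-\varepsilon$. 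Second, over a unit $u$-increment the process $\hat A^N-\hat S^N$ is a sum of $\Theta(N)$ independent $\{-1,0,1\}$-valued increments of mean $\Theta(N^{-1/2})$ divided by $N^{1/2}$; Hoeffding's inequality gives sub-Gaussian tails with an $N$-independent variance proxy, and a union bound over integer levels together with the linear drift penalty yields $\lim_{M\to\infty}\sup_N\Pp\bigl(\sup_{u\le y}\{\hat A^N(u)-\hat S^N(u)\}\neq \sup_{u\in[-M,y]}\{\hat A^N(u)-\hat S^N(u)\}\text{ for some }y\in[-a,a]\bigr)=0$, so we may enlarge $M_0$ and find $N_0$ with this event having probability $\ge 1-\varepsilon$ for $N\ge N_0$. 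On the intersection of these two events with the event from \eqref{ipcon} that $\hat A^N,\hat S^N$ lie within $\delta$ of $Z^1,Z^2$ uniformly on $[-M_0,a]$ (probability $\ge 1-3\varepsilon$ for all large $N$), each of the three terms in $D(\hat A^N,\hat S^N)(y)$ is within $O(\delta)$ of the corresponding term of $D(Z^1,Z^2)(y)$ uniformly in $y\in[-a,a]$. Adding the $O(N^{-1/2})$ reduction error and letting $\delta\to 0$ gives \eqref{2ptprob}.

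The main obstacle is the maximal estimate for the prelimiting walk in the second step: one must make it genuinely uniform in $N$, tracking the $N$-dependence in the increment scaling, the drift, and the lattice spacing simultaneously so that the truncation level $M_0$ can be chosen independently of $N$. The algebraic identity from \eqref{DQ}, the control of the interpolation error, and the continuity of $D$ away from $-\infty$ are all routine.
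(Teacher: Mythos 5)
Your proposal is correct and follows the same skeleton as the paper's proof: both start from the identity \eqref{DQ} to rewrite the scaled departure walk as the continuous map $D$ applied to the scaled arrival and service walks (up to a deterministic $O(N^{-1/2})$ interpolation error), and both then reduce \eqref{2ptprob} to the uniform-in-$y$ convergence in probability of the running supremum of $\hat A^N-\hat S^N$ over $(-\infty,y]$ to that of $Z^1-Z^2$, handled by truncating the supremum to a compact window $[-S,y]$ using the strictly positive drift. The only substantive divergence is in how the truncation is justified uniformly in $N$: the paper introduces the three-part event $E_{N,a,S}$ and invokes Lemma \ref{lem:convprob}, which rests on the cited Resnick-type heavy-traffic limit (Lemmas \ref{lem:resnick}--\ref{lem:genres}) for the all-time supremum of a small-drift random walk, whereas you prove the required uniform tail bound directly by Hoeffding's inequality and a union bound over unit blocks against the linear drift penalty. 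Your route is more self-contained and elementary at that step; the paper's buys the same estimate (plus the joint convergence of the two suprema) from an off-the-shelf functional limit theorem. Both are valid.
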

 \begin{proof}
 From \eqref{DQ}, we have, for $\f{\sigma^2 N x}{1 - v^2} \in \Z$,
 \be \label{Prew}
 \begin{aligned}
 &-N^{-1/2} \mathcal P[\Dd(\arrv^N,\srvv^N)]\Bigl(\f{\sigma^2 x}{1 - v^2}N\Bigr) + \f{\sigma^2 vx}{1 - v^2}N^{1/2} \\
 &=-N^{-1/2} \mathcal P[\srvv^N]\Bigl(\f{\sigma^2 x}{1 - v^2} N\Bigr) + \f{\sigma^2 vx}{1 -v^2}N^{1/2} \\
&\qquad\qquad + N^{-1/2}\sup_{-\infty < j \le \sigma^2 Nx/(1 - v^2)}[\mathcal P[\srvv^N](j) - \mathcal P[\arrv^N](j)] 
\\ &\qquad\qquad\qquad\qquad\qquad\qquad 
- N^{-1/2}\sup_{-\infty < j \le 0}[\mathcal P[\srvv^N](j) - \mathcal P[\arrv^N](j)] \\
 &= -N^{-1/2} \mathcal P[\srvv^N]\Bigl(\f{\sigma^2 x}{1 - v^2}N\Bigr)  +\f{\sigma^2 vx}{1 -v^2}N^{1/2} \\
 &\qquad\qquad  
 + \sup_{-\infty < j \le \sigma^2 Nx/(1 - v^2)}[- N^{-1/2} \mathcal P[\arrv^N](j)- (-N^{-1/2}\mathcal P[\srvv^N](j)) ] \\
&\qquad\qquad\qquad
-\sup_{-\infty < j \le 0}[- N^{-1/2} \mathcal P[\arrv^N](j)- (-N^{-1/2}\mathcal P[\srvv^N](j))].
 \end{aligned}
 \ee 
Hence, from \eqref{Prew} and the assumed convergence of $N^{-1/2} \mathcal P[\srvv^N](\f{\sigma^2N}{1 - v^2}\tsp\aabullet)$ in probability \eqref{ipcon}, to prove \eqref{2ptprob}, it suffices to show that, for each $a > 0$ and $\ve > 0$,
\begin{multline}\label{3004}  
\limsup_{N \to \infty}\Pp\biggl(\,\sup_{y \in [-a,a]}\Bigl|\sup_{-\infty < x \le [\sigma^2 N y/(1 - v^2)]}\bigl[- N^{-1/2} \mathcal P[\arrv^N](x)- (-N^{-1/2}\mathcal P[\srvv^N](x )) \bigr] \\
 - \sup_{-\infty < x \le y}[Z^1(x) - Z^2(x)]  \Bigr| > \ve\biggr) = 0.
\end{multline}
For $x \in \R$, recall that $[x]$ denotes the integer closest to $x$ with $|[x]| \le |x|$. Note that there is a drift term for both the walks $\mathcal P[\arrv^N]$ and $\mathcal P[\srvv^N]$ that cancels when they are subtracted. 
For shorthand, let 
\[
X^N(x) = - N^{-1/2} \mathcal P[\arrv^N](x)- (-N^{-1/2}\mathcal P[\srvv^N](x )) 
\]
For the $a$ in the hypothesis of the lemma and arbitrary $S > a$, let $E_{N,a,S}$  be the event where the following three  conditions all hold: 
\begin{enumerate}[label={\rm(\roman*)}, ref={\rm\roman*}]   \itemsep=3pt
\item \[\sup_{-\infty < x \le [-\sigma^2 N a/(1 -v^2)]}\bigl[X^N(x) \bigr] = \sup_{[-\sigma^2 NS/(1 -v^2)]\le x \le [-\sigma^2 Na/(1 - v^2)]}\bigl[X^N(x) \bigr].\]
\item \[
\sup_{-\infty < x \le -a}[Z^1(x) - Z^2(x)] = \sup_{-S \le x \le -a}[Z^1(x) - Z^2(x)].
\]
\item \[\sup_{y \in [-a,a]}\Bigl|\sup_{[-\sigma^2 NS/(1 - v^2)] \le x \le [\sigma^2 N x/(1 - v^2)]} X^N(y) - \sup_{-S \le x \le y}[Z^1 (x) - Z^2(x)] \Bigr| \le \ve.\]
\end{enumerate}
For every $S > a$, the event in \eqref{3004} is contained in $E_{N,a,S}^c$. By assumption \eqref{ipcon} and Lemma \ref{lem:convprob} (applied to the random walk $-\mathcal P[\arrv^N] + \mathcal P[\srvv^N]$ with $m = \dir_2 - \dir_1$, $\sigma = \sqrt{2\sigma^2}$, $\varphi(N) = N^{-1/2}, \xi(N) = \sigma^2 N/(1 - v^2)$, and $B = Z^1(\aabullet/(2\sigma^2)) - Z^2(\aabullet/(2\sigma^2))$),  $\lim_{S \to \infty} \limsup_{N \to \infty} \Pp(E_{N,a,S}^c) = 0$, completing the proof. 
\end{proof}

\begin{remark}\label{rmk:L/R8} 
For $v = 0$, one can alternatively arrive at \eqref{bercoup}  by   considering the speed process for TASEP with left jumps. As in \eqref{speed8}, let  $X_i(t)$ be the position of the right-going particle with label $i$ that starts at $X_i(0)=i$ and define the right-going speed process by $U_i=\lim_{t\to\infty} t^{-1}X_i(t)$.  To flip the space direction, define left-going particles $\wt X_i(t)=-X_{-i}(t)$ and the corresponding speed process $\wt U_i=\lim_{t\to\infty} t^{-1}\wt X_i(t)=-U_{-i}$. Reversing the lattice direction reversed the priorities of the labels (for the walks $\wt X$, lower label means lower priority), so the non-decreasing  projection $F$ to left-going multiclass stationary measures has to be applied to speeds $-\wt U_i=U_{-i}$. 

The distributional equality  $\{\wt U_i\}_{i\in\Z}\deq\{U_{i}\}_{i\in\Z}$ from \cite[Proposition 5.2]{Amir_Angel_Valko11}  implies that both speed processes have the same SH limit. This can also be verified by replacing $U$ with $\wt U$ in \eqref{H138}, rearranging, taking the limit, and using the reflection property (Proposition \ref{prop:SH_cons}\ref{itm:SH_reflinv}) of the SH.

\end{remark}

\singlespacing
\appendix
\chapter{Auxiliary technical inputs} \label{appA}
\section{Maximizers of continuous functions}
Recall the definitions of $f(x,y)$ and $f \li g$ from Section \ref{sec:notat}. 
\begin{lemma} \label{lemma:max_monotonicity}
Let $f,g:\R \to \R$ be continuous functions satisfying $f(x)\vee g(x) \to -\infty$ as $x \to \pm \infty$ and $f \li g$. Let $x_f^L$ and $x_f^R$ be the leftmost and rightmost maximizers of $f$ over $\R$, and similarly defined for $g$. Then,  $x_f^L \le x_g^L$ and $x_f^R \le x_g^R$.
\end{lemma}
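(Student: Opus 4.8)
The plan is a short direct argument by contradiction, using only that the hypothesis $f \li g$ is equivalent to the monotonicity of the difference $g-f$. First I would record the preliminary fact that the four maximizers in the statement actually exist: since $f$ is continuous and $f(x)\vee g(x)\to-\infty$ as $x\to\pm\infty$ forces $f(x)\to-\infty$ (and likewise $g(x)\to-\infty$), the supremum of $f$ over $\R$ is attained, and the set $\{x: f(x)=\max f\}$ is closed (by continuity) and bounded (since $f(x)\to-\infty$), hence compact; thus $x_f^L$ and $x_f^R$ are well defined, and the same applies to $g$. I would also note that $f \li g$ says precisely that $f(y)-f(x)\le g(y)-g(x)$ for all $x<y$, i.e.\ $x\mapsto g(x)-f(x)$ is nondecreasing.

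Next I would handle $x_f^R \le x_g^R$. Suppose instead $x_g^R < x_f^R$; set $a=x_g^R$ and $b=x_f^R$, so $a<b$. Because $a$ is the \emph{rightmost} maximizer of $g$ and $b>a$, we have $g(b)<g(a)$, hence $g(b)-g(a)<0$. Because $b$ maximizes $f$ and $a<b$, we have $f(a)\le f(b)$, hence $f(b)-f(a)\ge 0$. But monotonicity of $g-f$ on $[a,b]$ gives $g(b)-f(b)\ge g(a)-f(a)$, i.e.\ $g(b)-g(a)\ge f(b)-f(a)\ge 0$, contradicting $g(b)-g(a)<0$. Therefore $x_f^R \le x_g^R$.

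The inequality $x_f^L\le x_g^L$ is the mirror-image argument: assume $x_g^L<x_f^L$, set $a=x_g^L$, $b=x_f^L$ with $a<b$. Since $b$ is the \emph{leftmost} maximizer of $f$ and $a<b$, we get $f(a)<f(b)$, so $f(b)-f(a)>0$; since $a$ maximizes $g$, $g(b)\le g(a)$, so $g(b)-g(a)\le 0$. Monotonicity of $g-f$ again yields $g(b)-g(a)\ge f(b)-f(a)>0$, a contradiction. This completes the proof. I do not expect any genuine obstacle here: the only thing one must be careful about is keeping the strict/weak inequalities on the correct side (strictness coming from the "leftmost"/"rightmost" qualifier and weakness from the plain maximizer property), and confirming existence of the extreme maximizers from the coercivity hypothesis.
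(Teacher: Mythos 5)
Your proof is correct and rests on the same key observation as the paper's: the increment comparison $f(x_g^R,x)\le g(x_g^R,x)<0$ for $x>x_g^R$ (you phrase it as monotonicity of $g-f$ plus a contradiction at the specific points $x_g^R$ and $x_f^R$, while the paper argues directly and then handles the leftmost case by reflecting $x\mapsto -x$ rather than by a mirror argument). The added verification that the extreme maximizers exist is a harmless refinement the paper omits.
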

\begin{proof}
By the definition of  $x_g^R$   and by the assumption $f \li g$, for all $x > x_g^R$
\[
f(x_g^R,x) \le g(x_g^R,x) < 0.
\]
Hence, the rightmost maximizer of $f$ must be less than or equal to $x_g^R$. We get the statement for leftmost maximizers by considering the functions $x \mapsto f(-x)$ and $g \mapsto g(-x)$.
\end{proof}

\begin{lemma} \label{lem:ext_mont}
Assume that $f,g:\R \to \R$ satisfy $f \li g$. Then, for $a \le x \le y \le b$,
\[
0 \le g(x,y) - f(x,y) \le g(a,b) - f(a,b).
\]
\end{lemma}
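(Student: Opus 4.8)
The plan is to reduce everything to the monotonicity of the difference function $h := g - f$. First observe that the hypothesis $f \li g$ unwinds directly: for any $u < w$ we have $h(w) - h(u) = \bigl(g(w) - f(w)\bigr) - \bigl(g(u) - f(u)\bigr) = g(u,w) - f(u,w) \ge 0$, so $h$ is nondecreasing on $\R$. This single observation carries the whole statement.

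For the lower bound of the display, note that when $x < y$ we have $g(x,y) - f(x,y) = h(y) - h(x) \ge 0$ by monotonicity of $h$ (and when $x = y$ both increments vanish, so the inequality is trivial). For the upper bound, rewrite both sides as increments of $h$: $g(x,y) - f(x,y) = h(y) - h(x)$ and $g(a,b) - f(a,b) = h(b) - h(a)$. Since $a \le x \le y \le b$ and $h$ is nondecreasing, $h(x) \ge h(a)$ and $h(y) \le h(b)$, hence
\[
h(y) - h(x) \le h(b) - h(a),
\]
which is exactly the claimed bound.

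There is no real obstacle here — the lemma is a two-line consequence of the fact that increment ordering $f \li g$ is equivalent to $g - f$ being nondecreasing. I would present it in precisely that order: (i) show $h = g-f$ is nondecreasing, (ii) deduce nonnegativity of $g(x,y)-f(x,y)$, (iii) sandwich $h(x), h(y)$ between $h(a), h(b)$ to get the upper bound. No continuity or decay assumptions on $f, g$ are needed for this statement, in contrast to Lemma \ref{lemma:max_monotonicity}.
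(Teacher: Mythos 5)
Your proof is correct and is essentially the paper's argument in different notation: the paper decomposes $f(a,b)-f(x,y)=f(a,x)+f(y,b)\le g(a,x)+g(y,b)=g(a,b)-g(x,y)$, and your inequalities $h(x)\ge h(a)$, $h(y)\le h(b)$ for $h=g-f$ are precisely those two increment comparisons. Nothing further is needed.
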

\begin{proof}
The first inequality follows immediately from the assumption $f \li g$. The second follows from the inequality
\[
f(a,b) - f(x,y) = f(a,x) + f(y,b) \le g(a,x) + g(y,b) = g(a,b) - g(x,y). \qedhere
\]
\end{proof}

\begin{lemma} \label{lemma:convergence of maximizers from converging functions}
Let $S \subseteq \R^n$, and let  $f_n:S \rightarrow \R$ be  continuous functions that  converge uniformly to   $f:S \rightarrow \R$. Let $c_n$ be a maximizer of $f_n$ and assume $c_n\to c\in S$.   Then $c$ is a maximizer of $f$. 
\end{lemma}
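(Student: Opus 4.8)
The plan is to use the elementary $\ve$-argument that lets one pass a pointwise inequality through a uniform limit. First I would record the preliminary observation that $f$ itself is continuous on $S$, being the uniform limit of the continuous functions $f_n$; this is needed so that $f(c_n) \to f(c)$ from $c_n \to c$.

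Next, the key step: I claim that $f_n(c_n) \to f(c)$. To see this I would split
\[
|f_n(c_n) - f(c)| \le |f_n(c_n) - f(c_n)| + |f(c_n) - f(c)|,
\]
where the first term is bounded by $\sup_{x \in S}|f_n(x) - f(x)| \to 0$ by uniform convergence, and the second tends to $0$ by continuity of $f$ at $c$ together with $c_n \to c$.

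Then I would fix an arbitrary $x \in S$. Since $c_n$ maximizes $f_n$, we have $f_n(c_n) \ge f_n(x)$ for every $n$. Letting $n \to \infty$, the left side converges to $f(c)$ by the previous step, and the right side converges to $f(x)$ by uniform (in particular pointwise) convergence. Hence $f(c) \ge f(x)$. As $x \in S$ was arbitrary, $c$ is a maximizer of $f$, completing the proof.

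There is no real obstacle here; the only point requiring a small amount of care is that one must evaluate the approximating functions at the moving points $c_n$ rather than at a fixed point, which is exactly why uniform (rather than merely pointwise) convergence is invoked in the triangle-inequality bound above.
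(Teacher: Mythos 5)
Your proof is correct and follows essentially the same route as the paper's: both reduce to showing $f_n(c_n)\to f(c)$ via the identical triangle-inequality split and then pass the inequality $f_n(c_n)\ge f_n(x)$ to the limit. Your added remark that $f$ is continuous as a uniform limit of continuous functions is a small but welcome clarification of a point the paper leaves implicit.
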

\begin{proof}
$f_n(c_n) \ge f_n(x)$ for all $x \in S$, so  it suffices to show that $f_n(c_n) \rightarrow f(c)$. This follows from the uniform convergence of $f_n$ to $f$, the continuity of $f$, and
\[
|f_n(c_n) - f(c)| \le |f_n(c_n) - f(c_n)| +|f(c_n) - f(c)|. \qedhere
\]
\end{proof}

The following is a well-known fact, but is often stated without proof, so we include full justification for the sake of completeness. For example, it appears as Equation (1.4) in \cite{Pitman1975} and Equation (13) in \cite{brownian_queues}.
\begin{lemma} \label{pitman Representation Lemma}
Let $f:\R \rightarrow \R$ be a continuous function such that 
\[
\lim_{x \rightarrow \pm \infty} f(x) = \pm \infty.
\]
Set $F(y) = \sup_{-\infty < x \le y} f(x)$. Then,
\begin{equation} \label{pitman Representation Equation}
\inf_{y \le x < \infty } (2 F(x) - f(x)) = F(y).
\end{equation}
\end{lemma}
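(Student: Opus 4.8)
The plan is to prove the identity by establishing the two inequalities separately: the ``$\ge$'' direction is routine, and the ``$\le$'' direction follows once the correct optimizer to the right of $y$ is identified.

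First I would record the elementary properties of $F$. Since $f(x)\to-\infty$ as $x\to-\infty$, the supremum defining $F(y)$ is finite for every $y$; the map $F$ is nondecreasing; and $F(x)\ge f(x)$ for all $x$. These give the lower bound at once: for every $x\ge y$,
\[
2F(x)-f(x) = F(x) + \big(F(x)-f(x)\big) \ge F(x) \ge F(y),
\]
so $\inf_{x\ge y}(2F(x)-f(x))\ge F(y)$.

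For the reverse inequality I would exhibit a single point $x_1\ge y$ at which $2F(x_1)-f(x_1)=F(y)$. Because $f(x)\to+\infty$ as $x\to+\infty$ and $f$ is continuous, the set $\{x\ge y: f(x)\ge F(y)\}$ is nonempty, closed, and bounded below, hence has a minimum $x_1\in[y,\infty)$; this $x_1$ is the first time $f$ returns to the level $F(y)$ to the right of $y$. I would then check two equalities. First, $f(x_1)=F(y)$: by definition $f(x_1)\ge F(y)$, while if $x_1>y$ then $f<F(y)$ on $[y,x_1)$ and continuity forces $f(x_1)\le F(y)$, and if $x_1=y$ then $f(y)\le\sup_{x\le y}f(x)=F(y)$. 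Second, $F(x_1)=F(y)$: on $(-\infty,y]$ one has $f\le F(y)$ by the definition of $F(y)$, on $(y,x_1)$ one has $f<F(y)$, and $f(x_1)=F(y)$, so the running supremum $F(x_1)=\sup_{x\le x_1}f(x)$ equals $F(y)$ (monotonicity of $F$ also gives the inequality $F(x_1)\ge F(y)$). Combining, $2F(x_1)-f(x_1)=2F(y)-F(y)=F(y)$, which yields $\inf_{x\ge y}(2F(x)-f(x))\le F(y)$ and in fact shows the infimum is attained at $x_1$.

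There is no genuine analytic difficulty here; the only point requiring care is choosing the right candidate for the upper bound. The naive choice $x=y$ gives $2F(y)-f(y)\ge F(y)$, the wrong direction, so the substance of the lemma is precisely that $f$ must climb back up to the level $F(y)$ somewhere to the right of $y$ (forced by $f(x)\to+\infty$), and that at the \emph{first} such return both $F$ and $f$ take the value $F(y)$. Once that point is produced, the statement reduces to the two continuity/monotonicity checks above.
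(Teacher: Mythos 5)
Your proof is correct and follows essentially the same route as the paper: the lower bound from $F\ge f$ together with monotonicity of $F$, and the upper bound by locating the first point $x_1\ge y$ where $f$ returns to the level $F(y)$ (guaranteed by $f(x)\to+\infty$ and continuity) and verifying $f(x_1)=F(x_1)=F(y)$ there. Your verification of those two equalities is in fact slightly more detailed than the paper's, which simply invokes continuity.
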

\begin{proof}
The left-hand side of \eqref{pitman Representation Equation} is
\[
\inf_{y \le x < \infty} (2F(x) - f(x)) = \inf_{y \le x < \infty} \bigl(2\sup_{-\infty < u \leq x} f(u) - f(x)\bigr).
\]
For all $x \geq y$, $\sup_{-\infty < u \le x} f(u)$ is greater than or equal to both $f(x)$ and $\sup_{-\infty < u \le y} f(u)$. Therefore,
\[
2\sup_{-\infty < u \leq x} f(u) - f(x)\geq \sup_{-\infty < u \leq y} f(u)+ f(x) - f(x) = F(y).
\]
 This establishes one direction of \eqref{pitman Representation Equation}. To show the other direction, we show that there exists $x \geq y$ such that 
\[
2F(x) - f(x) = F(y). 
\]
Note that $f(y) \leq F(y)$ recall the assumption $\lim_{x \rightarrow \infty} f(x) = \infty$. By the intermediate value theorem,  $f(x) = F(y)$ for some $x \geq y$. Let 
\[
x^\star = \inf\{x \geq y: f(x) = F(y)\}.
\]
Then, by continuity, $f(x^\star) = F(x^\star) = F(y)$, and
\[
2F(x^\star) - f(x^\star) = 2F(y) - F(y) = F(y). \qedhere
\]
\end{proof}

\section{Miscellaneous probabilistic facts}
\begin{lemma} \label{Identity for Laplace Transform}
For a nonnegative random variable $Z$ on $(\Omega,\F,\Pp)$ and $\alpha \in \R$, the following holds 
\[
\int_\Omega e^{-\alpha Z}\, d\Pp  = 1 - \alpha \int_0^\infty e^{-\alpha z}\Pp(Z > z)\, dz. 
\]
\end{lemma}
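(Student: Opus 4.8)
The plan is to reduce the identity to the pointwise fundamental theorem of calculus followed by Tonelli's theorem, with no appeal to finiteness hypotheses: the identity will hold as an equality in $[-\infty,+\infty]$, and is automatically an equality of finite numbers when $\alpha \ge 0$.

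First I would record the elementary pointwise identity. Fix $\omega \in \Omega$ and write $t = Z(\omega) \in [0,\infty)$. Since $z \mapsto e^{-\alpha z}$ is $C^1$ with derivative $-\alpha e^{-\alpha z}$, the fundamental theorem of calculus gives
\[
e^{-\alpha t} - 1 = e^{-\alpha t} - e^{-\alpha \cdot 0} = \int_0^t (-\alpha) e^{-\alpha z}\, dz = -\alpha \int_0^\infty e^{-\alpha z}\,\ind(0 \le z < t)\, dz,
\]
where in the last step the value of the integrand at the single point $z = t$ is irrelevant. Substituting $t = Z(\omega)$ yields, for every $\omega$,
\[
e^{-\alpha Z(\omega)} = 1 - \alpha \int_0^\infty e^{-\alpha z}\,\ind\bigl(z < Z(\omega)\bigr)\, dz.
\]

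Next I would integrate this identity over $\Omega$ against $\Pp$. The function $(z,\omega) \mapsto e^{-\alpha z}\,\ind(z < Z(\omega))$ is jointly measurable on $(0,\infty) \times \Omega$ (as a product of a continuous function of $z$ with the indicator of the measurable set $\{(z,\omega): z < Z(\omega)\}$) and, crucially, it is nonnegative regardless of the sign of $\alpha$. Hence Tonelli's theorem applies and gives
\[
\int_\Omega \Bigl( \int_0^\infty e^{-\alpha z}\,\ind(z < Z(\omega))\, dz \Bigr)\, d\Pp(\omega) = \int_0^\infty e^{-\alpha z} \Bigl( \int_\Omega \ind(z < Z(\omega))\, d\Pp(\omega) \Bigr)\, dz = \int_0^\infty e^{-\alpha z}\,\Pp(Z > z)\, dz,
\]
this common value lying in $[0,\infty]$. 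Combining this with the pointwise identity of the previous paragraph (and using $\int_\Omega 1\, d\Pp = 1$) yields
\[
\int_\Omega e^{-\alpha Z}\, d\Pp = 1 - \alpha \int_0^\infty e^{-\alpha z}\,\Pp(Z > z)\, dz,
\]
as claimed. When $\alpha \ge 0$ the right-hand integral is bounded by $\alpha^{-1}$ (or is just $0$ when $\alpha = 0$), so both sides are finite; when $\alpha < 0$ the two sides are simultaneously finite or infinite, and the equality holds in the extended reals. I do not anticipate a genuine obstacle here; the only point requiring a little care is the justification of the interchange of integrals, which is clean precisely because the integrand is nonnegative and one therefore never needs an integrability hypothesis on $Z$.
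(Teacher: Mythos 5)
Your proposal is correct and is essentially the same argument as the paper's: both reduce the identity to Tonelli's theorem applied to the nonnegative integrand $e^{-\alpha z}\ind(z < Z(\omega))$ together with the elementary computation $-\alpha\int_0^{Z}e^{-\alpha z}\,dz = e^{-\alpha Z}-1$; you merely start from the left-hand side while the paper starts from the right. Your added remark that the identity holds in the extended reals for $\alpha<0$ is a harmless refinement not needed where the lemma is applied.
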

\begin{proof}
We use Tonelli's Theorem below.
\begin{align*}
    &-\alpha \int_0^\infty  e^{-\alpha z} \Pp(Z > z)\, dz = -\alpha \int_0^\infty \int_{\Omega}e^{-\alpha z}1(Z > z)\, d\Pp dz \\
    = &-\alpha \int_{\Omega}\int_0^\infty  e^{-\alpha z} 1(Z >z)\, dzd\Pp = -\alpha \int_\Omega \int_0^Z  e^{-\alpha z}\, dzd\Pp \\
    = &\int_\Omega (e^{-\alpha Z} - 1)\, d\Pp = \int_\Omega e^{-\alpha Z}d\Pp - 1.
\end{align*}
Rearranging this equation completes the proof.
\end{proof}

\noindent The following is a classical result. We provide a proof for the specific formulation we use. 
\begin{theorem}[The Cameron-Martin-Girsanov Theorem] \label{cameron-martin}
Let $X:[0,\infty) \to \R$ be a standard Brownian motion with drift $\lambda \in \R$ on the space $(\Omega,\Ff,\Pp)$. Then, for $t > 0$ and $\rho \in \R$ consider the measure on $(\Omega,\Ff)$ defined by
\[
\wh \Pp(A) = \E\Bigl[\exp\Bigl((\rho - \lambda) X(t) + \f{\lambda^2 - \rho^2}{2}t \Bigr) \ind_A \Bigr].
\]
Then, under $\wh \Pp$, $\{X(s):0\le s \le t\}$ is a standard Brownian motion with drift $\rho$. 
\end{theorem}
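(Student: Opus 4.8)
The plan is to identify the law of $\{X(s):0\le s\le t\}$ under $\wh\Pp$ by computing the joint moment generating function of its increments and matching it with that of a Brownian motion with drift $\rho$. Throughout, write $X(s)=\lambda s+W(s)$ where $W$ is a standard Brownian motion under $\Pp$, and let $\Ff_s=\sigma(W(u):u\le s)$.

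First I would record the algebraic identity that drives everything. Set $M_s=\exp\bigl((\rho-\lambda)X(s)+\f{\lambda^2-\rho^2}{2}s\bigr)$, so that $\wh\Pp(A)=\E[M_t\ind_A]$. Substituting $X(s)=\lambda s+W(s)$ and simplifying the deterministic prefactor gives the clean form $M_s=\exp\bigl((\rho-\lambda)W(s)-\f{(\rho-\lambda)^2}{2}s\bigr)$, the exponential martingale. In particular $\E[M_t]=1$, so $\wh\Pp$ is a probability measure, and since $\wh\Pp\ll\Pp$ we still have $X(0)=0$ and continuity of $s\mapsto X(s)$ $\wh\Pp$-almost surely.

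Next, fix $0=s_0<s_1<\dots<s_n\le t$ and write $\Delta_j=s_j-s_{j-1}$, $\Delta W_j=W(s_j)-W(s_{j-1})$. Since $W(t)-W(s_n)$ is independent of $\Ff_{s_n}$ under $\Pp$ with $\E[e^{(\rho-\lambda)(W(t)-W(s_n))}]=e^{(\rho-\lambda)^2(t-s_n)/2}$, one gets $\E[M_t\mid\Ff_{s_n}]=M_{s_n}$; consequently $\E_{\wh\Pp}[F]=\E[M_{s_n}F]$ for every bounded $\Ff_{s_n}$-measurable $F$. Apply this with $F=\exp\bigl(\sum_{j=1}^n\theta_j(X(s_j)-X(s_{j-1}))\bigr)$ for real $\theta_1,\dots,\theta_n$. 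Using $M_{s_n}=\prod_{j=1}^n\exp\bigl((\rho-\lambda)\Delta W_j-\f{(\rho-\lambda)^2}{2}\Delta_j\bigr)$ and $X(s_j)-X(s_{j-1})=\lambda\Delta_j+\Delta W_j$, the product factors over $j$ by independence of the increments $\Delta W_j\sim\Nor(0,\Delta_j)$, and the $j$-th factor evaluates to
\[
\E\Bigl[\exp\bigl((\rho-\lambda+\theta_j)\Delta W_j+(\theta_j\lambda-\tfrac{(\rho-\lambda)^2}{2})\Delta_j\bigr)\Bigr]=\exp\Bigl(\rho\theta_j\Delta_j+\tfrac{\theta_j^2}{2}\Delta_j\Bigr).
\]
This is exactly the joint moment generating function of independent $\Nor(\rho\Delta_j,\Delta_j)$ variables, so under $\wh\Pp$ the increments $X(s_j)-X(s_{j-1})$ are independent with these laws. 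Together with $X(0)=0$ and path continuity, this identifies $\{X(s):0\le s\le t\}$ as a Brownian motion with drift $\rho$.

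All of this is routine Gaussian bookkeeping, so there is no genuine obstacle; the only step requiring a moment's care is the conditioning identity $\E[M_t\mid\Ff_{s_n}]=M_{s_n}$, i.e.\ recognizing $M$ as the exponential martingale, which is what allows the density $M_t$ to be replaced by $M_{s_n}$ when testing against functionals of the path up to time $s_n$.
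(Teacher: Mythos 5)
Your proof is correct and follows essentially the same route as the paper's: both identify the law under $\wh\Pp$ by computing the joint moment generating function of the increments and matching it with that of independent $\Nor(\rho\Delta_j,\Delta_j)$ variables. The only cosmetic difference is that you handle partitions with $s_n<t$ by first recognizing $M$ as the exponential martingale and conditioning on $\Ff_{s_n}$, whereas the paper simply takes $t_n=t$ and integrates the density directly; both are fine.
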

\begin{proof}
The measures $\Pp$ and $\wh \Pp$ are mutually absolutely continuous, so continuity is preserved. It remains to show that, under $\wh \Pp$, for $0 = t_0 < t_1 < \cdots < t_{n - 1} < t_n = t$, $X(t_1),X(t_2) - X(t_1),\ldots,X(t_n) - X(t_{n - 1})$ are independent Gaussian random variables with mean $\rho(t_i - t_{i - 1})$ and variance $t_i - t_{i - 1}$. We prove this via moment generating functions: For $1 \le i \le n$, set $s_i = t_{i} - t_{i - 1}$ and $Y_i = X(t_i) - X(t_{i - 1})$. Let $\alpha_1,\ldots,\alpha_n \in \R$. Then, using the fact that $t = s_1+ \cdots + s_n$ and $X(t) = Y_1 + \cdots + Y_n$,
\begin{align*}
    &\quad \;\wh \Ee\Bigl[\exp\Bigl(\sum_{i = 1}^n \alpha _i Y_i\Bigr)     \Bigr] = \Ee\Bigl[\exp\Bigl((\rho - \lambda)(Y_1 + \cdots + Y_{n}) + \f{\lambda^2 - \rho^2}{2} t 
 + \sum_{i = 1}^n \alpha _i Y_i\Bigr) \Bigr)    \Bigr] \\
    &= \int_{\R^n} \prod_{i = 1}^n \f{1}{\sqrt{2\pi s_i}} \exp\bigl(-\f{(y_i - \lambda s_i)^2}{2s_i} + (\rho - \lambda) y_i + \alpha_i y_i +\f{\lambda^2 - \rho^2}{2} s_i\bigr) \,dy_i \\
    &= \prod_{i = 1}^n \int_\R \f{1}{\sqrt{2\pi s_i}} \exp\bigl(-\f{(y_i - \lambda s_i)^2}{2s_i} + (\rho - \lambda) y_i + \alpha_i y_i +\f{\lambda^2 - \rho^2}{2} s_i\bigr) \,dy_i \\
    &= \prod_{i = 1}^n \exp\bigl(\lambda s_i(\rho - \lambda + \alpha_i) + \f{s_i(\rho - \lambda + \alpha_i)^2}{2} + \f{\lambda^2 - \rho^2}{2}s_i  \Bigr) = \prod_{i = 1}^n \exp(\rho \alpha_i s_i + s_i\alpha_i^2/2). \;\;\qedhere
\end{align*}
\end{proof}
\noindent The following is derived from a classical result. 
\begin{lemma}\cite[Equation 1.1.4 (1) on pg 251 ]{BM_handbook} \label{lemma:sup of BM with drift}
For a standard Brownian motion $B$ and $\dir > 0$,
\[
\sup_{-\infty < u \le 0}\{\sqrt 2 B(u) + \dir u\} \sim \operatorname{Exp}(\dir).
\]
\end{lemma}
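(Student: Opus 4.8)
The statement is the classical formula for the all-time maximum of a Brownian motion with negative drift, recorded in \cite{BM_handbook}; the plan is to give a short self-contained derivation. First I would reduce to a one-sided problem by time reversal. Since $B$ is a two-sided standard Brownian motion, $\{B(-t):t\ge0\}$ is again a standard Brownian motion; writing $W(t)=B(-t)$ and substituting $u=-t$ gives
\[
\sup_{-\infty<u\le0}\{\sqrt2\,B(u)+\dir u\}=\sup_{t\ge0}\{\sqrt2\,W(t)-\dir t\}=:M ,
\]
so it suffices to show $M\sim\Exp(\dir)$. The process $X(t):=\sqrt2\,W(t)-\dir t$ is a Brownian motion with diffusivity $\sqrt2$ and drift $-\dir<0$; in particular $X(t)\to-\infty$ almost surely as $t\to\infty$ by the law of large numbers for Brownian motion (i.e.\ $W(t)/t\to0$), so $M<\infty$ a.s., and $M\ge X(0)=0$.

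The key step is to recognize $\{M\ge a\}$, for $a>0$, as a hitting event: with $\tau_a=\inf\{t\ge0:X(t)=a\}$, path-continuity gives $M\ge a$ if and only if $\tau_a<\infty$. To compute $\Pp(\tau_a<\infty)$ I would use the exponential (Wald) martingale. A direct Gaussian computation shows $\E[e^{\theta X(t)}]=e^{(\theta^2-\theta\dir)t}$, so the choice $\theta=\dir$ makes $N_t:=e^{\dir X(t)}=e^{\dir\sqrt2\,W(t)-\dir^2 t}$ a mean-one martingale. Applying optional stopping at $\tau_a\wedge n$ yields $\E[N_{\tau_a\wedge n}]=1$ for every $n$. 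On $\{\tau_a<\infty\}$ one has $N_{\tau_a\wedge n}\to e^{\dir a}$, while on $\{\tau_a=\infty\}$ one has $X(t)\to-\infty$, hence $N_{\tau_a\wedge n}\to0$; moreover $0\le N_{\tau_a\wedge n}\le e^{\dir a}$ since $X(t)<a$ for $t<\tau_a$. Bounded convergence then gives $e^{\dir a}\Pp(\tau_a<\infty)=1$, i.e.\ $\Pp(M\ge a)=e^{-\dir a}$ for all $a>0$; together with $\Pp(M\ge0)=1$ this identifies $M$ as an $\Exp(\dir)$ random variable.

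The only genuinely technical point is the passage to the limit in the optional-stopping identity, which is routine: the stopped sequence $\{N_{\tau_a\wedge n}\}_{n\ge1}$ is uniformly bounded by $e^{\dir a}$, so bounded convergence applies once one knows $X(t)\to-\infty$ a.s., and that is immediate from $W(t)/t\to0$. As an alternative route, one could first observe that $M$ is memoryless—for $a,b\ge0$ the strong Markov property at $\tau_a$ gives $\Pp(M\ge a+b\mid M\ge a)=\Pp(M\ge b)$, which forces an exponential tail because $\Pp(M>0)=1$ (Brownian paths exceed $0$ at arbitrarily small times) and $M<\infty$ a.s.—and then pin down the rate either by the same martingale computation or by $\E[M]=1/\dir$, obtained from optional stopping applied to the martingale $X(t)+\dir t=\sqrt2\,W(t)$ at $\tau_a$. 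Either way there is no serious obstacle.
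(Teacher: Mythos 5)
Your proof is correct, but it takes a different route from the paper. The paper treats this lemma as a citation: it quotes the handbook formula $\sup_{0\le u<\infty}\{B(u)-\dir u\}\sim\Exp(2\dir)$ and then reduces the stated form to it by the substitution $u\mapsto -u$ together with the scaling $\sqrt2\,B(u)+\dir u=\sqrt2\bigl(B(u)+\tfrac{\dir}{\sqrt2}u\bigr)$, using that $c\cdot\Exp(\lambda)\sim\Exp(\lambda/c)$ — a two-line computation. You instead re-derive the underlying identity from scratch: after the same time reversal, you identify $\{M\ge a\}$ with the hitting event $\{\tau_a<\infty\}$ and compute $\Pp(\tau_a<\infty)=e^{-\dir a}$ via the exponential Wald martingale $e^{\dir X(t)}$ and optional stopping, with bounded convergence justified by $X(t)\to-\infty$. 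Your martingale computation checks out (note $\E[e^{\theta\sqrt2\,W(t)}]=e^{\theta^2 t}$, so $\theta=\dir$ indeed kills the drift term), and the rate comes out as $2\mu/\sigma^2=2\dir/2=\dir$, matching the statement. What your version buys is self-containedness — no reliance on the external handbook reference; what the paper's version buys is brevity and consistency with the lemma being presented as a cited fact. Either is acceptable here.
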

\begin{proof}
The statement given in \cite{BM_handbook} is 
\[
\sup_{0 \le u \le \infty}\{B(u) - \dir u\} \sim \operatorname{Exp}(2\dir).
\]
To get to the result we use, observe that 
\[
\sup_{-\infty < u \le 0}\{\sqrt 2 B(u) + \dir u\} = \sqrt 2\sup_{0 \le u < \infty}\bigl\{B(-u) - \f{\dir}{\sqrt 2} u\bigr\} \sim \sqrt 2 \operatorname{Exp}(\sqrt 2 \dir) \sim \operatorname{Exp}(\dir). \qedhere
\]
\end{proof}

\noindent The following is also well-known (see, for example, \cite[Equation 1.2.4 on page 251]{BM_handbook}; this result is stated for the running infimum, but can be modified by reflection invariance of Brownian motion). We provide a short proof for the specific choice of parameterization we use 
\begin{lemma} \label{lem:BM_drift_sup}
    Let $B$ be a standard Brownian motion. Then, for all $x  \ge 0$,
    \[
    \Pp\bigl(\sup_{0 \le u \le 1}\{\sqrt 2 B(u) + \dir u\} \le x\bigr)
=  \Phi\Bigl(\frac{x - \dir}{\sqrt{2}}\Bigr) - e^{\dir x}\Phi\Bigl(\frac{-x - \dir}{\sqrt{2}}\Bigr).
\]
\end{lemma}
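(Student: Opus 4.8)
The plan is to reduce the statement to the classical formula for the running maximum of a Brownian motion with drift, which one may then obtain either by citing \cite{BM_handbook} (exactly as in Lemma \ref{lemma:sup of BM with drift}) or by a short self-contained reflection-and-tilting argument. First, write $\sqrt 2 B(u) + \dir u = \sqrt 2\bigl(B(u) + \tfrac{\dir}{\sqrt 2}u\bigr)$ and set $X(u) = B(u) + \tfrac{\dir}{\sqrt 2}u$, a standard Brownian motion with drift $\mu := \dir/\sqrt 2$. Then $\sup_{0\le u\le 1}\{\sqrt 2 B(u)+\dir u\} = \sqrt 2\,\sup_{0\le u\le 1} X(u)$, so that for $x \ge 0$,
\[
\Pp\Bigl(\sup_{0\le u\le 1}\{\sqrt 2 B(u)+\dir u\}\le x\Bigr) = \Pp\Bigl(\sup_{0\le u\le 1}X(u)\le \tfrac{x}{\sqrt 2}\Bigr),
\]
and it suffices to evaluate the right-hand side.

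Second, I would establish the running-maximum law $\Pp\bigl(\sup_{0\le u\le 1}X(u)\ge a\bigr) = \Phi(\mu - a) + e^{2\mu a}\Phi(-\mu - a)$ for $a \ge 0$. The self-contained route uses the reflection principle for the driftless Brownian motion $B$: for $a \ge 0$ the law of $B$ restricted to the event that its maximum on $[0,1]$ exceeds $a$ is, via reflection at the first passage time, described by the density $\varphi(2a-y)$ in the endpoint $B(1)=y$ for $y\le a$, together with the ordinary Gaussian tail for $y>a$. Now apply Cameron--Martin (Theorem \ref{cameron-martin} with $t=1$, $\lambda=0$, $\rho=\mu$), under which the law of $\{X(u)\}_{u\in[0,1]}$ is obtained from that of $\{B(u)\}_{u\in[0,1]}$ by weighting with $\exp(\mu B(1) - \mu^2/2)$, a function of the endpoint alone. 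Since the event $\{\sup_{0\le u\le1}(\cdot)\ge a\}$ is measurable with respect to the path on $[0,1]$,
\[
\Pp\bigl(\sup_{0\le u\le 1}X(u)\ge a\bigr) = \Ee\Bigl[\mathbf 1\{\textstyle\sup_{[0,1]}B \ge a\}\, e^{\mu B(1)-\mu^2/2}\Bigr],
\]
and splitting the expectation according to $B(1)\le a$ versus $B(1)>a$, using the reflected density on the first piece (where the factor $e^{\mu(2a-y)-\mu^2/2}$ completes the square to produce $e^{2\mu a}\Phi(-\mu-a)$) and the ordinary Gaussian density on the second (producing $\Phi(\mu-a)$), yields the claimed expression.

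Third, substitute $a = x/\sqrt 2$ and $\mu = \dir/\sqrt 2$: then $\mu - a = (\dir - x)/\sqrt 2$, $-\mu - a = -(x+\dir)/\sqrt 2$, and $2\mu a = \dir x$, so $\Pp\bigl(\sup_{0\le u\le 1}X(u)\ge x/\sqrt 2\bigr) = \Phi\bigl(\tfrac{\dir - x}{\sqrt 2}\bigr) + e^{\dir x}\Phi\bigl(\tfrac{-x-\dir}{\sqrt 2}\bigr)$. Taking complements and using $1-\Phi(t)=\Phi(-t)$ gives exactly $\Phi\bigl(\tfrac{x-\dir}{\sqrt 2}\bigr) - e^{\dir x}\Phi\bigl(\tfrac{-x-\dir}{\sqrt 2}\bigr)$, as desired.

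The computation is entirely routine; the only place demanding care is the bookkeeping between the two normalizations — the ambient process has diffusivity $\sqrt 2$ while the reflection principle and Theorem \ref{cameron-martin} are phrased for standard Brownian motion — so I would state the rescaling identity explicitly at the outset and carry the constants $\mu=\dir/\sqrt 2$ and $a=x/\sqrt 2$ through to the end, where they recombine into the clean exponent $\dir x$. (Alternatively, one may skip the reflection-principle derivation altogether and simply quote the drifted running-maximum formula from \cite{BM_handbook}, just as is done for the running minimum in the statement preceding this lemma, in which case the proof is a one-line substitution.)
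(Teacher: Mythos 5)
Your proposal is correct and follows essentially the same route as the paper: both reduce to the running-maximum law for a standard Brownian motion with drift via the reflection principle combined with the Cameron--Martin tilting of Theorem \ref{cameron-martin}, and then rescale the constants ($\mu=\dir/\sqrt 2$, $a=x/\sqrt 2$) to recover the exponent $\dir x$. The only cosmetic difference is that the paper tilts the joint density of the pair (running maximum, endpoint) and integrates over $\{m\le x\}$, whereas you tilt the endpoint distribution on the event $\{\sup\ge a\}$ directly; the computations are equivalent.
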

\begin{proof}
    We prove that 
    \be \label{xisup}
    \Pp\bigl(\sup_{0 \le u \le 1}\{ B(u) + \dir u\} \le x\bigr)
=  \Phi(x - \dir) - e^{2 \dir x}\Phi(-x - \dir),
    \ee
    and the statement given follows by replacing $x$ with $x/\sqrt 2$ and $\dir$ with $\dir/\sqrt 2$. Let 
    \[
    W^\dir(u) = B(u) + \dir u, \quad  M^\dir = \sup_{0 \le u \le 1}\{W^\dir(u)\},
    \]
    and denote $M = M^0$ and $W = W^0$. A classical fact due to the reflection principle (see for example, \cite[Exercise 7.4.3]{Durrett}), is that $M$ and $W(1)$ have joint density
    \[
    f(m,w) = \f{2(2m - w)}{\sqrt{2\pi}}e^{-(2m - w)^2/2} \ind(m \ge 0, -\infty < w \le m).
    \]
    By Theorem \ref{cameron-martin}, $M^\dir$ and $W^\dir(1)$ have joint density
    \[
    f^\dir(m,w) = \f{2(2m - w)}{\sqrt{2\pi}}e^{-(2m - w)^2/2 + \dir w - \dir^2/2} \ind(m > 0, -\infty < w < m).
    \]
    Hence,
    \begin{align*}
        \Pp(M^\dir \le x) &= \int_{-\infty}^x \int_w^x \f{2(2m - w)}{\sqrt{2\pi}}e^{-(2m - w)^2/2 + \dir w - \dir^2/2} dm\,dw \\
        &= \int_{-\infty}^x \int_{w}^{2x - w} \f{u}{\sqrt{2\pi}} e^{-u^2/2 + \dir w - \dir^2/2}\,du\,dw \\
        &= \int_{-\infty}^x \f{e^{\dir w - \dir^2/2}}{\sqrt{2\pi}} \Bigl[e^{-w^2/2} - e^{-(2x - w)^2/2}  \Bigr]\,dw \\
        &= \int_{-\infty}^x \f{e^{-(w - \dir)^2/2}}{\sqrt{2\pi}}\,dw - e^{2x \dir}\int_{-\infty}^x \f{e^{-(w - (2x + \dir))^2/2}}{\sqrt{2\pi}}\,dx,
    \end{align*}
    and this equals \eqref{xisup} after a simple change of variables. 
\end{proof}

\section{Maximum of random walk }
We first state a random walk lemma that comes from p.~519--520 in \cite{resnick}. 
See also Chapter VIII, Section 6 in \cite{Asmussen-1987}. 

\begin{lemma}
[\cite{resnick}, Proposition 6.9.4. See also Chapter VIII, Section 6 in \cite{Asmussen-1987}] 
\label{lem:resnick}
Let $\mu_N$ be a sequence of strictly positive numbers with $\mu_N \to 0$. Let $\sigma_N$ be a sequence satisfying $\sigma_N \to 1$. For each $N$, let $\{X_{N,i}:i \in \Z\}$ be a collection of i.i.d. random variables with mean $\mu_N$ and variance $\sigma_N^2$. Further, suppose that the sequence $\{X_{N,0}^2:N \ge 1\}$ is uniformly integrable, meaning that 
\[
\lim_{b \to \infty} \sup_{N \ge 1} \mathbb E\bigl[ X_{N,0}^2 \ind_{|X_{N,0}| > b} \bigr] = 0.
\]

Let $B$ be a Brownian motion with diffusion coefficient $1$ and zero drift. Then, the following convergence in distribution holds: 
\[
\sup_{-\infty < x \le 0} \mu_N S^N(\lceil x\rceil) \underset{N \to \infty}{\Longrightarrow} \sup_{-\infty < x \le 0} \{B(x) + x\}.
\]
\end{lemma}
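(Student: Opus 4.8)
The statement is the classical heavy-traffic limit theorem for the all-time maximum of a random walk with vanishing positive drift; it is Proposition 6.9.4 in \cite{resnick} and also appears in Chapter VIII, Section 6 of \cite{Asmussen-1987}, so for the purposes of the dissertation it suffices to quote it. Were one to prove it from scratch, the plan would be as follows. Let $S^N$ denote the two-sided walk with increments $\{X_{N,i}\}_{i\in\Z}$ normalized by $S^N(0)=0$, so that $S^N(k)=-\sum_{i=k+1}^{0}X_{N,i}$ for $k<0$. Since $\lceil x\rceil$ ranges over all of $\Z_{\le 0}$ as $x$ ranges over $(-\infty,0]$, and $\mu_N^{-2}\to\infty$, the quantity of interest equals $\max_{k\in\Z_{\le0}}\mu_N S^N(k)=\sup_{-\infty<t\le0}\mu_N S^N(\lceil \mu_N^{-2}t\rceil)$; this rewriting simply records the macroscopic scale $t\approx \mu_N^2 k$ at which the walk is interesting. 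Decompose $S^N(k)=k\mu_N+\widetilde S^N(k)$, where $\widetilde S^N$ is the walk built from the centered increments $Y_{N,i}=X_{N,i}-\mu_N$, which have mean zero and variance $\sigma_N^2\to 1$.

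The second step is a triangular-array functional central limit theorem for $t\mapsto \mu_N\widetilde S^N(\lceil \mu_N^{-2}t\rceil)$ on the Skorokhod space $D((-\infty,0])$. The Lindeberg condition holds because $\mu_N\to0$ and $\{X_{N,0}^2\}_{N\ge1}$ (hence $\{Y_{N,0}^2\}_{N\ge1}$) is uniformly integrable: for every $\varepsilon>0$,
\[
\mu_N^{-2}\,\E\bigl[(\mu_N Y_{N,0})^2\,\ind_{|\mu_N Y_{N,0}|>\varepsilon}\bigr]=\E\bigl[Y_{N,0}^2\,\ind_{|Y_{N,0}|>\varepsilon/\mu_N}\bigr]\longrightarrow 0 .
\]
Together with $\sigma_N^2\to1$ this gives $\mu_N\widetilde S^N(\lceil \mu_N^{-2}\,\aabullet\,\rceil)\Longrightarrow B(\aabullet)$, while the deterministic drift term satisfies $\mu_N\cdot\lceil \mu_N^{-2}t\rceil\,\mu_N\to t$ uniformly on compacts; hence $\mu_N S^N(\lceil \mu_N^{-2}\,\aabullet\,\rceil)\Longrightarrow\{B(t)+t:t\le0\}$.

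The third step passes the supremum through the limit. For each fixed $S>0$ the functional $f\mapsto\sup_{t\in[-S,0]}f(t)$ is continuous on $D([-S,0])$ at continuous paths, so the continuous mapping theorem gives $\max_{-S\mu_N^{-2}\le k\le 0}\mu_N S^N(k)\Longrightarrow\sup_{-S\le t\le0}\{B(t)+t\}$, and the right-hand side increases to $\sup_{t\le0}\{B(t)+t\}$ as $S\to\infty$, a supremum that is a.s.\ finite and attained since $B(t)+t\to-\infty$ as $t\to-\infty$. It remains to show that truncating at $-S\mu_N^{-2}$ costs nothing in the limit, i.e.\ that $\lim_{S\to\infty}\limsup_{N\to\infty}\Pp\bigl(\max_{k<-S\mu_N^{-2}}\mu_N S^N(k)\ge0\bigr)=0$; since $\mu_N S^N(0)=0$, this lets one replace $\max_{k\le0}$ by $\max_{-S\mu_N^{-2}\le k\le0}$ with vanishing error. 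Split $(-\infty,-S\mu_N^{-2}]$ into dyadic blocks $\bigl[-S2^{j+1}\mu_N^{-2},\,-S2^{j}\mu_N^{-2}\bigr]$; on the $j$-th block the walk must overcome a mean displacement of order $S2^{j}$, so by Kolmogorov's maximal inequality applied to $\widetilde S^N$ followed by Chebyshev's inequality, together with $\sigma_N^2\to1$, the probability that it reaches level $0$ on that block is $O\bigl(2^{-j}S^{-1}\bigr)$ uniformly in large $N$; summing over $j$ gives $O(S^{-1})\to0$.

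I expect the only step requiring real care to be the localization at $-\infty$ in the third step — making the tail of $\max_{k\le0}\mu_N S^N(k)$ uniformly negligible in $N$ — since the triangular-array functional CLT and the continuous-mapping argument are routine once the Lindeberg condition is read off from uniform integrability. Even the localization, however, is elementary (a dyadic decomposition plus second-moment and maximal inequalities, exactly as in the analogous tail estimates used elsewhere in the dissertation), which is precisely why the result can simply be quoted from \cite{resnick,Asmussen-1987}.
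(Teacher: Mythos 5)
The paper gives no proof of this lemma at all -- it is quoted verbatim from \cite{resnick} (Proposition 6.9.4) and \cite{Asmussen-1987}, exactly as you do. Your supplementary sketch (centering the walk, a triangular-array FCLT via the Lindeberg condition read off from uniform integrability, continuous mapping on compacts, and a dyadic localization of the tail via Kolmogorov's maximal inequality plus Chebyshev) is the standard heavy-traffic argument and is sound, but it goes beyond what the dissertation itself records.
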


Next, we reformulate the statement slightly to suit our purposes. 
 
\begin{lemma} \label{lem:genres}
Let $\mu_N$ be a sequence of strictly positive numbers with $\mu_N \to 0$. Let $\sigma_N$ be a sequence satisfying $\sigma_N \to \sigma > 0$. Let $\varphi(N)$ be a sequence satisfying $\mu_N/\varphi(N) \to m > 0$. For each $N$, let $\{X_{N,i}:i \in \Z\}$ be a collection of i.i.d. random variables with mean $\mu_N$ and variance $\sigma_N^2$. Further, suppose that the sequence $\{X_{N,0}^2:N \ge 1\}$ is uniformly integrable. Let $S^N(m)$ be defined as
\be \label{SNm}
S^N(m) = \begin{cases}
 -\sum_{i = m}^{-1} X_{N,i} &m \le 0 \\
  \sum_{i = 0}^{m - 1} X_{N,i} &m \ge 0
\end{cases}
\ee
with   $S^N(0) = 0$. Let $B$ be a Brownian motion with diffusion coefficient $1$ and zero drift. Then, the following convergence in distribution holds:
\be \label{convBMgen}
\sup_{-\infty < x \le 0} \varphi(N) S^N(\ce{x}) \underset{N \to \infty}{\Longrightarrow} \sup_{-\infty < x \le 0} \{\sigma B(x) + m x\}
\ee
\end{lemma}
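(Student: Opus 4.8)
The plan is to deduce \eqref{convBMgen} from Lemma~\ref{lem:resnick} by a deterministic rescaling that normalizes the increment variances to one, after which Lemma~\ref{lem:resnick} applies with the (renormalized) mean as the scaling sequence; the leftover deterministic prefactor is handled by Slutsky's theorem, and the limiting constant is absorbed into the drift and diffusivity of the limit via Brownian scaling.

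Concretely, I would first set $Y_{N,i} = X_{N,i}/\sigma_N$ and let $\tilde S^N$ be the partial-sum walk built from the $Y_{N,i}$ via \eqref{SNm}, so that $\tilde S^N(m) = S^N(m)/\sigma_N$. Since $\sigma_N \to \sigma > 0$, the sequence $(\sigma_N)$ is bounded away from $0$ (for all $N$ past some $N_0$, which is all we need), so the $Y_{N,i}$ are i.i.d.\ with mean $\tilde\mu_N := \mu_N/\sigma_N \to 0$, variance identically $1$, and $\{Y_{N,0}^2\}_{N}$ uniformly integrable because it is dominated by a bounded multiple of $\{X_{N,0}^2\}_{N}$. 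Lemma~\ref{lem:resnick} then yields
\[
\sup_{-\infty < x \le 0} \tilde\mu_N\,\tilde S^N(\lceil x\rceil)\;\underset{N\to\infty}{\Longrightarrow}\;\sup_{-\infty < x \le 0}\{B(x)+x\}.
\]
Next, since $\sup$ commutes with multiplication by a positive constant,
\[
\sup_{-\infty < x \le 0}\varphi(N)\,S^N(\lceil x\rceil)=\varphi(N)\,\sigma_N\sup_{-\infty<x\le0}\tilde S^N(\lceil x\rceil)=c_N\sup_{-\infty<x\le0}\tilde\mu_N\,\tilde S^N(\lceil x\rceil),
\]
where $c_N := \varphi(N)\sigma_N/\tilde\mu_N = \varphi(N)\sigma_N^2/\mu_N \to \sigma^2/m$ by the hypotheses $\varphi(N)/\mu_N \to 1/m$ and $\sigma_N \to \sigma$. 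By Slutsky's theorem (the prefactor $c_N$ is deterministic and convergent), the left-hand side converges in distribution to $(\sigma^2/m)\sup_{-\infty<x\le0}\{B(x)+x\}$.

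Finally I would identify this with the claimed limit using Brownian scaling: applying $B(a^2y)\deq aB(y)$ with $a=\sigma/m$ together with the change of variable $x=(\sigma^2/m^2)y$, which is a bijection of $(-\infty,0]$ onto itself,
\[
\sup_{-\infty<x\le0}\{\sigma B(x)+mx\}\;\deq\;\sup_{-\infty<y\le0}\Bigl\{\sigma\cdot\tfrac{\sigma}{m}B(y)+\tfrac{\sigma^2}{m}y\Bigr\}=\tfrac{\sigma^2}{m}\sup_{-\infty<y\le0}\{B(y)+y\},
\]
which matches the distributional limit obtained in the previous step, establishing \eqref{convBMgen}. I do not anticipate a genuine obstacle; the only points needing a line of care are the preservation of uniform integrability when passing from $X_{N,i}$ to $X_{N,i}/\sigma_N$ and the arithmetic showing $c_N\to\sigma^2/m$, with everything else being a direct invocation of Lemma~\ref{lem:resnick}, Slutsky, and Brownian scaling.
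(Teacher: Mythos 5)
Your proposal is correct and follows essentially the same route as the paper's proof: normalize by $\sigma_N$ so that Lemma \ref{lem:resnick} applies with mean $\mu_N/\sigma_N$, factor out the deterministic prefactor $\varphi(N)\sigma_N^2/\mu_N \to \sigma^2/m$, and identify the limit via Brownian scaling. Your write-up is slightly more explicit about the preservation of uniform integrability and the Slutsky step, but there is no substantive difference.
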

\begin{remark}
It is immediate that on the left-hand side of \eqref{convBMgen}, one can replace $x$ with $\lceil \xi(N) x\rceil$ for any strictly positive sequence $\xi(N)$.  
\end{remark}
\begin{proof}
The random variables $X_{N,i}/\sigma_N$ satisfy the conditions of Lemma \ref{lem:resnick} with $\mu_N$ replaced by $\mu_N/\sigma_N$. Then,
\begin{align*}
&\,\sup_{-\infty < x \le 0} \varphi(N) S^N(x)
= \f{\varphi(N)\sigma_N^2}{ \mu_N} \sup_{-\infty < x \le 0} \f{\mu_N}{\sigma_N}\cdot \f{S^N(x)}{\sigma_N} \\
&= \f{\varphi(N)\sigma_N^2}{ \mu_N} \sup_{-\infty < x \le 0} \f{\mu_N}{\sigma_N} \cdot \f{S^N(\lceil \sigma_N^2 x/\mu_N^2\rceil)}{\sigma_N} 
\ \overset{\rm Lemma \ref{lem:resnick}}{\Longrightarrow} \  \f{\sigma^2}{m} \sup_{-\infty < x \le 0}\{B(x) + x\} \\
&= \f{\sigma^2}{m}\sup_{-\infty < x \le 0}\Bigl\{B\Bigl(\f{m^2}{\sigma^2}x\Bigr) + \f{m^2}{\sigma^2}x\Bigr\} 
\deq \sup_{-\infty < x \le 0}\{\sigma B(x) + m x\},
\end{align*}
so we may now apply Lemma \ref{lem:resnick}.
\end{proof}

For $x \in \R$, recall that $[x]$ denotes the integer closest to $x$ with $|[x]| \le |x|$. 

\begin{lemma}\label{lem:convprob}
Consider the setting of Lemma \ref{lem:genres}. Let $\xi(N)$ be a sequence satisfying $\varphi(N)^2 \xi(N) \to R > 0$. Then, for each $S < T \in \R$,
\be \label{rwconv}
\begin{aligned}
&\,\lim_{N \to \infty} \Pp\Bigl[\,\sup_{-\infty < x \le [S \xi(N)]} \varphi(N) S^N(x) > \sup_{[S \xi(N)] \le x \le [T \xi(N)]} \varphi(N) S^N(x)\Bigr] \\
&\qquad\qquad 
= \Pp\bigl[\sup_{-\infty < x \le S}\{\sigma B(R x) + mRx\} > \sup_{S \le x \le T}\{\sigma B(Rx) + mRx\}\bigr].
\end{aligned}
\ee

\end{lemma}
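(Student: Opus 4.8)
The plan is to deduce \eqref{rwconv} from a functional limit theorem for the rescaled two-sided walk $y\mapsto Y^N(y):=\varphi(N)S^N(\lceil\xi(N)y\rceil)$ together with the one-sided maximal estimate already packaged in Lemma \ref{lem:genres}. Write $M_1=\sup_{x\le S}\{\sigma B(Rx)+mRx\}$ and $M_2=\sup_{S\le x\le T}\{\sigma B(Rx)+mRx\}$ for the limiting quantities, so the target identity is $\Pp(\text{left event in }\eqref{rwconv})\to\Pp(M_1>M_2)$ as $N\to\infty$. First I would record the \emph{compact-interval} functional CLT: for every fixed $A$ with $-A<S$, the process $\{Y^N(y):y\in[-A,T]\}$ converges in distribution to $\{\sigma B(Ry)+mRy:y\in[-A,T]\}$. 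This is the triangular-array Donsker theorem: the increment means and variances scale correctly because $\varphi(N)\mu_N\xi(N)\to mR$ and $\varphi(N)^2\sigma_N^2\xi(N)\to\sigma^2R$ (note $\varphi(N)\to0$ is forced by $\mu_N\to0$ and $\mu_N/\varphi(N)\to m$, hence $\xi(N)\to\infty$), and the Lindeberg condition follows from the uniform integrability of $\{X_{N,0}^2\}$ — the same machinery behind Lemmas \ref{lem:resnick}--\ref{lem:genres}; the vanishing of the jumps $\varphi(N)X_{N,i}$ is likewise a consequence of uniform integrability. Applying the continuous functional $f\mapsto(\sup_{[-A,S]}f,\sup_{[S,T]}f)$ and matching the integer endpoints $[S\xi(N)],[T\xi(N)]$ to $S,T$ by continuity of the limit, this yields the joint convergence of the two truncated suprema.

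Next I would establish the \emph{zero-boundary} fact $\Pp\big(\sup_{[-A,S]}Y=\sup_{[S,T]}Y\big)=0$ for every $A$, where $Y(x)=\sigma B(Rx)+mRx$. Conditioning on $\mathcal F_S:=\sigma(Y(x):x\le S)$, the quantity $\sup_{[-A,S]}Y-Y(S)\ge0$ is $\mathcal F_S$-measurable, while $\sup_{[S,T]}Y-Y(S)$ is independent of $\mathcal F_S$ (increments after time $S$) and has a continuous law on $(0,\infty)$, being the running maximum over a nondegenerate interval of a Brownian motion with drift, which has no atoms. Hence the conditional probability of equality is $0$ a.s. Combined with the functional CLT and the portmanteau theorem — the event $\{\sup_{[-A,S]}f>\sup_{[S,T]}g\}$ has topological boundary inside $\{\sup_{[-A,S]}f=\sup_{[S,T]}g\}$ — this gives, for each fixed $A$,
\[
\lim_{N\to\infty}\Pp\Big(\sup_{[-A\xi(N)]\le x\le[S\xi(N)]}\varphi(N)S^N(x)>\sup_{[S\xi(N)]\le x\le[T\xi(N)]}\varphi(N)S^N(x)\Big)=\Pp\Big(\sup_{[-A,S]}Y>\sup_{[S,T]}Y\Big)=:p_A .
\]
Since $\sup_{[-A,S]}Y\uparrow M_1<\infty$ a.s. as $A\to\infty$ (the drift sends $Y(x)\to-\infty$ as $x\to-\infty$), monotone/dominated convergence gives $p_A\to\Pp(M_1>M_2)$.

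The remaining — and main — point is the uniform-in-$N$ control of the truncation error: $\lim_{A\to\infty}\limsup_{N\to\infty}\Pp(E_{N,A})=0$, where $E_{N,A}=\{\sup_{-\infty<x\le[-A\xi(N)]}\varphi(N)S^N(x)>\sup_{[-A\xi(N)]\le x\le[S\xi(N)]}\varphi(N)S^N(x)\}$. A short deterministic argument shows that the event in \eqref{rwconv} and its $A$-truncation differ only on $E_{N,A}$, so $0\le\Pp(\text{left event in }\eqref{rwconv})-p_A^N\le\Pp(E_{N,A})$, where $p_A^N$ is the probability appearing in the displayed limit above. To bound $\Pp(E_{N,A})$ I would split the walk at the site $k_N:=[-A\xi(N)]$: the increments of $S^N$ strictly to the left of $k_N$ are independent of $S^N(k_N)$, so $\sup_{x\le k_N}\varphi(N)S^N(x)=\varphi(N)S^N(k_N)+\sup_{x\le 0}\varphi(N)\overline S^N(x)$, where $\overline S^N$ is an independent copy in law of the left half-walk. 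By Lemma \ref{lem:genres} and \eqref{convBMgen}, $\sup_{x\le0}\varphi(N)\overline S^N(x)$ converges in distribution to the a.s.\ finite random variable $\sup_{x\le0}\{\sigma B(Rx)+mRx\}$, hence is tight, while $\varphi(N)S^N(k_N)$ converges to a Gaussian with mean $-mRA$; thus $\sup_{x\le k_N}\varphi(N)S^N(x)$ concentrates near $-mRA\to-\infty$, whereas $\sup_{[-A\xi(N)]\le x\le[S\xi(N)]}\varphi(N)S^N(x)\ge\varphi(N)S^N([S\xi(N)])$ is tight. Taking $A$ large makes $\limsup_{N}\Pp(E_{N,A})$ as small as desired. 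Assembling the three pieces gives $\Pp(\text{left event in }\eqref{rwconv})\to\Pp(M_1>M_2)$, which is the right-hand side of \eqref{rwconv}. I expect this last tail estimate to be the only genuinely delicate step; the compact-interval CLT and the portmanteau/boundary argument are routine.
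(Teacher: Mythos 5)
Your argument is correct, but it takes a noticeably different route from the paper's. The paper recenters both suprema at the site $[S\xi(N)]$, writing each as a supremum of increments $S^N([S\xi(N)],x)$; the two recentered suprema then involve disjoint blocks of the i.i.d.\ inputs and are exactly independent for every $N$. The unbounded one is handled in a single stroke by shift invariance plus Lemma \ref{lem:genres} (which already controls the supremum over the whole half-line, so no truncation is needed), the bounded one by Donsker, and joint convergence is free from independence; the conclusion then follows because the two independent limits have continuous distributions, making the event a continuity set. You instead truncate the half-line at $-A$, prove a compact-interval functional CLT on $[-A,T]$, pass to the two suprema by continuous mapping, kill the boundary by conditioning on $\mathcal F_S$, and then pay for the truncation with a separate uniform-in-$N$ tail estimate $\lim_{A\to\infty}\limsup_N\Pp(E_{N,A})=0$ — which, as you note, is the delicate step, and which itself secretly reuses the paper's recentering idea (splitting the walk at $[-A\xi(N)]$ into an independent half-line supremum plus a Gaussian term drifting to $-\infty$). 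Both proofs are sound. The paper's is shorter because Lemma \ref{lem:genres} absorbs all the half-line work and the independence of the two recentered suprema replaces both your portmanteau/boundary lemma and your truncation control; yours is more self-contained at the level of the joint convergence (you never need to observe that the two suprema are independent for the main limit, only for the boundary computation, where your conditioning argument is in fact slightly more robust than invoking independence of the limits). If you streamline, notice that recentering at $[S\xi(N)]$ from the start lets you cite \eqref{convBMgen} directly for the infinite supremum and delete the entire truncation step.
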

\begin{proof}
Observe that 
\begin{align*}
&\Pp\Bigl[\,\sup_{-\infty < x \le [S \xi(N)]} \varphi(N) S^N(x) > \sup_{[S \xi(N)] \le x \le [T \xi(N)]} \varphi(N) S^N(x)\Bigr] \\
&= \Pp\Bigl[\,\sup_{-\infty < x \le [S \xi(N)]} \varphi(N) S^N([S\xi(N)],x) > \sup_{[S \xi(N)] \le x \le [T \xi(N)]} \varphi(N) S^N([S\xi(N)],x)\Bigr].
\end{align*}
Now, note that \[
\sup_{-\infty < x \le [S \xi(N)]} \varphi(N) S^N([S\xi(N)],x),\quad \text{and} \quad \sup_{[S \xi(N)] \le x \le [T \xi(N)]} \varphi(N) S^N([S\xi(N)],x)\]
are independent. By convergence of random walk to Brownian motion with drift (with respect to  the topology of uniform convergence on compact sets) , we get that 
\[
\sup_{[S \xi(N)] \le x \le [T \xi(N)]} \varphi(N) S^N(RS,[S\xi(N)],x) \Longrightarrow \sup_{S \le x \le T} \{\sigma B(RS, Rx) + R(m-S)x\}.
\]
By shift invariance of random walk and Lemma \ref{lem:genres},
\begin{align*}
&\,\sup_{-\infty < x \le [S \xi(N)]} \varphi(N) S^N([S\xi(N)],x) \deq \sup_{-\infty < x \le 0} \varphi(N) S^N(x)  \\
&\Longrightarrow \sup_{-\infty < x \le 0}\{\sigma B(x) + mx\} = \sup_{-\infty < x \le 0} \{\sigma B(Rx) + mRx\} \\
&\deq \sup_{-\infty < x \le S} \{\sigma B(RS,Rx) + m(R-S)x\}. 
\end{align*}
By independence, we have shown the following joint convergence:
\be \label{jc}
\begin{aligned}
&\, \Bigl(\sup_{-\infty < x \le [S \xi(N)]} \varphi(N) S^N([S \xi(N)],x), \sup_{[S \xi(N)] \le x \le [T \xi(N)]} \varphi(N) S^N([S \xi(N)],x)\Bigr)  \\
&\Longrightarrow \Bigl(\sup_{-\infty < x \le S}\{\sigma B(RS,R x) + m(R-S)x\}, \sup_{S \le x \le T}\{\sigma B(RS,Rx) + m(R-S)x\}\Bigr).
\end{aligned}
\ee

The right-hand side of \eqref{jc} consists of two independent random variables with continuous distribution. Therefore, 
\begin{align*}
&\quad \lim_{N \to \infty} \Pp\Bigl[\,\sup_{-\infty < x \le [S \xi(N)]} \varphi(N) S^N(x) > \sup_{[S \xi(N)] \le x \le [T \xi(N)]} \varphi(N) S^N(x)\Bigr] \\
&= \lim_{N \to \infty}\Pp\Bigl[\,\sup_{-\infty < x \le [S \xi(N)]} \varphi(N) S^N([S \xi(N)],x) > \sup_{[S \xi(N)] \le x \le [T \xi(N)]} \varphi(N) S^N([S \xi(N)] ,x)\Bigr] \\
& =  \Pp\bigl[\sup_{-\infty < x \le S}\{\sigma B(RS,R x) + m(R-S)x\} > \sup_{S \le x \le T}\{\sigma B(Rx) + m(R-S)x\}\bigr]
\\ &
= \Pp\bigl[\sup_{-\infty < x \le S}\{\sigma B(R x) + mRx\} > \sup_{S \le x \le T}\{\sigma B(Rx) + mRx\}\bigr],
\end{align*}
with the second equality holding because the event on the right-hand side is a continuity set for the joint vector on the right in \eqref{jc}. 
\end{proof}

\chapter{The directed landscape} \label{appB}
\section{Bounds and distributional invariances}
The directed landscape satisfies the following symmetries.
\begin{lemma}\cite[Lemma 10.2]{Directed_Landscape},\cite[Proposition 1.23]{Dauvergne-Virag-21} \label{lm:landscape_symm}
As a random continuous function of $(x,s;y,t) \in \Rup$, the directed landscape $\Ll$ satisfies the following distributional symmetries, for all  $r,c \in \R$ and $q > 0$.
\begin{enumerate} [label=\rm(\roman{*}), ref=\rm(\roman{*})]  \itemsep=4pt
    \item {\rm(Space-time stationarity)}  \label{itm:time_stat} \ \ $\Ll(x,s;y,t) \deq \Ll(x+c,s + r;y+c,t + r).
    $
    \item {\rm(Skew stationarity)} \label{itm:skew_stat}
    \ \ $
    \Ll(x,s;y,t) \deq \Ll(x + cs,s;y + ct,t) -2c(x - y) + (t- s)c^2.  
    $
    \item \label{itm:DL_reflect} {\rm(Spatial and temporal reflections)} 
    \ \ $
    \Ll(x,s;y,t) \deq \Ll(-x,s;-y,t) \deq \Ll(y,-t;x,-s).
    $
    \item \label{itm:DL_rescaling} {\rm(Rescaling)} 
    \ \ $
    \Ll(x,s;y,t) \deq q\Ll(q^{-2}x,q^{-3}s;q^{-2}y,q^{-3}t).
    $
\end{enumerate}
\end{lemma}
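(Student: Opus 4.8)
The statement is recorded here from \cite[Lemma 10.2]{Directed_Landscape} and \cite[Proposition 1.23]{Dauvergne-Virag-21}, so one could simply cite those sources. If instead I were to reprove it using only the tools assembled in this excerpt, the plan would be to descend each symmetry from a corresponding symmetry of prelimiting Brownian last-passage percolation via the convergence in Theorem \ref{thm:BLPPtoDL}. The key observation is that the law of $\Ll$ restricted to any compact $K \subseteq \Rup$ is the weak limit of the rescaled BLPP profiles appearing in Theorem \ref{thm:BLPPtoDL}; hence, to prove $\Ll \deq T\Ll$ for a transformation $T$ of $\Rup$, it suffices to exhibit, for each $N$, a measure-preserving transformation of the environment $\{B_r\}$ whose effect on the rescaled profile converges, uniformly on compacts, to the action of $T$, after which one invokes uniqueness of finite-dimensional distributions on compacts.

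For items \ref{itm:time_stat}, \ref{itm:DL_reflect}, and \ref{itm:DL_rescaling} this is routine. Space-time stationarity follows from the shift invariance of BLPP (Lemma \ref{lem:BLPPshinvar}): translate the discrete level by $\lfloor Nr\rfloor$ and the spatial coordinate by $c$, and note that the linear centering $\alpha N(t-s) - \beta\tau N^{2/3}(y-x)$ in Theorem \ref{thm:BLPPtoDL} is invariant under $(s,t)\mapsto(s+r,t+r)$ and under $(x,y)\mapsto(x+c,y+c)$. Spatial reflection follows because each $B_r$ satisfies $\{B_r(-\aabullet)\}\deq\{B_r\}$ while reversing the spatial orientation of admissible BLPP paths preserves their weights, giving $\Ll(-x,s;-y,t)\deq\Ll(x,s;y,t)$. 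Temporal reflection follows by reversing the order of the levels $m\leftrightarrow -m$: an up-right path from $(x,m)$ to $(y,n)$ becomes an up-right path from $(y,-n)$ to $(x,-m)$ of the same weight, and since $\{B_r\}_{r\in\Z}$ is i.i.d.\ this relabelling is measure preserving, yielding $\Ll(y,-t;x,-s)\deq\Ll(x,s;y,t)$. Rescaling requires no extra work: the rescaling of $\Ll$ by $q$ corresponds exactly to relabelling $N\mapsto q^3 N$ in Theorem \ref{thm:BLPPtoDL}, and since the scaled profile and its centering transform consistently under the $1:2:3$ exponents, the limit is unchanged.

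The genuinely delicate point is skew stationarity \ref{itm:skew_stat}. A naive spatial shift fails here: shifting $x\mapsto x+cs$ proportionally to the macroscopic time $s$ amounts to translating the BLPP endpoint on the $N^{2/3}$ scale, which changes the centering $\beta\tau N^{2/3}(y-x)$ by an amount of order $N^{1/3}$, and this diverges after multiplication by $\chi N^{-1/3}$. The correct move is to recognize that a time-proportional shift of the endpoints is equivalent to perturbing the macroscopic direction $\rho$ on scale $N^{-1/3}$; under such a perturbation all of $\chi,\alpha,\beta,\tau$ change, and one must verify that the net effect on the rescaled profile is precisely the shear correction $-2c(x-y)+(t-s)c^2$, while the error terms $o_N$ remain uniformly small on compacts along the varying direction. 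Carrying this through is the main obstacle; alternatively, and more cheaply, one imports skew stationarity from the analogous covariance of the parabolic Airy sheet, which is the route taken in \cite{Directed_Landscape} and which I would ultimately adopt to keep the argument short.
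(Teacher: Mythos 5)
The paper does not prove this lemma: it is recorded in Appendix \ref{appB} purely as a cited input from \cite{Directed_Landscape} and \cite{Dauvergne-Virag-21}, which is exactly what your first sentence does, so your proposal matches the paper's treatment. The supplementary BLPP sketch is extraneous to what the paper does; if you were to pursue it, note that $\Ll(-x,s;-y,t)$ corresponds in the prelimit to reflecting only the $N^{2/3}$ fluctuation coordinate about the fixed macroscopic ray of slope $\rho$, not to a wholesale spatial reflection of the BLPP environment, so the spatial-reflection item (like skew stationarity) is cleanest imported from the limit object rather than from Lemma \ref{lem:BLPPshinvar}-type manipulations.
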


\begin{lemma}\cite[Corollary 10.7]{Directed_Landscape}\label{lem:Landscape_global_bound}
There exists a \\random constant $C$ such that for all $v = (x,s;y,t) \in \Rup$, we have 
\[
\Bigl|\Ll(x,s;y,t) + \f{(x - y)^2}{t - s}\Bigr| \le C (t - s)^{1/3} \log^{4/3} \Bigl(\f{2(\|v\| + 2)}{t - s}\Bigr)\log^{2/3}(\|v\| + 2),
\]
where $\|v\|$ is the Euclidean norm.
\end{lemma}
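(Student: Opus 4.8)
Since this inequality is Corollary 10.7 of \cite{Directed_Landscape}, the plan is simply to invoke it; were one to reconstruct the proof from scratch, here is the route I would take. The statement is a uniform ``parabolic'' bound, and the three quantities it must simultaneously control are the length $t-s$ of the time window (which contributes the KPZ-scaling prefactor $(t-s)^{1/3}$), the space-time extent $\|v\|$, and the ratio $\|v\|/(t-s)$, which measures how far into the tails a short, spatially spread-out window probes.

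First I would handle the unit-time case. When $t-s$ is of order one, $(x,y)\mapsto\Ll(x,s;y,t) + \frac{(x-y)^2}{t-s}$ is, by space-time and skew stationarity (Lemma \ref{lm:landscape_symm}\ref{itm:time_stat},\ref{itm:skew_stat}), a copy of the Airy-sheet fluctuation field $\mathcal{S}(x,y)+(x-y)^2$. For the latter one has exact one-point tail bounds inherited from GUE edge statistics and Brownian last passage percolation: the upper tail of $\mathcal{S}(u,v)+(u-v)^2$ is of order $\exp(-ca^{3/2})$ and the lower tail of order $\exp(-ca^3)$, valid once $a$ exceeds a threshold growing only polynomially in $|u|\vee|v|$. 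Combining these with the H\"older-$(1/2)^-$ spatial regularity of $\mathcal{S}$ and a union bound over unit spatial boxes inside $[-R,R]^2$ gives $\sup_{|u|,|v|\le R}\,|\mathcal{S}(u,v)+(u-v)^2|\le C\log^{2/3}(R+2)$ with a constant $C$ having stretched-exponential tails; here the exponent $2/3$ is forced by the $\exp(-ca^{3/2})$ upper tail, since one needs $a\sim(\log R)^{2/3}$ to beat a polynomial number of boxes.

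Next I would pass to general time windows and uniformize. By the rescaling symmetry (Lemma \ref{lm:landscape_symm}\ref{itm:DL_rescaling}), for a dyadic interval $[s,t]$ the field over $[s,t]$ equals in law $(t-s)^{1/3}$ times the unit-time field with spatial variables divided by $(t-s)^{2/3}$; applying the unit-time bound over the rescaled spatial window, whose radius is $\sim\|v\|/(t-s)^{2/3}$, yields a bound of shape $(t-s)^{1/3}\log^{2/3}\!\bigl(2(\|v\|+2)/(t-s)\bigr)$ for that single interval, again with stretched-exponential constant. I would then run Borel--Cantelli over the countable family indexed by (dyadic $s<t$, dyadic spatial box, dyadic scale of $\|v\|$); summability of the failure probabilities is exactly what forces the final logarithmic exponents to be $4/3$ and $2/3$ rather than a single $2/3$, the extra $2/3$ being the cost of the second (temporal/scale) union bound. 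Finally, the estimate on the countable family is upgraded to all $v\in\Rup$ via the quantitative modulus of continuity of $\Ll$ from \cite{Directed_Landscape}; alternatively, for $t-s\ge1$ one can avoid rescaling and chain unit-time estimates through the metric composition law \eqref{eqn:metric_comp}, using parabolic curvature to confine the intermediate maximizers to a window of polylogarithmic width.

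The main obstacle is precisely this matching of logarithmic exponents across nested union bounds: getting \emph{some} polylogarithmic correction is routine, but arranging that the spatial union bound contributes $\log^{2/3}$, the temporal/scale union bound another $\log^{2/3}$, and that their product is absorbed into $\log^{4/3}(2(\|v\|+2)/(t-s))\,\log^{2/3}(\|v\|+2)$ with a single almost surely finite constant requires careful bookkeeping of the thresholds appearing in the Airy-sheet tail estimates. Everything else — the one-point tails, the regularity input, and the modulus-of-continuity upgrade — is standard and can be quoted directly from \cite{Directed_Landscape}.
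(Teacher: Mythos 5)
Your proposal matches the paper exactly: Lemma \ref{lem:Landscape_global_bound} is one of the external inputs recorded in Appendix \ref{appB} and is simply cited as Corollary 10.7 of \cite{Directed_Landscape}, with no proof given in the dissertation, so invoking the citation is precisely what the paper does. Your supplementary reconstruction (one-point Airy-sheet tail bounds of order $e^{-ca^{3/2}}$ and $e^{-ca^3}$, the rescaling symmetry of Lemma \ref{lm:landscape_symm}, dyadic union bounds producing the $\log^{4/3}$ and $\log^{2/3}$ exponents, and a modulus-of-continuity upgrade) is consistent with how the cited result is actually established, but it is not needed here.
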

\begin{lemma}\cite[Proposition 2.6]{Dauvergne-22}\label{lem:DL_erg_coup}
For every $i = 1,\ldots,k$ and $\ve > 0$, let 
\be \label{Kie}
K_{i,\ve} = \{(x,s;y,t) \in \Rup: s,t \in [0,\ve],x,y \in [i -1/4,i + 1/4]\}.
\ee
Then, there exists a coupling of $k + 1$ copies of the directed landscape $\Ll_0,\Ll_1,\ldots,\Ll_k$ so that $\Ll_1,\ldots,\Ll_k$ are independent, and almost surely, there exists a random $\ve > 0$ such that for $1 \le i \le k$, $\Ll_0|_{K_{i,\ve}} = \Ll_i|_{K_{i,\ve}}$.
\end{lemma}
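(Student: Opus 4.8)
The statement is quoted from \cite{Dauvergne-22}, so the plan is to reprove it within the toolkit of this document, namely by realizing the coupling as a weak limit of an analogous construction for Brownian last-passage percolation (BLPP), exploiting the genuine spatial \emph{locality} of BLPP last-passage values over a short time window, and then upgrading ``agreement with high probability'' to ``agreement on a random neighbourhood of time $0$'' via Borel--Cantelli.

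First I would fix $\rho>0$ and use Theorem \ref{thm:BLPPtoDL} to realize $\Ll_0:=\Ll$ as the scaling limit of $L^N$, driven by an environment $\mathbf B^{(N)}=\{B^{(N)}_r\}_{r\ge1}$ of i.i.d.\ two-sided Brownian motions. Under the linear scaling map of Theorem \ref{thm:BLPPtoDL}, the box $K_{i,\ve}$ of \eqref{Kie} corresponds to a BLPP region: discrete levels $1,\dotsc,\lfloor N\ve\rfloor$ and space coordinates within $O(N^{2/3})$ of the column $c_i^{(N)}=N\rho+iN^{2/3}\tau$, and the columns $c_1^{(N)},\dotsc,c_k^{(N)}$ are mutually separated by $\tau|i-j|N^{2/3}$. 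A transversal-fluctuation estimate for BLPP (derivable from exit-point bounds in the spirit of Theorem \ref{thm:BLPP_exit_pts}, or quoted as a standard point-to-point geodesic bound) gives, uniformly in $N$, that with probability at least $1-e^{-cM^3}$ every geodesic relevant to the BLPP region attached to $K_{i,\ve}$ stays within distance $M(N\ve)^{2/3}$ of $c_i^{(N)}$. Fixing $\ve=\ve_\star=2^{-m_0}$ with $m_0$ large enough that $C(N\ve_\star)^{2/3}<\tfrac12\tau N^{2/3}$ for all large $N$, the confinement windows $V_i^{(N)}:=[\,c_i^{(N)}-C(N\ve_\star)^{2/3},\,c_i^{(N)}+C(N\ve_\star)^{2/3}\,]$ are pairwise disjoint.

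Next I would construct $\Ll_i$, $1\le i\le k$. Take independent copies $\mathbf B^{(N,i)}$ of $\mathbf B^{(N)}$ and define a glued environment that agrees with $\mathbf B^{(N)}$ on $V_i^{(N)}$ and with $\mathbf B^{(N,i)}$ off it (splicing increments of two-sided Brownian motions along a spatial cut is harmless by the Markov property), and let $L^{(N,i)}$ be BLPP in this environment; then $L^{(N,i)}\deq L^N$, and since the windows $V_i^{(N)}$ are disjoint and the off-window pieces are independent across $i$, the $L^{(N,i)}$ are mutually independent. On the event that all geodesics relevant to the region attached to $K_{i,2^{-m}}$, computed in either $\mathbf B^{(N)}$ or the glued environment, stay inside the interior of $V_i^{(N)}$, the two last-passage functions agree on that region; by the transversal-fluctuation bound this event has probability $\ge 1-ke^{-c'2^{2(m-m_0)/3}}$, uniformly in $N$ and for every $m\ge m_0$. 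I would then take a subsequential weak limit of the tuple of scaled landscapes (tight, each marginal converging to a directed landscape by Theorem \ref{thm:BLPPtoDL}): the limit $(\Ll_0,\Ll_1,\dotsc,\Ll_k)$ has each $\Ll_i$ a directed landscape, $\Ll_1,\dotsc,\Ll_k$ independent, and, since agreement on a fixed compact subset of $\Rup$ is a closed condition and $K_{i,2^{-m}}$ is exhausted by such compacts as $t-s\ge\delta\downarrow 0$, one gets $\Pp\big(\Ll_0|_{K_{i,2^{-m}}}=\Ll_i|_{K_{i,2^{-m}}}\ \forall i\big)\ge 1-ke^{-c'2^{2(m-m_0)/3}}$ for all $m\ge m_0$. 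Borel--Cantelli then yields that, almost surely, $\Ll_0|_{K_{i,2^{-m}}}=\Ll_i|_{K_{i,2^{-m}}}$ for every $i$ and all sufficiently large $m$; taking $\ve=2^{-m}$ for any such $m$ finishes the proof.

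The main obstacle is the geodesic-confinement input with an $e^{-cM^3}$-type tail that is \emph{uniform in $N$}: Theorem \ref{thm:BLPP_exit_pts} is stated for stationary initial data rather than point-to-point, so one must either adapt it or invoke an external BLPP transversal-fluctuation estimate, and then control the supremum over the whole (random) family of geodesics attached to a box $K_{i,\ve}$ rather than a single endpoint pair. A secondary technical point is the Brownian splicing: one must check that cutting and re-gluing two-sided Brownian motions along a spatial slice produces a bona fide i.i.d.\ environment and does not perturb the relevant last-passage values near the window boundary, which is routine but calls for a small extra safety margin in the definition of $V_i^{(N)}$. (An alternative, more DL-intrinsic route would replace the BLPP prelimit by the Brownian Gibbs locality of the Airy line ensemble underlying the $t\mapsto t^{1/3}$-rescaled Airy sheets furnished by the metric composition law, but that requires machinery not developed in this document.)
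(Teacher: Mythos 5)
The paper does not actually prove this lemma: it is quoted verbatim from \cite{Dauvergne-22}, so there is no internal argument to compare against. Your overall strategy --- realize the coupling as a limit of BLPP environments spliced along disjoint spatial regions, use geodesic confinement to force agreement of the last-passage values on the regions corresponding to $K_{i,\ve}$, pass to a joint subsequential limit using that agreement on a compact set is a closed condition, and run Borel--Cantelli over dyadic $\ve=2^{-m}$ --- is a legitimate route, and you correctly isolate the one genuinely external input (a uniform-in-$N$, $e^{-cM^3}$-type transversal-fluctuation bound for point-to-point BLPP geodesics, uniform over endpoints in a box; Theorem \ref{thm:BLPP_exit_pts} indeed does not supply this directly). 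The independence of the glued environments and the Portmanteau/Skorokhod limit passage are fine.

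The genuine gap is in the geometry of the confinement windows, and as written the construction fails deterministically rather than with small probability. Under the scaling map of Theorem \ref{thm:BLPPtoDL}, $(x,s)\mapsto(N\rho s+N^{2/3}\tau x,\lfloor Ns\rfloor)$, so the BLPP region attached to $K_{i,\ve}$ is a strip \emph{tilted along the characteristic}: as the level runs from $0$ to $\lfloor N\ve\rfloor$ its spatial center advances by $N\rho\ve$. Every admissible BLPP path joining two points of this region must therefore traverse a spatial distance of order $N\rho\ve\gg(N\ve)^{2/3}$, so no relevant geodesic can stay inside a vertical column $V_i^{(N)}$ of width $C(N\ve_\star)^{2/3}$ about a fixed location $c_i^{(N)}$; the confinement event to which you assign probability $1-ke^{-c'\,2^{2(m-m_0)/3}}$ is empty. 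A second, independent problem is the width: even after tilting the windows along the characteristic, the endpoints themselves spread transversally over $\tau N^{2/3}/2$ (from $x,y\in[i-\tfrac14,i+\tfrac14]$), so a window of width $(N\ve_\star)^{2/3}\ll N^{2/3}$ cannot contain them. Both defects are repairable and do not change the architecture of your proof: take $V_i^{(N)}$ to be the tilted strip of transversal half-width $\tfrac38\tau N^{2/3}$ around the characteristic line through $(i\tau N^{2/3},0)$ (these are still pairwise disjoint, and splicing the level-$r$ Brownian increments at the two level-dependent cut points is as harmless as in your vertical version). A geodesic attached to $K_{i,2^{-m}}$ then exits its strip only by fluctuating at least $\tfrac18\tau N^{2/3}=M(N2^{-m})^{2/3}$ beyond the line joining its endpoints, i.e.\ $M\asymp 2^{2m/3}$, giving failure probability $\lesssim e^{-c2^{2m}}$ uniformly in $N$ (applied in both the original and the glued environment, each of which is a bona fide BLPP environment, and uniformly over endpoints via a grid plus geodesic ordering). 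This is summable in $m$ and restores your Borel--Cantelli step; the key point is that the gain as $m\to\infty$ comes from comparing the shrinking fluctuation scale $(N2^{-m})^{2/3}$ to the \emph{fixed} margin $\asymp N^{2/3}$, not to $(N\ve_\star)^{2/3}$.
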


On a measure space $(\Omega,\F,\Pp)$, a measure-preserving transformation $T$ satisfies $T^{-1}E \in \F$ and $\Pp(T^{-1}E) = \Pp(E)$ for all $E \in \F$. Such a transformation is said to be \textit{ergodic} if $\Pp(E) \in\{0,1\}$ whenever $T^{-1}E = E$. The transformation $T$ is said to be \textit{mixing} if, for all $A,B \in \F$, $\Pp(A \cap T^{-k}B) \to \Pp(A) \Pp(B)$ as $k \to \infty$. By setting $A = B$, one sees that mixing implies ergodicity. 
\begin{lemma} \label{lm:horiz_shift_mix}
For $a,b \in \R$, not both $0$ and $z >0$, consider the shift operator $T_{z;a,b}$ acting on the directed landscape $\Ll$ as
\[
T_{z;a,b} \Ll(x,s;y,t) = \Ll(x + az,s + bz;y + az;t + bz),
\]
where both sides are understood as a process on $\Rup$.  Then, $\Ll$ is mixing under this transformation. That is, for all Borel subsets $A,B$ of the space $C(\Rup,\R)$,
\[
\Pp(\Ll \in A, T_{z;a,b}\Ll \in B) \overset{z \to \infty}{\longrightarrow} \Pp(\Ll \in A)\Pp(\Ll \in B).
\]
In words, the directed landscape is mixing (and therefore ergodic) under space-time shifts in any planar direction.
\end{lemma}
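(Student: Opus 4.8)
The plan is to reduce to compact cylinder events and then distinguish two regimes according to whether the shift $T_{z;a,b}$ perturbs the time coordinate. The sets $\{\Ll|_K\in C\}$, where $K$ runs over compact subsets of $\Rup$ and $C$ over Borel subsets of $C(K,\R)$, form an algebra generating the Borel $\sigma$-algebra of $C(\Rup,\R)$, and every such $K$ sits inside a box $K'=\{(x,s;y,t):|x|,|y|\le M,\ 0\le s,t\le t_0,\ t-s\ge\delta\}$, so it suffices to treat box-shaped $K$. Since mixing extends from a generating algebra to the full $\sigma$-algebra by $L^1$-approximation, the goal becomes: for every such box $K$ and all Borel $A,B\subseteq C(K,\R)$, $\Pp(\Ll|_K\in A,\ T_{z;a,b}\Ll|_K\in B)\to\Pp(\Ll|_K\in A)\,\Pp(\Ll|_K\in B)$ as $z\to\infty$. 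Observe that $T_{z;a,b}\Ll|_K$ is a fixed measurable function of $\Ll$ restricted to the translated box $K_z:=K+z\,(a,b;a,b)$.

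\textbf{Case $b\neq0$.} Say $b>0$; the case $b<0$ is symmetric. Once $z>(t_0-s_0)/b$ the time window $[s_0+bz,t_0+bz]$ of $K_z$ is disjoint from $[s_0,t_0]$, so by independence of $\Ll$ over disjoint time intervals (Section \ref{sec:DL_geod}, \cite{Directed_Landscape}), $\Ll|_K$ and $\Ll|_{K_z}$ are independent; by space-time stationarity (Lemma \ref{lm:landscape_symm}\ref{itm:time_stat}), $T_{z;a,b}\Ll|_K\deq\Ll|_K$. Hence the joint probability factors \emph{exactly} for all large $z$, which is more than enough.

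\textbf{Case $b=0$, $a\neq0$.} Say $a>0$, and by stationarity take $s_0=0$. Now $K$ and $K_z$ share their time window, so one must exploit spatial decorrelation, through the rescaling symmetry and the local-independence coupling. Fix $\eta>0$. By Lemma \ref{lem:DL_erg_coup} and space-time stationarity (the proof of Lemma \ref{lem:DL_erg_coup} applies to two unit boxes at an arbitrary separation $\ge1$, with a coincidence window $\ve>0$ whose law is bounded below uniformly in the separation), for each real $p\ge1$ there is a coupling of directed landscapes $(\Ll_0^p,\Ll_1^p,\Ll_2^p)$ with $\Ll_i^p\deq\Ll$, $\Ll_1^p\perp\Ll_2^p$, and a positive random $\ve$ such that $\Ll_0^p$ equals $\Ll_1^p$ on $\{|x|,|y|\le\tfrac14,\ 0\le s,t\le\ve\}$ and equals $\Ll_2^p$ on that box translated by $p$ in space. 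Choose $\delta_0>0$ with $\Pp(\ve\le\delta_0)\le\eta$, then $q$ large so that $q^{-2}M<\tfrac14$ and $q^{-3}t_0<\delta_0$. By the rescaling symmetry (Lemma \ref{lm:landscape_symm}\ref{itm:DL_rescaling}), $\Ll_q(x,s;y,t):=q\,\Ll(q^{-2}x,q^{-3}s;q^{-2}y,q^{-3}t)$ is again a directed landscape, $(\Ll_q|_K,\ T_{z;a,0}\Ll_q|_K)\deq(\Ll|_K,\ T_{z;a,0}\Ll|_K)$, and these are deterministic functions of $\Ll$ restricted, respectively, to the disjoint small boxes $B_1=\{|x|,|y|\le q^{-2}M,\ 0\le s,t\le q^{-3}t_0\}$ and $B_2=B_1+q^{-2}az\,(1,0;1,0)$. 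For $z$ large enough that $p:=q^{-2}az\ge1$, apply the coupling with this $p$: on $\{\ve>\delta_0\}$ (probability $\ge1-\eta$, uniformly in $z$) both $B_1$ and $B_2$ lie inside the coincidence boxes, so replacing $\Ll_0^p|_{B_1}$ by $\Ll_1^p|_{B_1}$ and $\Ll_0^p|_{B_2}$ by $\Ll_2^p|_{B_2}$ alters the relevant pair of indicators only on an event of probability at most $2\eta$. Independence of $\Ll_1^p,\Ll_2^p$ then factors the joint probability, and $\Ll_i^p\deq\Ll$ together with stationarity identify the two factors as $\Pp(\Ll|_K\in A)$ and $\Pp(\Ll|_K\in B)$; letting $\eta\downarrow0$ completes the proof.

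\textbf{Main obstacle.} The pure spatial shift carries all the difficulty: $\Ll$ has independent increments only in time, so no direct disjointness argument is available, and $\Ll$ restricted to a compact space-time box is not measurable with respect to any local $\sigma$-field of an underlying noise. The only substitute is the coupling of Lemma \ref{lem:DL_erg_coup}, which comes with a random, possibly small time window and with fixed box positions; reconciling it with a prescribed order-one time window (via the rescaling reduction) and with a box position $q^{-2}az$ that diverges (via a uniform-in-separation strengthening of Lemma \ref{lem:DL_erg_coup}, which I expect to follow routinely from its last-passage-percolation proof, since geodesic localization estimates do not degrade with the separation of the endpoints) is the delicate point. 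The rest is bookkeeping.
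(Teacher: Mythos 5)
Your splitting of cases is different from the paper's and partly an improvement: for $b\neq 0$ you observe that the time windows of $K$ and its shift become disjoint for large $z$, so independence of $\Ll$ over disjoint time intervals gives \emph{exact} factorization. That is correct and simpler than the paper's treatment of the regime $a\neq0,\ b\neq 0$ (the paper funnels all $a\neq0$ through the rescaling argument and reserves the disjoint-time argument for $a=0$ only).

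The pure spatial shift ($b=0$), however, contains a genuine gap, and you have correctly located it but not closed it. You fix the rescaling parameter $q$ in advance (depending only on $\eta,M,t_0$), so that after rescaling the two relevant boxes are $B_1$ and $B_2=B_1+q^{-2}az\,(1,0;1,0)$, whose separation $p=q^{-2}az$ \emph{diverges} with $z$. You then need Lemma \ref{lem:DL_erg_coup} for two boxes at arbitrary separation $p\ge 1$, with the coincidence window $\ve$ having a law bounded below \emph{uniformly in $p$}. The lemma as stated (and as cited from \cite{Dauvergne-22}) provides the coupling only for unit boxes at the fixed positions $1,\dotsc,k$, with a single random $\ve$ whose distribution depends on $k$; taking $k=\lceil p\rceil$ and using only the copies $\Ll_1$ and $\Ll_{\lceil p\rceil}$ does not yield a $\delta_0$ with $\Pp(\ve\le\delta_0)\le\eta$ uniformly in $p$, and no such uniform statement is available in the paper. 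Asserting that it "should follow routinely" from the LPP proof is not a proof. The paper avoids this entirely by making the rescaling $z$-dependent: with $q=z^{1/2}$, the shift by $z$ in space becomes a shift by exactly $1$ after rescaling, so the two boxes sit at the \emph{fixed} positions $1$ and $2$ while their time extent $z^{-3/2}t_0+z^{-1/2}|b|$ and spatial extent $z^{-1}M$ shrink to $0$; for large $z$ they therefore fit inside $K_{1,\ve}$ and $K_{2,\ve}$ for the single random $\ve$ of Lemma \ref{lem:DL_erg_coup} with $k=2$, which is precisely the form in which that lemma is stated. Replacing your fixed $q$ by $q=z^{1/2}$ (and otherwise keeping your bookkeeping) repairs the argument.
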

\begin{proof}
This key to the proof is Lemma \ref{lem:DL_erg_coup}, and I thank Duncan Dauvergne for pointing this out to me. We prove the case $a \neq 0$, and the case $a = 0$ and $b \neq 0$ will be proven separately. Further, since we send $z \to \infty$, it suffices to show the result for $a = 1$, for then the result also holds for arbitrary $a$ and $b = ab$. With this simplification, we use the shorthand notation $T_{z;b} = T{z:1,b}$. By Lemma \ref{lm:landscape_symm}\ref{itm:time_stat}, $\Ll$ is stationary under the shift $T_{z;b}$. By Dynkin's $\pi$-$\lambda$ theorem, it suffices to show that for a compact set $K\subseteq \Rup$ and Borel sets $A\subseteq C(K,\R)$ and $B \subseteq C(K,\R)$,  
\[
\Pp(\Ll|_{K} \in A,(T_{z;b}\Ll)|_{K} \in B) \overset{z \to \infty}{\longrightarrow} \Pp(\Ll|_{K} \in A)\Pp(\Ll|_{K} \in B).
\]
Further, by temporal stationarity, it suffices to assume that $\inf\{s: (x,s;y,t) \in K\} \ge 0$. Consider the coupling $\Ll_0,\Ll_1,\Ll_2$ of Lemma \ref{lem:DL_erg_coup} with $k = 2$. Then, using the rescaling and spatial stationarity of Lemma \ref{lm:landscape_symm}, 
\begin{align}
   &\quad\;\Pp(\Ll|_{K} \in A,(T_{z;b}\Ll)|_{K} \in B) \nonumber \\
   &= \Pp(\Ll_0(x,s;y,t)|_{K} \in A, \Ll_0(x + z,s + bz;y + z,t + bz)|_{K} \in B)\nonumber \\
   &= \Pp(z^{1/2}\Ll_0(z^{-1} x,z^{-3/2}s  ;z^{-1} y,z^{-3/2} t  )|_{K} \in A, \nonumber \\ 
   &\qquad\qquad z^{1/2}\Ll_0(z^{-1} x + 1,z^{-3/2}s + z^{-1/2} b;z^{-1} y + 1,z^{-3/2} t + z^{-1/2}b)|_{K} \in B) \nonumber \\
  &= \Pp(z^{1/2}\Ll_0(z^{-1} x + 1,z^{-3/2}s  ;z^{-1} y + 1 ,z^{-3/2} t )|_{K} \in A, \nonumber \\
  &\qquad\qquad z^{1/2}\Ll_0(z^{-1} x + 2,z^{-3/2}s + z^{-1/2}b  ;z^{-1} y + 2,z^{-3/2} t + z^{-1/2}b  )|_{K} \in B). \label{433}
\end{align}
Specifically, we used the rescaling property with $q = z^{1/2}$ in the second equality, and in the third equality, we shifted the entire process by $1$ in the spatial direction.
Above, the restrictions $|_{K_j}$ mean that $(x,s;y,t) \in K_j$.  
Since $K$ is compact and we assumed $s \ge 0$ for all $(x,s;y,t) \in K$, for any $\ve > 0$, there exists $Z > 0$ such that for $z > Z$,
\be \label{small_cpct}
\begin{aligned}
  &\{(z^{-1} x + 1,z^{-3/2}s;z^{-1} y + 1,z^{-3/2} t): (x,s;y,t) \in K\} \subseteq   K_{1,\ve}, \quad \text{and}\\
  &\{(z^{-1} x + 2,z^{-3/2}s + z^{-1/2}b;z^{-1} y + 2,z^{-3/2} t + z^{-1/2}b): (x,s;y,t) \in K\} \subseteq K_{2,\ve},
  \end{aligned}
\ee
where $K_{i,\ve}$ are defined in \eqref{Kie}. Let $C_z$ be the event where both these containments hold for the random $\ve > 0$ in Lemma \ref{lem:DL_erg_coup}, and let $D_z$ be the event in \eqref{433}. Then, $\Pp(C_z) \to 1$ as $z \to +\infty$. Then, for any $\delta > 0$, whenever $z$ is sufficiently large so that $1 - \Pp(C_z) = \delta > 0$,
\begin{align*}
    &\quad \;\Big|\Pp(D_z) - \Pp(\Ll|_{K} \in A)\Pp(\Ll|_{K} \in B)\Big| \\
    &\le \Big|\Pp(D_z \cap C_z) -\Pp(\Ll|_{K} \in A)\Pp(\Ll|_{K} \in B)\Big| + \delta \\
    &=\Big|\Pp(z^{1/2}\Ll_1(z^{-1} x + 1,z^{-3/2}s;z^{-1} y + 1,z^{-3/2} t)|_{K} \in A,  \\
  &\qquad\qquad z^{1/2}\Ll_2(z^{-1} x + 2,z^{-3/2}s + z^{-1/2}b;z^{-1} y + 2,z^{-3/2} t + z^{-1/2}b)|_{K} \in B, C_z) \\
  &\qquad\qquad\qquad- \Pp(\Ll_1|_{K} \in A)\Pp(\Ll_1|_{K} \in B)\Big| + \delta \\
  &\le \Big|\Pp(z^{1/2}\Ll_0(z^{-1} x + 1,z^{-3/2}s;z^{-1} y + 1,z^{-3/2} t)|_{K} \in A,  \\
  &\qquad\qquad z^{1/2}\Ll_0(z^{-1} x + 2,z^{-3/2}s + z^{-1/2}b;z^{-1} y + 2,z^{-3/2} t + z^{-1/2}b)|_{K} \in B) \\
  &\qquad\qquad\qquad- \Pp(\Ll_1|_{K} \in A)\Pp(\Ll_2|_{K} \in B)\Big| + 2\delta \\
  &= \Big|\Pp(\Ll_1|_{K} \in A, \Ll_2|_{K} \in B) - \Pp(\Ll_1|_{K} \in A)\Pp(\Ll_2|_{K} \in B)\Big| + 2\delta = 2\delta,
\end{align*}
completing the proof since $\delta$ is arbitrary. Specifically, in the two inequalities, we added and removed the event $C_z$ at the cost of $\delta$. In the first equality above, we used the fact that the containments \eqref{small_cpct} hold on $C_z$ and  $\Ll_0|_{K_i,\ve} = \Ll_i|_{K_i,\ve}$ for $i = 1,2$. In the last line, we reversed the application of the rescaling and spatial stationarity, then used the independence of $\Ll_1$ and $\Ll_2$ from Lemma \ref{lem:DL_erg_coup}.

The statement for the vertical shift operator when $a = 0$ is simpler. For a compact set $K$, the processes $\Ll|_{K}$ and $T_{z;0,b} \Ll|_{K}$ are independent for sufficiently large $b$ by the independent increments property of the directed landscape, and the desired result follows. 
\end{proof}


Recall the definition of the state space $\UC$ \eqref{UCdef} for the KPZ fixed point. Recall that the KPZ fixed point $h_\Ll(t,\abullet;s,\h)$ with initial data $\h$ at time $s$ can be represented as
\be \label{992}
h_\Ll(t,y;s,\h) = \sup_{x \in R}\{\h(x) + \Ll(x,s;y,t)\} \qquad\text{for }t > s.
\ee

\begin{lemma}\cite{Directed_Landscape,Basu-Ganguly-Hammond-21,Ganguly-Hegde-2021,Pimentel-21b} \label{lem:DL_crossing_facts}
Let $\Ll:\Rup \to \R$ be a continuous function satisfying the metric composition law \eqref{eqn:metric_comp} and such that maximizers in \eqref{eqn:metric_comp} exist. Then,
\begin{enumerate} [label=\rm(\roman{*}), ref=\rm(\roman{*})]  \itemsep=3pt
    \item \label{itm:DL_crossing_lemma} Whenever $s < t$, $x_1 < x_2$, $y_1 < y_2$,
\[
\Ll(x_2,s;y_1,t) - \Ll(x_1,s;y_1,t) \le \Ll(x_2,s;y_2,t) - \Ll(x_1,s;y_2,t).
\]
\end{enumerate} 
Let $\h^1,\h^2 \in \UC$, and For $i = 1,2$ and $t > s$, define $h_\Ll(t,y;s,\h^i)$ by \eqref{992}. 
Then, assuming that maximizers in \eqref{992} exist, the following hold.
\begin{enumerate}[resume, label=\rm(\roman{*}), ref=\rm(\roman{*})]  \itemsep=3pt
    \item \label{itm:KPZ_attractiveness} If $\h^1 \li \h^2$, then $h_\Ll(t,\aabullet;s,\h^1)\li h_\Ll(t,\aabullet;s,\h^2)$ for all $t > s$.
    \item \label{itm:KPZ_crossing_lemma} For $t > s$ and $i = 1,2$, set 
\[
Z_\Ll(t,y;s,\h^i) = \max \argmax_{x \in \R}\{\h^i(x) + \Ll(x,s;y,t)\}.
\]
Then, if $x < y$ and $Z_\Ll(t,y;s,\h^1) \le Z_\Ll(t,x;s,\h^2)$,
\[
h_\Ll(t,y;s,\h^1) - h_\Ll(t,x;s,\h^1) \le h_\Ll(t,y;s,\h^2) - h_\Ll(t,x;s,\h^2).
\]
\end{enumerate}
\end{lemma}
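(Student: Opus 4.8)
The plan is to prove part \ref{itm:DL_crossing_lemma} first and then obtain parts \ref{itm:KPZ_attractiveness} and \ref{itm:KPZ_crossing_lemma} from it by short maximizer-swapping arguments. Note that the displayed inequality in \ref{itm:DL_crossing_lemma} is equivalent to the statement that, for $s<t$ and $y_1<y_2$, the map $z\mapsto \Ll(z,s;y_2,t)-\Ll(z,s;y_1,t)$ is nondecreasing, and this is the form in which I will use it below.

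For part \ref{itm:DL_crossing_lemma} I would run the standard geodesic-crossing argument. Under the hypotheses (continuity and metric composition \eqref{eqn:metric_comp} with level-wise maximizers) there is a geodesic between any two ordered space-time points — this is a compactness argument over a countable dense set of intermediate times, and for $\Ll$ itself it is \cite{Directed_Landscape}, so one could alternatively just cite it. Take a geodesic $g_1$ from $(x_1,s)$ to $(y_2,t)$ and a geodesic $g_2$ from $(x_2,s)$ to $(y_1,t)$; since $x_1<x_2$ while $y_2>y_1$, continuity forces a common point $(z,r)$ with $s<r<t$. Additivity along geodesics gives $\Ll(x_1,s;y_2,t)=\Ll(x_1,s;z,r)+\Ll(z,r;y_2,t)$ and $\Ll(x_2,s;y_1,t)=\Ll(x_2,s;z,r)+\Ll(z,r;y_1,t)$, while the reverse triangle inequality (immediate from \eqref{eqn:metric_comp}) gives $\Ll(x_i,s;y_i,t)\ge \Ll(x_i,s;z,r)+\Ll(z,r;y_i,t)$ for $i=1,2$. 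Adding the last two inequalities and comparing with the two equalities yields $\Ll(x_1,s;y_1,t)+\Ll(x_2,s;y_2,t)\ge \Ll(x_1,s;y_2,t)+\Ll(x_2,s;y_1,t)$, which rearranges to the claim.

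For part \ref{itm:KPZ_attractiveness} it suffices to show, for $y_1<y_2$, that $h_\Ll(t,y_2;s,\h^1)+h_\Ll(t,y_1;s,\h^2)\le h_\Ll(t,y_1;s,\h^1)+h_\Ll(t,y_2;s,\h^2)$. Pick a maximizer $x^*$ in \eqref{992} for $h_\Ll(t,y_2;s,\h^1)$ and a maximizer $w^*$ for $h_\Ll(t,y_1;s,\h^2)$ (these exist by hypothesis; for continuous $\h^i$ they also exist because the linear upper bound in the definition of $\UC$ together with the $-\f{(x-y)^2}{t-s}$ behavior of $\Ll$ from Lemma \ref{lem:Landscape_global_bound} force the supremum to be attained). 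If $x^*\le w^*$, bound $h_\Ll(t,y_1;s,\h^1)$ below by plugging in $x^*$ and $h_\Ll(t,y_2;s,\h^2)$ by plugging in $w^*$; after the $\h^i$-terms cancel, the inequality reduces to $\Ll(x^*,s;y_2,t)-\Ll(x^*,s;y_1,t)\le \Ll(w^*,s;y_2,t)-\Ll(w^*,s;y_1,t)$, which is part \ref{itm:DL_crossing_lemma}. If $x^*>w^*$, instead bound $h_\Ll(t,y_1;s,\h^1)$ with $w^*$ and $h_\Ll(t,y_2;s,\h^2)$ with $x^*$; now the $\Ll$-terms cancel and the inequality reduces to $\h^1(w^*,x^*)\le \h^2(w^*,x^*)$, which holds by $\h^1\li\h^2$ since $w^*<x^*$.

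Part \ref{itm:KPZ_crossing_lemma} is the same swap, with the hypothesized ordering of maximizers replacing the case split. Write $b=Z_\Ll(t,y;s,\h^1)$ and $a=Z_\Ll(t,x;s,\h^2)$, so $x<y$ and $b\le a$. Then $h_\Ll(t,y;s,\h^1)+h_\Ll(t,x;s,\h^2)=\h^1(b)+\Ll(b,s;y,t)+\h^2(a)+\Ll(a,s;x,t)$, while $h_\Ll(t,x;s,\h^1)\ge \h^1(b)+\Ll(b,s;x,t)$ and $h_\Ll(t,y;s,\h^2)\ge \h^2(a)+\Ll(a,s;y,t)$; subtracting, the claimed inequality reduces to $\Ll(a,s;y,t)-\Ll(a,s;x,t)\ge \Ll(b,s;y,t)-\Ll(b,s;x,t)$, which is part \ref{itm:DL_crossing_lemma} evaluated at $b\le a$ (with $x<y$). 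The only real obstacle is in part \ref{itm:DL_crossing_lemma}: turning ``level-wise maximizers exist'' into ``a continuous geodesic exists,'' so that the crossing point $(z,r)$ can be produced; this is already available for $\Ll$ in the cited literature, and once part \ref{itm:DL_crossing_lemma} is in hand, parts \ref{itm:KPZ_attractiveness} and \ref{itm:KPZ_crossing_lemma} are entirely formal.
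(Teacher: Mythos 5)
Your proof is correct, and it is the standard argument from the references the paper cites for this lemma — the paper itself supplies no proof, only the citations, so there is nothing in the text to diverge from. The crossing/additivity derivation of the quadrangle inequality in \ref{itm:DL_crossing_lemma} and the maximizer-swapping reductions of \ref{itm:KPZ_attractiveness} and \ref{itm:KPZ_crossing_lemma} to \ref{itm:DL_crossing_lemma} are all sound; I checked the case split in \ref{itm:KPZ_attractiveness} (the $x^*>w^*$ branch correctly falls back on $\h^1\li\h^2$) and the sign bookkeeping in \ref{itm:KPZ_crossing_lemma}. You also correctly identify the one genuine gap: the lemma is stated for an abstract continuous $\Ll$ with metric composition and level-wise maximizers, and producing a \emph{continuous} geodesic (hence a crossing point) from those hypotheses alone is not immediate; for the actual directed landscape this is supplied by \cite{Directed_Landscape}, which is how the lemma is used everywhere in the paper, so flagging it and citing is the right call.
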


\section{Geodesics in the directed landscape}
 We start by citing some results from \cite{Bates-Ganguly-Hammond-22} and \cite{Dauvergne-Sarkar-Virag-2020}. 
\begin{lemma}\cite[Theorem 1.18]{Bates-Ganguly-Hammond-22} \label{lm:BGH_disj}
There exists a single event of full probability on which, for any compact set $K \subseteq \Rup$, there is a random $\ve > 0$ such that the following holds. If $v_1 = (x,s;y,u) \in K$ and $v_2 = (z,s;w,u) \in K$ admit geodesics $\gamma_1$ and $\gamma_2$ satisfying $|\gamma_1(t) - \gamma_2(t)| \le \ve$ for all $t \in [s,u]$, then $\gamma_1$ and $\gamma_2$ are not disjoint, i.e., $\gamma_1(t) = \gamma_2(t)$ for some $t \in [s,u]$. 
\end{lemma}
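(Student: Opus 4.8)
The statement is a reformulation, uniform over compact sets, of Theorem 1.18 in \cite{Bates-Ganguly-Hammond-22}. That theorem provides, for each \emph{fixed} compact set of space-time endpoint configurations, an almost sure event on which there is a (random) $\ve>0$ with the stated ``close geodesics must touch'' property. The only work is to assemble these into a single full-probability event valid simultaneously for every compact $K\subseteq\Rup$, at the cost of letting $\ve$ depend on $K$. So the plan is to quote the cited theorem as the substantive input and perform a routine exhaustion argument on top of it.

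Concretely, I would fix a countable increasing family of compact sets $K_n=\{(x,s;y,u)\in\Rup:\ \lVert(x,s;y,u)\rVert\le n,\ u-s\ge 1/n\}$, whose union is $\Rup$. Apply the cited theorem to each $K_n$ to obtain a full-probability event $\Omega_n$ and, on $\Omega_n$, a random $\ve_n>0$ such that any two geodesics of $\Ll$ with endpoint configurations in $K_n$ that stay within $\ve_n$ of one another on their common time interval must intersect. Set $\Omega_B=\bigcap_{n\ge1}\Omega_n$, still of full probability. Given an arbitrary compact $K\subseteq\Rup$, compactness produces $n$ with $K\subseteq K_n$ (using that $K$ is bounded and that $\inf\{u-s:(x,s;y,u)\in K\}>0$, since $K\subseteq\Rup$ is compact), and I would take $\ve=\ve_n$; the property is manifestly monotone under enlarging the set of admissible endpoints, so the conclusion for $K$ holds on $\Omega_B$.

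Two points require care. First, ``geodesic'' in the statement must be read as \emph{any} geodesic of $\Ll$ between the two endpoints, because in the applications (e.g.\ in the proof of Theorem \ref{thm:random_supp}) the lemma is applied to restrictions of leftmost and rightmost semi-infinite geodesics, which are indeed geodesics between their time-$s$ and time-$u$ points by Theorem \ref{thm:DL_SIG_cons_intro}\ref{itm:DL_LRmost_geod}; one should verify that the cited theorem is stated (or can be restated) in this generality, and that its notion of ``disjoint'' matches ours. Second, one must confirm that the hypotheses of the cited theorem do not implicitly demand that $s$ and $u$ be separated by more than a fixed amount — this is precisely why the constraint $u-s\ge 1/n$ is built into $K_n$. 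I expect the only genuine obstacle to be bookkeeping of this sort: matching the exact formulation in \cite{Bates-Ganguly-Hammond-22} to ours. If their Theorem 1.18 is already phrased with the quantifiers in the ``for all compact $K$, on a single event'' order, then nothing beyond a citation is needed.
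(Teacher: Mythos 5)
The paper gives no proof of this lemma: it is quoted verbatim as Theorem 1.18 of \cite{Bates-Ganguly-Hammond-22}, and the only ``proof'' in the dissertation is the inclusion of its conclusion in the definition of the full-probability event $\Omega_1$. Your proposal --- cite that theorem and, if its quantifiers are per compact set, upgrade to a single event by exhausting $\Rup$ with the sets $K_n$ and intersecting the corresponding events --- is correct and amounts to the same thing; the one point that genuinely needs saying, namely that compactness of $K\subseteq\Rup$ forces $\inf\{u-s:(x,s;y,u)\in K\}>0$ so that $K\subseteq K_n$ for some $n$, you said.
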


Let $g$ be a geodesic from $(x,s)$ to $(y,u)$. Define the graph of this geodesic as
\[
\graph g := \{(g(t),t): t \in [s,u]\}.
\]

\begin{lemma}\cite[Lemma 3.1]{Dauvergne-Sarkar-Virag-2020} \label{lem:precompact}
The following holds on a single event of full probability. Let $(p_n;q_n) \to (p,q) = (x,s;y,t) \in \Rup$, and let $g_n$ be any sequence of geodesics from $p_n$ to $q_n$. Then, the sequence of graphs $\graph g_n$ is precompact in the Hausdorff metric, and any subsequential limit of $\graph g_n$ is the graph of a geodesic from $p$ to $q$.  
\end{lemma}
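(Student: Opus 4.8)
The plan is to reproduce the argument of \cite[Lemma 3.1]{Dauvergne-Sarkar-Virag-2020}, reducing everything to the global fluctuation bound of Lemma \ref{lem:Landscape_global_bound} and the metric composition law \eqref{eqn:metric_comp}. Throughout, I would work on the full-probability event on which Lemma \ref{lem:Landscape_global_bound} holds with its random constant $C$ (and on which geodesics between all pairs of points exist). Since $(p;q)=(x,s;y,t)\in\Rup$ we have $t-s>0$, so for all large $n$ the time gaps satisfy $t_n-s_n\ge\tfrac12(t-s)$ and all endpoints $(p_n;q_n)$ lie in a fixed compact subset $K_0$ of $\Rup$. The overall strategy is: (i) show the family $\{g_n\}$ is uniformly spatially bounded and equicontinuous; (ii) deduce precompactness of $\{\graph g_n\}$ from compactness of the hyperspace of closed subsets of a fixed compact set (Blaschke selection); (iii) identify each subsequential Hausdorff limit as the graph of a continuous function and show, via continuity of $\Ll$, that this function is a geodesic.

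For step (i), fix $n$ large, $u\in[s_n,t_n]$, and write $z=g_n(u)$. Since a sub-path of a geodesic is again a geodesic (immediate from the partition-sum property of geodesics in Section \ref{sec:DL_geod}), the metric composition identity gives $\Ll(p_n;q_n)=\Ll(p_n;z,u)+\Ll(z,u;q_n)$. By Lemma \ref{lem:Landscape_global_bound} the right side is at most $-\tfrac{(x_n-z)^2}{u-s_n}-\tfrac{(z-y_n)^2}{t_n-u}$ plus a term growing only polylogarithmically in $|z|$, while the left side is at least $-\tfrac{(x_n-y_n)^2}{t_n-s_n}$ minus a polylogarithmic term. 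For $u$ in the middle third of $[s_n,t_n]$ both denominators are $\gtrsim t-s$ and the quadratic in $|z|$ beats the polylog, forcing $|g_n(u)|\le M$ for a constant $M=M(K_0,C)$; the bound then propagates to the endpoints of $[s_n,t_n]$ by the same device. Equicontinuity is obtained analogously: for $s_n\le u_1<u_2\le t_n$, the three-term splitting $\Ll(p_n;q_n)=\Ll(p_n;g_n(u_1),u_1)+\Ll(g_n(u_1),u_1;g_n(u_2),u_2)+\Ll(g_n(u_2),u_2;q_n)$, combined with Lemma \ref{lem:Landscape_global_bound} and the elementary inequality $\sum_i a_i^2/p_i\ge(\sum_i a_i)^2/\sum_i p_i$ together with its stability (the excess in this inequality controls $\tfrac{(g_n(u_1)-g_n(u_2))^2}{u_2-u_1}$ up to polylog), shows $|g_n(u_1)-g_n(u_2)|$ cannot exceed $(u_2-u_1)^{2/3}$ times a slowly varying factor, uniformly in $n$. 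This is the $\tfrac23-\ve$ H\"older regularity of \cite{Directed_Landscape,Dauvergne-Sarkar-Virag-2020}, made uniform over the family.

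For steps (ii) and (iii): by step (i) all graphs $\graph g_n$ lie inside the fixed compact set $[-M,M]\times[\inf_n s_n,\sup_n t_n]$, so the Blaschke selection theorem gives precompactness of $\{\graph g_n\}$ in the Hausdorff metric. Let $\graph g_{n_k}\to\Gamma$. The uniform spatial bound, equicontinuity, and Arzel\`a--Ascoli give a further subsequence along which $g_{n_k}\to g$ uniformly on compact subsets of $(s,t)$ with $g$ extending continuously to $[s,t]$; since $(x_{n_k},s_{n_k})\to(x,s)$ and $(y_{n_k},t_{n_k})\to(y,t)$ lie on the graphs, $\Gamma=\graph g$ with $g(s)=x$, $g(t)=y$ (equicontinuity prevents $\Gamma$ from having a fat vertical slice, so $\Gamma$ is genuinely a graph, and then no further subsequence is needed). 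To see $g$ is a geodesic, fix any partition $s=t_0<t_1<\dots<t_{m-1}<t_m=t$; for large $k$ the points $s_{n_k},t_1,\dots,t_{m-1},t_{n_k}$ form a valid partition of $[s_{n_k},t_{n_k}]$, so the partition-sum property gives $\sum_i\Ll(g_{n_k}(t_{i-1}),t_{i-1};g_{n_k}(t_i),t_i)=\Ll(p_{n_k};q_{n_k})$ with $t_0:=s_{n_k}$, $t_m:=t_{n_k}$. Sending $k\to\infty$ and using continuity of $\Ll$ together with $g_{n_k}(t_i)\to g(t_i)$, $x_{n_k}\to x$, $s_{n_k}\to s$ (and likewise at the other end) yields $\sum_{i=1}^m\Ll(g(t_{i-1}),t_{i-1};g(t_i),t_i)=\Ll(p;q)$ with $g(t_0)=x$, $g(t_m)=y$. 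Since this holds for every partition, $\Ll(g)=\Ll(p;q)$, i.e.\ $g$ is a geodesic from $(x,s)$ to $(y,t)$.

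The main obstacle is step (i): extracting a \emph{uniform} spatial bound and modulus of continuity for $\{g_n\}$ from Lemma \ref{lem:Landscape_global_bound}. The delicate points are that the fluctuation correction there depends on the a priori unknown location of the geodesic, so one must verify that the leading quadratic term dominates the polylogarithmic correction, and that near the endpoints of $[s_n,t_n]$ one of the two time gaps degenerates, which forces one first to control $g_n$ in the bulk and then propagate the bound outward. Once these uniform estimates are in hand, precompactness and the identification of subsequential limits are routine consequences of Blaschke selection, Arzel\`a--Ascoli, and continuity of $\Ll$.
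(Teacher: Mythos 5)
Your proposal is correct, but note that the paper does not prove this lemma at all: it is imported verbatim as an external input, cited as Lemma 3.1 of \cite{Dauvergne-Sarkar-Virag-2020}, so there is no internal proof to compare against. What you have written is essentially a faithful reconstruction of the cited argument: uniform spatial boundedness and a uniform modulus of continuity for the geodesics from the global parabolic bound of Lemma \ref{lem:Landscape_global_bound} together with the metric composition law, then Blaschke selection and Arzel\`a--Ascoli, then identification of the limit as a geodesic by passing partition sums to the limit using continuity of $\Ll$. The one quibble is in your equicontinuity step: the three-term decomposition plus the parallel-resistor inequality only bounds the excess by the $O(1)$ polylogarithmic corrections coming from the two \emph{outer} segments, which yields a uniform H\"older-$\tfrac12$ modulus rather than the $\tfrac23-\ve$ regularity you advertise (the sharper exponent needs a multi-scale/chaining refinement); this is harmless here, since any uniform positive H\"older modulus suffices for Arzel\`a--Ascoli and for ruling out vertical segments in the Hausdorff limit.
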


\begin{lemma}\cite[Lemma 3.3]{Dauvergne-Sarkar-Virag-2020} \label{lem:overlap}
The following holds on a single event of full probability. Let $(p_n;q_n) = (x_n,s_n;y_n,u_n) \in \Rup \to (p;q) = (x,s;y,u) \in \Rup$, and let $g_n$ be any sequence of geodesics from $p_n$ to $q_n$. Suppose that either
\begin{enumerate} [label=\rm(\roman{*}), ref=\rm(\roman{*})]  \itemsep=3pt
    \item \label{uniqn} For all $n$, $g_n$ is the unique geodesic from $(x_n,s_n)$ to $(y_n,u_n)$ and $\graph g_n \to \graph g$ for some geodesic $g$ from $p$ to $q$, or
    \item \label{uniqueg} There is a unique geodesic $g$ from $p$ to $q$.
    \end{enumerate}
    Then, the \textbf{overlap}
    \[
    O(g_n,g) := \{t \in [s_n,u_n]\cap [s,u]: g_n(t) = g(t)\}    
    \]
    is an interval for all $n$ whose endpoints converge to $s$ and $u$. 
\end{lemma}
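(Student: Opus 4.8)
The plan is to treat the interval property and the convergence of the endpoints separately, and to reduce both cases of the hypothesis to the single statement that $\graph g_n \to \graph g$ in the Hausdorff metric. In case \ref{uniqn} this is assumed outright. In case \ref{uniqueg} it follows from Lemma \ref{lem:precompact}: the sequence $\graph g_n$ is precompact and every subsequential limit is the graph of a geodesic from $p$ to $q$, but that geodesic is unique, so every subsequential limit equals $\graph g$ and hence $\graph g_n \to \graph g$. Since the union of the graphs is then bounded and geodesics of $\Ll$ are uniformly H\"older on compact sets (a consequence of the metric composition law \eqref{eqn:metric_comp} and the bounds of Lemma \ref{lem:Landscape_global_bound}), an Arzel\`a--Ascoli argument upgrades this to uniform convergence $g_n \to g$ on every compact subinterval $[s+\delta, u-\delta] \subseteq (s,u)$; this uniform convergence is all I will use below.

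For the interval (connectedness) property, suppose $t_1 < t_2$ both lie in $O(g_n,g)$ and put $a_i = g(t_i) = g_n(t_i)$. Then $g|_{[t_1,t_2]}$ and $g_n|_{[t_1,t_2]}$ are both geodesics from $(a_1,t_1)$ to $(a_2,t_2)$. If they were distinct, then in case \ref{uniqn} the concatenation of $g_n|_{[s_n,t_1]}$, $g|_{[t_1,t_2]}$, and $g_n|_{[t_2,u_n]}$ would be a continuous path from $p_n$ to $q_n$; because $(a_1,t_1)$ and $(a_2,t_2)$ lie on the geodesic $g_n$, the additivity of the metric composition gives $\Ll(p_n;a_1,t_1)+\Ll(a_1,t_1;a_2,t_2)+\Ll(a_2,t_2;q_n)=\Ll(p_n;q_n)$, so this path has $\Ll$-length $\Ll(p_n;q_n)$ and is therefore a geodesic from $p_n$ to $q_n$ distinct from $g_n$, contradicting uniqueness of $g_n$. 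In case \ref{uniqueg} one runs the symmetric argument, wrapping $g_n|_{[t_1,t_2]}$ with the outer pieces of $g$ to contradict uniqueness of $g$. Hence $g_n|_{[t_1,t_2]} = g|_{[t_1,t_2]}$, so $O(g_n,g)$ is connected; being also closed (as $g_n,g$ are continuous) it is a closed interval, which I write as $[\alpha_n,\beta_n]$ when nonempty.

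Next I would show nonemptiness and the endpoint limits. Fix rational $t_0 < t_1$ with $s < t_0 < t_1 < u$. For all large $n$ both $g_n$ and $g$ are defined on $[t_0,t_1]$, their restrictions are geodesics whose endpoints lie in a fixed compact subset of $\Rup$ and converge to the corresponding points of $g$, and $\sup_{[t_0,t_1]}|g_n-g|\to 0$. Lemma \ref{lm:BGH_disj} then forces $g_n$ and $g$ to intersect on $[t_0,t_1]$ for all large $n$, so $O(g_n,g)\neq\varnothing$ and $\beta_n \ge t_0$, $\alpha_n \le t_1$. To prove $\beta_n \to u$, suppose along a subsequence $\beta_n \to \beta < u$; choosing rational $t_0',t_1'$ with $\beta < t_0' < t_1' < u$ and repeating the Lemma \ref{lm:BGH_disj} argument on $[t_0',t_1']$ produces $t \in [t_0',t_1']$ with $g_n(t)=g(t)$ and $t > t_0' > \beta \ge \beta_n$ for large $n$, contradicting $\beta_n = \max O(g_n,g)$. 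The temporal reflection symmetry of the directed landscape (Lemma \ref{lm:landscape_symm}\ref{itm:DL_reflect}) turns the same reasoning into $\alpha_n \to s$, completing the proof.

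The main obstacle is the endpoint-convergence step, and specifically the temptation to analyze the separation point $(g(\beta),\beta)$: it is a random space-time point, so a.s.\ uniqueness of the geodesic emanating from it is not available. The resolution is to sidestep that point entirely, relying only on the uniform convergence $g_n \to g$ on fixed compact subintervals together with the quantitative non-disjointness in Lemma \ref{lm:BGH_disj}, which applies precisely because it requires only that the two competing geodesics share their common initial and terminal time levels and lie in a compact region. The remaining care is bookkeeping: $g_n$ and $g$ have different domains and different endpoints, so one must pass to a fixed rational subinterval before invoking Lemma \ref{lm:BGH_disj}, and the concatenation-based uniqueness argument in the interval step must be run against $g_n$ in case \ref{uniqn} and against $g$ in case \ref{uniqueg}.
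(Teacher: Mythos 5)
The paper does not prove Lemma \ref{lem:overlap}; it is imported verbatim from \cite{Dauvergne-Sarkar-Virag-2020} (with only the remark that rationality of the endpoints can be replaced by the uniqueness hypothesis in \ref{uniqn}), so there is no in-paper argument to compare against. Your proof is correct and is essentially the standard one: reduce case \ref{uniqueg} to Hausdorff convergence of graphs via Lemma \ref{lem:precompact}, get connectedness of the overlap from the concatenation--uniqueness argument (run against $g_n$ in case \ref{uniqn} and against $g$ in case \ref{uniqueg}), and get nonemptiness plus endpoint convergence by combining locally uniform convergence $g_n\to g$ with the quantitative non-disjointness of Lemma \ref{lm:BGH_disj} on fixed subintervals $[t_0',t_1']$ pushed toward $u$ (resp.\ $s$). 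All the ingredients you invoke are available on a single full-probability event, as required.

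One small point of hygiene: the appeal to the temporal reflection symmetry of Lemma \ref{lm:landscape_symm}\ref{itm:DL_reflect} to deduce $\alpha_n\to s$ is a distributional statement and does not directly transport a pathwise conclusion that holds on a fixed full-probability event. This is cosmetic, since the direct argument you already gave for $\beta_n\to u$ applies verbatim at the other end: if $\alpha_{n_k}\to\alpha>s$, choose $s<t_0'<t_1'<\alpha$ and apply Lemma \ref{lm:BGH_disj} on $[t_0',t_1']$ to produce an overlap time $t\le t_1'<\alpha$, contradicting $\alpha_{n_k}=\min O(g_{n_k},g)\to\alpha$. I would state it that way rather than via the symmetry.
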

\begin{remark}
We note that condition \ref{uniqn} is slightly different from that stated in \cite{Dauvergne-Sarkar-Virag-2020}. There, it is assumed instead that $(x_n,s_n;y_n,u_n) \in \Q^4 \cap \Rup$ for all $n$. The only use of this requirement in the proof is to ensure that there is a unique geodesic from $(x_n,s_n)$ to $(y_n,u_n)$ for all $n$, so there is no additional justification needed for the statement we use here. 
\end{remark}

\begin{lemma} \label{lem:geod_pp}
On the intersection of the full probability events from Lemmas \ref{lem:precompact} and \ref{lem:overlap}, the following holds. For all ordered triples $s < t < u$ and compact sets $K \subseteq \R$, the set  
\be \label{distinct}
\{g(t): g \text{ is the unique geodesic between }(x,s) \text{ and }(y,u) \text{ for some } x \in K, y \in K\}
\ee
is finite. 
\end{lemma}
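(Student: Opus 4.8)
The plan is to argue by contradiction, using the precompactness of geodesic graphs (Lemma \ref{lem:precompact}) together with the overlap-stability statement (Lemma \ref{lem:overlap}). Fix an ordered triple $s < t < u$ and a compact set $K \subseteq \R$, and work throughout on the intersection of the two full-probability events. Suppose the set in \eqref{distinct} were infinite. Then one may choose a sequence of \emph{distinct} points $z_n = g_n(t)$, where each $g_n$ is the unique geodesic from $p_n := (x_n, s)$ to $q_n := (y_n, u)$ for some $x_n, y_n \in K$. By compactness of $K$, I would pass to a subsequence along which $x_n \to x \in K$ and $y_n \to y \in K$, so that $(p_n; q_n) \to (p; q) := (x, s; y, u) \in \Rup$.

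Next, by Lemma \ref{lem:precompact}, the sequence of graphs $\graph g_n$ is precompact in the Hausdorff metric, and every subsequential limit is the graph of a geodesic from $p$ to $q$. Passing to a further subsequence, I may assume $\graph g_n \to \graph g$ for some geodesic $g$ from $p$ to $q$. Since each $g_n$ is by construction the \emph{unique} geodesic between $p_n$ and $q_n$, hypothesis \ref{uniqn} of Lemma \ref{lem:overlap} is satisfied (with $s_n = s$, $u_n = u$ for all $n$), so the overlap $O(g_n, g) = \{r \in [s,u] : g_n(r) = g(r)\}$ is an interval, say $[a_n, b_n]$, whose endpoints satisfy $a_n \to s$ and $b_n \to u$. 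Because $s < t < u$, for all sufficiently large $n$ we have $a_n < t < b_n$, hence $t \in [a_n,b_n]$ and therefore $g_n(t) = g(t)$; that is, $z_n = g(t)$ for all large $n$. This contradicts the distinctness of the $z_n$, and so the set in \eqref{distinct} must be finite.

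I do not expect a genuine obstacle here: the substantive content is entirely carried by Lemmas \ref{lem:precompact} and \ref{lem:overlap}, and in particular no separate boundedness argument for $g_n(t)$ is needed, since precompactness of the graphs already prevents escape to infinity. The only point requiring mild care is that the overlap intervals, having endpoints converging to $s$ and $u$, eventually straddle the fixed interior time $t$ — but this is exactly the conclusion of Lemma \ref{lem:overlap}\ref{uniqn}, so the argument is immediate once the two nested subsequences (first of endpoints, then of graphs) have been extracted.
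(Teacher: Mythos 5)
Your proof is correct and follows essentially the same route as the paper's: a contradiction argument that extracts a convergent subsequence of endpoints, invokes Lemma \ref{lem:precompact} for Hausdorff convergence of the geodesic graphs, and then uses Lemma \ref{lem:overlap}\ref{uniqn} to force $g_n(t)=g(t)$ for large $n$. The only difference is organizational — the paper first sandwiches all geodesics by planarity to reduce to ruling out accumulation points, whereas you work directly with a sequence of distinct values $z_n$; as you note, that boundedness step is indeed unnecessary in your formulation.
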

Lemma \ref{lem:geod_pp} is known. Its derivation from  Lemma \ref{lm:BGH_disj} and some results of \cite{Dauvergne-Sarkar-Virag-2020} are  shown in \cite{Busa-Sepp-Sore-22arXiv}.  Lemma 3.12 in \cite{Ganguly-Zhang-2022a} (posted after the first version of \cite{Busa-Sepp-Sore-22arXiv})  provides a stronger quantitative statement, but we do not need it for our purposes. This stronger estimate can be traced back to the work of \cite{SlyNonexistenceOB} in exponential LPP using integrable methods.
\begin{proof}[Proof of Lemma \ref{lem:geod_pp}]
Assume, without loss of generality, that $K$ is a closed interval $[a,b]$. We observe that by planarity, all geodesics from $(x,s)$ to $(y,u)$ for $x,y \in K$ lie between the leftmost geodesic from $(a,s)$ to $(a,u)$ and the rightmost geodesic from $(b,s)$ to $(b,u)$. Hence, the set \eqref{distinct} is contained in a compact set, and it suffices to show that the set has no limit points.

Assume, to the contrary, that there exists a point $(\hat x,s;\hat y,u) \in (K \times \{s\}\times K \times \{u\}) \cap \Rup$ with unique geodesic $\hat g$ such that there exists a sequence of points $x_n,y_n \in K$ such that for all $n$, the geodesic $g_n$ from $(x_n,s)$ to $(y_n,t)$ is unique and so that $g_n(t) \to \hat g(t)$ but $g_n(t) \neq \hat g(t)$ for all $n$.
By compactness, there exists a convergent subsequence $(x_{n_k},y_{n_k}) \to (x,y)$. By Lemma \ref{lem:precompact}, there exists a further subsequence $(x_{n_{k_\ell}},y_{n_{k_\ell}})$ such that the geodesic graphs $\G g_{n_{k_\ell}}$ converge to the graph of some geodesic $\G g$ from $(x,s)$ to $(y,u)$ in the Hausdorff metric. Since $g_n(t) \to \hat g(t)$, we have $g(t) = \hat g(t)$. By Lemma \ref{lem:overlap}, the overlap $O(g_{n_{k_\ell}},g)$ is an interval whose endpoints converge to $s$ and $u$, so $g_{n_{k_\ell}}(t) = g(t) = \hat g(t)$ for sufficiently large $\ell$, contradicting the definition of the sequence $g_n$. 
\end{proof}

\singlespacing
\printbibliography

\end{document}